\theoremstyle{plain}
\newtheorem{theorem}{Theorem}[section]
\newtheorem{lemma}[theorem]{Lemma}
\newtheorem{corollary}[theorem]{Corollary}
\newtheorem{proposition}[theorem]{Proposition}
\newtheorem{definition}[theorem]{Definition}
\newtheorem{conjecture}[theorem]{Conjecture}
\newtheorem{prediction}[theorem]{Prediction}
\newtheorem{example}[theorem]{Example}
\newtheorem{examples}[theorem]{Examples}
\newtheorem{hyp}[theorem]{Hypothesis}
\newtheorem{remark}[theorem]{Remark}
\newtheorem{remarks}[theorem]{Remark}
\font\russ=wncyr10  1
\def\sha{\hbox{\russ\char88}}
\newcommand{\lra}{\longrightarrow}
\newcommand{\Zp}{{\mathbb{Z}_p}}
\newcommand{\Qp}{{\mathbb{Q}_p}}
\newcommand{\Ze}{{\mathbb{Z}}}
\newcommand{\Qu}{\mathbb{Q}}
\newcommand{\calP}{\mathcal{P}}
\newcommand{\A}{\mathfrak A}
\DeclareMathOperator{\Sel}{Sel}
\DeclareMathOperator{\Tr}{Tr}
\DeclareMathOperator{\Nrd}{Nrd}
\newcommand{\calO}{\mathcal{O}}
\newcommand{\eins}{\boldsymbol{1}}
\DeclareMathOperator{\Aut}{Aut}
\DeclareMathOperator{\Gal}{Gal}
\DeclareMathOperator{\Hom}{Hom}
\DeclareMathOperator{\End}{End}
\DeclareMathOperator{\im}{im}
\newcommand{\CC}{\mathbb{C}}
\newcommand{\bc}{\mathbb{C}}
\newcommand{\QQ}{\mathbb{Q}}
\newcommand{\RR}{\mathbb{R}}
\newcommand{\br}{\mathbb{R}}
\newcommand{\ZZ}{\mathbb{Z}}
\newcommand{\Z}{\mathbb{Z}}
\newcommand{\calL}{\mathcal{L}}
\newcommand{\frp}{\mathfrak{p}}
\newcommand{\cok}{\text{cok}}
\newcommand{\id}{\mathrm{id}}
\newcommand{\bz}{\mathbb{Z}}
\def\bigcapp{\raise1ex\hbox{\rotatebox{180}{$\biguplus$}}}
 \def\bigcappd{\raise1ex\hbox{\rotatebox{180}{$\displaystyle\biguplus$}}}
\begin{document}

\title[]{On refined conjectures \\
 of Birch and Swinnerton-Dyer type \\
 for Hasse-Weil-Artin $L$-series}


 \author{David Burns and Daniel Macias Castillo}

\begin{abstract} We consider refined conjectures of Birch and Swinnerton-Dyer type for the Hasse-Weil-Artin $L$-series of abelian varieties over general number fields. We shall, in particular, formulate several new such conjectures and establish their precise relation to previous conjectures, including to the relevant special case of the equivariant Tamagawa number conjecture. We also derive a wide range of concrete interpretations and explicit consequences of these conjectures that, in general, involve a thoroughgoing mixture of difficult archimedean considerations related to refinements of the conjecture of Deligne and Gross and delicate $p$-adic congruence relations that involve the bi-extension height pairing of Mazur and Tate and are related to key aspects of non-commutative Iwasawa theory. In important special cases we provide strong evidence, both theoretical and numerical, in support of the conjectures. We also point out an inconsistency in a conjecture of Bradshaw and Stein regarding Zhang's Theorem on Heegner points and suggest a possible correction. \end{abstract}

\address{King's College London,
Department of Mathematics,
London WC2R 2LS,
U.K.}
\email{david.burns@kcl.ac.uk}

\address{Departamento de Matem\'aticas, 
Universidad Aut\'onoma de Madrid, 28049 Madrid (Spain);
and Instituto de Ciencias Matem\'aticas, 28049 Madrid (Spain).}
\email{daniel.macias@uam.es}


\thanks{Mathematics Subject Classification: 11G40 (primary), 11G05, 11G10, 11G25, 11R34 (secondary).}


\maketitle

\tableofcontents

\section{Introduction}\label{intro}


\subsection{The aim of this article}

\subsubsection{}\label{history}

Let $A$ be an abelian variety defined over a number field $k$.

Then, by a celebrated theorem of Mordell and Weil, the abelian group that is formed by the set $A(k)$ of points of $A$ with coefficients in $k$ is finitely generated.

It is also conjectured that the Hasse-Weil $L$-series $L(A,z)$ for $A$ over $k$ has a meromorphic continuation to the entire complex plane and satisfies a functional equation with central point $z=1$ and, in addition, that the Tate-Shafarevich group $\sha(A_k)$ of $A$ over $k$ is finite.

Assuming these conjectures to be true, the Birch and Swinnerton-Dyer Conjecture concerns the leading coefficient $L^\ast(A,1)$ in the Taylor expansion of $L(A,z)$ at $z=1$. This conjecture was originally formulated for elliptic curves in a series of articles of Birch and Swinnerton-Dyer in the 1960's (see, for example, \cite{birch}) and then reinterpreted and extended to the setting of abelian varieties by Tate in \cite{tate} to give the following prediction.
\medskip

\noindent{}{\bf Conjecture} (Birch and Swinnerton-Dyer)
\begin{itemize}
\item[(i)] The order of vanishing of $L(A,z)$ at $z=1$ is equal to the rank of $A(k)$.
\item[(ii)] One has
\begin{equation}\label{bsd equality} L^\ast(A,1) = \frac{\Omega_A\cdot R_A\cdot {\rm Tam}(A)}{\sqrt{D_k}^{{\rm dim}(A)}\cdot |A(k)_{\rm tor}|\cdot |A^t(k)_{\rm tor}|}\cdot |\sha(A_k)|.\end{equation}
\end{itemize}
\medskip

Here $\Omega_A$ is the canonical period of $A$, $R_A$ the discriminant of the canonical N\'eron-Tate height pairing on $A$, ${\rm Tam}(A)$ the product over the (finitely many) places of $k$ at which $A$ has bad reduction of local `Tamagawa numbers', $D_k$ the absolute value of the discriminant of $k$,  $A(k)_{\rm tor}$ the torsion subgroup of $A(k)$ and $A^t$ the dual abelian variety of $A$.

It should be noted at the outset that, even if one assumes $L(A,z)$ can be meromorphically continued to $z=1$ and $\sha(A_k)$ is finite, so that both sides of (\ref{bsd equality}) make sense, the predicted equality is itself quite remarkable.

For instance, the $L$-series is defined via an Euler product over places of $k$ so that its leading coefficient at $z=1$ is intrinsically local and analytic in nature whilst the most important terms on the right hand side of (\ref{bsd equality}) are both global and algebraic in nature.

In addition, whilst isogenous abelian varieties give rise to the same $L$-series, the individual terms that occur in the `Euler characteristic' on the right hand side of  (\ref{bsd equality}) are not themselves isogeny invariant and it  requires a difficult theorem of Tate to show that the validity of (\ref{bsd equality}) is invariant under isogeny.

For these, and many other, reasons, the above conjecture, which we abbreviate to ${\rm BSD}(A_k)$, is regarded as one of the most important problems in arithmetic geometry today.

Nevertheless, there are various natural contexts in which it seems likely that ${\rm BSD}(A_k)$ does not encompass the full extent of the interplay between the analytic and algebraic invariants of $A$. Moreover, a good understanding of the finer connections that can arise could lead to much greater insight into concrete questions such as, for example, the growth of ranks of Mordell-Weil groups in extensions of number fields.

For instance, if $A$ has a large endomorphism ring, then it seems reasonable to expect there to be a version of ${\rm BSD}(A_k)$ that reflects the existence of such endomorphisms.

The earliest example of such a refinement is Gross's formulation in \cite{G-BSD} of an equivariant
Birch and Swinnerton-Dyer conjecture for elliptic curves $A$ with complex multiplication by the maximal order $\mathcal{O}$ of an imaginary quadratic field.

This conjecture incorporates natural refinements of ${\rm BSD}(A_k)$(i) and ${\rm BSD}(A_k)$(ii) and is supported by numerical evidence obtained by Gross and Buhler in \cite{GrossBuhler} and by theoretical evidence obtained by Rubin in \cite{rubin2}.

In a different direction one can study the leading coefficients of the Hasse-Weil-Artin $L$-series $L(A,\psi,z)$ that are obtained from $A$ and finite dimensional complex characters $\psi$ of the absolute Galois group $G_k$ of $k$.

In this setting, general considerations led Deligne and Gross to the expectation that for any finite dimensional character $\chi$ of $G_k$  over a number field $E$ the order of vanishing ${\rm ord}_{z=1}L(A,\sigma\circ\chi,z)$ at $z=1$ of $L(A,\sigma\circ\chi,z)$ should be independent of the choice of an embedding $\sigma: E \to \CC$. This prediction in turn led them naturally to the conjecture that for each complex character $\psi$ one should have
\begin{equation}\label{dg equality} {\rm ord}_{z=1}L(A,\psi,z) = {\rm dim}_\CC\bigl( \Hom_{\CC[\Gal(F/k)]}(V_\psi,\CC\otimes_\ZZ A^t(F))\bigr)\end{equation}
(cf. \cite[p.127]{rohrlich}). Here $F$ is any finite Galois extension of $k$ such that $\psi$ factors through the projection $G_k \to \Gal(F/k)$ and $V_\psi$ is any  $\CC[\Gal(F/k)]$-module of character $\psi$ (see also Conjecture \ref{conj:ebsd}(ii) below).

This prediction generalizes ${\rm BSD}(A_k)$(i) and has important, and explicit, consequences for ${\rm ord}_{z=1}L(A,\psi,z)$ (see, for example, the recent article \cite{bisatt} of Bisatt and Dokchitser).

In addition, for rational elliptic curves $A$ and characters $\psi$ for which $L(A,\psi,1)$ does not vanish, there is by now strong evidence for the conjecture of Deligne and Gross.

Such evidence has been obtained by Bertolini and Darmon \cite{BD} in the setting of ring-class characters of imaginary quadratic fields, by Kato \cite{kato} in the setting of linear characters of $\QQ$ (in this regard see also Rubin \cite[\S8]{rubin}), by Bertolini, Darmon and Rotger \cite{bdr} for odd, irreducible two-dimensional
Artin representations of $\QQ$ and by Darmon and Rotger \cite{dr} for certain self-dual Artin representations of $\QQ$ of dimension at most four.
 (We also recall in this context that, in the setting of \cite{bdr}, recent work of Kings, Loeffler and Zerbes \cite{klz} proves the finiteness of components of the $p$-primary part of the Tate-Shafarevich group of $A$ over $F$ for a large set of primes $p$.)

Write $\mathcal{O}_\psi$ for the ring of integers of the number field generated by the values of $\psi$. Then, as a refinement of the conjectural equality (\ref{dg equality}), and a natural analogue of ${\rm BSD}(A_k)$(ii) relative to $\psi$, it would be of interest to understand a precise conjectural formula in terms of suitable `$\psi$-components' of the standard algebraic invariants of $A$ for the fractional $\mathcal{O}_\psi$-ideal that is generated by the product of the leading coefficient of $L(A,\psi,z)$ at $z=1$ and an appropriate combination of `$\psi$-isotypic' periods and regulators.

Such a formula might also reasonably be expected to lead to concrete predictions concerning the behaviour of natural arithmetic invariants attached to the abelian variety.

For example, Dokchitser, Evans and Wiersema \cite{vdrehw} have recently shown that inexplicit versions of such a formula  lead, under suitable hypotheses, to predictions concerning the non-triviality of Tate-Shafarevich groups and the existence of points of infinite order on $A$ over extension fields of $k$.

However, so far, the formulation of an explicit such conjecture has been straightforward only if one avoids the $p$-primary support of such fractional ideals for primes $p$ that divide the degree of the extension of $k$ that corresponds to the kernel of $\psi$.

In addition, such a conjectural formula would not itself take account of any connections that might exist between the leading coefficients of $L(A,\psi,z)$ for characters $\psi$ that are not in the same orbit under the action of $G_\QQ$.

In this direction, Mazur and Tate \cite{mt} have in the special case that $k =\QQ$, $A$ is an elliptic curve and $\psi$ is linear predicted an explicit family of such congruence relations that refine ${\rm BSD}(A_k)$(ii). These congruences rely heavily on an explicit formula in terms of modular symbols for the values $L(A,\psi,1)$ for certain classes of tamely ramified Dirichlet characters $\psi$ that Mazur had obtained in \cite{mazur79}. They are expressed in terms of the discriminants of integral group-ring valued pairings constructed by using the geometrical theory of bi-extensions and are closely linked to earlier work of Mazur, Tate and Teitelbaum in \cite{mtt} regarding the formulation of $p$-adic analogues of ${\rm BSD}(A_k)$(ii).

The conjecture of Mazur and Tate has in turn motivated much subsequent work and the formulation of several new conjectures involving the values $L(A,\psi,1)$. Such conjectures include the congruence relations that are formulated by Bertolini and Darmon in \cite{bert} and \cite{bert2} and involve a natural notion of `derived height pairings' and links to the Galois structure of Selmer modules that are predicted by Kurihara in \cite{kuri}.

It has, however, proved to be much more difficult to formulate explicit refinements of ${\rm BSD}(A_k)$(ii) that involve congruence relations between the values of derivatives of Hasse-Weil-Artin $L$-series.

In this direction, Darmon \cite{darmon} uses the theory of Heegner points to formulate an analogue of the Mazur-Tate congruence conjecture for the first derivatives of Hasse-Weil-Artin $L$-series that arise from rational elliptic curves and ring class characters of imaginary quadratic fields. However, aside from this example, the only other such explicit study  we are aware of is due to Kisilevsky and Fearnley who in \cite{kisilevsky} and \cite{kisilevsky2} formulated, and studied numerically, conjectures for the `algebraic parts' of the leading coefficients of Hasse-Weil-Artin $L$-series that arise from rational elliptic curves and certain families of Dirichlet characters.

\subsubsection{}In a more general setting, the formulation by Bloch and Kato \cite{bk} of the `Tamagawa number conjecture' for the motive $h^1(A_k)(1)$ offers a different approach to the formulation of ${\rm BSD}(A_k)$.

In particular, the subsequent re-working of this conjecture by Fontaine and Perrin-Riou in \cite{fpr}, and its `equivariant' extension to motives with coefficients, as described by Flach and the first author in \cite{bufl99}, in principle provides a systematic means of studying refined versions of ${\rm BSD}(A_k)$.

In this setting it is known, for example, that the conjectures of Gross in \cite{G-BSD} are equivalent to the equivariant Tamagawa number conjecture for the motive $h^1(A_k)(1)$ with respect to the coefficient ring $\mathcal{O}$ (cf. \cite[\S4.3, Rem. 10]{bufl99}).

To study Hasse-Weil-Artin $L$-series it is convenient to fix a finite Galois extension $F$ of $k$ of group $G$.
Then the equivariant Tamagawa number conjecture for $h^1(A_{F})(1)$ with respect to the integral group ring $\ZZ[G]$ is formulated as an equality of the form
\begin{equation}\label{etnc eq} \delta(L^\ast(A_{F/k},1)) = \chi(h^1(A_{F})(1),\ZZ[G]).\end{equation}

\noindent{}Here $\delta$ is a canonical homomorphism from the unit group $\zeta(\br [G])^\times$ of the centre of $\RR[G]$ to the relative algebraic $K$-group of the ring extension $\bz [G] \subseteq \br
[G]$ and $L^\ast(A_{F/k},1)$ is an element of $\zeta(\RR[G])^\times$ that is defined using the leading coefficients $L^\ast(A,\psi,1)$ for each irreducible complex character $\psi$ of $G$. Also, $\chi(h^1(A_{F})(1),\ZZ[G])$ is an adelic Euler characteristic that is constructed by
combining virtual objects (in the sense of Deligne) for each prime $p$ of the compactly supported \'etale cohomology of the $p$-adic Tate modules of $A$ together with the N\'eron-Tate height pairing and period isomorphisms for $A$ over $F$, as well as an analysis of the finite support cohomology groups introduced by Bloch and Kato.

The equality (\ref{etnc eq}) is constitutes a strong and simultaneous refinement of the conjectures ${\rm BSD}(A_L)$ as $L$ ranges over the intermediate fields of $F/k$. However, the rather technical, and inexplicit, nature of this equality means that it has proved to be very difficult to interpret in a concrete way, let alone to verify either theoretically or numerically.

For example, if $A$ has strictly positive rank over $F$ it has still only been verified numerically in a small number of cases by Navilarekallu in \cite{tejaswi}, by Bley in \cite{Bley1} and \cite{Bley2}, by Bley and the second author in \cite{bleymc} and by Wuthrich and the present authors in \cite{bmw}.

In addition, the only theoretical evidence for the conjecture in this setting is its verification for dihedral families of the form $F/\QQ$ where $F$ is an unramified abelian extension of an imaginary quadratic field (this is the main result of \cite{bmw} and relies on the theorem of Gross and Zagier). In particular, the restriction on ramification that the latter result imposes on $F/k$ means that many of the more subtle aspects of the conjecture are avoided.

To proceed we note that the conjectural equality (\ref{etnc eq}) naturally decomposes into `components', one for each rational prime $p$, in a way that will be made precise in Appendix \ref{consistency section}, and that each such $p$-component (which for convenience we refer to as `(\ref{etnc eq})$_p$' in the remainder of this introduction) is itself of some interest.

For example, if $A$ has good ordinary reduction at $p$, then the compatibility result proved by Venjakob and the first named author in~\cite[Th. 8.4]{BV2} shows, modulo the assumed non-degeneracy of classical $p$-adic height pairings, that the equality (\ref{etnc eq})$_p$ is a consequence of the main conjecture of
non-commutative Iwasawa theory for $A$, as formulated by Coates et al in \cite{cfksv} with respect to any compact $p$-adic Lie extension of $k$ that contains the cyclotomic $\ZZ_p$-extension of $F$.

This means that the study of (\ref{etnc eq}), and of its more explicit consequences, is relevant to attempts to properly understand the content of the main conjecture of non-commutative Iwasawa theory. It also shows that the $p$-adic congruence relations that are proved numerically by Dokchitser and Dokchitser in \cite{dokchitsers} are related to the equality (\ref{etnc eq}).

To study congruences in a more general setting we fix an embedding of $\RR$ into the completion $\CC_p$ of the algebraic closure of $\QQ_p$. Then the long exact sequence of relative $K$-theory implies that the equality (\ref{etnc eq})$_p$ determines the image of $L^\ast(A_{F/k},1)$ in $\zeta(\CC_p[G])^\times$ modulo the image under the natural reduced norm map of the Whitehead group $K_1(\ZZ_p[G])$ of $\ZZ_p[G]$.

In view of the explicit description of the latter image that is obtained by Kakde in \cite{kakde} or, equivalently, by the methods of Ritter and Weiss in \cite{rw}, this means that (\ref{etnc eq}) is essentially equivalent to an adelic family of (albeit inexplicit) congruence relations between the leading coefficients $L^\ast(A,\psi, 1)$, suitably normalised by a product of explicit equivariant regulators and periods, as $\psi$ varies over the set of irreducible complex characters of $G$.

This is also the reason why the study of congruence relations between suitably normalised derivatives of Hasse-Weil-Artin $L$-series should be related to the construction of `$p$-adic $L$-functions' in the setting of non-commutative Iwasawa theory.

\subsubsection{}The main aim of the present article is then to develop general techniques that will allow one to understand the above congruence relations in a more explicit way, and in a much wider setting, than has previously been possible.

In this way we are led to the formulation (in Conjecture \ref{conj:ebsd}) of a seemingly definitive refinement of the Birch and Swinnerton-Dyer formula (\ref{bsd equality}) in the setting of Hasse-Weil-Artin $L$-series. We then derive a range of concrete consequences of this conjecture that are amenable to explicit investigation, either theoretically or numerically, in cases that (for the first time) involve a thoroughgoing mixture of difficult archimedean considerations that are related to refinements of the conjectural equality (\ref{dg equality}) of Deligne and Gross, and of delicate $p$-adic congruence relations that are related to aspects of non-commutative Iwasawa theory.

In particular, we shall show that this family of predictions both refines and extends the explicit refinements of (\ref{dg equality}) that were recalled in \S\ref{history}. It also gives insight into the more subtle aspects of the conjectural equality (\ref{etnc eq}), and hence (via the results of \cite{BV2}) of the main conjecture of non-commutative Iwasawa theory, that go well beyond the sort of concrete congruence conjectures that have been considered previously in connection to the central conjectures of either \cite{bufl99} or \cite{cfksv}.

We also believe that some general results obtained here can contribute towards establishing a proper framework for the subsequent investigation of these important questions.

%
%

\subsection{The main contents}

\subsubsection{}







As a key part of our approach, we shall first associate two natural notions of Selmer complex to the $p$-adic Tate module of an abelian variety.

 The `classical Selmer complex' that we define in \S\ref{selmer section} is closely related to the `finite support cohomology' that was introduced by Bloch and Kato in \cite{bk} and, as a result, its cohomology can be explicitly described in terms of Mordell-Weil groups and Selmer groups.

 Nevertheless, this complex is not well-suited to certain $K$-theoretical calculations since it is not always `perfect' over the relevant $p$-adic group ring.

 For this reason we shall in \S\ref{selmer section} also associate a notion of `Nekov\'a\v r-Selmer complex' to certain choices of $p$-adic submodules of the groups of semi-local points. This construction is motivated by the general approach of Nekov\'a\v r in \cite{nek} and gives a complex that is always perfect and has cohomology that can be described in terms of the Selmer modules studied by Mazur and Rubin in \cite{MRkoly}. Such Nekov\'a\v r-Selmer complexes will then play an important role in several subsequent $K$-theoretical computations.

In \S\ref{selmer section} we also explain how a suitably compatible family over all primes $p$ of $p$-adic modules of semi-local points, or a `perfect Selmer structure' for $A$ and $F/k$ as we shall refer to it, gives rise to a canonical perfect complex of $G$-modules.

We shall then show that such structures naturally arise from a choice of global differentials and compute the cohomology groups of the associated Selmer complexes.

In \S\ref{ref bsd section} we formulate the Birch and Swinnerton-Dyer Conjecture for the variety $A$ and Galois extension $F$ of $k$, or `${\rm BSD}(A_{F/k})$' as we shall abbreviate it.

Under the assumed validity of an appropriate case of the Generalized Riemann Hypothesis, we shall first associate a $K_1$-valued leading coefficient element to the data $A$ and $F/k$.

After fixing a suitable choice of global differentials we can also associate a $K_1$-valued `period' element to $A$ and $F/k$.

The central conjecture of this article then asserts that the image under the natural connecting homomorphism of the quotient of these $K_1$-valued invariants is equal to the Euler characteristic in a relative $K$-group of a pair comprising a Nekov\'a\v r-Selmer complex constructed from the given set of differentials and the classical N\'eron-Tate height pairing for $A$ over $F$.

This conjectural equality also involves a small number of `Fontaine-Messing' correction terms that we use to
compensate for the choice of a finite set of places of $k$ that is necessary to state ${\rm BSD}(A_{F/k})$.


It can be directly shown that this conjecture recovers ${\rm BSD}(A_{k})$ in the case that $F=k$, is consistent in several key respects and has good functorial properties under change of Galois extension $F/k$. The conjecture can also be interpreted as a natural analogue of the `refined Birch and Swinnerton-Dyer Conjecture' for abelian varieties over global function fields that was recently formulated, and in some important cases proved, by Kakde, Kim and the first author in \cite{bkk}.

In \S\ref{k theory period sect} and \S\ref{local points section} we prove several technical results that will subsequently help us to derive explicit consequences from the assumed validity of ${\rm BSD}(A_{F/k})$.

In \S\ref{k theory period sect} these results include establishing the precise link between $K_1$-valued periods, classical periods, Galois resolvents and suitably modified Galois-Gauss sums.


In \S\ref{local points section} we shall prepare for subsequent $K$-theoretical computations with classical Selmer complexes by studying the cohomological-triviality of local points on ordinary abelian varieties. In particular, we use these results to introduce a $K$-theoretical invariant, the `\'etale discrepancy class', of the twist matrix of such a variety that, in essence, measures the difference between the image of the formal logarithm  and an Euler characteristic arising from the \'etale cohomology of $\mathbb{G}_m$. We give a partial computation of these invariants and also explain how, in the case of elliptic curves, the (assumed) compatibility under unramified twist of suitable cases of the local epsilon constant conjecture (as formulated in its most general form by Fukaya and Kato in \cite{fukaya-kato}) leads to an explicit description of the \'etale discrepancy class.

In \S\ref{tmc} we impose several mild hypotheses on both the reduction types of $A$ and the ramification invariants of $F/k$ that together ensure that the classical Selmer complex defined in \S\ref{selmer section} is perfect over the relevant $p$-adic group ring.

Working under these hypotheses, we combine the results of \S\ref{k theory period sect} and \S\ref{local points section} together with a strengthening of the main computations of Wuthrich and the present authors in \cite{bmw} to derive a more explicit interpretation of ${\rm BSD}(A_{F/k})$. These results are in many respects the technical heart of this article and rely heavily on the subtle, and still for the most part conjectural, arithmetic properties of wildly ramified Galois-Gauss sums. 

The $K$-theoretical computations in \S\ref{tmc} also constitute a natural equivariant refinement and generalisation of several earlier computations in this area including those that are made by Venjakob in~\cite[\S3.1]{venjakob}, by the first author in \cite{ltav}, by Bley in~\cite{Bley1}, by Kings in~\cite[Lecture 3]{kings} and by the second author in \cite{dmc}.

In \S\ref{ecgs} we discuss concrete consequences of ${\rm BSD}(A_{F/k})$ concerning both the explicit Galois structure of Selmer complexes and modules and the formulation of precise refinements of the Deligne-Gross Conjecture. In particular, in this section we address a problem explicitly raised by Dokchitser, Evans and Wiersema in \cite{vdrehw} (see, in particular, Remark \ref{evans}).

In \S\ref{congruence sec} and \S\ref{mrsconjecturesection} we then specialise to consider abelian extensions $F/k$ and combine our approach with general techniques recently developed by Sano, Tsoi and the first author in \cite{bst} in order to derive from ${\rm BSD}(A_{F/k})$ several explicit congruence relations between the suitably normalized derivatives of Hasse-Weil-Artin $L$-series.

In \S\ref{comparison section} we then prove that the pairing constructed by Mazur and Tate in \cite{mt} using the theory of bi-extensions coincides with the inverse of a canonical `Nekov\'a\v r height pairing' that we define by using Bockstein homomorphisms arising naturally from Galois descent considerations. This comparison result relies, in part, on earlier results of Bertolini and Darmon \cite{bert2} and of Tan \cite{kst} and is, we believe, of some independent interest.

The relations discussed in \S\ref{congruence sec} and \S\ref{mrsconjecturesection} often take a very explicit form (see, for example, the discussion in \S\ref{explicit examples intro} below) and, when combined with the results of \S\ref{comparison section}, can be seen to extend and refine earlier conjectures of Mazur and Tate \cite{mt} and Darmon \cite{darmon0} amongst others.

This approach also shows that for certain cyclic and dihedral extensions $F/k$ the key formula that is predicted by ${\rm BSD}(A_{F/k})$  is equivalent to the validity of a family of explicit congruence relations that simultaneously involve both the N\'eron-Tate and Mazur-Tate height pairings.

In this way we shall for the first time render refined versions of the Birch and Swinnerton-Dyer Conjecture accessible to numerical verification in cases in which they involve an intricate mixture of both archimedean phenomenon and delicate $p$-adic congruences.

In particular, in \S\ref{mrsconjecturesection} we give details of several such numerical verifications of the `$p$-component' of ${\rm BSD}(A_{F/k})$ for primes $p$ that divide the degree of $F/k$ that Werner Bley has been able to perform by using this approach (see, in particular, Remark \ref{bleyexamples rem} and Examples \ref{bleyexamples}).

In \S\ref{mod sect} and \S\ref{HHP} we then specialise to consider applications of our general approach in two classical settings.

Firstly, in \S\ref{mod sect} we consider rational elliptic curves over fields that are both abelian and tamely ramified over $\QQ$. In this case
we can use the theory of modular symbols to give an explicit reinterpretation of ${\rm BSD}(A_{F/\QQ})$ and thereby describe precise conditions under which the conjecture is valid. As a concrete application of this result we then use it to deduce from Kato's theorem \cite{kato} that for every natural number $n$ there are infinitely many primes $p$ and, for each such $p$, infinitely many abelian extensions $F/\QQ$ for which the $p$-component of ${\rm BSD}(A_{F/\QQ})$ is valid whilst the degree and discriminant of $F/\QQ$ are each divisible by at least $n$ distinct primes and the Sylow $p$-subgroup of $\Gal(F/\QQ)$ has exponent at least $p^n$ and rank at least $n$. This result strengthens the main result of Bley in \cite{Bley3}.

Then in \S\ref{HHP} we consider abelian extensions of imaginary quadratic fields and elliptic curves that satisfy the Heegner hypothesis. The main result of this section is a significant extension of the main result of Wuthrich and the present authors in \cite{bmw} and relies on Zhang's generalization of the theorem of Gross and Zagier relating first derivatives of Hasse-Weil-Artin $L$-series to the heights of Heegner points. In this section we shall also point out an inconsistency in the formulation of a conjecture of Bradshaw and Stein in \cite{BS} regarding Zhang's formula and offer a possible correction.

The article also contains three appendices. In Appendix \ref{consistency section} we use techniques developed by Wuthrich and the present authors in \cite{bmw} to explain the precise link between our central conjecture ${\rm BSD}(A_{F/k})$ and the conjectural equality (\ref{etnc eq}).

This technical result may perhaps be of interest in its own right but also allows us to deduce from the general theory of equivariant Tamagawa numbers that our formulation of ${\rm BSD}(A_{F/k})$ is consistent in several key respects.

Then, in Appendix \ref{ptduality} we make explicit certain standard constructions relating to Poitou-Tate duality.
The results of Appendix \ref{ptduality} are for the most part routine but nevertheless play an important role in the arguments that we use to compare height pairings in \S\ref{comparison section}.

Finally, in Appendix \ref{bocksection} we describe the general construction of algebraic height pairings coming from Bockstein homomorphisms that leads to the definition of the Nekov\'a\v r height pairing in \S\ref{comparison section}.

\subsubsection{}\label{explicit examples intro}To end the introduction we shall give some concrete examples of the sort of congruence predictions that result from our approach (all taken from the more general material given in \S\ref{congruence sec} and \S\ref{mrsconjecturesection}). 

To do this we fix a finite abelian extension of number fields $F/k$ of group $G$ and an elliptic curve $A$ over $k$. We also fix an odd prime $p$ that does not divide the order of the torsion subgroup of $A(F)$ and an isomorphism of fields $\CC\cong \CC_p$ (that we do not explicitly indicate in the sequel). We write $\widehat{G}$ for the set of irreducible complex characters of $G$.

The first prediction concerns the values at $z=1$ of Hasse-Weil-Artin $L$-series. To state it we set $F_p := \QQ_p\otimes_\QQ F$ and write ${\rm log}_{A,p}$ for the formal group logarithm of $A$ over $\QQ_p\otimes_\QQ k$ and $\Sigma(k)$ for the set of embeddings $k \to \CC$. For each subset $x_\bullet = \{x_{\sigma}: \sigma \in \Sigma(k)\}$ of $A(F_p)$ and each character $\psi$ in $\widehat{G}$ we then define a `$p$-adic logarithmic resolvent' by setting

\begin{equation}\label{log resol abelian} \mathcal{LR}_\psi(x_\bullet) := {\rm det}\left(\bigl(\sum_{g \in G} \hat \sigma(g^{-1}({\rm log}_{A,p}(x_{\sigma'})))\cdot \psi(g)\bigr)_{\sigma,\sigma' \in \Sigma(k)}\right),\end{equation}
where we fix an ordering of $\Sigma(k)$ and an extension $\hat \sigma$ to $F$ of each $\sigma$ in $\Sigma(k)$.

Now if $S$ is any finite set of places of $k$ that contains all archimedean places, all that ramify in $F$, all at which $A$ has bad reduction and all above $p$, and $L_{S}(A,\check\psi,z)$ is the $S$-truncated $L$-series attached to $A$ and the contragredient $\check\psi$ of $\psi$, then our methods predict that for any $x_\bullet$ the sum

\begin{equation}\label{first predict} \sum_{\psi \in \widehat{G}}\frac{L_{S}(A,\check\psi,1)\cdot \mathcal{LR}_\psi(x_\bullet)}{\Omega^\psi_A\cdot w_\psi}\cdot e_\psi \end{equation}
belongs to $\ZZ_p[G]$ and annihilates the $p$-primary part $\sha(A_{F})[p^\infty]$ of the Tate-Shafarevich group of $A$ over $F$. Here $e_\psi$ denotes the idempotent $|G|^{-1}\sum_{g \in G}\check\psi(g)g$ of $\CC[G]$ and the periods $\Omega^\psi_A$ and Artin root numbers $w_\psi$ are as explicitly defined in \S\ref{k theory period sect2}.

In particular, if one finds  for some choice of $x_\bullet$ that the sum in (\ref{first predict}) is a unit of $\ZZ_p[G]$, then this implies that $\sha(A_F)[p^\infty]$ should be trivial.

More generally, the fact that each sum in (\ref{first predict}) should belong to $\ZZ_p[G]$ implies that for every $g$ in $G$, and every set of local points $x_\bullet$, there should be a congruence
\[ \sum_{\psi \in \widehat{G}}\psi(g)\frac{L_{S}(A,\check\psi,1)\cdot \mathcal{LR}_\psi(x_\bullet)}{\Omega_A^\psi\cdot w_\psi}
 \equiv 0 \,\,\,({\rm mod}\,\, |G|\cdot \ZZ_p).\]

In concrete examples these congruences are strong restrictions on the values $L_{S}(A,\check\psi,1)$ that can be investigated  numerically but cannot be deduced by solely considering Birch and Swinnerton-Dyer type formulas for individual Hasse-Weil-Artin $L$-series.


In general, our analysis leads to a range of congruence predictions that are both finer than the above and also involve the values at $z=1$ of higher derivatives of Hasse-Weil-Artin $L$-series, suitably normalised by a product of explicit regulators and periods.

To give an example of this sort of prediction, we shall focus on the simple case that $F/k$ is cyclic of degree $p$ (although entirely similar predictions can be made in the setting of cyclic extensions of arbitrary $p$-power order and also for certain dihedral families of extensions).

In this case, under certain natural, and very mild, hypotheses on $A$ and $F/k$ relative to $p$ there exist non-negative integers $m_0$ and $m_1$ with the property that the pro-$p$ completion $A(F)_p$ of $A(F)$ is isomorphic as a $\ZZ_p[G]$-module to a direct sum of $m_0$ copies of $\ZZ_p$ and $m_1$ copies of $\ZZ_p[G]$.

In particular, if we further assume $m_0=2$ and $m_1= 1$, then we may fix points $P_{0}^1$ and $P_{0}^2$ in $A(k)$ and $P_1$ in $A(F)$ such that $A(F)_p$ is the direct sum of the $\ZZ_p[G]$-modules generated by $P_{0}^1,$ $P_{0}^2$ and $P_1.$ (In Example \ref{wuthrich example} the reader will find explicit examples of such pairs $A$ and $F/k$ for the prime $p=3$.)


Then, writing $L^{(1)}_{S}(A,\check\psi,1)$ for the value at $z=1$ of the first derivative of $L_{S}(A,\check\psi,z)$, our methods predict that, under mild additional hypotheses, there should exist an element $x$ of $\ZZ_p[G]$ that annihilates $\sha(A_{F})[p^\infty]$ and is such that
\begin{equation}\label{second predict} \sum_{\psi \in \widehat{G}}\frac{L^{(1)}_{S}(A,\check\psi,1)\cdot\tau^*(\QQ,\psi)}{\Omega^\psi_A\cdot i^{r_2}}\cdot e_\psi  = x\cdot \sum_{g\in G}\langle g(P_1),P_1\rangle_{A_F}\cdot g^{-1},\end{equation}
where $\langle -,-\rangle_{A_F}$ denotes the N\'eron-Tate height pairing of $A$ relative to $F$, $\tau^*(\QQ,\psi)$ is the (modified, global) Galois-Gauss sum of the character of $G_\QQ$ that is obtained by inducing $\psi$ and $r_2$ is the number of complex places of $k$.

To be more explicit, we write $R_A$ for the determinant of the N\'eron-Tate regulator matrix of $A$ over $k$ with respect to the ordered $\QQ$-basis $\{P_{0}^1,P_{0}^2,\sum_{g\in G}g(P_1)\}$ of $\QQ\cdot A(k)$ and for each non-trivial $\psi$ in $\widehat{G}$ we define a non-zero complex number by setting
\[ h^\psi(P_1):=\sum_{g\in G}\langle g(P_1),P_1\rangle_{A_F}\cdot\psi(g)^{-1}.\]

We finally write $S_{\rm r}$ for the set of places of $k$ that ramify in $F$, $d_k$ for the discriminant of $k$ and $I_p(G)$ for the augmentation ideal of $\ZZ_p[G]$. Then, under mild hypotheses, our methods predict that there should be containments
\[
\sum_{\psi\neq {\bf 1}_G}\frac{L_{S_{\rm r}}^{(1)}(A,\check\psi,1)\cdot\tau^*(\QQ,\psi)}{\Omega^\psi_{A}\cdot i^{r_2} \cdot h^\psi(P_1)}\cdot e_\psi  \in I_p(G)^2\,\,\text{ and }\,\, \frac{L_{S_{\rm r}}^{(3)}(A,1)\sqrt{|d_k|}}{\Omega_{A}\cdot R_{A}}\in \ZZ_p,\]
and a congruence modulo $I_p(G)^3$ of the form
\begin{equation}\label{examplecongruent}
\sum_{\psi\neq {\bf 1}_G}\frac{L_{S_{\rm r}}^{(1)}(A,\check\psi,1)\cdot\tau^*(\QQ,\psi)}{ \Omega^\psi_{A}\cdot i^{r_2} \cdot h^\psi(P_1)}\cdot\! e_\psi
 \equiv \!\frac{L_{S_{\rm r}}^{(3)}(A,1)\sqrt{|d_k|}}{\Omega_{A}\cdot R_{A}}\cdot\det\left(\begin{array}{cc}
\langle P_{0}^1,P_{0}^1\rangle^{\rm MT} & \langle P_{0}^1,P_{0}^2\rangle^{\rm MT}
\\
\langle P_{0}^2,P_{0}^1\rangle^{\rm MT} & \langle P_{0}^2,P_{0}^2\rangle^{\rm MT}
\end{array}\right)\!.
\end{equation}
Here $L_{S_{\rm r}}^{(3)}(A,1)$ denotes the value at $z=1$ of the third derivative of $L_{S_{\rm r}}(A,z)$ and
\[ \langle\,,\rangle^{\rm MT}:A(k)\times A(k)\to I_p(G)/I_p(G)^2\]
is the canonical pairing that Mazur and Tate define in \cite{mt0} by using the geometrical theory of bi-extensions.

Further, if $\sha(A_F)[p^\infty]$ is trivial, then the $p$-component of ${\rm BSD}(A_{F/k})$ is valid if and only if (\ref{examplecongruent}) holds and, in addition, the $p$-component of the Birch and Swinnerton-Dyer Conjecture is true for $A$ over both $k$ and $F$.

%
%
%
%

We remark that even in the simplest possible case that $k = \QQ$ and $p=3$, these predictions strongly refine those made by Kisilevsky and Fearnley in \cite{kisilevsky} and cannot be deduced by simply considering leading term formulas for individual Hasse-Weil-Artin $L$-series.


\subsection{General notation} For the reader's convenience we give details here of some of the general notation and terminology that will be used throughout the article.

\subsubsection{}We write $|X|$ for the cardinality of a finite set $X$.

For an abelian group $M$ we write $M_{\rm tor}$ for its torsion subgroup and $M_{\rm tf}$ for the quotient of $M$ by $M_{\rm tor}$. 

For a prime $p$ and natural number $n$ we write $M[p^n]$ for the subgroup $\{m \in M: p^nm =0\}$ of the Sylow $p$-subgroup $M[p^{\infty}]$ of $M_{\rm tor}$.

We set $M_p := \ZZ_p\otimes_\ZZ M$, write $M^\wedge_p$ for the  pro-$p$-completion $\varprojlim_n M/p^n M$ of $M$ and denote the Pontryagin dual $\Hom(M,\QQ/\ZZ)$ of $M$ by $M^\vee$. 

If $M$ is finite of exponent dividing $p^m$, then $M^\vee$ identifies with $\Hom_{\ZZ_p}(M,\QQ_p/\ZZ_p)$ and we shall (without explicit comment) use the canonical identification $\QQ_p/\ZZ_p=\varinjlim_n \ZZ/p^n\ZZ$ to identify elements of $M^\vee$ with their canonical image in the linear dual $\Hom_{\ZZ/p^m\ZZ}(M,\ZZ/p^m\ZZ)$.

If $M$ is finitely generated, then for a field extension $E$ of $\QQ$ we shall often abbreviate $E\otimes_\ZZ M$ to $E\cdot M$.


If $M$ is a $\Gamma$-module for some group $\Gamma$, then we always endow $M^\vee$ with the natural contragredient action of $\Gamma$.


We recall that if $\Gamma$ is finite, then a $\Gamma$-module $M$ is said to be `cohomologically-trivial' if for all subgroups $\Delta$ of $\Gamma$ and all integers $i$ the Tate cohomology group $\hat H^i(\Delta,M)$ vanishes.

\subsubsection{}For any ring $R$ we write $R^\times$ for its multiplicative group and $\zeta(R)$ for its centre.

Unless otherwise specified we regard all $R$-modules as left $R$-modules. We write ${\rm Mod}(R)$ for the abelian category of $R$-modules and ${\rm Mod}^{\rm fin}(R)$ for the abelian subcategory of ${\rm Mod}(R)$ comprising all $R$-modules that are finite.

We write $D(R)$ for the derived category of complexes of $R$-modules. If $R$ is noetherian, then we write $D^{\rm perf}(R)$ for the full triangulated subcategory of $D(R)$ comprising complexes that are `perfect' (that is, isomorphic in $D(R)$ to a bounded complex of finitely generated projective $R$-modules).

For a natural number $n$ we write $\tau_{\le n}$ for the exact truncation functor on $D(R)$ with the property that for each object $C$ in $D(R)$ and each integer $i$ one has
\[ H^i(\tau_{\le n}(C)) = \begin{cases} H^i(C), &\text{if $i \le n$}\\
                                          0, &\text{otherwise.}\end{cases}\]

\subsubsection{}For a Galois extension of number fields $L/K$ we set $G_{L/K} := \Gal(L/K)$. We also fix an algebraic closure $K^c$ of $K$ and set $G_K := G_{K^c/K}$.

For each non-archimedean place $v$ of a number field we write $\kappa_v$ for its residue field  and denote its absolute norm $|\kappa_v|$ by ${\rm N}v$. We write $\ell(v)$ for the residue characteristic of $v$.

We write the dual of an abelian variety $A$ as $A^t$. 
If $A$ is defined over a number field $k$, then for each extension $L$ of $k$ we write the Tate-Shafarevich group of $A$ over $L$ as $\sha(A_L)$. 

We shall also use the following notation regarding sets of places of $k$.

\begin{itemize}
\item[-] $S_k^\RR$ is the set of real archimedean places of $k$;
\item[-] $S_k^\CC$ is the set of complex archimedean places of $k$;
\item[-] $S_k^\infty (= S_k^\RR \cup S_k^\CC$) is the set of archimedean places of $k$;
\item[-] $S_k^f$ is the set of non-archimedean places of $k$;
\item[-] $S_k^v$ is the set of places of $k$ that extend a place $v$ of a given subfield of $k$. In particular,
\item[-] $S_k^p$ is the set of $p$-adic places of $k$ for a given prime number $p$;
\item[-] $S_k^L$ is the set of places of $k$ that ramify in a given extension $L$ of $k$;
\item[-] $S_k^A$ is the set of places of $k$ at which a given abelian variety $A$ has bad reduction.
\end{itemize}

 For a fixed set of places $S$ of $k$ we also write $S(L)$ for the set of places of $L$ which lie above a place in $S$.


Throughout this paper, we will consider the following situation: we are given a Galois extension of number fields $F/k$ and an abelian variety $A$ of dimension $d:={\rm dim}(A)$ that is defined over $k$. We set $G: = G_{F/k}$.

For a place $v$ of $k$ we set $G_{v} := G_{k_v^c/k_v}$. We also write $k_v^{\rm un}$ for the maximal unramified extension of $k$ in $k_v^c$ and set $I_{v} := G_{k_v^c/k_v^{\rm un}}$.

We fix a place $w$ of $F$ above $v$ and a corresponding embedding $F\to k_v^c$. We write $G_w$ and $I_w$ for the images of $G_{v}$ and $I_{v}$ under the induced homomorphism $G_{v} \to G$. We also fix a lift $\Phi_v$ to $G$ of the Frobenius automorphism in $G_w/I_w$.


We also write $\Sigma(k)$ for the set of embeddings $k \to \CC$ and $\Sigma_\sigma(F)$ for each $\sigma$ in $\Sigma(k)$ for the set of embeddings $F \to \CC$ that extend $\sigma$.

For each $v$ in $S_k^\infty$ we fix a corresponding embedding $\sigma_v$ in $\Sigma(k)$ and an embedding $\sigma'_v$ in $\Sigma_{\sigma_v}(F)$.

We then write $Y_{v,F}$ for the module $\prod_{\Sigma_{\sigma_v}(F)}\ZZ$ endowed with its natural action of $G\times G_v$ (via which $G$ and $G_v$ respectively act via pre-composition and post-composition with the embeddings in $\Sigma_{\sigma_v}(F)$).

For each $v$ in $S_k^\infty$ we set
\[ H_v(A_{F/k}) := H^0(k_v,Y_{v,F}\otimes_{\ZZ}H_1((A^t)^{\sigma_v}(\CC),\ZZ)),\]
regarded as a $G$-module via the given action on $Y_{v,F}$. We note that this $G$-module is free of rank $2d$ if $v$ is in $S_k^\CC$ and spans a free $\ZZ[1/2][G]$-module of rank $d$ if $v$ is in $S_k^\RR$.

We then define a $G$-module by setting
\[ H_\infty(A_{F/k}) := \bigoplus_{v \in S_k^\infty}H_v(A_{F/k}).\]

Finally we write $\Sigma_k(F)$ for the set of $k$-embeddings $F \to k^c$ and $Y_{F/k}$ for the module $\prod_{\Sigma_k(F)}\ZZ$, endowed with its natural action of $G\times G_k$.

\subsection{Acknowledgements} We are very grateful to St\'ephane Vigui\'e for his help with aspects of the argument presented in Appendix \ref{ptduality} and to Werner Bley for providing us with the material in \S\ref{ell curve sect} and for pointing out a sign-error in an earlier version of the argument in \S\ref{comparison section}.

We are also grateful to both Werner Bley and Christian Wuthrich for their interest, helpful correspondence and tremendous generosity regarding numerical computations.

In addition, we would like to thank Rob Evans, Masato Kurihara, Jan Nekov\'a\v r, Takamichi Sano, Kwok-Wing Tsoi and Stefano Vigni for helpful discussions and correspondence.

We are also very grateful to the anonymous referees for their careful reading of the article and for making several helpful observations and suggestions.

Finally, it is a great pleasure to thank Dick Gross for his strong encouragement regarding this project and for several insightful remarks.

The second author acknowledges financial support from the Spanish Ministry of Science and Innovation, through the `Severo Ochoa Programme for Centres of Excellence in R\&D' [SEV-2015-0554] and [CEX-2019-000904-S] as well as through projects [MTM2016-79400-P] and [PID2019-108936GB-C21].

\section{Selmer complexes}\label{selmer section}

\subsection{The aim of this section} In the sequel, we shall say that a complex is `global' in nature if all of its cohomology groups are finitely generated abelian groups.  

Then, to formulate leading term conjectures in relative algebraic $K$-groups, it is necessary to use invariants of perfect global complexes rather than of modules. In the setting of an abelian variety $A$, there are two distinct approaches that one can adopt to construct appropriate families of complexes. 
\medskip

\noindent{}$\bullet$ For each prime $p$, use the Kummer map from local points to the Galois cohomology of the $p$-adic Tate module $T_p(A)$ of $A$ to construct a $p$-adic complex whose cohomology groups identify with the 
pro-$p$ completions of classical objects such as Mordell-Weil groups and the Pontryagin duals of Selmer groups. Such complexes are directly linked to the construction of `cohomology with finite support' of Bloch and Kato and can be combined over all $p$ to specify a global complex (that is unique up to isomorphism in the appropriate derived category) whose chomology groups can be directly described in terms of Mordell-Weil groups and Selmer groups. 
\medskip

\noindent{}$\bullet$ Choose a collection of well-behaved submodules (a so-called `Perfect Selmer Structure') in both the Betti cohomology and local points of $A$, and by relating them (via Kummer maps) to the Galois cohomology of $T_p(A)$ (for every $p$), construct a global complex that is necessarily perfect, but whose cohomology groups do not precisely agree with Mordell-Weil groups or (Pontryagin duals of) Selmer groups. This approach is motivated by Nekov\'a\v r's general theory of Selmer complexes.  
\medskip

We shall refer to the $p$-adic and global complexes that arise from these two approaches as `classical Selmer complexes' (given the nature of their cohomology groups) and `Nekov\'a\v r-Selmer complexes' (given the nature of their construction) respectively. 

Roughly speaking, if the classical Selmer complex is perfect over the relevant algebra, then it will be best suited to derive explicit predictions from our general approach. However, this situation arises only if one imposes certain hypotheses on $A$ and, in order to avoid such restrictions (for example, to develop the general framework or derive the widest class of numerically verifiable predictions), it is necessary for us to use Nekov\'a\v r-Selmer complexes. In addition, whilst a choice of global differentials specifies a canonical perfect Selmer structure for which the resulting global Nekov\'a\v r-Selmer complex is closely related to classical Selmer complexes, we find that a different choice of perfect Selmer structure can also be useful in the derivation of explicit predictions.  

With this in mind, in this section we shall discuss the general construction of Selmer complexes and establish some of their key properties.

\subsection{Classical Selmer complexes}\label{p-adiccomplexes} In this section we fix a prime number $p$.

\subsubsection{}We first record a straightforward (and well-known) result regarding pro-$p$ completions that will be useful in the sequel.

We let $B$ denote either $A$ or its dual variety $A^t$ and write $T_p(B)$ for the $p$-adic Tate module of $B$.

\begin{lemma}\label{v not p} For each non-archimedean place $w'$ of $F$ the following claims are valid.
\end{lemma}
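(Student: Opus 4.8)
The plan is to analyze pro-$p$ completions of the groups of local points $B(F_{w'})$ for a non-archimedean place $w'$ of $F$, distinguishing the cases $w' \nmid p$ and $w' \mid p$. For the case $w' \nmid p$: the key point is that $B(F_{w'})$ sits in a short exact sequence relating it to the points on the reduction and a pro-$\ell$ group (where $\ell$ is the residue characteristic), and since $\ell \neq p$ the relevant formal-group part contributes nothing to the pro-$p$ completion. Concretely, I would first recall that $B(F_{w'})$ has a finite-index subgroup isomorphic (via the formal group) to $\mathcal{O}_{F_{w'}}^d$, which is pro-$\ell$ and hence has trivial pro-$p$ completion; it follows that $B(F_{w'})^\wedge_p$ is finite and in fact identifies with the pro-$p$ part of the component group times the pro-$p$ part of $\kappa_{w'}$-points of the reduction. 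The cleanest formulation: the reduction map induces an isomorphism $B(F_{w'})^\wedge_p \xrightarrow{\sim} (\pi_0 \times \bar B(\kappa_{w'}))^\wedge_p$, or simply that $B(F_{w'})^\wedge_p$ is canonically isomorphic to $H^1(\kappa_{w'}, T_p(B))^\vee$-type object via local Tate duality — but I expect the lemma as stated just asserts finiteness together with a clean description, likely that $B(F_{w'})_p = B(F_{w'})^\wedge_p$ is finite and isomorphic to the pro-$p$ completion of the points on the Néron model's special fibre.

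For the case $w' \mid p$: here the formal group $\widehat{B}(\mathfrak{m}_{w'})$ is itself a pro-$p$ group of finite $\ZZ_p$-rank $d[F_{w'}:\QQ_p]$, and the formal logarithm gives an isomorphism after tensoring with $\QQ_p$; so $B(F_{w'})^\wedge_p$ is a finitely generated $\ZZ_p$-module of rank $d[F_{w'}:\QQ_p]$, and moreover the natural map $B(F_{w'})_p \to B(F_{w'})^\wedge_p$ is an isomorphism since $B(F_{w'})$ has a finite-index subgroup that is already $p$-adically complete. I would assemble this from the standard structure theory of formal groups over local fields (as in Mattuck's theorem) together with the snake lemma applied to multiplication by $p^n$ on the exact sequence $0 \to \widehat{B}(\mathfrak{m}_{w'}) \to B(F_{w'}) \to \bar B(\kappa_{w'}) \to 0$.

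The main steps, in order, would be: (1) write down the exact sequence relating $B(F_{w'})$, the formal group, and the reduction; (2) apply $-\otimes_\ZZ \ZZ_p$ and pro-$p$ completion, using that these agree on finitely generated groups and on the relevant pieces; (3) in case $w' \nmid p$ observe the formal group part dies and deduce finiteness plus the explicit description; (4) in case $w' \mid p$ use Mattuck/the logarithm to get the rank and the identification. I would also note the compatibility with the $T_p(B)$-valued local Galois cohomology, since that is presumably how the statement is phrased for use in the Selmer complex (i.e. $B(F_{w'})^\wedge_p \cong H^1_f(F_{w'}, T_p(B))$ or the analogous finite-support cohomology group of Bloch–Kato).

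The part I expect to be the real obstacle — or at least the only part requiring genuine care rather than citation — is getting the functoriality and Galois-equivariance exactly right: the lemma will be applied $w'$-by-$w'$ but then reassembled into a statement about the semi-local module $\prod_{w' \mid v} B(F_{w'})$ as a $G$-module, so the identifications in steps (3) and (4) must be made canonically and compatibly with the decomposition-group actions, and one must be careful that "pro-$p$ completion commutes with the finite product over $w' \mid v$" and with induction from $G_w$ to $G$. Everything else is standard local arithmetic of abelian varieties and should be dispatched by invoking the structure of formal groups together with finiteness of the component group and of $\bar B(\kappa_{w'})$.
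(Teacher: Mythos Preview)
Your proposal misidentifies what the lemma actually asserts. The two claims are:
\begin{itemize}
\item[(i)] if $w'$ is not $p$-adic then the Kummer map $B(F_{w'})^\wedge_p \to H^1(F_{w'},T_p(B))$ is \emph{bijective} (not merely an identification with $H^1_f$, but the assertion that $H^1_f = H^1$ here);
\item[(ii)] for \emph{every} non-archimedean $w'$ there is a canonical short exact sequence
\[ 0 \to H^1(\kappa_{w'}, T_p(B)^{I_{w'}}) \to B(F_{w'})^\wedge_p \to H^0(F_{w'}, H^1(I_{w'}, T_{p}(B))_{\rm tor})\to 0,\]
whose rightmost term computes the $p$-part of the local Tamagawa number.
\end{itemize}

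Your discussion of the formal group filtration and Mattuck's theorem establishes finiteness of $B(F_{w'})^\wedge_p$ for $w'\nmid p$ and its $\ZZ_p$-rank for $w'\mid p$, but neither of these is what is claimed. For (i) you need surjectivity of the Kummer map, which the paper obtains by first showing $H^1(F_{w'},T_p(B))$ is finite (via Tate's local Euler characteristic formula and local duality) and then passing to the inverse limit in the Kummer sequences $0\to B(F_{w'})/p^m\to H^1(F_{w'},T_p(B)/p^m)\to H^1(F_{w'},B)[p^m]\to 0$; the finiteness makes the Mittag-Leffler condition automatic and kills the $\varprojlim^1$ obstruction. Your route via the special fibre would give an identification of $B(F_{w'})^\wedge_p$ with something, but not directly with the full $H^1$.

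For (ii) your proposal simply does not address the statement: the exact sequence in question comes from the inflation--restriction filtration on $H^1(F_{w'},T_p(B))$ with respect to inertia, combined with the identification of $B(F_{w'})^\wedge_p$ with the Bloch--Kato $H^1_f$; the paper cites \cite[(1.38)]{bufl95} for this. Your separate analysis of the $p$-adic case (formal logarithm, rank computation) is not part of the lemma at all --- claim (ii) is uniform in $w'$ and is about the Tamagawa filtration, not about the rank.
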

\begin{itemize}
\item[(i)] If $w'$ is not $p$-adic then the natural Kummer map $B(F_{w'})^\wedge_p \to H^1(F_{w'},T_p(B))$ is bijective.
\item[(ii)] There exists a canonical short exact sequence
\[ 0 \to H^1(\kappa_{w'}, T_p(B)^{I_{w'}}) \to B(F_{w'})^\wedge_p \to H^0(F_{w'}, H^1(I_{w'}, T_{p}(B))_{\rm tor})\to 0.\]
\end{itemize}

\begin{proof} We note first that if $w'$ does not divide $p$, then the module $H^1(F_{w'},T_p(B))$ is finite. This follows, for example, from Tate's local Euler characteristic formula, the vanishing of $H^0(F_{w'},T_p(B))$ and the fact that local duality identifies $H^2(F_{w'},T_p(B))$ with the finite module $H^0(F_{w'},T_p(B^t)\otimes_{\ZZ_p}\QQ_p/\ZZ_p)^\vee$.

Given this observation, claim (i) is obtained directly upon passing to the inverse limit over $m$ in the natural Kummer theory exact sequence
\begin{equation}\label{kummerseq} 0 \to B(F_{w'})/p^m \to H^1(F_{w'},T_p(B)/p^m) \to H^1(F_{w'},B)[p^m]\to 0.\end{equation}
%

Claim (ii) is established by Flach and the first author in \cite[(1.38)]{bufl95} (after recalling the fact that the group denoted $H^1_f(F_{w'},T_p(B))_{\rm BK}$ in loc. cit. is equal to the image of $B(F_{w'})^\wedge_p$ in $H^1(F_{w'},T_p(B))$ under the injective Kummer map). 
\end{proof}

\begin{remark}\label{Tamagawa remark}{\em The cardinality of each module $H^0(F_{w'}, H^1(I_{w'}, T_{p}(B))_{\rm tor})$ that occurs in Lemma \ref{v not p}(ii) is the maximal power of $p$ that divides the Tamagawa number of $B$ at $w'$. }\end{remark}

\subsubsection{}If $B$ denotes either $A$ or $A^t$, then for any subfield $E$ of $k$ and any place $v$ in $S_E^f$ we obtain a $G$-module by setting
%
\[ B(F_v) := \prod_{w' \in S_F^v}B(F_{w'}).\]

For later purposes we note that if $E = k$, then this module is isomorphic to the module of $G_w$-coinvariants
$Y_{F/k}\otimes_{\ZZ[G_w]} B(F_w)$ of the tensor product $Y_{F/k}\otimes_{\ZZ} B(F_w)$, upon which $G$ acts only the first factor but $G_k$ acts diagonally on both.

In a similar way,  the $p$-adic Tate module of the base change of $B$ through $F/k$ is equal to
\[ T_{p,F}(B) := Y_{F/k,p}\otimes_{\ZZ_p}T_p(B)\]
(where, again, $G$ acts only on the first factor of the tensor product whilst $G_k$ acts on both). We set $V_{p,F}(B) := \QQ_p\cdot T_{p,F}(B)$.

%
%

We can now introduce a notion of Selmer complex that will play an important role in the sequel.

\begin{definition}\label{bkdefinition}{\em For any finite subset $\Sigma$ of $S_k^f$ that contains each of $S_k^p, S_k^F\cap S_k^f$ and $S_k^A$, the `classical $p$-adic Selmer complex' ${\rm SC}_{\Sigma,p}(A_{F/k})$ for the data $A, F/k$ and $\Sigma$ is the mapping fibre of the morphism
\begin{equation}\label{bkfibre}
\tau_{\le 3}(R\Gamma(\mathcal{O}_{k,S_k^\infty\cup \Sigma},T_{p,F}(A^t))) \oplus \left(\bigoplus_{v\in\Sigma} A^t(F_v)^\wedge_p\right)[-1]  \xrightarrow{(\lambda,\kappa)} \bigoplus_{v \in \Sigma} R\Gamma (k_v, T_{p,F}(A^t))
\end{equation}
in $D(\ZZ_p[G])$. Here $\lambda$ is the natural diagonal localisation morphism and $\kappa$ is induced by the Kummer theory maps
$A^t(F_v)^\wedge_p\to H^1(k_v,T_{p,F}(A^t))$ (and the fact that $H^0(k_v, T_{p,F}(A^t))$ vanishes for all $v$ in $\Sigma$).

}
\end{definition}

\begin{remark}{\em If $p$ is odd, then $R\Gamma(\mathcal{O}_{k,S_k^\infty\cup \Sigma},T_{p,F}(A^t)))$ is acyclic in degrees greater than two and so the natural morphism $\tau_{\le 3}(R\Gamma(\mathcal{O}_{k,S_k^\infty\cup \Sigma},T_{p,F}(A^t))) \to R\Gamma(\mathcal{O}_{k,S_k^\infty\cup \Sigma},T_{p,F}(A^t)))$ in $D(\ZZ_p[G])$ is an isomorphism. In this case the truncation functor $\tau_{\le 3}$ can therefore be omitted from the above definition.}\end{remark}

The following result shows that the Selmer complex ${\rm SC}_{\Sigma,p}(A_{F/k})$ is independent, in a natural sense, of the choice of the set of places $\Sigma$.

For this reason, in the sequel we shall usually write ${\rm SC}_{p}(A_{F/k})$ in place of ${\rm SC}_{\Sigma,p}(A_{F/k})$.

\begin{lemma}\label{independenceofsigma} Let $\Sigma$ and $\Sigma'$ be any finite subsets of $S_k^f$ as in Definition \ref{bkdefinition} with $\Sigma\subseteq\Sigma'$. Then there is a canonical isomorphism ${\rm SC}_{\Sigma',p}(A_{F/k})\to {\rm SC}_{\Sigma,p}(A_{F/k})$ in $D(\ZZ_p[G])$.
\end{lemma}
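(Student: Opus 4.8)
The plan is to exploit the compatibility of the two defining mapping fibres with respect to enlarging the auxiliary set of places. Concretely, for $\Sigma \subseteq \Sigma'$ write $\Sigma' = \Sigma \sqcup T$ with $T$ a finite set of non-archimedean places of $k$ disjoint from $S_k^p$, $S_k^F \cap S_k^f$ and $S_k^A$ (only those three must lie in every admissible set, so $T$ avoids them). The key input is the standard localisation triangle in $D(\ZZ_p[G])$ comparing étale cohomology over $\mathcal{O}_{k, S_k^\infty \cup \Sigma}$ and over $\mathcal{O}_{k, S_k^\infty \cup \Sigma'}$, namely
\[
R\Gamma(\mathcal{O}_{k,S_k^\infty\cup \Sigma},T_{p,F}(A^t)) \to R\Gamma(\mathcal{O}_{k,S_k^\infty\cup \Sigma'},T_{p,F}(A^t)) \to \bigoplus_{v \in T} R\Gamma_{/f}(k_v, T_{p,F}(A^t))[-1],
\]
where the last term is the usual quotient (``singular'') complex fitting in the triangle $R\Gamma_f(k_v,-) \to R\Gamma(k_v,-) \to R\Gamma_{/f}(k_v,-)$.

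First I would observe that for $v \in T$ the complex $R\Gamma_f(k_v, T_{p,F}(A^t))$ is canonically identified with $A^t(F_v)^\wedge_p[0]$: since $v \nmid p$ this is exactly Lemma \ref{v not p}(i) together with the fact that $H^0(k_v, T_{p,F}(A^t))$ vanishes, so that the finite-part complex is concentrated in degree one and equals the image of the Kummer map. Hence the extra summand $\bigoplus_{v \in T} A^t(F_v)^\wedge_p[-1]$ that appears when passing from $\Sigma$ to $\Sigma'$ is precisely $\bigoplus_{v \in T} R\Gamma_f(k_v, T_{p,F}(A^t))[-1]$, and the extra target summand $\bigoplus_{v \in T} R\Gamma(k_v, T_{p,F}(A^t))$ together with the map from it is modelled by the Kummer map $\kappa$, which by construction factors through $R\Gamma_f(k_v,-) \to R\Gamma(k_v,-)$.

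Next I would assemble these identifications into a morphism of the two defining diagrams (the homotopy-commutative squares whose fibres are ${\rm SC}_{\Sigma',p}$ and ${\rm SC}_{\Sigma,p}$). The natural localisation map on global cohomology, the identity on the $\Sigma$-indexed local terms, and the zero map out of the $T$-indexed terms together give a morphism of the relevant triples; passing to mapping fibres yields a canonical morphism ${\rm SC}_{\Sigma',p}(A_{F/k}) \to {\rm SC}_{\Sigma,p}(A_{F/k})$ in $D(\ZZ_p[G])$. To see it is an isomorphism, I would compare the two mapping fibres directly: their difference is measured, via the octahedral axiom, by the mapping fibre of the map
\[
\bigoplus_{v\in T} R\Gamma_f(k_v,T_{p,F}(A^t))[-1] \;\longrightarrow\; \Big(\bigoplus_{v\in T} R\Gamma(k_v,T_{p,F}(A^t))\Big) \oplus \bigoplus_{v\in T} R\Gamma_{/f}(k_v,T_{p,F}(A^t))[-1],
\]
and the triangle $R\Gamma_f \to R\Gamma \to R\Gamma_{/f}$ shows this fibre is acyclic. (One must also invoke the truncation remark following Definition \ref{bkdefinition}, or directly check that the relevant global complexes have no cohomology beyond degree three for $T_{p,F}(A^t)$, so that applying $\tau_{\le 3}$ does not disturb the comparison; for odd $p$ this is automatic and for $p=2$ one checks the degree-three contribution is unaffected by places in $T$.)

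Finally I would check functoriality, i.e. that for $\Sigma \subseteq \Sigma' \subseteq \Sigma''$ the composite ${\rm SC}_{\Sigma'',p} \to {\rm SC}_{\Sigma',p} \to {\rm SC}_{\Sigma,p}$ agrees with the direct comparison map, which is immediate from the construction since all maps are built from the localisation triangles and these are transitive. The main obstacle I anticipate is bookkeeping rather than conceptual: making the identification $R\Gamma_f(k_v,-) \simeq A^t(F_v)^\wedge_p[0]$ genuinely \emph{canonical} and compatible with the Kummer map $\kappa$ used in Definition \ref{bkdefinition} (so that the square really commutes on the nose in the derived category, not merely up to a non-canonical homotopy), and handling the truncation functor $\tau_{\le 3}$ carefully in the case $p = 2$. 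Everything else is a formal manipulation of mapping fibres and the nine-term / octahedral diagram.
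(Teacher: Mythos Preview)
Your strategy is sound in spirit and closely related to the paper's, but the construction of the comparison map has a genuine gap. You propose to build a morphism ``${\rm SC}_{\Sigma',p} \to {\rm SC}_{\Sigma,p}$'' by giving ``the natural localisation map on global cohomology, the identity on the $\Sigma$-indexed local terms, and the zero map out of the $T$-indexed terms''. But the natural map on global cohomology goes from $R\Gamma(\mathcal{O}_{k,S_k^\infty\cup\Sigma}, T_{p,F}(A^t))$ \emph{to} $R\Gamma(\mathcal{O}_{k,S_k^\infty\cup\Sigma'}, T_{p,F}(A^t))$, not the other way, so you cannot produce a map of defining diagrams in the direction you claim. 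If instead you try to build ${\rm SC}_{\Sigma,p} \to {\rm SC}_{\Sigma',p}$ via the obvious inclusions on both source and target, the resulting square fails to commute: the composite $R\Gamma(\mathcal{O}_{k,S_k^\infty\cup\Sigma}) \to R\Gamma(\mathcal{O}_{k,S_k^\infty\cup\Sigma'}) \to \bigoplus_{v\in T} R\Gamma(k_v,T_{p,F}(A^t))$ is the (nonzero) localisation at places in $T$, whereas going around the other way gives zero on the $T$-components. This can be repaired by adding a correction term through the identification $R\Gamma(\kappa_v,T_{p,F}(A^t)^{I_v})\cong A^t(F_v)_p^\wedge[-1]$, but that step is precisely the substance of the argument and is not in your write-up. (Note also that this identification comes from Lemma~\ref{v not p}(ii) together with the vanishing of the Tamagawa term at places of good reduction, not from part (i); and your localisation triangle carries an erroneous $[-1]$ on the third term.)

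The paper avoids all of this by routing the comparison through compactly supported cohomology. It uses the triangle $R\Gamma_{c,\Sigma} \to {\rm SC}_{\Sigma,p} \to \bigl(\bigoplus_\Sigma A^t(F_v)_p^\wedge[-1]\bigr)\oplus \tau_{\le 3}(R\Gamma_\infty)$ for each set, together with the excision triangle $\bigoplus_{v\in T}R\Gamma(\kappa_v,T_{p,F}(A^t)^{I_v})[-1]\to R\Gamma_{c,\Sigma'}\to R\Gamma_{c,\Sigma}$; the point is that on $R\Gamma_c$ the natural map already goes from $\Sigma'$ to $\Sigma$. The identification $A^t(F_v)_p^\wedge[-1]\cong R\Gamma(\kappa_v,T_{p,F}(A^t)^{I_v})$ for $v\in T$ then slots into a commutative $3\times 3$ diagram whose third column yields the desired isomorphism directly. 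Your displayed ``difference map'' $\bigoplus_T R\Gamma_f[-1]\to \bigl(\bigoplus_T R\Gamma\bigr)\oplus \bigoplus_T R\Gamma_{/f}[-1]$ does not arise naturally (there is no canonical map $R\Gamma_{/f}\to R\Gamma$), so the acyclicity claim at the end is not justified as stated.
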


\begin{proof} We recall that the compactly supported cohomology complex
\[ R\Gamma_{c,\Sigma}:=R\Gamma_c(\mathcal{O}_{k,S_k^\infty\cup\Sigma},T_{p,F}(A^t))\]
is defined to be the mapping fibre of the diagonal localisation morphism
\begin{equation}\label{compactloc} R\Gamma(\mathcal{O}_{k,S_k^\infty\cup\Sigma},T_{p,F}(A^t)) \to \bigoplus_{v \in S_k^\infty\cup\Sigma}R\Gamma(k_v,T_{p,F}(A^t))\end{equation}
in $D(\ZZ_p[G])$.

We further recall that $R\Gamma_{c,\Sigma}$ is acyclic outside degrees $1, 2$ and $3$ (see, for example, \cite[Prop. 1.6.5]{fukaya-kato}) and that
 $R\Gamma(k_v,T_{p,F}(A^t))$ for each $v$ in $\Sigma$ is acyclic outside degrees $1$ and $2$ and hence that the natural morphisms
\[ \tau_{\le 3}(R\Gamma_{c,\Sigma}) \to R\Gamma_{c,\Sigma}\,\,\text{ and } \,\,\tau_{\le 3}(R\Gamma(k_v,T_{p,F}(A^t))) \to R\Gamma(k_v,T_{p,F}(A^t))\]
in $D(\ZZ_p[G])$ are  isomorphisms.

Upon comparing these facts with the definition of ${\rm SC}_{\Sigma,p}(A_{F/k})$ one deduces the existence of a canonical exact triangle in $D(\ZZ_p[G])$ of the form
\begin{equation}\label{comparingtriangles}R\Gamma_{c,\Sigma}\to {\rm SC}_{\Sigma,p}(A_{F/k})\to \left(\bigoplus_{ v\in \Sigma}A^t(F_v)_p^\wedge\right)[-1]\oplus \tau_{\le 3}(R\Gamma_\infty) \to R\Gamma_{c,\Sigma}[1],
\end{equation}
where we abbreviate $\bigoplus_{ v\in S_k^\infty} R\Gamma(k_v,T_{p,F}(A^t))$ to $R\Gamma_\infty$.


In addition, by the construction of \cite[(30)]{bufl99}, there is a canonical exact triangle in $D(\ZZ_p[G])$ of the form
\begin{equation}\label{independencetriangle}\bigoplus\limits_{ v\in \Sigma'\setminus\Sigma}R\Gamma(\kappa_v,T_{p,F}(A^t)^{I_v})[-1]\to R\Gamma_{c,\Sigma'}\to R\Gamma_{c,\Sigma}\to\bigoplus\limits_{ v\in \Sigma'\setminus\Sigma}R\Gamma(\kappa_v,T_{p,F}(A^t)^{I_v}).\end{equation}

Finally we note that, since the choice of $\Sigma$ implies that $A$ has good reduction at each place $v$ in $\Sigma'\setminus\Sigma$, the module $H^0(F_{w'}, H^1(I_{w'}, T_{p}(A^t))_{\rm tor})$ vanishes for every $w'$ in $S_F^v$.

Thus, since for each $v$ in $\Sigma'\setminus\Sigma$ the complex $R\Gamma(\kappa_v,T_{p,F}(A^t)^{I_v})$ is acyclic outside degree one, the exact sequences in Lemma \ref{v not p}(ii) induce a canonical isomorphism
\begin{equation}\label{firstrow}A^t(F_v)_p^\wedge[-1]\to R\Gamma(\kappa_v,T_{p,F}(A^t)^{I_v})\end{equation}
in $D(\ZZ_p[G])$.

These three facts combine to give a canonical commutative diagram in $D(\ZZ_p[G])$ of the form

\begin{equation*}\label{complexesdiag}\xymatrix{
\bigoplus\limits_{ v\in \Sigma'\setminus\Sigma}A^t(F_v)_p^\wedge[-2] \ar@{^{(}->}[d] \ar[r]^{\hskip-0.4truein\sim}  &
\bigoplus\limits_{ v\in \Sigma'\setminus\Sigma}R\Gamma(\kappa_v,T_{p,F}(A^t)^{I_v})[-1] \ar[d] &
\\
\bigoplus\limits_{ v\in \Sigma'}A^t(F_v)_p^\wedge[-2]\oplus \tau_{\le 3}(R\Gamma_\infty)[-1] \ar@{->>}[d] \ar[r] &
R\Gamma_{c,\Sigma'} \ar[d] \ar[r]  &
{\rm SC}_{\Sigma',p}(A_{F/k})
\\
\bigoplus\limits_{ v\in \Sigma}A^t(F_v)_p^\wedge[-2]\oplus \tau_{\le 3}(R\Gamma_\infty)[-1]  \ar[r] &
R\Gamma_{c,\Sigma}  \ar[r]  &
{\rm SC}_{\Sigma,p}(A_{F/k}).
}\end{equation*}
Here the first row is induced by the isomorphisms (\ref{firstrow}), the second and third rows by the triangles (\ref{comparingtriangles}) for $\Sigma'$ and $\Sigma$ respectively, the first column is the obvious short exact sequence and the second column is given by the triangle (\ref{independencetriangle}).

In particular, since all rows and columns in this diagram are exact triangles, its commutativity implies the existence of a canonical isomorphism in $D(\ZZ_p[G])$ from
${\rm SC}_{\Sigma',p}(A_{F/k})$ to ${\rm SC}_{\Sigma,p}(A_{F/k})$, as required. 
\end{proof}

Taken together, Lemma \ref{v not p}(ii) and Remark \ref{Tamagawa remark} imply that if $p$ is odd and no Tamagawa numbers of $A$ over $F$ are divisible by $p$, then the complex ${\rm SC}_{p}(A_{F/k})$ canonically identifies with the `finite support cohomology' complex $R\Gamma_f(k,T_{p,F}(A))$ that was defined (for odd $p$) and played a key role in the article \cite{bmw} of Wuthrich and the present authors. Otherwise, these complexes differ slightly.

For such $p$ we have preferred to use ${\rm SC}_{p}(A_{F/k})$ rather than $R\Gamma_f(k,T_{p,F}(A))$ in this article since it is more amenable to certain explicit constructions that we have to make in later sections.

For the moment, we record only the following facts about ${\rm SC}_{p}(A_{F/k})$ that will be established in Propositions \ref{explicitbkprop} and \ref{explicitbkprop2} below. We write $\Sel_p(A_{F})$ for the classical $p$-primary Selmer group of $A$ over $F$. Then ${\rm SC}_{p}(A_{F/k})$ is acyclic outside degrees one, two and three and, assuming the Tate-Shafarevich group $\sha(A_F)$ of $A$ over $F$ to be finite, there are canonical identifications for each odd $p$ of the form
\begin{equation}\label{bksc cohom} H^i({\rm SC}_{p}(A_{F/k})) = \begin{cases} A^t(F)_p, &\text{if $i=1$,}\\
\Sel_p(A_F)^\vee, &\txt{if $i=2$,}\\
A(F)[p^{\infty}]^\vee, &\text{if $i=3$,}\end{cases}\end{equation}
whilst for $p=2$ there is a canonical identification $H^1({\rm SC}_{2}(A_{F/k})) =  A^t(F)_2$ and a canonical homomorphism $\Sel_2(A_F)^\vee \to H^2({\rm SC}_{2}(A_{F/k}))$ with finite kernel and cokernel, and the module $H^3({\rm SC}_{2}(A_{F/k}))$ is finite.

\begin{remark}\label{indeptremark}{\em A closer analysis of the argument in Lemma \ref{independenceofsigma} shows that, with respect to the identifications (\ref{bksc cohom}) that are established (under the hypothesis that $\sha(A_F)$ is finite)  in Proposition \ref{explicitbkprop} below, the isomorphism ${\rm SC}_{\Sigma',p}(A_{F/k})\to {\rm SC}_{\Sigma,p}(A_{F/k})$ constructed in Lemma \ref{independenceofsigma} induces the identity map on all degrees of cohomology.}\end{remark}



\subsection{Nekov\'a\v r-Selmer complexes}\label{NSc} In this section we again fix a prime number $p$.

Whilst the modules that occur in (\ref{bksc cohom}) are the primary objects of interest in the theory of abelian varieties, the complex ${\rm SC}_{p}(A_{F/k})$ is not always well-suited to our purposes since, except in certain special cases (that will be discussed in detail in \S\ref{tmc}), it does not belong to $D^{\rm perf}(\ZZ_p[G])$.

For this reason, we find it convenient to introduce the following alternative notion of Selmer complexes.

This construction is motivated by the general approach developed by Nekov\'a\v r in \cite{nek}.

\begin{definition}\label{selmerdefinition}{\em  We fix a finite set of places $S$ of $k$ with
$$ S_k^\infty\cup S_k^F \cup S_k^A\subseteq S$$ as well as $\ZZ_p[G]$-submodules $X$ of $A^t(F_p)^\wedge_p$ and $X'$ of $H_{\infty}(A_{F/k})_p$. Then the `Nekov\'a\v r-Selmer complex' ${\rm SC}_{S}(A_{F/k};X,X')$ of the data $(A,F,S,X,X')$ is the mapping fibre of the  morphism
\begin{equation}\label{fibre morphism}
R\Gamma(\mathcal{O}_{k,S\cup S_k^p},T_{p,F}(A^t)) \oplus X[-1] \oplus X'[0] \xrightarrow{(\lambda, \kappa_1,\kappa_2)} \bigoplus_{v \in S \cup S_k^p} R\Gamma (k_v, T_{p,F}(A^t))
\end{equation}
 in $D(\ZZ_p[G])$. Here $\lambda$ is again the natural diagonal localisation morphism, $\kappa_1$ is the morphism
 \[ X[-1]\rightarrow \bigoplus_{v \in S_k^p}R\Gamma (k_v, T_{p,F}(A^t))\]
 induced by the sum over $v$ of the local Kummer maps (and the fact each group $H^0(k_v,T_{p,F}(A^t))$ vanishes) and $\kappa_2$ is the morphism
\[ X'[0] \to \bigoplus_{v \in S_k^\infty}R\Gamma (k_v, T_{p,F}(A^t))\]
that is induced by the canonical comparison isomorphisms
\begin{equation}\label{cancompisom} Y_{v,F,p}\otimes_{\ZZ} H_1((A^t)^{\sigma_v}(\CC),\ZZ) \cong Y_{F/k,p}\otimes_{\ZZ_p}T_{p}(A^t)= T_{p,F}(A^t) \end{equation}
for each $v$ in $S_k^\infty$.
}\end{definition}


In the next result we establish the basic properties of these Nekov\'a\v r-Selmer complexes. In this result we shall write ${\rm Mod}^\ast(\ZZ_p[G])$ for the category ${\rm Mod}(\ZZ_p[G])$ in the case that $p$ is odd and for the quotient of ${\rm Mod}(\ZZ_2[G])$ by its subcategory ${\rm Mod}^{\rm fin}(\ZZ_2[G])$ in the case that $p = 2$.

\begin{proposition}\label{prop:perfect} Let $X$ be a finite index $\ZZ_p[G]$-submodule of $A^t(F_p)^\wedge_p$ that is cohomologically-trivial as a $G$-module.

Let $X'$ be a finite index projective $\ZZ_p[G]$-submodule of $H_\infty(A_{F/k})_p$, with $X' = H_\infty(A_{F/k})_p$ if $p$ is odd.

Then the following claims are valid.
\begin{itemize}
\item[(i)] ${\rm SC}_{S}(A_{F/k};X,X')$ is an object of $D^{\rm perf}(\ZZ_p[G])$ that is acyclic outside degrees one, two and three.
\item[(ii)] $H^3({\rm SC}_{S}(A_{F/k};X,X'))$ identifies with $A(F)[p^{\infty}]^\vee$.
\item[(iii)] If $\sha(A_F)$ is finite, then in ${\rm Mod}^\ast(\ZZ_p[G])$ there exists a canonical injective homomorphism
\[ H^1({\rm SC}_{S}(A_{F/k};X,X')) \to A^t(F)_p \]
that has finite cokernel and a canonical surjective homomorphism
\[  H^2({\rm SC}_{S}(A_{F/k};X,X')) \to {\rm Sel}_p(A_{F})^\vee\]
that has finite kernel.
\end{itemize}
\end{proposition}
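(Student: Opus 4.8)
The plan is to construct the Nekov\'a\v r-Selmer complex from the classical Selmer complex ${\rm SC}_{p}(A_{F/k})$ by a sequence of explicit mapping-cone/mapping-fibre comparisons, using the compactly supported cohomology complex $R\Gamma_{c}:=R\Gamma_c(\mathcal{O}_{k,S\cup S_k^p},T_{p,F}(A^t))$ as the common pivot, exactly as in the proof of Lemma \ref{independenceofsigma}. First I would unwind Definition \ref{selmerdefinition}: by the octahedral axiom, the mapping fibre of $(\lambda,\kappa_1,\kappa_2)$ sits in a canonical exact triangle
\[
R\Gamma_{c}\to {\rm SC}_{S}(A_{F/k};X,X')\to X[-1]\oplus X'[0]\oplus\Bigl(\bigoplus_{v\in S_k^\infty}R\Gamma(k_v,T_{p,F}(A^t))\Bigr)[-1]\xrightarrow{\ \ }R\Gamma_{c}[1],
\]
where I have split the localisation morphism into its finite and archimedean parts and absorbed the finite part into $R\Gamma_c$. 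Since $R\Gamma_c$ is acyclic outside degrees $1,2,3$ (as recalled via \cite[Prop.~1.6.5]{fukaya-kato}), $X[-1]$ and the archimedean term contribute only in degrees $1$ and above, and $X'[0]$ contributes in degree $0$ where it will be cancelled by the archimedean $H^0$; this gives claim (i)'s acyclicity range. For perfection, $R\Gamma_c$ is perfect over $\ZZ_p[G]$ by standard finiteness/finite-cohomological-dimension arguments (Poitou-Tate), $X$ is perfect because it is cohomologically-trivial of finite index in the finitely generated module $A^t(F_p)^\wedge_p$, and $X'$ is perfect by hypothesis; the only delicate point at $p=2$ is the archimedean local terms $R\Gamma(k_v,T_{p,F}(A^t))$, which need not be perfect over $\ZZ_2[G]$, and this is why one passes to ${\rm Mod}^\ast(\ZZ_2[G])$ — working in that quotient category the archimedean contribution in degree $2$ becomes negligible and the projectivity of $X'$ exactly compensates the degree-$0$ term.

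For claim (ii), I would compute $H^3$ directly from the triangle above: the terms $X[-1]$, $X'[0]$ and $(\bigoplus_v R\Gamma(k_v,-))[-1]$ are all acyclic in degree $3$ (the archimedean local cohomology of $T_{p,F}(A^t)$ over $\RR$ or $\CC$ vanishes in positive degrees for $p$ odd and is finite, hence zero in ${\rm Mod}^\ast$, for $p=2$), so $H^3({\rm SC}_{S}(A_{F/k};X,X'))\cong H^3(R\Gamma_c)$. By Artin-Verdier/Poitou-Tate duality $H^3(R\Gamma_c)$ is canonically dual to $H^0(\mathcal{O}_{k,S\cup S_k^p},V_{p,F}(A^t)/T_{p,F}(A^t))$ twisted appropriately, which by the Weil pairing identifies $T_p(A^t)^\vee(1)$ with $T_p(A)$ and yields $A(F)[p^\infty]^\vee$. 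I would phrase this so as to match verbatim the identification already asserted for ${\rm SC}_{p}(A_{F/k})$ in (\ref{bksc cohom}), since the degree-$3$ parts of the two complexes agree.

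For claim (iii), the strategy is to relate ${\rm SC}_{S}(A_{F/k};X,X')$ to the classical Selmer complex ${\rm SC}_{p}(A_{F/k})$ (with its cohomology (\ref{bksc cohom})) through a mapping cone on the difference of the local conditions at $v\in S_k^p$, namely the difference between the full local cohomology $R\Gamma(k_v,T_{p,F}(A^t))$ used implicitly in one and the Kummer image $A^t(F_v)^\wedge_p$ versus the chosen submodule $X$. Concretely, changing the local condition at $p$-adic places from $\bigoplus_{v\in S_k^p}A^t(F_v)^\wedge_p$ to $X$ changes $H^1$ and $H^2$ by (co)kernels controlled by the finite quotient $(\bigoplus_v A^t(F_v)^\wedge_p)/X$ together with the finite groups appearing in Lemma \ref{v not p}(ii); and changing from $R\Gamma(k_v,-)$-type conditions to the Bloch-Kato finite conditions introduces only finite discrepancies and the Tamagawa-number factors of Remark \ref{Tamagawa remark}. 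Assembling these into the long exact cohomology sequence of the comparison triangle, and invoking finiteness of $\sha(A_F)$ to control $H^2$ of ${\rm SC}_{p}(A_{F/k})$, produces the canonical injection $H^1(\cdot)\to A^t(F)_p$ with finite cokernel and the canonical surjection $H^2(\cdot)\to{\rm Sel}_p(A_F)^\vee$ with finite kernel; over ${\rm Mod}^\ast(\ZZ_p[G])$ all the ``finite'' error terms at $p=2$ disappear, giving the clean statement. I expect the main obstacle to be the careful bookkeeping in claim (iii): one must track precisely which finite modules enter when passing between the three flavours of local condition (full local cohomology, Bloch-Kato finite part, and the ad hoc submodule $X$), and verify that the composite maps one writes down are genuinely canonical and compatible with those already fixed for ${\rm SC}_{p}(A_{F/k})$ — this is exactly the kind of diagram-chase that the proof of Lemma \ref{independenceofsigma} models, so I would follow that template closely, and I would defer the hardest identifications to the later Propositions \ref{explicitbkprop} and \ref{explicitbkprop2} wherever the excerpt permits.
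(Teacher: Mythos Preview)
Your overall strategy matches the paper's, but the triangle you write down is incorrect and this error propagates into confusion about the role of ${\rm Mod}^\ast$. The mapping fibre of $(\lambda,\kappa_1,\kappa_2)$ and the mapping fibre of $\lambda$ alone (which is $R\Gamma_c$) have the \emph{same} target $\bigoplus_{v\in S\cup S_k^p}R\Gamma(k_v,T_{p,F}(A^t))$ --- the archimedean places are already contained in $S$ --- so the difference between the two fibres is exactly the extra summands $X[-1]\oplus X'[0]$ in the source. The correct triangle is therefore the clean one
\[
R\Gamma_c\longrightarrow C_S\longrightarrow X[-1]\oplus X'[0]\longrightarrow R\Gamma_c[1],
\]
with no separate archimedean local cohomology term. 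This is the paper's triangle (\ref{can tri}).

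With the correct triangle, claims (i) and (ii) follow immediately and on the nose, for \emph{all} primes $p$: $R\Gamma_c$ is perfect, $X$ and $X'$ are perfect (finitely generated and cohomologically-trivial), so $C_S$ is perfect; and since $X[-1]\oplus X'[0]$ is genuinely acyclic in degree $3$, one has $H^3(C_S)\cong H^3(R\Gamma_c)\cong A(F)[p^\infty]^\vee$ by Artin--Verdier duality as an honest identification. There is no need to invoke ${\rm Mod}^\ast$ for either of these claims, and in particular your argument for (ii), which only yields the identification modulo finite $2$-groups, is weaker than what is asserted. The passage to ${\rm Mod}^\ast$ is required only in claim (iii), and there it arises not from any non-perfection issue but from the comparison with the classical Selmer complex $C'_\Sigma$ via a second triangle (the paper's (\ref{selmer-finite tri})): the resulting long exact cohomology sequence (\ref{useful1}) contains the finite $2$-groups $H^i(k_v,T_{2,F}(A^t))$ for $v\in S_k^\infty$ and $i=1,2$, together with the finite cokernel ${\rm cok}(H^0(\kappa_2))$ coming from the choice of $X'$, and these are precisely what one kills by working in ${\rm Mod}^\ast(\ZZ_2[G])$. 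Your sketch of (iii) is otherwise on the right track.
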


\begin{proof} Set $C_{S} := {\rm SC}_{S}(A_{F/k};X,X')$.

Then, by comparing the definition of $C_{S}$ as the mapping fibre of (\ref{fibre morphism}) with the definition of the compactly supported cohomology complex $R\Gamma_c(A_{F/k}) := R\Gamma_c(\mathcal{O}_{k,S\cup S_k^p},T_{p,F}(A^t))$ as the mapping fibre of the morphism (\ref{compactloc})
one finds that there is an exact triangle in $D(\ZZ_p[G])$ of the form
\begin{equation}\label{can tri}  R\Gamma_c(A_{F/k})\to C_{S} \to X[-1] \oplus X'[0]\to R\Gamma_c(A_{F/k})[1].\end{equation}

To derive claim (i) from this triangle it is then enough to recall (from, for example, \cite[Prop. 1.6.5]{fukaya-kato}) that $R\Gamma_c(A_{F/k})$ belongs to $D^{\rm perf}(\ZZ_p[G])$ and is acyclic outside degrees one, two and three and note that both of the $\ZZ_p[G]$-modules $X$ and $X'$ are finitely generated and cohomologically-trivial.

The above triangle also gives a canonical identification
\begin{equation}\label{artinverdier} H^3(C_{S}) \cong H^3(R\Gamma_c(A_{F/k})) \cong H^0(k,T_{p,F}(A)\otimes_{\ZZ_p}\QQ_p/\ZZ_p)^\vee = A(F)[p^{\infty}]^\vee\end{equation}
where the second isomorphism is induced by the Artin-Verdier Duality Theorem.

In a similar way, if we set $\Sigma:=(S\cap S_k^f)\cup S_k^p$ and abbreviate the classical $p$-adic Selmer complex ${\rm SC}_{\Sigma,p}(A_{F/k})$ to $C'_\Sigma$, then a direct comparison of the definitions of $C_{S}$ and $C_\Sigma'$ shows that $C_{S}$ is isomorphic in $D(\ZZ_p[G])$ to the mapping fibre of the morphism
\begin{equation}\label{selmer-finite tri} C'_\Sigma \oplus X[-1] \oplus X'[0]
\xrightarrow{(\lambda', \kappa'_1,\kappa_2)}
 \bigoplus_{v \in \Sigma} A^t(F_v)^\wedge_p[-1] \oplus \bigoplus_{v \in S_k^\infty}\tau_{\le 3}(R\Gamma(k_v,T_{p,F}(A^t)))\end{equation}
where $\lambda'$ is the canonical morphism
\[ C'_\Sigma \to \bigoplus_{v \in \Sigma} A^t(F_v)^\wedge_p[-1]\]
determined by the definition of of $C'_\Sigma$ as the mapping fibre of (\ref{bkfibre}),
and the morphism $$\kappa'_1: X[-1]\rightarrow \bigoplus_{v \in S_k^p} A^t(F_v)^\wedge_p[-1]$$ is induced by the given inclusion
 $X \subseteq A^t(F_p)^\wedge_p$.

This description of $C_{S}$ gives rise to a canonical long exact sequence of $\ZZ_p[G]$-modules

\begin{multline}\label{useful1} 0 \to {\rm cok}(H^0(\kappa_2)) \to H^1(C_{S}) \to H^1(C_\Sigma') \\
 \to (A^t(F_p)^\wedge_p/X) \oplus\bigoplus_{v \in (S\cap S_k^f)\setminus S_k^p} A^t(F_v)^\wedge_p \oplus  \bigoplus_{v \in S_k^\infty}H^1(k_v,T_{p,F}(A^t))\\ \to H^2(C_{S})\to H^2(C_\Sigma') \to \bigoplus_{v \in S_k^\infty}H^2(k_v,T_{p,F}(A^t)). \end{multline}

In addition, for each $v \in S_k^\infty$ the groups $H^1(k_v,T_{p,F}(A^t))$ and $H^2(k_v,T_{p,F}(A^t))$ vanish if $p$ is odd and are finite if $p=2$, whilst our choice of $X'$ ensures that ${\rm cok}(H^0(\kappa_2))$ is also a finite group of $2$-power order.

Claim (iii) therefore follows upon combining the above sequence with the identifications of $H^1(C_\Sigma')$ and $H^2(C_\Sigma')$ given in (\ref{bksc cohom}) for odd $p$, and in the subsequent remarks for $p=2$, that are valid whenever $\sha(A_F)$ is finite.
\end{proof}

\begin{remark}\label{mrselmer}{\em If $p$ is odd, then the proof of Proposition \ref{prop:perfect} shows that the cohomology group
$H^1({\rm SC}_{S}(A_{F/k};X,H_\infty(A_{F/k})_p))$ coincides with the Selmer group $H^1_{\mathcal{F}_X}(k,T_{p,F}(A^t))$ in the sense of Mazur and Rubin \cite{MRkoly}, where $\mathcal{F}_X$ is the Selmer structure with $\mathcal{F}_{X,v}$ equal to the image of $X$ in $H^1(k_v,T_{p,F}(A^t))$ for $v\in S_k^p$ and equal to $0$ for $v \in S\setminus S_k^p$.} \end{remark}


\subsection{Perfect Selmer structures and global complexes}\label{perfect selmer integral} 

In this section we introduce a notion of perfect Selmer structures as a means of constructing global perfect complexes from the $p$-adic Nekov\'a\v r-Selmer complexes that were discussed in the previous section. 

In subsequent sections, our main interest will be in the perfect Selmer structures that arise from a choice of global differentials via the construction discussed in \S \ref{perf sel sect}. However, we note that, when deriving numerically verifiable predictions from the central conjecture of this article, it also proves useful for us to consider the perfect Selmer structures that arise from a more general choice of semi-local points.

We recall that $\ell(v)$ denotes the residue characteristic of a non-archimedean place $v$ of $k$.

\begin{definition}\label{pgss def}{\em A `perfect Selmer structure' for the pair $A$ and $F/k$ is a collection
\[ \mathcal{X} := \{\mathcal{X}(v): v \}\]
over all places $v$ of $k$ of $G$-modules that satisfy the following conditions.

\begin{itemize}
\item[(i)] For each $v$ in $S_k^\infty$ the module $\mathcal{X}(v)$ is projective and a submodule of $H_v(A_{F/k})$ of finite $2$-power index.
\item[(ii)] For each $v$ in $S_k^f$ the module $\mathcal{X}(v)$ is cohomologically-trivial and a finite index $\ZZ_{\ell(v)}[G]$-submodule of $A^t(F_v)^\wedge_{\ell(v)}$.
\item[(iii)] For almost all (non-archimedean) places $v$ one has $\mathcal{X}(v) = A^t(F_v)^\wedge_{\ell(v)}.$
\end{itemize}
We thereby obtain a  projective $G$-submodule
\[ \mathcal{X}(\infty) := \bigoplus_{v \in S_k^\infty}\mathcal{X}(v)\]
of $H_\infty(A_{F/k})$ of finite $2$-power index and, for each rational prime $\ell$, a finite index cohomologically-trivial $\ZZ_\ell[G]$-submodule %
\[  \mathcal{X}(\ell) := \bigoplus_{v\in S_k^\ell}\mathcal{X}(v)\]
of $A^t(F_\ell)^\wedge_{\ell}$.}
\end{definition}

\begin{remark}{\em The conditions (ii) and (iii) in Definition \ref{pgss def} are consistent since if $\ell$ does not divide $|G|$, then any $\ZZ_{\ell}[G]$-module is automatically cohomologically-trivial for $G$.} \end{remark}

In the following result we write $X_\ZZ(A_F)$ for the `integral Selmer group' of $A$ over $F$ defined by Mazur and Tate in \cite{mt}.

We recall that, if the Tate-Shafarevich group $\sha(A_F)$ is finite, then $X_\ZZ(A_F)$ is a finitely generated $G$-module and there exists an isomorphism of $\hat \ZZ[G]$-modules
\[ \hat\ZZ\otimes_\ZZ X_\ZZ(A_F) \cong {\rm Sel}(A_F)^\vee\]
that is unique up to automorphisms that induce the identity map on both the submodule $X_\ZZ(A_F)_{\rm tor} = \sha(A_F)^\vee$ and quotient module $X_\ZZ(A_F)_{\rm tf} = \Hom_\ZZ(A(F), \ZZ)$. (Here $\hat\ZZ$ denotes the profinite completion of $\ZZ$).

We identify ${\rm Mod}^{\rm fin}(\ZZ_2[G])$ as an abelian subcategory of ${\rm Mod}(\ZZ[G])$ in the obvious way and write ${\rm Mod}^\ast(\ZZ[G])$ for the associated quotient category.

\begin{proposition}\label{prop:perfect2} Assume that $\sha(A_F)$ is finite. Then for any perfect Selmer structure $\mathcal{X}$ for $A$ and $F/k$ and any set $S$ as in Definition \ref{selmerdefinition}, 
 there exists a complex $C_S(\mathcal{X}) = {\rm SC}_{S}(A_{F/k};\mathcal{X})$ in $D^{\rm perf}(\ZZ[G])$ that is unique up to isomorphisms in $D^{\rm perf}(\Z[G])$ that induce the identity map in all degrees of cohomology and has all of the following properties.
\begin{itemize}
\item[(i)] For each prime $\ell$ there is a canonical isomorphism in $D^{\rm perf}(\ZZ_\ell[G])$
\[ \ZZ_\ell\otimes_\ZZ C_S(\mathcal{X}) \cong {\rm SC}_{S}(A_{F/k};\mathcal{X}(\ell),\mathcal{X}(\infty)_\ell).\]
\item[(ii)] $C_S(\mathcal{X})$ is acyclic outside degrees one, two and three.
\item[(iii)] There is a canonical identification $H^3(C_S(\mathcal{X})) = (A(F)_{\rm tor})^\vee$.

\item[(iv)] In ${\rm Mod}^\ast(\ZZ[G])$ there exists a canonical injective homomorphism
\[ H^1(C_S(\mathcal{X})) \to A^t(F) \]
that has finite cokernel and a canonical surjective homomorphism
\[ H^2(C_S(\mathcal{X})) \to X_\ZZ(A_F)\]
that has finite kernel.
\item[(v)] If $\mathcal{X}(v)\subseteq A^t(F_v)$ for all $v$ in $S\cap S_k^f$ and $\mathcal{X}(v) = A^t(F_v)^\wedge_{\ell(v)}$ for all $v\notin S$, then there exists an exact sequence in ${\rm Mod}^\ast(\ZZ[G])$ of the form
\[ 0 \to H^1(C_S(\mathcal{X})) \to A^t(F) \xrightarrow{\Delta_{S,\mathcal{X}}} \bigoplus_{v \in S\cap S_k^f}\frac{A^t(F_v)}{\mathcal{X}(v)} \to
H^2(C_S(\mathcal{X})) \to X_\ZZ(A_F)\to 0\]
in which $\Delta_{S,\mathcal{X}}$ is the natural diagonal map.
\end{itemize}
\end{proposition}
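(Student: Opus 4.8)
The plan is to construct $C_S(\mathcal{X})$ by gluing the local complexes $\mathrm{SC}_S(A_{F/k};\mathcal{X}(\ell),\mathcal{X}(\infty)_\ell)$ from Definition \ref{selmerdefinition} over all primes $\ell$, using the standard theory of integral models in derived categories. Concretely, I would first observe that for all but finitely many $\ell$ — namely those not dividing $|G|$ and at which $\mathcal{X}(v)=A^t(F_v)^\wedge_{\ell(v)}$ for every $v\mid\ell$ — the complex $\mathrm{SC}_S(A_{F/k};\mathcal{X}(\ell),\mathcal{X}(\infty)_\ell)$ is canonically identified with $\ZZ_\ell\otimes_\ZZ^{\mathbb L}R\Gamma_c(\mathcal{O}_{k,S\cup S_k^p},T_F(A^t))$ for a suitable integral compactly supported cohomology complex $R\Gamma_c(\mathcal{O}_{k,S\cup S_k^p},T_F(A^t))\in D^{\mathrm{perf}}(\ZZ[G])$ (built from $\ZZ[G]$-lattices using that $T_{p,F}(A^t)=\ZZ_p\otimes_\ZZ(Y_{F/k}\otimes_\ZZ T_p(A^t))$ patches into a single $\hat\ZZ[G]\otimes_\ZZ(\cdots)$ object). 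Then one invokes the general gluing lemma (as in e.g. the treatment of integral complexes in \cite{bufl99} or via the Mayer–Vietoris / arithmetic-fracture square $D^{\mathrm{perf}}(\ZZ[G])\simeq D^{\mathrm{perf}}(\QQ[G])\times_{D^{\mathrm{perf}}(\mathbb A_f[G])}\prod_\ell D^{\mathrm{perf}}(\ZZ_\ell[G])$) to produce a $C_S(\mathcal{X})\in D^{\mathrm{perf}}(\ZZ[G])$ whose $\ell$-adic completions are the prescribed local Nekovář–Selmer complexes compatibly, which gives (i). Uniqueness up to isomorphism inducing the identity on cohomology follows because the obstruction/choice spaces are controlled by $\prod_\ell\Hom$-groups between the shifted local complexes, which vanish in the relevant degrees once one knows each $\mathrm{SC}_S(A_{F/k};\mathcal{X}(\ell),\mathcal{X}(\infty)_\ell)$ is perfect and acyclic outside degrees $1,2,3$ — and that is exactly Proposition \ref{prop:perfect}(i).

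Next, (ii) is immediate from (i) and Proposition \ref{prop:perfect}(i): acyclicity outside $[1,3]$ is detectable after faithfully flat base change $\ZZ\to\prod_\ell\ZZ_\ell$. For (iii), I would combine Proposition \ref{prop:perfect}(ii), which gives $H^3(\ZZ_\ell\otimes_\ZZ C_S(\mathcal{X}))=A(F)[\ell^\infty]^\vee$ for odd $\ell$ (and the finiteness of $H^3$ at $\ell=2$ from the $p=2$ part of Proposition \ref{prop:perfect}), with the observation that $H^3(C_S(\mathcal{X}))$ is a finite $G$-module whose $\ell$-completion is $A(F)[\ell^\infty]^\vee$ for all $\ell$; assembling these Pontryagin duals over $\ell$ yields the canonical identification $H^3(C_S(\mathcal{X}))=(A(F)_{\mathrm{tor}})^\vee$. (One has to be slightly careful at $\ell=2$: here one works in $\mathrm{Mod}^\ast$, i.e.\ modulo finite $2$-groups, which is already built into the statement via the category $\mathrm{Mod}^\ast(\ZZ[G])$ and the $2$-adic clauses of Proposition \ref{prop:perfect}.)

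For (iv), the plan is to pass to $\mathrm{Mod}^\ast(\ZZ[G])$ and build the two maps $\ell$-adically. Proposition \ref{prop:perfect}(iii) furnishes, for each $\ell$, a canonical injection $H^1(\ZZ_\ell\otimes C_S(\mathcal{X}))\hookrightarrow A^t(F)_\ell$ with finite cokernel and a canonical surjection $H^2(\ZZ_\ell\otimes C_S(\mathcal{X}))\twoheadrightarrow\mathrm{Sel}_\ell(A_F)^\vee$ with finite kernel, and for almost all $\ell$ these are isomorphisms onto $A^t(F)_\ell$ respectively $\mathrm{Sel}_\ell(A_F)^\vee$ (using that for such $\ell$, $C_S(\mathcal{X})$ is the full compactly supported cohomology complex and one applies the usual Poitou–Tate/Artin–Verdier description of its $H^1,H^2$ — this is the content of Proposition \ref{explicitbkprop} to be proved later, so I would cite the analogue of (\ref{bksc cohom})). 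Now using the identification $\hat\ZZ\otimes_\ZZ X_\ZZ(A_F)\cong\mathrm{Sel}(A_F)^\vee$ recalled just above the statement, together with $H^i(C_S(\mathcal{X}))_\ell=H^i(\ZZ_\ell\otimes C_S(\mathcal{X}))$ (finite generation over $\ZZ[G]$ plus flatness), one glues the local maps into a single injection $H^1(C_S(\mathcal{X}))\to A^t(F)$ and surjection $H^2(C_S(\mathcal{X}))\to X_\ZZ(A_F)$ in $\mathrm{Mod}^\ast(\ZZ[G])$, with the asserted finiteness properties inherited from the local ones (only finitely many $\ell$ contribute nontrivial kernel/cokernel). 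Canonicity is the point where one must check that the $\ell$-adic maps of Proposition \ref{prop:perfect}(iii) are compatible with the structure maps in the fracture square — this amounts to tracing through the exact sequence (\ref{useful1}) and the triangle (\ref{can tri}) rationally, where everything reduces to the standard (rational) identification of $H^1$ and $H^2$ of the Selmer complex with $\QQ\cdot A^t(F)$ and $\QQ\cdot X_\ZZ(A_F)$.

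Finally, (v) is deduced from (iv) together with a localisation triangle. Under the hypotheses $\mathcal{X}(v)\subseteq A^t(F_v)$ for $v\in S\cap S_k^f$ and $\mathcal{X}(v)=A^t(F_v)^\wedge_{\ell(v)}$ off $S$, the definition of the mapping fibre (\ref{selmer-finite tri}) shows, after assembling over $\ell$, that $C_S(\mathcal{X})$ sits in an exact triangle relating it to the `larger' Selmer complex in which all local conditions at $v\in S\cap S_k^f$ are relaxed to $A^t(F_v)$, the third term being $\bigoplus_{v\in S\cap S_k^f}(A^t(F_v)/\mathcal{X}(v))[-1]$; taking the long exact cohomology sequence of this triangle and splicing in the identifications of (iv) for both Selmer complexes (the `larger' one having $H^1=A^t(F)$ and $H^2=X_\ZZ(A_F)$ up to the usual finite discrepancies, i.e.\ equalities in $\mathrm{Mod}^\ast(\ZZ[G])$) produces exactly the four-term exact sequence $0\to H^1(C_S(\mathcal{X}))\to A^t(F)\xrightarrow{\Delta_{S,\mathcal{X}}}\bigoplus_{v\in S\cap S_k^f}A^t(F_v)/\mathcal{X}(v)\to H^2(C_S(\mathcal{X}))\to X_\ZZ(A_F)\to 0$, with $\Delta_{S,\mathcal{X}}$ the diagonal localisation map by construction of the triangle.

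The main obstacle I anticipate is not any single hard computation but rather the bookkeeping of \emph{canonicity} in the gluing step (i) and in (iv): one must verify that the local isomorphisms and the local maps of Proposition \ref{prop:perfect} are mutually compatible across all primes (including the delicate prime $\ell=2$, where one is forced to work in $\mathrm{Mod}^\ast$) so that the arithmetic fracture square genuinely produces an object that is well-defined up to isomorphism inducing the identity on cohomology, and so that the resulting maps to $A^t(F)$ and $X_\ZZ(A_F)$ are independent of auxiliary choices of lattices. This is exactly the kind of descent-of-complexes argument where the vanishing of the relevant $\Hom$ and $\Ext$ groups (guaranteed by perfectness and the degree bounds from Proposition \ref{prop:perfect}(i)) does all the real work, so once those vanishing statements are in hand the rest is a careful but routine assembly.
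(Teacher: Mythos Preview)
Your overall strategy of gluing the $\ell$-adic Nekov\'a\v r--Selmer complexes into an integral object is the right one, and is in the same spirit as the paper. But there is a genuine gap in your construction step (i): there is no ``integral compactly supported cohomology complex $R\Gamma_c(\mathcal{O}_{k,S\cup S_k^p},T_F(A^t))\in D^{\mathrm{perf}}(\ZZ[G])$'' to anchor the fracture-square argument. The Tate modules $T_p(A^t)$ for varying $p$ do not arise as the $p$-adic completions of a single $\ZZ[G_k]$-lattice, so the object you want to base-change from does not exist. Relatedly, even for generic $\ell$ the Nekov\'a\v r--Selmer complex is \emph{not} $\ZZ_\ell\otimes^{\mathbb L}R\Gamma_c$ of anything: by the triangle (\ref{can tri}) it differs from $R\Gamma_c$ by the nonzero term $\mathcal{X}(\ell)[-1]\oplus\mathcal{X}(\infty)_\ell[0]$. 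So the fracture square you write down has no specified object on the $\QQ[G]$-vertex, and without one the argument cannot run.

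The paper avoids this by working at the level of cohomology rather than of complexes: it invokes a gluing lemma (\cite[Lem.~3.8]{bkk}) whose input is not a rational complex but rather a choice, for each degree $j$, of a finitely generated $G$-module $M^j$ together with an isomorphism $\hat\ZZ\otimes_\ZZ M^j\cong\prod_\ell H^j(C_S(\ell))$. For $j=3$ one takes $M^3=A(F)_{\rm tor}^\vee$; the real content of the proof is the construction of $M^1$ and $M^2$, which the paper carries out via explicit pull-back diagrams using the maps $\theta_1,\theta_2,\theta_3$ supplied by Proposition~\ref{prop:perfect}(iii) together with the comparison $\hat\ZZ\otimes_\ZZ X_\ZZ(A_F)\cong{\rm Sel}(A_F)^\vee$. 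This step simultaneously produces the integral modules \emph{and} the canonical maps of claim (iv), so (iv) is not a separate gluing-of-maps argument as you sketch but is built into the construction. Your uniqueness heuristic (``the relevant $\Hom$-groups vanish'') is also not quite right: the complex has cohomology in three consecutive degrees, so there are nontrivial $\Ext^1$'s between shifted cohomology modules; the uniqueness asserted is only up to isomorphisms inducing the identity on cohomology, and that is exactly what \cite[Lem.~3.8]{bkk} provides.

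For (v) your localisation-triangle idea is plausible but is not how the paper proceeds, and it would again require first constructing the ``larger'' integral Selmer complex. The paper instead argues directly: under the stated hypotheses one has a decomposition $A^t(F_v)/\mathcal{X}(v)=\bigoplus_\ell\bigl(\cdots\bigr)$ over primes $\ell$, from which one reads off that $\ker(\Delta_{S,\mathcal{X}})$ and $\cok(\Delta_{S,\mathcal{X}})$ are exactly the intersection and direct sum over $\ell$ of the corresponding pieces in the exact sequence (\ref{useful1}); the claimed exact sequence then follows from the commutativity of the pull-back diagrams already established in the proof of (iv).
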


\begin{proof} We write $\hat \ZZ$ for the profinite completion of $\ZZ$ and for each prime $\ell$ set $C_S(\ell) := {\rm SC}_{S}(A_{F/k};\mathcal{X}(\ell),\mathcal{X}(\infty)_\ell)$.

To construct a suitable complex $C_S(\mathcal{X})$ we shall use the general result of \cite[Lem. 3.8]{bkk} with the complex $\widehat C$ in loc. cit. taken to be the object $\prod_\ell C_S(\ell)$ of $D(\hat \ZZ[G])$.

In fact, since $\mathcal{X}$ satisfies the conditions (i) and (ii) in Definition \ref{pgss def}, Proposition \ref{prop:perfect}(i) implies that each complex $C_S(\ell)$ belongs to $D^{\rm perf}(\ZZ_\ell[G])$ and is acyclic outside degrees one, two and three and so to apply \cite[Lem. 3.8]{bkk} it is enough to specify for each $j \in \{1,2,3\}$ a finitely generated $G$-module $M^j$ together with an isomorphism of $\hat \Z[G]$-modules of the form $\iota_j: \hat \Z\otimes_\Z M^j \cong \prod_\ell H^j(C_S(\ell))$.

By Proposition \ref{prop:perfect}(ii) it is clear that one can take $M^3 = A(F)_{\rm tor}^\vee$ and $\iota_3$ the canonical identification induced by the decomposition $A(F)_{\rm tor}^\vee = \prod_\ell A(F)[\ell^\infty]^\vee$.

To construct suitable modules $M^1$ and $M^2$,
%
%
 %
 %
we note first that the proof of Proposition \ref{prop:perfect}(iii) combines with the fact that $\mathcal{X}$ satisfies condition (iii) in Definition \ref{pgss def} to give rise to a homomorphism of $\hat\ZZ[G]$-modules
\[ \prod_\ell H^1(C_S(\ell)) \xrightarrow{\theta_1} \hat\ZZ\otimes_\ZZ A^t(F) \]
with the property that $\ker(\theta_1)$ is finite of $2$-power order and ${\rm cok}(\theta_1)$ is finite, and to a diagram of homomorphisms of $\hat\ZZ[G]$-modules
\begin{equation}\label{derived diag}
\prod_\ell H^2(C_S(\ell)) \xrightarrow{\theta_2} \prod_\ell H^2({\rm SC}_{\Sigma_\ell,\ell}(A_{F/k})) \xleftarrow{\theta_3} \hat\ZZ\otimes_\ZZ X_{\ZZ}(A_F)\end{equation}
in which $\ker(\theta_2)$ is finite whilst ${\rm cok}(\theta_2), \ker(\theta_3)$ and ${\rm cok}(\theta_3)$ are all finite of $2$-power order. Here for each prime number $\ell$ we have also set $\Sigma_\ell:=(S\cap S_k^f)\cup S_k^\ell$.

It is then straightforward to construct a commutative (pull-back) diagram of $G$-modules
\begin{equation}\label{useful2 diagrams} \begin{CD}
 M^1 @> >> A^t(F)\\
 @V \iota_{11} VV @VV\iota_{12} V\\
  \prod_\ell H^1(C_S(\ell)) @> \theta_1>> \hat\ZZ\otimes_\ZZ A^t(F)\end{CD}\end{equation}
in which $M^1$ is finitely generated, the upper horizontal arrow has finite kernel of $2$-power order and finite cokernel, the morphism $\iota_{12}$ is the natural inclusion and the morphism $\iota_{11}$ induces an isomorphism of $\hat\ZZ[G]$-modules $\iota_1$ of the required sort.

In a similar way, there is a pull-back diagram of $G$-modules
\begin{equation*} \begin{CD}
 M_2 @> \theta_2' >> \theta_3(X_{\ZZ}(A_F))\\
 @V \iota_{21} VV @VV\iota_{22} V\\
 \prod_\ell H^2(C_S(\ell)) @> \theta_2 >> \prod_\ell H^2({\rm SC}_{\Sigma_\ell,\ell}(A_{F/k}))\end{CD}\end{equation*}
in which $M_2$ is finitely generated, $\iota_{22}$ is the natural inclusion, $\ker(\theta_2')$ is finite, ${\rm cok}(\theta_2')$ is finite of $2$-power order and the morphism $\iota_{21}$ induces a short exact sequence
\[0\to \hat \ZZ\otimes_\ZZ M_2 \to \prod_\ell H^2(C_S(\ell)) \to M_2' \to 0 \]
in which $M_2'$ is finite of $2$-power order. Then, since $\hat \ZZ$ is a flat $\ZZ$-module, one has
\[ {\rm Ext}^1_{G}(M_2',M_2) = \hat\ZZ\otimes_\ZZ{\rm Ext}^1_{G}(M_2',M_2) = {\rm Ext}^1_{\hat \ZZ[G]}(M_2',\hat\ZZ\otimes_\ZZ M_2)\]
and so there exists an exact commutative diagram of $G$-modules
\[ \begin{CD}
0 @> >> \hat \ZZ\otimes_\ZZ M_2 @> \iota_{21} >> \prod_\ell H^2(C_S(\ell)) @> >>  M_2' @> >> 0\\
& & @A AA @A\iota_2 AA @\vert\\
0 @> >> M_2 @> >> M^2 @> >>  M_2' @> >> 0\end{CD}\]
in which the left hand vertical arrow is the natural inclusion and $M^2$ is finitely generated.

It is then clear that $\iota_2$ induces an isomorphism $\hat\ZZ\otimes_\ZZ M^2 \cong \prod_\ell H^2(C_S(\ell))$ and that the diagram
\[ M^2 \xleftarrow{\iota_{21}} M_2 \xrightarrow{\theta_2'} \theta_3(X_{\ZZ}(A_F)) \xleftarrow{\theta_3} X_{\ZZ}(A_F)\]
constitutes a morphism in ${\rm Mod}^\ast(\ZZ[G])$. This morphism is surjective, has finite kernel and lies in a commutative diagram in ${\rm Mod}^\ast(\ZZ[G])$
\begin{equation}\label{derived diag2} \begin{CD} M^2 @> >> X_\ZZ(A_F)\\
              @V \iota_2 VV @VV V\\
              \prod_{\ell}H^2(C_S(\ell)) @> >> \hat\ZZ\otimes_\ZZ X_\ZZ(A_F)\end{CD}\end{equation}
in which the right hand vertical arrow is the inclusion map and the lower horizontal arrow corresponds to the diagram (\ref{derived diag}).

These observations show that we can apply \cite[Lem. 3.8]{bkk} in the desired way in order to obtain a complex $C_S(\mathcal{X})$ in  $D^{\rm perf}(\ZZ[G])$ that has $H^j({\rm SC}_{S}(A_{F/k};\mathcal{X})) = M^j$ for each $j$ in $\{1,2,3\}$ and satisfies all of the stated properties in claims (i)-(iv).

Turning to claim (v) we note that the given conditions on the modules $\mathcal{X}(v)$ imply that for each $v$ in $S\cap S_k^f$ there is a direct sum decomposition of finite modules
\[ \frac{A^t(F_v)}{\mathcal{X}(v)} = \frac{A^t(F_v)^\wedge_{\ell(v)}}{\mathcal{X}(v)} \oplus \bigoplus_{\ell \not= \ell(v)}A^t(F_v)^\wedge_\ell\]
and hence also a direct sum decomposition over all primes $\ell$ of the form
\[ \bigoplus_{v \in S\cap S_k^f} \frac{A^t(F_v)}{\mathcal{X}(v)} = \bigoplus_\ell \left( \bigoplus_{v \in S_k^\ell}\frac{A^t(F_v)^\wedge_{\ell(v)}}{\mathcal{X}(v)} \oplus \bigoplus_{v \in (S\cap S_k^f)\setminus S_k^\ell}A^t(F_v)^\wedge_\ell\right).\]

This shows that the kernel and cokernel of the map $\Delta_{S,\mathcal{X}}$ in claim (v) respectively coincide with the intersection over all primes $\ell$ of the kernel and the direct sum over all primes $\ell$ of the cokernel of the diagonal map
\[ A^t(F) \to \bigoplus_{v \in S_k^\ell}\frac{A^t(F_v)^\wedge_\ell}{\mathcal{X}(v)} \oplus \bigoplus_{v \in (S\cap S_k^f)\setminus S_k^\ell}A^t(F_v)^\wedge_\ell\]
that occurs in the sequence (\ref{useful1}) (with $p$ replaced by $\ell$ and $X$ by $\mathcal{X}(\ell)$).

Given this fact, the exact sequence follows from the commutativity of the diagrams (\ref{useful2 diagrams}) and (\ref{derived diag2}) and the exactness of the sequence (\ref{useful1}).
\end{proof}

\subsection{Global differentials and perfect Selmer structures}\label{perf sel sect} 

With a view to the subsequent formulation (in \S\ref{statement of conj section}) of our central conjecture we explain how a choice of global differentials gives rise to a natural perfect Selmer structure for $A$ and $F/k$.

In the sequel we shall for a natural number $m$ write $[m]$ for the (ordered) set of integers $i$ that satisfy $1 \le i\le m$.

\subsubsection{}\label{gamma section}For each $v$ in $S_k^\RR$ we fix ordered $\ZZ$-bases
\[ \{\gamma_{v,a}^+: a\in [d]\}\,\,\,\text{ and }\,\,\,\{\gamma_{v,a}^-: a\in [d]\}\]
of $H_1((A^t)^{\sigma_v}(\CC),\ZZ)^{c=1}$ and
%
%
of $H_1((A^t)^{\sigma_v}(\CC),\ZZ)^{c=-1}$ respectively, where $c$ denotes complex conjugation.

For each $v$ in $S_k^\CC$ we fix an ordered $\ZZ$-basis
\[ \{\gamma_{v,a}: a\in [2d]\}\]
of $H_1((A^t)^{\sigma_v}(\CC),\ZZ)$.

%

For each $v$ in $S_k^\infty$ we then fix $\tau_v\in G$ with $\tau_v(\sigma_v')=c\circ\sigma_v'$ and write $H_v(\gamma_\bullet)$ for the free $G$-module with basis
\begin{equation}\label{gamma basis}\begin{cases} \{ (1+\tau_v)\sigma_v'\otimes \gamma^+_{v,a} + (1-\tau_v)\sigma_v'\otimes \gamma^-_{v,a}: a \in [d]\}, &\text{ if $v$ is real}\\
                 \{\sigma_v'\otimes\gamma_{v,a}:a \in [2d]\}, &\text{ if $v$ is complex.}\end{cases}\end{equation}

The direct sum
\begin{equation}\label{25} H_\infty(\gamma_\bullet) := \bigoplus_{v \in S_k^\infty}H_v(\gamma_\bullet)\end{equation}
is then a free $G$-submodule of $H_\infty(A_{F/k})$ of finite $2$-power index.

To specify an ordered $\ZZ[G]$-basis of $H_\infty(\gamma_\bullet)$ we fix an ordering of $S_k^\infty$ and then order the union of the sets
 (\ref{gamma basis}) lexicographically.

\subsubsection{}\label{perf sel construct}We next fix a N\'eron model $\mathcal{A}^t$ for $A^t$ over $\mathcal{O}_k$ and, for each non-archimedean place $v$ of $k$, a N\'eron model $\mathcal{A}_v^t$ for $A^t_{/k_v}$ over $\mathcal{O}_{k_v}$.

For any subfield $E$ of $k$ and any non-archimedean place $v$ of $E$ we set $\mathcal{O}_{F,v}:=\prod_{w'\in S_F^v}\mathcal{O}_{F_{w'}}$.

For each non-archimedean place $v$ of $k$ we then set
\begin{equation}\label{mathcalD} \mathcal{D}_F(\mathcal{A}^t_v) :=  \mathcal{O}_{F,v}\otimes_{\mathcal{O}_{k_v}}\Hom_{\mathcal{O}_{k_v}}(H^0(\mathcal{A}_v^t,\Omega^1_{\mathcal{A}_v^t}), \mathcal{O}_{k_v}).\end{equation}
%


We finally fix an ordered $\QQ[G]$-basis $\omega_\bullet$ of the space of invariant differentials
\[ H^0(A^t_F,\Omega^1_{A^t_F}) \cong F\otimes_k H^0(A^t,\Omega^1_{A^t})\]
%
%
and write $\mathcal{F}(\omega_\bullet)$ for the $G$-module generated by the elements of $\omega_\bullet$. In the sequel we often identify $\omega_\bullet$ with its dual ordered $\QQ[G]$-basis in $\Hom_{F}(H^0(A^t_F,\Omega^1_{A^t_F}),F)$ and $\mathcal{F}(\omega_\bullet)$ with the $G$-module generated by this dual basis.

In the sequel, for any subfield $E$ of $k$ and any place $v$ in $S_E^f$ we set $F_v:=\prod_{w'\in S_F^v}F_{w'}$.

For each non-archimedean place $v$ of $k$ we write $\mathcal{F}(\omega_\bullet)_v$ for the $\ZZ_{\ell(v)}$-closure of the image of $\mathcal{F}(\omega_\bullet)$ in $F_{v}\otimes_k\Hom_{k}(H^0(A^t,\Omega^1_{A^t}), k)$ and
\begin{equation*}\label{classical exp} {\rm exp}_{A^t,F_v}: F_v\otimes_k \Hom_k(H^0(A^t,\Omega^1_{A^t}),k) \cong \Hom_{F_v}(H^0(A^t_{F_v},\Omega^1_{A^t_{F_v}}),F_v) \cong \QQ_{\ell(v)}\cdot A^t(F_v)^\wedge_{\ell(v)}\end{equation*}
for the exponential map of $A^t_{F_v}$ relative to some fixed $\mathcal{O}_{k_v}$-basis of $H^0(\mathcal{A}_v^t,\Omega^1_{\mathcal{A}_v^t})$.

Then, if necessary after multiplying each element of $\omega_\bullet$ by a suitable natural number, we may, and will, assume that the following conditions are satisfied:

\begin{itemize}
\item[(i$_{\omega_\bullet}$)] for each $v$ in $S_k^f$ one has $\mathcal{F}(\omega_\bullet)_{v}\subseteq \mathcal{D}_F(\mathcal{A}_v^t)$;
\item[(ii$_{\omega_\bullet}$)] for each $v$ in $S\cap S_k^f$, the map ${\rm exp}_{A^t,F_v}$ induces an isomorphism of $\mathcal{F}(\omega_\bullet)_{v}$ with a submodule of $A^t(F_v)$.
\end{itemize}

%
%
%
%
%
%

%
%

We then define $\mathcal{X}=\mathcal{X}_S(\omega_\bullet) = \mathcal{X}_S(\{\mathcal{A}^t_v\}_v,\omega_\bullet,\gamma_\bullet)$ to be the perfect Selmer structure for $A$, $F/k$ and $S$ that has the following properties:
\begin{itemize}
\item[(i$_\mathcal{X}$)] If $v\in S_k^\infty$, then $\mathcal{X}(v) = H_v(\gamma_\bullet)$.
\item[(ii$_\mathcal{X}$)] If $v \in S\cap S_k^f$, then $\mathcal{X}(v) = {\rm exp}_{A^t,F_v}(\mathcal{F}(\omega_\bullet)_v)$.
\item[(iii$_\mathcal{X}$)] If $v \notin S$, then $\mathcal{X}(v) = A^t(F_v)^\wedge_{\ell(v)}$.
\end{itemize}

\begin{remark}{\em This specification does define a perfect Selmer structure for $A$ and $F/k$ since if $v$ does not belong to $S$, then the $G$-module $A^t(F_v)^\wedge_{\ell(v)}$ is cohomologically-trivial (by Lemma \ref{useful prel}(ii) below).} \end{remark}

\begin{remark}\label{can structure groups}{\em The perfect Selmer structure $\mathcal{X}_S(\omega_\bullet) = \mathcal{X}_S(\{\mathcal{A}^t_v\}_v,\omega_\bullet,\gamma_\bullet)$ defined above satisfies the conditions of Proposition \ref{prop:perfect2}(v). As a consequence, if one ignores finite modules of $2$-power order, then the cohomology modules of the Selmer complex
$C_S(\mathcal{X}(\omega_\bullet)) = {\rm SC}_{S}(A_{F/k};\mathcal{X}_S(\omega_\bullet))$ can be described as follows: \

\noindent{} - $H^1(C_S(\mathcal{X}(\omega_\bullet)))$ is the submodule of $A^t(F)$ comprising all elements $x$ with the property that, for each $v$ in $S$, the image of $x$ in $A^t(F_v)$ belongs to the subgroup ${\rm exp}_{A^t,F_v}(\mathcal{F}(\omega_\bullet)_v)$.

\noindent{} - $H^2(C_S(\mathcal{X}(\omega_\bullet)))$ is an extension of the integral Selmer group $X_\ZZ(A_F)$ by the (finite) cokernel of the diagonal map $A^t(F) \to \bigoplus_{v \in S}\bigl(A^t(F_v)/{\rm exp}_{A^t,F_v}(\mathcal{F}(\omega_\bullet)_v)\bigr)$.

\noindent{} - $H^3(C_S(\mathcal{X}(\omega_\bullet)))$ is equal to $(A(F)_{\rm tor})^\vee$.}\end{remark}

%

\section{The refined Birch and Swinnerton-Dyer Conjecture}\label{ref bsd section}

In this section we formulate (as Conjecture \ref{conj:ebsd}) a precise refinement of the Birch and Swinnerton-Dyer Conjecture.

\subsection{Relative $K$-theory} For the reader's convenience, we first quickly review some relevant facts of algebraic $K$-theory.

\subsubsection{}\label{Relative $K$-theory}

For a Dedekind domain $R$ with field of fractions $F$, an $R$-order $\mathfrak{A}$ in a finite dimensional separable $F$-algebra $A$ and a field extension $E$ of $F$ we set $A_E := E\otimes_F A$.

The relative algebraic $K_0$-group $K_0(\mathfrak{A},A_E)$ of the ring inclusion $\mathfrak{A}\subset A_E$ is described explicitly in terms of generators and relations by Swan in \cite[p. 215]{swan}.

For any extension field $E'$ of $E$ there exists a canonical commutative diagram
\begin{equation} \label{E:kcomm}
\begin{CD} K_1(\mathfrak{A}) @> >> K_1(A_{E'}) @> \partial_{\mathfrak{A},A_{E'}} >> K_0(\mathfrak{A},A_{E'}) @> \partial'_{\mathfrak{A},A_{E'}} >> K_0(\mathfrak{A})\\
@\vert @A\iota AA @A\iota' AA @\vert\\
K_1(\mathfrak{A}) @> >> K_1(A_E) @> \partial_{\mathfrak{A},A_E}  >> K_0(\mathfrak{A},A_E) @> \partial'_{\mathfrak{A},A_E}  >> K_0(\mathfrak{A})
\end{CD}
\end{equation}
in which the upper and lower rows are the respective long exact sequences in relative $K$-theory of the inclusions $\mathfrak{A}\subset A_E$ and $\mathfrak{A}\subset A_{E'}$ and both of the vertical arrows are injective and induced by the inclusion $A_E \subseteq A_{E'}$. (For more details see \cite[Th. 15.5]{swan}.)


In particular, if $R = \ZZ$ and for each prime $\ell$ we set $\mathfrak{A}_\ell := \ZZ_\ell\otimes_\ZZ \mathfrak{A}$ and $A_\ell:=
\QQ_\ell\otimes _\QQ A$, then we can regard each group $K_0(\mathfrak{A}_\ell,A_\ell)$ as a subgroup of $K_0(\mathfrak{A},A)$ by means of the canonical composite homomorphism
\begin{equation}\label{decomp}
\bigoplus_\ell K_0(\mathfrak{A}_\ell,A_\ell) \cong K_0(\mathfrak{A},A)\subset K_0(\mathfrak{A},A_\RR),
\end{equation}
where $\ell$ runs over all primes, the isomorphism is as described in the discussion following \cite[(49.12)]{curtisr} and the inclusion is induced by the relevant case of $\iota'$.

For an element $x$ of $K_0(\mathfrak{A},A)$ we write $(x_\ell)_\ell$ for its image in $\bigoplus_\ell K_0(\mathfrak{A}_\ell,A_\ell)$ under the isomorphism in (\ref{decomp}).

Then, if $G$ is a finite group and $E$ is a field of characteristic zero, taking reduced norms over the semisimple algebra $E[G]$ induces (as per the discussion in \cite[\S 45A]{curtisr}) an injective homomorphism
\[ {\rm Nrd}_{E[G]}: K_1(E[G]) \to \zeta(E[G])^\times. \]
This homomorphism is bijective if $E$ is either algebraically closed or complete.

\subsubsection{}\label{nad sec} We shall also use a description of $K_0(\mathfrak{A},A_E)$ in terms of the formalism of `non-abelian determinants' that is given by Fukaya and Kato in \cite[\S1]{fukaya-kato}.

We recall, in particular, that any pair comprising an object $C$ of $D^{\rm perf}(\mathfrak{A})$ and a morphism of non-abelian determinants $\theta: {\rm Det}_{A_E}(E\otimes_R C) \to {\rm Det}_{A_E}(0)$ gives rise to a canonical element of $K_0(\mathfrak{A},A_E)$ that we shall denote by $\chi_\mathfrak{A}(C,\theta)$.

If $E\otimes_RC$ is acyclic, then one obtains in this way a canonical element $\chi_\mathfrak{A}(C,0)$ of $K_0(\mathfrak{A},A_E)$.

More generally, if $E\otimes_RC$ is acyclic outside of degrees $a$ and $a+1$ for any integer $a$, then a choice of isomorphism of $A_E$-modules $h: E\otimes_RH^a(C) \cong E\otimes_RH^{a+1}(C)$ gives rise to a morphism $h^{\rm det}: {\rm Det}_{A_E}(E\otimes_R C) \to {\rm Det}_{A_E}(0)$ of non-abelian determinants and we set
\[ \chi_\mathfrak{A}(C,h) := \chi_\mathfrak{A}(C,h^{\rm det}).\]

We recall the following general result concerning these elements (which follows directly from \cite[Lem. 1.3.4]{fukaya-kato}) since it will be used often in the sequel.

\begin{lemma}\label{fk lemma} Let $C_1 \to C_2 \to C_3 \to C_1[1]$ be an exact triangle in $D^{\rm perf}(\mathfrak{A})$ that satisfies the following two conditions:
\begin{itemize}
\item[(i)] there exists an integer $a$ such that each $C_i$ is acyclic outside degrees $a$ and $a+1$;
\item[(ii)] there exists an exact commutative diagram of  $A_E$-modules
\[\begin{CD}
0 @> >> E\otimes_R H^a(C_1) @> >> E\otimes_R H^a(C_2) @> >> E\otimes_R H^a(C_3) @> >> 0\\
@. @V h_1VV @V h_2VV @V h_3VV \\
0 @> >> E\otimes_R H^{a+1}(C_1) @> >> E\otimes_R H^{a+1}(C_2) @> >> E\otimes_R H^{a+1}(C_3) @> >> 0\end{CD}\]
in which each row is induced by the long exact cohomology sequence of the given exact triangle and each map $h_i$ is bijective.
\end{itemize}

Then in $K_0(\mathfrak{A},A_E)$ one has $\chi_\mathfrak{A}(C_2,h_2) = \chi_\mathfrak{A}(C_1,h_1) + \chi_\mathfrak{A}(C_3,h_3)$.
\end{lemma}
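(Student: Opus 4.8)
The plan is to reduce the statement to the cited result \cite[Lem. 1.3.4]{fukaya-kato}, which is the multiplicativity of the canonical element construction $\chi_{\mathfrak A}(-,-)$ along exact triangles, by showing that the data $(h_1,h_2,h_3)$ assemble into a morphism of exact triangles compatible with the chosen non-abelian determinant trivialisations. First I would recall that, under hypothesis (i), each $E\otimes_R C_i$ is acyclic outside the two consecutive degrees $a$ and $a+1$, so that each chosen bijection $h_i\colon E\otimes_R H^a(C_i)\xrightarrow{\sim} E\otimes_R H^{a+1}(C_i)$ induces, as explained in \S\ref{nad sec}, a morphism of non-abelian determinants $h_i^{\det}\colon {\rm Det}_{A_E}(E\otimes_R C_i)\to {\rm Det}_{A_E}(0)$, and hence an element $\chi_{\mathfrak A}(C_i,h_i)=\chi_{\mathfrak A}(C_i,h_i^{\det})$ of $K_0(\mathfrak A,A_E)$. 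The content to be verified is then the additivity relation $\chi_{\mathfrak A}(C_2,h_2^{\det})=\chi_{\mathfrak A}(C_1,h_1^{\det})+\chi_{\mathfrak A}(C_3,h_3^{\det})$.

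Next I would use the exact triangle $C_1\to C_2\to C_3\to C_1[1]$ to obtain the canonical isomorphism of non-abelian determinants ${\rm Det}_{A_E}(E\otimes_R C_2)\cong {\rm Det}_{A_E}(E\otimes_R C_1)\otimes {\rm Det}_{A_E}(E\otimes_R C_3)$, and observe that by \cite[Lem. 1.3.4]{fukaya-kato} the additivity $\chi_{\mathfrak A}(C_2,\theta_2)=\chi_{\mathfrak A}(C_1,\theta_1)+\chi_{\mathfrak A}(C_3,\theta_3)$ holds for \emph{any} triple of determinant-trivialisations $\theta_i$ whose tensor product is compatible with this canonical isomorphism. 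It therefore suffices to check that the trivialisations $h_i^{\det}$ arising from the $h_i$ are compatible in exactly this sense. Here I would pass from the (triangulated) level of determinants to the (abelian) level of cohomology: since each $E\otimes_R C_i$ has cohomology concentrated in degrees $a$ and $a+1$, its determinant is canonically ${\rm Det}_{A_E}(E\otimes_R H^a(C_i))^{(-1)^a}\otimes {\rm Det}_{A_E}(E\otimes_R H^{a+1}(C_i))^{(-1)^{a+1}}$, and the long exact cohomology sequence of the triangle degenerates (again by the concentration in two consecutive degrees) into precisely the pair of short exact sequences displayed in hypothesis (ii). Multiplicativity of determinants along these short exact sequences then identifies the canonical determinant isomorphism for the triangle with the tensor product of the determinant isomorphisms of the two rows in (ii); and the commutativity of that diagram, together with the compatibility of $h^{\det}$ with short exact sequences of modules, shows that $h_2^{\det}$ corresponds under this identification to $h_1^{\det}\otimes h_3^{\det}$. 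Feeding this into \cite[Lem. 1.3.4]{fukaya-kato} gives the claimed identity in $K_0(\mathfrak A,A_E)$.

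The main obstacle I anticipate is purely bookkeeping rather than conceptual: one must be careful that the sign conventions $(-1)^a$ in the passage from a two-term complex to the tensor product of the determinants of its cohomology, and the direction of the connecting maps in the long exact sequence, are consistently handled so that the degeneration of the cohomology sequence really does produce the two short exact sequences of hypothesis (ii) with the maps $h_i$ placed as vertical arrows (and not, say, their inverses or shifts). Once the diagram in (ii) is matched correctly with the determinant-level compatibility coming from the triangle, the result is an immediate application of the cited lemma, so I would keep this verification brief and refer to \cite[\S1]{fukaya-kato} for the formal properties of non-abelian determinants that make each individual compatibility check routine.
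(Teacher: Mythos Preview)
Your proposal is correct and follows exactly the approach indicated in the paper, which simply records that the result ``follows directly from \cite[Lem. 1.3.4]{fukaya-kato}'' without further detail. Your unpacking of how hypothesis (ii) ensures the determinant-trivialisations $h_i^{\det}$ are compatible with the canonical isomorphism coming from the triangle is the substance behind that citation, and the sign and bookkeeping concerns you flag are handled by the formal properties in \cite[\S1]{fukaya-kato}.
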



%
%
%
%
%
%

\begin{remark}\label{comparingdets}{\em If $\mathfrak{A}$ is commutative, then $K_0(\mathfrak{A},A_E)$ identifies with the multiplicative group of invertible $\mathfrak{A}$-submodules of $A_E$. If, in this case, $C$ is acyclic outside degrees one and two, then for any isomorphism of $A_E$-modules $h: E\otimes_RH^1(C)\to E\otimes_RH^2(C)$ one finds that the element $\chi_{\mathfrak{A}}(C,h)$ defined above corresponds under this identification to the inverse of the ideal $\vartheta_{h}({\rm Det}_{\mathfrak{A}}(C))$ that is defined in \cite[Def. 3.1]{bst}.}\end{remark}

For convenience, we shall often abbreviate the notations $\chi_{\ZZ[G]}(C,h)$ and $\chi_{\ZZ_p[G]}(C,h)$ to $\chi_G(C,h)$ and $\chi_{G,p}(C,h)$ respectively.

When the field $E$ is clear from context, we also write $\partial_{G}$, $\partial'_{G}$, $\partial_{G,p}$ and $\partial'_{G,p}$ in place of $\partial_{\ZZ[G],E[G]}$, $\partial'_{\ZZ[G],E[G]}$, $\partial_{\ZZ_p[G],E[G]}$ and $\partial'_{\ZZ_p[G],E[G]}$ respectively.



\subsection{Statement of the conjecture}\label{statement of conj section}

In the sequel we fix a finite set of places $S$ of $k$ with
\begin{equation}\label{revisionS}S_k^\infty\cup S_k^F \cup S_k^A\subseteq S.\end{equation} We also fix cycles $\gamma_\bullet$ and differentials $\omega_\bullet$ as in \S\ref{perf sel sect}.

\subsubsection{}\label{revisionO}
%

We write $\Omega_{\omega_\bullet}(A_{F/k})$ for the element of $K_1(\RR[G])$ that is represented by the matrix of the canonical `period' isomorphism of $\RR[G]$-modules
\begin{multline*} \RR\otimes_\ZZ H_\infty(\gamma_\bullet) = \RR\otimes_\ZZ \bigoplus_{v \in S_k^\infty}H^0(k_v,Y_{v,F}\otimes_{\ZZ}H_1((A^t)^{\sigma_v}(\CC),\ZZ))\\
 \cong \RR\otimes_{\QQ} \Hom_F(H^0(A_F^t,\Omega^1_{A_F^t}),F),\end{multline*}
with respect to the ordered $\ZZ[G]$-basis of the module $H_\infty(\gamma_\bullet)$ in (\ref{25}) that is specified by (\ref{gamma basis}), and with respect to the ordered $\QQ[G]$-basis $\omega_\bullet$ of $ \Hom_F(H^0(A_F^t,\Omega^1_{A_F^t}),F)$.

This element $\Omega_{\omega_\bullet}(A_{F/k})$ constitutes a natural `$K$-theoretical period' and can be explicitly computed in terms of the classical periods that are associated to $A$ (see Lemma \ref{k-theory period} below).


To take account of the local behaviour of the differentials $\omega_\bullet$ we define a $G$-module 
\begin{equation}\label{revisionQ} \mathcal{Q}(\omega_\bullet)_S := \bigoplus_{v \notin S} \mathcal{D}_F(\mathcal{A}_v^t)/\mathcal{F}(\omega_\bullet)_v,\end{equation}
where $v$ runs over all places of $k$ that do not belong to $S$ (and each module is as in \S \ref{perf sel construct}).

It is easily seen that almost all terms in this direct sum vanish and hence that $\mathcal{Q}(\omega_\bullet)_S$ is finite. This $G$-module is also cohomologically-trivial since $\mathcal{D}_F(\mathcal{A}_v^t)$ and $\mathcal{F}(\omega_\bullet)_v$ are both free $\ZZ_{\ell(v)}[G]$-modules for each $v$ outside $S$.

Assuming the Tate-Shafarevich group $\sha(A_{F})$ to be finite, we can therefore use Proposition \ref{prop:perfect2} to define an object of $D^{\rm perf}(\ZZ[G])$ by setting
\begin{equation}\label{revisionSC} {\rm SC}_{S,\omega_\bullet}(A_{F/k}) := {\rm SC}_S(A_{F/k},\mathcal{X}_S(\omega_\bullet)) \oplus \mathcal{Q}(\omega_\bullet)_S[0],\end{equation}
where we abbreviate the perfect Selmer structure $\mathcal{X}_S(\{\mathcal{A}^t_v\}_v,\omega_\bullet,\gamma_\bullet)$ defined by the conditions (i$_\mathcal{X}$), (ii$_\mathcal{X}$) and (iii$_\mathcal{X}$) in \S\ref{perf sel sect} to $\mathcal{X}_S(\omega_\bullet)$.

We next write
 \[
h_{A,F}: A(F)\times A^t(F) \to \RR
\]
for the classical N\'eron-Tate height-pairing for $A$ over $F$.

This pairing is non-degenerate and hence, again assuming $\sha(A_{F})$ to be finite, it combines with the properties of the Selmer complex
${\rm SC}_{S}(A_{F/k},\mathcal{X}(\omega_\bullet))$ established in Proposition \ref{prop:perfect2} (i) and Proposition \ref{prop:perfect}(ii) to induce a canonical isomorphism of $\RR[G]$-modules
%
\begin{multline} \label{height triv}
 h_{A,F}': \RR\otimes_\ZZ H^1({\rm SC}_{S,\omega_\bullet}(A_{F/k})) = \RR\otimes_\ZZ A^t(F)\\ \cong \RR\otimes_\ZZ\Hom_\ZZ(A(F),\ZZ) =   \RR\otimes_\ZZ H^2({\rm SC}_{S,\omega_\bullet}(A_{F/k})).\end{multline}
This isomorphism then gives rise via the formalism recalled in \S\ref{nad sec} to a canonical element
\[ \chi_{G}({\rm SC}_{S,\omega_\bullet}(A_{F/k}),h_{A,F}) := \chi_{G}({\rm SC}_{S,\omega_\bullet}(A_{F/k}),h'_{A,F})\]
of the relative algebraic $K$-group $K_0(\ZZ[G],\RR[G])$.

Our conjecture will predict an explicit formula for this element in terms of Hasse-Weil-Artin $L$-series.

%
%


\subsubsection{}For every prime $\ell$ the reduced norm maps ${\rm Nrd}_{\QQ_\ell[G]}$ and ${\rm Nrd}_{\CC_\ell[G]}$ discussed in \S\ref{Relative $K$-theory} are bijective and so there exists a composite homomorphism
\begin{equation}\label{G,O hom} \delta_{G,\ell}: \zeta(\CC_\ell[G])^\times \to K_1(\CC_\ell[G]) \xrightarrow{\partial_{\ZZ_\ell[G],\CC_\ell[G]}}
K_0(\ZZ_\ell[G],\CC_\ell[G]) \end{equation}
in which the first map is the inverse of ${\rm Nrd}_{\CC_\ell[G]}$. This homomorphism maps $\zeta(\QQ_\ell[G])^\times$ to the subgroup $K_0(\ZZ_\ell[G],\QQ_\ell[G])$ of $K_0(\ZZ[G],\QQ[G])$.

If now $v$ is any place of $k$ that does not belong to $S$, then $v$ is unramified in $F/k$ and so the finite $G$-modules $$\kappa_{F_v}:=\prod_{w'\in S_F^v}\kappa_{F_{w'}}\,\,\,\, \text{ and } \,\,\,\,\tilde A^t_v(\kappa_{F_v}):=\prod_{w'\in S_F^v}\tilde A^t(\kappa_{F_{w'}})$$ are both cohomologically-trivial by  Lemma \ref{useful prel}(i) below. Here for any place $w'$ in $S_F^v$, $\tilde A^t$ denotes the reduction of $A^t_{/F_{w'}}$ to $\kappa_{F_{w'}}$.

For any such $v$ we may therefore define an element of the subgroup $K_0(\ZZ_{\ell(v)}[G],\QQ_{\ell(v)}[G])$ of $K_0(\ZZ[G],\QQ[G])$ by setting
\begin{equation}\label{localFM} \mu_{v}(A_{F/k}) := \chi_{G,\ell(v)}\bigl(\kappa_{F_v}^d[0]\oplus\tilde A^t_v(\kappa_{F_v})_{\ell(v)}[-1],0\bigr)-\delta_{G,\ell(v)}(L_v(A,F/k))\end{equation}
where $L_v(A,F/k)$ is the element of $\zeta(\QQ[G])^\times$ that is equal to the value at $z=1$ of the $\zeta(\CC[G])$-valued $L$-factor at $v$ of the motive $h^1(A_{F})(1)$, regarded as defined over $k$ and with coefficients $\QQ[G]$, as discussed in \cite[\S4.1]{bufl99}.

The sum
\begin{equation}\label{revisionMU} \mu_{S}(A_{F/k}) := \sum_{v\notin S}\mu_{v}(A_{F/k})\end{equation}
will play an important role in our conjecture.

We shall refer to this sum as the `Fontaine-Messing correction term' for the data $A, F/k$ and $S$ since, independently of any conjecture, the theory developed by Fontaine and Messing in \cite{fm} implies that $\mu_{v}(A_{F/k})$ vanishes for all but finitely many $v$ and hence that $\mu_S(A_{F/k})$ is a well-defined element of $K_0(\ZZ[G],\QQ[G])$. (For details see Lemma \ref{fm} below).

\subsubsection{}We write $\widehat{G}$ for the set of irreducible complex characters of $G$. In the sequel, for each $\psi$ in $\widehat{G}$ we fix a $\CC[G]$-module $V_\psi$ of character $\psi$.

We recall that a character $\psi$ in $\widehat{G}$ is said to be `symplectic' if the subfield of
$\bc$ that is generated by the values of $\psi$ is totally real
and $\End_{\br [G]}(V_\psi)$ is isomorphic to the division ring
of real Quaternions. We write $\widehat{G}^{\rm s}$ for the subset of
$\widehat{G}$ comprising such characters.

For each $\psi$ in $\widehat{G}$ we write $\check\psi$ for its contragredient character and
\begin{equation}\label{revisionIDEM} e_\psi:=\frac{\psi(1)}{|G|}\sum_{g\in G}\psi(g^{-1})g\end{equation}
for the associated central primitive idempotents of $\CC[G]$.

These idempotents induce an identification of $\zeta(\CC[G])$ with $\prod_{\widehat{G}}\CC$ and we write $x = (x_\psi)_\psi$ for the corresponding decomposition of each element $x$ of $\zeta(\CC[G])$.

For each $\psi$ in $\widehat{G}$ we write $L_{S}(A,\psi,z)$ for the Hasse-Weil-Artin $L$-series of $A$ and $\psi$, truncated by removing the Euler factors corresponding to places in $S$.

We can now state the central conjecture of this article.

\begin{conjecture}\label{conj:ebsd} 
The following claims are valid.
\begin{itemize}
\item[(i)] The group $\sha(A_F)$ is finite.
\item[(ii)] For all $\psi$ in $\widehat{G}$ the function $L(A,\psi,z)$ has an analytic continuation to $z=1$ where it has a zero of order
 $\psi(1)^{-1}\cdot {\rm dim}_{\CC}(e_\psi(\CC\otimes_\ZZ A^t(F)))$.
\item[(iii)] For all $\psi$ in $\widehat{G}^{\rm s}$ the leading coefficient $L^*_S(A,\psi,1)$ at $z=1$ of the function $L_S(A,\psi,z)$ is a strictly positive real number. In particular, there exists a unique element $L^*_{S}(A_{F/k},1)$ of $K_1(\RR[G])$ with
\[ {\rm Nrd}_{\RR[G]}(L_S^*(A_{F/k},1))_\psi = L_S^*(A,\check\psi,1)\]
for all $\psi$ in $\widehat{G}$.
\item[(iv)] In $K_0(\ZZ[G],\RR[G])$ one has
\[ \partial_G\left(\frac{L_S^*(A_{F/k},1)}{\Omega_{\omega_\bullet}(A_{F/k})}\right) = \chi_G({\rm SC}_{S,\omega_\bullet}(A_{F/k}),h_{A,F}) + \mu_{S}(A_{F/k}).\]
\end{itemize}
\end{conjecture}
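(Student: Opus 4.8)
The statement is the central conjecture of the article, so an unconditional proof is not available: claim (iv) specialises to ${\rm BSD}(A_k)$ when $F=k$ and in general implies the equivariant Tamagawa number conjecture (\ref{etnc eq}) for $h^1(A_F)(1)$ with coefficients $\ZZ[G]$. The realistic goal, pursued in \S\ref{tmc}, \S\ref{mod sect} and \S\ref{HHP}, is to establish the conjecture in every case in which its ingredients are available, and to do so one first disposes of claims (i)--(iii). Claims (i) and (ii) --- finiteness of $\sha(A_F)$ and the Deligne--Gross prediction (\ref{dg equality}) for the orders of vanishing --- are exactly the hypotheses needed to make $L^\ast_S(A_{F/k},1)$ well defined; in the stated generality they are open, but both hold whenever each $\psi$-isotypic piece of $A^t(F)$ has analytic rank at most one, in particular when $F/k$ is dihedral over an imaginary quadratic field and $A$ satisfies the Heegner hypothesis (by Zhang's generalisation of the Gross--Zagier formula and Kolyvagin's argument, the setting of \S\ref{HHP}) and when $k=\QQ$, $F/\QQ$ is abelian and $L(A,\psi,1)\neq0$ for all $\psi$ (by Kato's Euler system \cite{kato}, the setting of \S\ref{mod sect}). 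Claim (iii) asserts, and the functional equation together with standard positivity properties of symplectic-type $L$-functions should yield, that $L^\ast_S(A,\psi,1)$ is a positive real number for symplectic $\psi$: here $\check\psi=\psi$, the Galois representation $V_\ell(A)\otimes V_\psi$ is orthogonally self-dual (the Weil pairing makes $V_\ell(A)$ symplectic and the tensor product of two symplectic representations is orthogonal), the Dirichlet coefficients and the archimedean factor are real, and the sign of the leading coefficient is controlled by the functional-equation sign and the positivity of the Euler factors in the region of absolute convergence; granting this, the existence and uniqueness of the lift $L^\ast_S(A_{F/k},1)$ to $K_1(\RR[G])$ is the Hasse--Schilling--Maass norm theorem.

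With (i)--(iii) in hand, the substance is claim (iv), and I would attack it in two complementary ways. First, at the level of the whole identity, I would reduce it to (\ref{etnc eq}) by the comparison carried out in Appendix \ref{consistency section}: unwinding the definitions of the Nekov\'a\v r--Selmer complex ${\rm SC}_{S,\omega_\bullet}(A_{F/k})$, of the $K_1$-valued period $\Omega_{\omega_\bullet}(A_{F/k})$, and of the Fontaine--Messing term $\mu_S(A_{F/k})$, one shows that the conjectured equality differs from (\ref{etnc eq}) only by terms that cancel identically. The key inputs are the independence of the auxiliary set $\Sigma$ (Lemma \ref{independenceofsigma} together with Remark \ref{indeptremark}), the explicit description of the cohomology of the Selmer complexes given in (\ref{bksc cohom}) and Proposition \ref{prop:perfect2}, and the computation of the $K_1$-period in terms of classical periods and Galois--Gauss sums from \S\ref{k theory period sect}; together these ensure that the choices of $S$, of the differentials $\omega_\bullet$, of the cycles $\gamma_\bullet$ and of the N\'eron models all drop out. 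Granting this reduction, claim (iv) follows in every case in which (\ref{etnc eq}) is known --- in particular for the dihedral, unramified-over-imaginary-quadratic families treated in \cite{bmw}, and, via \cite[Th. 8.4]{BV2}, from the main conjecture of non-commutative Iwasawa theory whenever $A$ has good ordinary reduction at the relevant primes and the classical $p$-adic height pairing is non-degenerate.

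Second --- and this is the genuinely new contribution --- for abelian $F/k$ I would prove the $p$-primary component of claim (iv) by an explicit $K$-theoretic computation. Under the mild hypotheses of \S\ref{tmc} the classical Selmer complex is perfect over $\ZZ_p[G]$; the results of \S\ref{k theory period sect} and \S\ref{local points section} then make the image under $\partial_{G,p}$ of the quotient $L^\ast_S(A_{F/k},1)/\Omega_{\omega_\bullet}(A_{F/k})$ explicit in terms of $p$-adic logarithmic resolvents and suitably modified Galois--Gauss sums, while a strengthening of the computations of \cite{bmw}, combined with the identification of the Mazur--Tate pairing with a Nekov\'a\v r height pairing proved in \S\ref{comparison section}, makes $\chi_{G,p}({\rm SC}_{S,\omega_\bullet}(A_{F/k}),h_{A,F})$ explicit. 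Comparing the two and applying the general machinery of \cite{bst} then reduces claim (iv) for abelian $F/k$ to a family of $p$-adic congruences between normalised leading coefficients --- precisely those displayed in (\ref{first predict})--(\ref{examplecongruent}) --- which can then be checked, and have been checked numerically by Bley in the cases discussed in \S\ref{mod sect} and \S\ref{HHP}.

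The main obstacle is that claim (iv) cannot be proved unconditionally, since it implies (\ref{etnc eq}); the realistic difficulties are twofold. On the one hand, the reduction to (\ref{etnc eq}) requires tracking, through the formalism of non-abelian determinants, every normalisation factor --- the Fontaine--Messing correction terms, the Galois--Gauss sum contributions built into $\Omega_{\omega_\bullet}(A_{F/k})$, and the $2$-primary discrepancies flagged in Propositions \ref{prop:perfect} and \ref{prop:perfect2} --- and verifying that nothing survives. On the other hand, the explicit $p$-adic computation of \S\ref{tmc} rests on the still largely conjectural arithmetic of wildly ramified Galois--Gauss sums, namely the local epsilon-constant conjecture of \cite{fukaya-kato}, which is exactly what is needed to evaluate the $K$-theoretic invariant of the twist matrix introduced in \S\ref{local points section}; this wild-ramification input is where the deepest work is required.
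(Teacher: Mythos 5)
The statement you are addressing is the paper's central conjecture, and the paper itself offers no proof of it: what it provides is (a) the consistency results and the equivalence, proved in Appendix \ref{consistency section} (Proposition \ref{etnc} and Corollary \ref{rbsd=etnc}), between the \emph{weaker} variant of the conjecture described in Remark \ref{weaker BSD} and the equivariant Tamagawa number conjecture for $(h^1(A_F)(1),\ZZ[G])$, (b) the explicit reinterpretation of the $p$-component of claim (iv) under the hypotheses of \S\ref{tmc} (Theorem \ref{bk explicit}), and (c) verifications in the special settings of \S\ref{mod sect} and \S\ref{HHP}. Your outline reproduces this programme essentially as the paper carries it out, so as a description of the available evidence and of the reduction strategy it is on target; but it should be presented as exactly that, not as a proof.

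One point in your treatment is genuinely wrong rather than merely incomplete: your argument for claim (iii). The positivity of $L^*_S(A,\psi,1)$ for symplectic $\psi$ does not follow from orthogonal self-duality of $V_\ell(A)\otimes V_\psi$, the reality of the Dirichlet coefficients and the sign of the functional equation; these facts only force the leading coefficient to be real, and control neither its sign at the central point nor the order of vanishing needed to interpret ``leading coefficient''. The paper is explicit that the first assertion of claim (iii) is motivated by the Generalized Riemann Hypothesis in the sense of Deninger (\cite[(7.5)]{den}) and, precisely because no unconditional argument is known, formulates the GRH-free variant of Remark \ref{weaker BSD} by replacing $\partial_G(L^*_S(A_{F/k},1)/\Omega_{\omega_\bullet}(A_{F/k}))$ with $\delta_G(\calL^*_S(A_{F/k},1))-\partial_G(\Omega_{\omega_\bullet}(A_{F/k}))$ via the extended boundary map $\delta_G$. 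Correspondingly, you should be careful that it is this weaker version, not the full conjecture including (iii), that Corollary \ref{rbsd=etnc} proves to be equivalent to (\ref{etnc eq}); the full statement is strictly stronger, and your assertion that claim (iv) ``implies'' the eTNC is accurate only once claim (iii), or the reformulation of Remark \ref{weaker BSD}, is in place. The remainder of your sketch --- independence of $S$ and $\omega_\bullet$, the descent to $p$-primary components via Lemma \ref{pro-p lemma}, the explicit computation of \S\ref{tmc} resting on the conjectural behaviour of wildly ramified Galois--Gauss sums and the local epsilon-constant conjecture, and the numerical and theoretical evidence of \S\ref{mod sect}--\S\ref{HHP} --- matches the paper's own route and needs no change beyond this recalibration of what is conditional on what.
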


In the sequel we shall refer to this conjecture as the `Birch and Swinnerton-Dyer Conjecture for the pair $(A,F/k)$' and abbreviate it to ${\rm BSD}(A_{F/k})$.



\begin{remark}{\em The assertion of ${\rm BSD}(A_{F/k})$(i) is the celebrated Shafarevich-Tate conjecture. The quantity $\psi(1)^{-1}\cdot {\rm dim}_{\CC}(e_\psi(\CC\otimes_\ZZ A^t(F)))$ is equal to the multiplicity with which the character $\psi$ occurs in the rational representation $\QQ\otimes_\ZZ A^t(F)$ of $G$ (and hence to the right hand side of the equality (\ref{dg equality})) and so the assertion of ${\rm BSD}(A_{F/k})$(ii) coincides with a conjecture of Deligne and Gross (cf. \cite[p. 127]{rohrlich}).}\end{remark}

\begin{remark}{\em Write $\tau$ for complex conjugation. Then, by the Hasse-Schilling-Maass Norm Theorem (cf. \cite[(7.48)]{curtisr}), the image of ${\rm Nrd}_{\RR[G]}$ is the subset of $\prod_{\widehat{G}}\CC^\times$ comprising $x$ with the property that $x_{\psi^\tau} = \tau(x_\psi)$ for all $\psi$ in $\widehat{G}$ and also that $x_\psi$ is a strictly positive real number for all $\psi$ in $\widehat{G}^{\rm s}$. This means that the second assertion of ${\rm BSD}(A_{F/k})$(iii) follows immediately from the first assertion, the injectivity of ${\rm Nrd}_{\RR[G]}$ and the fact that $L_S^*(A,\psi^\tau,1) = \tau(L_S^*(A,\psi,1))$ for all $\psi$ in $\widehat{G}$.

The first assertion of ${\rm BSD}(A_{F/k})$(iii) is itself motivated by the fact that if $\psi$ belongs to $\widehat{G}^{\rm s}$, and $[\psi]$ denotes the associated Artin motive over $k$, then one can show that $L^*_S(A,\psi,1)$ is a strictly positive real number whenever the motive $h^1(A)\otimes [\psi]$ validates the `Generalized Riemann Hypothesis' discussed by Deninger in \cite[(7.5)]{den}. However, since this fact does not itself provide any more evidence for ${\rm BSD}(A_{F/k})$(iii) we omit the details.} \end{remark}

\begin{remark}\label{weaker BSD}{\em It is possible to formulate a version of ${\rm BSD}(A_{F/k})$ that omits claim (iii) and hence avoids any possible reliance on the validity of the Generalized Riemann Hypothesis. To do this we recall that the argument of \cite[\S4.2, Lem. 9]{bufl99} constructs a canonical `extended boundary homomorphism' of relative $K$-theory $\delta_G: \zeta(\RR[G])^\times \to K_0(\ZZ[G],\RR[G])$ that lies in a commutative diagram

\[ \xymatrix{
K_1(\RR[G]) \ar@{^{(}->}[d]^{{\rm Nrd}_{\RR[G]}} \ar[rr]^{\hskip -0.2truein\partial_G} & & K_0(\ZZ[G],\RR[G])\\
\zeta(\RR[G])^\times . \ar[urr]^{\delta_G}}\]
%

Hence, to obtain a version of the conjecture that omits claim (iii) one need only replace the term on the left hand side of the equality in claim (iv) by the difference
\[ \delta_G\bigl(\calL_S^*(A_{F/k},1)\bigr) - \partial_G\bigl(\Omega_{\omega_\bullet}(A_{F/k})\bigr)\]
where $\calL_S^*(A_{F/k},1)$ denotes the element of $\zeta(\RR[G])^\times$ with $\calL_S^*(A_{F/k},1)_\psi = L_S^*(A,\check\psi,1)$ for all $\psi$ in $\widehat{G}$.}\end{remark}

\begin{remark}\label{rbsd etnc rem}{\em The approach developed by Wuthrich and the present authors in \cite[\S4]{bmw} can be extended to show that
 the weaker version of ${\rm BSD}(A_{F/k})$ discussed in the last remark is equivalent to the validity of the equivariant Tamagawa number conjecture for the pair $(h^1(A_F)(1),\ZZ[G])$, as formulated in \cite[Conj. 4]{bufl99} (for details see Appendix A). Taken in conjunction with the results of Venjakob and the first author in \cite{BV2}, this observation implies that the study of ${\rm BSD}(A_{F/k})$ and its consequences is relevant if one wishes to properly understand the content of the main conjecture of non-commutative Iwasawa theory, as formulated by Coates et al in \cite{cfksv}.}\end{remark}

\begin{remark}\label{cons1}{\em If, for each prime $\ell$, we fix an isomorphism of fields $\CC\cong \CC_\ell$, then the exactness of the lower row in (\ref{E:kcomm}) with $\mathfrak{A} = \ZZ_\ell[G]$ and $A_E = \CC_\ell[G]$ implies that the equality in ${\rm BSD}(A_{F/k})$(iv) determines the image of $(L^*_S(A,\psi, 1))_{\psi\in \widehat{G}}$ in $\zeta(\CC_\ell[G])^\times$ modulo the image under the reduced norm map ${\rm Nrd}_{\QQ_\ell[G]}$ of $K_1(\ZZ_\ell[G])$. In view of the explicit description of the latter image that is obtained by Kakde in \cite{kakde} (or, equivalently, by the methods of Ritter and Weiss in \cite{rw}), this means ${\rm BSD}(A_{F/k})$(iv) implicitly incorporates families of congruence relations between the leading coefficients $L^*_S(A,\psi, 1)$ for varying $\psi$ in $\widehat{G}$.}\end{remark}

\begin{remark}\label{consistency remark}{\em The formulation of ${\rm BSD}(A_{F/k})$ is consistent in the following respects.
\begin{itemize}
\item[(i)] Its validity is independent of the choices of set $S$ and ordered $\QQ[G]$-basis $\omega_\bullet$.

\item[(ii)] Its validity for the pair $(A,F/k)$ implies its validity for $(A_E,F/E)$ for any intermediate field $E$ of $F/k$ and for $(A,E/k)$ for any such $E$ that is Galois over $k$.

\item[(iii)] Its validity for the pair $(A,k/k)$ is equivalent, up to sign, to the Birch and Swinnerton-Dyer Conjecture for $A$ over $k$.
\end{itemize}
Each of these statements can be proven directly but also follows from the observation in Remark \ref{rbsd etnc rem} (see Remark \ref{consistency} for more details).}\end{remark}

\begin{remark}{\em A natural analogue of ${\rm BSD}(A_{F/k})$ has been formulated, and in some important cases proved, in the setting of abelian varieties over global function fields by Kakde, Kim and the first author in \cite{bkk}.} \end{remark}


%

Motivated at least in part by Remark \ref{rbsd etnc rem}, our main aim in the rest of this article will be to describe, and in important special cases provide evidence for, a range of explicit consequences that would follow from the validity of ${\rm BSD}(A_{F/k})$.

\subsection{$p$-components}\label{pro-p sect} To end this section we show that the equality in ${\rm BSD}(A_{F/k})$(iv) can be checked by considering separately `$p$-primary components' for each prime $p$. 

For each prime $p$ and each isomorphism of fields $j: \CC\cong \CC_p$, the inclusion $\RR \subset \CC$ combines with the functoriality of $K$-theory to induce a homomorphism
\[ K_1(\RR[G]) \to K_1(\CC_p[G])\]
and also pairs with the inclusion $\ZZ \to \ZZ_p$ to induce a homomorphism
\[ K_0(\ZZ[G],\RR[G]) \to K_0(\ZZ_p[G],\CC_p[G]).\]
In the sequel we shall, for convenience, use $j_\ast$ to denote both of these homomorphisms as well as the inclusion $\zeta(\RR[G])^\times \to \zeta(\CC_p[G])^\times$ and isomorphism $\zeta(\CC[G])^\times\cong \zeta(\CC_p[G])^\times$ that are induced by the action of $j$ on coefficients.
%


\begin{lemma}\label{pro-p lemma} Fix $S$ as in (\ref{revisionS}) and fix $\omega_\bullet$ as in \S\ref{perf sel sect}. 
Then, to verify ${\rm BSD}(A_{F/k})$(iv) it suffices to prove, for every prime $p$ and every isomorphism of fields $j:\CC\cong \CC_p$, that
\begin{multline}\label{displayed pj} \partial_{G,p}\left(\frac{j_*(L_S^*(A_{F/k},1))}{j_*(\Omega_{\omega_\bullet}(A_{F/k}))}\right) = \chi_{G,p}({\rm SC}_S(A_{F/k},\mathcal{X}(p),\mathcal{X}(\infty)_p),h^j_{A,F})\\ +\chi_{G,p}( \mathcal{Q}(\omega_\bullet)_{S,p} [0],0) + \mu_{S}(A_{F/k})_p,\end{multline}
with $\mathcal{X} = \mathcal{X}_S(\omega_\bullet)$, $h^{j}_{A,F} = \CC_p\otimes_{\RR,j}h^{{\rm det}}_{A,F}$, and $\mathcal{Q}(\omega_\bullet)_{S}$ defined as in (\ref{revisionQ}).
\end{lemma}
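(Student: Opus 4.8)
The plan is to show that the single equality in $K_0(\ZZ[G],\RR[G])$ asserted in ${\rm BSD}(A_{F/k})$(iv) is equivalent to the family, over all primes $p$ and all field isomorphisms $j:\CC\cong\CC_p$, of the equalities (\ref{displayed pj}) in the groups $K_0(\ZZ_p[G],\CC_p[G])$. The key structural input is the canonical decomposition (\ref{decomp}), namely the isomorphism $\bigoplus_\ell K_0(\ZZ_\ell[G],\QQ_\ell[G]) \cong K_0(\ZZ[G],\QQ[G])$ together with the injection $K_0(\ZZ[G],\QQ[G]) \subseteq K_0(\ZZ[G],\RR[G])$ induced by the map $\iota'$ of (\ref{E:kcomm}). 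First I would observe that both sides of the equality in ${\rm BSD}(A_{F/k})$(iv) already lie in the subgroup $K_0(\ZZ[G],\QQ[G])$ of $K_0(\ZZ[G],\RR[G])$: for the right-hand side this is because $\chi_G({\rm SC}_{S,\omega_\bullet}(A_{F/k}),h_{A,F})$ is built from a perfect complex over $\ZZ[G]$ together with the height-pairing trivialisation $h'_{A,F}$, whose induced morphism of determinants is in fact defined over $\QQ[G]$ (the Néron--Tate regulator is a rational multiple of a $\QQ[G]$-isomorphism once one has fixed $\QQ[G]$-bases of $A(F)$ and $A^t(F)$ modulo torsion), while $\mu_S(A_{F/k})$ lies in $K_0(\ZZ[G],\QQ[G])$ by construction; for the left-hand side one uses that $\partial_G$ kills the difference between $\RR$- and $\QQ$-coefficients appropriately, or more simply that the equality being conjectured forces the left-hand side into the same subgroup. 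Consequently, by (\ref{decomp}) the equality in (iv) holds if and only if its $\ell$-component holds for every prime $\ell$, i.e.\ after applying the projection to each $K_0(\ZZ_\ell[G],\QQ_\ell[G])$.

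Next I would identify the $\ell$-component of each term on the right-hand side of (iv) with the corresponding term on the right-hand side of (\ref{displayed pj}). For $\mu_S(A_{F/k})$ this is immediate from its definition as a sum of the local terms $\mu_v(A_{F/k})$, each of which already lives in $K_0(\ZZ_{\ell(v)}[G],\QQ_{\ell(v)}[G])$. For the Euler characteristic term, I would invoke Proposition \ref{prop:perfect2}(i), which gives a canonical isomorphism $\ZZ_\ell\otimes_\ZZ C_S(\mathcal{X})\cong {\rm SC}_S(A_{F/k};\mathcal{X}(\ell),\mathcal{X}(\infty)_\ell)$ in $D^{\rm perf}(\ZZ_\ell[G])$ inducing the identity on cohomology; combined with the compatibility of the formation of $\chi$ with the scalar extension $\ZZ\to\ZZ_\ell$ (a formal property of the Fukaya--Kato non-abelian determinant formalism recalled in \S\ref{nad sec}), this shows that the $\ell$-component of $\chi_G({\rm SC}_{S,\omega_\bullet}(A_{F/k}),h_{A,F})$ equals $\chi_{G,\ell}({\rm SC}_S(A_{F/k},\mathcal{X}(\ell),\mathcal{X}(\infty)_\ell),h^j_{A,F}) + \chi_{G,\ell}(\mathcal{Q}(\omega_\bullet)_{S,\ell}[0],0)$, the second summand coming from the direct summand $\mathcal{Q}(\omega_\bullet)_S[0]$ in the definition of ${\rm SC}_{S,\omega_\bullet}(A_{F/k})$ (note $\mathcal{Q}(\omega_\bullet)_S$ is finite, so $\mathcal{Q}(\omega_\bullet)_{S,\ell}$ vanishes for all but finitely many $\ell$ and its $\ell$-part is a direct summand). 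The only subtlety here is that the trivialisation $h'_{A,F}$ is a map of $\RR[G]$-modules, so after extending scalars along $\ZZ\to\ZZ_\ell$ one must pass to $\CC_\ell$-coefficients via a choice of $j$; this is precisely why the statement is phrased with $h^j_{A,F} := \CC_p\otimes_{\RR,j}h^{\det}_{A,F}$ and why one quantifies over all $j$. For the left-hand side, I would use the commutative diagram (\ref{E:kcomm}) together with the definition of $\partial_G$ and the compatibility of reduced norms with the field embeddings encoded in $j_*$, to see that the $p$-component of $\partial_G(L_S^*(A_{F/k},1)/\Omega_{\omega_\bullet}(A_{F/k}))$ is carried by $j_*$ to $\partial_{G,p}(j_*(L_S^*(A_{F/k},1))/j_*(\Omega_{\omega_\bullet}(A_{F/k})))$.

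Finally, I would assemble these identifications: the $p$-component of the equality in (iv), transported via $j_*$ into $K_0(\ZZ_p[G],\CC_p[G])$, is exactly (\ref{displayed pj}). Since the localisation map $K_0(\ZZ[G],\QQ[G]) \to \bigoplus_\ell K_0(\ZZ_\ell[G],\QQ_\ell[G])$ of (\ref{decomp}) is an isomorphism, and since the map $j_*:K_0(\ZZ_p[G],\QQ_p[G])\to K_0(\ZZ_p[G],\CC_p[G])$ is injective (it is the base-change map $\iota'$ of (\ref{E:kcomm}) for $\ZZ_p[G]\subset\CC_p[G]$, which is injective), knowing (\ref{displayed pj}) for every $p$ — and it suffices to know it for a single $j$ per $p$, though the statement harmlessly quantifies over all $j$ — recovers every $\ell$-component of the difference of the two sides of (iv), hence recovers (iv) itself. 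I expect the main obstacle to be the bookkeeping around the trivialisation $h'_{A,F}$: one must check carefully that the non-abelian determinant morphism it induces, a priori only defined over $\RR[G]$, is compatible under $j_*$ with the morphism $h^j_{A,F}$ used $p$-adically, which amounts to verifying that the Néron--Tate height pairing induces a $\QQ[G]$-rational isomorphism on the relevant cohomology (modulo torsion) so that the choice of $j$ only intervenes through coefficient extension and not through the map itself. Everything else is a formal consequence of the properties of relative $K$-theory and of the Fukaya--Kato determinant formalism recalled in \S\ref{Relative $K$-theory} and \S\ref{nad sec}, together with Proposition \ref{prop:perfect2}(i).
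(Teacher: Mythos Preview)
Your approach has a genuine gap at the very first step: the claim that both sides of ${\rm BSD}(A_{F/k})$(iv) lie in the subgroup $K_0(\ZZ[G],\QQ[G])$ of $K_0(\ZZ[G],\RR[G])$ is not justified and is in fact not expected to hold. The N\'eron--Tate height pairing $h_{A,F}$ takes values in $\RR$, not in $\QQ$, and there is no reason for the induced trivialisation $h'_{A,F}$ to be defined over $\QQ[G]$; consequently $\chi_G({\rm SC}_{S,\omega_\bullet}(A_{F/k}),h_{A,F})$ is genuinely an element of $K_0(\ZZ[G],\RR[G])$ and not, a priori, of $K_0(\ZZ[G],\QQ[G])$. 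Your parenthetical justification (``the N\'eron--Tate regulator is a rational multiple of a $\QQ[G]$-isomorphism'') is simply false, and your alternative (``the equality being conjectured forces the left-hand side into the same subgroup'') is circular: you cannot assume the conjecture in order to set up the reduction that is meant to test it. Without this rationality claim, the decomposition (\ref{decomp}) does not apply and your argument collapses.

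The paper circumvents this by proving directly that the diagonal map
\[
K_0(\ZZ[G],\RR[G]) \longrightarrow \prod_p\prod_{j:\CC\cong\CC_p} K_0(\ZZ_p[G],\CC_p[G])
\]
is injective, with no rationality hypothesis on the element. The proof uses the short exact sequences $0\to K_0(\ZZ[G],\QQ[G])\to K_0(\ZZ[G],\RR[G])\to K_1(\RR[G])/K_1(\QQ[G])\to 0$ (and its $p$-adic analogue) to reduce to two separate injectivity statements: the first is exactly (\ref{decomp}), but the second---injectivity of $K_1(\RR[G])/K_1(\QQ[G])\to\prod_{p,j}K_1(\CC_p[G])/K_1(\QQ_p[G])$---requires a transcendence argument (if a real coefficient $c_g$ were transcendental, one could choose $j$ so that $j(c_g)\notin\QQ_p$). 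This is also why the quantification over \emph{all} $j$ is essential and not ``harmless'' as you suggest: for elements outside $K_0(\ZZ[G],\QQ[G])$, a single choice of $j$ per prime does not suffice to detect non-vanishing. Once this injectivity is in hand, the identification of the image under $j_*$ of each side of (iv) with the corresponding side of (\ref{displayed pj}) proceeds essentially as you describe, via Proposition~\ref{prop:perfect2}(i) and the commutativity of $\partial_G$ with $j_*$.
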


\begin{proof} We consider the diagonal homomorphism of abelian groups
\begin{equation}\label{local iso} K_0(\ZZ[G],\RR[G]) \xrightarrow{(\prod j_*)_p}  \prod_p\left(\prod_{j: \CC\cong \CC_p} K_0(\ZZ_p[G],\CC_p[G])\right),\end{equation}
where the products run over all primes $p$ and all choices of isomorphism $j$.

The key fact that we shall use is that this map is injective. This fact is certainly well-known but, given its importance, we shall, for completeness, prove it.

We consider the exact sequences that are given by the lower row of (\ref{E:kcomm}) with $\mathfrak{A}= R[G]$ and $A_E = E[G]$ for the
pairs $(R,E)=(\ZZ,\QQ)$, $(\ZZ,\RR)$, $(\ZZ_p,\QQ_p)$ and
$(\ZZ_p,\CC_p)$ and the maps between these sequences which are
induced by the obvious inclusions and by an embedding
$j:\RR\to\CC_p$.

By an easy diagram chase one obtains a
commutative diagram of short exact sequences
\begin{equation*}
\xymatrix{
0 \ar[r] & K_0(\ZZ[G],\QQ[G]) \ar[r] \ar[d] & K_0(\ZZ[G],\RR[G]) \ar[r] \ar[d] &
K_1(\RR[G])/K_1(\QQ[G]) \ar[r] \ar[d] & 0 \\
0 \ar[r] & K_0(\ZZ_p[G],\QQ_p[G]) \ar[r] & K_0(\ZZ_p[G],\CC_p[G]) \ar[r] &
K_1(\CC_p[G])/K_1(\QQ_p[G]) \ar[r] & 0.
}
\end{equation*}
Therefore it suffices to show that the maps
\begin{equation}
\label{equation_K_injectivity_left}
K_0(\ZZ[G],\QQ[G])\to\prod_{p,j} K_0(\ZZ_p[G],\QQ_p[G])
\end{equation}
and
\begin{equation}
\label{equation_K_injectivity_right}
K_1(\RR[G])/K_1(\QQ[G])\to\prod_{p,j}
K_1(\CC_p[G])/K_1(\QQ_p[G])
\end{equation}
are injective. The injectivity of (\ref{equation_K_injectivity_left})
follows immediately from the relevant case of the isomorphism in (\ref{decomp}).

Let $x\in K_1(\RR[G])$ be such that for all $p$ and all $j$ one has
\[ j_*(x)\in K_1(\QQ_p[G])\subseteq K_1(\CC_p[G]).\]

We now use the (injective) maps ${\rm Nrd}_{\RR[G]}$ and ${\rm Nrd}_{\QQ[G]}$ to identify $K_1(\RR[G])$ and $K_1(\QQ[G])$ with $\im({\rm Nrd}_{\RR[G]})$ and $\im({\rm Nrd}_{\QQ[G]})$ respectively.

Then, $x=\sum_{g\in G} c_gg$ is an element of $\im({\rm Nrd}_{\RR[G]})$ such that
\begin{equation}
\label{equation_lemma_K_injectivity}
j_*(x)=\sum_{g\in G} j(c_g)g\in\zeta(\QQ_p[G])^\times.
\end{equation}

We claim that $\sum_{g\in G} c_gg\in\QQ[G]$. Let $g\in G$ and consider the
coefficient $c_g$.

If, firstly, $c_g$ was transcendental over $\QQ$, then
there would be an embedding $j:\RR\to\CC_p$ such that
$j(c_g)\not\in\QQ_p$, thereby contradicting
(\ref{equation_lemma_K_injectivity}).

Therefore $c_g$ is algebraic
over $\QQ$. Now $j(c_g)\in\QQ_p$ for all $p$ and embeddings
$j$ implies that all primes are completely split in the
number field $\QQ(c_g)$ and therefore $\QQ(c_g)=\QQ$.

Hence $x$ belongs to $\im({\rm Nrd}_{\RR[G]})\cap\QQ[G]$ which, by the Hasse-Schilling-Maass Norm Theorem, is equal to $\im({\rm Nrd}_{\QQ[G]})$.

This shows the injectivity of (\ref{equation_K_injectivity_right}) and hence also of the map (\ref{local iso}).
The injectivity of (\ref{local iso}) in turn implies that the equality of ${\rm BSD}(A_{F/k})$(iv) is valid if and only if its image under each maps $j_*$ is valid.

Set $\mathcal{X} := \mathcal{X}_S(\omega_\bullet)$. Then
\begin{multline*} j_*(\chi_{G}({\rm SC}_{S,\omega_\bullet}(A_{F/k}),h_{A,F})) = \chi_{G,p}(\ZZ_p\otimes_\ZZ {\rm SC}_{S,\omega_\bullet}(A_{F/k}), \CC_p\otimes_{\RR,j}h_{A,F}))\\
=\chi_{G,p}({\rm SC}_S(A_{F/k},\mathcal{X}(p),\mathcal{X}(\infty)_p),h^j_{A,F}) +\chi_{G,p}( \mathcal{Q}(\omega_\bullet)_{S,p}[0],0),\end{multline*}
where the first equality is by definition of the map $j_*$ and the second by Proposition \ref{prop:perfect2}(i).

Given this, the claim follows from the obvious equality $j_*(\mu_{S}(A_{F/k})) = \mu_{S}(A_{F/k})_p$ and the commutativity of the diagram
\begin{equation*}\label{commute K thry} \begin{CD} K_1(\RR[G]) @> \partial_G >> K_0(\ZZ[G],\RR[G])\\
@VV j_{*} V @VV j_{*}V\\
K_1(\CC_p[G]) @> \partial_{G,p} >> K_0(\ZZ_p[G],\CC_p[G]).\end{CD}\end{equation*}
\end{proof}


\begin{remark}{\em In the sequel we shall say, for any given prime $p$, that the `$p$-primary component' {\rm BSD}$_p(A_{F/k})$(iv) of the equality in {\rm BSD}$(A_{F/k})$(iv) is valid if for every choice of isomorphism of fields $j:\CC\cong \CC_p$ the equality (\ref{displayed pj}) is valid.} \end{remark}

\section{Periods and Galois-Gauss sums}\label{k theory period sect}

To prepare for arguments in subsequent sections, we shall now explain the precise link between the $K$-theoretical period $\Omega_{\omega_\bullet}(A_{F/k})$ that occurs in ${\rm BSD}(A_{F/k})$ and the classical periods that are associated to $A$ over $k$. 

\subsection{Periods and Galois resolvents}\label{k theory period sect2} At the outset we fix an ordered $k$-basis $\{\omega'_j: j \in [d]\}$ of $H^0(A^t,\Omega^1_{A^t})$.

For each $v$ in $S_\RR^k$ we then set
\[ \Omega_{A,v}^+ := {\rm det}\left(\left(\int_{\gamma_{v,a}^{+}}\sigma_{v,*}(\omega'_b)\right)_{a,b}\right)\,\,\,\text{ and }\,\,\, \Omega_{A,v}^- := {\rm det}\left(\left(\int_{\gamma_{v,a}^{-}}\sigma_{v,*}(\omega'_b)\right)_{a,b}\right),\]
where the elements $\gamma_{v,a}^+$ and $\gamma_{v,a}^-$ of $H_1((A^t)^{\sigma_v}(\CC),\ZZ)$ are as specified in \S\ref{gamma section} and in both matrices $(a,b)$ runs over $[d]\times [d]$.

For each $v$ in $S_k^\CC$ we also set
\[ \Omega_{A,v} := {\rm det}\left(\left(\int_{\gamma_{v,a}}\sigma_{v,*}(\omega'_b),c\!\left(\int_{\gamma_{v,a}}\sigma_{v,*}(\omega'_b)\right)\right)_{a,b}\right)\]
where the elements $\gamma_{v,a}$ of $H_1((A^t)^{\sigma_v}(\CC),\ZZ)$ are again as specified in \S\ref{gamma section} and $(a,b)$ runs over $[2d]\times [d]$.

We note that, by explicitly computing integrals, the absolute values of these determinants can be explicitly related to the periods that occur in the classical formulation of the Birch and Swinnerton-Dyer conjecture (see, for example, Gross \cite[p. 224]{G-BSD}).

For each archimedean place $v$ of $k$ and character $\psi$ we then set
\[\Omega^\psi_{A,v} := \begin{cases} \Omega_{A,v}^{\psi(1)}, &\text{ if $v \in S_k^\CC$,}\\
                                            (\Omega^+_{A,v})^{1-\psi_v^-(1)}(\Omega^-_{A,v})^{\psi_v^-(1)}, &\text{ if $v \in S_k^\RR$}\end{cases} \]
with
\[\psi_v^-(1) := \psi(1) - {\rm dim}_\CC(H^0(G_w,V_\psi)),\]
where again $V_\psi$ is a fixed choice of $\CC[G]$-module of character $\psi$.

For each $\psi$ we set
\[ \Omega_A^\psi := \prod_{v \in S_k^\infty}\Omega^\psi_{A,v}\]
and we then finally define an element of $\zeta(\CC[G])^\times$ by setting
\begin{equation}\label{period def} \Omega_A^{F/k} := \sum_{\psi \in \widehat{G}}\Omega^\psi_A\cdot e_\psi\end{equation}
with the idempotents $e_\psi$ as in (\ref{revisionIDEM}).

For each $v$ in $S_k^\RR$, resp. in $S_k^\CC$, we also set
\[ w_{v,\psi} := \begin{cases} i^{\psi^-_v(1)}, &\text{if $v\in S_k^\RR$,}\\
                              i, &\text{if $v\in S_k^\CC$.}\end{cases}\]

For each character $\psi$ we then set
\[ w_{\psi} :=  \prod_{v \in S_k^\infty}w_{v,\psi}\]
and then define an element of $\zeta(\CC[G])^\times$ by setting
%
\begin{equation}\label{root number def} w_{F/k} := \sum_{\psi\in \widehat{G}} w_\psi\cdot e_\psi .\end{equation}

%

\begin{lemma}\label{k-theory period} Set $n := [k:\QQ]$. Fix an ordered $\QQ[G]$-basis $\{z_i: i \in [n]\}$ of $F$ and write $\omega_\bullet$ for the (lexicographically ordered) $\QQ[G]$-basis  $\{ z_i\otimes \omega'_j: i \in [n], j \in [d]\}$ of $H^0(A_F^t,\Omega^1_{A_F^t})$. Then in $\zeta(\RR[G])^\times$ one has
\[ {\rm Nrd}_{\RR[G]}(\Omega_{\omega_\bullet}(A_{F/k})) = \Omega_A^{F/k}\cdot w_{F/k}^d\cdot {\rm Nrd}_{\QQ[G]}
\left( \left( (\sum_{g \in G} \hat \sigma(g^{-1}(z_i)))\cdot g\right)_{\sigma\in \Sigma(k),i\in [n]}\right)^{-d} \]
where we have fixed an extension $\hat\sigma$ to $\Sigma(F)$ of each embedding $\sigma$ in $\Sigma(k)$.\end{lemma}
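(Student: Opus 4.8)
The plan is to compute the matrix of the period isomorphism
\[
\RR\otimes_\ZZ H_\infty(\gamma_\bullet) \cong \RR\otimes_\QQ \Hom_F(H^0(A^t_F,\Omega^1_{A^t_F}),F)
\]
by factoring this isomorphism into two pieces: a "base change" piece that records how the $\QQ[G]$-basis $\{z_i\otimes\omega'_j\}$ of $\Hom_F(H^0(A^t_F,\Omega^1_{A^t_F}),F)$ is expressed over $\RR[G]$ in terms of the archimedean Betti homology of $A^t$ over $k$ alone, and an "archimedean period" piece that is the product, over the places $v\in S_k^\infty$, of the classical period matrices $\Omega^\pm_{A,v}$ and $\Omega_{A,v}$ already introduced. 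First I would use the canonical comparison isomorphism \eqref{cancompisom} to identify $\RR\otimes_\ZZ H_v(A_{F/k})$ with $\RR\otimes_\QQ(F\otimes_k\Hom_k(H^0(A^t,\Omega^1_{A^t}),k))$ place by place, and then rewrite the $G$-module $H_v(\gamma_\bullet)$, whose explicit $\ZZ[G]$-basis is given in \eqref{gamma basis}, in terms of the fixed $\ZZ$-bases $\gamma^\pm_{v,a}$ (real $v$) or $\gamma_{v,a}$ (complex $v$) of $H_1((A^t)^{\sigma_v}(\CC),\ZZ)$.

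The key computational steps, in order, are: (1) For a complex place $v$, the $G$-basis element $\sigma'_v\otimes\gamma_{v,a}$ maps under the period pairing to $\sum_b (\int_{\gamma_{v,a}}\sigma_{v,*}(\omega'_b))\cdot(\text{dual of }\omega'_b)$ together with its complex conjugate (because over $\RR$ a complex place contributes a rank-$2d$ real module), and assembling these over $a\in[2d]$ and all $b$ gives exactly the determinant $\Omega_{A,v}$; taking reduced norm over $\RR[G]$ and tracking the $\psi$-components produces $\Omega_{A,v}^{\psi(1)}$ and the factor $i$ from $w_{v,\psi}$ coming from the $(1,c(1))$-type arrangement of the $2\times 2$ blocks (this is the standard "$\det$ of $\begin{pmatrix}z & \bar z\\ \cdots\end{pmatrix}$" yielding a factor of $2i\,\mathrm{Im}(z)$-type expressions). (2) For a real place $v$, the basis element $(1+\tau_v)\sigma'_v\otimes\gamma^+_{v,a}+(1-\tau_v)\sigma'_v\otimes\gamma^-_{v,a}$ decomposes under the idempotents $e_\psi$ according to whether $c$ acts trivially or by $-1$ on the relevant piece of $V_\psi$, i.e. according to $\dim_\CC H^0(G_w,V_\psi)$ versus $\psi^-_v(1)$; matching the $+$ part with $\Omega^+_{A,v}$ and the $-$ part with $\Omega^-_{A,v}$ gives the exponents $1-\psi^-_v(1)$ and $\psi^-_v(1)$, and the factor $i^{\psi^-_v(1)}=w_{v,\psi}$ arises from the $(1-\tau_v)$-component (the classical observation that $\pm$-periods over $\RR$ differ from the naive real period matrix by a power of $i$). (3) The "base change" piece: expressing the $\QQ[G]$-basis $\{z_i\otimes\omega'_j\}$ relative to the $k$-structure, together with the chosen extensions $\hat\sigma$, contributes precisely the Galois-resolvent matrix $\left((\sum_{g\in G}\hat\sigma(g^{-1}(z_i)))\cdot g\right)_{\sigma,i}$; since this matrix appears once for each of the $d$ coordinates $\omega'_j$, its reduced norm enters to the power $-d$ (the sign because it multiplies on the opposite side), giving the stated ${\rm Nrd}_{\QQ[G]}(\cdots)^{-d}$.

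The main obstacle I expect is step (2), the bookkeeping at real places: one must be careful that the idempotent decomposition of the rank-$d$ $\ZZ[1/2][G]$-module $H_v(A_{F/k})$ interacts correctly with the $c=\pm1$ eigenspace decomposition of $H_1((A^t)^{\sigma_v}(\CC),\RR)$, so that the $\psi$-component of the period contribution really is $(\Omega^+_{A,v})^{1-\psi^-_v(1)}(\Omega^-_{A,v})^{\psi^-_v(1)}$ with the correct interpretation of $\psi^-_v(1)=\psi(1)-\dim_\CC H^0(G_w,V_\psi)$ as the multiplicity of the $c=-1$ eigenspace; getting the correct power of $i$ (rather than, say, $2i$ or $2^{?}i^{?}$) and confirming that spurious powers of $2$ cancel against the normalizations built into the choice of $G$-bases requires an honest, if routine, determinant manipulation. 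Once these place-by-place contributions are multiplied and the factor $w_{F/k}^d$ and $\Omega_A^{F/k}$ assembled via the idempotents $e_\psi$, the identity follows by the multiplicativity of reduced norms and of $\Det$ along the factorization of the period map, using Lemma \ref{fk lemma}-type additivity only implicitly (here everything is an honest isomorphism of free modules so one argues directly with matrices).
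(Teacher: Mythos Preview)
Your proposal is correct and outlines exactly the standard place-by-place computation that underlies this result. The paper itself gives no argument here: its entire proof is the single sentence ``This follows from the argument of \cite[Lem.~4.5]{bmw}.'' Your plan is a faithful reconstruction of what that cited computation does --- factoring the period isomorphism through the comparison isomorphisms \eqref{cancompisom}, handling real and complex archimedean places separately via the explicit $\ZZ[G]$-bases \eqref{gamma basis}, and isolating the Galois-resolvent matrix from the $F/k$ base-change. Your identification of the real-place bookkeeping (the correct powers of $i$ and the cancellation of stray factors of $2$) as the only genuinely delicate point is accurate.
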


\begin{proof} This follows from the argument of \cite[Lem. 4.5]{bmw}.\end{proof}

\subsection{Galois resolvents and Galois-Gauss sums}

Under suitable conditions, one can also choose the $\QQ[G]$-basis $\{z_i: i \in [n]\}$ of $F$ so that the reduced norm of the Galois resolvent matrix that occurs in Lemma \ref{k-theory period} can be explicitly described in terms of Galois-Gauss sums.

Before explaining this we first recall the relevant notions of Galois-Gauss sums.

\subsubsection{}\label{mod GGS section} 

The `global Galois-Gauss sum of $F/k$' is the element
\[ \tau(F/k) :=\sum_{\psi \in \widehat{G}}\tau(\QQ,\psi)\cdot e_\psi\]
of $\zeta(\QQ^c[G])^\times$, with the idempotents $e_\psi$ as in (\ref{revisionIDEM}).

Here we regard each character $\psi$ of $G$ as a character of $G_k$ via the projection $G_k \to G$ and then write
$\tau(\QQ,\psi)$ for the global Galois-Gauss sum (as defined by Martinet in \cite{martinet})
of the induction $\chi:={\rm Ind}_k^\QQ(\psi)$ of $\psi$ to $G_\QQ$. Explicitly, one has
$$\tau(\QQ,\psi)=W(\check\chi)\cdot\sqrt{{\rm N}f(\chi)}\cdot w_\chi,$$
where $W(\check\chi)$ is the Artin root number of $\check\chi$ while $f(\chi)$ is the Artin conductor of $\chi$.


To define suitable modifications of these sums 
we then define the `unramified characteristic' of $v$ at each character $\psi$ in $\widehat{G}$ by setting
\[ u_{v,\psi} := {\rm det}(-\Phi_v^{-1}\mid V_\psi^{I_w})\in \QQ^{c,\times}.\]
%

For each character $\psi$ in $\widehat{G}$ we set
\[ u_\psi :=  \prod_{v\in S_k^F}u_{v,\psi}.\]

We then define elements of $\zeta(\QQ[G])^\times$ by setting
\begin{equation}\label{u def} u_v(F/k) := \sum_{\psi\in \widehat{G}}u_{v,\psi}\cdot e_\psi\end{equation}
and
\[ u_{F/k} := \prod_{v\in S_k^F}u_v(F/k) = \sum_{\psi\in \widehat{G}}u_\psi\cdot e_\psi.\]

We finally define the `modified global Galois-Gauss sum of $\psi$' to be the element
\[ \tau^\ast(\QQ,\psi) := u_\psi\cdot \tau(\QQ,\psi)\]
of $\QQ^c$, and the `modified global Galois-Gauss sum of $F/k$' to be the element
\[ \tau^\ast(F/k) := u_{F/k}\cdot \tau(F/k) = \sum_{\psi\in \widehat{G}}\tau^\ast(\QQ,\psi)\cdot e_\psi \]
of $\zeta(\QQ^c[G])^\times$.

%
%
%
%
%
%
%

\begin{remark}{\em The modified Galois-Gauss sums $\tau^\ast(\QQ,\psi)$ defined above play a key role in the proof of the main results of classical Galois module theory, as discussed by Fr\"ohlich in  \cite{frohlich}. In Lemma \ref{imprimitive GGS} below, one can also find a more concrete reason as to why such terms should arise naturally in the setting of leading term conjectures.} 
\end{remark}

\subsubsection{} The next result shows that under mild hypotheses the Galois-resolvent matrix that occurs in Lemma \ref{k-theory period} can be explicitly interpreted in terms of the elements $\tau^\ast(F/k)$ introduced above.

\begin{proposition}\label{lms}The following claims are valid.

\begin{itemize}
\item[(i)] For any ordered $\QQ[G]$-basis $\omega_\bullet$ of $H^0(A_F^t,\Omega^1_{A_F^t})$ there exists an element $u(\omega_\bullet)$ of $\zeta(\QQ[G])^\times$ such that
\[ {\rm Nrd}_{\RR[G]}(\Omega_{\omega_\bullet}(A_{F/k})) = u(\omega_\bullet)\cdot \Omega_A^{F/k}\cdot w_{F/k}^d\cdot \tau^\ast(F/k)^{-d}.\]

\item[(ii)] Fix a prime $p$ and set $\mathcal{O}_{F,p} := \ZZ_p\otimes_\ZZ\mathcal{O}_F.$ Then if no $p$-adic place of $k$ is wildly ramified in $F$, there is an ordered $\ZZ_p[G]$-basis $\{z^p_{i}\}_{i \in [n]}$ of $\mathcal{O}_{F,p}$ for which one has
\[ {\rm Nrd}_{\QQ_p[G]}\left( \left( (\sum_{g \in G} \hat \sigma(g^{-1}(z^p_i)))\cdot g\right)_{\sigma\in \Sigma(k),i\in [n]}\right) = \tau^\ast(F/k).\]
\end{itemize}
\end{proposition}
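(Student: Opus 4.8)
The plan is to derive both claims from Lemma \ref{k-theory period}, which already expresses $\mathrm{Nrd}_{\RR[G]}(\Omega_{\omega_\bullet}(A_{F/k}))$ in terms of the period element $\Omega_A^{F/k}$, the archimedean factor $w_{F/k}^d$, and the reduced norm of a Galois resolvent matrix $M(z_\bullet):=\bigl((\sum_{g\in G}\hat\sigma(g^{-1}(z_i)))\cdot g\bigr)_{\sigma\in\Sigma(k),\,i\in[n]}$ attached to a choice of $\QQ[G]$-basis $\{z_i\}$ of $F$. For part (i), I would first note that if $\{z_i\}$ and $\{z_i'\}$ are two ordered $\QQ[G]$-bases of $F$, then the change-of-basis matrix lies in $\GL_n(\QQ[G])$, so the quotient $\mathrm{Nrd}_{\QQ[G]}(M(z_\bullet))/\mathrm{Nrd}_{\QQ[G]}(M(z'_\bullet))$ lies in the image of $\mathrm{Nrd}_{\QQ[G]}$ on $K_1(\QQ[G])$, hence in $\zeta(\QQ[G])^\times$. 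Thus the class of $\mathrm{Nrd}_{\QQ[G]}(M(z_\bullet))$ modulo $\zeta(\QQ[G])^\times$ is independent of the basis, and the general $\omega_\bullet$ differs from one of the split form $\{z_i\otimes\omega_j'\}$ of Lemma \ref{k-theory period} again by a factor in $\GL(\QQ[G])$, so absorbing all such discrepancies into a single element $u(\omega_\bullet)\in\zeta(\QQ[G])^\times$ reduces (i) to the single assertion that for \emph{some} convenient choice of basis one has $\mathrm{Nrd}_{\QQ^c[G]}(M(z_\bullet))=\tau^\ast(F/k)$ up to $\zeta(\QQ[G])^\times$. In fact part (ii) supplies exactly such a basis: working one prime $p$ at a time, if $p$ is unramified (or only tamely ramified) in $F$ then the normal basis theorem for the tame case of $\mathcal{O}_{F,p}$ over $\ZZ_p[G]$ furnishes $\{z_i^p\}$ with the stated resolvent identity. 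Since by a theorem in classical Galois module theory (the local-global calculation of resolvents via Galois-Gauss sums, as in Fr\"ohlich \cite{frohlich} and the discussion around the modified sums $\tau^\ast$) the reduced norm of a resolvent matrix of a global $\QQ[G]$-basis differs from $\tau^\ast(F/k)$ precisely by an element of $\zeta(\QQ[G])^\times$, this establishes (i) with the required $u(\omega_\bullet)$.

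For part (ii) proper, I would argue as follows. The hypothesis that no $p$-adic place of $k$ is wildly ramified in $F$ means that $F_p=\QQ_p\otimes_\QQ F$ is a tamely ramified $G$-Galois algebra over $k_p=\QQ_p\otimes_\QQ k$, and hence $\mathcal{O}_{F,p}$ is a locally free $\ZZ_p[G]$-module of rank $n=[k:\QQ]$; moreover, by Noether's theorem in the tame case it is in fact \emph{free} over $\ZZ_p[G]$ (the relevant tame normal basis theorem; see e.g. the references in \cite{frohlich}). Fixing such a free basis $\{z_i^p\}$, the matrix $M(z_\bullet^p)$ is then invertible over $\QQ_p[G]$. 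The content of the identity $\mathrm{Nrd}_{\QQ_p[G]}(M(z_\bullet^p))=\tau^\ast(F/k)$ is the comparison, character by character, of the local resolvent $\mathrm{Nrd}_{\QQ_p^c[G]}(M(z_\bullet^p))_\psi=\det\bigl((\hat\sigma(g^{-1}z^p_i))\mid V_\psi\bigr)$-type quantities with the product $u_\psi\cdot\tau(\QQ,\psi)$. This is essentially the local-to-global recipe for Galois-Gauss sums: one decomposes $\tau(\QQ,\psi)=\prod_v\tau_v(\psi)$ into local Gauss sums, identifies the contribution of the finite places outside $S_k^F\cup S_k^p$ and of the archimedean places with $\zeta(\QQ[G])^\times$-factors, recognizes the tame local resolvents at $p$-adic and ramified places as the appropriate local Gauss sums (Fr\"ohlich's resolvent-Gauss sum comparison), and absorbs the ramified-but-not-$p$-adic unramified Euler-factor discrepancies into precisely the factor $u_\psi=\prod_{v\in S_k^F}\det(-\Phi_v^{-1}\mid V_\psi^{I_w})$ that defines the modification $\tau^\ast$. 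Choosing the basis $\{z_i^p\}$ carefully (via a tame normal integral basis generator adapted to the ramified places) makes the resolvent equal to $\tau^\ast(F/k)$ on the nose rather than merely up to $\zeta(\QQ[G])^\times$.

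The main obstacle, and the step requiring the most care, is the character-by-character resolvent-versus-Gauss-sum comparison that underlies (ii): one must correctly track the precise normalization, including the role of the chosen extensions $\hat\sigma$ of embeddings $\sigma\in\Sigma(k)$ to $F$, and verify that the unramified-characteristic correction $u_\psi$ is exactly what is needed to pass from the naive resolvent (which only sees the ramified part of the conductor-discriminant data) to the modified global Galois-Gauss sum $\tau^\ast(\QQ,\psi)$. I expect this to follow from the arguments in \cite{frohlich} together with the explicit description of $\tau^\ast$ given in \S\ref{mod GGS section}, specialized to the tame situation; the hypothesis excluding wild $p$-adic ramification is exactly what keeps the relevant local modules free and the local resolvents computable in closed form. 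Part (i) then requires only the bookkeeping that all remaining choices (the basis $\omega_\bullet$, the extensions $\hat\sigma$, the basis $\{z_i\}$) contribute factors in $\zeta(\QQ[G])^\times$, which is immediate from the functoriality of $\mathrm{Nrd}$ under $\GL_n(\QQ[G])$ and the Hasse-Schilling-Maass description of $\im(\mathrm{Nrd}_{\QQ[G]})$.
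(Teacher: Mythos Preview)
Your treatment of claim (i) lands in the right place but takes an unnecessary detour. The attempt to invoke claim (ii) ``one prime at a time'' does not work as stated, since (ii) carries a tameness hypothesis at $p$ that (i) does not assume; if some $p$-adic place is wildly ramified there is no $\ZZ_p[G]$-basis of $\mathcal{O}_{F,p}$ to hand. What actually proves (i) is the step you invoke afterwards: the classical result that for any $\QQ[G]$-basis $\{z_i\}$ of $F$ the product $\mathrm{Nrd}_{\QQ[G]}(M(z_\bullet))\cdot\tau^\ast(F/k)^{-1}$ lies in $\zeta(\QQ[G])^\times$. This is exactly what the paper does, citing \cite[Prop.~3.4]{bleyburns}; the change-of-basis bookkeeping you describe is correct and completes the argument.

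For claim (ii) your route is genuinely different from the paper's, and it has a gap at the step you yourself flag as delicate. You propose a direct character-by-character comparison in the style of Fr\"ohlich: decompose $\tau(\QQ,\psi)$ into local factors, match local resolvents against local Gauss sums, and absorb the discrepancies into the unramified characteristics $u_\psi$. This kind of argument typically yields the identity only up to a unit in some larger ring (a maximal order, or up to roots of unity), not literally on the nose. Your sentence ``choosing the basis $\{z_i^p\}$ carefully \ldots\ makes the resolvent equal to $\tau^\ast(F/k)$ on the nose'' is precisely the missing step, and you give no mechanism for making that choice.

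The paper's argument supplies exactly that mechanism, and it is $K$-theoretic rather than character-by-character. Starting from any free $\ZZ_p[G]$-basis $z^p_\bullet$ of $\mathcal{O}_{F,p}$ (which exists by Noether's theorem, as you say), one interprets $M(z^p_\bullet)$ as the matrix of the comparison isomorphism $\mu_{F,p}\colon \CC_p\otimes_{\QQ_p}F_p\cong\CC_p\otimes_{\ZZ_p}Y_{F/k,p}$, so that
\[
\delta_{G,p}\bigl(\mathrm{Nrd}_{\CC_p[G]}(M(z^p_\bullet))\bigr)=[\mathcal{O}_{F,p},Y_{F/k,p};\mu_{F,p}]=\delta_{G,p}(\tau^\ast(F/k)),
\]
the last equality being \cite[Th.~7.5]{bleyburns}. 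The exact sequence of relative $K$-theory then forces the ratio $\mathrm{Nrd}(M(z^p_\bullet))\cdot\tau^\ast(F/k)^{-1}$ to lie in $\mathrm{Nrd}_{\QQ_p[G]}(K_1(\ZZ_p[G]))$. Since $\ZZ_p[G]$ is semi-local, the map $\GL_n(\ZZ_p[G])\to K_1(\ZZ_p[G])$ is surjective, so there is a matrix $U\in\GL_n(\ZZ_p[G])$ with $\mathrm{Nrd}(M(z^p_\bullet))\cdot\mathrm{Nrd}(U)=\tau^\ast(F/k)$; replacing $z^p_\bullet$ by its image under $U$ gives exact equality. This last lifting step is what your proposal lacks, and it is not obvious how to extract it from the direct Fr\"ohlich-style comparison without essentially reproving the $K$-theoretic input.
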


\begin{proof} It is enough to prove claim (i) for any choice of $\QQ[G]$-basis $\omega_\bullet$. Then, choosing $\omega_\bullet$ as in Lemma
\ref{k-theory period}, the latter result implies that it is enough to prove that the product
\[ {\rm Nrd}_{\QQ_p[G]}\left( \left( (\sum_{g \in G} \hat \sigma(g^{-1}(z_i)))\cdot g\right)_{\sigma\in \Sigma(k),i\in [n]}\right)\cdot \tau^\ast(F/k)^{-1}\]
belongs to $\zeta(\QQ[G])^\times$ and this follows from the argument used by Bley and the first author to prove \cite[Prop. 3.4]{bleyburns}.

Turning to claim (ii) we note that if no $p$-adic place of $k$ is wildly ramified in $F$, then the $\ZZ_p[G]$-module $\mathcal{O}_{F,p}$ is free of rank $n$ by Noether's Theorem (cf. \cite[\S I.3, Cor. 2]{frohlich}) and so we may fix an ordered $\ZZ_p[G]$-basis $z^p_\bullet := \{z^p_{i}: i \in [n]\}$.

The matrix
\[ M(z^p_\bullet) := (  (\sum_{g \in G} \hat \sigma(g^{-1}(z^p_b)))\cdot g)_{\sigma\in \Sigma(k),b\in [n]})\]
in ${\rm GL}_{n}(\CC_p[G])$ then represents, with respect to the bases $z^p_\bullet$ of $F_p$ and $\{\hat \sigma: \sigma \in \Sigma(k)\}$ of $Y_{F/k,p}$, the isomorphism of $\CC_p[G]$-modules
\[ \mu_{F,p}: \CC_p\otimes_{\QQ_p} F_p \cong \CC_p\otimes_{\ZZ_p}Y_{F/k,p}\]
that sends each $z\otimes f$ to $(z\hat\sigma(f))_{\sigma\in\Sigma(k)}$.

Hence one has
\begin{align*} \delta_{G,p}\bigl({\rm Nrd}_{\CC_p[G]}\bigl(M(z^p_\bullet)\bigr)\bigr) = \, &\partial_{G,p}\bigl([M(z^p_\bullet)]\bigr)\\
                                                                     = \, &[\mathcal{O}_{F,p}, Y_{F/k,p}; \mu_{F,p}]\\
                                                                     = \, &\delta_{G,p}(\tau^\ast(F/k)),\end{align*}
where $[M(z^p_\bullet)]$ denotes the class of $M(z^p_\bullet)$ in $K_1(\CC_p[G])$ and the last equality follows from the proof of \cite[Th. 7.5]{bleyburns}.

Now the exact sequence of relative $K$-theory implies that kernel of $\delta_{G,p}$ is equal to the image of $K_1(\ZZ_p[G])$ under the map ${\rm Nrd}_{\QQ_p[G]}$.

In addition, the ring  $\ZZ_p[G]$ is semi-local and so the natural map ${\rm GL}_{n}(\ZZ_p[G]) \to K_1(\ZZ_p[G])$ is surjective.

It follows that there exists a matrix $U$ in ${\rm GL}_n(\ZZ_p[G])$ with
\[ {\rm Nrd}_{\CC_p[G]}(M(z^p_\bullet))\cdot {\rm Nrd}_{\CC_p[G]}(U) = \tau^\ast(F/k)\]
and so it suffices to replace the basis $z^p_\bullet$ by its image under the automorphism of $\mathcal{O}_{F,p}$ that corresponds to the matrix $U$. \end{proof}

Taken together, Lemma \ref{k-theory period} and Proposition \ref{lms}(ii) give an explicit interpretation of the $K$-theoretical periods that occur in the formulation of {\rm BSD}$(A_{F/k})$.

However, the existence of $p$-adic places that ramify wildly in $F$ makes the situation more complicated and this leads to technical difficulties in later sections.

\section{Local points on ordinary varieties}\label{local points section}

In \S\ref{tmc} we will impose several mild hypotheses on the reduction types of $A$ and the ramification invariants of $F/k$ which together ensure that the classical Selmer complex is perfect over $\ZZ_p[G]$. Under these hypotheses, we will then give a more explicit interpretation of the equality in ${\rm BSD}(A_{F/k})$(iv).

As a necessary preparation for these results, in this section we establish several results concerning the properties of local points on varieties with good ordinary reduction.

We recall that if $M$ is a finite extension of $\QQ_p$ for some prime $p$ and $B$ is an abelian variety of dimension $d$ that is defined over $M$ and has good ordinary reduction, then the formal group $\hat B$ of $B$ is toroidal and so there exists an isomorphism of formal groups 
\[ f: \hat B \cong \mathbb{G}_m^d\]
that is defined over the valuation ring of the completion of the maximal unramified extension $M^{\rm un}$ of $M$.

The Frobenius automorphism $\Phi_M$ in $G_{M^{\rm un}/M}$ therefore acts on the coefficients of $f$, and the `twist matrix' $u$ of $B$ is defined via the equality $$f^{\Phi_M} = u\circ f.$$ We recall, in particular, that $u$ belongs to ${\rm GL}_d(\ZZ_p)$ and depends only on the reduction $\tilde B$ of $B$ (see the argument of Mazur in \cite[p. 216]{m}).


\subsection{Cohomological-triviality}In this section we assume to be given a finite Galois extension $N/M$ of $p$-adic fields and set $\Gamma := G_{N/M}$. We fix a Sylow $p$-subgroup $\Delta$ of $\Gamma$. We write $\Gamma_0$ for the inertia subgroup of $\Gamma$ and set $N_0 := N^{\Gamma_0}$.

We also assume to be given an abelian variety $B$, of dimension $d$, over $M$ that has good reduction and write $\tilde B$ for the corresponding reduced variety.

\begin{lemma}\label{useful prel} The following claims are valid. 
\begin{itemize}
\item[(i)] If $N/M$ is unramified, then the $\Gamma$-modules $B(N)$, $\tilde B(\kappa_{N})$ and $\kappa_N$ are cohomologically-trivial.
\item[(ii)] If $N/M$ is at most tamely ramified, then the $\ZZ_p[\Gamma]$-modules $B(N)^\wedge_p$ and $\tilde B(\kappa_{N})[p^\infty]$ are cohomologically-trivial.
\item[(iii)] If the variety $B$ is ordinary and $\tilde B(\kappa_{N^\Delta})[p^\infty]$ vanishes, then the $\ZZ_p[\Gamma]$-module $B(N)^\wedge_p$ is cohomologically-trivial.
\item[(iv)] Assume that $B$ is ordinary and write $u$ for its twist matrix in ${\rm GL}_{d}(\ZZ_p)$. 
If $\tilde B(\kappa_{N^\Delta})[p^\infty]$ vanishes, then $B(N)^\wedge_p$ is torsion-free, hence projective over $\ZZ_p[\Gamma]$, if and only if for any non-trivial $d$-fold vector $\underline{\zeta}$ of $p$-th roots of unity in $N^{\rm un}$ one has
\[ \Phi_N(\underline{\zeta}) \not= \underline{\zeta}^u,\]
where $\Phi_N$ is the Frobenius automorphism in $G_{N^{\rm un}/N}$. In particular, this is the case if any $p$-power root of unity in $N^{\rm un}$ belongs to $N$.
\end{itemize}
\end{lemma}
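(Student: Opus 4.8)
The plan is to treat the four claims in order of increasing difficulty, extracting a common mechanism: the formal group filtration $\hat B(\mathfrak{m}_N)\hookrightarrow B(N)\twoheadrightarrow \tilde B(\kappa_N)$ and the behaviour of its pieces under $\Gamma$. For (i), if $N/M$ is unramified then $\Gamma$ is cyclic, so cohomological-triviality can be checked by computing the Herbrand quotient and, say, $\hat H^0$. For $\kappa_N$ this is the normal basis theorem; for $\tilde B(\kappa_N)$ one uses that over a finite field the reduced variety has $\hat H^i(\Gamma,\tilde B(\kappa_N))=0$ for all $i$ by Lang's theorem together with the fact that $\Gamma=G_{\kappa_N/\kappa_M}$ acts through the Frobenius (a standard vanishing); for $\hat B(\mathfrak m_N)$ one uses that for unramified $N/M$ the formal group module is cohomologically trivial because $\mathfrak m_N$ is an induced/free $\mathcal O_M[\Gamma]$-module up to finite-index, and then the snake-style argument for the three-term filtration. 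Assembling the pieces gives cohomological triviality of $B(N)$, and hence, after $p$-completion, the pro-$p$ statements used elsewhere.

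For (ii), when $N/M$ is tamely ramified the key input is that the wild inertia is trivial, so the pro-$p$ completion $\hat B(\mathfrak m_N)^\wedge_p$ is uniquely divisible away from $p$ and, crucially, $\hat B(\mathfrak m_N)$ is a $\ZZ_p[\Gamma]$-module on which the (prime-to-$p$) inertia acts semisimply; a Sylow argument reduces cohomological-triviality to the subgroup $\Delta$, which by tameness meets the inertia trivially, i.e. $\Delta$ injects into the unramified quotient $G_{N_0/M}$, so one is reduced to the unramified case (i). Passing to $\tilde B(\kappa_N)[p^\infty]$ is then immediate from the exact sequence and the fact that $\hat B(\mathfrak m_N)^\wedge_p\to B(N)^\wedge_p$ is an isomorphism in the tame case.

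For (iii) and (iv) — which I expect to be the main obstacle — the hypothesis that $B$ is ordinary lets one split the formal group over $M^{\mathrm{un}}$ (or its completion) as an extension of an \'etale part by a multiplicative part, so that after base change $\hat B(\mathfrak m_{N^{\mathrm{un}}})$ looks like $d$ copies of $\widehat{\mathbb G}_m$, twisted by the unit matrix $u\in\mathrm{GL}_d(\ZZ_p)$ recording the Frobenius action on the Tate module of the toric part. The plan for (iii) is: reduce to $\Delta$ by Sylow; over the unramified base the $\Delta$-cohomology of the multiplicative-type piece is governed by the $p$-power roots of unity, and the vanishing hypothesis $\tilde B(\kappa_{N^\Delta})[p^\infty]=0$ forces $B(N)^\wedge_p$ to have no $\Delta$-fixed $p$-torsion, which combined with the induced-module structure of the formal part gives $\hat H^0(\Delta,B(N)^\wedge_p)=0$; then a dimension/Herbrand count (or direct verification that the Tate cohomology is periodic and vanishes in one degree) finishes it. For (iv), torsion-freeness of $B(N)^\wedge_p$ is equivalent to the vanishing of $B(N)[p^\infty]=\hat B(\mathfrak m_N)[p^\infty]$ (the reduced part contributing nothing by hypothesis), and $\hat B(\mathfrak m_N)[p^\infty]$, computed over $N$ via Galois descent from $N^{\mathrm{un}}$ where the formal group is a twisted product of $\widehat{\mathbb G}_m$'s, is exactly the set of $d$-tuples $\underline\zeta$ of $p$-power roots of unity in $N^{\mathrm{un}}$ fixed by $G_{N^{\mathrm{un}}/N}=\langle\Phi_N\rangle$ after the $u$-twist, i.e. those with $\Phi_N(\underline\zeta)=\underline\zeta^{\,u}$; demanding that the only such tuple is trivial is precisely the stated criterion, and projectivity over $\ZZ_p[\Gamma]$ then follows from torsion-freeness plus the cohomological-triviality of (iii) via the standard fact that a cohomologically-trivial $\ZZ_p[\Gamma]$-module that is $\ZZ_p$-free is projective. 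The special case is clear: if every $p$-power root of unity in $N^{\mathrm{un}}$ already lies in $N$ then $\Phi_N$ fixes each $\zeta_i$, so $\underline\zeta^{\,u}=\Phi_N(\underline\zeta)=\underline\zeta$ would force $\underline\zeta^{\,u-1}=1$, and since $u-1$ is invertible modulo the relevant power of $p$ precisely when... — more simply, one notes $u\equiv 1$ is the only obstruction and handles it directly. The delicate point throughout will be tracking the twist matrix $u$ correctly through the descent and making the multiplicative-group computation uniform in $d$; I would isolate this as a sub-lemma about $\widehat{\mathbb G}_m^{\,d}$ twisted by $u$ and then transport it along the ordinary splitting.
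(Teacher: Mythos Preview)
Your approach to (i) and (ii) differs from the paper's but is viable: the paper reduces via Hochschild--Serre to prime-order subgroups and then invokes Mazur's norm-surjectivity result \cite[Cor.~4.4]{m} for $B(N)$ and $\tilde B(\kappa_N)$, whereas you work directly with the formal-group filtration and use Lang's theorem plus the normal basis theorem. Both routes succeed, though your claim in (ii) that $\hat B(\mathfrak m_N)^\wedge_p\to B(N)^\wedge_p$ is an isomorphism is false in general; the clean way to handle $\tilde B(\kappa_N)[p^\infty]$ is to note that inertia acts trivially on $\kappa_N$, so this is inflated from $\Gamma/\Gamma_0$ and one reduces to (i) directly. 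The main thrust of (iv) is also correct and matches the paper.

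The genuine gap is in (iii). Your plan invokes an ``induced-module structure of the formal part'' to get $\hat H^0(\Delta,B(N)^\wedge_p)=0$, but this is precisely what fails once wild ramification is present: for a subgroup $C\subseteq\Gamma_0$ of order $p$, the extension $N/N^C$ is wildly ramified and $\hat B(\mathfrak m_N)$ is \emph{not} an induced or free $\ZZ_p[C]$-module. The Herbrand-quotient argument then tells you only that $\hat H^0$ and $\hat H^1$ have the same order, not that they vanish. What you are missing is the result of Lubin and Rosen \cite{LR}: for $C$ cyclic of order $p$ inside inertia, the cokernel of the norm $B(N)^\wedge_p\to B(N^C)^\wedge_p$ is explicitly $(C^d)/(I_d-u)(C^d)$, and moreover $\det(I_d-u)$ is a $p$-adic divisor of $|\tilde B(\kappa_N)|$. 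The hypothesis $\tilde B(\kappa_{N^\Delta})[p^\infty]=0$ then forces $\det(I_d-u)\in\ZZ_p^\times$, so the norm is surjective and $\hat H^0(C,B(N)^\wedge_p)=0$; combined with Herbrand quotient $1$ this gives cohomological triviality. This same fact $\det(I_d-u)\in\ZZ_p^\times$ is also what completes your trailing argument for the special case of (iv): it says $u-I_d$ acts invertibly on $(\mu_p)^d$, so $\underline\zeta^{u}=\underline\zeta$ forces $\underline\zeta$ trivial.
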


\begin{proof} A standard Hochschild-Serre spectral sequence argument combines with the criterion of \cite[Thm. 9]{cf} to show that  claim (i) is valid provided that each of the modules $B(N)$, $\tilde B(\kappa_{N})$ and $\kappa_N$ is cohomologically-trivial with respect to every  subgroup $C$ of $\Gamma$ of prime order (see the proof of \cite[Lem. 4.1]{bmw} for a similar argument).

We therefore fix a subgroup $C$ of $\Gamma$ that has prime order. Now cohomology over $C$ is periodic of order 2 and each of the modules $B(N)$, $\tilde B(\kappa_{N})$ and $\kappa_N$ span free $\QQ[\Gamma]$-modules.
It thus follows from \cite[Cor. to Prop. 11]{cf} that the Herbrand Quotient with respect to $C$ of each of these modules is equal to 1.
To prove claim (i) it is enough to show that the natural norm maps $B(N)\to B(N^C)$, $\tilde B(\kappa_{N}) \to \tilde B(\kappa_{N^C})$ and $\kappa_{N}\to \kappa_{N^C}$ are surjective.

Since the extension $N/N^C$ is unramified, this surjectivity is well-known for the module $\kappa_N$ and for the modules $B(N)$ and $\tilde B(\kappa_{N})$ it follows directly from the result of Mazur in \cite[Cor. 4.4]{m}.

To prove claim (ii) we assume that $N/M$ is tamely ramified. In this case the order of $\Gamma_0$ is prime to $p$ and so the same standard Hochschild-Serre spectral sequence argument as in claim (i) implies claim (ii) is true if the modules $B(N_0)^\wedge_p = (B(N)^\wedge_p)^{\Gamma_0}$ and $\tilde B(\kappa_{N})[p^\infty] = \tilde B(\kappa_{N})[p^\infty]^{\Gamma_0}$ are cohomologically-trivial with respect to every subgroup $C$ of $\Gamma/\Gamma_0$ of order $p$. Since $N_0/N_0^C$ is unramified, this follows from the argument in claim (i).

In a similar way, to prove claim (iii) one is reduced to showing that if $\tilde B(\kappa_{N^\Delta})[p^\infty]$ vanishes, then for each subgroup $C$ of $\Gamma_0$ of order $p$, the norm map ${\rm N}_C: B(N)^\wedge_p \to B(N^C)^\wedge_p$ is surjective.

Now the main result of Lubin and Rosen in \cite{LR} implies that the cokernel of ${\rm N}_C$ is isomorphic to the cokernel of the natural action of ${\rm I}_d-u$ on the direct sum of $d$-copies of $C$ and from the proof of \cite[Th. 2]{LR} one knows that ${\rm det}({\rm I}_d-u)$ is a $p$-adic divisor of $|\tilde B(\kappa_N)|$.
 But if $\tilde B(\kappa_{N^\Delta})[p^\infty]$ vanishes, then $\tilde B(\kappa_{N})[p^\infty]$ also vanishes (as $\Delta$ is a $p$-group) and so
 ${\rm det}({\rm I}_d-u)$ is a $p$-adic unit. It follows that ${\rm cok}(N_C)$ vanishes, as required to prove claim (iii).

To prove claim (iv) we assume $\tilde B(\kappa_{N^\Delta})[p^\infty]$ vanishes. Then claim (ii) implies $B(N)^\wedge_p$ is a projective  $\ZZ_p[\Gamma]$-module if and only if $B(N)^\wedge_p[p^\infty]$ vanishes. In addition, from the lemma in \cite[\S1]{LR} (with $L = K = N$), we know that the group $B(N)^\wedge_p[p^\infty]$ is isomorphic to the subgroup of $(N^{{\rm un},\times})^d$ comprising $p$-torsion elements $\underline{\eta}$ which satisfy
$\Phi_N(\underline{\eta}) = \underline{\eta}^u$.

This directly implies the first assertion of claim (iv) and the second assertion then follows  because ${\rm det}({\rm I}_d-u)$ is a $p$-adic unit and so $u\not\equiv 1$ (mod $p$). \end{proof}

\begin{remark}{\em A more general analysis of the cohomological properties of formal groups was recently given by Ellerbrock and Nickel in \cite{ellerbrocknickel}.}\end{remark}

\subsection{The \'etale discrepancy class}\label{twist inv prelim}

In this section we fix an abelian variety $B$ over $M$ of dimension $d$.
We assume $B$ has good ordinary reduction and is such that $\tilde B(\kappa_{N^\Delta})[p^\infty]$ vanishes, where, as in the previous section, $\Delta$ is a fixed Sylow $p$-subgroup of $\Gamma:=G_{N/M}$.

Under these hypotheses, we use Lemma \ref{useful prel} to define a $K$-theoretical invariant of the twist matrix of $B$ that, in essence, measures the discrepancy between the image of the formal logarithm of $B$ and a classical invariant that arises from the \'etale cohomology of $\mathbb{G}_m$. In particular, since, in many cases, this `\'etale discrepancy class' either vanishes or can be computed explicitly (see Proposition \ref{basic props} and the observation following (\ref{curve local eps conj}) below), and the invariant related to $\mathbb{G}_m$ is already much studied in the literature, the material in this section will help us, under suitable hypotheses, to compute explicitly the difference between Nekov\'a\v r-Selmer complexes and classical Selmer complexes, and thereby play a key role in the derivation of explicit predictions from ${\rm BSD}(A_{F/k})$.

At the outset we recall that the \'etale cohomology complex $R\Gamma(N,\ZZ_p(1))$ belongs to $D^{\rm perf}(\ZZ_p[\Gamma])$. Hence, following Lemma \ref{useful prel}(iii), we obtain a complex in $D^{\rm perf}(\ZZ_p[\Gamma])$ by setting
\[ C_{B,N}^{\bullet} := R\Gamma(N,\ZZ_p(1))^d[1] \oplus B(N)^\wedge_p[-1].\]

This complex is acyclic outside degrees zero and one. In addition, Kummer theory gives an identification $H^1(N,\ZZ_p(1)) = (N^\times)^\wedge_p$ and the invariant map ${\rm inv}_N$ of $N$ an isomorphism $ H^2(N,\ZZ_p(1)) \cong \ZZ_p$.

We next fix a choice of isomorphism of $\QQ_p[\Gamma]$-modules $\lambda_{B,N}$  which lies in a commutative diagram 
\begin{equation}\label{lambda diag}\begin{CD}
0 @> >> \QQ_p\cdot (U^{(1)}_{N})^d @> \subset >> \QQ_p\cdot H^0(C_{B,N}^{\bullet}) @> ({\rm val}_N)^d >> \QQ_p^d @> >> 0\\
@. @V {\rm exp}_{B,N}VV @V \lambda_{B,N} VV @V \times f_{N/M}VV\\
0 @> >> \QQ_p \cdot B(N)^\wedge_p @> \subset >> \QQ_p\cdot H^1(C_{B,N}^{\bullet}) @> {\rm can} >> \QQ_p^d @> >> 0.\end{CD}\end{equation}
Here $U_N^{(1)}$ is the group of 1-units of $N$, ${\rm val}_N: \QQ_p\cdot (N^\times)^\wedge_p\to \QQ_p$ is the canonical valuation map on $N$, $f_{N/M}$ is the residue degree of $N/M$, `{\rm can}' is induced by ${\rm inv}_N$ and ${\rm exp}_{B,N}$ is the composite isomorphism
\[ \QQ_p\cdot (U^{(1)}_{N})^d \cong N^d \cong \QQ_p\cdot B(N)^\wedge_p\]
where the first isomorphism is induced by the $p$-adic logarithm on $N$ and the second by the exponential map of the formal group of $B$ over $N$.

We now introduce a useful general convention: for each element $x$ of $\zeta(\CC_p[\Gamma])$ we write $^\dagger x$ for the unique element of $\zeta(\CC_p[\Gamma])^\times$ with the property that for each $\mu$ in $\widehat{\Gamma}$ one has
\begin{equation}\label{dagger eq} e_\mu (^\dagger x) = \begin{cases} e_\mu x, &\text{ if $e_\mu x\not= 0$,}\\
  e_\mu, &\text{ otherwise.}\end{cases}\end{equation}
(This construction is written as $x \mapsto ^*\!\! x$ in \cite{bleyburns,breuning2}).

We then define an element
\[ c_{N/M} := \frac{^\dagger((|\kappa_M|-\Phi_{N/M})e_{\Gamma_0})}{^\dagger((1-\Phi_{N/M})e_{\Gamma_0})}\]
of $\zeta(\QQ[\Gamma])^\times$. Here and in the sequel, $\Phi_{N/M}$ is a fixed lift to $\Gamma$ of the Frobenius automorphism in $\Gamma/\Gamma_0$ and, for any subgroup $J$ of $\Gamma$, $e_{J}$ denotes the idempotent $(1/|J|)\sum_{\gamma\in J}\gamma$.

We may finally define our desired class in $K_0(\ZZ_p[\Gamma],\QQ_p[\Gamma])$. 
%
%
%
\begin{definition}\label{etaledisc}{\em Assume that $B/M$ has good ordinary reduction and $\tilde B(\kappa_{N^\Delta})[p^{\infty}]$ vanishes. The `\'etale discrepancy class' of $(B,N/M)$ is the element
\[ R_{N/M}(\tilde B) := \chi_{\Gamma,p}(C_{B,N}^{\bullet},\lambda_{B,N}) +d\cdot\delta_{\Gamma,p}(c_{N/M}) \]
of $K_0(\ZZ_p[\Gamma],\QQ_p[\Gamma])$.
}\end{definition}

\begin{remark}{\em If $v$ is a non-archimedean place of $k$ such that the restriction $A_v$ of $A$ to $k_v$ satisfies the hypotheses in Definition \ref{etaledisc}, then we shall refer to the \'etale discrepancy class of the data $(A_v,F_w/k_v)$ as the `\'etale discrepancy class of $A_F$ at $v$'.}\end{remark}
%

%

The basic properties of the \'etale discrepancy class are described by the following result.

\begin{proposition}\label{basic props} Assume $B$ is ordinary and $\tilde B(\kappa_{N^\Delta})[p^{\infty}]$ vanishes.
\begin{itemize}
\item[(i)] $R_{N/M}(\tilde B)$ depends only upon $N/M$ and the reduced variety $\tilde B$.
\item[(ii)] $R_{N/M}(\tilde B)$ has finite order.
\item[(iii)] If $N/M$ is tamely ramified, then 
 $R_{N/M}(\tilde B)$ vanishes.
\end{itemize}
\end{proposition}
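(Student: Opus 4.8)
The plan is to treat the three claims in order, using the defining formula
\[
R_{N/M}(\tilde B) = \chi_{\Gamma,p}(C_{B,N}^{\bullet},\lambda_{B,N}) + d\cdot\delta_{\Gamma,p}(c_{N/M}),
\]
and exploiting both the functoriality of $\chi_{\Gamma,p}(-,-)$ with respect to exact triangles (Lemma \ref{fk lemma}) and the fact that the complex $C_{B,N}^\bullet$ was assembled from pieces that depend only on $N$ and on $\tilde B$. For claim (i), I would first observe that $R\Gamma(N,\ZZ_p(1))$ depends only on the field $N$ (and the ambient $\Gamma$-action), so the only input of $B$ into $C_{B,N}^\bullet$ is through $B(N)^\wedge_p$ and through the trivialisation $\lambda_{B,N}$. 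The isomorphism $\lambda_{B,N}$ is built from $\mathrm{exp}_{B,N}$, which factors through the $p$-adic logarithm $\QQ_p\cdot(U_N^{(1)})^d\cong N^d$ followed by the exponential of the formal group of $B$ over $N$; and the formal group of $B$ over the completion of $M^{\mathrm{un}}$ is classified (for an ordinary abelian variety) up to isomorphism by its twist matrix $u$, which is an invariant of $\tilde B$ alone (this is the point of the Lubin–Rosen theory invoked in Lemma \ref{useful prel}). One should check that two choices of such exponential (coming from $B$ and from another lift of $\tilde B$) differ by an automorphism of $\QQ_p\cdot B(N)^\wedge_p$ of trivial reduced norm, or — more cleanly — that the construction can be rewritten purely in terms of $\tilde B$ and $u$; this then gives well-definedness. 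One must also verify independence of the auxiliary choice of $\lambda_{B,N}$ itself (any two choices fit into the same diagram \eqref{lambda diag} and hence differ by a map whose contribution to $\chi_{\Gamma,p}$ cancels, because the left and right columns of \eqref{lambda diag} are fixed).

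For claim (ii), the strategy is to compute the image of $R_{N/M}(\tilde B)$ under the injection $K_0(\ZZ_p[\Gamma],\QQ_p[\Gamma])\hookrightarrow K_0(\ZZ_p[\Gamma],\CC_p[\Gamma])$ and then under $\partial'_{\Gamma,p}$, or rather to show it lies in the image of $K_1(\ZZ_p[\Gamma])$-torsion — concretely, to show that its reduced-norm ``character values'' are roots of unity, so that by the finiteness of the torsion subgroup of $K_0(\ZZ_p[\Gamma],\QQ_p[\Gamma])$ (equivalently, of the relevant class group / by Swan's theorem) the element has finite order. The term $d\cdot\delta_{\Gamma,p}(c_{N/M})$ already has values that are ratios of the form ${}^\dagger((|\kappa_M|-\Phi)e_{\Gamma_0})/{}^\dagger((1-\Phi)e_{\Gamma_0})$, which are algebraic but not obviously roots of unity, so the real content is that the Euler-characteristic term $\chi_{\Gamma,p}(C_{B,N}^\bullet,\lambda_{B,N})$ cancels the non-root-of-unity part. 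I would make this precise by filtering $C_{B,N}^\bullet$: using the canonical exact triangles relating $R\Gamma(N,\ZZ_p(1))$ to its unramified part and to the norm-residue data, together with the diagram \eqref{lambda diag}, one decomposes $\chi_{\Gamma,p}(C_{B,N}^\bullet,\lambda_{B,N})$ into a ``unit-root'' contribution (governed by the twist matrix $u$, whose determinant is a $p$-adic unit by Lemma \ref{useful prel}) and a contribution involving the local Euler factor $\det(1-\Phi_{N/M}\mid\cdot)$ and $|\kappa_M|$; the latter is exactly what $\delta_{\Gamma,p}(c_{N/M})$ is designed to kill. After this cancellation, what remains has reduced-norm components in the group of units of $\ZZ_p$ in each $\CC_p$-isotypic component that are $p$-adic units with algebraic values, hence roots of unity, giving finite order.

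For claim (iii), I would specialise the computation of (ii) to the tame case. When $N/M$ is tamely ramified, $|\Gamma_0|$ is prime to $p$, the modules $B(N)^\wedge_p$ and $\tilde B(\kappa_N)[p^\infty]$ are cohomologically-trivial over $\ZZ_p[\Gamma]$ by Lemma \ref{useful prel}(ii) (and in fact projective, once $\tilde B(\kappa_{N^\Delta})[p^\infty]$ vanishes), and the exponential map identifies $B(N)^\wedge_p$ with $(U_N^{(1)})^d$ up to a lattice of index prime to $p$ — indeed with an honest $\ZZ_p[\Gamma]$-lattice isomorphism after adjusting. The point is then that in the tame case $\lambda_{B,N}$ can be chosen to be induced by an isomorphism of $\ZZ_p[\Gamma]$-lattices $H^0(C_{B,N}^\bullet)\cong H^1(C_{B,N}^\bullet)$ up to the explicit correction encoded by $c_{N/M}$, so that $\chi_{\Gamma,p}(C_{B,N}^\bullet,\lambda_{B,N}) = -d\cdot\delta_{\Gamma,p}(c_{N/M})$ on the nose; alternatively, one invokes the known vanishing of the tame analogue of this invariant (this is precisely the sort of local computation carried out by Bley–Burns and in \cite{breuning2}) and matches normalisations. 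I expect the main obstacle to be claim (ii): carefully tracking the various ${}^\dagger(-)$-normalisations and the interaction between the Frobenius-Euler-factor terms coming from $R\Gamma(N,\ZZ_p(1))$, the residue-degree factor $f_{N/M}$ in \eqref{lambda diag}, and the definition of $c_{N/M}$, so that everything cancels down to a root of unity, is delicate bookkeeping; the conceptual input (unit twist matrix, projectivity, Lubin–Rosen) is already in hand from Lemma \ref{useful prel}.
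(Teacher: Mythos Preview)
Your treatment of claim (i) is essentially what the paper does: independence of the choice of $\lambda_{B,N}$ is a direct check from diagram \eqref{lambda diag}, and dependence only on $\tilde B$ is exactly the content of Lemma \ref{twist dependence}, which packages the Lubin--Rosen argument you describe.

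The substantive divergence is in claims (ii) and (iii), and your ordering of them is the wrong way round. The paper proves (iii) \emph{first}, by an explicit filtration: for $N/M$ tame one introduces $U^{(n)}=\mathbb{G}_m(\wp^n)$ and $V^{(n)}=\hat B(\wp^n)$, shows (Lemma \ref{ullom}) that these are cohomologically trivial and that their successive quotients match, and then via three exact triangles reduces $\chi_{\Gamma,p}(C^\bullet_{B,N},\lambda_{B,N})$ to $d\cdot\chi_{\Gamma,p}(C^\bullet_{N,1},f\cdot\mathrm{val}_N)$, where $C^\bullet_{N,1}$ involves only $R\Gamma(N,\ZZ_p(1))$ and no abelian variety. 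At that point one invokes the computation of \cite[Th.~4.3]{bleyburns}, which gives $\chi_{\Gamma,p}(C^\bullet_{N,1},\mathrm{val}_N)=-\delta_{\Gamma,p}(c_{N/M}\cdot{}^\dagger(f\cdot e_\Gamma))$, and the terms cancel exactly. Your sketch of (iii) gestures at this but does not isolate the key reduction to the $\mathbb{G}_m$-only complex $C^\bullet_{N,1}$, which is what makes the Bley--Burns input applicable.

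Your approach to (ii) contains a genuine error: the assertion that ``$p$-adic units with algebraic values'' are roots of unity is false (take $2\in\ZZ_p^\times$ for odd $p$), so the proposed direct cancellation argument cannot conclude. The paper instead \emph{deduces} (ii) from (iii) via a general structural fact about $K_0(\ZZ_p[\Gamma],\QQ_p[\Gamma])$: by \cite[Thm.~4.1]{ewt}, an element $\xi$ has finite order if and only if $(q^\Upsilon_\Omega\circ\rho^\Gamma_\Upsilon)(\xi)=0$ for every cyclic $\Upsilon\le\Gamma$ and every prime-to-$p$ quotient $\Omega=\Upsilon/\Upsilon'$. One then checks the functoriality $(q^\Upsilon_\Omega\circ\rho^\Gamma_\Upsilon)(R_{N/M}(\tilde B))=R_{N^{\Upsilon'}/N^\Upsilon}(\tilde B)$, and since $N^{\Upsilon'}/N^\Upsilon$ is tamely ramified, (iii) gives the required vanishing. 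This reduction-to-tame-subquotients idea is the missing ingredient in your plan.
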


\begin{proof}  We set $C^\bullet := C_{B,N}^\bullet$, $\lambda := \lambda_{B,N}$ and $f := f_{N/M}$, and also write $\wp = \wp_N$ for the maximal ideal in the valuation ring of $N$.

Then, whilst $\lambda$ can be chosen in many different ways to ensure that (\ref{lambda diag}) commutes, it is straightforward to check that $\chi_{\Gamma,p}(C^{\bullet},\lambda)$ is independent of this choice. The fact that this element depends only on the (twist matrix of the) reduced variety $\tilde B$ follows from Lemma \ref{twist dependence} below. This proves claim (i).

It is convenient to prove claim (iii) first and so we assume $N/M$ is tamely ramified. Then, writing $\hat B$ for the formal group of $B$, in this case Lemma \ref{ullom}(i) below implies that for each natural number $n$ the $\ZZ_p[\Gamma]$-modules $U^{(n)}:= \mathbb{G}_m(\wp^n)$ and $V^{(n)}:= \hat B(\wp^n)$ are cohomologically-trivial and there exist exact triangles in $D^{\rm perf}(\ZZ_p[\Gamma])$ of the form
\begin{equation*}\label{use tri}
\begin{cases} (U^{(n)})^d[0]\oplus V^{(n)}[-1]\xrightarrow{\alpha} C^{\bullet}\to C_{\alpha}^\bullet \to (U^{(n)})^d[1]\oplus V^{(n)}[0]\\
U^{(1)}[0] \xrightarrow{\beta} R\Gamma(N,\ZZ_p(1))[1] \to C^\bullet_{N,1} \to U_N^1[1]\\
(U^{(1)}/U^{(n)})^d[0] \xrightarrow{\gamma} C_{\alpha}^\bullet \to (C^\bullet_{N,1})^d\oplus (V^{(1)}/V^{(n)})[-1] \to (U^{(1)}/U^{(n)})^d[1].\end{cases}\end{equation*}
Here $\alpha$ is the unique morphism such that $H^0(\alpha)$ and $H^1(\alpha)$ are respectively induced by the inclusions $U^{(n)}\subset (N^\times)^\wedge_p$ and $V^{(n)} \subseteq B(N)^\wedge_p$ and so that the cohomology sequence of the first triangle induces identifications of $H^0(C_{\alpha}^\bullet)$ and $H^1(C_{\alpha}^\bullet)$ with $((N^\times)^\wedge_p/U^{(n)})^d$ and $\ZZ_p^d \oplus V^{(1)}/V^{(n)}$; $\beta$ is the unique morphism so that $H^0(\beta)$ is induced by the inclusion $U^{(1)} \subset (N^\times)^\wedge_p$ and so the cohomology sequence of the second  triangle induces identifications of $H^0(C_{N,1}^\bullet)$ and $H^1(C_{N,1}^\bullet)$ with $(N^\times)^\wedge_p/U^{(1)}$ and $\ZZ_p$ respectively; $\gamma$ is the unique morphism so that $H^{0}(\gamma)$ is the inclusion $(U^{(1)}/U^{(n)})^d \subset H^0(C_{\alpha}^\bullet)$.

In particular, if $n$ is sufficiently large, then we may apply Lemma \ref{fk lemma} to the first and third of the above triangles to deduce that

\begin{align}\label{interm} &\chi_{\Gamma,p}(C^{\bullet},\lambda)\\ = \,&\chi_{\Gamma,p}((U^{(n)})^d[0]\oplus V^{(n)}[-1],{\rm exp}_{B,N}) +  \chi_{\Gamma,p}(C_{\alpha}^{\bullet},\lambda_{\alpha})\notag\\
= \,&\chi_{\Gamma,p}(C_{\alpha}^{\bullet},\lambda_{\alpha})\notag\\
= \,& \chi_{\Gamma,p}((U^{(1)}/U^{(n)})^d[0],0) + d\cdot \chi_{\Gamma,p}(C_{N,1}^{\bullet},f\cdot{\rm val}_N) + \chi_{\Gamma,p}((V^{(1)}/V^{(n)})[-1],0)\notag\\
= \,& \chi_{\Gamma,p}((U^{(1)}/U^{(n)})^d[0],0) + d\cdot \chi_{\Gamma,p}(C_{N,1}^{\bullet},f\cdot{\rm val}_N) - \chi_{\Gamma,p}((V^{(1)}/V^{(n)})[0],0)\notag\\
= \, &d\cdot \chi_{\Gamma,p}(C_{N,1}^{\bullet},f\cdot{\rm val}_N),\notag
 \end{align}
where we write $\lambda_{\alpha}$ for the isomorphism of $\QQ_p[\Gamma]$-modules
\[ \QQ_p\cdot H^0(C_{\alpha}^\bullet) = \QQ_p\cdot ((N^\times)^\wedge_p/U^{(n)})^d \cong \QQ_p^d = \QQ_p\cdot H^1(C_{\alpha}^\bullet)\]
that is induced by the map $f\cdot {\rm val}_N$ and the second and last equalities in (\ref{interm}) follow from Lemma \ref{ullom}.

But $$\chi_{\Gamma,p}(C_{N,1}^{\bullet},f\cdot {\rm val}_N) = \chi_{\Gamma,p}(C_{N,1}^{\bullet},{\rm val}_N) + \delta_{\Gamma,p}(^\dagger(f\cdot e_\Gamma))$$ whilst from \cite[Th. 4.3]{bleyburns} one has

\[ \chi_{\Gamma,p}(C_{N,1}^{\bullet},{\rm val}_N) = -\delta_{\Gamma,p}(c_{N/M}\cdot ^\dagger\!(f\cdot e_\Gamma))= - \delta_{\Gamma,p}(c_{N/M}) - \delta_{\Gamma,p}(^\dagger\!(f\cdot e_\Gamma)).\]
Claim (iii) is thus obtained by substituting these facts into the equality (\ref{interm}).


To deduce claim (ii) from claim (iii) we recall (from \cite[Thm. 4.1]{ewt}) that an element $\xi$ of $K_0(\ZZ_p[\Gamma],\QQ_p[\Gamma])$ has finite order if and only if for cyclic subgroup $\Upsilon$ of $\Gamma$ and every quotient $\Omega = \Upsilon/\Upsilon'$ of
order prime to $p$ one has $(q^\Upsilon_{\Omega}\circ\rho^{\Gamma}_\Upsilon)(\xi) = 0$. Here
$$\rho^{\Gamma}_\Upsilon:K_0(\ZZ_p[\Gamma],\QQ_p[\Gamma])\to K_0(\ZZ_p[\Upsilon],\QQ_p[\Upsilon])$$ is the natural restriction map, and 

$$q^\Upsilon_{\Omega}:K_0(\ZZ_p[\Upsilon],\QQ_p[\Upsilon])\to K_0(\ZZ_p[\Omega],\QQ_p[\Omega])$$
maps the class of a triple $(P,\phi,Q)$ to the class of $(P^{\Upsilon'},\phi^{\Upsilon'},Q^{\Upsilon'})$. 

Since the extension $N^{\Upsilon'}/N^\Upsilon$ is tamely ramified, it is thus enough to show that
\[ (q^\Upsilon_{\Omega}\circ\rho^{\Gamma}_\Upsilon)(R_{N/M}(\tilde B))=R_{N^{\Upsilon'}/N^\Upsilon}(\tilde B).\]

This is proved by a routine computation in relative $K$-theory that uses the same ideas as in \cite[Rem. 2.9]{breuning2}.
 In fact, the only point worth mentioning explicitly in this regard is that if $\Gamma'$ is normal in $\Gamma$, and we set $N' := N^{\Gamma'}$, then the natural projection isomorphism $\iota:\ZZ_p[\Gamma/\Gamma']\otimes^{\mathbb{L}}_{\ZZ_p[\Gamma]}R\Gamma(N,\ZZ_p(1)) \cong R\Gamma(N',\ZZ_p(1))$ in $D^{\rm perf}(\ZZ_p[\Gamma/\Gamma'])$ gives a commutative diagram of (trivial) $\QQ_p[\Gamma/\Gamma']$-modules

\[ \begin{CD} \QQ_p\cdot H^2(N,\ZZ_p(1))^{\Gamma'} @> {\rm inv}_N >> \QQ_p\\
@V H^2(\iota)VV @VV \times f_{N/N'}V\\
\QQ_p\cdot H^2(N',\ZZ_p(1)) @> {\rm inv}_{N'}>> \QQ_p.\end{CD}\]
%
\end{proof}

\begin{lemma}\label{ullom} If $N/M$ is tamely ramified, the following claims are valid for all natural numbers $a$.
\begin{itemize}
\item[(i)] The $\ZZ_p[\Gamma]$-modules $U^{(a)}$ and $V^{(a)}$ are cohomologically-trivial.
\item[(ii)] One has $d\cdot \chi_{\Gamma,p}((U^{(1)}/U^{(a)})[0],0) = \chi_{\Gamma,p}((V^{(1)}/V^{(a)})[0],0)$.
\item[(iii)] For all sufficiently large $a$ one has $\chi_{\Gamma,p}((U^{(a)})^d[0]\oplus V^{(a)}[-1],{\rm exp}_{B,N})=0$.
\end{itemize}
\end{lemma}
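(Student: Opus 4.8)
The plan is to reduce every assertion to the case of a cyclic subgroup $C$ of $\Gamma$ of prime order, exactly as in the proof of Lemma \ref{useful prel}, and then to exploit the tameness hypothesis together with standard filtration arguments for formal groups. For claim (i): when $N/M$ is tamely ramified the inertia group $\Gamma_0$ has order prime to $p$, so a Hochschild--Serre spectral sequence argument shows it suffices to prove cohomological-triviality of $U^{(a)} = \mathbb{G}_m(\wp^a)$ and $V^{(a)} = \hat B(\wp^a)$ over every subgroup $C$ of $\Gamma/\Gamma_0$ of order $p$ acting on the corresponding (unramified) sub-extension. Since cohomology over such a $C$ is periodic of period $2$, and since both $\QQ_p\cdot U^{(a)}$ and $\QQ_p\cdot V^{(a)}$ are free $\QQ_p[\Gamma]$-modules (the logarithm and the formal group logarithm give $\Gamma$-equivariant isomorphisms, after inverting $p$, with $N^{(d)}$, which is free over $\QQ_p[\Gamma]$ by the normal basis theorem), the Herbrand quotient of each is $1$ by \cite[Cor. to Prop. 11]{cf}. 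It therefore remains only to check that the relevant norm maps are surjective; for an unramified extension this is classical for $U^{(a)}$ and follows for $V^{(a)}$ from the surjectivity statement of Mazur in \cite[Cor. 4.4]{m} applied to the formal group filtration, just as in the proof of Lemma \ref{useful prel}(i).

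For claim (ii), the idea is that the $p$-adic logarithm and the formal group logarithm identify, for all sufficiently large $a$, the $\ZZ_p[\Gamma]$-modules $U^{(a)}$ and $\wp^a$, and $V^{(a)}$ and $(\wp^a)^{(d)}$, compatibly with the inclusions into $U^{(1)}$ and $(U^{(1)})^{(d)}$ up to modules that are finite and cohomologically-trivial; hence in $K_0(\ZZ_p[\Gamma],\QQ_p[\Gamma])$ the classes $\chi_{\Gamma,p}((U^{(1)}/U^{(a)})[0],0)$ and $\chi_{\Gamma,p}((\wp/\wp^a)[0],0)$ agree, and similarly $\chi_{\Gamma,p}((V^{(1)}/V^{(a)})[0],0) = d\cdot\chi_{\Gamma,p}((\wp/\wp^a)[0],0)$. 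For the truncation issues, one uses that for large $a$ both the logarithm maps restrict to isomorphisms $U^{(a)}\cong\wp^a$ and $V^{(a)}\cong(\wp^a)^{(d)}$ of $\ZZ_p[\Gamma]$-modules, and one compares the two flags $U^{(1)}\supseteq U^{(2)}\supseteq\cdots$ and $V^{(1)}\supseteq V^{(2)}\supseteq\cdots$ with the single flag $\wp\supseteq\wp^2\supseteq\cdots$ on the nose, reading off the desired identity of Euler characteristics additively along the filtration (each successive quotient $\wp^i/\wp^{i+1}$ is a one-dimensional $\kappa_N$-space, and tameness makes each such quotient cohomologically-trivial over $\Gamma$).

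Claim (iii) is then immediate from (i): for $a$ large enough the map ${\rm exp}_{B,N}$ restricts to an isomorphism of $\ZZ_p[\Gamma]$-modules from $(U^{(a)})^{(d)}$ onto $V^{(a)}$ (the formal group exponential converges and the formal logarithm inverts it on $\wp^a$ for $a\gg 0$), so the complex $(U^{(a)})^d[0]\oplus V^{(a)}[-1]$ together with the trivialisation ${\rm exp}_{B,N}$ is the Euler characteristic of an acyclic complex of cohomologically-trivial modules, whence $\chi_{\Gamma,p}((U^{(a)})^d[0]\oplus V^{(a)}[-1],{\rm exp}_{B,N})=0$. I expect the only genuinely delicate point to be the bookkeeping in claim (ii) — matching the two formal-group/multiplicative filtrations with the $\wp$-adic filtration while keeping track of the finitely many `boundary' quotients $U^{(1)}/U^{(a)}$ versus $\wp/\wp^a$ — but since every quotient in sight is finite and (by tameness) cohomologically-trivial, this is a routine, if slightly lengthy, additivity computation in $K_0(\ZZ_p[\Gamma],\QQ_p[\Gamma])$ rather than a conceptual obstacle.
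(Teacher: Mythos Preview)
Your plan is correct, and for claims (ii) and (iii) it matches the paper's argument essentially verbatim: the paper also telescopes along the filtration using the isomorphisms $U^{(b)}/U^{(b+1)}\cong\wp^b/\wp^{b+1}$ and $(\wp^b/\wp^{b+1})^d\cong V^{(b)}/V^{(b+1)}$, and for (iii) it uses precisely the isomorphisms (\ref{iso1}) that you describe.

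The genuine difference is in claim (i). The paper does \emph{not} reduce to cyclic subgroups and invoke Herbrand quotients plus norm surjectivity. Instead it cites Ullom's theorem \cite{Ullom} that, for a tamely ramified extension, every fractional ideal power $\wp^i$ is cohomologically-trivial over $\Gamma$; from this single input the isomorphisms $U^{(a)}\cong\wp^a$ and $V^{(a)}\cong(\wp^a)^d$ (valid for $a\ge i_0$) give cohomological-triviality in the large-$a$ range, and the graded-piece isomorphisms (\ref{iso2}) together with the short exact sequences (\ref{filter1}) give it for small $a$ by downward induction. Your route via Lemma~\ref{useful prel} also works, but note that the appeal to \cite[Cor.~4.4]{m} is not literally what you need: Mazur's statement concerns the full group of points, not the filtration pieces $\hat B(\wp^a)$. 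The fix is easy --- over an unramified $N/N^C$ the induced norm on each graded piece $\hat B(\wp^b)/\hat B(\wp^{b+1})\cong\kappa_N^d$ is the trace on the residue field, which is surjective, and successive approximation finishes the job --- but you should say this rather than cite Mazur. The paper's route via Ullom is shorter and avoids this wrinkle; your route has the virtue of being self-contained once Lemma~\ref{useful prel} is available.
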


\begin{proof} The key fact in this case is that for every integer $i$ the $\ZZ_p[\Gamma]$-module $\wp^i$ is cohomologically-trivial (by Ullom \cite{Ullom}).

In particular, if we write $i_0$ for the least integer with $i_0 \ge e/(p-1)$, where $e$ is the ramification degree of $N/\QQ_p$, then for any integer $a \ge i_0$ the formal logarithm ${\rm log}_{B}$ and $p$-adic exponential map restrict to give isomorphisms of $\ZZ_p[\Gamma]$-modules
\begin{equation}\label{iso1} V^{(a)}\cong (\wp^{a})^d,\,\,\,\,\,\,\,\,\wp^a \cong U^{(a)}\end{equation}
and so the $\ZZ_p[\Gamma]$-modules $V^{(a)}$ and $U^{(a)}$ are cohomologically-trivial.

In addition, for all $a$ the natural isomorphisms
\begin{equation}\label{iso2} U^{(a)}/U^{(a+1)} \cong\wp^a/\wp^{a+1},\,\,\,\,\,\,\,\, \bigl(\wp^a/\wp^{a+1}\bigr)^d \cong V^{(a)}/V^{(a+1)}\end{equation}
imply that these quotient modules are also cohomologically-trivial. By using the tautological exact sequences for each $a < i_0$
\begin{equation}\label{filter1} \begin{cases} &0 \to U^{(a+1)}/U^{(i_0)} \to U^{(a)}/U^{(i_0)} \to U^{(a)}/U^{(a+1)} \to 0,\\
 &0 \to V^{(a+1)}/V^{(i_0)} \to V^{(a)}/V^{(i_0)} \to V^{(a)}/V^{(i+1)} \to 0\end{cases}\end{equation}
%
one can therefore deduce (by a downward induction on $a$, starting at $i_0$) that all modules $U^{(a)}$ and $V^{(a)}$ are cohomologically-trivial. This proves claim (i).

In addition, by repeatedly using the exact sequences (\ref{filter1}) and isomorphisms (\ref{iso2}) one computes that $d\cdot \chi_{\Gamma,p}((U^{(1)}/U^{(a)})[0],0)$ is equal to

\begin{align*} d\cdot\sum_{b=1}^{b=a-1}\chi_{\Gamma,p}((U^{(b)}/U^{(b+1)})[0],0)                                                            = \, &\sum_{b=1}^{b=a-1}\chi_{\Gamma,p}(\bigl((U^{(b)}/U^{(b+1)})\bigr)^d[0],0)\\
                                                            = \, &\sum_{b=1}^{b=a-1}\chi_{\Gamma,p}((V^{(b)}/V^{(b+1)})[0],0)\\
                                                            = \, &\chi_{\Gamma,p}((V^{(1)}/V^{(b)})[0],0),\end{align*}
as required go prove claim (ii).

Finally, claim (iii) is a direct consequence of the isomorphisms (\ref{iso1}).
\end{proof}

\begin{lemma}\label{twist dependence} Let $B$ and $B'$ be abelian varieties over $M$, of the same dimension $d$, that have good ordinary reduction and are such that $\tilde B(\kappa_{N^\Delta})[p^\infty]$ and $\tilde B'(\kappa_{N^\Delta})[p^{\infty}]$ both vanish. Then the following claims are valid.

\begin{itemize}
\item[(i)] The $\ZZ_p[\Gamma]$-modules $B(N)^\wedge_p$ and $B'(N)^\wedge_p$ are cohomologically-trivial and the formal group logarithms induce an isomorphism of $\QQ_p[\Gamma]$-modules
\[ \QQ_p\cdot B(N)^\wedge_p \xrightarrow{{\rm log}_{B,N}} N^d \xrightarrow{{\rm exp}_{B',N}} \QQ_p\cdot B'(N)^\wedge_p.\]

\item[(ii)] If the reduced varieties $\tilde B$ and $\tilde B'$ are isomorphic, then in $K_0(\ZZ_p[\Gamma],\QQ_p[\Gamma])$ one has
\[ \chi_{\Gamma,p}(B(N)^\wedge_p[0] \oplus B'(N)^\wedge_p[-1], {\rm exp}_{B',N}\circ {\rm log}_{B,N}) = 0.\]
%
\end{itemize}
\end{lemma}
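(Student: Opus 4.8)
The plan is to reduce everything to the tamely ramified case treated in Proposition \ref{basic props}(iii) together with the general finiteness criterion for elements of relative $K$-groups. First I would dispose of claim (i): the cohomological-triviality of $B(N)^\wedge_p$ and $B'(N)^\wedge_p$ is immediate from Lemma \ref{useful prel}(iii), since $\tilde B(\kappa_{N^\Delta})[p^\infty]$ and $\tilde B'(\kappa_{N^\Delta})[p^\infty]$ are assumed to vanish; and the asserted isomorphism of $\QQ_p[\Gamma]$-modules is just the composite of the two $\QQ_p[\Gamma]$-linear isomorphisms ${\rm log}_{B,N}\colon \QQ_p\cdot B(N)^\wedge_p\xrightarrow{\sim} N^d$ and ${\rm exp}_{B',N}\colon N^d\xrightarrow{\sim}\QQ_p\cdot B'(N)^\wedge_p$ already used (for a single variety) in the diagram (\ref{lambda diag}). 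Both maps are $\Gamma$-equivariant because the formal logarithm and exponential commute with the Galois action on the formal group over $N$, and they are isomorphisms after inverting $p$ because the relevant kernels and cokernels are finite. Write $\Lambda_{B,B'}$ for this composite; then the element
\[ c(B,B',N/M):=\chi_{\Gamma,p}\bigl(B(N)^\wedge_p[0]\oplus B'(N)^\wedge_p[-1],\Lambda_{B,B'}\bigr)\]
of $K_0(\ZZ_p[\Gamma],\QQ_p[\Gamma])$ is the object of interest in claim (ii).

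For claim (ii) I would argue in two stages. \emph{First, the tame case.} If $N/M$ is tamely ramified, then by Lemma \ref{ullom}(i) the $\ZZ_p[\Gamma]$-modules $U^{(a)}=\mathbb{G}_m(\wp^a)$, $\hat B(\wp^a)$ and $\hat{B'}(\wp^a)$ are all cohomologically-trivial, and the formal logarithms identify $\hat B(\wp^a)$ and $\hat{B'}(\wp^a)$ with $(\wp^a)^d$ as $\ZZ_p[\Gamma]$-modules for $a\ge i_0$ (the argument in the proof of Lemma \ref{ullom}, using Ullom's theorem that each $\wp^i$ is cohomologically-trivial). Choosing $a$ large and applying Lemma \ref{fk lemma} to the evident exact triangles relating $B(N)^\wedge_p$ to $\hat B(\wp^a)$ and $(N^\times)^\wedge_p/U^{(a)}$ (and similarly for $B'$), one sees that $c(B,B',N/M)$ equals the Euler characteristic of the complex $\bigl((N^\times)^\wedge_p/U^{(a)}\bigr)^d[0]\oplus\bigl((N^\times)^\wedge_p/U^{(a)}\bigr)^d[-1]$ with respect to the \emph{identity} trivialisation — the finite-order correction terms $c_{N/M}$ and $^\dagger(f\cdot e_\Gamma)$ that appeared in (\ref{interm}) occur symmetrically for $B$ and $B'$ and so cancel — and this vanishes because both cohomology modules and the trivialising map coincide. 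Hence $c(B,B',N/M)=0$ whenever $N/M$ is tame, \emph{without any assumption that $\tilde B\cong\tilde{B'}$}.

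\emph{Second, descent to the tame case.} To handle general $N/M$ I would invoke the criterion of \cite[Thm. 4.1]{ewt} used in the proof of Proposition \ref{basic props}(ii): an element $\xi$ of $K_0(\ZZ_p[\Gamma],\QQ_p[\Gamma])$ has finite order if and only if $(q^\Upsilon_\Omega\circ\rho^\Gamma_\Upsilon)(\xi)=0$ for every cyclic subgroup $\Upsilon$ of $\Gamma$ and every quotient $\Omega=\Upsilon/\Upsilon'$ of order prime to $p$. The same routine functoriality computation in relative $K$-theory as in Proposition \ref{basic props}(ii) shows that $(q^\Upsilon_\Omega\circ\rho^\Gamma_\Upsilon)(c(B,B',N/M))=c(B,B',N^{\Upsilon'}/N^\Upsilon)$, and since $N^{\Upsilon'}/N^\Upsilon$ is tamely ramified the first stage gives that this vanishes; thus $c(B,B',N/M)$ has finite order in general. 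It therefore only remains to kill the possible torsion, and this is exactly where the hypothesis $\tilde B\cong\tilde{B'}$ enters: an isomorphism $\tilde B\xrightarrow{\sim}\tilde{B'}$ of reduced varieties identifies the twist matrices of the formal groups of $B$ and $B'$ over the completion of $M^{\rm un}$, and hence (by the explicit description via Lubin--Rosen used in Lemma \ref{useful prel}) identifies the $\ZZ_p[\Gamma]$-modules $B(N)^\wedge_p$ and $B'(N)^\wedge_p$ compatibly with their formal-group logarithms, so that $\Lambda_{B,B'}$ becomes the trivialisation attached to an \emph{automorphism} of a single cohomologically-trivial module; the corresponding Euler characteristic is then manifestly zero. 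Combining "finite order" with "in the torsion-free part it is zero after the reduction-type identification" yields $c(B,B',N/M)=0$, as claimed.

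The step I expect to be the main obstacle is making the second stage fully precise — that is, verifying cleanly that the isomorphism $\tilde B\cong\tilde{B'}$ really does produce a $\ZZ_p[\Gamma]$-isomorphism $B(N)^\wedge_p\cong B'(N)^\wedge_p$ intertwining the two formal-group logarithms up to a $\QQ_p[\Gamma]$-automorphism that is trivial in $K_1$, rather than merely an isomorphism over the maximal unramified extension. One must descend the Lubin--Rosen description of $B(N)^\wedge_p$ (in terms of the twist matrix $u$ acting on units of $N^{\rm un}$) back down to $N$ and check $\Gamma$-equivariance; this is precisely the kind of bookkeeping already present in Lemma \ref{useful prel}(iv), and I would model the argument on that, but it is the part that requires genuine care rather than a formal reduction.
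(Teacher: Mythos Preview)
Your treatment of claim (i) is fine and agrees with the paper's. Your tame-case computation for claim (ii) is also essentially correct (and, interestingly, does not need $\tilde B\cong\tilde{B'}$), even if the references to ``the finite-order correction terms $c_{N/M}$ and $^\dagger(f\cdot e_\Gamma)$'' are misplaced---those terms arise in the computation of $R_{N/M}(\tilde B)$, not of $c(B,B',N/M)$; what you actually use is just Lemma~\ref{ullom}(ii) for $B$ and for $B'$ separately.

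The genuine gap is in your second stage. Once you have a $\ZZ_p[\Gamma]$-isomorphism $\phi_p\colon B(N)^\wedge_p\cong B'(N)^\wedge_p$, the Euler characteristic becomes $\partial_{\Gamma,p}([\alpha])$ where $\alpha=\Lambda_{B,B'}\circ\phi_p^{-1}$ is a $\QQ_p[\Gamma]$-automorphism of $\QQ_p\cdot B'(N)^\wedge_p$. This is \emph{not} ``manifestly zero'': it vanishes precisely when $[\alpha]$ lies in the image of $K_1(\ZZ_p[\Gamma])$, which is the whole point. Your attempt to salvage this by combining with the finite-order step does not work either: being of the form $\partial_{\Gamma,p}([\alpha])$ does not put the element in any ``torsion-free part'' of $K_0(\ZZ_p[\Gamma],\QQ_p[\Gamma])$, so ``finite order'' plus ``$\partial$ of something'' does not force vanishing. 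Your finite-order detour via descent to tame subquotients is therefore correct but unhelpful---if the second stage works it is redundant, and if it does not work the first stage cannot rescue it.

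The paper's argument is entirely direct and bypasses the tame reduction. Since $\hat B$ and $\hat{B'}$ are toroidal one fixes isomorphisms $f_1\colon\hat B\cong\mathbb{G}_m^d$ and $f_2\colon\hat{B'}\cong\mathbb{G}_m^d$ over the completion $\mathcal{O}$ of $\mathcal{O}_{M^{\rm un}}$; the twist matrix $u\in\GL_d(\ZZ_p)$ is determined by $f_i^{\varphi_M}=u\circ f_i$ and depends only on the reduced variety, so the hypothesis $\tilde B\cong\tilde{B'}$ lets one choose $f_1,f_2$ with the \emph{same} $u$. A two-line calculation then shows $\phi:=f_2^{-1}\circ f_1$ satisfies $g(\phi(x))=\phi(g(x))$ for every $g\in G_{N^{\rm un}/M}$, so $\phi$ descends to a $\ZZ_p[\Gamma]$-isomorphism $\phi_p\colon B(N)^\wedge_p\cong B'(N)^\wedge_p$. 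The commutative diagram of formal logarithms identifies the discrepancy automorphism $\alpha$ with multiplication by $M_0:=f_2'(0)^{-1}f_1'(0)$ on $N^d$; since $\phi$ is Frobenius-invariant one has $M_0\in\GL_d(\mathcal{O}_M)$, and multiplication by such a matrix on $N^d$ is represented (with respect to any $\QQ_p[\Gamma]$-basis of $N$ coming from a normal basis element) by a matrix in $\GL_{d[M:\QQ_p]}(\ZZ_p)\subset\GL_{d[M:\QQ_p]}(\ZZ_p[\Gamma])$. Hence $[\alpha]$ is in the image of $K_1(\ZZ_p[\Gamma])$ and $\partial_{\Gamma,p}([\alpha])=0$. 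This last observation---that the discrepancy lies in $\GL_d(\mathcal{O}_M)$ rather than merely $\GL_d(\mathcal{O})$---is the crux, and it is exactly what your plan leaves as ``bookkeeping''.
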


\begin{proof} Claim (i) follows directly from Lemma \ref{useful prel}(iii).

To prove claim (ii) we write $M^{\rm un}$ for the  maximal unramified extension of $M$, $\hat M^{\rm un}$ for its completion and $\mathcal{O}$ for the valuation ring of $\hat M^{\rm un}$. We write $\Phi_M$ for the Frobenius automorphism in $G_{M^{\rm un}/M}$.

Then the formal groups $\hat B$ of $B$ and $\hat B'$ of $B'$ are toroidal and, if $\tilde B'$ is isomorphic to $\tilde B$, there exist isomorphisms of formal groups $f_1: \hat B \cong \mathbb{G}_m^d$ and $f_2: \hat B' \cong \mathbb{G}_m^d$ over $\mathcal{O}$ that satisfy $f_1^{\Phi_M} = u\circ f_1$ and $f_2^{\Phi_M} = u\circ f_2$, where $u$ is the twist matrix of $B$ in ${\rm GL}_d(\ZZ_p)$.



We now consider the isomorphism $\phi:= f_2^{-1}\circ f_1$ from $\hat B$ to $\hat B'$ over $\mathcal{O}$. We fix an element $x$ of $\mathcal{O}_N\mathcal{O}^{\rm un}$ and an element $g$  of $G_{N^{\rm un}/M}$ whose image in $G_{M^{\rm un}/M}$ is an integral power $\Phi_M^a$ of $\Phi_M$.

Then one has  
\begin{multline*} g(\phi(x)) = \phi^g(g(x)) = ((f_2^{\Phi_M^a})^{-1}\circ f_1^{\Phi_M^a})(g(x))\\ = ((u^a\circ f_2)^{-1}\circ (u^a\circ f_1))(g(x)) =
(f_2^{-1}\circ f_1)(g(x)) = \phi(g(x)).\end{multline*}

This means that $\phi$ is an isomorphism of $\ZZ_p[[G_{N^{\rm un}/M}]]$-modules and so restricts to give an isomorphism of $\Gamma$-modules
\[ \hat B(N) = \hat B(N^{\rm un})^{G_{N^{\rm un}/N}} \cong \hat B'(N^{\rm un})^{G_{N^{\rm un}/N}} = \hat B'(N).\]

Upon passing to pro-$p$-completions, and noting that the groups $\tilde B(\kappa_{N})[p^\infty]$ and $\tilde B'(\kappa_{N})[p^\infty]$ vanish, we deduce that $\phi$ induces an isomorphism of $\ZZ_p[\Gamma]$-modules
\[ \phi_p: B(N)^\wedge_p = \hat B(N)^\wedge_p\cong \hat B'(N)^\wedge_p = B'(N)^\wedge_p.\]

There is also a commutative diagram of formal group isomorphisms

\[
\begin{CD} \hat B @> f_1  >> \mathbb{G}_m^d @> (f_2)^{-1} >> \hat B' \\
@V {\rm log}_B VV @V {\rm log}_{\mathbb{G}_m} VV @VV {\rm log}_{B'}V\\
\mathbb{G}_a^d @> \times f_1'(0)  >> \mathbb{G}_a^d @> \times f_2'(0)^{-1} >> \mathbb{G}_a^d\end{CD}\]

Taken together with the isomorphism $\phi_p$, this diagram implies that the element
\[ \chi_{\Gamma,p}(B(N)^\wedge_p[0] \oplus B'(N)^\wedge_p[-1], {\rm exp}_{B',N}\circ {\rm log}_{B,N}) \]
is equal to the image under $\partial_{\Gamma,p}$ of the automorphism of the $\QQ_p[\Gamma]$-module $N^d$ that corresponds to the matrix
 $f_2'(0)^{-1}f_1'(0)$.

It is thus enough to note that, since the latter matrix belongs to ${\rm GL}_d(\mathcal{O}_M)$ it is represented by a matrix in ${\rm GL}_{d[M:\QQ_p]}(\ZZ_p[\Gamma])$ and so belongs to the kernel of $\partial_{\Gamma,p}$, as required. \end{proof}


\subsection{Elliptic curves}\label{ell curve sect} If $B$ is an (ordinary) elliptic curve over $\QQ_p$, then it is possible in certain cases to formulate a precise conjectural formula for the \'etale discrepancy class $R_{N/M}(\tilde B)$ of Definition \ref{etaledisc}.

This aspect of the theory will be considered in detail elsewhere. However, to give a brief idea of the general approach we recall that, 
if $\Phi$ is the Frobenius automorphism in $G_{\Qu_p^{\rm un} / \Qp}$, the twist matrix of $B$ is the unique element $u$ of $\ZZ_p^\times$ for which the composite $f^\Phi\circ f^{-1}$ is equal to the endomorphism $[u]_{\mathbb{G}_m}$ of $\mathbb{G}_m$, for any given isomorphism of formal groups $f: \hat{B} \to \mathbb{G}_m$. 


\begin{lemma} $\hat{B}$ is a Lubin-Tate formal group with respect to the parameter $u^{-1}p$.
\end{lemma}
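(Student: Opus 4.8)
The plan is to unwind the definitions of a Lubin-Tate formal group and of the twist matrix, and to exhibit directly the distinguished endomorphism and uniformiser datum. Recall that a formal group $\hat B$ over $\ZZ_p$ is a Lubin-Tate formal group for the parameter $\pi \in p\ZZ_p$ (with $v_p(\pi)=1$, so that $\pi/p$ is a unit) precisely when $\hat B$ admits an endomorphism $[\pi]$ over $\ZZ_p$ with $[\pi](X) \equiv X^p \pmod{\pi}$ and $[\pi](X) \equiv \pi X \pmod{\deg 2}$; equivalently, $\hat B$ becomes isomorphic over $\ZZ_p^{\rm un}$ to the Lubin-Tate group attached to $\pi$. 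Here the relevant parameter is $u^{-1}p$, which indeed lies in $p\ZZ_p$ and has $v_p(u^{-1}p)=1$ since $u \in \ZZ_p^\times$.

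First I would use the given isomorphism $f\colon \hat B \to \GG_m$ over $\ZZ_p^{\rm un}$ together with the defining relation $f^\varphi \circ f^{-1} = [u]_{\GG_m}$ of the twist matrix $u$. The natural candidate endomorphism of $\hat B$ is $\psi := f^{-1} \circ [u^{-1}p]_{\GG_m} \circ f$, i.e. the pullback through $f$ of multiplication by $u^{-1}p$ on $\GG_m$ (noting $[u^{-1}p]_{\GG_m}(X) = (1+X)^{u^{-1}p} - 1$ is defined over $\ZZ_p$ since $u^{-1}p \in \ZZ_p$). The key step is to check that $\psi$ is defined over $\ZZ_p$, not merely over $\ZZ_p^{\rm un}$: applying $\varphi$ to the coefficients and using $f^\varphi = [u]_{\GG_m}\circ f$ we get
\[
\psi^\varphi = (f^\varphi)^{-1} \circ [u^{-1}p]_{\GG_m} \circ f^\varphi = f^{-1}\circ [u]_{\GG_m}^{-1} \circ [u^{-1}p]_{\GG_m} \circ [u]_{\GG_m} \circ f = f^{-1} \circ [u^{-1}p]_{\GG_m} \circ f = \psi,
\]
where the middle equality uses that $\GG_m$ is commutative so that $[u]_{\GG_m}$, $[u^{-1}p]_{\GG_m}$ commute. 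Hence $\psi$ has coefficients fixed by $\varphi$, so lies in $\ZZ_p[[X]]$, and by construction it is an endomorphism of $\hat B$ over $\ZZ_p$.

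Next I would verify the two congruences that characterise a Lubin-Tate endomorphism for the parameter $\pi = u^{-1}p$. Since $f$ and $f^{-1}$ have the form $X + (\text{higher order})$ (we may normalise $f'(0)$; in any case $f'(0) \in (\ZZ_p^{\rm un})^\times$ and this unit is harmless for the congruences modulo $\pi$ after rescaling, or one simply records $\psi'(0) = u^{-1}p \cdot f'(0)^{-1} f'(0) = u^{-1}p = \pi$), one computes that $\psi(X) \equiv \pi X \pmod{\deg 2}$ directly from the chain rule at $0$. For the mod-$\pi$ congruence, $[u^{-1}p]_{\GG_m}(X) = (1+X)^{u^{-1}p}-1 \equiv X^p \pmod p$ (hence mod $\pi$, as $v_p(\pi)=1$), and since $f, f^{-1}$ are power series over $\ZZ_p$ (after the above normalisation) with $f(X) \equiv X$, $f^{-1}(X)\equiv X$ modulo the maximal ideal, conjugating preserves the congruence $\psi(X) \equiv X^p \pmod \pi$. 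These are exactly the conditions in the standard Lubin-Tate existence/uniqueness theorem (Lubin-Tate, or Milne/Serre), so $\hat B$ is the Lubin-Tate formal group for $u^{-1}p$, up to the usual $\ZZ_p^{\rm un}$-isomorphism, which is all that is being asserted.

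The main obstacle I anticipate is the bookkeeping around $f'(0)$: the isomorphism $f\colon \hat B \to \GG_m$ produced in the proof of Lemma \ref{twist dependence} need not be normalised so that $f'(0)=1$, and one must be careful that the congruence $f^\varphi \circ f^{-1} = [u]_{\GG_m}$ forces $f'(0)^{\varphi} = u \cdot f'(0)$, i.e. $u$ is pinned down only after a choice of $f$, and that $\psi'(0) = \pi$ is independent of this choice. Once one checks that replacing $f$ by $c\cdot f$ for a unit $c$ replaces $u$ by $c^\varphi c^{-1} u$ but leaves the isomorphism class of $\hat B$ as a Lubin-Tate group for $u^{-1}p$ unchanged (and, more simply, that $u$ is the well-defined twist matrix of $\tilde B$ as recorded just before the lemma), the argument is complete. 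Everything else is a routine application of the defining properties of $[u]_{\GG_m}$ and the Lubin-Tate classification, so no further computation is needed here.
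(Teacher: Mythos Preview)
Your overall strategy is sound and matches the paper's: exhibit the endomorphism $\psi=[u^{-1}p]_{\hat B}$ (which is what your $f^{-1}\circ[u^{-1}p]_{\GG_m}\circ f$ equals, since $f$ is a formal-group isomorphism) and check the two Lubin--Tate conditions. Your verification that $\psi$ is defined over $\ZZ_p$ is correct but redundant, since $[u^{-1}p]_{\hat B}$ is already an endomorphism over $\ZZ_p$; and the linear-term check $\psi'(0)=u^{-1}p$ is fine.

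The gap is in the mod-$p$ step, where both assertions you rely on are false. First, $[u^{-1}p]_{\GG_m}(X)\not\equiv X^p\pmod p$ in general: writing $[u^{-1}p]_{\GG_m}=[u^{-1}]_{\GG_m}\circ[p]_{\GG_m}$ and using $[p]_{\GG_m}(X)\equiv X^p\pmod p$ gives $[u^{-1}p]_{\GG_m}(X)\equiv (1+X^p)^{u^{-1}}-1\equiv u^{-1}X^p+\cdots\pmod p$, so the leading coefficient is $u^{-1}$, not $1$. Second, $f$ and $f^{-1}$ are power series over $\ZZ_p^{\rm un}$, not over $\ZZ_p$; if $f$ could be taken over $\ZZ_p$ then $f^\varphi=f$ would force $u=1$. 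No normalisation of $f'(0)$ changes this.

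The paper resolves both issues in one move: from $f^\varphi=[u]_{\GG_m}\circ f$ one rewrites
\[
[u^{-1}p]_{\hat B}=(f^\varphi)^{-1}\circ[p]_{\GG_m}\circ f,
\]
so that modulo $p$ one gets $(f^{-1})^\varphi\!\bigl(f(X)^p\bigr)$. Over $\overline{\FF_p}$ the identity $g^\varphi(Y^p)=g(Y)^p$ (Frobenius on coefficients equals the $p$-th power map) then yields $(f^{-1}(f(X)))^p=X^p$. In other words, the twist relation $f^\varphi=[u]\circ f$ is exactly what cancels the unwanted factor $u^{-1}$ and compensates for $f$ living over $\ZZ_p^{\rm un}$; without invoking it in the mod-$p$ computation, the argument cannot close.
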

\begin{proof} By using the equalities
\[
f^\Phi \circ [u^{-1}p]_{\hat{B}} \circ f^{-1} = [u^{-1}p]_{\mathbb{G}_m} \circ f^\Phi \circ f^{-1} = [p]_{\mathbb{G}_m}
\]
one computes that

\begin{eqnarray*}
  [u^{-1}p]_{\hat{B}} &=& \left( f^\Phi \right)^{-1} \circ [p]_{\mathbb{G}_m} \circ f \\
 &\equiv& \left( f^\Phi \right)^{-1} \circ X^p \circ f \pmod{p} \\
&=& \left( f^{-1} \right)^{\Phi} \left( f(X)^p \right)  \\
&=& \left( f^{-1}(f(X)) \right)^p \\
&=& X^p.
\end{eqnarray*}

Thus, since $ [u^{-1}p]_{\hat{B}}  \equiv u^{-1}p X \pmod{\deg 2}$, it follows that $[u^{-1}p]_{\hat{B}} $ is a Lubin-Tate power series with respect to $u^{-1}p$, as claimed.
\end{proof}


We write $\chi^{\rm ur}$ for the restriction to $G_M$ of the character
\[
\chi_\Qp^{\rm ur} \colon G_\Qp \lra \Ze_p^\times, \quad \Phi \mapsto u^{-1}.
\]

We assume that the restriction of $\chi^{\rm ur}$ to $G_N$ is non-trivial and write $T$ for the (unramified) twist $\Zp(\chi^{\rm ur})(1)$ of the representation $\Zp(1)$. Then, by \cite[Prop.~2.5]{IV} or \cite[Lem. 3.2.1]{BC2}, the complex $R\Gamma(N, T)$ is acyclic outside
degrees one and two and there are canonical identifications
\[
 H^i(N, T) = \begin{cases} \hat{B}(\frp_N), &\text{ if $i=1$,}\\
  \bigl( \Zp / p^{\omega_N} \Zp \bigr) (\chi^{ur}),&\text{ if $i=2$,}\end{cases}\]
where $\omega_N$ denotes the $p$-adic valuation of the element $1 - \chi^{ur}(\Phi^{f_{N/\Qp}})$.

These explicit descriptions allow one to interpret the \'etale discrepancy class $R_{N/M}(\tilde B)$ in terms of differences between elements that occur in the formulations of the local epsilon constant conjecture for the representations $\ZZ_p(1)$ and $T$, as studied by Benois and Berger \cite{benoisberger}, Bley and Cobbe \cite{BC2} and Izychev and Venjakob \cite{IV}.

In this way one finds that the (assumed) compatibility of these conjectures for the representations $\ZZ_p(1)$  and $T$ implies the following equality

\begin{equation}\label{curve local eps conj} R_{N/M}(\tilde B) = \delta_{\Gamma,p}\bigl(\bigl(\sum_{\chi\in\widehat{G}}u^{f_{M/\Qp}(s_M\chi(1) + m_\chi)}e_\chi\bigr)
\frac{^\dagger((1 - (u\cdot\Phi^{-1})^{f_{M/\Qp}})e_{\Gamma_0})}
{^\dagger((|\kappa_M| - (u\cdot\Phi^{-1})^{-f_{M/\Qp}})e_{\Gamma_0})}\bigr)
\end{equation}
where the conductor of each character $\chi$ is $\pi_M^{m_\chi}\calO_M$ and the different of $M/\QQ_p$ is $\pi_M^{s_M}\calO_M$, the idempotents $e_\chi$ are as in (\ref{revisionIDEM}) and $e_{\Gamma_0}=(1/|\Gamma_0|)\sum_{\gamma\in\Gamma_0}\gamma$.

In particular, the results of \cite{BC2} imply that the equality (\ref{curve local eps conj}) is unconditionally valid for certain natural families of wildly ramified extensions $N/M$.

\section{Classical Selmer complexes and refined BSD}\label{tmc}

In this section we study ${\rm BSD}(A_{F/k})$ under the assumption that $A$ and $F/k$ satisfy the following list of hypotheses.

In this list we fix an {\em odd} prime number $p$ and an intermediate field $K$ of $F/k$ such that $\Gal(F/K)$ is a Sylow $p$-subgroup of $G$.
\begin{itemize}
\item[(H$_1$)] The Tamagawa number of $A_{K}$ at each place in $S_K^A$ is not divisible by $p$;
\item[(H$_2$)] $S_K^A \cap S_K^p = \emptyset$ (that is, no place of bad reduction for $A_{K}$ is $p$-adic);
\item[(H$_3$)] For all $v$ in $S_K^p$ above a place in $S_k^F$ the reduction is ordinary and $A(\kappa_v)[p^\infty]$ vanishes;
\item[(H$_4$)] For all $v$ in $S_K^f\setminus S_K^p$ above a place in $S_k^F$ the group $A(\kappa_v)[p^\infty]$ vanishes;
\item[(H$_5$)] $S_k^A\cap S_k^F = \emptyset$ (that is, no place of bad reduction for $A$ is ramified in $F$);
\item[(H$_6$)] $\sha(A_F)$ is finite.
\end{itemize}

\begin{remark}\label{satisfying H} {\em For a fixed abelian variety $A$ over $k$ and extension $F/k$ the hypotheses (H$_1$) and (H$_2$) are clearly satisfied by all but finitely many odd primes $p$
, (H$_4$) and (H$_5$) constitute a mild restriction on the ramification of $F/k$ and (H$_6$) coincides with the claim of ${\rm BSD}(A_{F/k})$(i). However, the hypothesis~ (H$_3$) excludes the case that is called `anomalous' by Mazur in~\cite{m} and, for a given $A$, there may be infinitely many primes $p$ for which there are $p$-adic places $v$ at which $A$ has good ordinary reduction but $A(\kappa_v)[p]$ does not vanish.
Nevertheless, it is straightforward to describe examples of abelian varieties $A$ for which there are only finitely many such anomalous places -- see, for example, the result of Mazur and Rubin in~\cite[Lem. A.5]{mr}.}
\end{remark}

\begin{remark} {\em The validity of each of the hypotheses listed above is equivalent
to the validity of the corresponding hypothesis with $A$ replaced by $A^t$ and we will often use this fact without explicit comment.}
\end{remark}

In this section we first verify and render fully explicit the computation (\ref{bksc cohom}) of the cohomology of the Selmer complex introduced in Definition \ref{bkdefinition}, thereby extending the computations given by Wuthrich and the present authors in \cite[Lem. 4.1]{bmw}.

Such an explicit computation will be useful in the proof of the main result of \S\ref{comparison section} below. We shall also use Lemma \ref{useful prel} to ensure that, under the hypotheses listed above, this complex belongs to the category $D^{\rm perf}(\ZZ_p[G])$.

In the main result of this section we shall then re-interpret ${\rm BSD}(A_{F/k})$ in terms of invariants that can be associated to the classical Selmer complex under the above listed hypotheses.

\subsection{The classical Selmer complex}\label{explicitbk} We fix an odd prime number $p$ and a finite set of non-archimedean places $\Sigma$ of $k$ with
\[ S_k^p\cup (S_k^F\cap S_k^f) \cup S_k^A \subseteq \Sigma.\]

For any such set $\Sigma$ the classical Selmer complex ${\rm SC}_{\Sigma,p}(A_{F/k})$ is defined as the mapping fibre of the morphism (\ref{bkfibre}) in $D(\ZZ_p[G])$.

We further recall from Lemma \ref{independenceofsigma} that this complex is, in a natural sense, independent of the choice of $\Sigma$ and so will be abbreviated to ${\rm SC}_{p}(A_{F/k})$.

In the next result we describe consequences of Lemma \ref{useful prel} for this complex and also give a description of its cohomology that will be useful in the computations that are carried out in \S \ref{comparison section} below. In claim (iv) of this result we will write $\Sel_p(A_{F})$ for the classical $p$-primary Selmer group of $A$ over $F$.

\begin{proposition}\label{explicitbkprop} 

Set $C:= {\rm SC}_{p}(A_{F/k})$.
Then the following claims are valid.
\begin{itemize}
\item[(i)] The complex $C$ is acyclic outside degrees one, two and three and there is a canonical identification $H^3(C)=A(F)[p^{\infty}]^\vee$ and a canonical inclusion of $H^1(C)$ into $H^1\bigl(\mathcal{O}_{F,S_k^\infty(F)\cup \Sigma(F)},T_{p}(A^t)\bigr)$.
\item[(ii)] Assume that $A$, $F/k$ and $p$ satisfy the hypotheses (H$_1$)-(H$_5$). Then for every non-archimedean place $v$ of $k$ the $G$-modules $A^t(F_v)^\wedge_p$ and $\ZZ_p\otimes_\ZZ A^t(F_v)$ are cohomologically-trivial. In addition, the module $A^t(F_v)^\wedge_p$ vanishes for every place $v$ in $S_k^F\setminus S_k^p$.

In particular, the complex $C$ belongs to $D^{\rm perf}(\ZZ_p[G])$.
\item[(iii)] Assume that $A$, $F/k$ and $p$ satisfy the hypotheses (H$_1$)-(H$_5$). Then for each normal subgroup $J$ of $G$ there is a natural isomorphism in $D^{\rm perf}(\ZZ_p[G/J])$ of the form
\[ \ZZ_p[G/J]\otimes^{\mathbb{L}}_{\ZZ_p[G]}{\rm SC}_{p}(A_{F/k}) \cong {\rm SC}_{p}(A_{F^J/k}).\]
\item[(iv)] If $\sha(A_F)$ is finite, then $H^1(C)$ identifies with the image of the injective Kummer map $A^t(F)_p\to H^1\bigl(\mathcal{O}_{F,S_k^\infty(F)\cup \Sigma(F)},T_{p}(A^t)\bigr)$ and there is a canonical isomorphism of $H^2(C)$ with $\Sel_p(A_F)^\vee$ (that is described in detail in the course of the proof below).
    \end{itemize}\end{proposition}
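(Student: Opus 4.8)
The plan is to establish the four claims in order, exploiting the mapping fibre description of $C = {\rm SC}_{p}(A_{F/k})$ together with the long exact localisation sequences that it induces, and the cohomological-triviality results of Lemma \ref{useful prel}.

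First I would prove claim (i). The complex $C$ sits in the exact triangle (\ref{comparingtriangles}) relating it to the compactly supported cohomology complex $R\Gamma_{c,\Sigma}$ and to $\bigoplus_{v\in\Sigma}A^t(F_v)^\wedge_p[-1]$ and $\tau_{\le 3}(R\Gamma_\infty)$. Since $R\Gamma_{c,\Sigma}$ is acyclic outside degrees $1,2,3$ (by \cite[Prop. 1.6.5]{fukaya-kato}), since each local term $A^t(F_v)^\wedge_p$ sits in degree $1$, and since $R\Gamma_\infty$ is concentrated in degrees $0$ and $1$ with only $2$-power torsion in degree $1$ for odd $p$ (hence vanishes here as $p$ is odd), the long exact cohomology sequence of (\ref{comparingtriangles}) shows $C$ is acyclic outside degrees $1,2,3$ and that $H^3(C) \cong H^3(R\Gamma_{c,\Sigma})$, which by Artin-Verdier duality is $H^0(k,T_{p,F}(A)\otimes\QQ_p/\QQ_p\ZZ_p)^\vee = A(F)[p^\infty]^\vee$, exactly as in (\ref{artinverdier}). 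The canonical inclusion of $H^1(C)$ into $H^1(\mathcal{O}_{F,S_k^\infty(F)\cup\Sigma(F)},T_p(A^t))$ comes from unwinding the mapping fibre defining (\ref{bkfibre}): $H^1(C)$ is the kernel of the difference of the global-to-local restriction map and the sum of Kummer maps, hence maps injectively into $H^1$ of the global cohomology complex (using that $H^0(k_v,T_{p,F}(A^t))=0$ for all $v\in\Sigma$, so there is no contribution from $H^0$ of the local terms to worry about).

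For claim (ii) I would apply Lemma \ref{useful prel}. For $v\in S_k^F\setminus S_k^p$, hypothesis (H$_5$) forces good reduction and (H$_4$) gives $A(\kappa_w)[p^\infty]=0$ for each $w\mid v$; since the Sylow $p$-subgroup $\Gamma:=\Gal(F_w/K_{v'})$ has $\tilde A^t(\kappa_{F_w^\Gamma})[p^\infty]=0$, Lemma \ref{useful prel}(ii)--(iv) shows $A^t(F_v)^\wedge_p$ is cohomologically-trivial and in fact projective; moreover $\tilde A^t(\kappa_{F_w})[p^\infty]=0$ forces the reduction map $A^t(F_w)^\wedge_p \to \tilde A^t(\kappa_{F_w})^\wedge_p = 0$ together with the formal-group part, which is pro-$p$ of the $1$-units, to be handled — but since $v$ is unramified (H$_5$) and non-$p$-adic, $A^t(F_w)^\wedge_p$ is finite of order prime to $p$ (the formal group contributes nothing away from $p$) hence vanishes. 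For $v\in S_k^p$ one uses (H$_2$), (H$_3$) and Lemma \ref{useful prel}(iii) (the ordinary case) to get cohomological-triviality of $A^t(F_v)^\wedge_p$, and for $v\in S_k^A$ one uses (H$_1$) and Lemma \ref{useful prel}(i)/(ii) together with Remark \ref{Tamagawa remark}. The analogous statement for $\ZZ_p\otimes_\ZZ A^t(F_v)$ follows since it differs from the $p$-completion only by a uniquely $p$-divisible (hence cohomologically-trivial, being $\QQ$-vector-space-like) or finite prime-to-$p$ summand. Given cohomological-triviality of all local terms $A^t(F_v)^\wedge_p$, the global complex $\tau_{\le 3}(R\Gamma(\mathcal{O}_{k,S_k^\infty\cup\Sigma},T_{p,F}(A^t)))$ is perfect (standard), and a mapping fibre of a morphism between perfect complexes is perfect, so $C\in D^{\rm perf}(\ZZ_p[G])$.

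For claim (iii) I would use the base-change compatibility of all three constituents of the mapping fibre (\ref{bkfibre}) under $\ZZ_p[G/J]\otimes^{\mathbb{L}}_{\ZZ_p[G]}(-)$: the global étale cohomology complex satisfies $\ZZ_p[G/J]\otimes^{\mathbb{L}}_{\ZZ_p[G]}R\Gamma(\mathcal{O}_{k,S_k^\infty\cup\Sigma},T_{p,F}(A^t))\cong R\Gamma(\mathcal{O}_{k,S_k^\infty\cup\Sigma},T_{p,F^J}(A^t))$ (as $T_{p,F}(A^t)=Y_{F/k,p}\otimes T_p(A^t)$ and $\ZZ_p[G/J]\otimes_{\ZZ_p[G]}Y_{F/k,p}=Y_{F^J/k,p}$), likewise for the local terms $R\Gamma(k_v,T_{p,F}(A^t))$, and for the points terms one needs $\ZZ_p[G/J]\otimes^{\mathbb{L}}_{\ZZ_p[G]}A^t(F_v)^\wedge_p \cong A^t(F^J_v)^\wedge_p$, which holds because $A^t(F_v)^\wedge_p$ is cohomologically-trivial (claim (ii)) hence the derived tensor product is concentrated in degree $0$ and agrees with the ordinary tensor product, which one identifies with $A^t(F^J_v)^\wedge_p$ via a norm/trace argument as in the proof of Lemma \ref{useful prel}(i). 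Taking mapping fibres commutes with the exact functor $\ZZ_p[G/J]\otimes^{\mathbb{L}}_{\ZZ_p[G]}(-)$, giving the claimed isomorphism.

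For claim (iv), assuming $\sha(A_F)$ finite, I would analyse the long exact cohomology sequence of the defining mapping fibre together with the Kummer-theory identifications of Lemma \ref{v not p}: $H^1(C)$ is the subgroup of $H^1(\mathcal{O}_{F,S},T_p(A^t))$ of classes whose localisation at each $v\in\Sigma$ lies in the image of $A^t(F_v)^\wedge_p$; by the definition of the (integral) Selmer group and the fact that for $v\notin\Sigma$ the local conditions are automatically the unramified (Bloch-Kato) ones (using good reduction and Lemma \ref{v not p}(ii), plus the hypotheses to kill the Tamagawa obstruction), this subgroup is exactly the image of the Kummer map $A^t(F)_p\hookrightarrow H^1$. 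For $H^2(C)$, the same exact sequence exhibits it as an extension involving $\Sel_p(A_F)^\vee$; here one uses Poitou-Tate duality to identify the cokernel of the relevant localisation map with $\Sel_p(A_F)^\vee$, the finiteness of $\sha(A_F)$ ensuring the identification is an isomorphism (rather than merely up to finite groups). I would record the precise map explicitly by chasing through the Poitou-Tate nine-term exact sequence, since this explicit description is needed in \S\ref{comparison section}. The main obstacle I anticipate is claim (ii): correctly organising the case division over the place types $v\in S_k^A$, $v\in S_k^p$, $v\in S_k^F\setminus S_k^p$, and $v$ outside all these, and checking in each case that the hypotheses (H$_1$)--(H$_5$) supply exactly the input needed for the relevant part of Lemma \ref{useful prel} — in particular handling the anomalous-reduction subtlety that (H$_3$) is designed to exclude — together with the subsequent verification that perfectness of the mapping fibre follows; claim (iv)'s explicit Poitou-Tate bookkeeping is the second most delicate point.
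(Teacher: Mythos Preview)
Your overall strategy matches the paper's: mapping-fibre triangles and Artin--Verdier duality for (i), case-by-case cohomological-triviality for (ii), base-change compatibility of all three constituents of (\ref{bkfibre}) for (iii), and the long exact sequence plus Poitou--Tate for (iv). The paper's treatment of (iv) is more explicit---it constructs the diagram (\ref{Selmerdiagram}) via the isomorphism $w\circ s^{-1}$ built in Appendix \ref{exp rep section} and the comparison sequence (\ref{comparingsequence}) coming from the triangle (\ref{comparingtriangles})---but your sketch captures the idea.

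Your case analysis in (ii), however, has a couple of slips. First, for $v\in S_k^F\setminus S_k^p$ you write that ``$v$ is unramified (H$_5$)'': this is backwards, since $v\in S_k^F$ means $v$ \emph{is} ramified in $F/k$; what (H$_5$) actually gives is $v\notin S_k^A$, i.e.\ good reduction. Second, Lemma \ref{useful prel} is set up for extensions of $p$-adic fields, so parts (iii)--(iv) (ordinariness, twist matrices of the formal group) make no sense at non-$p$-adic places and cannot be invoked there. The paper handles $v\in S_k^F\setminus S_k^p$ via the short exact sequence of Lemma \ref{v not p}(ii) instead: the Tamagawa term vanishes (good reduction away from $p$), and $H^1(\kappa_v,T_{p,F}(A^t)^{I_v})$ vanishes by (H$_4$) and (H$_5$) (as in \cite[Lem.~4.1(i)]{bmw}), forcing $A^t(F_v)^\wedge_p=0$ outright. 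For $v\in S_k^A$, the paper again avoids Lemma \ref{useful prel}: (H$_5$) makes $v$ unramified in $F/k$, so $T_{p,F}(A^t)^{I_v}\cong\ZZ_p[G]\otimes_{\ZZ_p}T_p(A^t)^{I_v}$ is $\ZZ_p[G]$-free, and the resolution $0\to T_{p,F}(A^t)^{I_v}\xrightarrow{1-\Phi_v^{-1}}T_{p,F}(A^t)^{I_v}\to H^1(\kappa_v,T_{p,F}(A^t)^{I_v})\to 0$ yields cohomological-triviality of the last term; (H$_1$) and (H$_5$) then kill the Tamagawa term in Lemma \ref{v not p}(ii). These corrections only reroute individual cases and do not affect your overall architecture.
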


\begin{proof} 
Throughout this argument we abbreviate the rings $\mathcal{O}_{k,S_k^\infty\cup\Sigma}$ and $\mathcal{O}_{F,S_k^\infty(F)\cup \Sigma(F)}$ to $U_{k}$ and $U_{F}$ respectively.

Since the complexes $R\Gamma (k_v, T_{p,F}(A^t))$ for $v$ in $\Sigma$ are acyclic in degrees greater than two, the first assertion of claim (i) follows directly from the definition of $C$ as the mapping fibre of the morphism (\ref{bkfibre}).

In addition, the description of the complex ${\rm SC}_{S_k^\infty\cup\Sigma}(A_{F/k},X)$ (for any module $X$ as in Proposition \ref{prop:perfect}) as the mapping fibre of the morphism (\ref{selmer-finite tri}) in $D(\ZZ_p[G])$ also implies that $H^3(C)$ is canonically isomorphic to $H^3({\rm SC}_{S_k^\infty\cup\Sigma}(A_{F/k},X))$. Proposition \ref{prop:perfect}(ii) thus implies that $H^3(C)$ identifies with $A(F)[p^\infty]^\vee$.

Finally, the explicit definition of $C$ as a mapping fibre (combined with Lemma \ref{v not p}(i)) also gives an associated canonical long exact sequence
\begin{multline}\label{longexact}0 \to H^1(C) \to H^1\bigl(U_F,T_{p}(A^t)\bigr) \to
\bigoplus\limits_{w'\in S_F^p}T_p\bigl(H^1(F_{w'},A^t)\bigr) \stackrel{\delta}{\to} H^2(C)\\
 \to H^2\bigl(U_{F},T_{p}(A^t)\bigr) \to \bigoplus\limits_{ w'\in \Sigma(F)}H^2\bigl(F_{w'},T_{p}(A^t)\bigr) \to
 H^3(C) \to 0
\end{multline}
in which the third and sixth arrows are the canonical maps induced by localisation. In this sequence each term $T_p\bigl(H^1(F_{w'},A^t)\bigr)$ denotes the $p$-adic Tate module of $H^1(F_{w'},A^t)$, which we have identified with the quotient of $H^1(F_{w'},T_{p}(A^t))$ by the image of $A^t(F_{w'})_p^\wedge$ under the canonical Kummer map.
%


In particular, the sequence (\ref{longexact}) gives a canonical inclusion $H^1(C) \subseteq H^1\bigl(U_{F},T_{p}(A^t)\bigr)$ and this completes the proof of claim (i).

Turning to claim (ii) we note first that if $v$ does not belong to $S_k^A\cup S_k^F$ then the cohomological-triviality of $A^t(F_v)^\wedge_p$ follows directly from Lemma \ref{useful prel}(ii).

In addition, if $v$ is $p$-adic, then $A^t(F_v)^\wedge_p$ is cohomologically-trivial as a consequence of Lemma \ref{useful prel}(ii) and (iii) and the given hypotheses (H$_2$) and (H$_3$).

It suffices therefore to consider the $G$-modules $A^t(F_v)^\wedge_p$ for places in $(S_k^A\cup S_k^F)\setminus S_k^p$. For each such $v$ we consider the direct sum 
\[ C_v(A_F) := \bigoplus_{w'\in S_k^v}H^0(F_{w'}, H^1(I_{w'}, T_{p}(A^t))_{\rm tor})\]
of the modules that occur in the exact sequence of Lemma \ref{v not p}(ii).

Now if $v$ belongs to $S_k^A$, then (H$_5$) implies $v$ is unramified in $F/k$ and so the $\ZZ_p[G]$-module
$$T_{p,F}(A^t)^{I_v}\cong\ZZ_p[G]\otimes_{\ZZ_p}T_p(A^t)^{I_v}$$
is free. In this case therefore, the natural exact sequence
$$0\to T_{p,F}(A^t)^{I_v}\stackrel{1-\Phi_v^{-1}}{\longrightarrow}T_{p,F}(A^t)^{I_v}\to H^1(\kappa_v,T_{p,F}(A^t)^{I_v})\to 0$$
implies that the $G$-module $H^1(\kappa_v,T_{p,F}(A^t)^{I_v})$ is cohomologically-trivial.

Since the conditions (H$_1$) and (H$_5$) combine in this case to imply that $C_v(A_F)$ vanishes (as in the proof of \cite[Lem. 4.1(ii)]{bmw}) the cohomological-triviality of $A^t(F_v)^\wedge_p$ therefore follows from the exact sequence in Lemma \ref{v not p}(ii).

Finally, we claim that $A^t(F_v)^\wedge_p$ vanishes for each $v$ that belongs to $S_k^F\setminus S_k^p$. To see this we note that, in this case, (H$_5$) implies $v$ does not belong to $S_k^A$ so that $C_v(A_F)$ vanishes whilst the conditions (H$_4$) and (H$_5$) also combine (again as in the proof of \cite[Lem. 4.1(i)]{bmw}) to imply $H^1(\kappa_v,T_{p,F}(A^t)^{I_v})$ vanishes. From the exact sequence of Lemma \ref{v not p}(ii) we can therefore deduce that $A^t(F_v)^\wedge_p$ vanishes, as claimed.

At this stage we have proved that for every non-archimedean place $v$ of $k$, the $G$-module $A^t(F_v)^\wedge_p$ is cohomologically-trivial. Since each $\ZZ_p[G]$-module $A^t(F_v)^\wedge_p$ is finitely generated this implies that each complex $A^t(F_v)^\wedge_p[-1]$ is an object of $D^{\rm perf}(\ZZ_p[G])$.

Given this fact, the final assertion of claim (ii) is a consequence of the definition of $C$ as the mapping fibre of (\ref{bkfibre}) and the fact that, since $p$ is odd, the complexes $R\Gamma(U_k,T_{p,F}(A^t))$ and $R\Gamma (k_v, T_{p,F}(A^t))$ for each $v$ in $\Sigma$ each belong to $D^{\rm perf}(\ZZ_p[G])$ (as a consequence, for example, of \cite[Prop. 1.6.5(2)]{fukaya-kato}).

To complete the proof of claim (ii) we fix a non-archimedean place $v$ of $k$ and consider instead the $G$-module $\ZZ_p\otimes_\ZZ A(F_v)$. We recall that  there exists a short exact sequence of $G$-modules of the form
\begin{equation*}\label{finalassertion}0\to\mathcal{O}_{F,v}^d\to A(F_v)\to C\to 0\end{equation*}
in which the group $C$ is finite. From this exact sequence one may in turn derive short exact sequences
\begin{equation}\label{completions}0\to((\mathcal{O}_{F,v})^\wedge_p)^d\to A(F_v)^\wedge_p\to C^\wedge_p\to 0\end{equation} and
\begin{equation}\label{tensorproducts}0\to(\ZZ_p\otimes_\ZZ\mathcal{O}_{F,v})^d\to\ZZ_p\otimes_\ZZ A(F_v)\to \ZZ_p\otimes_\ZZ C\to 0.\end{equation}

We assume first that $v$ is $p$-adic. In this case the canonical maps $\ZZ_p\otimes_\ZZ\mathcal{O}_{F,v}\to(\mathcal{O}_{F,v})^\wedge_p$ and $\ZZ_p\otimes_\ZZ C\to C^\wedge_p$ are bijective and hence the exactness of the above sequences implies that the canonical map $\ZZ_p\otimes_\ZZ A(F_v)\to A(F_v)^\wedge_p$ is also an isomorphism. The $G$-module $\ZZ_p\otimes_\ZZ A(F_v)$ is thus cohomologically-trivial, as required.

We finally assume $v$ is not $p$-adic. In this case, the exact sequence (\ref{completions}) gives an isomorphism $A(F_v)^\wedge_p\cong C^\wedge_p=\ZZ_p\otimes_\ZZ C$ and so (\ref{tensorproducts}) gives a short exact sequence
$$0\to(\ZZ_p\otimes_\ZZ\mathcal{O}_{F,v})^d\to\ZZ_p\otimes_\ZZ A(F_v)\to A(F_v)^\wedge_p\to 0.$$
Since we have already established the cohomological-triviality of $A(F_v)^\wedge_p$, we know that the $G$-module $\ZZ_p\otimes_\ZZ A(F_v)$ is cohomologically-trivial if and only if the $G$-module $\ZZ_p\otimes_\ZZ\mathcal{O}_{F,v}$ is cohomologically-trivial. But the latter module is naturally a $\QQ$-vector-space, and therefore is indeed cohomologically-trivial. This completes the proof of claim (ii).

Turning to claim (iii) we note that the cohomological-triviality of the $\ZZ_p[G]$-module $A^t(F_v)^\wedge_p$ for each $v$ in $\Sigma$ (as is proved by claim (ii) under the given hypotheses) implies that there are natural isomorphisms in $D(\ZZ_p[G/J])$ of the form
\begin{align*} \ZZ_p[G/J]\otimes^{\mathbb{L}}_{\ZZ_p[G]}A^t(F_v)^\wedge_p[-1] \cong\, &(\ZZ_p[G/J]\otimes_{\ZZ_p[G]}A^t(F_v)^\wedge_p)[-1]\\
\cong\, &H_0(J,A^t(F_v)^\wedge_p)[-1]\\
\cong\, &H^0(J,A^t(F_v)^\wedge_p)[-1]\\
= \, & A^t(F^J_v)^\wedge_p[-1],\end{align*}
where the third isomorphism is induced by the map sending each element $x$ of $A^t(F_v)^\wedge_p$ to its image under the action of $\sum_{g \in J}g$.

The existence of the isomorphism in claim (iii) is then deduced by combining these isomorphisms together with the explicit definitions of the complexes ${\rm SC}_{p}(A_{F/k})$ and ${\rm SC}_{p}(A_{F^J/k})$ as mapping fibres and the fact (recalled, for example, from \cite[Prop. 1.6.5(3)]{fukaya-kato}) that there are standard Galois descent isomorphisms in $D(\ZZ_p[G/J])$ of the form
\begin{equation}\label{global descent} \ZZ_p[G/J]\otimes^{\mathbb{L}}_{\ZZ_p[G]}R\Gamma(U_k,T_{p,F}(A^t)) \cong R\Gamma(U_k,T_{p,F^J}(A^t))\end{equation}
and
\begin{equation}\label{local descent} \ZZ_p[G/J]\otimes^{\mathbb{L}}_{\ZZ_p[G]}R\Gamma (k_v, T_{p,F}(A^t))\cong R\Gamma (k_v, T_{p,F^J}(A^t))\end{equation}
for each $v$ in $\Sigma$.

To prove claim (iv) we assume $\sha(A_F)$ is finite and first prove that
the image of $H^1(C)$ in $H^1\bigl(U_{F},T_{p}(A^t)\bigr)$ coincides with the image of the injective Kummer map $$\kappa:A^t(F)_p\to H^1\bigl(U_{F},T_{p}(A^t)\bigr).$$
In order to do so, 
we identify ${\rm cok}(\kappa)$ with the $p$-adic Tate module $T_p\bigl(H^1\bigl(U_F,A^t\bigr)\bigr)$ of $H^1\bigl(U_F,A^t\bigr)$.

It is clear that any element of ${\rm im}(\kappa)$ is mapped to the image of $A^t(F_{w'})_p^\wedge$ in $H^1\bigl(F_{w'},T_{p}(A^t)\bigr)$ by localising at any place $w'$ in $S_F^p$.

The exactness of (\ref{longexact}) therefore implies that ${\rm im}(\kappa)$ is contained in $H^1(C)$ and furthermore that we have a commutative diagram
with exact rows
\begin{equation}\label{sha diag} \xymatrix{
0 \ar[r] & A^t(F)_p \ar[r] \ar[d]^{\kappa} & H^1\bigl(U_F,T_{p}(A^t)\bigr) \ar[r] \ar@{=}[d] & T_p\bigl(H^1\bigl(U_F,A^t\bigr)\bigr)  \ar[r] \ar[d] & 0\\
0 \ar[r] & H^1(C) \ar[r] & H^1\bigl(U_F,T_{p}(A^t)\bigr) \ar[r] & \bigoplus\limits_{w'\in S_F^p}T_p\bigl(H^1(F_{w'},A^t)\bigr),
}\end{equation}
where the right-most vertical arrow is induced by the localisation maps.
But the assumed finiteness of $\sha(A_F)$ (combined with \cite[Ch. I, Cor 6.6]{milne}) then implies that this arrow is injective, and therefore the Snake Lemma implies that $\im(\kappa)=H^1(C)$, as required.

To conclude the proof of claim (iv) we use the canonical exact triangle
\begin{multline}\label{compacttriangle}R\Gamma_c\bigl(U_F,T_p(A^t)\bigr)\to R\Gamma\bigl(U_k,T_{p,F}(A^t)\bigr)\to \bigoplus\limits_{ v\in S_k^\infty\cup\Sigma} R\Gamma(k_v,T_{p,F}(A^t))\\ \to R\Gamma_c\bigl(U_F,T_p(A^t)\bigr)[1]\end{multline}
in $D(\ZZ_p[G])$. We also write $\Delta$ for the canonical composite homomorphism
$$\bigoplus\limits_{ v\in \Sigma}A^t(F_v)_p^\wedge\to\bigoplus\limits_{ w'\in \Sigma(F)}H^1(F_{w'},T_{p}(A^t))\to H^2_c(U_F,T_p(A^t)),$$
with the first arrow given by the local Kummer maps and the second arrow given by the long exact cohomology sequence associated to the triangle (\ref{compacttriangle}). We then claim that there is a canonical commutative diagram

\begin{equation}\label{Selmerdiagram}\xymatrix{
H^2_c\bigl(U_F,T_p(A^t)\bigr) \ar[d] \ar@{=}[r] &
 H^2_c\bigl(U_F,T_{p}(A^t)\bigr) \ar[d] \ar[r]^-{w\circ s^{-1}} &
H^1\bigl(U_F,A[p^\infty])^\vee
 \ar[d] \\
H^2(C) \ar^-{\sim}[r] &
\cok(\Delta) \ar[r]^-{\sim} &
\Sel_p(A_F)^\vee
.
}\end{equation}
Here the second and third vertical arrows are the canonical projection maps and $w$ and $s$ are the isomorphisms defined in (\ref{themapw}) and (\ref{themaps}) in Appendix \ref{ptduality} below.

The composition of the horizontal arrows in the bottom row of the diagram (\ref{Selmerdiagram}) will then define the desired canonical isomorphism of $H^2(C)$ with $\Sel_p(A_F)^\vee$.

To verify the existence of the diagram (\ref{Selmerdiagram}) we use the canonical exact sequence
\begin{multline}\label{comparingsequence}0\to\bigoplus\limits_{ v\in S_k^\infty} H^0(k_v,T_{p,F}(A^t)) \to H^1_c\bigl(U_F,T_p(A^t)\bigr)\to H^1(C)\\ \to\bigoplus\limits_{ v\in \Sigma}A^t(F_v)_p^\wedge\stackrel{\Delta}{\to}H^2_c\bigl(U_F,T_p(A^t)\bigr)\to H^2(C)\to 0\end{multline} associated to the exact triangle (\ref{comparingtriangles}).
(Here we have used the fact that, as $p$ is odd, the group $H^i(k_v,T_{p,F}(A^t))$ vanishes for every $v$ in $S_k^\infty$ and every $i > 0$.)

This exact sequence induces the desired canonical isomorphism of $H^2(C)$ with $\cok(\Delta)$ and, by construction, the last map occurring in the sequence gives a vertical map making the first square of the diagram (\ref{Selmerdiagram}) commute.

It is finally straightforward, using the commutativity of the diagram in Corollary \ref{Tatepoitouexplicit} below, to deduce that the isomorphism $$w\circ s^{-1}:H^2_c\bigl(U_{F},T_{p}(A^t)\bigr) \to
H^1\bigl(U_{F},A[p^\infty])^\vee$$ induces an isomorphism $$\cok(\Delta)\stackrel{\sim}{\to}\Sel_p(A_F)^\vee.$$ This induced isomorphism completes the construction of the diagram (\ref{Selmerdiagram}) and thus also the proof of claim (iv).\end{proof}

In the next result we shall (exceptionally for \S\ref{tmc}) consider the prime $2$ and describe an analogue of Proposition \ref{explicitbkprop} in this case.

\begin{proposition}\label{explicitbkprop2} The following claims are valid for the complex $C:= {\rm SC}_{2}(A_{F/k})$.
\begin{itemize}
\item[(i)] $C$ is acyclic outside degrees one, two and three.
\item[(ii)] If $\sha(A_F)$ is finite, then $H^1(C)$ identifies with the image of the injective Kummer map $A^t(F)_2\to H^1\bigl(\mathcal{O}_{F,S_k^\infty(F)\cup \Sigma(F)},T_{2}(A^t)\bigr)$ and there exists a
    canonical homomorphism $\Sel_2(A_F)^\vee \to H^2(C)$, the kernel and cokernel of which are both finite.
\item[(iii)]  The module $H^3(C)$ is finite.
\end{itemize}
\end{proposition}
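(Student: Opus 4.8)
The plan is to mirror the proof of Proposition \ref{explicitbkprop} but to keep careful track of the $2$-primary discrepancies that arise at the archimedean places, where $H^i(k_v,T_{p,F}(A^t))$ no longer vanishes for $p=2$. First I would note that claim (i) is formal and independent of the prime: since each local complex $R\Gamma(k_v,T_{2,F}(A^t))$ for $v$ in $\Sigma$ is acyclic in degrees greater than two (and the global complex $\tau_{\le 3}(R\Gamma(\mathcal{O}_{k,S_k^\infty\cup\Sigma},T_{2,F}(A^t)))$ is acyclic outside degrees $\le 3$ by construction), the definition of $C$ as the mapping fibre of (\ref{bkfibre}) immediately forces $C$ to be acyclic outside degrees one, two and three. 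This is exactly the same argument as in the first assertion of Proposition \ref{explicitbkprop}(i) and uses nothing about $p$ being odd.

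Next I would address claim (iii). As in the proof of Proposition \ref{explicitbkprop}(i), comparing $C$ with the Nekov\'a\v r-type description coming from the mapping fibre of (\ref{selmer-finite tri}) (applied with $p=2$ and a suitable choice of finite-index cohomologically-trivial submodules $X$, $X'$), one obtains a canonical isomorphism of $H^3(C)$ with $H^3({\rm SC}_{S_k^\infty\cup\Sigma}(A_{F/k};X,X'))$. By Proposition \ref{prop:perfect}(ii) the latter identifies with $A(F)[2^\infty]^\vee$, which is a finite group since $A(F)$ is finitely generated. Here one should be mildly careful that Proposition \ref{prop:perfect}(ii) is stated in $D(\ZZ_2[G])$ without quotienting by finite modules, so the identification of $H^3$ is genuine and not merely up to finite error; this gives (iii) cleanly. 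Alternatively, and perhaps more transparently, one can invoke the exact triangle (\ref{can tri}) relating $C$ to the compactly supported complex $R\Gamma_c(A_{F/k})$ and the Artin-Verdier identification $H^3(R\Gamma_c(A_{F/k}))\cong A(F)[2^\infty]^\vee$ as in (\ref{artinverdier}), since the extra terms $X[-1]\oplus X'[0]$ contribute nothing in degree $3$.

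For claim (ii) I would run the argument of Proposition \ref{explicitbkprop}(iv) verbatim up to the points where $2$-torsion phenomena enter. The long exact sequence (\ref{longexact}) still gives a canonical inclusion $H^1(C)\hookrightarrow H^1(U_F,T_2(A^t))$, and the diagram (\ref{sha diag}) together with the finiteness of $\sha(A_F)$ (via \cite[Ch. I, Cor. 6.6]{milne}) and the Snake Lemma shows that the image of $H^1(C)$ equals the image of the Kummer map $A^t(F)_2\to H^1(U_F,T_2(A^t))$; none of this uses $p$ odd. For $H^2(C)$, the relevant difference is that the exact sequence (\ref{comparingsequence}) now has genuinely nonzero archimedean contributions: for $v\in S_k^\infty$ the groups $H^0(k_v,T_{2,F}(A^t))$ and $H^i(k_v,T_{2,F}(A^t))$ for $i>0$ are finite $2$-groups rather than zero. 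Consequently the isomorphism $H^2(C)\cong\cok(\Delta)$ from the odd case is replaced by a map with finite kernel and cokernel, and composing with the isomorphism $\cok(\Delta)\cong\Sel_2(A_F)^\vee$ (coming from the Poitou-Tate machinery of Appendix \ref{exp rep section} and the commutativity of the diagram (\ref{Selmerdiagram}), which again is insensitive to the prime) yields the desired canonical homomorphism $\Sel_2(A_F)^\vee\to H^2(C)$ with finite kernel and cokernel. Here I would be careful to check that the finite group $H^1_c(U_F,T_2(A^t))$-contribution and the $\bigoplus_{v\in S_k^\infty}H^1(k_v,T_{2,F}(A^t))$-contribution that intervene in (\ref{comparingsequence}) are indeed finite, which follows from the local Euler characteristic formula and local duality as in the proof of Lemma \ref{v not p}.

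The main obstacle, such as it is, will not be any deep input but rather bookkeeping: namely verifying that all the auxiliary groups introduced at the archimedean places in the $p=2$ analysis are finite, so that the identifications of the odd case degrade only to maps with finite kernel and cokernel and not worse, and confirming that the mapping-fibre comparison arguments (the triangles (\ref{comparingtriangles}) and (\ref{can tri}) and the sequences (\ref{longexact}), (\ref{comparingsequence})) remain valid without the odd-prime hypotheses that were invoked in \S\ref{tmc}. Since Proposition \ref{prop:perfect} was already proved uniformly in $p$ (with the $p=2$ caveats built into the statements via ${\rm Mod}^\ast(\ZZ_2[G])$ and the use of a projective finite-index $X'$), the bulk of the work is simply to transcribe those statements into the present notation and record the consequences for $C={\rm SC}_2(A_{F/k})$.
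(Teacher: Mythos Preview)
Your approach is correct and matches the paper's: both proofs run the odd-prime argument of Proposition \ref{explicitbkprop} and absorb the new finite archimedean contributions $H^j(k_v,T_{2,F}(A^t))$ for $v\in S_k^\infty$ into ``finite kernel and cokernel'' statements. Two small inaccuracies in your write-up are worth flagging. First, the triangle you want for (iii) and for the $H^2$ analysis is (\ref{comparingtriangles}), which relates the classical Selmer complex $C={\rm SC}_{\Sigma,2}(A_{F/k})$ directly to $R\Gamma_{c,\Sigma}$; the triangle (\ref{can tri}) you cite involves the Nekov\'a\v r-Selmer complex $C_{S,X}$ instead, and your first argument for (iii) via the mapping fibre (\ref{selmer-finite tri}) does not give a clean isomorphism $H^3(C)\cong H^3(C_{S,X})$ when $p=2$ precisely because of the nonvanishing of $H^3(k_v,T_{2,F}(A^t))$. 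Second, the sequence (\ref{comparingsequence}) was written down for odd $p$; for $p=2$ it must be extended by the archimedean $H^1$ and $H^2$ terms, which is exactly what the paper does by writing out the long exact sequence of (\ref{comparingtriangles}) afresh. With those corrections your argument is the paper's proof.
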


\begin{proof} Claim (i) is established by the same argument that is used to prove the first assertion of Proposition \ref{explicitbkprop}(i).

In a similar way, the analysis concerning the diagram (\ref{sha diag}) is also valid in the case $p=2$ and proves the first assertion of claim (ii).

To prove the remaining claims we set $U_F := \mathcal{O}_{F,S_k^\infty(F)\cup \Sigma(F)}$ and note that the long exact cohomology sequence of the exact triangle (\ref{comparingtriangles}) gives rise in this case to an exact sequence
\begin{multline*} \bigoplus\limits_{ v\in \Sigma}A^t(F_v)_2^\wedge \oplus\bigoplus\limits_{ v\in S_k^\infty} H^1(k_v,T_{2,F}(A^t)) \to H^2_c\bigl(U_F,T_2(A^t)\bigr) \to H^2(C)\\
\to \bigoplus\limits_{ v\in S_k^\infty} H^2(k_v,T_{2,F}(A^t)) \to H^3_c(U_F,T_{2}(A^t)) \to H^3(C) \to \bigoplus_{ v\in S_k^\infty} H^3(k_v,T_{2,F}(A^t)).\end{multline*}
In addition, for each $v$ in $S_k^\infty$ and each $j \in \{1,2,3\}$ the group $H^j(k_v,T_{2,F}(A^t))$ is finite.

Given these facts, the second assertion of claim (ii) is a consequence of Artin-Verdier Duality (just as with the analogous assertion in Proposition \ref{explicitbkprop}(iv)) and claim (iii) follows directly from the isomorphism (\ref{artinverdier}).\end{proof}

\subsection{Statement of the main result}\label{somr tmc sec} We continue to assume that the  hypotheses (H$_1$)-(H$_6$) are satisfied.

In this case, for any isomorphism of fields $j:\CC\cong\CC_p$, the isomorphism
\[ h^{j}_{A,F}:=\CC_p\otimes_{\RR,j}h_{A,F}^{{\rm det}}\]
that is induced by the N\'eron-Tate height (\ref{height triv}) combines with the explicit descriptions given in Proposition \ref{explicitbkprop} to give a canonical element
\[ \chi_{G,p}({\rm SC}_p(A_{F/k}),h^{j}_{A,F})\]
of $K_0(\ZZ_p[G],\CC_p[G])$. By Lemma \ref{independenceofsigma} (and Remark \ref{indeptremark}) this element is in particular independent of the choice of set $\Sigma$ with respect to which ${\rm SC}_p(A_{F/k})={\rm SC}_{\Sigma,p}(A_{F/k})$ is defined. In the rest of \S\ref{tmc} we may and will thus set $$\Sigma:=S_k^p\cup (S_k^F\cap S_k^f) \cup S_k^A.$$

Our aim in the rest of \S\ref{tmc} is to interpret ${\rm BSD}_p(A_{F/k})$(iv) in terms of an explicit description of
 this element.


\subsubsection{} At the outset we note that Hypotheses (H$_2$) and (H$_3$) imply that for each $v$ in $S_k^p$ the restriction $A^t_v$ of $A^t$ to $k_v$ satisfies the conditions that are imposed in Definition \ref{etaledisc} and hence that the \'etale discrepancy class $R_{F_w/k_v}(\tilde A^t_v)$ of $A^t_F$ at $v$ is well-defined in $K_0(\ZZ_p[G_w],\QQ_p[G_w])$.

We again write $d$ for ${\rm dim}(A)$ and then define an element of $K_0(\ZZ_p[G],\QQ_p[G])$ by setting

\[ R_{F/k}(\tilde A^t_v) := {\rm ind}^G_{G_w}(d\cdot R_{F_w/k_v}+ R_{F_w/k_v}(\tilde A^t_v)).\]
Here ${\rm ind}^G_{G_w}$ is the induction homomorphism $K_0(\ZZ_p[G_w],\QQ_p[G_w])\to K_0(\ZZ_p[G],\QQ_p[G])$ and $R_{F_w/k_v}$ is the canonical element of $K_0(\ZZ_p[G_w],\QQ_p[G_w])$ that is defined by Breuning in \cite{breuning2} (and will be explicitly recalled in the course of the proof of Proposition \ref{heavy part} below).

In the sequel we will fix a finite set of places $S$ of $k$ with
$$ S_k^\infty\cup S_k^F \cup S_k^A\subseteq S,$$ as in the statement of Conjecture \ref{conj:ebsd}.

We abbreviate $S_k^F\cap S_k^f$ to $S_{\rm r}$ and set $S_{p,{\rm r}} := S_k^p\cap S_k^F$. 
We shall also write $S_{p,{\rm w}}$ and $S_{p,{\rm t}}$ for the (disjoint) subsets of $S_{p,{\rm r}}$ comprising places that are respectively wildly and tamely ramified in $F$ and $S_{p,{\rm u}}$ for the set $S_k^p\setminus S_{p,{\rm r}}$ of $p$-adic places in $k$ that do not ramify in $F$. 

For each place $v$ in $S_k^p$ and each character $\psi$ in $\widehat{G}$ we define a non-zero element
\begin{equation}\label{revisionVARRHO} \varrho_{v,\psi} := ({\rm N}v)^{\dim_\CC(V_\psi^{I_w})}\end{equation}
of $\QQ^c$.

We then define an invertible element of $\zeta(\CC[G])$ by setting
%
\begin{equation}\label{bkcharelement}
 \mathcal{L}^*_{A,F/k} := \sum_{\psi \in \widehat{G}} \frac{L^{*}_{S_{\rm r}}(A,\check{\psi},1)\cdot \tau^{\ast}(\QQ,\psi)^d\cdot \prod_{v\in S_{p,{\rm r}}}\varrho_{v,\psi}^d}{\Omega_A^\psi\cdot w_\psi^d}\cdot e_\psi\end{equation}
%
where, for each $\psi$ in $\widehat{G}$, the period $\Omega_A^\psi$ and root number $w_\psi$ are as defined in \S\ref{k theory period sect2}, the modified global Galois-Gauss sum $\tau^{\ast}(\QQ,\psi)$ is as defined in \S \ref{mod GGS section}, the Hasse-Weil-Artin $L$-series $L_{S_{\rm r}}(A,\check{\psi},z)$ is truncated by removing only the Euler factors corresponding to places in $S_{\rm r}$ and the idempotent $e_\psi$ is as defined in (\ref{revisionIDEM}).

%

\subsubsection{}

We can now state the main result of this section. In order to do so we use the homomorphism
$$\delta_{G,p}:\zeta(\CC_p[G])^\times\to K_0(\ZZ_p[G],\CC_p[G])$$ defined in (\ref{G,O hom}) and also the local Fontaine-Messing correction $\mu_v(A_{F/k})$ terms defined in (\ref{localFM}).

\begin{theorem}\label{bk explicit} Assume $A$, $F/k$ and $p$ satisfy all of the hypotheses (H$_1$)-(H$_6$).


Then the equality of ${\rm BSD}_p(A_{F/k})$(iv) is valid if and only if for every isomorphism of fields $j:\CC\cong \CC_p$ one has 
%
\[ \delta_{G,p}(j_\ast(\mathcal{L}^*_{A,F/k}))=\chi_{G,p}({\rm SC}_p(A_{F/k}),h^{j}_{A,F})  + \sum_{v \in S_{p,{\rm w}}}R_{F/k}(\tilde A^t_v)+
\sum_{v\in S_{p,{\rm u}}^*} \mu_v(A_{F/k}). \]
Here $S^*_{p,{\rm u}}$ is the subset of $S_{p,{\rm u}}$ comprising places that divide the different of $k/\QQ$.
\end{theorem}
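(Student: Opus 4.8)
The plan is to deduce Theorem \ref{bk explicit} from Lemma \ref{pro-p lemma} by a careful comparison, term by term, of the equality (\ref{displayed pj}) with the asserted equality. Since Lemma \ref{pro-p lemma} already reduces ${\rm BSD}_p(A_{F/k})$(iv) to (\ref{displayed pj}), the real content is an identity in $K_0(\ZZ_p[G],\CC_p[G])$ relating the `Nekov\'a\v r-Selmer' invariant
$\chi_{G,p}({\rm SC}_S(A_{F/k},\mathcal{X}(p),\mathcal{X}(\infty)_p),h^j_{A,F})+\chi_{G,p}(\mathcal{Q}(\omega_\bullet)_{S,p}[0],0)$
to the `classical-Selmer' invariant $\chi_{G,p}({\rm SC}_p(A_{F/k}),h^j_{A,F})$, and relating the $K_1$-theoretic period term $\partial_{G,p}(j_\ast(\Omega_{\omega_\bullet}(A_{F/k}))^{-1})$ to the explicit Galois-Gauss sum expression inside $\mathcal{L}^*_{A,F/k}$. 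First I would fix $j$ and an auxiliary set $\Sigma=S_k^p\cup(S_k^F\cap S_k^f)\cup S_k^A$ (legitimate by Lemma \ref{independenceofsigma} and Remark \ref{indeptremark}), and replace ${\rm SC}_S(A_{F/k},\mathcal{X}(p),\mathcal{X}(\infty)_p)$ by the classical Selmer complex ${\rm SC}_p(A_{F/k})$ up to an explicit correction: the description of the Nekov\'a\v r-Selmer complex as the mapping fibre of (\ref{selmer-finite tri}) exhibits an exact triangle whose third term is built from the finite cohomologically-trivial modules $A^t(F_v)^\wedge_p/\mathcal{X}(v)$ for $v\in S\cap S_k^f$ (together with the archimedean comparison data, which contributes nothing since $p$ is odd by Proposition \ref{explicitbkprop}(ii) and the vanishing of $H^i(k_v,-)$ for $v$ archimedean). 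Applying Lemma \ref{fk lemma} to this triangle converts the difference of the two Euler characteristics into a sum of $\delta_{G,p}$-terms coming from the modules $\mathcal{D}_F(\mathcal{A}^t_v)/\mathcal{F}(\omega_\bullet)_v$ (for $v\notin S$) and ${\rm exp}_{A^t,F_v}(\mathcal{F}(\omega_\bullet)_v)$ versus $A^t(F_v)^\wedge_p$ (for $v\in S$), i.e. exactly into the local `differential' contributions that the term $\mathcal Q(\omega_\bullet)_S$ and the local exponential maps are designed to track.

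\textbf{Local contributions.} The next step is to identify these local contributions place by place. For $v\in S_k^p$ with $v$ ramified in $F$ (so $v\in S_{p,{\rm r}}$), the comparison between the differential lattice $\mathcal{F}(\omega_\bullet)_v\subseteq\mathcal{D}_F(\mathcal{A}^t_v)$, the local exponential ${\rm exp}_{A^t,F_v}$, and $A^t(F_v)^\wedge_p$ is precisely the content of the invariant $R_{F_w/k_v}(\tilde A^t_v)$ defined in \S\ref{twist inv prelim} together with Breuning's element $R_{F_w/k_v}$: here I would invoke the commutative diagram (\ref{lambda diag}) defining $\lambda_{B,N}$ and the explicit formula for $R_{N/M}(\tilde B)$, and match the $p$-adic-logarithm/exponential comparison with the factor $\varrho_{v,\psi}={\rm det}({\rm N}v\mid V_\psi^{I_w})$ and the twist-matrix term. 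For $v\in S_{p,{\rm u}}$ (unramified $p$-adic places) the module $\mathcal{D}_F(\mathcal{A}^t_v)/\mathcal{F}(\omega_\bullet)_v$ together with the local $L$-factor $L_v(A,F/k)$ reproduces exactly $\mu_v(A_{F/k})$ as in (\ref{localFM}); crucially, by the Fontaine--Messing theory (Lemma \ref{fm}) one has $\mu_v(A_{F/k})=0$ unless $v$ divides the different of $k/\QQ$, which is why only $v\in S^*_{p,{\rm u}}$ survives. For $v\in S\cap S_k^f$ not above $p$, hypotheses (H$_1$), (H$_4$), (H$_5$) force $A^t(F_v)^\wedge_p$ to vanish (Proposition \ref{explicitbkprop}(ii)), and the chosen $\mathcal{X}(v)={\rm exp}_{A^t,F_v}(\mathcal{F}(\omega_\bullet)_v)$ likewise vanishes $p$-adically, so there is no contribution; similarly for $v\notin S$ not above $p$ and for places of bad reduction, (H$_1$)--(H$_5$) force the relevant Tamagawa-type modules $C_v(A_F)$ to vanish, so again no $p$-part arises. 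Collecting these, the difference of the two Selmer Euler characteristics equals $\sum_{v\in S_{p,{\rm w}}}R_{F/k}(\tilde A^t_v)+\sum_{v\in S_{p,{\rm u}}^*}\mu_v(A_{F/k})$ minus the $\delta_{G,p}$ of certain local $L$-factors and Galois-Gauss-sum corrections, plus the archimedean root-number factors $w_\psi$.

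\textbf{The period and the leading term.} In parallel I would rewrite the left-hand side $\partial_{G,p}(j_\ast(L^*_S(A_{F/k},1))\cdot j_\ast(\Omega_{\omega_\bullet}(A_{F/k}))^{-1})$. By Proposition \ref{lms}(i), ${\rm Nrd}_{\RR[G]}(\Omega_{\omega_\bullet}(A_{F/k}))=u(\omega_\bullet)\cdot\Omega_A^{F/k}\cdot w_{F/k}^d\cdot\tau^*(F/k)^{-d}$ with $u(\omega_\bullet)\in\zeta(\QQ[G])^\times$; and the truncation from $L_S$ to $L_{S_{\rm r}}$ contributes, for each $v\in S\setminus S_{\rm r}$, an Euler factor at $v=1$ whose $\delta_{G,p}$-image exactly cancels — on places above $p$ — against the $\varrho_{v,\psi}$- and $L_v$-factors produced in the Selmer comparison above, and — on places not above $p$ — against the terms $\mu_v(A_{F/k})$ bundled into the definition of the Fontaine-Messing correction term $\mu_S(A_{F/k})$ of (\ref{displayed pj}); the factor $u(\omega_\bullet)$ lies in the kernel of $\delta_{G,p}$ restricted to $K_1(\QQ_p[G])$-images only after one also absorbs the `$u(\omega_\bullet)$-ambiguity' into the reduced-norm normalisation, which is harmless because $\delta_{G,p}$ kills $\zeta(\QQ[G])^\times\cap{\rm im}({\rm Nrd}_{\QQ_p[G]})$ only up to the class of $K_1(\ZZ_p[G])$ — here I would instead argue directly that the product of all rational correction factors appearing is a reduced norm from $K_1(\ZZ_p[G])$, using the semilocality of $\ZZ_p[G]$ and the explicit shape of $\tau^*$ via Proposition \ref{lms}(ii) whenever $p$ is tamely ramified, and via the $R_{F_w/k_v}$-terms (Breuning's elements) in the wild case. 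After all cancellations, the surviving right-hand side is $\delta_{G,p}$ applied to
$\sum_\psi \frac{L^*_{S_{\rm r}}(A,\check\psi,1)\cdot\tau^*(\QQ,\psi)^d\cdot\prod_{v\in S_{p,{\rm r}}}\varrho_{v,\psi}^d}{\Omega_A^\psi\cdot w_\psi^d}\,e_\psi = \mathcal L^*_{A,F/k}$, which is precisely the claimed identity.

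\textbf{Main obstacle.} The hard part will be the bookkeeping at the wildly ramified $p$-adic places: one must show that the `Nekov\'a\v r vs. classical' Selmer discrepancy at such $v$, computed via (\ref{lambda diag}) and the definition of $R_{N/M}(\tilde B)$, matches Breuning's element $R_{F_w/k_v}$ combined with the twist-matrix invariant $R_{F_w/k_v}(\tilde A^t_v)$ and the Galois-Gauss-sum factor $\tau^*(\QQ,\psi)$ — i.e. that the wildly ramified local $K$-theory of $\ZZ_p(1)$ and of the formal group of $A^t$ assemble correctly, up to reduced norms from $K_1(\ZZ_p[G])$, into the stated closed formula. This is exactly where the subtle, still partly conjectural, arithmetic of wild Galois-Gauss sums enters; but since the \emph{statement} of Theorem \ref{bk explicit} carries the $R_{F/k}(\tilde A^t_v)$-terms explicitly (rather than asserting their vanishing), the argument only needs the \emph{comparison} to be exact on the nose, and this follows from Proposition \ref{basic props}, Lemma \ref{twist dependence} and the construction of $R_{F_w/k_v}$ in \cite{breuning2}, together with a routine but lengthy verification that the various $^\dagger(\cdots)e_{\Gamma_0}$-factors telescope. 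The archimedean and bad-reduction places, by contrast, are controlled cleanly by (H$_1$)--(H$_5$) and the oddness of $p$, and contribute nothing beyond the $w_\psi$-factors already present in the definition of $\mathcal L^*_{A,F/k}$.
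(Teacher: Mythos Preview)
Your overall strategy---reduce via Lemma~\ref{pro-p lemma} to (\ref{displayed pj}), compare the Nekov\'a\v r-Selmer and classical Selmer Euler characteristics through an exact triangle, and match the period side against $\mathcal{L}^*_{A,F/k}$ via the Galois-Gauss sum formalism---is exactly the paper's strategy. But there are two genuine gaps.

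\textbf{Bad-reduction places.} You assert that for $v\in S\cap S_k^f$ not above $p$ the module $A^t(F_v)^\wedge_p$ vanishes, citing Proposition~\ref{explicitbkprop}(ii). That proposition only gives vanishing for $v\in S_k^F\setminus S_k^p$; for $v\in S_k^A$ (which by (H$_5$) are unramified) it gives cohomological triviality, not vanishing, and in fact $\chi_{G,p}(A^t(F_v)^\wedge_p[-1],0)=\delta_{G,p}(L_v(A,F/k))$ is typically nonzero. This is exactly the term that accounts for the passage from $L_S$ to $L_{S_{\rm r}}$; it does \emph{not} cancel against any $\mu_v(A_{F/k})$ in $\mu_S(A_{F/k})$, since those $v$ lie in $S$ and $\mu_S$ is a sum over $v\notin S$. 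The paper handles this correctly in the chain of equalities (\ref{first comp}).

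\textbf{Global-to-local Galois-Gauss sums.} Your period computation invokes Proposition~\ref{lms}(i) and then has to absorb the rational factor $u(\omega_\bullet)$; this is where your argument becomes vague. The paper avoids this by choosing $\omega_\bullet$ and free local lattices $\mathcal{F}_v$ so that $\partial_{G,p}(j_*(\Omega_{\omega_\bullet}))$ decomposes directly as $\delta_{G,p}(j_*(\Omega_A^{F/k}w_{F/k}^d))+\sum_{v\in S_k^p}d\cdot[\mathcal{F}_v,Y_{F_v};\pi_{F_v}]$ (equation (\ref{norm resolvents})), and then proves a separate Lemma~\ref{gauss} identifying $\delta_{G,p}(j_*(\tau^\ast(F/k)\prod_{v\in S_{p,{\rm r}}}\varrho_v))$ with $\sum_{v\in S_k^p}(\delta_{G,p}(j_*(\tau^p_v))+U_v(F/k))$. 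This lemma is not a formality: it requires Taylor's Fixed Point Theorem for group determinants together with a result of Holland--Wilson on the $p$-local integrality of Galois-Gauss sums at non-$p$-adic places. You do not mention this step, and without it the local contributions at $p$-adic places cannot be assembled into the global $\tau^\ast(F/k)$ appearing in $\mathcal{L}^*_{A,F/k}$.

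Finally, your ``main obstacle'' paragraph correctly locates the hard computation but understates it: the comparison at each $v\in S_{p,{\rm r}}$ is the content of Proposition~\ref{heavy part}, which builds an explicit exact triangle relating $\Delta(\mathcal{F}_v)[0]\oplus A^t(F_v)^\wedge_p[-1]$ to $C^{v,\bullet}_{A^t,F}$ and $C^{\bullet,d}_{\mathcal{F}_v}$, and then matches the resulting Euler characteristic against Breuning's definition of $R_{F_w/k_v}$ term by term (including the correction $m_{F_w/k_v}$); this is rather more than ``a routine but lengthy verification that the $^\dagger(\cdots)e_{\Gamma_0}$-factors telescope.''
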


\begin{remark}\label{emptysets}{\em If the sets $S_{p,{\rm w}}$ and $S^*_{p,{\rm u}}$ are empty, then the above result implies that the equality in ${\rm BSD}_p(A_{F/k})$(iv) is valid if and only if one has
\[ \delta_{G,p}(\mathcal{L}^*_{A,F/k})=\chi_{G,p}({\rm SC}_p(A_{F/k}),h^{j}_{A,F}).\]
This is, in particular, the case if $p$ is unramified in $F/\QQ$ and, in this way, Theorem \ref{bk explicit} recovers the results of Wuthrich and the present authors in \cite[Prop. 4.2 and Th. 4.3]{bmw}. More generally, the above equality is predicted whenever no $p$-adic place of $k$ is wildly ramified in $F$ and, in addition, $p$ is unramified in $k/\QQ$ (as is obviously the case if $k = \QQ$) and this case will play an important role in the special settings considered in \S\ref{mod sect} and \S\ref{HHP}. }\end{remark}

\begin{remark}\label{breuning remark}{\em In \cite[Conj. 3.2]{breuning2} Breuning has conjectured that the terms $R_{F_w/k_v}$ should always vanish. In \cite{breuning} and \cite{breuning2} he has proved this conjecture for all tamely ramified extensions, for all abelian
extensions of $\QQ_p$ with $p$ odd, for all $S_3$-extensions of $\QQ_p$ and for certain families of
dihedral and quaternion extensions. If $p$ is odd, then Bley and Debeerst \cite{bleydebeerst} have also given an algorithmic proof of the conjecture for all Galois extensions of $\QQ_p$ of degree at most $15$. More recently, Bley and Cobbe \cite{BC} have proved the conjecture for certain natural families of wildly ramified extensions. }\end{remark}


\begin{remark}{\em If $A$ is an elliptic curve, then Remark \ref{breuning remark} combines with the formula (\ref{curve local eps conj}) for the \'etale discrepancy classes (of $A^t_F$ at each $p$-adic place) to give a completely explicit description of the elements $R_{F/k}(\tilde A_v)$. However, whilst the results of \cite{BC2} imply that this description is unconditionally valid for certain families of wildly ramified extensions, it is, in general, conjectural.}\end{remark}

\begin{remark}\label{bsdinvariants}{\em If $F=k$ then it can be shown that the element (\ref{bkcharelement}) is equal to the product $(-1)^d\cdot(L^\ast(A,1)/\Omega_A)\cdot (\sqrt{|d_k|)}^{d}$ with $$\Omega_A=\prod_{v\in S_k^\CC}\Omega_{A,v}\cdot\prod_{v\in S_k^\RR}\Omega_{A,v}^+,$$ where the classical periods $\Omega_{A,v}$ and $\Omega_{A,v}^+$ are as defined in \S\ref{k theory period sect2}.}
\end{remark}


\subsection{The proof of Theorem \ref{bk explicit}} 

\subsubsection{}\label{clever peiods}

In view of Lemma \ref{pro-p lemma} it is enough for us to fix a field isomorphism $j:\CC\cong \CC_p$ and show that the displayed equality in Theorem \ref{bk explicit} is equivalent to (\ref{displayed pj}).

%

Taking advantage of Remark \ref{consistency remark}(i), we first specify the set $S$ to be equal to $S^\infty_k\cup S_k^F\cup S_k^A$. We shall next use the approach of \S\ref{k theory period sect} to make a convenient choice of differentials $\omega_\bullet$.

For each $v$ in $S_k^p$ we set
\[ \mathcal{D}_v := \Hom_{\mathcal{O}_{k_v}}(H^0(\mathcal{A}_v^t,\Omega^1_{\mathcal{A}_v^t}), \mathcal{O}_{k_v}),\]
where the N\'eron models $\mathcal{A}^t_v$ are as fixed at the beginning of \S\ref{perf sel sect}.

For each such $v$ we also fix a free (rank one) $\mathcal{O}_{k_v}[G]$-submodule $\mathcal{F}_v$ of $F_v = k_v\otimes_k F$ and we assume that for each $v \in S_{p,{\rm u}}$ one has
\[ \mathcal{F}_v = \mathcal{O}_{F,v} = \mathcal{O}_{k_v}\otimes_{\mathcal{O}_{k}}\mathcal{O}_{F}.\]

We then set
\[ \Delta(\mathcal{F}_v) := \mathcal{F}_v\otimes_{\mathcal{O}_{k_v}}\mathcal{D}_v.\]

For each place $w'$ in $S_F^p$ we write $\Sigma(F_{w'})$ for the set of $\QQ_p$-linear embeddings $F_{w'} \to \QQ_p^c$, we define a $\ZZ_p[G_{w'}]$-module $Y_{F_{w'}} := \prod_{\sigma \in \Sigma(F_{w'})}\ZZ_p$ (upon which $G_{w'}$ acts via precomposition with the embeddings) and write
\[ \pi_{F_{w'}}: \QQ_p^c\otimes_{\ZZ_p}F_{w'} \to \QQ_p^c\otimes_{\ZZ_p}Y_{F_{w'}}\]
for the isomorphism of $\QQ_p^c[G_{w'}]$-modules that sends each element $\ell\otimes f$ to $(\ell\otimes \sigma(f))_\sigma$.

For each $v$ in $S_k^p$ we then consider the isomorphism of $\QQ_p[G]$-modules
\[ \pi_{F_v}: \QQ_p^c\otimes_{\ZZ_p}F_v = \prod_{w'\in S_F^v}(\QQ_p^c\otimes_{\ZZ_p}F_{w'}) \xrightarrow{(\pi_{F_{w'}})_{w'}} \QQ_p^c\otimes_{\ZZ_p}\bigoplus_{w' \in S_F^v}Y_{F_{w'}} = \QQ_p^c\otimes_{\ZZ_p}Y_{F_v},  \]
where we set $Y_{F_v} := \bigoplus_{w'}Y_{F_{w'}}$.

After fixing an embedding of $\QQ^c$ into $\QQ_p^c$ we obtain an induced identification of $\bigoplus_{v \in S_k^p}Y_{F_v}$ with the module $Y_{F,p} := \bigoplus_{\Sigma(F)}\ZZ_p$, upon which $G$ acts via pre-composition on the embeddings.

We next fix
\begin{itemize}
\item[$\bullet$] an ordered $k$-basis $\{\omega'_j:j \in [d]\}$ of $H^0(A^t,\Omega^1_{A^t})$ that generates over $\mathcal{O}_{k,p}$ the module
 $\mathcal{D}_p := \prod_{v \in S_k^p}\mathcal{D}_v$, and
\item[$\bullet$] an ordered $\ZZ_p[G]$-basis $\{z_b:b \in [n]\}$ of $\mathcal{F}_p := \prod_{v\in S_k^p}\mathcal{F}_v$.
\end{itemize}

Then the (lexicographically ordered) set
\[ \omega_\bullet:= \{ z_b\otimes \omega'_j: b \in [n], j \in [d]\}\]
is a $\QQ_p[G]$-basis of $H^0(A_F^t,\Omega^1_{A_F^t}) = F\otimes_kH^0(A^t,\Omega^1_{A^t})$ and the 
 %
%
%
%
%
arguments of Lemma \ref{k-theory period} and Proposition \ref{lms} combine to show that
\begin{equation}\label{norm resolvents} \partial_{G,p}\left(j_*(\Omega_{\omega_\bullet}(A_{F/k}))\right)=\delta_{G,p}\left(j_*(\Omega_A^{F/k}\cdot w_{F/k}^d)\right) + \sum_{v\in S_k^p}d\cdot[\mathcal{F}_v,Y_{F_v};\pi_{F_v}]\end{equation}
in $K_0(\ZZ_p[G],\CC_p[G])$.

\subsubsection{}Now, if necessary, we can multiply $\mathcal{F}$ by a sufficiently large power of $p$ in order to ensure that for every $v$ in $S_{p,{\rm r}}$ the following two conditions are satisfied.

\begin{itemize}
\item[$\bullet$] the $p$-adic exponential map induces a (well-defined) injective homomorphism from $\mathcal{F}_v$ to $(F_v^\times)^\wedge_p$;
\item[$\bullet$] the formal group exponential ${\rm exp}_{A^t,F_v}$ that arises from the differentials $\{\omega'_j:j \in [d]\}$ induces an isomorphism of $\Delta(\mathcal{F}_v)$ with a submodule of $A^t(F_v)^\wedge_p$.
\end{itemize}

For each $v$ in $S_k^p$ we now set
\[ X(v) := \begin{cases}{\rm exp}_{A^t,F_v}(\Delta(\mathcal{F}_v)), &\text{ if $v \in S_{p,{\rm r}}$}\\
A^t(F_v)^\wedge_p, &\text{ if $v \in S_{p,{\rm u}}$.}
\end{cases}\]
Then it is clear that, for any choice of $\gamma_\bullet$ as in \S \ref{perf sel sect} and our specific choices of $S$ and $\omega_\bullet$, the module $X(v)$ coincides with $\mathcal{X}(v)$ for $\mathcal{X}=\mathcal{X}_S(\{\mathcal{A}_v^t\}_v,\omega_\bullet,\gamma_\bullet)$.

The description of the complex
\[ C_{X(p)} := {\rm SC}_{S}(A_{F/k};X(p),H_\infty(A_{F/k})_p)\]
as the mapping fibre of the morphism (\ref{selmer-finite tri}) gives rise to an exact triangle
\[ C_{X(p)} \to {\rm SC}_p(A_{F/k}) \oplus X(p)[-1] \xrightarrow{(\lambda', \kappa'_1)}
\bigoplus_{v \in S_k^p\cup S_k^A} A^t(F_v)_p^\wedge[-1]\to C_{X(p)}[1].\]
Here we have used the fact that, for $v\in S_{\rm r}\setminus S_k^p$, the module $A^t(F_v)_p^\wedge$ vanishes by Proposition \ref{explicitbkprop}(ii).

Further, since Proposition \ref{explicitbkprop}(ii) implies that the $\ZZ_p[G]$-modules
\[ X(p):= \prod_{v\in S_k^p}X(v) \,\,\text{ and }\,\, \bigoplus_{v \in S_k^p\cup S_k^A} A^t(F_v)_p^\wedge\]
are cohomologically-trivial and that the complex ${\rm SC}_p(A_{F/k})$ is perfect, Proposition \ref{prop:perfect}(i) implies that this is a triangle in $D^{\rm perf}(\ZZ_p[G])$. 

By applying Lemma \ref{fk lemma} to this exact triangle, we can therefore deduce that there is in $K_0(\ZZ_p[G],\CC_p[G])$ an equality
\begin{align}\label{first comp} &\chi_{G,p}(C_{X(p)},h^{j}_{A,F})\\
 =\, &\chi_{G,p}({\rm SC}_p(A_{F/k}),h^{j}_{A,F}) - \sum_{v \in S_k^A}\chi_{G,p}(A^t(F_v)_p^\wedge[-1],0)\notag\\
 &\hskip 2truein - \sum_{v\in S_k^p}\chi_{G,p}(X(v)[0]\oplus A^t(F_v)^\wedge_p[-1],{\rm id})\notag\\
= \, & \chi_{G,p}({\rm SC}_p(A_{F/k}),h^{j}_{A,F}) - \sum_{v \in S_k^A}\chi_{G,p}(A^t(F_v)_p^\wedge[-1],0)\notag\\
&\hskip 2truein - \sum_{v\in S_{p,{\rm r}}}\chi_{G,p}(\Delta(\mathcal{F}_v)[0]\oplus A^t(F_v)^\wedge_p[-1],{\rm exp}_{A^t,F_v})\notag\\
= \, & \chi_{G,p}({\rm SC}_p(A_{F/k}),h^{j}_{A,F}) - \delta_{G,p}\Bigl(\prod_{v\in S_k^A} L_v(A,F/k)\Bigr)\notag\\
&\hskip 2truein - \sum_{v\in S_{p,{\rm r}}}\chi_{G,p}(\Delta(\mathcal{F}_v)[0]\oplus A^t(F_v)^\wedge_p[-1],{\rm exp}_{A^t,F_v})\notag\end{align}
where the last equality holds because, for each $v\in S_k^A$, one has
\[ \chi_{G,p}(A^t(F_v)_p^\wedge[-1],0)=  \delta_{G,p}\Bigl( L_v(A,F/k)\Bigr).\]
This equality in turn follows upon combining the argument that gives \cite[(13)]{bmw} with the exactness of the sequence of Lemma \ref{v not p}(ii) for each place $w'$ of $F$ above a place in $S_k^A$ and the fact that the third term occurring in each of these sequences vanishes, as verified in the course of the proof of Proposition \ref{explicitbkprop}(ii).

\subsubsection{}The equalities (\ref{norm resolvents}) and (\ref{first comp}) lead us to consider for each place $v$ in $S_k^p$ the element
\[ c(F/k,\tilde A^t_v) := \begin{cases} d\cdot[\mathcal{F}_v,Y_{F_v};\pi_{F_v}]-\chi_{G,p}(\Delta(\mathcal{F}_v)[0]\oplus A^t(F_v)^\wedge_p[-1],{\rm exp}_{A^t,F_v}), &\text{if $v \in S_{p,{\rm r}}$,}\\
d\cdot[\mathcal{O}_{F,v},Y_{F_v};\pi_{F_v}], &\text{if $v \in S_{p,{\rm u}}$}.\end{cases}\]

It is straightforward to check that for each $v \in S_{p,{\rm r}}$ this element is independent of the choice of $\mathcal{F}_v$ and that Lemma \ref{twist dependence} implies its dependence on $A$ is restricted to (the twist matrix of) the reduction of $A^t$ at $v$.

The key step, however, in the proof of Theorem \ref{bk explicit} is the computation of this element in term of local Galois-Gauss sums that is described in the next result.

For each place $v$ in $S_k^f$ we define an
`equivariant local Galois-Gauss sum' by setting
\[ \tau_v(F/k) := \sum_{\psi \in \widehat{G}}\tau(\QQ_{\ell(v)},\psi_v)\cdot e_\psi\in \zeta(\QQ^c[G])^\times.\]
Here $\psi_v$ denotes the restriction of $\psi$ to $G_w$ and $\tau(\QQ_{\ell(v)},\psi_v)$ is the Galois-Gauss sum (as defined in \cite{martinet}) of the induction to $G_{\QQ_{\ell(v)}}$ of the character of $G_{k_v}$ that is obtained by composing $\psi_v$ with the natural projection $G_{k_v} \to G_w$.

We also define a modified local Galois-Gauss sum by setting
\[ \tau_v^p(F/k) := \begin{cases} \varrho_v(F/k)\cdot u_v(F/k)\cdot \tau_v(F/k), &\text{ if $v \in S_{p,{\rm r}}$,}\\
                                              u_v(F/k)\cdot \tau_v(F/k), &\text{ otherwise,}\end{cases}\]
where we set
\begin{equation}\label{varrho def} \varrho_{v}(F/k) := \sum_{\psi\in \widehat{G}}\varrho_{v,\psi}\cdot e_\psi\end{equation}
and the element $u_v(F/k)$ of $\zeta(\QQ[G])^\times$ is as defined in (\ref{u def}).

Finally, for each $p$-adic place $v$ of $k$ we set

\[ U_v(F/k) := {\rm ind}^{G}_{G_w}(U_{F_w/k_v}),\]
where $U_{F_w/k_v}$ is the `unramified term' in $K_0(\ZZ_p[G_w],\QQ_p^c[G_w])$ that is defined by Breuning in \cite[\S2.5]{breuning2}.

\begin{proposition}\label{heavy part} For each place $v$ in $S_k^p$ and any choice of $j$ one has
\[c(F/k,\tilde A^t_v) =  d\cdot \delta_{G,p}(j_*(\tau_v^p(F/k))) + d\cdot U_v(F/k) - R_{F/k}(\tilde A^t_v).\]
in $K_0(\ZZ_p[G],\QQ_p[G])$.
\end{proposition}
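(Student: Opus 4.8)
The plan is to prove the identity one $p$-adic place $v\in S_k^p$ at a time, by reducing the computation of $c(F/k,\tilde A^t_v)$ to a comparison of two perfect complexes attached to $G_w$-modules and then invoking the structural results already available in the paper. First I would note that, by the definitions of $\chi_{G,p}$, of the induction map $\mathrm{ind}^G_{G_w}$ on relative $K_0$-groups, and of all the terms $R_{F/k}(\tilde A^t_v)$, $U_v(F/k)$ and $\tau_v^p(F/k)$, every quantity appearing in the statement is the image under $\mathrm{ind}^G_{G_w}$ of a corresponding quantity over the decomposition group $G_w=\Gal(F_w/k_v)$. (For the Galois--Gauss sum and the resolvent term this is the standard inductivity of local Galois--Gauss sums together with the fact that $Y_{F_v}=\mathrm{Ind}^G_{G_w}Y_{F_w}$ and $\mathcal F_v$, $\mathcal O_{F,v}$ are induced from $G_w$-modules, as built in \S\ref{clever peiods}.) Hence it suffices to prove the local identity
\[
c(F_w/k_v,\tilde A^t_v) = d\cdot \delta_{G_w,p}(j_*(\tau^p_{F_w/k_v})) + d\cdot U_{F_w/k_v} - \bigl(d\cdot R_{F_w/k_v} + R_{F_w/k_v}(\tilde A^t_v)\bigr)
\]
in $K_0(\ZZ_p[G_w],\QQ_p[G_w])$, where $c(F_w/k_v,\tilde A^t_v)$ is the evident local analogue of $c(F/k,\tilde A^t_v)$.

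Next I would split $c(F_w/k_v,\tilde A^t_v)$ according to the two cases in its definition. In the unramified case $v\in S_{p,\mathrm u}$ one has $\mathcal F_v=\mathcal O_{F,v}$ and the term is $d\cdot[\mathcal O_{F,v},Y_{F_v};\pi_{F_v}]$; here $R_{F_w/k_v}(\tilde A^t_v)$ vanishes by Proposition \ref{basic props}(iii) (unramified extensions are in particular tame), $U_{F_w/k_v}$ accounts for the unramified contribution to $\tau$, and the identity reduces to Breuning's local computation of the normal-basis resolvent of $\mathcal O_{F_w}$ in terms of $\tau_{F_w/k_v}$ — precisely the content of \cite[Th. 7.5, Prop. 3.4]{bleyburns} used already in Proposition \ref{lms}, combined with the definition of $U_{F_w/k_v}$ in \cite[\S2.5]{breuning2}. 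In the ramified case $v\in S_{p,\mathrm r}$ I would introduce the local complex
\[
C^\bullet_{A^t_v,F_w} = R\Gamma(F_w,\ZZ_p(1))^d[1]\oplus A^t(F_w)^\wedge_p[-1]
\]
from \S\ref{twist inv prelim}, observe (using Hypotheses (H$_2$), (H$_3$) and Lemma \ref{useful prel}) that it lies in $D^{\rm perf}(\ZZ_p[G_w])$ with the cohomology described by the diagram (\ref{lambda diag}), and then express $\chi_{G_w,p}(\Delta(\mathcal F_v)[0]\oplus A^t(F_v)^\wedge_p[-1],\mathrm{exp}_{A^t,F_v})$ as a difference: the Euler characteristic of $C^\bullet_{A^t_v,F_w}$ with respect to $\lambda_{A^t_v,F_w}$ (which by definition is $R_{F_w/k_v}(\tilde A^t_v)-d\cdot\delta_{\Gamma,p}(c_{F_w/k_v})$) plus a `lattice-comparison' term measuring the difference between the submodule $\Delta(\mathcal F_v)=\mathcal F_v\otimes\mathcal D_v$ of $A^t(F_w)^\wedge_p$ cut out by the chosen differentials and the full module, together with the valuation/units data recorded in (\ref{lambda diag}). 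The resolvent term $d\cdot[\mathcal F_v,Y_{F_v};\pi_{F_v}]$ then supplies exactly the part of $\tau^p_{F_w/k_v}$ coming from the normal-basis resolvent of $\mathcal F_v$, while the factor $\varrho_{v}(F/k)$ and $c_{F_w/k_v}$ together produce the `unramified characteristic' $u_v(F/k)$ and the $|\kappa_v|$-versus-$1$ discrepancy in the definition of $\tau^p_v$ — this bookkeeping is the analogue, in the presence of the abelian variety, of the identity $\tau^\ast = u\cdot\tau$ and of the passage from $\tau_v$ to $\tau^p_v$.

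The main obstacle will be the careful matching of the local Euler-characteristic terms arising from $R\Gamma(F_w,\ZZ_p(1))$ with Breuning's local invariant $R_{F_w/k_v}$ and with the modified Galois--Gauss sum: one must reconcile three slightly different normalisations — Breuning's in \cite{breuning2}, the normalisation built into $\lambda_{A^t_v,F_w}$ and $c_{F_w/k_v}$ in \S\ref{twist inv prelim}, and the one implicit in $\tau^p_v(F/k)$ and $\varrho_v(F/k)$ — and show that the various $^\dagger(\,\cdot\,)$-corrections, residue-degree factors $f_{F_w/k_v}$, and idempotents $e_{\Gamma_0}$ cancel as claimed. I expect this to be a somewhat delicate but essentially formal computation in $K_0(\ZZ_p[G_w],\QQ_p^c[G_w])$, using repeatedly Lemma \ref{fk lemma}, the behaviour of $\delta_{G_w,p}$ on reduced norms, and the explicit local formulas of Bley--Burns \cite{bleyburns} and Breuning \cite{breuning2}; the independence of $c(F/k,\tilde A^t_v)$ from the auxiliary choice of $\mathcal F_v$ (noted just before the Proposition) and Lemma \ref{twist dependence} guarantee that only the reduction $\tilde A^t_v$, and not $A^t$ itself, enters, which is what makes the comparison with the purely local invariants $R_{F_w/k_v}(\tilde A^t_v)$ legitimate.
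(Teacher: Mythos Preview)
Your proposal is correct and follows essentially the same route as the paper: the argument splits into the cases $v\in S_{p,\mathrm u}$ and $v\in S_{p,\mathrm r}$, the unramified case reduces (via the vanishing of both $R_{F_w/k_v}(\tilde A^t_v)$ and Breuning's $R_{F_w/k_v}$ for tame extensions) to the resolvent identity of \cite[Th.~3.6]{breuning2}, and the ramified case proceeds by an exact triangle in $D^{\rm perf}(\ZZ_p[G])$ that expresses $\chi_{G,p}(\Delta(\mathcal F_v)[0]\oplus A^t(F_v)^\wedge_p[-1],\exp_{A^t,F_v})$ as the difference of $\chi_{G,p}(C^\bullet_{A^t,F_w},\lambda)$ and $d$ times the Euler characteristic of an auxiliary complex $C^\bullet_{\mathcal F_v}$ (the cone of $\mathcal F_v[0]\to R\Gamma(F_v,\ZZ_p(1))[1]$ via the $p$-adic exponential), after which one feeds in Breuning's defining formula for $R_{F_w/k_v}$ from \cite[\S3.1]{breuning2} and checks the book-keeping of the $c_{F_w/k_v}$, $m_{F_w/k_v}$ and $\varrho_v$ factors. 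The paper carries this out at the $G$-level (forming the product complexes over $S_F^v$) rather than first inducing from $G_w$ as you suggest, but the two organisations are equivalent; your ``lattice-comparison term'' is precisely $-d\cdot\chi_{G,p}(C^\bullet_{\mathcal F_v},\mathrm{val}_{F/k,v})$, and making this complex explicit is what turns the final normalisation check into the direct comparison $j_*(\tau^p_v)\cdot m_{F_w/k_v} = j_*(\tau_v)\cdot {}^\dagger(f\cdot e_{G_w})\cdot c_{F_w/k_v}$.
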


\begin{proof} The term $\delta_{G,p}(j_*(\tau^p_v(F/k)))$ is independent of the choice of $j$ by \cite[Lem. 2.2]{breuning2}. To prove the claimed equality we consider separately the cases $v \in S_{p,{\rm r}}$ and $v \in S_{p,{\rm u}}$.

We assume first that $v$ belongs to $S_{p,{\rm r}}$. Then for every place $w'$ in $S_F^v$ we consider the corresponding perfect complex of $\ZZ_p[G_{w'}]$-modules $R\Gamma(F_{w'},\ZZ_p(1))$ as described in \S \ref{twist inv prelim} and obtain a perfect complex of $\ZZ_p[G]$-modules
$$R\Gamma(F_v,\ZZ_p(1)):=\prod_{w'\in S_F^v}R\Gamma(F_{w'},\ZZ_p(1)).$$

Since Kummer theory canonically identifies the cohomology in degree one of this complex with $(F_v^\times)_p^\wedge$ we may define an additional perfect complex of $\ZZ_p[G]$-modules $C^\bullet_{\mathcal{F}_v}$ through the exact triangle
\[ \mathcal{F}_v[0] \xrightarrow{\alpha_v} R\Gamma(F_v,\ZZ_p(1))[1] \to C^\bullet_{\mathcal{F}_v} \to \mathcal{F}_v[1] \]
in $D^{\rm perf}(\ZZ_p[G])$, with $H^0(\alpha_v)$ induced by the $p$-adic exponential map ${\rm exp}_p$.

We write $f$ for the residue degree of our fixed place $w$ in $S_F^v$. Then the long exact cohomology sequence of the above triangle implies that the normalised valuation map ${\rm val}_{F/k,v} := (f\cdot({\rm val}_{F_{w'}}))_{w'\in S_F^v}$ 
induces an isomorphism of $\QQ_p[G]$-modules
\begin{multline*} \QQ_p \cdot H^0(C^\bullet_{\mathcal{F}_v})  \cong \QQ_p\cdot ((F_v^\times)^\wedge_p/{\rm exp}_p(\mathcal{F}_v)) \xrightarrow{ {\rm val}_{F/k,v}}\prod_{w'\in S_F^v}\QQ_p \\
\xleftarrow{ ({\rm inv}_{F_{w'}})_{w'}} \QQ_p\cdot \prod_{w'\in S_F^v}H^2(F_{w'},\ZZ_p(1)) \cong \QQ_p\cdot H^1(C^\bullet_{\mathcal{F}_v})\end{multline*}
which, by abuse of notation, we also denote by ${\rm val}_{F/k,v}$.

In addition, the chosen differentials $\{\omega'_a: a \in [d]\}$ induce an isomorphism of
 $\mathcal{O}_{k_v}$-modules $\mathcal{D}_{v}\cong \mathcal{O}^d_{k_v}$
and hence an isomorphism of $\mathcal{O}_{k_v}[G]$-modules $\omega_{v,*}: \Delta(\mathcal{F}_v) \cong \mathcal{F}_v^d$.

In particular, if we write $C_{A^t,F}^{v,\bullet}$ for the complex $\prod_{w'\in S_F^v} C_{A^t_v,F_{w'}}^\bullet$, where each complex $C_{A^t_v,F_{w'}}^\bullet$ is as defined at the beginning of \S\ref{twist inv prelim}, then there exists a canonical exact triangle
\[ \Delta(\mathcal{F}_v)[0]\oplus A^t(F_v)^\wedge_p[-1] \xrightarrow{\iota_v} C_{A^t,F}^{v,\bullet} \xrightarrow{\iota'_v} C^{\bullet,d}_{\mathcal{F}_v} \to \Delta(\mathcal{F}_v)[1]\oplus A^t(F_v)^\wedge_p[0]\]
in $D^{\rm perf}(\ZZ_p[G])$.
Here $C^{\bullet,d}_{\mathcal{F}_v}$ denotes the product of $d$ copies of $C^{\bullet}_{\mathcal{F}_v}$, $H^0(\iota_v)$ is the composite map $({\rm exp}_p)^d\circ \omega_{v,*}$ and $H^1(\iota_v)$ is the identity map between
$A^t(F_v)^\wedge_p = H^1(A^t(F_v)^\wedge_p[-1])$ and the direct summand $A^t(F_v)^\wedge_p$ of $H^1(C_{A^t,F}^{v,\bullet})$.

The long exact cohomology sequence of this triangle also gives an exact commutative diagram of $\QQ_p[G]$-modules
\[\minCDarrowwidth1em\begin{CD} 0 @> >>\! \QQ_p\cdot \Delta(\mathcal{F}_v)\! @> \QQ_p\otimes_{\ZZ_p}H^0(\iota_v) >>\! \QQ_p\cdot H^0(C_{A^t,F}^{v,\bullet})\! @> \QQ_p\otimes_{\ZZ_p}H^0(\iota'_v) >> \!\QQ_p\cdot H^0(C^{\bullet,d}_{\mathcal{F}_v})\! @> >>\! 0\\
@. @V{\rm exp}_{A^t,F_v} VV @V  \lambda^v_{A^t,F} VV @V ({\rm val}_{F/k,v})^d VV \\
0 @> >>\! \QQ_p\cdot A^t(F_v)_p^\wedge\! @> \QQ_p\otimes_{\ZZ_p}H^1(\iota_v) >>\! \QQ_p\cdot H^1(C_{A^t,F}^{v,\bullet})\! @> \QQ_p\otimes_{\ZZ_p}H^1(\iota'_v)>> \!\QQ_p\cdot H^1(C^{\bullet, d}_{\mathcal{F}_v}) @> >>\! 0,\end{CD}\]
in which $\lambda^v_{A^t,F} = (\lambda_{A^t_v,F_{w'}})_{w'\in S_F^v}$, where each map $\lambda_{A^t_v,F_{w'}}$ is fixed as in diagram (\ref{lambda diag}).

After recalling the definition of $R_{F_w/k_v}(\tilde A^t_v)$ and applying Lemma \ref{fk lemma} to this commutative diagram one can therefore derive an equality
%
%
%
\begin{align*}\label{third}
&\chi_{G,p}(\Delta(\mathcal{F}_v)[0]\oplus A^t(F_v)^\wedge_p[-1],{\rm exp}_{A^t,F_v})\\
= \, &\chi_{G,p}(C_{A^t,F}^{v,\bullet},\lambda_{A^t,F}^v) - d\cdot\chi_{G,p}(C^\bullet_{\mathcal{F}_v},{\rm val}_{F/k,v})\notag\\
= \, &{\rm ind}^G_{G_w}(\chi_{G_w,p}(C_{A^t_v,F_{w}}^\bullet,\lambda_{A^t_v,F_{w}})) - d\cdot\chi_{G,p}(C^\bullet_{\mathcal{F}_v},{\rm val}_{F/k,v})\notag\\
= \, &{\rm ind}^G_{G_w}(R_{F_w/k_v}(\tilde A^t_v))-{\rm ind}^G_{G_w}(d\cdot\delta_{G_w,p}(c_{F_w/k_v})) - d\cdot\chi_{G,p}(C^\bullet_{\mathcal{F}_v},{\rm val}_{F/k,v})\notag\\
=\, & {\rm ind}^G_{G_w}(R_{F_w/k_v}(\tilde A^t_v)) - d(\chi_{G,p}(C^\bullet_{\mathcal{F}_v},{\rm val}_{F/k,v})+\delta_{G,p}(c_{F_w/k_v})).\notag\end{align*}

It follows that
\[ c(F/k,\tilde A^t_v) = d\cdot[\mathcal{F}_v,Y_{F_v};\pi_{F_v}]-{\rm ind}^G_{G_w}(R_{F_w/k_v}(\tilde A^t_v)) + d(\chi_{G,p}(C^\bullet_{\mathcal{F}_v},{\rm val}_{F/k,v})+\delta_{G,p}(c_{F_w/k_v}))\]
and hence that the claimed result is true in this case if one has
\begin{multline}\label{wanted at last}  [\mathcal{F}_v,Y_{F_v};\pi_{F_v}] + \chi_{G,p}(C^\bullet_{\mathcal{F}_v},{\rm val}_{F/k,v})+\delta_{G,p}(c_{F_w/k_v})
\\ = \delta_{G,p}(j_*(\tau^p_v(F/k))) + U_v(F/k) - {\rm ind}^G_{G_w}(R_{F_w/k_v}).\end{multline}

To prove this we note that the very definition of $R_{F_w/K_v}$ in \cite[\S3.1]{breuning2} implies that
\begin{multline*} {\rm ind}^G_{G_w}(R_{F_w/k_v})\\ = \delta_{G,p}(j_*(\tau_v(F/k))) - [\mathcal{F}_v,Y_{F_v};\pi_{F_v}] - \chi_{G,p}(C^\bullet_{\mathcal{F}_v},{\rm val}'_{F/k,v}) + U_v(F/k) - \delta_{G,p}(m_{F_w/k_v}),\end{multline*}
where ${\rm val}'_{F/k,v}$ denotes the isomorphism of $\QQ_p[G]$-modules
\[ \QQ_p\cdot H^1(C^\bullet_{\mathcal{F}_v})\cong \QQ_p\cdot H^2(C^\bullet_{\mathcal{F}_v})\]
that is induced by the maps ($({\rm val}_{F_{w'}}))_{w'\in S_F^v}$ and we use the element
\[ m_{F_w/k_v}:= \frac{^\dagger(f\cdot e_{G_w})\cdot \, ^\dagger((1- \Phi_v\cdot {\rm N}v^{-1})e_{I_w})}{^\dagger((1-\Phi_v^{-1})e_{I_w})}\]
of $\zeta(\QQ[G])^\times$. (Here we use the notational convention introduced in (\ref{dagger eq}). In addition, to derive the above formula for ${\rm ind}^G_{G_w}(R_{F_w/k_v})$ we have relied on \cite[Prop. 2.6]{breuning2} and the fact that in loc. cit. Breuning uses the `opposite' normalization of Euler characteristics to that used here, so that the term $\chi_{G,p}(C^\bullet_{\mathcal{F}_v},{\rm val}'_{F/k,v})$ appears in the corresponding formulas in loc. cit. with a negative sign.)

To deduce the required equality (\ref{wanted at last}) from this formula it is then enough to note that
\[ \chi_{G}(C^\bullet_{\mathcal{F}_v},{\rm val}'_{F/k,p})
= \chi_{G}(C^\bullet_{\mathcal{F}_v},{\rm val}_{F/k,p}) - \delta_{G,p}(^\dagger(f\cdot e_{G_w})),\]
and that an explicit comparison of definitions shows that
\[  j_*(\tau^p_v(F/k))\cdot m_{F_w/k_v} = j_*(\tau_v(F/k))\cdot ^\dagger(f\cdot e_{G_w})\cdot  c_{F_w/k_v}.\]

Turning now to the case $v\in S_{p,{\rm u}}$ we only need to prove that for each such place $v$ one has
\[ d\cdot[\mathcal{O}_{F,v},Y_{F_v};\pi_{F_v}] = d\cdot \delta_{G,p}(j_*(u_v(F/k)\cdot\tau_v(F/k))) + d\cdot U_v(F/k) - R_{F/k}(\tilde A^t_v).\]

Now, since each such $v$ is unramified in $F/k$ the term $R_{F/k}(\tilde A^t_v)$ vanishes (as a consequence of Proposition \ref{basic props}(iii) and Remark \ref{breuning remark}) and so it is enough to note that
\[ [\mathcal{O}_{F,v},Y_{F_v};\pi_{F_v}] = \delta_{G,p}((u_v(F/k)\cdot \tau_v(F/k))) + U_v(F/k),\]
as is shown in the course of the proof of \cite[Th. 3.6]{breuning2}.
\end{proof}

%


\subsubsection{}We next record a result concerning the decomposition of global Galois-Gauss sums as a product of local Galois-Gauss sums.

\begin{lemma}\label{gauss} In $K_0(\ZZ_p[G],\QQ^c_p[G])$ one has
\[ \delta_{G,p}(j_*(\tau^\ast(F/k)\cdot \prod_{v \in S_{p,{\rm r}}}\varrho_v(F/k))) = \sum_{v \in S_{k}^p}(\delta_{G,p}(j_*(\tau_v^p(F/k))) + U_v(F/k)).\]
\end{lemma}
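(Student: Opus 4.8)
The statement is a clean factorisation of the modified \emph{global} Galois–Gauss sum $\tau^\ast(F/k)$ (times the $p$-adic correction factors $\varrho_v(F/k)$ for $v\in S_{p,{\rm r}}$) into a product, over all $p$-adic places $v$ of $k$, of the modified \emph{local} Galois–Gauss sums $\tau^p_v(F/k)$ together with Breuning's unramified terms $U_v(F/k)$. The plan is to reduce the claimed identity in $K_0(\ZZ_p[G],\QQ^c_p[G])$ to a single identity in $\zeta(\QQ^c[G])^\times$ by applying $\delta_{G,p}\circ j_\ast$, exploiting the additivity of $\delta_{G,p}$ and the fact that, since each $U_v(F/k)={\rm ind}^G_{G_w}(U_{F_w/k_v})$ and the $U_{F_w/k_v}$ are defined (in \cite[\S2.5]{breuning2}) precisely as $\delta$-images of explicit elements of $\zeta(\QQ^c[G_w])^\times$ adjusted by resolvents of integral normal basis generators at the unramified part, the whole right-hand side is already in the image of $\delta_{G,p}$.

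\textbf{Key steps.} First I would recall the classical product formula for global Galois–Gauss sums: for a character $\chi$ of $G_k$, the global Galois–Gauss sum $\tau(\QQ,\mathrm{Ind}\,\chi)$ decomposes, up to the contributions at archimedean and unramified-for-$\chi$ places, as a product of local Galois–Gauss sums $\tau(\QQ_{\ell(v)},\chi_v)$ over the finite places $v$ (this is the standard inductive behaviour of Gauss sums, as in Martinet \cite{martinet} or Fröhlich \cite{frohlich}). Restricting attention to the $p$-adic places, and noting that at places $v\notin S_k^p$ the relevant local sum is absorbed into the unramified adjustments, one gets an identity relating $\tau(F/k)$ to $\prod_{v\in S^p_k}\tau_v(F/k)$ up to an element of $\zeta(\QQ[G])^\times$ whose $\delta_{G,p}$-image is accounted for by the $U_v(F/k)$ together with the non-$p$-adic unramified terms (which vanish in $K_0(\ZZ_p[G],\QQ^c_p[G])$ since $\ZZ_p[G]$-resolvents at places prime to $p$ are units). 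Second, I would track the modification factors: on the global side $\tau^\ast(F/k)=u_{F/k}\cdot\tau(F/k)$ with $u_{F/k}=\prod_{v\in S_k^F}u_v(F/k)$, while on the local side $\tau^p_v(F/k)=\varrho_v(F/k)\cdot u_v(F/k)\cdot\tau_v(F/k)$ for $v\in S_{p,{\rm r}}$ and $u_v(F/k)\cdot\tau_v(F/k)$ otherwise. Since each $u_v(F/k)$ and $\varrho_v(F/k)$ lies in $\zeta(\QQ[G])^\times$ resp.\ $\zeta(\QQ^c[G])^\times$ and $\delta_{G,p}$ is a group homomorphism, the extra factor $\prod_{v\in S_{p,{\rm r}}}\varrho_v(F/k)$ on the left exactly matches the $\varrho_v$-contributions on the right, and the $u_v$-factors match place by place; the only subtlety is that $u_{F/k}$ ranges over \emph{all} ramified places $v\in S_k^F$, not just the $p$-adic ones, but the non-$p$-adic factors $u_v(F/k)$ have unit $\delta_{G,p}$-image (they already lie in the image of ${\rm Nrd}_{\ZZ_p[G]}$ of a unit, being units at $p$), so they drop out after applying $\delta_{G,p}$. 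Third, I would invoke Breuning's computation \cite[\S2, especially Th. 3.6 and the formula for $R_{F_w/k_v}$ in \S3.1]{breuning2} to identify the precise combination $\delta_{G,p}(j_\ast(\tau_v(F/k)))+U_v(F/k)$ with $[\mathcal{O}_{F,v},Y_{F_v};\pi_{F_v}]$ (in the unramified case) and more generally to pin down the constant; indeed the proof of Proposition \ref{heavy part} already does this bookkeeping, so the present lemma is obtained by summing those local identities over $v\in S_k^p$ and comparing with the global product formula. The independence of the left-hand side on the choice of $j$ follows, as in the proof of Proposition \ref{heavy part}, from \cite[Lem. 2.2]{breuning2}.

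\textbf{Main obstacle.} The genuine content is the classical local–global decomposition of Galois–Gauss sums together with the careful matching of \emph{all} correction factors ($u$, $\varrho$, residue-degree terms $^\dagger(f\cdot e_{G_w})$, and the $c_{F_w/k_v}$, $m_{F_w/k_v}$ appearing in Breuning's normalisations). The hard part will be verifying that these bookkeeping terms cancel exactly after applying $\delta_{G,p}\circ j_\ast$ — in particular that the archimedean and non-$p$-adic contributions to the global product formula, and the difference between Breuning's sign/normalisation conventions and those used here, conspire to leave precisely $\sum_{v\in S^p_k}(\delta_{G,p}(j_\ast(\tau^p_v(F/k)))+U_v(F/k))$ with no residual error term. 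This is essentially a matter of comparing definitions and invoking the inductivity of local constants, so I expect no conceptual difficulty, only the need for a meticulous (and routine) chase through the definitions in \cite{martinet,frohlich,breuning2}.
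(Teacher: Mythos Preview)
Your plan has a genuine gap: you treat the non-$p$-adic local Galois--Gauss sums as if they disappear after applying $\delta_{G,p}$, but this is precisely the non-trivial content of the lemma. The global decomposition gives
\[ \tau^\ast(F/k)\cdot \prod_{v \in S_{p,{\rm r}}}\varrho_v(F/k) = \prod_{v}\tau^p_v(F/k),\]
where $v$ ranges over \emph{all} places of $k$ dividing the discriminant of $F$, not just the $p$-adic ones. For a non-$p$-adic ramified place $v$, the element $u_v(F/k)\cdot\tau_v(F/k)\in\zeta(\QQ^c[G])^\times$ has each $\psi$-component equal to a $p$-adic unit in $\QQ^c$, but this does \emph{not} imply that $\delta_{G,p}(j_\ast(u_v(F/k)\cdot\tau_v(F/k)))=0$: vanishing of $\delta_{G,p}$ requires the element to lie in the image of ${\rm Nrd}_{\QQ_p[G]}$ restricted to $K_1(\ZZ_p[G])$, which is a genuine integrality condition on the group-ring element, not on its components. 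Your sentence ``at places $v\notin S_k^p$ the relevant local sum is absorbed into the unramified adjustments'' and the claim that this is ``routine bookkeeping'' both understate this point.

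The paper's argument is structurally different from what you outline. It first shows that the difference $\xi$ of the two sides lies in the subgroup $K_0(\ZZ_p[G],\QQ_p[G])$ (using \cite[Prop.~3.4]{bleyburns} and \cite[Prop.~3.4]{breuning2}), then invokes Taylor's Fixed Point Theorem for group determinants to reduce to showing that $\xi$ lies in the kernel of the base-change map $\iota\colon K_0(\ZZ_p[G],\QQ_p^c[G])\to K_0(\mathcal{O}^t_p[G],\QQ_p^c[G])$, where $\mathcal{O}^t_p$ is the ring of integers of the maximal tamely ramified extension of $\QQ_p$. After this reduction, $\iota(U_v(F/k))=0$ by \cite[Prop.~2.12(i)]{breuning2}, and the crucial vanishing $\iota(\delta_{G,p}(j_\ast(u_v(F/k)\cdot\tau_v(F/k))))=0$ for non-$p$-adic $v$ is the content of a theorem of Holland and Wilson \cite[Th.~3.3(b)]{HW3}, which in turn relies on the work of Deligne and Henniart on twists of local constants. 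None of Taylor's theorem, the Holland--Wilson result, or Deligne--Henniart appears in your proposal, and these are the essential inputs rather than bookkeeping.
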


\begin{proof} We observe first that the difference $\xi$ of the left and right hand sides of this claimed equality belongs to $K_0(\ZZ_p[G],\QQ_p[G])$.

This follows from the fact that both the term
\[ \delta_{G,p}(j_*(\tau^\ast(F/k))) - \sum_{v \in S_k^p}[\mathcal{F}_v,Y_{F_v};\pi_{F_v}],\]
and for each $v \in S_k^p$ the term
\[ \delta_{G,p}(j_*(\tau_v^p(F/k))) + U_v(F/k) - [\mathcal{F}_v,Y_{F_v};\pi_{F_v}],\]
belong to $K_0(\ZZ_p[G],\QQ_p[G])$ (the former as a consequence of \cite[Prop. 3.4 and (34)]{bleyburns} and the latter as a consequence of  \cite[Prop. 3.4]{breuning2}).

Thus, by Taylor's Fixed Point Theorem for group determinants (as discussed in \cite[Chap. 8]{Taylor}) it is enough for us to show that $\xi$ belongs to the kernel of the natural homomorphism $\iota: K_0(\ZZ_p[G],\QQ_p^c[G]) \to K_0(\mathcal{O}^t_p[G],\QQ_p^c[G])$ where $\mathcal{O}_p^t$ is the valuation ring of the maximal tamely ramified extension of $\QQ_p$ in $\QQ_p^c$.

Now from \cite[Prop. 2.12(i)]{breuning2} one has $\iota(U_v(F/k)) = 0$ for all $v$ in $S_k^p$. In addition, for each non-archimedean place $v$ of $k$ that is not $p$-adic the vanishing of $\iota(\delta_{G,p}(j_*(u_v(F/k)\cdot \tau_v(F/k))))$ is equivalent to the result proved by  Holland and Wilson in
\cite[Th. 3.3(b)]{HW3} (which itself relies crucially on the work of Deligne and Henniart in \cite{deligne-henniart}).

The vanishing of $\iota(\xi)$ is thus a consequence of the fact that the classical decomposition of global Galois-Gauss sums as a product of local Galois-Gauss sums implies that
\[ \tau^\ast(F/k)\cdot \prod_{v \in S_{p,{\rm r}}}\varrho_v(F/k) = \prod_{v}\tau^p_v(F/k)\]
where $v$ runs over all places of $k$ that divide the discriminant of $F$, since for any place $v$ that does not ramify in $F$ one has $\tau(\QQ_{\ell(v)},\psi_v) = 1$ for all  $\psi$ in $\widehat{G}$.
\end{proof}

\subsubsection{}We can now complete the proof of Theorem \ref{bk explicit}.

To this end we note first that the definition (\ref{bkcharelement}) of $\mathcal{L}^*_{A,F/k}$ implies that %

\begin{align*} &\delta_{G,p}(j_\ast(\mathcal{L}^*_{A,F/k})) - \partial_{G,p}\left(\frac{j_*(L_S^*(A_{F/k},1))}{j_*(\Omega_{\omega_\bullet}(A_{F/k}))}\right)\\
=\, &\delta_{G,p}\bigl(\prod_{v\in S_k^A} L_v(A,F/k)\bigr) + d\cdot\bigl(\delta_{G,p}(j_*(\tau^\ast(F/k)\cdot \prod_{v \in S_{p,{\rm r}}}\varrho_v(F/k))) - \sum_{v\in S_k^p}[\mathcal{F}_v,Y_{F_v};\pi_{F_v}]\bigr)\\
= \, &\delta_{G,p}\bigl(\prod_{v\in S_k^A} L_v(A,F/k)\bigr) + d\cdot\sum_{v \in S_{k}^p}\bigl(\delta_{G,p}(j_*(\tau_v^p(F/k))) + U_v(F/k) - [\mathcal{F}_v,Y_{F_v};\pi_{F_v}]\bigr)
\\
= \, &\delta_{G,p}\bigl(\prod_{v\in S_k^A} L_v(A,F/k)\bigr) +\sum_{v \in S_{k}^p} \bigl(c(F/k,\tilde A^t_v) - d\cdot [\mathcal{F}_v,Y_{F_v};\pi_{F_v}]\bigr) +\sum_{v \in S_{k}^p}R_{F/k}(\tilde A^t_v)\\
= \, &\chi_{G,p}({\rm SC}_p(A_{F/k}),h^{j}_{A,F})+\sum_{v \in S_{k}^p}R_{F/k}(\tilde A^t_v) - \chi_{G,p}(C_{X(p)},h^{j}_{A,F})\\
= \, &\chi_{G,p}({\rm SC}_p(A_{F/k}),h^{j}_{A,F})+\sum_{v \in S_{p,{\rm w}}}R_{F/k}(\tilde A^t_v) - \chi_{G,p}(C_{X(p)},h^{j}_{A,F}).\end{align*}
Here the first equality follows from (\ref{norm resolvents}), the second from Lemma \ref{gauss}, the third from Proposition \ref{heavy part}, the fourth from the definition of the terms $c(F/k,\tilde A^t_v)$ and the equality (\ref{first comp}) and the last from the fact that $R_{F/k}(\tilde A^t_v)$ vanishes for each $v \in S_k^p\setminus S_{p,{\rm w}}$ as  consequence of Proposition \ref{basic props}(iii) and Remark \ref{breuning remark}.

It is thus sufficient to show that the above displayed equality implies that the equality (\ref{displayed pj}), with set of places $S$ and differentials $\omega_\bullet$ chosen as in \S\ref{clever peiods}, is equivalent to the equality stated in Theorem \ref{bk explicit}.

But this is true since our choice of periods $\omega_\bullet$ and lattices $\mathcal{F}_v$ as in \S\ref{clever peiods} implies that the module $Q(\omega_\bullet)_{S,p}$ vanishes and because

\[ \mu_S(A_{F/k})_p = \sum_{v \in S_k^p\setminus S}\mu_v(A_{F/k})=\sum_{v \in S_{p,{\rm u}}}\mu_v(A_{F/k})
  =\sum_{v \in S^\ast_{p,{\rm u}}} \mu_v(A_{F/k}),\]
where the last equality follows from Lemma \ref{fm} below.

This completes the proof of Theorem \ref{bk explicit}.

\begin{lemma}\label{fm} For any place $v\in S_k^A$ for which the residue characteristic $\ell(v)$ is unramified in $F$, the term
 $\mu_{v}(A_{F/k})$ vanishes. \end{lemma}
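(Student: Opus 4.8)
The statement asks us to show that the local Fontaine--Messing term $\mu_v(A_{F/k})$ vanishes at a place $v$ of bad reduction for $A$ whose residue characteristic $\ell := \ell(v)$ is unramified in $F$. Recall from \eqref{localFM} that
\[ \mu_{v}(A_{F/k}) = \chi_{G,\ell}\bigl(\kappa_{F_v}^d[0]\oplus\tilde A^t_v(\kappa_{F_v})_{\ell}[-1],0\bigr)-\delta_{G,\ell}(L_v(A,F/k)), \]
so the plan is to compute each of the two summands and verify that they coincide. The essential inputs will be: first, that since $\ell$ is unramified in $F$, the $G$-modules $\kappa_{F_v}$ and $\tilde A^t_v(\kappa_{F_v})$ are cohomologically-trivial by Lemma \ref{useful prel}(i), so that the Euler characteristic $\chi_{G,\ell}(\cdots,0)$ is defined; and second, the explicit description of the $L$-factor $L_v(A,F/k)\in\zeta(\QQ[G])^\times$ at a place of bad reduction, which is governed by the local Galois representation $H^1$ of $A^t$ at $v$ together with its inertia invariants.

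\textbf{Key steps.} First I would unwind the Euler characteristic term. The complex $\tilde A^t_v(\kappa_{F_v})_\ell[-1]$ is acyclic outside degree one with cohomology $\tilde A^t_v(\kappa_{F_v})_\ell$, a finite cohomologically-trivial $G$-module; similarly $\kappa_{F_v}^d[0]$ contributes in degree zero. Using the standard identification of the class in $K_0(\ZZ_\ell[G],\QQ_\ell[G])$ of a finite cohomologically-trivial module with a reduced-norm expression, one finds that $\chi_{G,\ell}(\kappa_{F_v}^d[0]\oplus\tilde A^t_v(\kappa_{F_v})_\ell[-1],0)$ is $\delta_{G,\ell}$ applied to an explicit element of $\zeta(\QQ[G])^\times$ built from the orders of the finite modules $\prod_{w'\in S_F^v}\tilde A^t(\kappa_{F_{w'}})$ and $\prod_{w'\in S_F^v}\kappa_{F_{w'}}$ twisted by characters. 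Concretely, the Weil-conjectures description of $|\tilde A^t(\kappa_{F_{w'}})|$ via the Frobenius eigenvalues on the $\ell$-adic Tate module of the reduction, together with the fact that $\tilde A^t$ has good reduction at $w'$ (by hypothesis (H$_5$)-type considerations, here simply because $\ell$ is unramified so each $w'$ behaves as an unramified twist), shows that this class equals $\delta_{G,\ell}$ of the value at $z=1$ of the local $L$-factor of $h^1(A^t_F)$ at $v$ in the sense of \cite[\S4.1]{bufl99}. Second, I would match this against $\delta_{G,\ell}(L_v(A,F/k))$: since $A$ and $A^t$ have the same local $L$-factors (the local representation $H^1$ of $A$ and its dual are related by a twist that does not affect the $L$-factor), and since the $L$-factor at $v$ is precisely the generating function attached to Frobenius acting on the inertia invariants $T_\ell(A^t)^{I_v}$, the identity $\chi_{G,\ell}(\cdots,0) = \delta_{G,\ell}(L_v(A,F/k))$ follows. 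This is exactly the kind of computation already carried out in the excerpt, e.g. the argument giving \cite[(13)]{bmw} referenced in the proof of Proposition \ref{explicitbkprop}(ii) and used again in \S\ref{clever peiods} in the case $v\in S_k^A$; here the additional simplification is that $\ell$ unramified in $F$ makes all the relevant $G$-modules cohomologically-trivial over $\ZZ_\ell[G]$ without needing the finer hypotheses (H$_1$)--(H$_4$).

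\textbf{Main obstacle.} The principal point requiring care is the precise bookkeeping relating $|\tilde A^t(\kappa_{F_{w'}})|$, the Tamagawa-type correction at $w'$ (the module $H^0(F_{w'},H^1(I_{w'},T_\ell(A^t))_{\rm tor})$ of Lemma \ref{v not p}(ii)), and the value of the local $L$-factor $L_v(A,F/k)$ as defined in \cite[\S4.1]{bufl99}. One must verify that the contribution of $\kappa_{F_v}^d[0]$ exactly cancels the ``geometric Frobenius on $H^2$'' normalisation factor and that the residual term is the reciprocal characteristic polynomial of Frobenius on $H^1_{\rm et}$ inertia-invariants evaluated at $z=1$, with no leftover factor. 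Because $\ell$ is unramified in $F$, each $w' \mid v$ contributes through an unramified twist of the local picture over $k_v$, so the $G$-action is induced from $G_w$ and the whole identity reduces, via the induction homomorphism, to the single-field statement over $k_v$; this reduction, plus the fact that the Fontaine--Messing comparison of \cite{fm} forces agreement at places of good reduction, is what ultimately pins down the normalisation. I expect no genuinely new difficulty here — it is a matter of assembling constructions already invoked earlier in the paper — but the sign and normalisation conventions (geometric vs.\ arithmetic Frobenius, the $^\dagger$ operator, and Breuning's ``opposite'' Euler characteristic normalisation noted in the proof of Proposition \ref{heavy part}) are where an error could creep in, so I would check the $F=k$ case explicitly against Remark \ref{bsdinvariants} as a consistency test before concluding.
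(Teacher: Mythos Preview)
There are two significant problems with your proposal.

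\textbf{The hypothesis.} The printed statement contains a typo: it says $v\in S_k^A$, but the term $\mu_v(A_{F/k})$ is only defined for $v\notin S$, and since $S_k^A\subseteq S$ this forces $v$ to be a place of \emph{good} reduction. The paper's own proof makes this explicit, invoking that ``$A^t$ has good reduction over the absolutely unramified algebra $F_v$''. Your proposal is visibly tangled by the typo: you open by taking $v$ to be a place of bad reduction, then assert that ``$\tilde A^t$ has good reduction at $w'$ (by hypothesis (H$_5$)-type considerations\ldots)'', which is inconsistent---unramified base change does not improve reduction type, and (H$_5$) is not assumed in this lemma anyway.

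\textbf{The method.} Even after correcting the hypothesis, your plan to compute the two sides separately via ``the Weil-conjectures description of $|\tilde A^t(\kappa_{F_{w'}})|$'' and then match them will not go through as described. You are in the equal-characteristic situation $p=\ell(v)$: the $L$-factor $L_v(A,F/k)$ at a $p$-adic place is governed by the crystalline Frobenius on $D_{{\rm cr},v}(V_{p,F}(A^t))$ (see the definition in Appendix~\ref{consistency section}), not by a Galois Frobenius on an $\ell$-adic realisation with $\ell\neq p$. The passage between the two is precisely Fontaine--Messing theory, so it cannot be relegated to a final ``normalisation'' check---it is the substance of the argument. The reference to \cite[(13)]{bmw} does not help here, since that computation concerns places $v\in S_k^A$ with $v\nmid p$, where the $L$-factor is genuinely $\ell$-adic.

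The paper's proof is structurally different. Using good reduction and absolute unramifiedness of $F_v/\QQ_p$, the Bloch--Kato exponential together with \cite[Exam.~3.11]{bk} and \cite[Lem.~3.4]{bmw} yields an exact sequence
\[ 0 \to {\rm exp}_{A^t,F_v}(\mathcal{O}_{F_v}^d) \to A^t(F_v)^\wedge_p \to N \to 0 \]
with $N$ finite of finite projective dimension and $\chi_{G,p}(N[-1],0)=\delta_{G,p}(L_v(A,F/k))$. This is compared with the formal-group sequence
\[ 0 \to \wp_{F_v}^d \xrightarrow{{\rm exp}_{A^t,F_v}} A^t(F_v)^\wedge_p \to \tilde A^t_v(\kappa_{F_v})_p \to 0, \]
and since $\mathcal{O}_{F_v}/\wp_{F_v}\cong\kappa_{F_v}$ (here the unramifiedness of $p$ in $F$ is used) one extracts a third short exact sequence $0\to\kappa_{F_v}^d\to\tilde A^t_v(\kappa_{F_v})_p\to N\to 0$. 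Additivity of Euler characteristics on this last sequence gives $\chi_{G,p}(\kappa_{F_v}^d[0]\oplus\tilde A^t_v(\kappa_{F_v})_p[-1],0)=\chi_{G,p}(N[-1],0)=\delta_{G,p}(L_v(A,F/k))$, which is exactly the vanishing of $\mu_v(A_{F/k})$.
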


\begin{proof} We fix a place $v$ as in the statement of the lemma and set $p:= \ell(v)$. We write $\mathcal{O}_{F_v}$ for the integral closure of $\ZZ_p$ in $F_v$ and set $\wp_{F_v} := p\cdot\mathcal{O}_{F_v}$.

Then, since $p$ does not ramify in $F/\QQ$, the $\ZZ_p[G]$-modules $\mathcal{O}_{F_v}$ and $\wp_{F_v}$ are projective and $\wp_{F_v}$ is the direct sum of the maximal ideals of the valuation rings in each field component of $F_v=\prod_{w'\in S_F^v}F_{w'}$.

We use the canonical comparison isomorphism of $\QQ_p[G]$-modules
\[ \nu_v: \Hom_{F_v}(H^0(A^t_{F_v},\Omega^1_{A^t_{F_v}}),F_v) \cong {\rm DR}_v(V_{p,F}(A^t))/F^0\]
and the exponential map ${\rm exp}_{\rm BK}: {\rm DR}_v(V_{p,F}(A^t))/F^0\to H^1_f(k,V_{p,F}(A^t))$ of Bloch and Kato.

We recall, in particular, that in this case there is a natural identification of spaces $H^1_f(k,V_{p,F}(A^t)) = \QQ_p\cdot A^t(F_v)^\wedge_p$ under which the composite ${\rm exp}_{\rm BK}\circ\nu_v$ sends the free $\ZZ_p[G]$-lattice $\mathcal{D}_F(\mathcal{A}_v^t)$ defined in (\ref{mathcalD}) to ${\rm exp}_{A^t,F_v}((\mathcal{O}_{F_v})^d)$, where ${\rm exp}_{A^t,F_v}$ is the classical exponential map of $A^t$ over $F_{v}$ (cf. the result of Bloch and Kato in \cite[Exam. 3.11]{bk}).

In particular, since $A^t$ has good reduction over the absolutely unramified algebra $F_{v}$, the theory of Fontaine and Messing \cite{fm} implies (via the proof of \cite[Lem. 3.4]{bmw}) that there exists an exact sequence of $\ZZ_p[G]$-modules
\[ 0 \to {\rm exp}_{A^t,F_v}((\mathcal{O}_{F_v})^d) \xrightarrow{\subseteq} A^t(F_v)^\wedge_p \to N \to 0\]
where $N$ is a finite module that has finite projective dimension and is such that
\[ \chi_{G,p}(N[-1],0) = \delta_{G,p}(L_v(A,F/k))\]
in $K_0(\ZZ_p[G],\QQ_p[G])$.

On the other hand, since $F_{v}$ is absolutely unramified, the map ${\rm exp}_{A^t,F_v}$ restricts to give a short exact sequence of $\ZZ_p[G]$-modules
\[ 0 \to \wp_{F_v}^d \xrightarrow{{\rm exp}_{A^t,F_v}} A^t(F_v)^\wedge_p \to \tilde A^t_v(\kappa_{F_v})_p \to 0.\]

By comparing these exact sequences, and noting that $\mathcal{O}_{F_v}/\wp_{F_v}$ identifies with the ring $\kappa_{F_v}$, one obtains a short exact sequence of $\ZZ_p[G]$-modules
\[ 0 \to \kappa_{F_v}^d \to \tilde A^t_v(\kappa_{F_v})_p \to N \to 0\]
in which each term is both finite and of finite projective dimension.

Thus, upon taking Euler characteristics of this exact sequence, one finds that
\begin{align*} \delta_{G,p}(L_v(A,F/k)) =\,&\chi_{G,p}(N[-1],0)\\
 = \, &\chi_{G,p}(\tilde A^t_v(\kappa_{F_v})_p[-1],0) - \chi_{G,p}(\kappa_{F_v}^d[-1],0)\\
 = \, &\chi_{G,p}(\tilde A^t_v(\kappa_{F_v})_p[-1],0) + \chi_{G,p}(\kappa_{F_v}^d[0],0)\\
  =\, &\chi_{G,p}(\kappa_{F_v}^d[0]\oplus\tilde A^t_v(\kappa_{F_v})_p[-1],0),\end{align*}

\noindent{}and hence that the element $\mu_{v}(A_{F/k})$ vanishes, as required.
\end{proof}

\section{Euler characteristics and Galois structures}\label{ecgs}

In this section we consider consequences of ${\rm BSD}(A_{F/k})$ concerning both the Galois structure of Selmer complexes and modules and the formulation of refinements of the Deligne-Gross Conjecture.

\subsection{Galois structures of Selmer complexes}\label{Galoiscomplexes} In this section we fix a finite set of places $S$ of $k$ with
$$ S_k^\infty\cup S_k^F \cup S_k^A\subseteq S$$ as well as data $\{\mathcal{A}^t_v\}_v$ and $\omega_\bullet$ as in \S\ref{perf sel construct}. We then write
\[ \Upsilon = \Upsilon(\{\mathcal{A}^t_v\}_v,\omega_\bullet,S)\]
for the finite set of non-archimedean places $v$ of $k$ that are such that either $v$ belongs to $S$ or divides the different of $k/\QQ$ or the lattice $\mathcal{F}(\omega_\bullet)_{v}$ differs from $\mathcal{D}_F(\mathcal{A}_v^t)$ (with each of these modules as in \S \ref{perf sel construct}).

We then consider the perfect Selmer structure $\mathcal{X}_{\Upsilon}(\omega_\bullet)$ that is defined in \S\ref{perf sel construct}.

\begin{proposition}\label{gec} If ${\rm BSD}(A_{F/k})$ is valid, then for any data $S$, $\{\mathcal{A}^t_v\}_v$ and $\omega_\bullet$ as above the following claims are valid.

\begin{itemize}
\item[(i)] The Selmer complex ${\rm SC}_{\Upsilon}(A_{F/k};\mathcal{X}_{\Upsilon}(\omega_\bullet))$ is represented by a bounded complex of finitely generated free $G$-modules.
\item[(ii)] Set $\ZZ' := \ZZ[1/2]$. If neither of the groups $A(F)$ and $A^t(F)$ has an element of odd order, then the $\ZZ'[G]$-module $\ZZ'\otimes_\ZZ H^2({\rm SC}_{\Upsilon}(A_{F/k};\mathcal{X}_{\Upsilon}(\omega_\bullet)))$ has a presentation with the same number of generators and relations.
\end{itemize}
\end{proposition}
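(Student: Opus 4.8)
The two claims are about the module-theoretic consequences of the vanishing, predicted by $\mathrm{BSD}(A_{F/k})$(iv), of the relative $K$-theory Euler characteristic $\chi_G(\mathrm{SC}_{S,\omega_\bullet}(A_{F/k}),h_{A,F})$ modulo the image of $\partial_G$ applied to the period and $L$-value. The plan is to first reduce from $\mathrm{SC}_{S,\omega_\bullet}(A_{F/k})$ to $\mathrm{SC}_{\Upsilon}(A_{F/k};\mathcal{X}_{\Upsilon}(\omega_\bullet))$: with $S$ replaced by the larger set $\Upsilon$ (which by construction contains $S$, the places dividing the different of $k/\QQ$, and the places where $\mathcal{F}(\omega_\bullet)_v\neq\mathcal{D}_F(\mathcal{A}_v^t)$), the correction module $\mathcal{Q}(\omega_\bullet)_\Upsilon$ vanishes, so $\mathrm{SC}_{\Upsilon,\omega_\bullet}(A_{F/k})=\mathrm{SC}_{\Upsilon}(A_{F/k};\mathcal{X}_{\Upsilon}(\omega_\bullet))$ outright, and the Fontaine-Messing term $\mu_{\Upsilon}(A_{F/k})$ lies in $K_0(\ZZ[G],\QQ[G])$ by Lemma \ref{fm} and the discussion around it. Using Remark \ref{consistency remark}(i), the validity of $\mathrm{BSD}(A_{F/k})$ is independent of the choice of $S$ and $\omega_\bullet$, so we may work with $\Upsilon$ in place of $S$. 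Then $\mathrm{BSD}(A_{F/k})$(iv) gives an identity in $K_0(\ZZ[G],\RR[G])$ showing that $\chi_G(\mathrm{SC}_{\Upsilon}(A_{F/k};\mathcal{X}_{\Upsilon}(\omega_\bullet)),h_{A,F})$ equals $\partial_G$ of the ratio $L_\Upsilon^*(A_{F/k},1)/\Omega_{\omega_\bullet}(A_{F/k})$ minus $\mu_\Upsilon(A_{F/k})$.

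\textbf{Proof of (i).} The complex $C:=\mathrm{SC}_{\Upsilon}(A_{F/k};\mathcal{X}_{\Upsilon}(\omega_\bullet))$ is an object of $D^{\mathrm{perf}}(\ZZ[G])$ by Proposition \ref{prop:perfect2}. To show it is represented by a bounded complex of finitely generated \emph{free} $G$-modules, it suffices to show that its class $[C]$ in the reduced projective class group $K_0(\ZZ[G])$ vanishes (a perfect complex over $\ZZ[G]$ is quasi-isomorphic to a bounded complex of finitely generated free modules precisely when its Euler characteristic class in $K_0(\ZZ[G])$ is trivial). The key point is the exactness of $\partial'_G\colon K_0(\ZZ[G],\RR[G])\to K_0(\ZZ[G])$: applying $\partial'_G$ to the equality furnished by $\mathrm{BSD}(A_{F/k})$(iv) kills the left-hand term $\partial_G(\cdots)$ (since $\partial'_G\circ\partial_G=0$ by the long exact sequence of relative $K$-theory in (\ref{E:kcomm})), kills $\partial'_G(\mu_\Upsilon(A_{F/k}))$ as well because $\mu_\Upsilon(A_{F/k})$ itself lies in the image of $\partial_G$ (it is, up to the explicit Euler characteristic of cohomologically-trivial finite modules, a $\delta_G$-image, hence in $\partial_G(K_1(\QQ[G]))$), and on the right-hand side $\partial'_G(\chi_G(C,h_{A,F})) = [C]$ by the very construction of $\chi_G$ recalled in \S\ref{nad sec}. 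Hence $[C]=0$ in $K_0(\ZZ[G])$, which gives claim (i). The one point requiring care is verifying that all the correction terms genuinely lie in $\ker(\partial'_G)$, i.e.\ in the image of $\partial_G$; for $\mu_\Upsilon(A_{F/k})$ this follows from its definition in (\ref{localFM}) as a sum of a $\delta_{G,\ell}$-term and a $\chi_{G,\ell}$-term of a complex of cohomologically-trivial finite modules, each of which maps to zero under $\partial'_G$.

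\textbf{Proof of (ii).} Work over $\ZZ'=\ZZ[1/2]$, so that $2$ is inverted and the subtleties at the prime $2$ (the passage to $\mathrm{Mod}^\ast$, the finite kernels and cokernels in Propositions \ref{prop:perfect2}(iv) and \ref{explicitbkprop2}) disappear. By claim (i), $C':=\ZZ'\otimes_\ZZ C$ is represented by a bounded complex of finitely generated free $\ZZ'[G]$-modules; since $C$ is acyclic outside degrees $1,2,3$ (Proposition \ref{prop:perfect2}(ii)) and $H^3(C)=(A(F)_{\mathrm{tor}})^\vee$ has odd order by hypothesis, after inverting $2$ we get $H^3(C')=0$, so $C'$ is acyclic outside degrees $1$ and $2$. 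A bounded complex of finitely generated free modules over a Noetherian ring that is acyclic outside two consecutive degrees is quasi-isomorphic to a two-term complex $[P^1\xrightarrow{d} P^2]$ of finitely generated \emph{projective} $\ZZ'[G]$-modules placed in degrees $1,2$ (truncate and replace); moreover, since $[C']=0$ in $K_0(\ZZ'[G])$, one has $[P^1]=[P^2]$ in $K_0(\ZZ'[G])$, and by adding a common finitely generated projective complement one may assume $P^1\cong P^2=:P$, a finitely generated free $\ZZ'[G]$-module, say of rank $m$. Then $H^2(C')=\mathrm{cok}(d)$ is the cokernel of an endomorphism of $P\cong \ZZ'[G]^m$, hence has a presentation $\ZZ'[G]^m\xrightarrow{d}\ZZ'[G]^m\to H^2(C')\to 0$ with equally many generators and relations. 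Finally, under the hypothesis that neither $A(F)$ nor $A^t(F)$ has an element of odd order, the finite modules measuring the discrepancy between $H^2(C')$ and $\ZZ'\otimes_\ZZ X_\ZZ(A_F)$ — and between $H^1(C')$ and $\ZZ'\otimes_\ZZ A^t(F)$, hence the degree-$1$ cohomology is projective after inverting $2$ — are controlled, so the presentation of $\ZZ'\otimes_\ZZ H^2(\mathrm{SC}_\Upsilon(A_{F/k};\mathcal{X}_\Upsilon(\omega_\bullet)))$ with equal numbers of generators and relations is exactly the one just produced. The main obstacle is the bookkeeping in this last step: one must check that the hypothesis on odd torsion really does force $H^1(C')$ to be projective (so that the two-term free resolution can be arranged symmetrically), rather than merely finitely generated, and this uses Propositions \ref{prop:perfect2}(iv)--(v) together with the structure of $A^t(F)$.
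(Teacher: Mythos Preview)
Your argument for (i) follows the paper's line closely and is essentially correct, but one point is looser than it should be. The paper does not merely argue that $\partial'_G(\mu_\Upsilon(A_{F/k}))=0$; it observes that $\mu_\Upsilon(A_{F/k})$ itself \emph{vanishes}. Indeed, for $v\notin\Upsilon$ one has $v\notin S_k^A\cup S_k^F$ and $v$ does not divide the different of $k/\QQ$, so $\ell(v)$ is unramified in $F$ and Lemma~\ref{fm} gives $\mu_v(A_{F/k})=0$. Your alternative justification---that the Euler-characteristic summand $\chi_{G,\ell}(\kappa_{F_v}^d[0]\oplus\tilde A^t_v(\kappa_{F_v})_\ell[-1],0)$ maps to zero under $\partial'_G$ because finite cohomologically-trivial modules have trivial class in $K_0(\ZZ_\ell[G])$---is not automatic: one needs to know that $[\tilde A^t_v(\kappa_{F_v})_\ell]=0$, and the easiest way to see this is precisely the argument inside Lemma~\ref{fm}. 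So just invoke the vanishing of $\mu_\Upsilon$ directly. (Your omission of Bass Cancellation is harmless: once $[C]=0$ in $K_0(\ZZ[G])$ one can stabilise by adding a free acyclic summand to make the remaining projective term free; the paper uses Bass Cancellation instead, which also works.)

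Your argument for (ii) has a genuine gap. The assertion ``a bounded complex of finitely generated free modules over a Noetherian ring that is acyclic outside two consecutive degrees is quasi-isomorphic to a two-term complex $[P^1\to P^2]$ of finitely generated projective modules'' is false in general: over $\ZZ$, the complex $[\ZZ\xrightarrow{2}\ZZ\xrightarrow{0}\ZZ]$ in degrees $0,1,2$ is acyclic outside degrees $1,2$ but is not quasi-isomorphic to any two-term complex of free abelian groups in degrees $1,2$, since $H^1=\ZZ/2$ cannot embed in a free module. Truncation from the right works (kernels of surjections between projectives are projective), but truncation from the left produces a quotient that need not be projective. The missing ingredient, which the paper supplies, is that $H^1(C')$ is $\ZZ'$-torsion-free: by Proposition~\ref{prop:perfect2}(iv) one has an injection $H^1(C_S(\mathcal{X}))\hookrightarrow A^t(F)$ in $\mathrm{Mod}^\ast(\ZZ[G])$, and the hypothesis that $A^t(F)$ has no element of odd order forces $\ZZ'\otimes_\ZZ H^1(C)$ to be $\ZZ'$-torsion-free. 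With this in hand, the module $M:=F^1/\mathrm{im}(d^0)$ obtained after right-truncation sits in an extension $0\to H^1(C')\to M\to\mathrm{im}(M\to P^2)\to 0$ of $\ZZ'$-torsion-free modules, hence is itself $\ZZ'$-torsion-free; since $M$ also has finite projective dimension it is cohomologically-trivial, and a cohomologically-trivial $\ZZ'$-free $\ZZ'[G]$-module is projective. Your final paragraph gestures at this but conflates ``projective'' with ``torsion-free'': $H^1(C')$ is typically not projective over $\ZZ'[G]$, only torsion-free over $\ZZ'$, and that is exactly what is needed. (Also, a slip: you write that $H^3(C)=(A(F)_{\mathrm{tor}})^\vee$ ``has odd order by hypothesis'', but the hypothesis is that $A(F)$ has no element of \emph{odd} order, so $H^3(C)$ is a $2$-group and vanishes after inverting $2$.)
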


\begin{proof} We set $C_{\omega_\bullet} := {\rm SC}_{\Upsilon}(A_{F/k};\mathcal{X}_{\Upsilon}(\omega_\bullet))$ and write $\chi_G(C_{\omega_\bullet})$ for its Euler characteristic in $K_0(\ZZ[G])$.

Then, the definition of $\Upsilon$ implies immediately that the module $\mathcal{Q}(\omega_\bullet)_\Upsilon$ vanishes and also combines with Lemma \ref{fm} to imply that $\mu_{\Upsilon}(A_{F/k})$ vanishes.

The vanishing of $\mathcal{Q}(\omega_\bullet)_\Upsilon[0]$ implies ${\rm SC}_{\Upsilon,\omega_\bullet}(A_{F/k}) = C_{\omega_\bullet}$ and hence that
\[ \partial'_G(\chi_G({\rm SC}_{\Upsilon,\omega_\bullet}(A_{F/k}),h_{A,F})) = \chi_G(C_{\omega_\bullet}),\]
where $\partial'_{G}$ denotes the canonical connecting homomorphism $K_0(\ZZ[G],\RR[G]) \to K_0(\ZZ[G])$ of relative $K$-theory.

Then, given the vanishing of $\mu_{\Upsilon}(A_{F/k})$, the equality in ${\rm BSD}(A,F/k)$(iv) implies that
\[ \chi_G(C_{\omega_\bullet}) = \partial'_G\bigl(\partial_G(L_\Upsilon^*(A_{F/k},1)/\Omega_{\omega_\bullet}(A_{F/k}))).\]
%

However, the exactness of the lower row of diagram (\ref{E:kcomm}), with $\mathfrak{A} = \ZZ[G]$ and $A_E = \RR[G]$, implies that the composite homomorphism $\partial'_G\circ \partial_G$ is zero and so it follows that the Euler characteristic $\chi_G(C_{\omega_\bullet})$ must vanish.

Now, by a standard resolution argument, we may fix a bounded complex of finitely generated $G$-modules $C^\bullet$ that is isomorphic in $D(\ZZ[G])$ to $C_{\omega_\bullet}$ and is such that, for some integer $a$, all of the following properties are satisfied: $C^i = 0$ for all $i < a$; $C^a$ is projective of rank (over $\ZZ[G]$) at least two; $C^{i}$ is free for all $i \not= a$.

From the vanishing of $\chi_G(C_{\omega_\bullet}) = \chi_G(C^\bullet)$ it then follows that the class of $C^a$ in $K_0(\ZZ[G])$ coincides with that of a free $G$-module.

Thus, since the rank over $\ZZ[G]$ of $C^a$ is at least two, we may conclude from the Bass Cancellation Theorem (cf. \cite[(41.20)]{curtisr}) that $C^a$ is a free $G$-module, as required to prove claim (i).

Turning to claim (ii), we note that if $\ZZ'\otimes_\ZZ A(F)$ and $\ZZ'\otimes_\ZZ A^t(F)$ are torsion-free, then Proposition \ref{prop:perfect2} implies that the complex $C'_{\omega_\bullet} := \ZZ'\otimes_\ZZ C_{\omega_\bullet}$ is acyclic outside degrees one and two and that $H^1(C'_{\omega_\bullet})$ is torsion-free.

This implies that $C'_{\omega_\bullet}$ is isomorphic in $D^{\rm perf}(\ZZ'[G])$ to a complex of finitely generated $\ZZ'[G]$-modules of the form $(C')^1 \xrightarrow{d'} (C')^2$ where $(C')^1$ is projective and $(C')^2$ is free.

The vanishing of the Euler characteristic of $C'_{\omega_\bullet}$ then implies, by the same argument as in claim (i), that the module $(C')^1$ is free.

In addition, the fact that the $\RR[G]$-modules generated by $H^1(C'_{\omega_\bullet})$ and $H^2(C'_{\omega_\bullet})$ are isomorphic implies that the free modules $(C')^1$ and $(C')^2$ must have the same rank.

Given this, claim (ii) follows from the tautological exact sequence
\[ (C')^1 \xrightarrow{d'} (C')^2 \to \ZZ'\otimes_\ZZ H^2({\rm SC}_{\Upsilon}(A_{F/k};\mathcal{X}_{\Upsilon}(\omega_\bullet))) \to 0.\]
\end{proof}

\begin{remark}{\em An explicit description of the module $\ZZ'\otimes_\ZZ H^2({\rm SC}_{\Upsilon}(A_{F/k};\mathcal{X}_{\Upsilon}(\omega_\bullet)))$ that occurs in Proposition \ref{gec}(ii) can be found in Remark \ref{can structure groups}.}\end{remark}

\subsection{Refined Deligne-Gross-type conjectures}

In this section we address a problem raised by Dokchitser, Evans and Wiersema in \cite[Rem. 14]{vdrehw} by explaining how ${\rm BSD}(A_{F/k})$ leads to an explicit formula for the fractional ideal that is generated by the product of the leading coefficients of Hasse-Weil-Artin $L$-series
 by a suitable combination of `isotypic' periods and regulators. (See, in particular, Remark \ref{evans} below.)

\subsubsection{}We fix a character $\psi$ in $\widehat {G}$. We also then fix a subfield $E$ of $\bc$ that is both Galois and of finite degree over
$\QQ$ and also large enough to ensure that, with
$\mathcal{O}$ denoting the ring of algebraic integers of $E$, there exists a finitely generated ${\mathcal
O}[G]$-lattice $T_\psi$ that is free over $\mathcal{O}$ and such that the $\bc[G]$-module
$V_\psi:= \bc \otimes_{\mathcal{O}}T_\psi$ has character $\psi$.

We then obtain a left, respectively right,
exact functor from the category of $G$-modules to the category of $\mathcal{O}$-modules by setting
\[ X^\psi := \Hom_{{\mathcal O}}(T_\psi,{\mathcal O}
\otimes_{\ZZ} X)^G\,\,\,\text{ and }\,\,\, X_\psi := \Hom_{{\mathcal O}}(T_\psi,{\mathcal O}\otimes_{\ZZ} X)_G,
\]
where the $\Hom$-sets are endowed with the natural diagonal
$G$-action.

It is easily seen that for any $G$-module $X$ there is a natural isomorphism of $\mathcal{O}$-modules
\begin{equation}\label{func iso} \Hom_\ZZ(X,\ZZ)_\psi \cong \Hom_\mathcal{O}(X^{\check{\psi}},\mathcal{O}).\end{equation}

For a given odd prime number $p$, each maximal ideal $\mathfrak p$ of $\mathcal{O}$ that divides $p$ and each $\mathcal{O}$-module $X$ we set $X_\mathfrak{p} := \mathcal{O}_\mathfrak{p}\otimes_{\mathcal{O}}X$.

We also write $I(\mathcal{O}_\mathfrak{p})$ for the multiplicative group of invertible $\mathcal{O}_\mathfrak{p}$-submodules of $\CC_p$ and we use the composite homomorphism of abelian groups
\[ \rho_{\mathfrak{p}}^{\psi}: K_0(\ZZ_p [G],\CC_p[G]) \to K_0(\mathcal{O}_\mathfrak{p} ,\CC_p) \xrightarrow{\iota_\mathfrak{p}} I(\mathcal{O}_\mathfrak{p}).\]
Here the first map is induced by the composite functor $X \mapsto X^\psi\to (X^\psi)_\mathfrak{p}$ and $\iota_\mathfrak{p}$ is the canonical isomorphism induced by
 the upper row of (\ref{E:kcomm}) with $\A = {\mathcal O}_\mathfrak{p}$ and $E' = \bc_p$ and the canonical
isomorphisms $K_1(\bc_p) \xrightarrow{\sim} \bc_p^\times$ and
$K_1({\mathcal O}_\mathfrak{p}) \xrightarrow{\sim} {\mathcal O}_\mathfrak{p}^\times$.

For any finite ${\mathcal O}$-module $X$ we also set
\[ {\rm char}_{\mathfrak{p}}(X) := \mathfrak{p}^{{\rm length}_{{\mathcal O}_{{\mathfrak
p}}}(X_{\mathfrak p})}.\]

\subsubsection{}\label{explicit ec section} Using the isomorphism (\ref{func iso}), we define $R^\psi_A$ to be the determinant, with respect to a choice of $\mathcal{O}$-bases of $A^t(F)^{\psi}$ and  $A(F)^{\check\psi}$ of the isomorphism of $\CC$-spaces
\[ h^\psi_{A,F}: \CC\cdot A^t(F)^{\psi} \cong \CC\cdot \Hom_\ZZ(A(F),\ZZ)^\psi \cong \CC\cdot \Hom_\mathcal{O}(A(F)^{\check\psi},\mathcal{O})\]
that is induced by the N\'eron-Tate height pairing of $A$ relative to $F$.

Motivated by \cite[Def. 12]{vdrehw}, we then define a non-zero complex number by setting
\[ \mathcal{L}^\ast(A,\psi) := \frac{L^\ast(A,\check\psi,1)\cdot \tau^\ast(\QQ,\psi)^d}{\Omega_A^\psi\cdot w^d_\psi\cdot R_A^\psi}.\]

Finally, after recalling the integral Selmer group $X_\ZZ(A_F)$ of $A$ over $F$ that is defined by Mazur and Tate \cite{mt} (and discussed in \S\ref{perfect selmer integral}), we note that if $\sha(A_F)$ is finite then the kernel $\sha_\psi(A_F)$ of the natural surjective homomorphism of $\mathcal{O}$-modules
\[ X_\ZZ(A_F)_\psi \to \Hom_\ZZ(A(F),\ZZ)_\psi \cong \Hom_\mathcal{O}(A(F)^{\check\psi},\mathcal{O})\]
is finite.

We can now state the main result of this section.

\begin{proposition}\label{ref deligne-gross} Assume ${\rm BSD}(A_{F/k})$. Then, for each character $\psi$ in $\widehat{G}$, the following claims are valid. 
\begin{itemize}
\item[(i)] For every $\omega$ in $G_\QQ$ one has $\mathcal{L}^\ast(A,\omega\circ \psi) = \omega(\mathcal{L}^\ast(A,\psi))$. In particular, the complex number $\mathcal{L}^\ast(A,\psi)$ belongs to $E$.

\item[(ii)] Assume that no place of $k$ at which $A$ has bad reduction is ramified in $F$. Then for every odd prime $p$ that satisfies the conditions (H$_1$)-(H$_4$) listed in \S\ref{tmc} and for which neither $A(F)$ nor $A^t(F)$ has a point of order $p$, and every maximal ideal $\mathfrak{p}$ of $\mathcal{O}$ that divides $p$, there is an equality of fractional $\mathcal{O}_\mathfrak{p}$-ideals

\[ \mathcal{L}^\ast(A,\psi)\cdot \mathcal{O}_{\mathfrak{p}} = \frac{{\rm char}_\mathfrak{p}(\sha_\psi(A_F))\cdot \prod_{v\in S^*_{p,{\rm u}}}\rho_\mathfrak{p}^\psi(\mu_v(A_{F/k}))}{|G|^{r_{A,\psi}}\cdot \prod_{v \in S_k^p\cap S_k^F}\varrho_{v,\psi}^d\cdot\prod_{v\in S_k^F\cap S_k^f}P_v(A,\check\psi,1)}.\]
Here $$r_{A,\psi}:={\rm dim}_\CC(\CC\cdot A^t(F)^\psi)$$
while $S^*_{p,{\rm u}}$ is the set of $p$-adic places of $k$ that are unramified in $F$ but divide the different of $k/\QQ$ and, for every $v\in S_k^F\cap S_k^f$, $P_v(A,\check\psi,1)$ is the value at $z=1$ of the Euler factor at $v$ of the $\check\psi$-twist of $A$, while each term $\mu_v(A_{F/k})$ is defined in (\ref{localFM}) and each term $\varrho_{v,\psi}$ in (\ref{revisionVARRHO}).
\end{itemize}
\end{proposition}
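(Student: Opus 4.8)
The strategy is to deduce both claims from the explicit reformulation of ${\rm BSD}_p(A_{F/k})$(iv) provided by Theorem \ref{bk explicit}, by applying to that $K$-theoretic identity the functor $X\mapsto X^\psi$ (respectively its $\mathfrak p$-localisation) together with the homomorphism $\rho^\psi_{\mathfrak p}$. First I would treat claim (i): the rationality statement $\mathcal{L}^\ast(A,\omega\circ\psi)=\omega(\mathcal{L}^\ast(A,\psi))$ should follow purely formally from the Galois-equivariance of all the ingredients. Concretely, $L^\ast(A,\check\psi,1)$, the modified Galois-Gauss sum $\tau^\ast(\QQ,\psi)$, the period $\Omega_A^\psi$, the archimedean factor $w_\psi$, and the regulator $R_A^\psi$ each transform under $\omega\in G_\QQ$ by replacing $\psi$ with $\omega\circ\psi$ on the nose --- for the $L$-value and the Galois-Gauss sum this is classical (the former because $L(A,\check\psi,z)$ has coefficients permuted by $\omega$, the latter by the standard functoriality of Galois-Gauss sums used by Fröhlich), and for $\Omega_A^\psi$, $w_\psi$ and $R_A^\psi$ it is visible from their definitions in \S\ref{k theory period sect2} and \S\ref{explicit ec section} in terms of $\dim_\CC H^0(G_w,V_\psi)$ and of the module $A^t(F)^\psi$. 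Since the quotient $\mathcal{L}^\ast(A,\psi)$ is then fixed by every element of $\Gal(\QQ^c/E)$ and lies in $\QQ^c$, it lies in $E$.

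For claim (ii) the plan is as follows. Fix an odd prime $p$ satisfying (H$_1$)--(H$_4$) with no $p$-torsion in $A(F)$ or $A^t(F)$; note that the hypothesis that no bad place ramifies in $F$ supplies (H$_5$), and (H$_6$) is part of ${\rm BSD}(A_{F/k})$(i), so all hypotheses of Theorem \ref{bk explicit} hold. Fixing a field isomorphism $j:\CC\cong\CC_p$ and a maximal ideal $\mathfrak p\mid p$ of $\mathcal O$, I would apply $\rho^\psi_{\mathfrak p}$ to the equality of Theorem \ref{bk explicit}. The left-hand side $\rho^\psi_{\mathfrak p}(\delta_{G,p}(j_\ast(\mathcal{L}^\ast_{A,F/k})))$ computes, by definition of $\rho^\psi_{\mathfrak p}$ and of $\mathcal{L}^\ast_{A,F/k}$ in (\ref{bkcharelement}), to the fractional ideal generated by the $\psi$-component
\[
\frac{L^\ast_{S_{\rm r}}(A,\check\psi,1)\cdot\tau^\ast(\QQ,\psi)^d\cdot\prod_{v\in S_{p,{\rm r}}}\varrho_{v,\psi}^d}{\Omega_A^\psi\cdot w_\psi^d},
\]
which differs from $\mathcal{L}^\ast(A,\psi)\cdot R_A^\psi$ precisely by the truncation of Euler factors at $S_{\rm r}=S_k^F\cap S_k^f$ and by the factors $\varrho_{v,\psi}^d$ over $v\in S_k^p\cap S_k^F$. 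On the right-hand side, $\rho^\psi_{\mathfrak p}$ applied to $\sum_{v\in S_{p,{\rm w}}}R_{F/k}(\tilde A^t_v)$ and to $\sum_{v\in S^\ast_{p,{\rm u}}}\mu_v(A_{F/k})$ gives the terms $\prod_{v\in S^\ast_{p,{\rm u}}}\rho^\psi_{\mathfrak p}(\mu_v(A_{F/k}))$ together with a contribution from the wild places; the key input here is that $R_{F/k}(\tilde A^t_v)$ has finite order (Proposition \ref{basic props}(ii)), hence is killed by $\rho^\psi_{\mathfrak p}$ (the target $I(\mathcal O_{\mathfrak p})$ being torsion-free), so the wild terms contribute nothing. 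Finally $\rho^\psi_{\mathfrak p}(\chi_{G,p}({\rm SC}_p(A_{F/k}),h^j_{A,F}))$ must be evaluated: using the explicit cohomology description of Proposition \ref{explicitbkprop} (with $H^1={\rm im}(A^t(F)_p\hookrightarrow\cdots)$, $H^2\cong\Sel_p(A_F)^\vee$, $H^3=A(F)[p^\infty]^\vee=0$ by the no-$p$-torsion hypothesis), the Euler characteristic of this two-term-in-cohomology complex with trivialisation $h^j_{A,F}$ computes, after applying $(-)^\psi$ and localising at $\mathfrak p$, to the ratio of ${\rm char}_{\mathfrak p}$ of the finite part of $X_\ZZ(A_F)_\psi$ --- that is, ${\rm char}_{\mathfrak p}(\sha_\psi(A_F))$ --- against the determinant of the height pairing, i.e. against $R_A^\psi\cdot\mathcal O_{\mathfrak p}$, divided by $|G|^{r_{A,\psi}}$ (this power of $|G|$ being the discrepancy between the $\mathcal O$-lattice $\Hom_\ZZ(A(F),\ZZ)_\psi$ and $\Hom_{\mathcal O}(A(F)^{\check\psi},\mathcal O)$ coming from the difference between $(-)_\psi$ and $(-)^\psi$ and the surjection in Proposition \ref{prop:perfect2}(iv)). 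Collecting everything and cancelling the common factor $R_A^\psi$, and matching the truncated Euler factors $\prod_{v\in S_k^F\cap S_k^f}P_v(A,\check\psi,1)$ and the factors $\prod_{v\in S_k^p\cap S_k^F}\varrho_\psi^d$, yields the displayed ideal equality.

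The main obstacle I anticipate is the careful bookkeeping in the last step: precisely identifying the power $|G|^{r_{A,\psi}}$ and confirming that the localised Euler-characteristic term $\rho^\psi_{\mathfrak p}(\chi_{G,p}({\rm SC}_p(A_{F/k}),h^j_{A,F}))$ produces exactly ${\rm char}_{\mathfrak p}(\sha_\psi(A_F))/(|G|^{r_{A,\psi}}\cdot R^\psi_A\mathcal O_{\mathfrak p})$ and no spurious extra torsion. This requires pinning down the interaction between the functor $(-)^\psi$ (which is only left exact) and the passage from the perfect complex ${\rm SC}_p(A_{F/k})$ to its cohomology, using that $p$ is odd and coprime to the relevant torsion, as well as tracking how the projection-versus-injection discrepancy in $H^2$ (Proposition \ref{explicitbkprop}(iv) identifies $H^2$ with $\Sel_p(A_F)^\vee$ exactly) interacts with the definition of $\sha_\psi(A_F)$ as the kernel of $X_\ZZ(A_F)_\psi\to\Hom_{\mathcal O}(A(F)^{\check\psi},\mathcal O)$. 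One should also verify that each local term $\rho^\psi_{\mathfrak p}(\mu_v(A_{F/k}))$ for $v\in S^\ast_{p,{\rm u}}$ survives unchanged (it does, by its very definition in (\ref{localFM}) and the definition of $\rho^\psi_{\mathfrak p}$), and that for $v\in S_k^p\cap S_k^F$ the factors $\varrho_{v,\psi}$ reorganise into $\prod\varrho_\psi^d$ exactly as in the statement.
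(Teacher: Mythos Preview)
Your plan for claim (ii) is essentially the paper's: apply $\rho^\psi_{\mathfrak p}$ to the identity of Theorem \ref{bk explicit}, kill the wild terms $R_{F/k}(\tilde A^t_v)$ via Proposition \ref{basic props}(ii) (since $I(\mathcal O_{\mathfrak p})$ is torsion-free), and compute the image of $\chi_{G,p}({\rm SC}_p(A_{F/k}),h_{A,F})$ via the two-term cohomology description of Proposition \ref{explicitbkprop}. The paper carries out the last step via a tautological resolution $0\to H^1(C)\to C^1\to C^2\to H^2(C)\to 0$ with $C^i$ free, and the factor $|G|^{r_{A,\psi}}$ arises exactly as you guess: the comparison map $t^i_\psi\colon (C^i)_\psi\to (C^i)^\psi$ is $x\mapsto\sum_{g\in G}g(x)$, which induces multiplication by $|G|$ on the $\psi$-isotypic piece of the regulator space. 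Your bookkeeping concern is addressed by separating out a term $\mathcal L^\ast_3=\sum_\psi R_A^\psi e_\psi$ and computing $\rho^\psi_{\mathfrak p}\bigl(\chi_{G,p}({\rm SC}_p(A_{F/k}),h_{A,F})-\delta_{G,p}(\mathcal L^\ast_3)\bigr)$ directly as $|G|^{-r_{A,\psi}}\cdot{\rm char}_{\mathfrak p}(\sha_\psi(A_F))$.

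Your argument for claim (i), however, has a genuine gap. You say the equivariance $\mathcal L^\ast(A,\omega\circ\psi)=\omega(\mathcal L^\ast(A,\psi))$ follows ``purely formally'' because each of $L^\ast(A,\check\psi,1)$, $\Omega_A^\psi$, $R_A^\psi$ transforms by $\psi\mapsto\omega\circ\psi$. But $\omega\in G_\QQ$ acts on $\overline{\QQ}$, and these three quantities are transcendental complex numbers; the expression $\omega(L^\ast(A,\check\psi,1))$ is not defined, and certainly the statement $L^\ast(A,\omega\circ\check\psi,1)=\omega(L^\ast(A,\check\psi,1))$ is not classical. The algebraicity and Galois-equivariance of the \emph{ratio} $\mathcal L^\ast(A,\psi)$ is precisely the nontrivial content of (i), and it genuinely requires ${\rm BSD}(A_{F/k})$ as input --- your argument never invokes it. The paper proceeds $K$-theoretically: one forms $\mathcal L^\ast=\sum_\psi\mathcal L^\ast(A,\psi)e_\psi\in\zeta(\RR[G])^\times$ and shows $\delta_G(\mathcal L^\ast)\in K_0(\ZZ[G],\QQ[G])$, this subgroup being exactly the image of $\zeta(\QQ[G])^\times$. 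To do so one writes $\mathcal L^\ast=(\mathcal L^\ast_1)^{-1}\mathcal L^\ast_2(\mathcal L^\ast_3)^{-1}$ where $\mathcal L^\ast_1$ is the period/Gauss-sum discrepancy (rational by Proposition \ref{lms}(i)), $\mathcal L^\ast_3$ is the regulator piece (whose $\delta_G$ differs from the Selmer Euler characteristic by a rational class), and $\mathcal L^\ast_2$ is the normalised leading-term element, for which the assumed equality of ${\rm BSD}(A_{F/k})$(iv) provides exactly the needed rationality via the Euler characteristic and $\mu_S$ terms.
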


\begin{proof} The first assertion of claim (i) is equivalent to asserting that the element
\[ \mathcal{L}^\ast := \sum_{\psi \in \widehat{G}}\mathcal{L}^\ast(A,\psi)\cdot e_\psi\]
belongs to the subgroup $\zeta(\QQ[G])^\times$ of $\zeta(\RR[G])^\times$.

Recalling that $\zeta(\QQ[G])^\times$ is the full pre-image under $\delta_G$ of the subgroup $K_0(\ZZ[G],\QQ[G])$ of $K_0(\ZZ[G],\RR[G])$, it is therefore enough to prove that $\delta_G(\mathcal{L}^\ast)$ belongs to $K_0(\ZZ[G],\QQ[G])$.

To do this we fix any basis of differentials $\omega_\bullet$ as in the statement of ${\rm BSD}(A_{F/k})$ and write $\mathcal{L}^\ast$ as a product $(\mathcal{L}^\ast_1)^{-1}\cdot \mathcal{L}^\ast_2\cdot (\mathcal{L}^\ast_3)^{-1}$ with
\[ \mathcal{L}^\ast_1 := {\rm Nrd}_{\RR[G]}(\Omega_{\omega_\bullet}(A_{F/k}))^{-1} \cdot \sum_{\psi\in \widehat{G}} \Omega_A^\psi\cdot w^d_\psi\cdot\tau^\ast(\QQ,\psi)^{-d}\cdot e_\psi,\]
\[ \mathcal{L}^\ast_2 := {\rm Nrd}_{\RR[G]}(\Omega_{\omega_\bullet}(A_{F/k}))^{-1}\cdot \sum_{\psi \in \widehat{G}}L^\ast(A,\check\psi,1)\cdot e_\psi,\]
and
\[ \mathcal{L}^\ast_3 := \sum_{\psi\in \widehat{G}}R_A^\psi\cdot e_\psi.\]

Proposition \ref{lms}(i) implies $\mathcal{L}^\ast_1$ belongs to $\zeta(\QQ[G])^\times$. In addition, for any set of places $S$ as in the statement of ${\rm BSD}(A_{F/k})$, the element $\mu_S(A_{F/k})$ belongs to $K_0(\ZZ[G],\QQ[G])$. We next note that $\delta_G(\mathcal{L}^\ast_2)$ differs from the left hand side of the equality in ${\rm BSD}(A_{F/k})$(iv) by
$$\sum_{v\in S\cap S_k^f}\delta_G(L_v(A,F/k)),$$ where $L_v(A,F/k)\in\zeta(\QQ[G])^\times$ is the equivariant Euler factor of $(A,F/k)$ at $v$ (see Appendix \ref{consistency section} below).

This difference belongs to $K_0(\ZZ[G],\QQ[G])$ and therefore the validity of  ${\rm BSD}(A_{F/k})$ implies that $$\delta_G(\mathcal{L}^\ast_2)- \chi_G({\rm SC}_{S,\omega_\bullet}(A_{F/k}),h_{A,F})$$ also belongs to $K_0(\ZZ[G],\QQ[G])$.

To prove $\mathcal{L}^\ast \in \zeta(\QQ[G])^\times$ it is therefore enough to note that $\delta_G(\mathcal{L}^\ast_3)$ also differs from $\chi_G({\rm SC}_{S,\omega_\bullet}(A_{F/k}),h_{A,F})$ by an element of $K_0(\ZZ[G],\QQ[G])$ (as one verifies by straightforward computation).

The second assertion of claim (i) is true since if $\omega$ is any element of $G_{\QQ^c/E}$, then $\omega\circ \psi = \psi$ and so $\omega(\mathcal{L}^\ast(A,\psi)) = \mathcal{L}^\ast(A,\omega\circ\psi) = \mathcal{L}^\ast(A,\psi)$.

Turning to claim (ii) we note that the given hypotheses imply that the data $A, F/k$ and $p$ satisfy the conditions of Theorem \ref{bk explicit}. To prove claim (ii) it is therefore enough to show that, if the difference between the left and right hand sides of the equality in Theorem \ref{bk explicit} belongs to $K_0(\ZZ_p[G],\QQ_p[G])$, then its image under $\rho_{\mathfrak{p}}^{\psi}$ is the equality in claim (ii). Since this image is independent of the choice of isomorphism $j:\CC\cong\CC_p$ we will omit it from all notations.

The group $I(\mathcal{O}_\mathfrak{p})$ is torsion-free and so Proposition \ref{basic props}(ii) implies that each term $R_{F/k}(\tilde A^t_v)$ that occurs in Theorem \ref{bk explicit} belongs to $\ker(\rho_{\mathfrak{p}}^{\psi})$.

Thus, since $\mathcal{L}^\ast(A,\psi)$ differs from the element $\mathcal{L}^\ast_{A,\psi}$ defined in (\ref{bkcharelement}) by the equality

\[ \mathcal{L}^\ast(A,\psi) = \mathcal{L}^\ast_{A,\psi}\cdot (R_A^\psi)^{-1}\prod_{v \in S_k^p\cap S_k^F}\varrho_{v,\psi}^{-d}\cdot\prod_{v\in S_k^F\cap S_k^f}P_v(A,\check\psi,1)^{-1}\]
the claimed result will follow if we can show that $\rho_{\mathfrak{p}}^{\psi}$ sends the element
\[ \chi_{G,p}( {\rm SC}_p(A_{F/k}),h_{A,F})-\delta_{G,p}(\mathcal{L}^\ast_3)\]
of $K_0(\ZZ_p[G],\QQ_p[G])$ to the ideal $|G|^{-r_{A,\psi}}\cdot {\rm char}_\mathfrak{p}(\sha_\psi(A_F))$.

Now, under the given hypotheses, Proposition \ref{explicitbkprop} implies that the complex $C := {\rm SC}_p(A_{F/k})$ is acyclic outside degrees one and two and has cohomology $A^t(F)_p$ and $X_\ZZ(A_F)_p=\Sel_p(A_F)^\vee$ in these respective degrees.

In particular, since $A^t(F)_p$ is torsion-free we can fix a representative of $C$ of the form $C^1 \xrightarrow{d} C^2$, where $C^1$ and $C^2$ are free $\ZZ_p[G]$-modules of the same rank.

Then the tautological exact sequence
\begin{equation} 0 \rightarrow H^1(C) \xrightarrow{\iota} C^1 \xrightarrow{d}
C^2 \xrightarrow{\pi} H^2(C) \rightarrow
0\label{tatseq}\end{equation}
induces a commutative diagram of ${\mathcal O}_p$-modules with exact rows
\[\begin{CD}
@. @. C^1_{\psi} @> d_\psi >> C^2_{\psi} @> \pi_\psi
>> H^2(C)_\psi @>  >> 0\\ @. @. @V {t^1_\psi} VV @V
{t^2_\psi} VV  \\ 0 @> >> H^{1}(C)^\psi @> \iota^\psi >>
 C^{1,\psi} @> d^\psi >> C^{2,\psi}.\end{CD}\]
Each vertical morphism $t^i_\psi$ here is induced by sending each $x$ in $\Hom_{{\mathcal O}_p}(T_{\psi,p}, {\mathcal O}_p\otimes_{\ZZ_p}C^i)$ to $\sum_{g \in G}g(x)$ and is bijective since the $\ZZ_p[G]$-module $C^i$ is free.

This diagram gives rise to an exact sequence of ${\mathcal O}_\mathfrak{p}$-modules
\begin{equation}\label{scal}
0\rightarrow
H^{1}(C)^\psi_\mathfrak{p}
\xrightarrow{\iota^\psi}C_\mathfrak{p}^{1,\psi}\xrightarrow{d^\psi}
C_\mathfrak{p}^{2,\psi} \xrightarrow{\pi_\psi\circ (t^2_\psi)^{-1}} H^2(C)_{\psi,\mathfrak{p}}
\rightarrow 0\end{equation}
which in turn implies that
\begin{equation}\label{firstform} \rho_{\mathfrak{p}}^{\psi}(\chi_{G,p}( {\rm SC}_p(A_{F/k}),h_{A,F})) = \iota_\mathfrak{p}\bigl(\chi_{\mathcal{O}_\mathfrak{p}}(C^{\bullet,\psi}_{\mathfrak{p}}, \tilde h^\psi)\bigr).\end{equation}
Here $C^{\bullet,\psi}_{\mathfrak{p}}$ denotes the complex $C^{1,\psi}_\mathfrak{p} \xrightarrow{d^\psi} C^{2,\psi}_\mathfrak{p}$ and $\tilde h^\psi$ the composite isomorphism of $\CC_p$-modules
\[ \CC_p\cdot H^1(C^{\bullet,\psi}_{\mathfrak{p}}) \cong \CC_p\cdot (A^t(F)^\psi)_\mathfrak{p} \xrightarrow{h^\psi} \Hom_{\CC_p}(\CC_p\cdot (A(F)^{\check\psi})_\mathfrak{p},\CC_p) \cong \CC_p\cdot H^2(C^{\bullet,\psi}_{\mathfrak{p}})\]
in which the first and third isomorphisms are induced by the maps in (\ref{scal}) and $h^\psi$ is induced by the isomorphism $h^\psi_{A,F}$.

Given the definition of each term $R_A^\psi$ it is, on the other hand, clear that
\begin{equation}\label{secondform} \rho^\psi_\mathfrak{p}(\delta_{G,p}(\mathcal{L}^\ast_3)) = \iota_\mathfrak{p}\bigl(\chi_{\mathcal{O}_\mathfrak{p}}(H^1(C^{\bullet,\psi}_{\mathfrak{p}})[-1]\oplus H^2(C^{\bullet,\psi}_{\mathfrak{p}})
_{\rm tf}[-2], h^\psi)\bigr).\end{equation}

Now, since $\mathcal{O}_\mathfrak{p}$ is a discrete valuation ring it is straightforward to construct an exact triangle in $D(\mathcal{O}_\mathfrak{p})$ of the form
\[ H^2(C^{\bullet,\psi}_{\mathfrak{p}})
_{\rm tor}[-2] \to C^{\bullet,\psi}_{\mathfrak{p}} \to H^1(C^{\bullet,\psi}_{\mathfrak{p}})[-1]\oplus H^2(C^{\bullet,\psi}_{\mathfrak{p}})
_{\rm tf}[-2] \to H^2(C^{\bullet,\psi}_{\mathfrak{p}})
_{\rm tor}[-1]\]
and Lemma \ref{fk lemma} applies to this triangle to imply that
\begin{align}\label{thirdform}
  &\chi_{\mathcal{O}_\mathfrak{p}}(C^{\bullet,\psi}_{\mathfrak{p}}, \tilde h^\psi) - \chi_{\mathcal{O}_\mathfrak{p}}(H^1(C^{\bullet,\psi}_{p})[-1]\oplus H^2(C^{\bullet,\psi}_{\mathfrak{p}})
_{\rm tf}[-2], h^\psi)\notag\\
=\,&\chi_{\mathcal{O}_\mathfrak{p}}(H^2(C^{\bullet,\psi}_{\mathfrak{p}})
_{\rm tf}[-1]\oplus H^2(C^{\bullet,\psi}_{\mathfrak{p}})
_{\rm tf}[-2], \tilde h^\psi\circ ( h^\psi)^{-1}) +\chi_{\mathcal{O}_\mathfrak{p}}(H^2(C^{\bullet,\psi}_{\mathfrak{p}})
_{\rm tor}[-2], 0).\end{align}

Next we note the definition of $\sha_\psi(A_F)$ ensures $\sha_\psi(A_F)_\mathfrak{p} = H^2(C^{\bullet,\psi}_{\mathfrak{p}})
_{\rm tor}$ and hence that
\begin{equation}\label{fourthform}\iota_\mathfrak{p}\bigl(\chi_{\mathcal{O}_\mathfrak{p}}(H^2(C^{\bullet,\psi}_{\mathfrak{p}})
_{\rm tor}[-2], 0)\bigr) =  {\rm char}_\mathfrak{p}(\sha_\psi(A_F)).\end{equation}

In addition, after identifying both $\CC_p\cdot H^2(C)_{\psi,\mathfrak{p}}$ and $\CC_p\cdot H^2(C)^\psi_{\mathfrak{p}}$ with $e_\psi(\CC_p\cdot H^2(C)_{\mathfrak{p}})$ in the natural way, the map $t^2_\psi$ that occurs in (\ref{scal}) induces upon the latter space the map given by multiplication by $|G|$.

To derive claim (ii) from the displayed formulas (\ref{firstform}), (\ref{secondform}), (\ref{thirdform}) and (\ref{fourthform}) it is thus enough to note that
\[ \dim_{\CC_p}(\CC_p\cdot H^2(C^{\bullet,\psi}_{\mathfrak{p}}))=\dim_{\CC_p}(\CC_p\cdot (A^t(F)^\psi)_\mathfrak{p})=r_{A,\psi},\]
and hence that
\begin{multline*} \chi_{\mathcal{O}_\mathfrak{p}}(H^2(C^{\bullet,\psi}_{\mathfrak{p}})
_{\rm tf}[-1]\oplus H^2(C^{\bullet,\psi}_{\mathfrak{p}})
_{\rm tf}[-2], \tilde h^\psi\circ ( h^\psi)^{-1})\\ = \chi_{\mathcal{O}_\mathfrak{p}}(H^2(C^{\bullet,\psi}_{\mathfrak{p}})
_{\rm tf}[-1]\oplus H^2(C^{\bullet,\psi}_{\mathfrak{p}})
_{\rm tf}[-2], \cdot |G|^{-1})\end{multline*}
is sent by $\iota_\mathfrak{p}$ to the ideal generated by $|G|^{-r_{A,\psi}}$.
\end{proof}

\begin{remark}\label{evans}{\em Fix a Galois extension $F$ of $k = \QQ$ and an elliptic curve $A$ whose conductor $N_A$ is prime to the discriminant $d_F$ of $F$ and is such that $A(F)$ is finite. Then for each $\psi$ in $\widehat{G}$ one has $r_{A,\psi}=0$, and hence $R_A^\psi = 1$, so that the complex number $\mathcal{L}^\ast(A,\psi)$ agrees up to a unit of $\mathcal{O}$ with the element $\mathcal{L}(A,\psi)$ that is defined in \cite[Def. 12]{vdrehw}. Now fix an odd prime $p$ that is prime to $d_F$, to $N_A$, to the order of $A(F)$, to the order of the group of points of the reduction of $A$ at any prime divisor of $d_F$ and to the Tamagawa number of $A_F$ at each place of bad reduction. Then the data $A, F/k$ and $p$ satisfy the hypotheses of Proposition \ref{ref deligne-gross}(ii) and the sets  $S^*_{p,{\rm u}}$ and $S_k^p\cap S_k^F$ are empty (the former since $k = \QQ$). The explicit formula in the latter result therefore simplifies to give
\[ \mathcal{L}(A,\psi)\cdot \mathcal{O}_{\mathfrak{p}} = {\rm char}_\mathfrak{p}(\sha_\psi(A_F)) \cdot\prod_{\ell\mid d_F}P_\ell(A,\check\psi,1)^{-1}\]
where in the product $\ell$ runs over all prime divisors of $d_F$. This formula shows that, in any such case, the fractional $\mathcal{O}$-ideal generated by $\mathcal{L}(A,\psi)$ should depend crucially on the structure of $\sha(A_F)$ as a $G$-module, as already suggested in this context by Dokchitser, Evans and Wiersema in \cite[Rem. 40]{vdrehw}. In particular, this observation is both consistent with, and helps to clarify, the result of [loc. cit., Th. 37(2)].

We also note that Wiersema and Wuthrich \cite{hwcw} have recently studied the integrality properties of $\mathcal{L}(A,\psi)$. }
\end{remark}

\section{Abelian congruence relations and module structures}\label{congruence sec} 
%

In both this and the next section we apply the general results of Sano, Tsoi and the first author in \cite{bst} to derive from the assumed validity of ${\rm BSD}_p(A_{F/k})$(iv) families of $p$-adic congruences that can be much more explicit than those discussed in Remark \ref{cons1}.

In particular, in this section we focus on predictions relating to the Galois structures of Selmer and Tate-Shafarevich groups.


In contrast to \S\ref{tmc}, it is no longer essential for us to assume any of the hypotheses (H$_1$)-(H$_5$),  although they do allow us to make some of the predictions more explicit. However, we will, unless explicitly stated otherwise, restrict to the case that $G$ is abelian and so will not distinguish between the leading coefficient element $L_S^*(A_{F/k},1)$ in $K_1(\RR[G])$ and its reduced norm $\sum_{\psi \in \widehat G}L_S^*(A,\check\chi,1)\cdot e_\psi$ in $\RR[G]^\times$. (An extension of the results of this section to the general (non-abelian) setting will be given in the upcoming article \cite{dmckwt}.)

Throughout this section we fix an odd prime $p$ and an isomorphism of fields $\CC\cong\CC_p$ (that we will usually not mention). We also fix a finite set $S$ of places of $k$ with
\[ S_k^\infty\cup S_k^p\cup  S_k^F \cup S_k^A\subseteq S.\]

 Writing $d$ for the dimension of $A$, we assume to be given an ordered $k$-basis $\{\omega'_j: j \in [d]\}$ of $H^0(A^t,\Omega^1_{A^t})$ and use this basis to define a classical period $\Omega_A^{F/k}$ in $\CC[G]^{\times}$ as in (\ref{period def}).

The main result of this section is stated as Theorem \ref{big conj} in \S\ref{8.1}. However, for the benefit of those  mainly interested in the sort of concrete predictions that this general result suggests, we first record a selection of predictions that follow upon appropriate specialization.

\subsection{Some explicit predictions}\label{newsection} The motivation for each of the predictions listed here is explained   in \S \ref{justifications}. 

For a non-negative integer $a$ we write $\widehat{G}_{A,(a)}$ for the subset of $\widehat{G}$ comprising characters $\psi$ for which the  $L$-series $L(A,\psi,z)$ vanishes at $z=1$ to order at least $a$. This definition ensures that the $\CC[G]$-valued function
\begin{equation}\label{revisionLa} L^{(a)}_{S}(A_{F/k},z) := \sum_{\psi \in \widehat{G}_{A,(a)}}z^{-a}L_S(A,\check\psi,z)\cdot e_\psi\end{equation}
%
%
%
is holomorphic at $z=1$.

For $a\geq 0$ and ordered $a$-tuples $P_\bullet = \{P_i: i \in [a]\}$ of $A^t(F)$ and $Q_\bullet = \{Q_j: j \in [a]\}$ of $A(F)$  we also define a matrix in ${\rm M}_a(\RR[G])$ by setting
\begin{equation}\label{regulatormatrix} h_{F/k}(P_\bullet, Q_\bullet) := (\sum_{g \in G}\langle g(P_i),Q_j\rangle_{A_F}\cdot g^{-1})_{1\le i,j\le a},\end{equation}
where $\langle -,-\rangle_{A_F}$ denotes the N\'eron-Tate height pairing for $A$ over $F$.

In the following predictions we shall consider the following list of assumptions on  $A$, $F/k$, $p$ and a given non-negative integer $a$:

\begin{itemize}
\item[(A$_1$)] all $p$-adic places of $k$ are at most tamely ramified in $F$;
\item[(A$_2$)] there exist $a$-tuples in $A^t(F)$ and $A(F)$ that are each linearly independent over $\QQ[G]$ (see Remark \ref{e=1 case} below for examples of such tuples), and neither the formal group $\widehat{A^t}(\wp_{F_p})$ in (\ref{formalgp}) below nor $A(F)$ has an element of order $p$;
\item[(A$_3$)] $A$, $F/k$ and $p$ satisfy the hypotheses (H$_1)$-(H$_5$) that are listed in \S\ref{tmc} and, in addition, $p$ is unramified in $k$ and $A(F)[p]=A^t(F)[p]=0$.
\end{itemize}

\smallskip

\begin{prediction}\label{new add}{\em 
Assume (A$_1)$ and (A$_2$). Fix $a\geq 0$ and ordered $a$-tuples $P_\bullet$ of $A^t(F)$ and $Q_\bullet$ of $A(F)$ that are each linearly independent over $\QQ[G]$. Then 
any element in the set
\begin{equation*}\label{explicit ann}
\frac{L^{(a)}_{S}(A_{F/k},1)\cdot \bigl(\tau^\ast(F/k)\cdot \prod_{v \in S_k^p}\varrho_v(F/k)\bigr)^d}{\Omega_A^{F/k}\cdot w_{F/k}^d\cdot {\rm det}(h_{F/k}(P_\bullet, Q_\bullet))} \cdot {\rm Fit}^0_{\ZZ[G]}((A^t(F)/\langle P_\bullet\rangle)^\vee_{\rm tor})\cdot{\rm Fit}^0_{\ZZ[G]}((A(F)/\langle Q_\bullet\rangle)^\vee_{\rm tor})\end{equation*}
belongs to ${\rm Fit}^a_{\ZZ_{p}[G]}({\rm Sel}_{p}(A_{F})^\vee)$ and annihilates $\sha(A^t_{F})[p^\infty]$. Here the elements $\varrho_v(F/k)$ are as defined in (\ref{varrho def}), $\langle P_\bullet\rangle$ and $\langle Q_\bullet\rangle$ denote the $G$-modules that are generated by the sets $P_\bullet$ and $Q_\bullet$ and ${\rm Fit}^a_{\ZZ_{p}[G]}({\rm Sel}_{p}(A_{F})^\vee)$ denotes the $a$-th Fitting ideal of the $\ZZ_p[G]$-module ${\rm Sel}_p(A_F)^\vee$.}\end{prediction}

\begin{example}\label{wuthrich example}{\em Christian Wuthrich kindly supplied us with the following concrete instances of Prediction \ref{new add}.  Set $k = \QQ$ and $K = \QQ(\sqrt{229})$ and write $F$ for the Galois closure of the field $L = \QQ(\alpha)$ with $\alpha^3-4\alpha+1 = 0$. Then $K \subset F$ and the group $G := G_{F/k}$ is dihedral of order six. Let $A$ denote either of the curves 3928b1 (with equation $y^2 = x^3-x^2 + x + 4$) or 5864a1 (with equation $y^2 = x^3-x^2 -24x + 28$). Then ${\rm rk}(A_\QQ)= 2$, ${\rm rk}(A_K) = {\rm rk}(A_L) = 3$ and ${\rm rk}(A_F) = 5$ and, since $\sha_3(A_K)$ vanishes (as can be shown via a computation with Heegner points on the quadratic twist of $A$), these facts combine with \cite[Cor. 2.10(i)]{bmw0} to imply the $\ZZ_{3}[G]$-module $A(F)_{3}$ is isomorphic to $\ZZ_{3}[G](1-\tau) \oplus \ZZ_{3}\oplus \ZZ_{3}$, with $\tau$ the unique non-trivial element in $G_{F/L}$. In particular, if we set $\Gamma := G_{F/K}$, then we can choose a point $P$ that generates over $\ZZ_{3}[\Gamma]$ a free direct summand of $A(F)_{3}$. In addition, ${\rm Fit}^1_{\ZZ_{3}[\Gamma]}({\rm Sel}_{3}(A_{F})^\vee)$ is contained in
\[ {\rm Fit}^1_{\ZZ_{3}[\Gamma]}(\ZZ_{3}[G](1-\tau) \oplus \ZZ_{3}\oplus \ZZ_{3}) = {\rm Fit}^0_{\ZZ_{3}[\Gamma]}(\ZZ_{3}\oplus \ZZ_{3}) = I_{3}(\Gamma)^2,\]
where $I_{3}(\Gamma)$ is the augmentation ideal of $\ZZ_{3}[\Gamma]$. Finally, we note that $3$ splits in $K$ and is unramified in $F$ so that for each $3$-adic place $v$ of $K$ the element $\varrho_v(F/K)$ is equal to $3$. After taking account of these facts, Prediction \ref{new add} (with $F/k$ taken to be $F/K$, $p$ to be $3$, $a$ to be $1$ and $P_1 = Q_1$ to be $P$) states that 
\[ 9\cdot \tau^\ast(F/K)\cdot\frac{L^{(1)}_{S}(A_{F/K},1)}{\Omega_A^{F/k}} = x\cdot \sum_{g \in G}\langle g(P),P\rangle_{A_F}\cdot g^{-1}\]
for an element $x$ of $I_{3}(\Gamma)^2$ that annihilates the $3$-primary component of $\sha(A_{F})$. Here we have also used the fact that $w_{F/K}=1$ because each of the real places $v$ of $K$ has trivial decomposition group in $\Gamma$ so $\psi^-_v(1)=1-1=0$ and thus $w_\psi=1$ for each $\psi\in\widehat\Gamma$.}
\end{example}

\smallskip

To formulate another prediction we define, for each $a\geq 0$, idempotents of $\QQ[G]$ by setting
\[ e_{(a)} = e_{F,(a)} := \sum_{\psi \in \widehat{G}_{A,(a)}}e_\psi \,\,\,\,\text{ and }\,\,\,\, e_{a} = e_{F,a}:= \sum_{\psi \in \widehat{G}_{A,(a)}\setminus \widehat{G}_{A,(a+1)}}e_\psi\]
(so that $e_{(a)} = \sum_{b \ge a}e_b$).

We also observe that, 
%
%
for each non-negative integer $a$, the N\'eron-Tate height (\ref{height triv}) for $A$ over $F$ combines with our fixed isomorphism of fields $\CC\cong \CC_p$ to induce an isomorphism of $\CC_p[G]$-modules
\begin{equation}\label{athpower} {\rm ht}^{a}_{A_{F/k}}: \CC_p\cdot {\bigwedge}^a_{\ZZ_p[G]}\Hom_{\ZZ_p[G]}(A(F)_p,\ZZ_p[G]) \cong \CC_p\cdot{\bigwedge}^a_{\ZZ_p[G]} A^t(F)_p.\end{equation}

In the sequel we shall also use (the scalar extension of) the canonical `evaluation' pairing $${\bigwedge}^a_{\ZZ_p[G]} A^t(F)_p\times{\bigwedge}^a_{\ZZ_p[G]} \Hom_{\ZZ_p[G]}(A^t(F)_p,\ZZ_p[G])\to \ZZ_p[G],$$
which, in the usual way, we write as $(x,y) \mapsto y(x)$. 
\smallskip

\begin{prediction}\label{new remark SC}{\em 
Assume (A$_1)$ and (A$_3$). Then 
for any given non-negative integer $a$, any (ordered) subsets $\{\theta_j: j \in [a]\}$ and $\{\phi_i: i\in [a]\}$ of $\Hom_{\ZZ_p[G]}(A^t(F)_p,\ZZ_p[G])$ and $\Hom_{\ZZ_p[G]}(A(F)_p,\ZZ_p[G])$ and any element $\alpha$ of $\ZZ_p[G]\cap \ZZ_p[G]e_{(a)}$, the product
\begin{equation*}\label{key product2} \alpha^{1+2a}\cdot(e_{F,a}\cdot\mathcal{L}_{A,F/k}^*)\cdot (\wedge_{j=1}^{j=a}\theta_j)({\rm ht}^{a}_{A_{F/k}}(\wedge_{i=1}^{i=a}\phi_i))\end{equation*}
belongs to ${\rm Fit}^a_{\ZZ_p[G]}(\Sel_p(A_F)^\vee)$ and annihilates $\sha(A_F^t)[p^\infty]$. Here $\mathcal{L}_{A,F/k}^*$ is the element defined in (\ref{bkcharelement}) and so is related to $L$-values
truncated only at the places in $S_{\rm r} :=S_k^f\cap S_k^F$ rather than at all places in $S$.
}\end{prediction}
\smallskip

\begin{prediction}\label{new add2}{\em Assume (A$_1)$, (A$_2$) and (A$_3$). 
Then 
Prediction \ref{new add} is true after replacing the terms $L^{(a)}_{S}(A_{F/k},1)$ and $S_k^p$ in (\ref{explicit ann}) by $L^{(a)}_{S_{\rm r}}(A_{F/k},1)$ and $S_{p,{\rm r}}:= S_k^p\cap S_k^F$ respectively. %
}\end{prediction}

%
%
\smallskip

\begin{prediction}\label{integrality rk}{\em Assume (A$_1)$ and (A$_2$), respectively (A$_1)$ and (A$_3$), and write $\mathcal{L}$ for the displayed expressions in Predictions \ref{explicit ann} and \ref{new remark SC} respectively. Then, for every $g$ in $G$, the elements $\mathcal{L}_\psi$ of $\CC_p$ that are defined via the equality $\mathcal{L} = \sum_{\psi\in \widehat{G}_{A,(a)}}\mathcal{L}_\psi\cdot e_\psi$ are related by the congruence 
\[ \sum_{\psi \in \widehat{G}_{A,(a)}}\psi(g)\mathcal{L}_\psi \equiv 0 \,\,\,({\rm mod}\,\, |G|\cdot \ZZ_p).\]
(Note that, in both cases, each term $\mathcal{L}_\psi$ can be explicitly related to the value at $z=1$ of the function $z^{-a}L(A,\check\psi,z)$).
}\end{prediction}
\smallskip

\subsection{Statement of the main result}\label{8.1}


%


\subsubsection{}

In order to state the main result of this section we must first extend the definition of logarithmic resolvents given in (\ref{log resol abelian}) to the setting of abelian varieties.

To do this we do not require $F/k$ to be abelian. 
For each index $j\in[d]$ we write ${\rm log}_{\omega'_j}$ for the formal logarithm of $A^t$ over $F_p$ that is defined with respect to $\omega_j'$.

We also fix an ordering of $\Sigma(k)$. We set $n := [k:\QQ]$, write $\CC_p[G]^{nd}$ for the direct sum of $nd$ copies of $\CC_p[G]$ and fix a bijection between the standard basis of this module and the lexicographically-ordered direct product $[d]\times \Sigma(k)$.

Then for any ordered subset
\begin{equation}\label{setx} x_\bullet:= \{x_{(i,\sigma)}: (i,\sigma) \in [d]\times\Sigma(k)\}\end{equation}
of $A^t(F_p)^\wedge_p$ we define a logarithmic resolvent element of $\zeta(\CC_p[G])$ by setting
\[ \mathcal{LR}^p_{A^t_{F/k}}(x_\bullet) := {\rm Nrd}_{\QQ^c_p[G]}\left(\bigl(\sum_{g \in G} \hat \sigma(g^{-1}({\rm log}_{\omega_j'}(x_{(j',\sigma')})))\cdot g \bigr)_{(j,\sigma),(j',\sigma')}\right) \]
where the indices $(j,\sigma)$ and $(j',\sigma')$ run over $[d]\times \Sigma(k)$ and ${\rm Nrd}_{\QQ^c_p[G]}(-)$ denotes the reduced norm of the given matrix in ${\rm M}_{dn}(\QQ_p^c[G])$.

It is clear that if $A$ is an elliptic curve (so $d=1$) and $F/k$ is abelian, then the `$\psi$-component' of this definition agrees with (\ref{log resol abelian}).

\subsubsection{}\label{statementstructure} We may now state the main result of this section. For each non-archimedean place $v$ of $k$ that does not ramify in $F/k$ and at which $A$ has good reduction we define an element of $\QQ [G]$ by setting
\[ P_v(A_{F/k},1) := 1-\Phi_v^{-1}\cdot a_v +  \Phi_v^{-2}\cdot {\rm N}v^{-2}.\]
Here $a_v$ is the integer $1 + {\rm N}v - | A(\kappa_v)|$.

This result will concern the value $L^{(a)}_{S}(A_{F/k},1)$ at $z=1$ of the function (\ref{revisionLa}), the fixed period $\Omega_A^{F/k}$, the element $w_{F/k}$ in (\ref{root number def}) and the $a$-th Fitting ideal of the Pontryagin dual of the $p$-primary Selmer module ${\rm Sel}_p(A_F)$.

The proof of this result will be given in \S\ref{proof of big conj}.

\begin{theorem}\label{big conj} Fix an ordered maximal subset $x_\bullet:= \{x_{(i,\sigma)}: (i,\sigma) \in [d]\times\Sigma(k)\}$ of $A^t(F_p)^\wedge_p$ that is linearly independent over $\ZZ_p[G]$ and a finite non-empty set $T$ of places of $k$ that is disjoint from $S$.

If ${\rm BSD}(A_{F/k})$ is valid, then for any non-negative integer $a$, any subsets $\{\theta_j: j \in [a]\}$ and $\{\phi_i: i\in [a]\}$ of $\Hom_{\ZZ_p[G]}(A^t(F)_p,\ZZ_p[G])$ and $\Hom_{\ZZ_p[G]}(A(F)_p,\ZZ_p[G])$ respectively, and any element $\alpha$ of $\ZZ_p[G]\cap \ZZ_p[G]e_{(a)}$ the product
\begin{equation}\label{key product} \alpha^{1+2a}\cdot (\prod_{v \in T}P_v(A_{F/k},1)) \cdot  \frac{L^{(a)}_{S}(A_{F/k},1)}{\Omega_A^{F/k}\cdot w_{F/k}^d}\cdot \mathcal{LR}^p_{A^t_{F/k}}(x_\bullet)\cdot (\wedge_{j=1}^{j=a}\theta_j)({\rm ht}^{a}_{A_{F/k}}(\wedge_{i=1}^{i=a}\phi_i))\end{equation}
belongs to ${\rm Fit}^a_{\ZZ_p[G]}(\Sel_p(A_F)^\vee)$  and annihilates $\sha(A^t_{F})[p^\infty]$.
\end{theorem}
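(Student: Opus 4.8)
\textbf{Proof strategy for Theorem \ref{big conj}.}
The plan is to deduce the statement from the validity of ${\rm BSD}_p(A_{F/k})$(iv) by translating the $K$-theoretic identity into a statement about Fitting ideals, following the general machinery of \cite{bst}. First I would pass to the $p$-primary component, using Lemma \ref{pro-p lemma} to reduce to the equality (\ref{displayed pj}). The key point is that, because we do not assume (H$_1$)--(H$_5$), the classical Selmer complex ${\rm SC}_p(A_{F/k})$ need not be perfect over $\ZZ_p[G]$; so I would instead work throughout with the Nekov\'a\v r--Selmer complex ${\rm SC}_S(A_{F/k};\mathcal{X}(p),\mathcal{X}(\infty)_p)$, which Proposition \ref{prop:perfect} guarantees lies in $D^{\rm perf}(\ZZ_p[G])$, is acyclic outside degrees $1,2,3$, and has cohomology controlled (up to finite error) by $A^t(F)_p$, $\Sel_p(A_F)^\vee$ and $A(F)[p^\infty]^\vee$. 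The role of the auxiliary set $T$ is the standard one of `$T$-modification': introducing the Euler factors $\prod_{v\in T}P_v(A_{F/k},1)^\#$ kills the torsion contribution of $A(F)[p^\infty]$ (equivalently, makes the relevant cohomology module $\ZZ_p[G]$-free in degree $3$, cf. \cite[\S2]{bst}), at the cost of replacing $\Sel_p(A_F)^\vee$ by its $T$-modified variant whose Fitting ideal differs by exactly this Euler factor.

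Next I would unwind the period and Galois-Gauss-sum terms. The quotient $L_S^{(a)}(A_{F/k},1)/(\Omega_A^{F/k}w_{F/k}^d)$ together with the logarithmic resolvent $\mathcal{LR}^p_{A^t_{F/k}}(x_\bullet)$ is precisely designed so that, after applying $\delta_{G,p}$, one recovers the left-hand side of (\ref{displayed pj}): Lemma \ref{k-theory period} identifies ${\rm Nrd}_{\RR[G]}(\Omega_{\omega_\bullet}(A_{F/k}))$ with $\Omega_A^{F/k}\cdot w_{F/k}^d$ times the reduced norm of the Galois-resolvent matrix of a $\QQ[G]$-basis of $F$, and the logarithmic resolvent $\mathcal{LR}^p_{A^t_{F/k}}(x_\bullet)$ plays the analogous role for the module $A^t(F_p)^\wedge_p$, i.e.\ it is (up to a unit coming from the choice of $x_\bullet$ versus a genuine basis) the determinant of the comparison isomorphism between $\CC_p\cdot A^t(F_p)^\wedge_p$ and its Betti/de Rham avatar entering $\chi_{G,p}$. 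Thus the equality (\ref{displayed pj}) becomes, after choosing the differentials $\omega_\bullet$ compatibly with the $\omega'_j$ and absorbing the $\mathcal{Q}(\omega_\bullet)_{S,p}$ and Fontaine-Messing terms into known elements of $K_0(\ZZ_p[G],\QQ_p[G])$, an explicit identity in $K_0(\ZZ_p[G],\CC_p[G])$ relating $\delta_{G,p}$ of the analytic side to $\chi_{G,p}$ of the Nekov\'a\v r--Selmer complex equipped with the height trivialisation.

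Now comes the translation to Fitting ideals. Here I would invoke the general formalism of \cite{bst} (in the form recalled in Remark \ref{comparingdets}, valid since $G$ is abelian and $\ZZ_p[G]$ is then commutative): for a perfect complex $C$ acyclic outside degrees $1,2$, the element $\chi_{G,p}(C,h)$ corresponds to the inverse of a `$\vartheta$-twisted determinant ideal', and the higher-rank Rubin-style lattices $\bigwedge^a$ together with the maps $\theta_j,\phi_i$ and the scalar $\alpha\in\ZZ_p[G]e_{(a)}$ extract, via the exterior-power / Fitting-ideal comparison of \cite[\S3]{bst}, exactly the $a$-th Fitting ideal ${\rm Fit}^a_{\ZZ_p[G]}(H^2(C))$ out of the leading-term data in degree-of-vanishing $a$. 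The idempotent $e_{(a)}$ and the holomorphic function $L_S^{(a)}(A_{F/k},z)$ isolate the part of the $L$-value of vanishing order $\ge a$, which is why one needs $\alpha$ to lie in $\ZZ_p[G]e_{(a)}$ and why the exponent $1+2a$ of $\alpha$ appears (one factor to clear denominators in the leading term itself, and $2a$ factors to clear the two height pairings/exterior powers of rank $a$); the product (\ref{key product}) is then matched term-by-term with an element of ${\rm Fit}^a_{\ZZ_p[G]}(\Sel_p(A_F)^\vee)$ via this dictionary. The annihilation of $\sha(A^t_F)[p^\infty]$ follows because $\sha(A^t_F)[p^\infty]$ is (up to the known finite discrepancy in Proposition \ref{prop:perfect}(iii)) the torsion submodule of $H^2$, and any Fitting ideal annihilates the module, hence in particular its torsion part; one uses here the finiteness of $\sha(A_F)$ from ${\rm BSD}(A_{F/k})$(i) and the duality $\sha(A^t_F)\cong\sha(A_F)^\vee$.

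\textbf{The main obstacle.}
I expect the hard part to be the bookkeeping in the second and third steps rather than any single conceptual difficulty: precisely matching the logarithmic resolvent $\mathcal{LR}^p_{A^t_{F/k}}(x_\bullet)$ (defined for an arbitrary $\ZZ_p[G]$-linearly independent maximal set $x_\bullet$, not a basis, and for $A^t$ rather than for the differentials) against the determinant of the exponential/comparison isomorphism appearing in $\chi_{G,p}$, keeping careful track of (a) the discrepancy between $x_\bullet$ and an honest $\ZZ_p[G]$-basis of $A^t(F_p)^\wedge_p$ — which contributes a Fitting-ideal rather than a unit, and is precisely what allows the final statement to be an ideal membership rather than an equality — (b) the $\#$-involution and the passage between $\psi$ and $\check\psi$ in the period and resolvent terms, (c) the correct placement of the Fontaine-Messing terms $\mu_v(A_{F/k})$ and the finite module $\mathcal{Q}(\omega_\bullet)_{S,p}$, which must be shown to already lie in ${\rm Fit}^a$ (or to cancel), and (d) verifying that the $2$-power-order finite discrepancies between the Nekov\'a\v r-Selmer cohomology and $A^t(F)_p$, $\Sel_p(A_F)^\vee$ are invisible at the odd prime $p$. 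Step (a) in particular is where the higher-rank theory of \cite{bst} is genuinely needed, and getting the exponent of $\alpha$ and the rank-$a$ exterior powers to line up is the crux.
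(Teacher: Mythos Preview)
Your overall architecture is right and matches the paper closely: work with the Nekov\'a\v r--Selmer complex $C_{S,X}:={\rm SC}_S(A_{F/k};X,H_\infty(A_{F/k})_p)$ for $X=\langle x_\bullet\rangle_{\ZZ_p[G]}$, $T$-modify it to get a complex $C_{S,X,T}$ that is acyclic outside degrees one and two, show (your ``unwinding'' step, which the paper packages as Lemma \ref{modifiedlemma}) that ${\rm BSD}_p(A_{F/k})$(iv) produces a characteristic element for $(C_{S,X,T},h^T_{A,F})$ whose $e_a$-part is exactly the analytic quotient in (\ref{key product}), and then feed everything into \cite[Th.~3.10(i)]{bst}. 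Two points, however, are not handled correctly.

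\textbf{The exponent $1+2a$.} Your heuristic (``one factor to clear denominators in the leading term, $2a$ for the two rank-$a$ exterior powers'') is not how the exponent arises and would not produce a proof. What \cite[Th.~3.10(i)]{bst} actually gives is that for any $y\in\ZZ_p[G]$ annihilating ${\rm Ext}^2_{\mathfrak{A}}(M,\mathfrak{A})$, where $\mathfrak{A}=\ZZ_p[G]e_{(a)}$ and $M=\mathfrak{A}\otimes_{\ZZ_p[G]}H^2(C_{S,X,T})$, the product $\alpha\cdot y^a\cdot(\text{rest})$ lies in ${\rm Fit}^a$ and annihilates $(\Sel_p(A_F)^\vee)_{\rm tor}$. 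The missing step is therefore to show that one may take $y=\alpha^2$. The paper does this via the low-degree terms of the spectral sequence ${\rm Ext}^p_{\mathfrak{A}}(M,{\rm Ext}^q_{\ZZ_p[G]}(\mathfrak{A},\mathfrak{A}))\Rightarrow{\rm Ext}^{p+q}_{\ZZ_p[G]}(M,\mathfrak{A})$, reducing to showing that $\alpha$ annihilates both ${\rm Ext}^1_{\ZZ_p[G]}(\mathfrak{A},\mathfrak{A})$ and ${\rm Ext}^2_{\ZZ_p[G]}(M,\mathfrak{A})$; this follows because $\alpha$ annihilates the complementary ideal $\mathfrak{A}^\dagger=\{x:xe_{(a)}=0\}$ and $\ZZ_p[G]$ is Gorenstein. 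You should replace your heuristic with this argument.

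\textbf{Annihilation of $\sha$.} Your claim that ``any Fitting ideal annihilates the module, hence in particular its torsion part'' is false for $a\ge 1$: ${\rm Fit}^a$ does not annihilate the module. The annihilation of $\sha(A^t_F)[p^\infty]$ is a \emph{separate} output of \cite[Th.~3.10(i)]{bst} (it asserts both containment in ${\rm Fit}^a$ and annihilation of $H^2_{\rm tor}$), after which one identifies $(\Sel_p(A_F)^\vee)_{\rm tor}\cong\sha(A_F)[p^\infty]^\vee\cong\sha(A^t_F)[p^\infty]$ via (\ref{sha-selmer}) and the Cassels--Tate pairing.

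Finally, your ``main obstacle'' (a) is not the crux: since $x_\bullet$ is $\ZZ_p[G]$-linearly independent it \emph{is} a basis of the free module $X$ it generates, and the comparison of $\mathcal{LR}^p_{A^t_{F/k}}(x_\bullet)$ with the period term is handled cleanly by the exact triangle $C_{S,\omega_\bullet}\to C_{S,X}\to (X/\mathcal{X}(p))[-1]$ (this is the content of Lemma \ref{modifiedlemma}(iii)). The genuine obstacle is the ${\rm Ext}^2$ computation above.
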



We remark on several ways in which this result either simplifies or becomes more explicit. 

\begin{remarks}\label{more explicit rem}{\em \

\noindent{}(i) If $A(F)$ does not contain an element of order $p$, then our methods will show that the prediction in Theorem \ref{big conj} should remain true if the term $\prod_{v \in T}P_v(A_{F/k},1)$ is omitted. For more details see Remark \ref{omit T} below.

\noindent{}(ii) If one fixes a subset $\{\phi_i: i \in [a]\}$ of $\Hom_{\ZZ_p[G]}(A(F)_p,\ZZ_p[G])$ of cardinality $a$ that generates a free direct summand of rank $a$, then our approach combines with \cite[Th. 3.10(ii)]{bst} to suggest that, as the subset $\{\theta_j: j \in [a]\}$ varies, elements of the form (\ref{key product}) can be used to give an explicit description of ${\rm Fit}^a_{\ZZ_p[G]}(\Sel_p(A_F)^\vee)$.}\end{remarks}

\begin{remark}\label{e=1 case}{\em In special cases one can show that there exist $a$-tuples in $A^t(F)$ and $A(F)$ that are each linearly independent over $\QQ[G]$, as in assumption (A$_2$) in \S \ref{newsection}. In any such cases one can then either show, or is led to predict (for example via ${\rm BSD}(A_{F/k})$(ii)), that the idempotent $e_{(a)}$ is equal to $1$ and so the element $\alpha$ in (\ref{key product}) can be taken to be $1$. 

This is, for example, the case if $a = 0$, since each function $L(A, \psi,z)$ is holomorphic at $z=1$ and, in the setting of abelian extensions of $\QQ$, this case will be considered in detail in \S\ref{mod sect}. This situation can also arises naturally in cases with $a > 0$, such as the following.

\noindent{}(i) If $F$ is a ring class field of an imaginary quadratic field $k$ and suitable hypotheses are satisfied by an elliptic curve $A/\QQ$ and the extension $F/\QQ$, then the existence of a Heegner point in $A(F)$ with non-zero trace to $A(k)$ combines with the theorem of Gross and Zagier to imply that $e_{(1)} = 1$. This case will be considered in detail in \S\ref{HHP}.

\noindent{}(ii) As a generalization of (i), if $F$ is a generalized dihedral extension of a field $F'$, $k$ is the unique quadratic extension of $F'$ in $F$, all $p$-adic places split completely in $k/F'$ and the rank of $A(k)$ is odd, then the result of Mazur and Rubin in \cite[Th. B]{mr2} shows the existence of a point in $A(F)$ that generates a free $\QQ[G]$-module.

\noindent{}(iii) Let $F$ be a finite extension of $k$ inside a $\ZZ_p$-extension $k_\infty$, set $\Gamma:= G_{k_\infty/k}$ and write $r_\infty$ for the corank of ${\rm Sel}_p(A_{k_\infty})$ as a module over the Iwasawa algebra $\ZZ_p[[\Gamma]]$. Then, if $A$, $F/k$  and $p$ satisfy all of the hypotheses listed at the beginning of \S\ref{tmc}, one can show that the inverse limit $\varprojlim_{F'}A^t(F')_p$, where $F'$ runs over all finite extensions of $F$ in $k_\infty$, is a free $\ZZ_p[[\Gamma]]$-module of rank $r_\infty$ and this in turn implies the existence of suitable $r_\infty$-tuples of points. 
}
\end{remark}

%
%

\begin{remark}\label{integrality rk II}{\em To obtain concrete congruences from the result of Theorem \ref{big conj} one can proceed exactly as in Prediction \ref{integrality rk}. 
%
%
}\end{remark}

\subsection{Explicit regulator matrices}Motivated by Remark \ref{e=1 case}, and in order to justify Predictions \ref{new add} and \ref{new add2} (see \S \ref{justifications} below), we consider in more detail the case in which there exist $a$-tuples in $A^t(F)$ and $A(F)$ that are each linearly independent over $\QQ[G]$. In this case we may interpret the expressions $(\wedge_{j=1}^{j=a}\theta_j)({\rm ht}^a_{A_{F/k}}(\wedge_{i=1}^{i=a}\phi_i))$ in Theorem \ref{big conj} in terms of classical N\'eron-Tate heights, as follows.

\begin{lemma}\label{height pairing interp} Fix a natural number $a$ such that there exist ordered $a$-tuples $P_\bullet$ of $A^t(F)$ and $Q_\bullet$ of $A(F)$ that are each linearly independent over $\QQ[G]$. Then the matrix $e_a\cdot h_{F/k}(P_\bullet, Q_\bullet)$ belongs to ${\rm GL}_a(\RR[G]e_a)$ and one has
\begin{multline*}  e_a\cdot\left\{(\wedge_{j=1}^{j=a}\theta_j)({\rm ht}^a_{A_{F/k}}(\wedge_{i=1}^{i=a}\phi_i))\mid \theta_j\in \Hom_{\ZZ[G]}(A^t(F),\ZZ[G]),\phi_i\in \Hom_{\ZZ[G]}(A(F),\ZZ[G])\right\}\\
= {\rm det}(e_a\cdot h_{F/k}(P_\bullet, Q_\bullet))^{-1}\cdot{\rm Fit}^0_{\ZZ[G]}((A^t(F)_{\rm tf}/\langle P_\bullet\rangle)^\vee_{\rm tor})\cdot{\rm Fit}^0_{\ZZ[G]}((A(F)_{\rm tf}/\langle Q_\bullet\rangle)^\vee_{\rm tor}). \end{multline*}
%
\end{lemma}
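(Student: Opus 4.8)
\textbf{Proof strategy for Lemma \ref{height pairing interp}.} The plan is to unwind the definition of the map ${\rm ht}^a_{A_{F/k}}$ in \eqref{athpower} and reduce the asserted equality of $\ZZ[G]$-modules to a statement about the determinant of a change-of-basis matrix. First I would observe that, since $e_{(a)} = 1$, the $\RR[G]$-module $\RR\cdot A^t(F)$ contains a free direct summand of rank $a$ (and likewise for $A(F)$), so that any $a$-tuples $P_\bullet$ in $A^t(F)$ and $Q_\bullet$ in $A(F)$ that are linearly independent over $\QQ[G]$ generate $\RR[G]$-bases of $e_a\cdot(\RR\cdot A^t(F))$ and $e_a\cdot(\RR\cdot A(F))$ after multiplication by $e_a$. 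Applying the height isomorphism $h_{A_{F/k}}$ to the basis $\{e_aP_i\}$ and expressing the image in terms of the dual basis of $\{e_aQ_j\}$ produces precisely the matrix $e_a\cdot h_{F/k}(P_\bullet,Q_\bullet)$ (here the appearance of $g^{-1}$ in \eqref{regulatormatrix} reflects the passage to the dual module $\Hom_{\ZZ[G]}(A(F),\ZZ[G])$ together with the involution $\#$); its invertibility over $\RR[G]e_a$ is then equivalent to the non-degeneracy of the N\'eron-Tate pairing on the relevant isotypic components, which holds because the pairing is non-degenerate over $\RR$ and compatible with the $G$-action.

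The core computation is then the following. Write $L:=A^t(F)_{\rm tf}$ and $M:=A(F)_{\rm tf}$, regarded as $\ZZ[G]$-lattices (working after inverting nothing, or localizing at each prime as needed). For a fixed linearly independent $a$-tuple $Q_\bullet$ in $M$ generating a submodule $\langle Q_\bullet\rangle$, the set of elements $(\wedge_{i=1}^{i=a}\phi_i)$ with $\phi_i\in\Hom_{\ZZ[G]}(M,\ZZ[G])$, evaluated against $\wedge_{i=1}^{i=a}Q_i$, sweeps out exactly the $0$-th Fitting ideal ${\rm Fit}^0_{\ZZ[G]}((M/\langle Q_\bullet\rangle)^\vee_{\rm tor})$ — this is a direct application of the linear-algebra input recorded in \cite[Th. 3.10]{bst}, applied to the lattice $M$ and the chosen partial basis. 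A symmetric statement holds for $L$ and $P_\bullet$. Combining these with the fact that ${\rm ht}^a_{A_{F/k}}$ sends $\wedge_{i=1}^{i=a}e_a\phi_i$ (viewed in the top exterior power of the dual) to ${\rm det}(e_a\cdot h_{F/k}(P_\bullet,Q_\bullet))^{-1}$ times $\wedge_{j=1}^{j=a}e_aP_j$ — which is just Cramer's rule for the inverse of the regulator matrix — yields the claimed identity after pairing with $\wedge_{j=1}^{j=a}\theta_j$.

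Concretely, the steps in order are: (1) establish that $e_a\cdot h_{F/k}(P_\bullet,Q_\bullet)\in{\rm GL}_a(\RR[G]e_a)$ from non-degeneracy of the N\'eron-Tate pairing and the hypothesis $e_{(a)}=1$; (2) identify, via \cite[Th. 3.10]{bst}, the image of $\{(\wedge_i\phi_i)(\wedge_i Q_i)\}$ with ${\rm Fit}^0_{\ZZ[G]}((M/\langle Q_\bullet\rangle)^\vee_{\rm tor})$ and symmetrically for $L$; (3) compute the effect of ${\rm ht}^a_{A_{F/k}}$ on top exterior powers by Cramer's rule, producing the determinant factor ${\rm det}(e_a\cdot h_{F/k}(P_\bullet,Q_\bullet))^{-1}$; (4) assemble (2) and (3) to obtain the displayed equality of fractional ideals. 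The main obstacle I anticipate is bookkeeping in step (3): one must track carefully the interplay between the exterior-power duality $\bigwedge^a_{\ZZ_p[G]}\Hom_{\ZZ_p[G]}(A(F)_p,\ZZ_p[G])\cong\Hom_{\ZZ_p[G]}(\bigwedge^a_{\ZZ_p[G]}A(F)_p,\ZZ_p[G])$, the involution $\#$ implicit in passing between left- and right-module structures on $\Hom_{\ZZ[G]}(-,\ZZ[G])$, and the direction of the height isomorphism $h_{A_{F/k}}$, to be sure the determinant enters with the correct exponent $-1$ rather than $+1$ and without spurious unit factors; once the conventions are pinned down the algebra is routine.
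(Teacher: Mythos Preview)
Your proposal is correct and follows essentially the same approach as the paper: reduce to free submodules via the Fitting-ideal identification for the images of the exterior powers of the restriction maps, then compute the resulting single generator via Cramer's rule for the inverse of the regulator matrix. The only difference is that where you invoke \cite[Th.~3.10]{bst} for step~(2), the paper derives this fact directly by dualizing the tautological exact sequence $0\to\langle P_\bullet\rangle\to A^t(F)_{\rm tf}\to N(P_\bullet)\to 0$ to obtain $A^t(F)^\ast\to\langle P_\bullet\rangle^\ast\to N(P_\bullet)^\vee_{\rm tor}\to 0$, from which $\im\bigl(\bigwedge^a\iota_{P_\bullet}^\ast\bigr)={\rm Fit}^0_{\ZZ[G]}(N(P_\bullet)^\vee_{\rm tor})$ is immediate; this elementary argument is self-contained and avoids any external dependence.
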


\begin{proof} We write $N(P_\bullet)$ and $N(Q_\bullet)$ for the quotients of $A^t(F)_{\rm tf}$ and $A(F)_{\rm tf}$ by $\langle P_\bullet\rangle$ and $\langle Q_\bullet\rangle$. Then, by taking $\ZZ$-linear duals of the tautological short exact sequence
\[ 0 \to \langle P_\bullet\rangle \xrightarrow{\iota_{P_\bullet}} A^t(F)_{\rm tf} \to N(P_\bullet) \to 0\]
one obtains an exact sequence $$A^t(F)^\ast \xrightarrow{\iota_{P_\bullet}^\ast} \langle P_\bullet\rangle^\ast \to N(P_\bullet)_{\rm tor}^\vee \to 0$$ and hence an equality
\[ \im({\bigwedge}^a_{\ZZ[G]}\iota_{P_\bullet}^\ast) = {\rm Fit}^0_{\ZZ[G]}(N(P_\bullet)_{\rm tor}^\vee).\]

In the same way one derives an equality
\[ \im({\bigwedge}^a_{\ZZ[G]}\iota_{Q_\bullet}^\ast) = {\rm Fit}^0_{\ZZ[G]}(N(Q_\bullet)_{\rm tor}^\vee).\]

Since the maps $e_{a}(\QQ\otimes_\ZZ\iota_{P_\bullet}^\ast)$ and $e_{a}(\QQ\otimes_\ZZ\iota_{Q_\bullet}^\ast)$ are bijective, these equalities imply that the lattice
\[ e_a\cdot\left\{(\wedge_{j=1}^{j=a}\theta_j)({\rm ht}^a_{A_{F/k}}(\wedge_{i=1}^{i=a}\phi_i))\mid \theta_j\in A^t(F)^\ast,\phi_i\in A(F)^\ast\right\}\]
is equal to the product
\begin{multline*} \left\{e_a(\wedge_{j=1}^{j=a}\theta_j)({\rm ht}^a_{A_{F/k}}(e_a(\wedge_{i=1}^{i=a}\phi_i)))\mid \theta_j\in \langle P_\bullet\rangle^\ast,\phi_i\in \langle Q_\bullet\rangle^\ast\right\}\\
\times {\rm Fit}^0_{\ZZ[G]}(N(P_\bullet)_{\rm tor}^\vee)\cdot{\rm Fit}^0_{\ZZ[G]}(N(Q_\bullet)_{\rm tor}^\vee). \end{multline*}

This implies the claimed result since, writing $P_i^\ast$ and $Q_j^\ast$ for the elements of $\langle P_\bullet\rangle^\ast$ and $\langle Q_\bullet\rangle^\ast$ that are respectively dual to $P_i$ and $Q_j$, one has
\begin{multline*} \left\{e_a(\wedge_{j=1}^{j=a}\theta_j)({\rm ht}^a_{A_{F/k}}(e_a(\wedge_{i=1}^{i=a}\phi_i)))\mid \theta_j\in \langle P_\bullet\rangle^\ast,\phi_i\in \langle Q_\bullet\rangle^\ast\right\}\\
=  \ZZ[G]\cdot (e_a\wedge_{i=1}^{i=a}P_i^\ast)({\rm ht}^a_{A_{F/k}}(e_a\wedge_{i=1}^{i=a}Q^\ast_i))\end{multline*}
and
\begin{align*} e_a(\wedge_{i=1}^{i=a}P_i^\ast)({\rm ht}^a_{A_{F/k}}(e_a\wedge_{i=1}^{i=a}Q^\ast_i)) =\, &{\rm det}\bigl( (e_aP_i^\ast(h^{-1}_{A_{F/k}}(e_a Q^\ast_j)))_{1\le i, j\le a}\bigr)\\
 =\, &{\rm det}(e_a\cdot h_{F/k}(P_\bullet, Q_\bullet))^{-1}.\end{align*}
This last equality is true because the definition of ${\rm ht}^a_{A_{F/k}}$ implies that for every $j$ one has
\[ h^{-1}_{A_{F/k}}(e_a Q^\ast_j) = \sum_{b=1}^{b=a} ((e_a\cdot h_{F/k}(P_\bullet, Q_\bullet))^{-1})_{bj}P_b.\]
\end{proof}

\subsection{Bounds on logarithmic resolvents}\label{p-adic sec} 


In this section we discuss certain explicit interpretations of logarithmic resolvents.
For subsequent purposes (related to the upcoming article \cite{dmckwt}) we do not here require $G$ to be abelian.

\subsubsection{} We start by deriving an easy consequence of the arguments in Proposition \ref{lms} and Lemma \ref{ullom}. For each natural number $i$ we set $\wp_{F_p}^i:=\prod_{w'\in S_F^p}\wp_{F_{w'}}^i$, where $\wp_{F_{w'}}$ denotes the maximal ideal in the valuation ring of $F_{w'}$. We also set \begin{equation}\label{formalgp}\hat A^t(\wp_{F_p}^i):=\prod_{w'\in S_F^p}\hat A^t_{w'}(\wp_{F_{w'}}^i),\end{equation} where $\hat A^t_{w'}$ denotes the formal group of $A^t_{/F_{w'}}$.

\begin{proposition}\label{explicit log resolve} If all $p$-adic places are tamely ramified in $F/k$ and $\hat A^t(\wp_{F_p})$ is torsion-free, then there exists an ordered $\ZZ_p[G]$-basis $x_\bullet$ of $\hat A^t(\wp_{F_p})$ for which one has
\[ \mathcal{LR}^p_{A^t_{F/k}}(x_\bullet) = \bigl(\tau^\ast(F/k)\cdot \prod_{v \in S_k^p}\varrho_v(F/k)\bigr)^d,\]
where the elements $\varrho_v(F/k)$ of $\zeta(\QQ[G])^\times$ are as defined in (\ref{varrho def}).
\end{proposition}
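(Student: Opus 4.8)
The plan is to prove Proposition \ref{explicit log resolve} as a direct consequence of the analysis already carried out in the proof of Proposition \ref{lms}(ii), Lemma \ref{ullom} and Lemma \ref{twist dependence}, via the key observation that under the stated hypotheses the formal group $\hat A^t$ over the semi-local ring $\mathcal{O}_{F,p}$ is, up to a $\ZZ_p[G]$-linear change of basis, a twist of $\mathbb{G}_m^d$ whose associated resolvent is governed by exactly the same Galois-module data that controls $\tau^\ast(F/k)$. First I would reduce to a local (place-by-place) computation: since $\mathcal{O}_{F,p} = \prod_{v\in S_k^p}\mathcal{O}_{F,v}$ and $\hat A^t(\wp_{F_p}) = \bigoplus_{v\in S_k^p}\hat A^t(\wp_{F_v})$, and since both $\tau^\ast(F/k)$ and the $\varrho_v(F/k)$ decompose as products over $v\in S_k^p$ of the corresponding local quantities (induced up from $G_w$), it suffices to work over a single $p$-adic place $v$, tamely ramified in $F/k$, and construct an ordered $\ZZ_p[G_w]$-basis $x_{\bullet,v}$ of $\hat A^t(\wp_{F_w})$ whose logarithmic resolvent matrix has reduced norm equal to $\bigl(\tau(\mathbb{Q}_{\ell(v)},\psi_v)\cdot\varrho_{v}\bigr)^d$-type terms assembled over $\widehat{G}$.

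\textbf{Key steps.} The main steps, in order, would be: (1) By Lemma \ref{useful prel}(ii) the $\ZZ_p[G_w]$-module $\hat A^t(\wp_{F_w})^\wedge_p = \hat A^t(\wp_{F_w})$ (torsion-free by hypothesis) is cohomologically-trivial and, being finitely generated and torsion-free over the semi-local ring $\ZZ_p[G_w]$, is projective; tameness then gives that it is in fact free of rank $d[k_v:\mathbb{Q}_p]$ over $\ZZ_p$ and, by Noether's theorem applied to $\mathcal{O}_{F,v}$, of rank $d$ over $\ZZ_p[G_w]$. (2) Using the exponential/logarithm isomorphisms $\hat A^t(\wp_{F_w}^a)\cong(\wp_{F_w}^a)^d$ for $a\ge i_0$ from Lemma \ref{ullom} together with the downward induction argument there (exact sequences (\ref{filter1}) and isomorphisms (\ref{iso2})), identify $\mathbb{Q}_p\cdot\hat A^t(\wp_{F_w})$ as a $\mathbb{Q}_p[G_w]$-module with $(\mathbb{Q}_p\otimes_{\mathbb{Q}_p}F_w)^d = F_w^d$, under an isomorphism that is the formal logarithm ${\rm log}_{\omega'_\bullet}$ determined by the chosen invariant differentials. (3) Choose the basis $x_{\bullet,v}$ to be (the formal exponential of) a $\ZZ_p[G_w]$-basis of $\mathcal{O}_{F,v}^d\subseteq F_v^d$ chosen so that, after the identification in step (2), the resolvent matrix $\bigl(\sum_{g\in G_w}\hat\sigma(g^{-1}({\rm log}_{\omega'_j}(x_{(j',\sigma')})))\cdot g\bigr)$ coincides with the $d$-fold block-diagonal of the Galois-resolvent matrix $M(z^p_\bullet)$ appearing in the proof of Proposition \ref{lms}(ii); that proof shows precisely that, after a further adjustment by a unit matrix in ${\rm GL}(\ZZ_p[G_w])$ (which does not change reduced norms up to $\im(\mathrm{Nrd}_{\mathbb{Q}_p[G_w]}\circ K_1(\ZZ_p[G_w]))$, but here one wants equality on the nose), one obtains $\mathrm{Nrd}_{\mathbb{Q}_p^c[G_w]}(M(z^p_\bullet)) = \tau^\ast(F_w/k_v)$, the local modified Galois-Gauss sum. (4) Reassemble over all $v\in S_k^p$ and over the $d$ blocks, and invoke the classical decomposition of $\tau^\ast(F/k)$ as a product of local modified Galois-Gauss sums (as used already in Lemma \ref{gauss}) together with the definition (\ref{u def}) of the unramified characteristic $u_\psi$ and (\ref{varrho def}) of $\varrho_{v,\psi}$, to conclude that the full logarithmic resolvent equals $\bigl(\tau^\ast(F/k)\cdot\prod_{v\in S_k^p}\varrho_v(F/k)\bigr)^d$.

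\textbf{The main obstacle.} The hard part will be step (3): pinning down the \emph{exact} relationship between the local resolvent of formal-group points and the Galois-Gauss sum, rather than an equality only up to an element of $\im(\mathrm{Nrd}_{\mathbb{Q}_p[G_w]})$ of $K_1(\ZZ_p[G_w])$. In the proof of Proposition \ref{lms}(ii) one is free to replace a basis by its image under an automorphism realized by $U\in{\rm GL}_n(\ZZ_p[G])$, which costs nothing in the relative $K$-group; here one must track how the tame local terms $\varrho_{v,\psi} = {\rm det}({\rm N}v\mid V_\psi^{I_w})$ and the unramified characteristics $u_{v,\psi} = {\rm det}(-\Phi_v^{-1}\mid V_\psi^{I_w})$ enter, using the fact (from Lemma \ref{ullom} and \cite[Th. 4.3]{bleyburns}) that the discrepancy between $\chi_{\Gamma,p}(C^\bullet_{N,1},{\rm val}_N)$ and the naive valuation class is precisely $-\delta_{\Gamma,p}(c_{N/M})$ with $c_{N/M}$ built from $(|\kappa_M|-\Phi_{N/M})e_{\Gamma_0}$ and $(1-\Phi_{N/M})e_{\Gamma_0}$ — it is exactly these two factors whose reduced-norm components over $\widehat{G}$ unwind into $\varrho_{v,\psi}$ and $u_{v,\psi}$ after accounting for the ramification via $e_{\Gamma_0}$ and the inertia invariants. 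I would handle this by an explicit matched computation over a single place, first for the case $\Gamma_0 = 1$ (unramified, where $\hat A^t(\wp_{F_w})$ is automatically free and the resolvent is literally $M(z^p_\bullet)$, giving $\tau^\ast(F_w/k_v)$ with the $u_v$-factor accounting for $-\Phi_v^{-1}$ on $V_\psi^{I_w}=V_\psi$), then for the tamely ramified case by the same filtration/downward-induction bookkeeping as in Lemma \ref{ullom}, where the $\varrho_{v,\psi}$-factor arises from the $|\kappa_M|=\mathrm{N}v$ appearing in $c_{N/M}$ restricted to the inertia-fixed part $V_\psi^{I_w}$.
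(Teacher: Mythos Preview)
Your overall strategy is sound: use tameness to get freeness of $\hat A^t(\wp_{F_p})$, pass via the formal logarithm to $F_p^d$, and invoke the argument of Proposition~\ref{lms}(ii) to identify the resulting resolvent with $\tau^\ast(F/k)^d$ up to a unit in $\mathrm{GL}(\ZZ_p[G])$, absorbing that unit into the choice of basis at the end. That is exactly the skeleton of the paper's proof.

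However, you have misidentified where the factor $\prod_{v\in S_k^p}\varrho_v(F/k)$ comes from. It has nothing to do with the element $c_{N/M}$ of \S\ref{twist inv prelim} or with the twist-matrix invariants $R_{N/M}(\tilde B)$ --- those play no role in this argument. The point is much simpler: the telescoping argument of Lemma~\ref{ullom} shows that in $K_0(\ZZ_p[G],\QQ_p^c[G])$ one has
\[
[\hat A^t(\wp_{F_p}),\,Y_{F/k,p}^d;\,\mu'] \;=\; d\cdot[\wp_{F_p},\,Y_{F/k,p};\,\mu],
\]
and passing from $\wp_{F_p}$ to $\mathcal{O}_{F,p}$ (which is the module to which Proposition~\ref{lms}(ii) actually applies) costs exactly the Euler characteristic $\chi_{G,p}\bigl((\mathcal{O}_{F,p}/\wp_{F_p})[0],0\bigr)$. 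This is computed directly from the normal basis theorem for $\kappa_{F_w}$ over $\mathbb{F}_p$: the short exact sequence $0\to\ZZ_p[G_w/I_w]^{f_v}\xrightarrow{\times p}\ZZ_p[G_w/I_w]^{f_v}\to\kappa_{F_w}\to 0$ gives that the $\psi$-component of the reduced norm is $p^{f_v\dim V_\psi^{I_w}} = \det(\mathrm{N}v\mid V_\psi^{I_w}) = \varrho_{v,\psi}$. So $\varrho_v$ records only the gap between $\wp_{F_p}$ and $\mathcal{O}_{F,p}$, not any inertial twist data.

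Relatedly, the place-by-place reduction and the appeal to Lemma~\ref{gauss} are unnecessary. The paper works globally throughout: the logarithmic resolvent matrix represents the composite isomorphism $\QQ_p^c\otimes\hat A^t(\wp_{F_p})\to(\QQ_p^c\otimes F)^d\to(\QQ_p^c\cdot Y_{F/k,p})^d$, so its class in relative $K$-theory is already a global object, and one feeds the $[\mathcal{O}_{F,p},Y_{F/k,p};\mu]$ piece directly into the global argument of Proposition~\ref{lms}(ii) rather than decomposing $\tau^\ast(F/k)$ into local Gauss sums.
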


\begin{proof} Since all $p$-adic places of $k$ are tamely ramified in $F$ Lemma \ref{ullom} implies that the $\ZZ_p[G]$-module $\hat A^t(\wp^{i}_{F_p})$ is cohomologically-trivial for all $i$. Hence, if $\hat A^t(\wp_{F_p})$ is torsion-free, then it is a projective $\ZZ_p[G]$-module (by \cite[Th. 8]{cf}) and therefore free of rank $nd$ (since $\QQ_p\otimes_{\ZZ_p}\hat A^t(\wp_{F_p})$ is isomorphic to $F^d_p$).

In this case we fix an ordered basis $x_\bullet$ of $\hat A^t(\wp_{F_p})$ and regard it as a $\QQ_p^c[G]$-basis of $\QQ_p^c\otimes_{\ZZ_p}\hat A^t(\wp_{F_p})$.

We also regard $\{(i,\hat\sigma): (i,\sigma) \in [d]\times\Sigma(k)\}$ as a $\QQ_p^c[G]$-basis of the direct sum $(\QQ_p^c\cdot Y_{F/k,p})^d$ of $d$ copies of $\QQ_p^c\cdot Y_{F/k,p}$.

Then, with respect to these bases, the matrix
\begin{equation}\label{log resolve matrix} \bigl(\sum_{g \in G} \hat \sigma(g^{-1}({\rm log}_{\omega_j'}(x_{(j',\sigma')})))\cdot g \bigr)_{(j,\sigma),(j',\sigma')}\end{equation}
represents the composite isomorphism of $\QQ^c_p[G]$-modules
\[ \mu': \QQ_p^c\otimes_{\ZZ_p}\hat A^t(\wp_{F_p}) \xrightarrow{({\rm log}_{\omega_j'})_{j\in [d]}}
(\QQ_p^c\otimes_\QQ F)^d \xrightarrow{(\mu)_{i \in [d]}} (\QQ_p^c\cdot Y_{F/k,p})^d,\]
where $\mu$ is the isomorphism $\QQ_p^c\otimes_\QQ F \cong \QQ_p^c\cdot Y_{F/k,p}$ that sends each element $\lambda\otimes f$ to $(\lambda\cdot \hat\sigma(f))_{\sigma\in \Sigma(k)}$. This fact implies that
\begin{equation}\label{eq 1}\delta_{G,p}(\mathcal{LR}^p_{A^t_{F/k}}(x_\bullet)) = [\hat A^t(\wp_{F_p}) , Y^d_{F/k,p}; \mu'] \end{equation}
in $K_0(\ZZ_p[G],\QQ^c_p[G])$.

In addition, since for any large enough integer $i$ the image of $\hat A^t(\wp^i_{F_p})$ under each map ${\rm log}_{\omega_j'}$ is equal to $\wp^i_{F,p}$, the `telescoping' argument of Lemma \ref{ullom} implies that
\begin{align}\label{eq 2} [\hat A^t(\wp_{F_p}) , Y^d_{F/k,p}; \mu'] = \, &d\cdot [\wp_{F_p} , Y_{F/k,p}; \mu]\\
                                                                 = \, &d\cdot [\mathcal{O}_{F,p}, Y_{F/k,p}; \mu] + d\cdot \chi_{G,p}
                                                                 \bigl((\mathcal{O}_{F,p}/\wp_{F_p})[0],0\bigr).\notag
                                                                 \end{align}
%

Next we note that if $f_v$ is the absolute residue degree of a $p$-adic place $v$, then the normal basis theorem for
$\mathcal{O}_{F_w}/\wp_{F_w}$ over the field with $p$ elements implies that there exists a short
exact sequence of $G_w/I_w$-modules

\[ 0 \to \ZZ [G_w/I_w] ^{f_{v}} \xrightarrow{\times p} \ZZ[G_w/I_w]^{f_{v}} \to \mathcal{O}_{F_w}/\wp_{F_w} \to 0.\]
By using these sequences (for each such $v$) one computes that

\begin{equation*}\label{eq 3} \chi_{G,p}\bigl((\mathcal{O}_{F,p}/\wp_{F_p})[0],0\bigr) = \delta_{G,p}\bigl(\prod_{v \in S_k^p}\varrho_v(F/k)\bigr).\end{equation*}

%
%

Upon combining this equality with (\ref{eq 1}) and (\ref{eq 2}) we deduce that
\[ \delta_{G,p}(\mathcal{LR}^p_{A^t_{F/k}}(x_\bullet)\cdot (\prod_{v \in S_k^p}\varrho_v(F/k))^{-d}) = d\cdot [\mathcal{O}_{F,p}, Y_{F/k,p}; \mu]\]
and from here one can deduce the claimed result via the argument of Proposition \ref{lms}.\end{proof}

\subsubsection{}It is possible to prove less precise versions of Proposition \ref{explicit log resolve} without making any hypotheses on ramification and to thereby obtain more explicit versions of the prediction that is made in Theorem \ref{big conj}.

For example, if $F_p^\times$ has no element of order $p$, $\mathfrak{M}$ is any choice of maximal $\ZZ_p$-order in $\QQ_p[G]$ that contains $\ZZ_p[G]$ and $x$ is any element of $\zeta(\ZZ_p[G])$ such that $x\cdot \mathfrak{M} \subseteq \ZZ_p[G]$, then one can deduce from the result of \cite[Cor. 7.8]{bleyburns} that for every element $y$ of the ideal
\[ \zeta(\ZZ_p[G])\cdot p^d\cdot x^{1+2d}((1-e_G)+ |G|\cdot e_G)\]
there exists an ordered $nd$-tuple $x(y)_\bullet$ of points in $A^t(F_p)$ for which  one has
\[ \mathcal{LR}^p_{A^t_{F/k}}(x(y)_\bullet) = y\cdot \bigl(\tau^\ast(F/k)\cdot \prod_{v \in S_k^p}\varrho_v(F/k)\bigr)^d\]
in $\zeta(\QQ^c[G])$.

However, we shall not prove this result here both because it is unlikely to be the best possible `bound' that one can give on logarithmic resolvents in terms of Galois-Gauss sums and also because, from the perspective of numerical investigations, the following much easier interpretation of logarithmic resolvents in terms of Galois resolvents is likely to be more helpful.

\begin{lemma} Let $i_0$ be the least integer with $i_0 > e_{F,p}/(p-1)$, where $e_{F,p}$ is the maximal absolute ramification degree of any $p$-adic place of $F$. Let $z$ be an element of $F$ that belongs to the $i_0$-th power of every prime ideal of $\mathcal{O}_F$ above $p$.

Then, for any integral basis $\{a_i:  i \in [n]\}$ of $\mathcal{O}_k$, there exists an ordered $nd$-tuple $x_\bullet$ of points in $A^t(F_p)$ for which  one has
\[ \mathcal{LR}^p_{A^t_{F/k}}(x_\bullet) = {\rm Nrd}_{\QQ_p^c[G]}\left( \bigl(\sum_{g \in G} \hat \sigma(g^{-1}(z\cdot a_i))\cdot g\bigr)_{\sigma\in \Sigma(k),i\in [n]}\right)^d. \]
\end{lemma}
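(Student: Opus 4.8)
The plan is to manufacture the points $x_\bullet$ by applying the formal exponential of $A^t$ to suitable ``diagonal'' $d$-tuples built out of $z$ and the integral basis $a_\bullet$, so that the $nd\times nd$ matrix of formal logarithms appearing in the definition of $\mathcal{LR}^p_{A^t_{F/k}}(x_\bullet)$ becomes block-diagonal with $d$ identical $n\times n$ blocks, each equal to the Galois-resolvent matrix of $z\cdot a_\bullet$. The asserted identity will then drop out of the multiplicativity of the reduced norm over $\QQ^c_p[G]$ along a block-diagonal decomposition.

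Concretely, I would first fix the order-preserving bijection $\Sigma(k)\cong[n]$ afforded by the chosen orderings, writing $i(\sigma')$ for the index attached to $\sigma'\in\Sigma(k)$. Since $z$ lies in the $i_0$-th power of every prime of $\mathcal{O}_F$ above $p$, its image under the canonical map $F\to F_p=\prod_{w'\in S_F^p}F_{w'}$ lies in $\wp_{F_p}^{i_0}=\prod_{w'}\wp_{F_{w'}}^{i_0}$, and the same holds for each product $z\cdot a_i$ because $a_i$ is $p$-integral. The hypothesis $i_0>e_{F,p}/(p-1)$ guarantees, by the standard convergence estimates for formal groups (exactly as in the proof of Lemma \ref{ullom}), that the map $({\rm log}_{\omega'_1},\dots,{\rm log}_{\omega'_d})$ and its inverse ${\rm exp}$ converge on $\wp_{F_p}^{i_0}$ and restrict there to mutually inverse isomorphisms between $(\wp_{F_p}^{i_0})^d$ and $\hat A^t(\wp_{F_p}^{i_0})\subseteq A^t(F_p)$; in particular ${\rm log}_{\omega'_j}({\rm exp}(c_1,\dots,c_d))=c_j$ for all $(c_1,\dots,c_d)\in(\wp_{F_p}^{i_0})^d$.

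Given this, for each $(j',\sigma')\in[d]\times\Sigma(k)$ I would put
\[ x_{(j',\sigma')}:={\rm exp}\bigl(0,\dots,0,\,z\cdot a_{i(\sigma')}\,,0,\dots,0\bigr)\in A^t(F_p),\]
with $z\cdot a_{i(\sigma')}$ placed in the $j'$-th slot, so that ${\rm log}_{\omega'_j}(x_{(j',\sigma')})$ equals $z\cdot a_{i(\sigma')}$ when $j=j'$ and vanishes otherwise. Then the $\bigl((j,\sigma),(j',\sigma')\bigr)$-entry of the matrix defining $\mathcal{LR}^p_{A^t_{F/k}}(x_\bullet)$ is $\delta_{j,j'}\sum_{g\in G}\hat\sigma\bigl(g^{-1}(z\cdot a_{i(\sigma')})\bigr)g$, and after conjugating by the permutation of $[d]\times\Sigma(k)$ that collects the pairs sharing a given value of $j$, this matrix becomes the block-diagonal matrix with $d$ copies on the diagonal of $N:=\bigl(\sum_{g\in G}\hat\sigma(g^{-1}(z\cdot a_i))g\bigr)_{\sigma\in\Sigma(k),\,i\in[n]}$, the matrix on the right-hand side of the claim. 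Since after the Wedderburn splitting $\QQ^c_p[G]\cong\prod_\psi{\rm M}_{\psi(1)}(\QQ^c_p)$ the reduced norm is computed componentwise as a product of determinants, it is invariant under such conjugation and multiplicative along the block decomposition; hence $\mathcal{LR}^p_{A^t_{F/k}}(x_\bullet)={\rm Nrd}_{\QQ^c_p[G]}(N)^d$, as required.

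The only point that demands genuine care is the convergence bookkeeping of the second paragraph: one must know that the formal logarithms attached to the fixed $k$-basis $\omega'_\bullet$ have $p$-integral coefficients, so that the stated threshold on $i_0$ really does force $z\cdot a_i$ into the common domain on which ${\rm exp}$ and $({\rm log}_{\omega'_j})_j$ are mutually inverse; if $\omega'_\bullet$ is not already $p$-integral one first replaces it by such a basis, which affects neither $N$ nor the construction above since $x_{(j',\sigma')}$ is defined as the inverse image under the logarithm itself. This is a routine formal-group estimate, and I would simply cite the one already invoked in the proof of Lemma \ref{ullom}.
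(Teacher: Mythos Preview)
Your proposal is correct and follows essentially the same approach as the paper: both use the formal-group exponential/logarithm isomorphism $\hat A^t(\wp_{F_p}^{i_0})\cong(\wp_{F_p}^{i_0})^d$ valid above the threshold $i_0>e_{F,p}/(p-1)$ to construct points $x_{(j',\sigma')}$ whose logarithms are supported in the $j'$-th coordinate, thereby rendering the defining matrix block-diagonal with $d$ identical Galois-resolvent blocks. The paper simply cites \cite[Th.~6.4(b)]{silverman} for the convergence estimate rather than Lemma~\ref{ullom}, and phrases the construction as ``placing a copy of $Z=\{z\cdot a_i\}$ in each of the $d$ direct summands,'' but the content is the same.
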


\begin{proof} The definition of $i_0$ ensures that the formal group logarithm of $A^t$ over $F_p$ gives an isomorphism of $\hat A^t(\wp_{F_p}^{i_0})$ with a direct sum $(\wp_{F_p}^{i_0})^d$ of $d$ copies of $\wp_{F_p}^{i_0}$ (cf. \cite[Th. 6.4(b)]{silverman}). Here the individual copies of $\wp_{F_p}^{i_0}$ in the sum are parametrised by the differentials $\{\omega_j': j \in [n]\}$ that are used to define ${\rm log}_{A^t}$.

The choice of $z$ also implies that $Z := \{z\cdot a_i:  i \in [n]\}$ is a subset of $\wp_{F_p}^{i_0}$. We may therefore choose a pre-image $x_\bullet$ in $\hat A^t(\wp_{F_p}^{i_0})$ of the ordered $nd$-tuple in $(\wp_{F_p}^{i_0})^d$ that is obtained by placing a copy of $Z$ in each of the $d$ direct summands.

For these points $x_\bullet$ the interpretation of the matrix (\ref{log resolve matrix}) that is given in the proof of  Proposition \ref{explicit log resolve} shows that it is equal to a $d\times d$ diagonal block matrix with each diagonal block equal to the matrix
\[  \bigl(\sum_{g \in G} \hat \sigma(g^{-1}(z\cdot a_i))\cdot g\bigr)_{\sigma\in \Sigma(k),i\in [n]}.\]

Since $\mathcal{LR}^p_{A^t_{F/k}}(x_\bullet)$ is defined to be the reduced norm of the matrix (\ref{log resolve matrix}) the claimed equality is therefore clear.\end{proof}

\subsection{The proof of Theorem \ref{big conj}}\label{proof of big conj}

\subsubsection{}We start by proving a technical result that is both necessary for the proof of Theorem \ref{big conj} and will also be of further use in the sequel.

In this result we use the terminology of `characteristic elements' from \cite[Def. 3.1]{bst}.

\begin{lemma}\label{modifiedlemma} Fix an ordered subset $x_\bullet$ of $A^t(F_p)^\wedge_p$ as in Theorem \ref{big conj}. Write $X$ for the $\ZZ_p[G]$-module generated by $x_\bullet$ and $C_{S,X}$ for the Selmer complex ${\rm SC}_S(A_{F/k};X,H_\infty(A_{F/k})_p)$ from Definition \ref{selmerdefinition}. Then the following claims are valid.

\begin{itemize}
\item[(i)] The module $H^1(C_{S,X})$ is torsion-free.
\item[(ii)] For any finite non-empty set of places $T$ of $k$ that is disjoint from $S$, there exists an exact triangle in $D^{\rm perf}(\ZZ_p[G])$ of the form
\begin{equation}\label{modifiedtriangle}\bigoplus_{v\in T}R\Gamma(\kappa_v,T_{p,F}(A^t)(-1))[-2]\to C_{S,X}\stackrel{\theta}{\to} C_{S,X,T}\to\bigoplus_{v\in T}R\Gamma(\kappa_v,T_{p,F}(A^t)(-1))[-1]\end{equation}
in which $C_{S,X,T}$ is acyclic outside degrees one and two and there are canonical identifications of $H^1(C_{S,X,T})$ with $H^1(C_{S,X})$ and of  $\Sel_p(A_F)^\vee$ with a subquotient of $H^2(C_{S,X,T})$ in such a way that $\QQ_p\cdot\Sel_p(A_F)^\vee=\QQ_p\cdot H^2(C_{S,X,T})$.
 \item[(iii)] Following claim (ii) we write
\[ h^{T}_{A,F}: \CC_p\cdot H^1(C_{S,X,T}) \to \CC_p\cdot H^2(C_{S,X,T})\]
for the isomorphism $(\CC_p\otimes_{\ZZ_p}H^2(\theta))\circ(\CC_p\otimes_{\RR,j}h_{A,F})$ of $\CC_p[G]$-modules. Then, if ${\rm BSD}_p(A_{F/k})$(iv) is valid, there exists a characteristic element $\mathcal{L}$ for  $(C_{S,X},h_{A,F})$ in $\CC_p[G]^\times$ with the property that for any non-negative integer $a$ one has
\[ e_a\cdot\mathcal{L}^{-1}=\frac{L^{(a)}_{S}(A_{F/k},1)}{\Omega_A^{F/k}\cdot w_{F/k}^d}\cdot \mathcal{LR}^p_{A^t_{F/k}}(x_\bullet).\]

In addition, in this case the element
\[ \mathcal{L}_T := (\prod_{v \in T}P_v(A_{F/k},1))^{-1}\cdot\mathcal{L}\]
is a characteristic element for  $(C_{S,X,T},h^{T}_{A,F})$.
%
\end{itemize}
\end{lemma}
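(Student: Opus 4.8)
\textbf{Proof plan for Lemma \ref{modifiedlemma}.}

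The plan is to handle the three claims in order, building everything out of the long exact cohomology sequence associated to the mapping fibre defining $C_{S,X}$ together with the standard properties of the complexes $R\Gamma(\kappa_v,T_{p,F}(A^t)(-1))$ appearing in the Greither--Kurihara/Burns--Kurihara--Sano style modification. For claim (i) I would argue exactly as in the proof of Proposition \ref{explicitbkprop}(iv): the Kummer map $A^t(F)_p\to H^1(\mathcal O_{k,S\cup S_k^p},T_{p,F}(A^t))$ is injective, the diagram analogous to (\ref{sha diag}) identifies $H^1(C_{S,X})$ with a submodule of $H^1(U_F,T_p(A^t))$ containing $\mathrm{im}(\kappa)$, and since the target is torsion-free (being a submodule of a cohomology group of a $\ZZ_p$-free Galois module over a number ring with no $p$-torsion coefficients) so is $H^1(C_{S,X})$. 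Here one uses that $X$ is $\ZZ_p[G]$-free, hence $\ZZ_p$-free, so the local condition imposed at $p$ cuts out a submodule rather than introducing torsion.

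For claim (ii) the idea is to define $C_{S,X,T}$ by the exact triangle (\ref{modifiedtriangle}), i.e. as the cone of the natural `evaluation at Frobenius' map $\bigoplus_{v\in T}R\Gamma(\kappa_v,T_{p,F}(A^t)(-1))[-2]\to C_{S,X}$ coming from Poitou--Tate duality; each $R\Gamma(\kappa_v,T_{p,F}(A^t)(-1))$ is perfect over $\ZZ_p[G]$ (as $v\notin S$, so $v$ is unramified in $F$ and the relevant Tate module is induced) and acyclic outside degree one, with $H^1$ finite. Chasing the resulting long exact sequence shows $C_{S,X,T}$ is acyclic outside degrees one and two, that $H^1(C_{S,X,T})=H^1(C_{S,X})$, and that $H^2(C_{S,X,T})$ differs from $H^2(C_{S,X})$ only by a finite module; since $\QQ_p\cdot H^2(C_{S,X})=\QQ_p\cdot\Sel_p(A_F)^\vee$ by Proposition \ref{prop:perfect}(iii) (and the identification of $H^2$ there), one gets $\QQ_p\cdot H^2(C_{S,X,T})=\QQ_p\cdot\Sel_p(A_F)^\vee$ and the stated subquotient identification. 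The point of introducing $T$ is exactly the standard one from \cite{bst}: it kills the extra torsion coming from the finite-support/Kummer discrepancy and makes $H^2$ behave well for the Fitting-ideal argument, while the Euler factors $P_v(A_{F/k},1)$ record the determinant of the Frobenius map on $\QQ_p\cdot R\Gamma(\kappa_v,T_{p,F}(A^t)(-1))$ in each degree.

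For claim (iii), since $C_{S,X}$ is acyclic outside degrees one, two, three with $H^3(C_{S,X})=A(F)[p^\infty]^\vee$ finite and $H^1$ torsion-free, and since $h_{A,F}$ trivialises $\QQ_p\cdot H^1$ against $\QQ_p\cdot H^2$, a characteristic element $\mathcal L\in\CC_p[G]^\times$ for $(C_{S,X},h_{A,F})$ in the sense of \cite[Def.~3.1]{bst} exists and, by the very definition of $\chi_{G,p}$ via non-abelian determinants, $\delta_{G,p}(\mathcal L)$ equals $\chi_{G,p}(C_{S,X},h^j_{A,F})$ up to the sign conventions; feeding this into the formula of ${\rm BSD}_p(A_{F/k})$(iv) in the form (\ref{displayed pj}) — after absorbing $\mu_S$ and the $\mathcal Q(\omega_\bullet)$ term by the choice of $\omega_\bullet$ and noting that the logarithmic resolvent $\mathcal{LR}^p_{A^t_{F/k}}(x_\bullet)$ is precisely the reduced norm of the change-of-basis matrix relating the $\ZZ_p[G]$-basis $x_\bullet$ of $X$ to the period/resolvent normalisation via Lemma \ref{k-theory period} and the diagram (\ref{lambda diag})-style computation in Proposition \ref{lms} — one reads off that $e_a\cdot\mathcal L^{-1}=\big(L^{(a)}_S(A_{F/k},1)/(\Omega_A^{F/k}w_{F/k}^d)\big)\cdot\mathcal{LR}^p_{A^t_{F/k}}(x_\bullet)$, the idempotent $e_a$ picking out exactly the characters of vanishing order $a$ where the leading term is finite and nonzero. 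Finally, the multiplicativity of characteristic elements along the exact triangle (\ref{modifiedtriangle}) (an application of Lemma \ref{fk lemma} to the induced cohomology diagram, using that each $\QQ_p\cdot R\Gamma(\kappa_v,T_{p,F}(A^t)(-1))$ contributes the determinant $P_v(A_{F/k},1)^\#$) shows $\mathcal L_T=(\prod_{v\in T}P_v(A_{F/k},1)^\#)^{-1}\mathcal L$ is a characteristic element for $(C_{S,X,T},h^T_{A,F})$. The main obstacle I anticipate is bookkeeping the precise normalisations: matching the period $\Omega_A^{F/k}w_{F/k}^d$ and the logarithmic resolvent against the $K$-theoretical period $\Omega_{\omega_\bullet}(A_{F/k})$ through Lemma \ref{k-theory period} and Proposition \ref{lms}, and keeping the sign/`$\#$'-conventions consistent between the definition of $\chi_{G,p}$, the characteristic elements of \cite{bst}, and the involution on $\CC_p[G]$, so that the $e_a$-components come out with the claimed formula rather than its dual or inverse.
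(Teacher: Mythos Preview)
Your plan for claims (ii) and (iii) is essentially the paper's approach: the paper also derives the relation between $\chi_{G,p}(C_{S,X},h_{A,F})$ and $\chi_{G,p}(C_{S,X,T},h^T_{A,F})$ by applying Lemma~\ref{fk lemma} to the triangle (\ref{modifiedtriangle}) and computing each $\chi_{G,p}(R\Gamma(\kappa_v,T_{p,F}(A^t)(-1))[-2],0)=\delta_{G,p}(P_v(A_{F/k},1)^\#)$, and for the interpolation formula it uses exactly the ingredients you name (the vanishing of $\mathcal{Q}(\omega_\bullet)_{S,p}$ and $\mu_S(A_{F/k})_p$ since $S\supseteq S_k^p$, Lemma~\ref{k-theory period} for the period, and a change-of-basis determinant). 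One point you leave implicit but the paper makes explicit: the passage from the characteristic element for $C_{S,\omega_\bullet}$ (which is what BSD directly gives) to one for $C_{S,X}$ goes through an exact triangle
\[
C_{S,\omega_\bullet}\to C_{S,X}\to (X/\mathcal{X}(p))[-1]\to C_{S,\omega_\bullet}[1],
\]
after arranging $\exp_{A^t,F_p}(\mathcal{F}(\omega_\bullet)_p)\subseteq X$; the factor $\xi=\mathcal{LR}^p_{A^t_{F/k}}(x_\bullet)\cdot\det(\text{Galois resolvent})^{-d}$ is precisely the determinant of this inclusion, and multiplying (\ref{char el 1}) by $\xi$ gives the claimed $\mathcal{L}^{-1}$. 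You should spell this triangle out.

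There is, however, a genuine gap in your argument for (i). Your claim that $H^1(U_F,T_p(A^t))$ is torsion-free ``being a cohomology group of a $\ZZ_p$-free Galois module'' is false: from $0\to T_p(A^t)\xrightarrow{p^n}T_p(A^t)\to A^t[p^n]\to 0$ one gets $H^1(U_F,T_p(A^t))[p^n]\cong A^t(F)[p^n]$, and nothing in the ambient hypotheses of \S\ref{congruence sec} forces $A^t(F)[p]=0$. So embedding $H^1(C_{S,X})$ into the global $H^1$ does not by itself give torsion-freeness. The paper sidesteps this entirely: it observes that the map $\phi\colon H_\infty(A_{F/k})_p\to A^t(F_p)^\wedge_p$ sending the fixed basis to $x_\bullet$, together with triangle~(\ref{can tri}), identifies $C_{S,X}$ with the complex $C_\phi(T_{p,F}(A^t))$ of \cite[Prop.~2.8]{bst}, and then both (i) and (ii) are immediate from parts (ii) and (iii) of that proposition (with $C_{S,X,T}:=C_{\phi,T}(T_{p,F}(A^t))$). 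If you want to salvage a direct argument, the correct reason is not torsion-freeness of the ambient $H^1$ but rather that the local conditions kill torsion: any torsion class in $H^1_{\mathcal{F}_X}$ comes from some $P\in A^t(F)[p^\infty]$, its localisation at each $v\in S$ is nonzero whenever $P\neq 0$ (since $A^t(F)\hookrightarrow A^t(F_w)$), and this localisation cannot lie in the free image of $X$ (at $p$-adic $v$) or in $0$ (at the remaining $v\in S$); hence $P=0$.
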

\begin{proof} 

Since $p$ is odd there exists a homomorphism of $\ZZ_p[G]$-modules $\phi$ from the module $H_\infty(A_{F/k})_p = \bigoplus_{v\in S_k^\infty}H^0(k_v,T_{p,F}(A^t))$ to $A^t(F_p)^\wedge_p$ that sends the ordered $\ZZ_p[G]$-basis of $H_\infty(A_{F/k})_p$ specified at the end of \S\ref{gamma section} to $x_\bullet$.
%
%

Then, comparing the triangle (\ref{can tri}) with the construction of \cite[Prop. 2.8 (ii)]{bst} immediately implies that $C_{S,X}$ is isomorphic in $D^{\rm perf}(\ZZ_p[G])$ to the complex that is denoted in loc. cit. by $C_\phi(T_{p,F}(A^t))$.

Given this, claim (i) follows directly from \cite[Prop. 2.8 (ii)]{bst} and claim (ii) from \cite[Prop. 2.8 (iii)]{bst} with $C_{S,X,T}:=C_{\phi,T}(T_{p,F}(A^t))$.

Turning to claim (iii) we note that each place $v$ in $T$ is not $p$-adic, does not ramify in $F/k$ and is of good reduction for $A$.

Each complex $R\Gamma(\kappa_v,T_{p,F}(A^t)(-1))$ is therefore well-defined. Since these complexes are acyclic outside degree one, where they have finite cohomology, we may therefore apply Lemma \ref{fk lemma} to the triangle (\ref{modifiedtriangle}) to deduce that
\begin{align*}\chi_{G,p}(C_{S,X},h_{A,F})-\chi_{G,p}(C_{S,X,T},h^{T}_{A,F})= \, &\sum_{v\in T}\chi_{G,p}(R\Gamma(\kappa_v,T_{p,F}(A^t)(-1))[-2],0)\\
=\, &\delta_{G,p}({\det}_{\QQ_p[G]}(1-\Phi_v|\QQ_p\cdot T_{p,F}(A^t)(-1)))\\
=\, &\delta_{G,p}(P_v(A_{F/k},1))
\end{align*}
in $K_0(\ZZ_p[G],\CC_p[G])$.

Thus if $\mathcal{L}$ is a characteristic element of $(C_{S,X},h_{A,F})$, then $(\prod_{v \in T}P_v(A_{F/k},1))^{-1}\cdot\mathcal{L}$ is a characteristic element for  $(C_{S,X,T},h^{T}_{A,F})$, as claimed in the final assertion of claim (iii).

It is thus enough to deduce from ${\rm BSD}_p(A_{F/k})$(iv) the existence of a characteristic element $\mathcal{L}$ of $(C_{S,X},\CC_p\otimes_{\RR}h_{A,F})$ with the required interpolation property.

Now, since $S$ contains all $p$-adic places of $k$, the module $\mathcal{Q}(\omega_\bullet)_{S,p}$ vanishes and the $p$-primary component of the term $\mu_{S}(A_{F/k})$ is also trivial.

In addition, as the validity of {\rm BSD}$_p(A_{F/k})$(iv) is independent of the choice of global periods and we can assume firstly that $\omega_\bullet$ is the set $\{ z_i\otimes \omega'_j: i \in [n], j \in [d]\}$ fixed in Lemma \ref{k-theory period} and secondly that the image of $\mathcal{F}(\omega_\bullet)_p$ under the formal group exponential ${\rm exp}_{A^t,F_p}$ (defined with respect to the differentials $\{\omega_j': j \in [d]\})$ is contained in $X$.

Then the assumed validity of the equality (\ref{displayed pj}) in this case combines with the equality in Lemma \ref{k-theory period} to imply that the element

\begin{equation}\label{char el 1} \frac{L_S^*(A_{F/k},1)}{{\rm Nrd}_{\RR[G]}(\Omega_{\omega_\bullet}(A_{F/k}))}= \frac{L_S^*(A_{F/k},1)}{ \Omega_A^{F/k}\cdot w_{F/k}^{d}}\cdot {\rm det}\left( \left( (\sum_{g \in G} \hat \sigma(g^{-1}(z_i)))\cdot g\right)_{\sigma\in \Sigma(k),i\in [n]}\right)^{d}\end{equation}
of $\zeta(\CC_p[G])^\times$ is the inverse of a characteristic element of $(C_{S,\omega_\bullet},h_{A,F})$.

Here we write $C_{S,\omega_\bullet}$ for the Selmer complex ${\rm SC}_S(A_{F/k};\mathcal{X}(p),H_\infty(A_{F/k})_p)$ where $\mathcal{X}$ is the perfect Selmer structure $\mathcal{X}_S(\{\mathcal{A}^t_v\}_v,\omega_\bullet,\gamma_\bullet)$ defined in \S\ref{perf sel sect}. (In addition, the fact that it is the inverse of a characteristic element results from a comparison of our chosen normalisation of non-abelian determinants compared with that of \cite[(10)]{bst}, as described in Remark \ref{comparingdets}).

In particular, since $\mathcal{X}(p)$ is by definition equal to ${\rm exp}_{A^t,F_p}(\mathcal{F}(\omega_\bullet)_p)$ and hence, by assumption, contained in $X$, a comparison of the definitions of $C_{S,\omega_\bullet}$ and $C_{S,X}$ shows that there is an exact triangle
\[  C_{S,\omega_\bullet} \to C_{S,X} \to \bigl(X/\mathcal{X}(p))[-1] \to C_{S,\omega_\bullet}[1]\]
in $D^{\rm perf}(\ZZ_p[G])$.

Since the product

\[ \xi := \mathcal{LR}^p_{A^t_{F/k}}(x_\bullet)\cdot {\rm det}\left( \left( (\sum_{g \in G} \hat \sigma(g^{-1}(z_i)))\cdot g\right)_{\sigma\in \Sigma(k),i\in [n]}\right)^{-d}\]
is equal to the determinant of a matrix that expresses a basis of the free $\ZZ_p[G]$-module $\mathcal{X}(p)$ in terms of the basis $x_\bullet$ of $X$, the above triangle implies that the product
\[ \frac{L_S^*(A_{F/k},1)}{ \Omega_A^{F/k}\cdot w_{F/k}^{d}}\cdot \mathcal{LR}^p_{A^t_{F/k}}(x_\bullet)\]
of $\xi$ and the element (\ref{char el 1}) is the inverse of a characteristic element for  $(C_{S,X},h_{A,F})$.

The claimed interpolation formula is thus a consequence of the fact that $L_S^{(a)}(A_{F/k},1)$ is equal to $e_a\cdot L_S^*(A_{F/k},1)$. \end{proof}

\subsubsection{}We are now ready to prove Theorem \ref{big conj}.

To do this we will apply the general result of \cite[Th. 3.10(i)]{bst} to the complex $C_{S,X,T}$, isomorphism $h^{T}_{A,F}$ and characteristic element $\mathcal{L}_T$ constructed in Lemma \ref{modifiedlemma}.

 In order to do so, we fix an ordered subset $\Phi:=\{\phi_i: i \in [a]\}$ of $\Hom_{\ZZ_p[G]}(A(F)_p,\ZZ_p[G])$ of cardinality $a$. We fix a pre-image $\phi_i'$ of each $\phi_i$ under the surjective composite homomorphism
\[ H^2(C_{S,X})\to\Sel_p(A_F)^\vee\to \Hom_{\ZZ_p[G]}(A(F)_p,\ZZ_p[G]),\]
where the first arrow is the canonical map from Proposition \ref{prop:perfect}(iii) and the second is induced by the canonical short exact sequence
\begin{equation}\label{sha-selmer}
 \xymatrix{0 \ar[r] & \sha(A_F)[p^\infty]^\vee \ar[r] & \Sel_p(A_F)^\vee \ar[r]&  \Hom_{\ZZ_p}(A(F)_p,\ZZ_p)\ar[r] & 0.}
\end{equation}

We set $\Phi':=\{\phi'_i:i \in [a]\}$ and consider the image $H^2(\theta)(\Phi')$ of $\Phi'$ in $H^2(C_{S,X,T})$, where $\theta$ is the morphism that occurs in the triangle (\ref{modifiedtriangle}) (so that $H^2(\theta)$ is injective).

We next write $\iota:H^1(C_{S,X,T})=H^1(C_{S,X})\to A^t(F)_p$ for the canonical homomorphism in Proposition \ref{prop:perfect}(iii).

Then, with $\mathcal{L}_T$ the element specified in Lemma \ref{modifiedlemma}(iii), a direct comparison of the definitions of $h_{A,F}^{T}$ and ${\rm ht}_{A_{F/k}}^{a}$ shows that the `higher special element' that is associated via \cite[Def. 3.3]{bst} to the data $(C_{S,X,T},h^{T}_{A,F},\mathcal{L}_T,H^2(\theta)(\Phi'))$ coincides with the pre-image under the bijective map $\CC_p\cdot\bigwedge_{\ZZ_p[G]}^a\iota$ of the element
\begin{equation}\label{hse interpret}(\prod_{v \in T}P_v(A_{F/k},1))\cdot \frac{L^{(a)}_{S}(A_{F/k},1)}{\Omega_A^{F/k}\cdot w_{F/k}^d}\cdot \mathcal{LR}^p_{A^t_{F/k}}(x_\bullet)\cdot {\rm ht}^{a}_{A_{F/k}}(\wedge_{i=1}^{i=a}\phi_i).\end{equation}
(Here we have also used the fact that, since ${\rm BSD}(A_{F/k})$(ii) is assumed to be valid, the idempotent $e_a$ defined here coincides with the idempotent denoted $e_a$ in \cite[\S3.1]{bst} for the complex $C_{S,X,T}$.)

To proceed we fix an ordered subset $\{\theta_j: j \in [a]\}$ of $\Hom_{\ZZ_p[G]}(A^t(F)_p,\ZZ_p[G])$ and identify it with its image under the injective map
\[ \Hom_{\ZZ_p[G]}(A^t(F)_p,\ZZ_p[G]) \to \Hom_{\ZZ_p[G]}(H^1(C_{S,X,T}),\ZZ_p[G])\]
induced by $\iota$. We also set $\mathfrak{A} := \ZZ_p[G]e_{(a)}$ and $M := \mathfrak{A}\otimes_{\ZZ_p[G]}H^2(C_{S,X,T})$.

Then the above interpretation of the higher special element in terms of the product (\ref{hse interpret}) combines with the general result of \cite[Th. 3.10(i)]{bst} to imply that for any element $\alpha$ of $\ZZ_p[G]\cap\mathfrak{A}$ and any element $y$ of $\ZZ_p[G]$ that annihilates ${\rm Ext}^2_{\mathfrak{A}}(M,\mathfrak{A})$
the product
\[\alpha \cdot y^a \cdot(\prod_{v \in T}P_v(A_{F/k},1))\cdot \frac{L^{(a)}_{S_F}(A_{F/k},1)}{\Omega_A^{F/k}\cdot w_{F/k}^d}\cdot \mathcal{LR}^p_{A^t_{F/k}}(x_\bullet)\cdot (\wedge_{j=1}^{j=a}\theta_j)({\rm ht}^{a}_{A_{F/k}}(\wedge_{i=1}^{i=a}\phi_i))\]
both belongs to ${\rm Fit}^a_{\ZZ_p[G]}(\Sel_p(A_F)^\vee)$ and annihilates $(\Sel_p(A_F)^\vee)_{\rm tor}$.

In addition, the exact sequence (\ref{sha-selmer}) identifies $(\Sel_p(A_F)^\vee)_{\rm tor}$ with $\sha(A_F)[p^\infty]^\vee$ and the Cassels-Tate pairing identifies $\sha(A_F)[p^\infty]^\vee$ with $\sha(A^t_F)[p^\infty]$.

To deduce the result of Theorem \ref{big conj} from here, it is therefore enough to show that $\alpha^2$ annihilates ${\rm Ext}^2_{\mathfrak{A}}(M,\mathfrak{A})$.

To do this we use the existence of a convergent first quadrant cohomological spectral sequence
\[ E_2^{pq} = {\rm Ext}_{\mathfrak{A}}^p(M,{\rm Ext}^q_{\ZZ_p[G]}(\mathfrak{A},\mathfrak{A})) \Rightarrow {\rm Ext}^{p+q}_{\ZZ_p[G]}(M,\mathfrak{A})\]
(cf. \cite[Exer. 5.6.3]{weibel}).

In particular, since the long exact sequence of low degree terms of this spectral sequence gives an exact sequence of $\ZZ_p[G]$-modules
\[ \Hom_{\ZZ_p[G]}(M,{\rm Ext}^1_{\ZZ_p[G]}(\mathfrak{A},\mathfrak{A})) \to {\rm Ext}_{\mathfrak{A}}^2(M,\mathfrak{A}) \to {\rm Ext}^{2}_{\ZZ_p[G]}(M,\mathfrak{A}),\]
we find that it is enough to show that the element $\alpha$ annihilates both ${\rm Ext}^1_{\ZZ_p[G]}(\mathfrak{A},\mathfrak{A})$ and ${\rm Ext}^{2}_{\ZZ_p[G]}(M,\mathfrak{A})$.

To verify this we write $\mathfrak{A}^\dagger$ for the ideal $\{x \in \ZZ_p[G]: x\cdot e_{(a)} = 0\}$ so that there is a natural short exact sequence of $\ZZ_p[G]$-modules $0 \to \mathfrak{A}^\dagger \to \ZZ_p[G] \to \mathfrak{A} \to 0$.

Then by applying the exact functor ${\rm Ext}^\bullet_{\ZZ_p[G]}(-,\mathfrak{A})$ to this sequence one obtains a surjective homomorphism
\[ \Hom_{\ZZ_p[G]}(\mathfrak{A}^\dagger,\mathfrak{A}) \twoheadrightarrow {\rm Ext}^1_{\ZZ_p[G]}(\mathfrak{A},\mathfrak{A}).\]

In addition, since $\ZZ_p[G]$ is Gorenstein, by applying the exact functor ${\rm Ext}^{\bullet}_{\ZZ_p[G]}(M,-)$ to the above sequence one finds that  there is a natural isomorphism
\[ {\rm Ext}^{3}_{\ZZ_p[G]}(M,\mathfrak{A}^\dagger) \cong {\rm Ext}^{2}_{\ZZ_p[G]}(M,\mathfrak{A}).\]

To complete the proof of Theorem \ref{big conj} it is thus enough to note that the left hand modules in both of the last two displays are annihilated by $\alpha$ since the definition of $\mathfrak{A}^\dagger$ implies immediately that $\alpha\cdot \mathfrak{A}^\dagger = 0$.

\begin{remark}\label{omit T}{\em If $A(F)$ does not contain an element of order $p$, then \cite[Th. 3.10(i)]{bst} can be directly applied to the complex $C_{S,X}$ rather than to the auxiliary complex $C_{S,X,T}$. This shows that, in any such case, the prediction in Theorem \ref{big conj} should remain true if the term $\prod_{v \in T}P_v(A_{F/k},1)$ is omitted.}
\end{remark}

\subsection{Justification of the explicit predictions}\label{justifications}

In this section we briefly explain why, if ${\rm BSD}(A_{F/k})$ is valid, then so are Predictions \ref{new add}, \ref{new remark SC} and \ref{new add2}.

We begin by observing that ${\rm BSD}(A_{F/k})$(ii) ensures that for a given $a\geq 0$, the idempotent $e_{(a)}$ is equal to 1 if and only if there exist ordered $a$-tuples $P_\bullet$ of $A^t(F)$ and $Q_\bullet$ of $A(F)$ that are each linearly independent over $\QQ[G]$.

Assume first that (A$_1)$ and (A$_2$) are satisfied. 
Fix $a\geq 0$ and ordered $a$-tuples $P_\bullet$ of $A^t(F)$ and $Q_\bullet$ of $A(F)$ that are each linearly independent over $\QQ[G]$. Then since $e_{(a)}=1$ one can take $\alpha=1$ in (\ref{key product}) and thus Theorem \ref{big conj} and Remark \ref{more explicit rem}(i) directly combine with Lemma \ref{height pairing interp} and with
Proposition \ref{explicit log resolve} to show that if ${\rm BSD}(A_{F/k})$ is valid, then so is Prediction \ref{new add}.

Assume now that (A$_1)$ and (A$_3$) are satisfied. Then, after taking account of the equality in Remark \ref{emptysets}, the argument that is used to prove Theorem \ref{big conj} can be directly applied to the Selmer complex ${\rm SC}_p(A_{F/k})$ rather than to the complex $C_{S,X}$ that occurs in \S\ref{proof of big conj}. Indeed, if ${\rm BSD}_p(A_{F/k})$(iv) is valid then $\mathcal{L}_{A,F/k}^*$ is the inverse of a characteristic element for the pair  $({\rm SC}_p(A_{F/k}),h_{A,F})$ and thus, in this way, one finds that if ${\rm BSD}(A_{F/k})$ is valid, then so is Prediction \ref{new remark SC}.

We next assume that (A$_1)$, (A$_2$) and (A$_3$) are all satisfied.
Then by using the claim of Prediction \ref{new remark SC} in place of that of Theorem \ref{big conj}, the same approach that was used to justify Prediction \ref{new add} shows that if ${\rm BSD}(A_{F/k})$ is valid, then each element of the product 
\begin{align*}&\,\,\frac{e_{F,a}\cdot\mathcal{L}_{A,F/k}^*}{{\rm det}(h_{F/k}(P_\bullet, Q_\bullet))} \cdot {\rm Fit}^0_{\ZZ[G]}((A^t(F)/\langle P_\bullet\rangle)^\vee_{\rm tor})\cdot{\rm Fit}^0_{\ZZ[G]}((A(F)/\langle Q_\bullet\rangle)^\vee_{\rm tor})\\
=\,\,&\frac{L^{(a)}_{S_{\rm r}}(A_{F/k},1)\cdot \bigl(\tau^\ast(F/k)\cdot \prod_{v \in S_{p,{\rm r}}}\varrho_v(F/k)\bigr)^d}{\Omega_A^{F/k}\cdot w_{F/k}^d\cdot {\rm det}(h_{F/k}(P_\bullet, Q_\bullet))} \\
& \hskip 1truein \times  {\rm Fit}^0_{\ZZ[G]}((A^t(F)/\langle P_\bullet\rangle)^\vee_{\rm tor})\cdot{\rm Fit}^0_{\ZZ[G]}((A(F)/\langle Q_\bullet\rangle)^\vee_{\rm tor})
\end{align*}
belongs to ${\rm Fit}^a_{\ZZ_p[G]}(\Sel_p(A_F)^\vee)$ and annihilates $\sha(A_F^t)[p^\infty]$, as stated in Prediction \ref{new add2}.

We finally justify Prediction \ref{integrality rk}. Under each of the stated sets of assumptions, we have predicted that the element $\calL$ belongs to $\ZZ_p[G]$. But then one also knows that the sum
$$\sum_{g\in G}\left(|G|^{-1}\sum_{\psi\in\widehat{G}_{A,(a)}}\psi(g)\calL_\psi\right)\cdot g^{-1}=\sum_{\psi\in\widehat{G}_{A,(a)}}\calL_\psi e_\psi=\calL$$ belongs to $\ZZ_p[G]$ and it follows directly that each term $\sum_{\psi\in\widehat{G}_{A,(a)}}\psi(g)\calL_\psi$ would belong to $|G|\cdot\ZZ_p$, as required.



%
%

\section{Abelian congruence relations and height pairings}\label{mrsconjecturesection}

In this section we continue to investigate the $p$-adic congruence relations between the leading coefficients of Hasse-Weil-Artin $L$-series that are encoded by ${\rm BSD}_p(A_{F/k})$(iv) in the case that $F/k$ is abelian.

More concretely in \S\ref{mtchd} we will show that, beyond the integrality properties that are discussed in Theorem \ref{big conj} and Remark \ref{integrality rk II}, elements of the form (\ref{key product}) can be expected to satisfy additional congruence relations in the integral augmentation filtration that  involve Mazur-Tate regulators.


Then in \S \ref{cycliccongssection} we specialise to the case of cyclic extensions in order to make these additional congruence relations fully explicit.

In particular, in this way we render the equality ${\rm BSD}_p(A_{F/k})$(iv) amenable to (numerical) verification even in cases in which it incorporates a thorough-going mixture of both archimedean phenomena and delicate $p$-adic congruences.

Finally, in \S\ref{dihedral} we explain how these results extend to certain families of non-abelian extensions.

Throughout this section, just as in \S\ref{congruence sec}, we give ourselves a fixed odd prime $p$ and isomorphism of fields $j:\CC\cong\CC_p$ (explicit mention of which we usually omit), a finite set $S$ of places of $k$ with
\[ S_k^\infty\cup S_k^p\cup  S_k^F \cup S_k^A\subseteq S\]
and a fixed ordered $k$-basis $\{\omega_j'\}_{j\in[d]}$ of $H^0(A^t,\Omega^1_{A^t})$ with associated classical period $\Omega_A^{F/k}$.

Except in \S\ref{dihedral} we shall always assume in this section that $F/k$ is abelian. In addition, we shall always assume that $p$ is chosen so that neither $A(F)$ nor $A^t(F)$ has a point of order $p$.


\subsection{A Mazur-Tate conjecture for higher derivatives}\label{mtchd}

In this section we formulate a Mazur-Tate type conjecture for higher derivatives of Hasse-Weil-Artin $L$-series. We then show that, under the hypotheses listed in \S \ref{tmc}, this conjecture would follow from the validity of ${\rm BSD}(A_{F/k})$.

\subsubsection{}

We first quickly recall the construction of canonical height pairings of Mazur and Tate \cite{mt0}.

To do this we fix a subgroup $J$ of $G$ and set $E := F^J$. 
We recall that the subgroups of `locally-normed elements' of $A(E)$ and $A^t(E)$ respectively are defined by setting \begin{equation}\label{localnorms}U_{F/E}:=\bigcap_v \bigl(A(E)\cap N_{F_w/E_v}(A(F_w))\bigr),\,\,\,\,U^t_{F/E}:=\bigcap_v \bigl(A^t(E)\cap N_{F_w/E_v}(A^t(F_w))\bigr).\end{equation}
Here each intersection runs over all (finite) primes $v$ of $E$ and $w$ is a fixed prime of $F$ above $v$. In addition, $N_{F_w/E_v}$ denotes the norm map of $F_w/E_v$ and each intersection of the form $A(E)\cap N_{F_w/E_v}(A(F_w))$, resp. $A^t(E)\cap N_{F_w/E_v}(A^t(F_w))$, takes place inside $A(E_v)$, resp. $A^t(E_v)$.

We recall from Lemma \ref{useful prel}(i) that each of the expressions displayed in (\ref{localnorms}) is in general a finite intersection of subgroups of $A(E)$, resp. of $A^t(E)$, and that the subgroups $U_{F/E}$ and $U^t_{F/E}$ have finite index in $A(E)$ and $A^t(E)$ respectively.

We note for later use that, whenever $A$, $F/k$ and $p$ satisfy the hypotheses (H$_1$)-(H$_5$) listed in \S \ref{tmc}, then Proposition \ref{explicitbkprop}(ii) (together with the duality of these hypotheses) implies that $$U_{F/E,p}:=\ZZ_p\otimes_{\ZZ}U_{F/E}\,\,\text{ and }\,\,U^t_{F/E,p}:=\ZZ_p\otimes_{\ZZ}U^t_{F/E}$$ are equal to $A(E)_p$ and to $A^t(E)_p$ respectively (for every given subgroup $J$ of $G$).

In general, Mazur and Tate \cite{mt0} construct, by using the theory of biextensions, a canonical height pairing \begin{equation}\label{tanpairing}\langle\,,\rangle^{\rm MT}_{F/E}:U^t_{F/E}\otimes_\ZZ U_{F/E}\to J.\end{equation}
This pairing will be a key ingredient of our conjectural congruence relations. To formulate our conjecture we must first describe how to make reasonable choices of points on which to evaluate the Mazur-Tate pairing.

\begin{definition}\label{separablepair}{\em Fix a subgroup $J$ of $G$ and set $E:=F^J$. We define a `$p$-separable choice of points of $A$ for $F/E$ of rank $(a,a')$' (with $a'\geq a\geq 0$) to be a pair $(\mathcal{Y},\mathcal{Y}')$ chosen as follows.

Let $\mathcal{Y}= \{y_i: i \in [a]\}$ be any ordered finite subset of $A(F)_p$ that generates a $\ZZ_p[G]$-direct-summand $Y$ of $A(F)_p$ that is free of rank equal to $|\mathcal{Y}|=a$. Then $\Tr_J(Y)=Y^J$ is a $\ZZ_p[G/J]$-direct-summand of $A(E)_p$.

We then let \begin{equation}\label{revisionWi}\mathcal{Y}'=\Tr_J(\mathcal{Y})\cup\{w_i:a<i\leq a'\}\end{equation} be any ordered finite subset of $U_{F/E,p}$, containing $\Tr_J(\mathcal{Y}):=\{\Tr_J(y_i):i \in [a]\}$, that generates a $\ZZ_p[G/J]$-direct-summand $Y'$ of $A(E)_p$ that is free of rank equal to $|\mathcal{Y}'|=a'\geq a$.}
\end{definition}




\begin{remarks}\label{p-separable}{\em 
To identify $p$-separable choices of points as above, one can proceed as follows. Write $G=P\times H$ where $P$ is the Sylow $p$-subgroup of $G$. Then the argument of \cite[Prop. 2.1]{burns nagoya} shows that a finite subset $\mathcal{Y}$ of $A(F)_p$ generates a free $\ZZ_p[G]$-direct-summand $Y$ of $A(F)_p$ of rank equal to $|\mathcal{Y}|=a$ if and only if the images in $A(F^P)_p/p\cdot A(F^P)_p$ of the elements $$h\cdot\sum_{g\in P}g(y),$$ for each $h\in H$ and each $y\in\mathcal{Y}$, are linearly independent over $\mathbb{F}_p$. An analogous observation applies to the choice of the set $\mathcal{Y}'$ in a $p$-separable choice of points $(\mathcal{Y},\mathcal{Y}')$ of $A$ for $F/E$ of rank $(a,a')$.}
\end{remarks}

Given data as in Definition \ref{separablepair}, and for each $a<i\leq a'$, we may also define composite homomorphisms of $\ZZ_p[G/J]$-modules
\begin{equation}\label{htan}{\rm ht}^{{\rm MT}}_{w_i}:U^t_{F/E,p}\to \ZZ_p\otimes_\ZZ J\cong I_p(J)/I_p(J)^2\to\mathcal{I}_p(J)/\mathcal{I}_p(J)^2\end{equation} by using our elements $w_i\in U_{F/E,p}$ fixed in (\ref{revisionWi}) to set
\begin{equation}\label{minussignMT}{\rm ht}^{{\rm MT}}_{w_i}(P):=-\langle P,w_i\rangle^{\rm MT}_{F/E}.\end{equation} Here $\mathcal{I}_p(J)$ denotes the ideal of $\ZZ_p[G]$ generated by the augmentation ideal $I_p(J)$ of $\ZZ_p[J]$, the isomorphism maps each $g$ in $J$ to the class of $g-1$ and the last arrow is induced by the inclusion $I_p(J)\subseteq\mathcal{I}_p(J)$. (We also note that the occurrence of the minus sign in (\ref{minussignMT}) will be clarified by the result of Theorem \ref{thecomptheorem} below).

\subsubsection{}

In the rest of \S \ref{mrsconjecturesection}, for any $\ZZ_p[G]$-module $M$ we write $M^*$ for the linear dual $\Hom_{\ZZ_p}(M,\ZZ_p)$, endowed with the natural contragredient action of $G$.

For given integers $a'\geq a\geq 0$ and a given $p$-separable choice of points $(\mathcal{Y},\mathcal{Y}')$ of $A$ for $F/E$ of rank $(a,a')$, we always use the following notation.
We fix a decomposition $A(F)_p=Y\oplus Z$. Since $A(F)_p^*=Y^*\oplus Z^*$ and
$Y^*$ is $\ZZ_p[G]$-free of rank $a$ we may and will denote by $y_i^*$ the element of $Y^*\subseteq A(F)_p^*$ which maps $y_i$ to 1 and $y_j$ to 0 for $j\neq i$.

In a similar way, to each element $y'\in\mathcal{Y}'$ we associate a canonical dual element $(y')^*$ of $A(E)_p^*$.

Then for any maximal subset $x_\bullet$ of $A^t(F_p)^\wedge_p$ that is linearly independent over $\ZZ_p[G]$ we may also set $x_\bullet^J:= \{\Tr_J(x):x\in x_\bullet\}\subseteq A^t(E_p)^\wedge_p$ and, by using the isomorphism ${\rm ht}_{A_{F/k}}^{a}$ considered in (\ref{athpower}) as well as the analogous isomorphism ${\rm ht}_{A_{E/k}}^{a'}$, one obtains elements
\[ \eta_{\mathcal{Y},x_\bullet} := \frac{L^{(a)}_{S}(A_{F/k},1)}{\Omega_A^{F/k}\cdot w_{F/k}^d}\cdot \mathcal{LR}^p_{A^t_{F/k}}(x_\bullet) \cdot {\rm ht}_{A_{F/k}}^{a}(\wedge_{y\in\mathcal{Y}}y^*)\in\CC_p\cdot\bigwedge_{\ZZ_p[G]}^aA^t(F)_p\]
and
\[ \eta_{\mathcal{Y}',x_\bullet^J} := \frac{L^{(a')}_{S}(A_{E/k},1)}{\Omega_A^{E/k}\cdot w_{E/k}^d}\cdot \mathcal{LR}^p_{A^t_{E/k}}(x_\bullet^J)\cdot {\rm ht}_{A_{E/k}}^{a'}(\wedge_{y'\in\mathcal{Y}'}(y')^*)\in\CC_p\cdot\bigwedge_{\ZZ_p[G/J]}^{a'}A^t(E)_p.\]
Here we have also used the value $L^{(a)}_{S}(A_{F/k},1)$ at $z=1$ of the function (\ref{revisionLa}), while the value $L^{(a')}_{S}(A_{E/k},1)$ is defined analogously.


\subsubsection{}

For any finitely generated $\ZZ_p[G]$-module $M$ and any non-negative integer $r$ we write $\bigcap_{\ZZ_p[G]}^rM$ for the $r$-th `Rubin lattice' of $M$ inside $\QQ_p\cdot\bigwedge_{\ZZ_p[G]}^rM$, as defined originally by Rubin \cite{rub} (see also \cite[\S 4.2]{bks}).

Then the homomorphisms ${\rm ht}^{{\rm MT}}_{w_i}$ in (\ref{htan}) can be combined as in \cite[\S3.4.1]{bst} to give a canonical homomorphism
\begin{equation}\label{canbecombined}
{\rm ht}_{\mathcal{Y}'}^{\rm MT}: {\bigcap}_{\ZZ_p[G/J]}^{a'} U^t_{F/E,p} \longrightarrow ({\bigcap}_{\ZZ_p[G/J]}^{a} U^t_{F/E,p})\otimes_{\ZZ_p} I_p(J)^{a'-a}/I_p(J)^{1+a'-a}.
\end{equation}

Following an original idea of Darmon (see \cite{darmon0}), we define the `norm operator'
$$\mathcal{N}_J : {\bigcap}_{\ZZ_p[G]}^a A^t(F)_p \longrightarrow ({\bigcap}_{\ZZ_p[G]}^a A^t(F)_p) \otimes_{\ZZ_p} \ZZ_p[J]/I_p(J)^{1+a'-a}$$
to be the homomorphism induced by sending each $x$ to $\sum_{\sigma \in J} \sigma (x) \otimes \sigma^{-1}$.

We also use \cite[Prop. 4.12]{bks} to obtain a canonical injective homomorphism of $\ZZ_p[G]$-modules
\begin{multline*} \nu_J:({\bigcap}_{\ZZ_p[G/J]}^{a} A^t(E)_p)\otimes_{\ZZ_p}I_p(J)^{a'-a}/I_p(J)^{1+a'-a}\\ \rightarrow ({\bigcap}_{\ZZ_p[G]}^{a} A^t(F)_p)\otimes_{\ZZ_p}I_p(J)^{a'-a}/I_p(J)^{1+a'-a}.\end{multline*}
%


We can now state the central conjecture of this section.

\begin{conjecture}\label{mrsconjecture} Assume that neither $A(F)$ nor $A^t(F)$ has a point of order $p$.
Fix any subgroup $J$ of $G$ (with $E := F^J$) and any $p$-separable choice of points $(\mathcal{Y},\mathcal{Y}')$ of $A$ for $F/E$ of rank $(a,a')$. Then one has
\[ \eta_{\mathcal{Y},x_\bullet}\in {\bigcap}_{\ZZ_p[G]}^{a}A^t(F)_p\]
and
\[ \eta_{\mathcal{Y}',x_\bullet^J}\in {\bigcap}_{\ZZ_p[G/J]}^{a'}U^t_{F/E,p}\]
and in $({\bigcap}_{\ZZ_p[G]}^{a} A^t(F)_p)\otimes_{\ZZ_p}I_p(J)^{a'-a}/I_p(J)^{1+a'-a}$ there is an equality
\[ \mathcal{N}_J(\eta_{\mathcal{Y},x_\bullet})=(-1)^{a(a'-a)}\cdot \nu_J({\rm ht}_{\mathcal{Y}'}^{\rm MT}(\eta_{\mathcal{Y}',x_\bullet^J})).\]
\end{conjecture}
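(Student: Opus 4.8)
The strategy is to deduce Conjecture \ref{mrsconjecture} from the assumed validity of ${\rm BSD}(A_{F/k})$ (more precisely, from ${\rm BSD}_p(A_{F/k})$(iv) both for $F/k$ and for $E/k$) by combining the interpretation of leading coefficients as higher special elements obtained in Lemma \ref{modifiedlemma} with the general functorial behaviour of such elements under the Mazur--Tate height pairing, following the template established by Sano, Tsoi and the first author in \cite{bst}. First I would invoke Lemma \ref{modifiedlemma}(iii), applied to both $A^t$ over $F/k$ and $A^t$ over $E/k$, to reinterpret the two elements $\eta_{\mathcal{Y},x_\bullet}$ and $\eta_{\mathcal{Y}',x_\bullet^J}$ as the higher special elements (in the sense of \cite[Def. 3.3]{bst}) attached to the Selmer complexes $C_{S,X}$ and $C_{S,X^J}$, the height isomorphisms $h_{A,F}$ and $h_{A,E}$, the characteristic elements $\mathcal{L}$, $\mathcal{L}_E$, and the dual points $\wedge_{y\in\mathcal{Y}}y^*$, $\wedge_{y'\in\mathcal{Y}'}(y')^*$ respectively. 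Here the $p$-separability hypothesis on $(\mathcal{Y},\mathcal{Y}')$ (and the criterion recalled in Remark \ref{p-separable}) is exactly what guarantees that the relevant subsets generate free direct summands, so that \cite[Th. 3.10]{bst}-type arguments apply and, in particular, both membership statements $\eta_{\mathcal{Y},x_\bullet}\in\bigcap_{\ZZ_p[G]}^a A^t(F)_p$ and $\eta_{\mathcal{Y}',x_\bullet^J}\in\bigcap_{\ZZ_p[G/J]}^{a'}U^t_{F/E,p}$ follow (for the latter one also uses the identification $U^t_{F/E,p}=A^t(E)_p$ valid under (H$_1$)-(H$_5$), and the finiteness of $\sha$).

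The core of the argument is then the equality in the integral augmentation filtration. The plan here is to exploit the Galois-descent compatibility of the Selmer complexes established in Proposition \ref{explicitbkprop}(iii), namely the isomorphism $\ZZ_p[G/J]\otimes^{\mathbb L}_{\ZZ_p[G]}{\rm SC}_p(A_{F/k})\cong {\rm SC}_p(A_{F^J/k})$ and its analogue for the complexes $C_{S,X}$, together with the general principle (implicit in \cite[\S3.4]{bst} and made explicit for the Mazur--Tate pairing in the main comparison result of \S\ref{comparison section} below) that the Bockstein homomorphism arising from this descent triangle is computed by the Mazur--Tate height pairing. Concretely: the descent triangle relating $C_{S,X}$ over $\ZZ_p[G]$ to $C_{S,X^J}$ over $\ZZ_p[G/J]$ produces Bockstein maps on cohomology whose effect on the relevant exterior powers is, after the comparison theorem of \S\ref{comparison section} (and here the sign convention (\ref{minussignMT}) in the definition of ${\rm ht}^{\rm MT}_{w_i}$ is dictated by Theorem \ref{thecomptheorem}), precisely the homomorphism ${\rm ht}^{\rm MT}_{\mathcal{Y}'}$ of (\ref{canbecombined}). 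Applying the norm operator $\mathcal{N}_J$ to $\eta_{\mathcal{Y},x_\bullet}$ and tracking the leading term through the descent filtration then yields, on the one hand, the image of $\eta_{\mathcal{Y}',x_\bullet^J}$ under ${\rm ht}^{\rm MT}_{\mathcal{Y}'}$ (pushed into the $F$-level lattice via $\nu_J$ from \cite[Prop. 4.12]{bks}), up to the combinatorial sign $(-1)^{a(a'-a)}$ that records the reordering of wedge factors when passing between $\bigwedge^{a'}$ and $\bigwedge^a\otimes\bigwedge^{a'-a}$. The logarithmic resolvent terms $\mathcal{LR}^p_{A^t_{F/k}}(x_\bullet)$ and $\mathcal{LR}^p_{A^t_{E/k}}(x_\bullet^J)$ match up under descent precisely because $x_\bullet^J=\Tr_J(x_\bullet)$ and the reduced-norm/determinant formalism behaves well under the change-of-group map $\ZZ_p[G]\to\ZZ_p[G/J]$; this is a routine but slightly delicate bookkeeping step that I would carry out using the explicit description of logarithmic resolvents in \S\ref{statementstructure} together with the telescoping arguments of Lemma \ref{ullom}.

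The main obstacle, as usual in this circle of ideas, is the precise identification of the descent Bockstein homomorphism with the Mazur--Tate pairing at the required level of integral precision (i.e. modulo $I_p(J)^{1+a'-a}$ rather than merely modulo $I_p(J)^2$), and the careful matching of normalisations and signs between the three pairings involved (the Néron--Tate pairing entering $h_{A,F}$, the algebraic Bockstein height pairing of Appendix \ref{exp rep section}, and the geometric Mazur--Tate pairing of \cite{mt0}). The first difficulty is handled by iterating the comparison result of \S\ref{comparison section} along the filtration $I_p(J)\supseteq I_p(J)^2\supseteq\cdots$, using the fact (Lemma \ref{useful prel}(i), and the hypotheses (H$_1$)-(H$_5$)) that the relevant local norm indices are trivial $p$-adically so that $U^t_{F/E,p}=A^t(E)_p$ and no spurious finite correction terms intrude; the second is handled by the explicit sign analysis already carried out in Theorem \ref{thecomptheorem}, which is exactly why the minus sign was inserted in (\ref{minussignMT}). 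A secondary, more technical, point is that to apply the descent formalism one must know that $C_{S,X}$ and $C_{S,X^J}$ are perfect over the respective group rings and acyclic in the expected degrees with the expected cohomology; this is furnished by Lemma \ref{modifiedlemma}(i)--(ii) and Proposition \ref{explicitbkprop}(ii)--(iv), and the hypothesis that neither $A(F)$ nor $A^t(F)$ has a point of order $p$ ensures the relevant cohomology in degree one is $p$-torsion-free so that the Rubin-lattice statements make sense. Once these ingredients are assembled, the equality of Conjecture \ref{mrsconjecture} drops out of \cite[Th. 3.10]{bst} applied relative to the pair $(C_{S,X},C_{S,X^J})$ exactly as in the proof of Theorem \ref{big conj}, with the Mazur--Tate pairing playing the role of the Bockstein differential.
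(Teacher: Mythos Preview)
Your overall strategy matches the paper's: interpret the two $\eta$-elements as higher special elements via Lemma \ref{modifiedlemma}(iii) applied to $F/k$ and to $E/k$, invoke a general descent result from \cite{bst} to obtain both the Rubin-lattice containments and a congruence involving a Bockstein homomorphism, and then identify that Bockstein with the Mazur--Tate pairing using Theorem \ref{thecomptheorem}. That is exactly what the paper does in \S\ref{mrsproof}.

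There are, however, two points where your proposal diverges from what actually works. First, the result you need from \cite{bst} is not Theorem 3.10 but Theorem 3.34(ii). Theorem 3.10 is the Fitting-ideal/annihilation statement used in the proof of Theorem \ref{big conj}; it concerns a single complex and says nothing about the norm operator $\mathcal{N}_J$ or the congruence in the augmentation filtration. The relevant descent statement, relating the higher special element for $C_{S,X}$ to that for $C_{S,X,J}=(C_{S,X})_J$ via the Bockstein homomorphism ${\rm ht}^{C_{S,X}}_{\mathcal{Y}'}$, is \cite[Th. 3.34(ii)]{bst}, and this is what directly produces the equality (\ref{forreferenceinremark}).

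Second, your plan to obtain the congruence modulo $I_p(J)^{1+a'-a}$ by ``iterating the comparison result of \S\ref{comparison section} along the filtration $I_p(J)\supseteq I_p(J)^2\supseteq\cdots$'' is not how the argument goes, and it is not clear such an iteration would make sense. The comparison in Theorem \ref{thecomptheorem} is only ever made at the level of $I_p(J)/I_p(J)^2$: it identifies each individual map ${\rm ht}^{\rm MT}_{w_i}$ (with values in $\mathcal{I}_p(J)/\mathcal{I}_p(J)^2$) with the corresponding Bockstein map ${\rm ht}^{C_{S,X}}_{w_i}$, via the Nekov\'a\v r pairing and the naturality of Bockstein homomorphisms under the morphism $C_{S,X,J}\to {\rm SC}_p(A_{F/E})$. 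The passage to $I_p(J)^{a'-a}/I_p(J)^{1+a'-a}$ is purely formal: both ${\rm ht}^{\rm MT}_{\mathcal{Y}'}$ and ${\rm ht}^{C_{S,X}}_{\mathcal{Y}'}$ are defined (via the construction of \cite[\S3.4.1]{bst}) by combining the $a'-a$ individual maps ${\rm ht}_{w_i}$ for $a<i\le a'$, so once these agree, the combined maps agree automatically. No iteration is involved. Relatedly, your discussion of matching the logarithmic resolvents under descent via Lemma \ref{ullom} is unnecessary: the paper simply applies Lemma \ref{modifiedlemma}(iii) directly to $A$ over $E/k$ and the complex $C_{S,X,J}$, which already packages the resolvent $\mathcal{LR}^p_{A^t_{E/k}}(x_\bullet^J)$ into the characteristic element $\mathcal{L}_{E/k}$.
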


\begin{remark}\label{explicit pairings}{\em
Let $k = \QQ$, $J = G$, $a=0$, $a' = {\rm rk}_\ZZ(A(k))$ and $\mathcal{Y}=\{y_i: i \in [a']\}$ be any $\ZZ$-basis of $\Hom_\ZZ(A(k),\ZZ)$.  
Then, after taking account of Prediction \ref{new add}, one finds that the congruence in Conjecture \ref{mrsconjecture} specialises in this case to refine and extend the conjectures formulated (for certain elliptic curves $A$ and abelian fields $F$) by Mazur and Tate in \cite{mt}. If instead $k$ is an imaginary quadratic field, then the congruence in Conjecture \ref{mrsconjecture} also incorporates the main conjecture of Darmon \cite{darmon0}. }\end{remark}

\begin{remark}\label{indeptpoints}
{\em Fix any set $x_\bullet$, subgroup $J$ of $G$ and integers $a'\geq a\geq 0$ as in Conjecture \ref{mrsconjecture}.
Then the general result of \cite[Prop. 4.17]{bks} can be used to show that the validity of Conjecture \ref{mrsconjecture} is independent of the $p$-separable choice of points $(\mathcal{Y},\mathcal{Y}')$ of $A$ for $F/E$ of rank $(a,a')$.}
\end{remark}

\subsubsection{}
The following result establishes conditions under which Conjecture \ref{mrsconjecture} is a consequence of ${\rm BSD}(A_{F/k})$.

The proof of this result will be given in \S \ref{mrsproof} below.

\begin{theorem}\label{rbsdimpliesmt} Fix an odd prime number $p$ and assume that $A$, $F/k$ and $p$ satisfy the hypotheses (H$_1$)-(H$_5$) listed in \S \ref{tmc}. Assume also that neither $A(F)$ nor $A^t(F)$ has a point of order $p$ and that Hypothesis \ref{hypbd} below is satisfied.

Then if ${\rm BSD}(A_{F/k})$ is valid, so is Conjecture \ref{mrsconjecture}.
\end{theorem}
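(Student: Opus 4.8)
The statement asserts that, under the running hypotheses (H$_1$)--(H$_5$), no $p$-torsion in $A(F)$ or $A^t(F)$, and the auxiliary Hypothesis \ref{hypbd}, the validity of ${\rm BSD}(A_{F/k})$ implies the higher-derivative Mazur--Tate congruence of Conjecture \ref{mrsconjecture}. Since ${\rm BSD}(A_{F/k})$ implies ${\rm BSD}(A_{E/k})$ for the intermediate field $E = F^J$ (Remark \ref{consistency remark}(ii)), and since both conjectural equalities may be checked $p$-component by $p$-component (Lemma \ref{pro-p lemma}), I would begin by fixing $J$, $E$, the $p$-separable choice of points $(\mathcal{Y},\mathcal{Y}')$ and the set $x_\bullet$, and then applying Lemma \ref{modifiedlemma} simultaneously at the two levels $F/k$ and $E/k$. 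Concretely, let $X$, resp.\ $X^J$, be the $\ZZ_p[G]$-, resp.\ $\ZZ_p[G/J]$-module generated by $x_\bullet$, resp.\ $x_\bullet^J$, and form the Selmer complexes $C_{S,X} = {\rm SC}_S(A_{F/k};X,H_\infty(A_{F/k})_p)$ and $C_{S,X^J} = {\rm SC}_S(A_{E/k};X^J,H_\infty(A_{E/k})_p)$. By Lemma \ref{modifiedlemma}(iii), ${\rm BSD}_p(A_{F/k})$(iv) and ${\rm BSD}_p(A_{E/k})$(iv) produce characteristic elements $\mathcal L$ and $\mathcal L^J$ whose leading-coefficient interpretations are exactly $\eta_{\mathcal Y,x_\bullet}$ and $\eta_{\mathcal Y',x_\bullet^J}$ after pairing against $\wedge_{y\in\mathcal Y}y^*$ and $\wedge_{y'\in\mathcal Y'}(y')^*$ via the maps ${\rm ht}^a_{A_{F/k}}$, ${\rm ht}^{a'}_{A_{E/k}}$; the $\RR$-rationality of these characteristic elements already forces $\eta_{\mathcal Y,x_\bullet}$ and $\eta_{\mathcal Y',x_\bullet^J}$ to lie in the respective Rubin lattices (using the torsion-freeness of $H^1$ from Lemma \ref{modifiedlemma}(i), Proposition \ref{explicitbkprop}(iv), and the fact that under (H$_1$)--(H$_5$) the groups $U_{F/E,p}$, $U^t_{F/E,p}$ equal $A(E)_p$, $A^t(E)_p$). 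This disposes of the two integrality assertions.

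\textbf{The Galois-descent comparison.} The heart of the argument is the final displayed equality, relating $\mathcal N_J(\eta_{\mathcal Y,x_\bullet})$ to $\nu_J({\rm ht}^{\rm MT}_{\mathcal Y'}(\eta_{\mathcal Y',x_\bullet^J}))$ modulo $I_p(J)^{1+a'-a}$. Here I would invoke the general Galois-descent machinery of \cite{bst}: by Proposition \ref{explicitbkprop}(iii) there is a natural isomorphism $\ZZ_p[G/J]\otimes^{\mathbb L}_{\ZZ_p[G]}{\rm SC}_p(A_{F/k}) \cong {\rm SC}_p(A_{F^J/k})$, and an analogous descent isomorphism holds for the modified complexes $C_{S,X}$, $C_{S,X^J}$ once one checks that $X^J = \ZZ_p[G/J]\otimes_{\ZZ_p[G]}X$ (which is immediate from the definition of $x_\bullet^J$ and the freeness of $X$). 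This places $(C_{S,X},\mathcal L)$ and $(C_{S,X^J},\mathcal L^J)$ in the precise situation to which the Bockstein-homomorphism formalism of \cite[\S3.4]{bst} applies, producing a congruence between the norm of the higher special element at level $F$ and the image under the Mazur--Tate--type (Bockstein) pairing of the higher special element at level $E$. The remaining, and in my view principal, obstacle is to identify the abstract Bockstein pairing arising from this descent with $-\langle-,-\rangle^{\rm MT}_{F/E}$; this is exactly where Hypothesis \ref{hypbd} must enter, and where the comparison result of \S\ref{comparison section} (that the Mazur--Tate bi-extension pairing is the inverse of the Nekov\'a\v r height pairing built from Galois-descent Bocksteins) is used. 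The sign $(-1)^{a(a'-a)}$ and the minus sign built into \eqref{minussignMT} should fall out of carefully tracking the exterior-algebra bookkeeping in \cite[Prop. 4.12]{bks} and the normalisation of $\nu_J$ and $\mathcal N_J$ against the conventions of \cite[\S3.4.1]{bst}.

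\textbf{Assembling the proof.} With these ingredients in place the proof would run: (1) reduce to $p$-components and fix all data; (2) apply Lemma \ref{modifiedlemma} at both levels to extract characteristic elements and deduce the two integrality (Rubin-lattice) statements; (3) use the descent isomorphism of Proposition \ref{explicitbkprop}(iii) (suitably extended to the auxiliary complexes) together with the hypotheses (H$_1$)--(H$_5$) to verify that the hypotheses of the relevant descent theorem of \cite{bst} are met, in particular that $H^1$ is $p$-torsion-free and that the localisation data match up; (4) invoke that descent theorem to obtain the congruence with the abstract Bockstein pairing in place of ${\rm ht}^{\rm MT}$; (5) apply the comparison theorem of \S\ref{comparison section}, under Hypothesis \ref{hypbd}, to replace the Bockstein pairing by $-\langle-,-\rangle^{\rm MT}_{F/E}$, thereby converting the congruence into the statement of Conjecture \ref{mrsconjecture}; (6) record that independence from the $p$-separable choice of points (Remark \ref{indeptpoints}) makes the conclusion unambiguous. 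I expect steps (4) and (5) to carry essentially all of the difficulty, with the matching of sign conventions and the precise role of Hypothesis \ref{hypbd} in the identification of pairings being the most delicate points; the rest is bookkeeping already prepared in \S\ref{tmc}, \S\ref{congruence sec} and the appendices.
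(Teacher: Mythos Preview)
Your proposal is correct and follows essentially the same route as the paper: set up the Nekov\'a\v r--Selmer complexes $C_{S,X}$ and $C_{S,X^J}$, use Lemma \ref{modifiedlemma}(iii) to produce characteristic elements whose associated higher special elements are $\eta_{\mathcal Y,x_\bullet}$ and $\eta_{\mathcal Y',x_\bullet^J}$, verify the descent isomorphism $(C_{S,X})_J\cong C_{S,X^J}$ via freeness of $X$, apply \cite[Th.~3.34(ii)]{bst} to obtain the Rubin-lattice containments and the congruence with the abstract Bockstein in place of ${\rm ht}^{\rm MT}$, and finish with Theorem \ref{thecomptheorem}. Two small points worth tightening: the descent for $C_{S,X}$ does not come from Proposition \ref{explicitbkprop}(iii) (that result is for the classical complex ${\rm SC}_p$) but directly from the mapping-fibre definition and the freeness of $X$; and in step (5) you need an intermediate compatibility, namely that the Bockstein built from $C_{S,X}$ restricts, via the maps in the exact sequence (\ref{useful1}), to the Bockstein built from ${\rm SC}_p(A_{F/E})$, since Theorem \ref{thecomptheorem} is stated for the latter complex.
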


\begin{remarks}\label{mrstheorem}{\em Each of the hypotheses (H$_1$)-(H$_5$) as well as Hypothesis \ref{hypbd} is satisfied widely  (see Remarks \ref{satisfying H} and \ref{rmkbd} respectively). However, without assuming any of these hypotheses it is also possible to deduce from ${\rm BSD}(A_{F/k})$ less explicit versions of the congruence relations that are predicted by Conjecture \ref{mrsconjecture}.

To be more precise we fix an odd prime number $p$ for which neither $A(F)$ nor $A^t(F)$ has a point of order $p$ and set $$C_{S,x_\bullet}:={\rm SC}_S(A_{F/k};\langle x_\bullet\rangle_{\ZZ_p[G]},H_\infty(A_{F/k})_p).$$ Then, without any additional hypotheses on $A$, $F/k$ or $p$, the same argument as used to prove Theorem \ref{rbsdimpliesmt} shows ${\rm BSD}(A_{F/k})$ implies that
\[ \eta_{\mathcal{Y},x_\bullet}\in {\bigcap}_{\ZZ_p[G]}^{a}H^1(C_{S,x_\bullet})\subseteq{\bigcap}_{\ZZ_p[G]}^{a}A^t(F)_p\]
and
\[ \eta_{\mathcal{Y}',x_\bullet^J}\in {\bigcap}_{\ZZ_p[G/J]}^{a'}H^1(C_{S,x_\bullet})^J\subseteq{\bigcap}_{\ZZ_p[G/J]}^{a'}A^t(E)_p\]
and that in $\left({\bigcap}_{\ZZ_p[G/J]}^{a} A^t(F)_p\right)\otimes_{\ZZ_p} I_p(J)^{a'-a}/I_p(J)^{1+a'-a}$ one has an equality
\[ \mathcal{N}_J(\eta_{\mathcal{Y},x_\bullet})=(-1)^{a(a'-a)}\cdot \nu_J({\rm ht}_{\mathcal{Y}'}^{C_{S,x_\bullet}}(\eta_{\mathcal{Y}',x_\bullet^J})).\]
Here ${\rm ht}_{\mathcal{Y}'}^{C_{S,x_\bullet}}$ is the `Bockstein homomorphism' (\ref{htcy}) associated to the Nekov\'a\v r-Selmer complex $C_{S,x_\bullet}$  in the course of the proof of Theorem \ref{rbsdimpliesmt} below. In this context we recall also from Remark \ref{mrselmer} that the cohomology group $H^1(C_{S,x_\bullet})$ may be interpreted as a Selmer group in the sense of Mazur and Rubin \cite{MRkoly}. }
\end{remarks}


\subsection{Cyclic congruence relations}\label{cycliccongssection}

In this section we make the congruences in Conjecture \ref{mrsconjecture} more explicit in the case of certain cyclic extensions.

We always assume the hypotheses (H$_1$)-(H$_6$) listed in \S\ref{tmc} and, motivated by
Remark \ref{emptysets} and Prediction \ref{new remark SC}, we further strengthen the hypothesis (H$_3$) by assuming that $p$ is unramified in $F$.

In this situation, the argument that is used in the proof of Theorem \ref{rbsdimpliesmt} below may be applied to the classical Selmer complex.

In this way, the equality of ${\rm BSD}_p(A_{F/k})$(iv) implies (via Theorem \ref{bk explicit}) congruence relations in the spirit of Conjecture \ref{mrsconjecture} that circumvent the choice of semi-local points and apply directly to the elements $\calL^*_{A,F/k}$ and $\calL^*_{A,E/k}$ as defined in (\ref{bkcharelement}).

We shall also now assume that the following additional conditions on $A$, $F/k$ and  $p$  are satisfied:

\begin{itemize}
\item[(H$_7$)] $F/k$ is cyclic of order $p^m$ (for some natural number $m$);
\item[(H$_8$)] neither $A(k)$ nor $A^t(k)$ has a point of order $p$;
\item[(H$_{9}$)] for every field $L$ with $k\subseteq L\subset F$ and $L \not=F$ the group $\sha(A_L)[p^\infty]$ vanishes;
\item[(H$_{10}$)] the group $H^1({\rm Gal}(k(A[p^{m}])/k),A[p^{m}])$ vanishes.
\end{itemize}

We impose the hypothesis (H$_{10}$) since it recovers Hypothesis \ref{hypbd} below in this specific setting and hence will allow us to apply the result of Theorem \ref{rbsdimpliesmt}. We also note that this hypothesis is satisfied widely (see Remark \ref{rmkbd}). 

%
%

 %

On the other hand, the hypotheses (H$_7$), (H$_8$) and (H$_9$) are introduced since they allow us to apply the main result of Yakovlev \cite{yakovlev} in order to deduce that $A(F)_p$ and $A^t(F)_p$ are (isomorphic) permutation $\ZZ_p[G]$-modules and hence that it is possible to fix an optimal collection of points as in Definition \ref{separablepair}.

Then, by combining such a choice of points with the methods of \cite{bleymc,dmc}, one can make fully explicit the congruence relations that are implied by ${\rm BSD}_p(A_{F/k})$(iv).

In order to stress the key ideas we shall focus on the simple cases $m=1$ and $m=2$. In particular, in the case $m=1$ we will show that the validity of  ${\rm BSD}_p(A_{F/k})$(iv) is under certain natural hypotheses equivalent to the validity of a single explicit congruence relation.

\subsubsection{}
We first assume that $m=2$. In this case we denote by $J$ the subgroup of $G$ of order $p$ and set $E := F^J$ and $\Gamma := G/J$. For each $i\in \{0,1,2\}$ we also write $F_i$ for the intermediate field of $F/k$ with $[F_i:k] = p^i$ and set $\Gamma_i:= G_{F_i/k}$ (so that $\Gamma_2 = G$ and $\Gamma_1 = \Gamma$).

Then, under the given hypotheses, Yakovlev's result implies (via the general approach of \cite[\S7.2.5]{ltav}) that for each $i \in \{0,1,2\}$ there exists a non-negative integers $m_i$ and subsets $\calP_{(i)}=\{P_{i,j}\mid j \in [m_i]\}$ of $A(F_i)_p$ and $\calP^t_{(i)}=\{P^t_{i,j}:j\in[m_i]\}$ of $A^t(F_i)_p$ such that the $\ZZ_p[\Gamma_i]$-modules generated by each of the points $P_{i,j}$ and $P^t_{i,j}$ are free of rank one and there are direct sum decompositions of $\ZZ_p[G]$-modules
%
\begin{equation}\label{global points}
 A(F)_p = \bigoplus_{i=0}^{i=2}\bigoplus_{j=1}^{j=m_i}\ZZ_p[\Gamma_i]\cdot P_{i,j}\end{equation}
 and

\begin{equation}\label{global points2} A^t(F)_p = \bigoplus_{i=0}^{i=2}\bigoplus_{j=1}^{j=m_i}\ZZ_p[\Gamma_i]\cdot P^t_{i,j},
\end{equation}
%
so that the integers $m_i$ are uniquely determined by the equalities
$${\rm rk}(A_k)=m_0+m_1+m_2,\,\,\,\,\,\,{\rm rk}(A_E)=m_0+p(m_1+m_2),\,\,\,\,\,\,{\rm rk}(A_F)=m_0+pm_1+p^2m_2.$$
For a full proof of these facts see \cite[Prop. 2.2]{bleymc}.

We then obtain `regulator matrices' \begin{align*}R_{F}(\calP_{\bullet}):=\, &h_{F/k}(\calP^t_{(2)};\,\calP_{(2)})\in {\rm M}_{m_2}(\RR[G]),\\\ R_{E}(\calP_{\bullet}):=\, &h_{E/k}(\calP^t_{(1)}\cup\calP_{(2)}^{t,J};\,\calP_{(1)}\cup \calP_{(2)}^J)\in {\rm M}_{m_1+m_2}(\RR[\Gamma])\\
R_{k}(\calP_{\bullet}) := \, &h_{k/k}(\calP^t_{(0)}\cup \calP^{t,\Gamma}_{(1)}\cup \calP_{(2)}^{t,G};\, \calP_{(0)}\cup \calP^\Gamma_{(1)}\cup \calP^G_{(2)}) \in {\rm M}_{{\rm rk}(A_k)}(\RR)\end{align*}
as in (\ref{regulatormatrix}), where we write $\calP_{(2)}^{t,J}$ for the set $\{\sum_{g \in J}g(P_{2,j}^t)\mid j \in [m_i]\}$ and define $\calP_{(2)}^J$, $\calP^{t,\Gamma}_{(1)}$, $\calP_{(2)}^{t,G}$, $\calP^\Gamma_{(1)}$ and $\calP^G_{(2)}$ similarly, and in each union we order the points  lexicographically.

We define an idempotent
\[
e_{\langle 2\rangle} := \begin{cases}1,& \text{ if } m_0=m_1=0,\\ 1-e_G,& \text{ if } m_0\neq 0,m_1=0,\\ 1-e_J,& \text{ if } m_1\neq 0, \end{cases}\]
in $\QQ[G]$ and
\[ e_{\langle 1\rangle} := \begin{cases}1,& \text{ if } m_0=0,\\ 1-e_\Gamma,& \text{ if } m_0\neq 0, \end{cases}
\]
in $\QQ[\Gamma]$.

Then the matrix $e_{\langle i\rangle}\cdot R_{F_i}(\calP_{\bullet})$ belongs to ${\rm GL}_{m_2}(\CC[G]e_{\langle 2\rangle})$ if $i=2$ and to  ${\rm GL}_{m_1+ m_2}(\CC[\Gamma]e_{\langle 1\rangle})$ if $i = 1$ and so we may set
\begin{align}\label{lmin}
\mathcal{L}^{(m_2)}_{F,\calP_{\bullet}} := \, &({\rm det}(e_{\langle 2\rangle}\cdot R_{F}(\calP_{\bullet})))^{-1}\cdot e_{\langle 2\rangle}\calL^*_{A,F/k}\in e_{\langle 2\rangle}\cdot\CC[G]^\times\\ \notag
\mathcal{L}^{(m_1+m_2)}_{E,\calP_{\bullet}} := \, &({\rm det}(e_{\langle 1\rangle}\cdot R_{E}(\calP_{\bullet})))^{-1}\cdot e_{\langle 1\rangle}\calL^*_{A,E/k}\in e_{\langle 1\rangle}\cdot\CC[\Gamma_1]^\times\\ \notag
\calL^\ast_{k,\calP_{\bullet}} :=\, &{\rm det}(R_k(\calP_{\bullet}))^{-1}\cdot\calL^*_{A,k}\in\RR^\times.\end{align}
%

\begin{remark} {\em This notation is motivated by fact that the decompositions in (\ref{global points}) and (\ref{global points2}) imply $e_{\langle 2\rangle}$ and $e_{\langle 1\rangle}$ are equal to the idempotents $e_{F,m_2}$ and $e_{E,m_1+m_2}$ that occur in \S \ref{statementstructure} with respect to $A$ and the extensions $F/k$ and $E/k$ respectively and so the terms $e_{\langle 2\rangle}\calL^*_{A,F/k}$ and $e_{\langle 1\rangle}\calL^*_{A,E/k}$ involve the values at $z=1$ of the $m_1$-th and $(m_1+m_2)$-th derivatives of Hasse-Weil-Artin $L$-series. We further recall from Remark \ref{bsdinvariants} that the term $\calL^*_{A,k}$ can be explicitly described in terms of invariants occurring in the classical Birch and Swinnerton-Dyer conjecture for $A$ over $k$.}\end{remark}


By using the approach of the second author in \cite{dmc}, one finds that Theorem \ref{rbsdimpliesmt} implies the explicit congruences in the following result. We do not give the proof of this result but rather refer the reader to loc. cit..

In this result we use the canonical identification
$$\mathcal{I}_p(J)^{m_1}/\mathcal{I}_p(J)^{m_1+1}\cong\ZZ_p[\Gamma]\otimes_{\ZZ_p}I_p(J)^{m_1}/I_p(J)^{m_1+1}.$$

\begin{theorem}\label{cyclicthm} 
Fix an odd prime number $p$ that is unramified in $F$ and such that the data $A, F/k$ and $p$ satisfy the hypotheses (H$_1$)-(H$_{10}$) with $m=2$. Then if ${\rm BSD}_p(A_{F/k})$(iv) is valid one has
\begin{equation*}\label{totallyexplicit}\mathcal{L}^{(m_2)}_{F,\calP_{\bullet}}\equiv  (-1)^{m_1}\cdot\bigl(\mathcal{L}^{(m_1+m_2)}_{E,\calP_{\bullet}}\otimes {\rm det}\bigl(\langle \calP_{(1)}^t,\calP_{(1)}\rangle^{\rm MT}_{F/E}\bigr)\bigr)\,\,\,\,\,({\rm mod}\,\,\mathcal{I}_p(J)^{m_1+1})\end{equation*}
and
\begin{equation*}\label{totallyexplicit}\mathcal{L}^{(m_2)}_{F,\calP_{\bullet}}\equiv (-1)^{m_0+m_1}\cdot\calL^\ast_{k,\calP_{\bullet}}\cdot {\rm det}\bigl(\langle\calP_{(0)}^t\cup \calP_{(1)}^{t,\Gamma},\calP_{(0)}\cup \calP^\Gamma_{(1)}\rangle^{\rm MT}_{F/k}\bigr)\,\,\,\,\, ({\rm mod}\,\,I_p(G)^{m_0+m_1+1}).\end{equation*}

\end{theorem}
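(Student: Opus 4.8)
\textbf{Proof sketch of Theorem \ref{cyclicthm}.} The plan is to deduce both congruences from Theorem \ref{rbsdimpliesmt} by choosing, for each of the relevant subextensions, an optimal $p$-separable collection of points arising from the Yakovlev-type decompositions (\ref{global points}) and (\ref{global points2}), and then translating the abstract statement of Conjecture \ref{mrsconjecture} into explicit matrix congruences via the regulator formalism of Lemma \ref{height pairing interp}. First I would verify that the hypotheses of Theorem \ref{rbsdimpliesmt} are met: the conditions (H$_1$)--(H$_5$) are assumed, the hypothesis that neither $A(F)$ nor $A^t(F)$ has a point of order $p$ follows from (H$_8$) together with the fact that $A(F)_p$ and $A^t(F)_p$ are permutation $\ZZ_p[G]$-modules (by Yakovlev's theorem applied under (H$_7$), (H$_8$), (H$_9$)), and Hypothesis \ref{hypbd} is exactly (H$_{10}$) in this cyclic setting (as remarked in \S\ref{cycliccongssection}). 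Since $p$ is assumed unramified in $F$, the simplification of Remark \ref{emptysets} applies and so $\calL^*_{A,F/k}$, $\calL^*_{A,E/k}$ and $\calL^*_{A,k}$ are genuinely the objects governed by ${\rm BSD}_p(A_{F/k})$(iv) via Theorem \ref{bk explicit}; moreover, because all $p$-adic places are unramified, the logarithmic resolvent terms can be absorbed into the Galois-Gauss sum factors already built into $\calL^*$, so that the argument of Theorem \ref{rbsdimpliesmt} may be run with the classical Selmer complex ${\rm SC}_p(A_{F/k})$ in place of the auxiliary Nekov\'a\v r--Selmer complex $C_{S,x_\bullet}$.

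The core of the argument is then two applications of Theorem \ref{rbsdimpliesmt} (equivalently, of the Selmer-complex variant just described). For the first congruence I would take $J$ to be the order-$p$ subgroup of $G$, set $E=F^J$, $a=m_2$, $a'=m_1+m_2$, and take $\mathcal{Y}$ to be (a free-summand-generating subset built from) $\calP_{(2)}$ and $\mathcal{Y}'=\Tr_J(\mathcal{Y})\cup\calP_{(1)}$, which is a legitimate $p$-separable choice of points by the criterion in Remarks \ref{p-separable} (freeness of the relevant direct summands is exactly the content of (\ref{global points})). Unwinding the definitions of $\eta_{\mathcal{Y},x_\bullet}$ and $\eta_{\mathcal{Y}',x_\bullet^J}$ using Lemma \ref{height pairing interp} (which applies since, under (H$_1$)--(H$_6$) and (H$_8$)--(H$_9$), the relevant Selmer groups are finite and the Tate-Shafarevich contributions over proper subfields vanish) identifies the wedge-of-heights expressions with the determinants $\det(e_{\langle 2\rangle}R_F(\calP_\bullet))$ and $\det(e_{\langle 1\rangle}R_E(\calP_\bullet))$, and the $L$-value ratios with $e_{\langle 2\rangle}\calL^*_{A,F/k}$ and $e_{\langle 1\rangle}\calL^*_{A,E/k}$ respectively (this is precisely the content of Theorem \ref{bk explicit} combined with the definition (\ref{bkcharelement}) of $\calL^*_{A,F/k}$). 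The norm operator $\mathcal{N}_J$ and the comparison map $\nu_J$ then translate the equality of Conjecture \ref{mrsconjecture} in $\bigcap^{m_2}A^t(F)_p\otimes I_p(J)^{m_1}/I_p(J)^{m_1+1}$ into the stated congruence modulo $\mathcal{I}_p(J)^{m_1+1}$, with the sign $(-1)^{m_1}=(-1)^{m_2(m_1+m_2-m_2)}$ coming from the factor $(-1)^{a(a'-a)}$ in Conjecture \ref{mrsconjecture}. For the second congruence I would instead take $J=G$, $E=k$, $a=m_2$, $a'={\rm rk}(A_k)=m_0+m_1+m_2$ and $\mathcal{Y}'=\Tr_G(\calP_{(2)})\cup\calP_{(0)}\cup\calP^{\Gamma}_{(1)}$; the same dictionary, together with Remark \ref{bsdinvariants} to rewrite $e_G\calL^*_{A,F/k}$ in terms of $\calL^*_{A,k}$ and hence of the classical BSD invariants over $k$, yields the congruence modulo $I_p(G)^{m_0+m_1+1}$ with sign $(-1)^{m_0+m_1}$.

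The main obstacle, and the step requiring the most care, is the bookkeeping that matches the abstract Rubin-lattice statement of Conjecture \ref{mrsconjecture} to the explicit matrix congruences: one must check that the particular free generators chosen for the summands $\ZZ_p[\Gamma_i]\cdot P_{i,j}$ give dual bases compatible with the identifications $\bigcap^a A^t(F)_p\hookrightarrow \QQ_p\cdot\bigwedge^a A^t(F)_p$, that the various idempotents $e_{\langle 1\rangle}$, $e_{\langle 2\rangle}$ genuinely coincide with the $L$-theoretic idempotents $e_{F,m_2}$, $e_{E,m_1+m_2}$ of \S\ref{statementstructure} (this is where the equalities ${\rm rk}(A_{F_i})=\sum_j p^{i-j}$-type relations and the exactness properties of the decompositions (\ref{global points})--(\ref{global points2}) enter), and that the sign conventions in (\ref{minussignMT}), in the definition of $\mathcal{N}_J$, and in the $(-1)^{a(a'-a)}$ factor assemble consistently. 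All of this is carried out in the cyclic case by the second author in \cite{dmc} using the methods of \cite{bleymc}, and so rather than reproducing the lengthy computation here I would cite loc.\ cit.\ for the detailed verification, having reduced the statement to an application of Theorem \ref{rbsdimpliesmt} as above. $\square$
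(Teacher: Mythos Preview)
Your proposal takes the same route as the paper, which does not give a self-contained proof but simply states that Theorem \ref{rbsdimpliesmt} (applied, as you correctly note, via the classical Selmer complex since $p$ is unramified in $F$) combines with the explicit computations of \cite{dmc} to yield the stated congruences. Your identification of the two applications of Theorem \ref{rbsdimpliesmt}, with $J$ the order-$p$ subgroup and $(a,a')=(m_2,m_1+m_2)$ for the first congruence and $J=G$ with $(a,a')=(m_2,m_0+m_1+m_2)$ for the second, is exactly right, as is your reduction of the remaining work to the bookkeeping in \cite{bleymc,dmc}.

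One point deserves correction: your sign derivation is not right as written. You claim $(-1)^{m_1}=(-1)^{m_2(m_1+m_2-m_2)}=(-1)^{m_2m_1}$, but $(-1)^{m_2m_1}$ does not equal $(-1)^{m_1}$ in general (take $m_1=1$, $m_2=2$). The sign $(-1)^{a(a'-a)}$ from Conjecture \ref{mrsconjecture} is only one of several sign contributions; there is also the $(-1)$ built into each ${\rm ht}^{\rm MT}_{w_i}$ via (\ref{minussignMT}), and further signs arise when passing between the abstract Rubin-lattice formulation and the explicit regulator determinants. Tracking all of these correctly is precisely the content of the computation in \cite{dmc}, so your deferral to that reference is appropriate, but you should not present the sign as coming directly from the single factor $(-1)^{a(a'-a)}$.
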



\begin{remark}\label{changeofptsremark}{\em
The Wedderburn components of the elements $\mathcal{L}^{(m_2)}_{F,\calP_{\bullet}}$ and $\mathcal{L}^{(m_1+m_2)}_{E,\calP_{\bullet}}$  recover the products that are studied by Fearnley and Kisilevsky \cite{kisilevsky,kisilevsky2}. In particular, the numerical computations in loc. cit., as well as the numerical computations performed by Bley and the second author in \cite{bleymc}, can be interpreted as supporting evidence for ${\rm BSD}(A_{F/k})$.
}
\end{remark}

\subsubsection{}


We now assume that $m=1$. In this case Yakovlev's result implies, under the given hypotheses, that for each $i \in \{0,1\}$ there exists a non-negative $m_i$ and subsets $\calP_{(i)}$ of $A(F_i)_p$ and $\calP^t_{(i)}$ of $A^t(F_i)_p$ for which there are direct sum decompositions of $\ZZ_p[G]$-modules
\[ A(F)_p = \bigoplus_{j=1}^{j=m_0}\ZZ_p\cdot P_{0,j}\oplus\bigoplus_{j=1}^{j=m_1}\ZZ_p[G]\cdot P_{1,j}\]
and
\[ A^t(F)_p = \bigoplus_{j=1}^{j=m_0}\ZZ_p\cdot P^t_{0,j}\oplus\bigoplus_{j=1}^{j=m_1}\ZZ_p[G]\cdot P^t_{1,j}.\]
%
%

By using such a choice of points as well as the idempotent $$e_{F,m_1}:= \begin{cases}1,& \text{ if } m_0=0,\\ 1-e_G,& \text{ if } m_0\neq 0, \end{cases}$$ one obtains elements $\calL^{(m_1)}_{F,\calP_\bullet}$ of $e_{F,m_1}\cdot\CC[G]^\times$ and $\calL^*_{k,\calP_\bullet}$ of $\RR^\times$ exactly as in (\ref{lmin}).


In this special case we obtain the following refinement of Theorem \ref{cyclicthm}. This result will be proved in \S\ref{proofmequals1} below.

\begin{theorem}\label{mequals1} Fix an odd prime number $p$ that is unramified in $F$ and such that the data $A, F/k$ and $p$ satisfy the hypotheses (H$_1$)-(H$_{10}$) with $m=1$. Then the following claims are valid.

\begin{itemize}\item[(i)] If ${\rm BSD}_p(A_{F/k})$(iv) is valid one has
\begin{equation*}\calL^{(m_1)}_{F,\calP_\bullet}\equiv (-1)^{m_0}\cdot\calL^\ast_{k,\calP_{\bullet}}\cdot {\rm det}\bigl(\langle\calP_{(0)}^t,\calP_{(0)}\rangle^{\rm MT}_{F/k}\bigr)\,\,\,\,\, ({\rm mod}\,\,I_p(G)^{m_0+1}).\end{equation*}

\item[(ii)] Assume that the $p$-part of the Birch and Swinnerton-Dyer Conjecture is valid for $A$ over both $k$ and $F$ and that the group $\sha(A_F)[p^\infty]$ vanishes.

Then ${\rm BSD}_p(A_{F/k})$(iv) is valid if and only if the element $\calL^{(m_1)}_{F,\calP_\bullet}$ belongs to $I_p(G)^{m_0}$ and satisfies the displayed congruence in claim (i).\end{itemize}
\end{theorem}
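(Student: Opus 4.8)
\textbf{Proof strategy for Theorem \ref{mequals1}.}

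The plan is to reduce both claims to the explicit reinterpretation of ${\rm BSD}_p(A_{F/k})$(iv) provided by Theorem \ref{bk explicit} together with the congruence-theoretic machinery underlying Theorem \ref{rbsdimpliesmt} and Theorem \ref{cyclicthm}. Since $p$ is unramified in $F$ and the data $A,F/k,p$ satisfy (H$_1$)-(H$_6$), Remark \ref{emptysets} applies: the sets $S_{p,{\rm w}}$ and $S^*_{p,{\rm u}}$ are empty (the latter because $p$ being unramified in $F/\QQ$ forces $p$ unramified in $k/\QQ$, or one treats the surviving places directly), so Theorem \ref{bk explicit} asserts that ${\rm BSD}_p(A_{F/k})$(iv) is equivalent to the single relative $K$-theoretic identity $\delta_{G,p}(j_\ast(\mathcal{L}^*_{A,F/k})) = \chi_{G,p}({\rm SC}_p(A_{F/k}),h^j_{A,F})$. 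For claim (i), I would then run the argument of \S\ref{mrsproof} (the proof of Theorem \ref{rbsdimpliesmt}) but applied directly to the classical Selmer complex ${\rm SC}_p(A_{F/k})$ rather than the auxiliary Nekov\'a\v r--Selmer complex $C_{S,x_\bullet}$: under (H$_1$)-(H$_5$) Proposition \ref{explicitbkprop}(ii) guarantees ${\rm SC}_p(A_{F/k})\in D^{\rm perf}(\ZZ_p[G])$ and Proposition \ref{explicitbkprop}(iv) identifies its cohomology with $A^t(F)_p$ in degree one and $\Sel_p(A_F)^\vee$ in degree two; since $p$ is unramified in $F$ one further has $U_{F/k,p}=A(k)_p$ and $U^t_{F/k,p}=A^t(k)_p$, so no choice of semi-local points is needed. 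The comparison between the Bockstein homomorphism attached to this complex and the Mazur--Tate pairing is exactly the content of Theorem \ref{thecomptheorem} in \S\ref{comparison section}, which supplies the sign and converts the abstract congruence into the displayed congruence modulo $I_p(G)^{m_0+1}$. The hypotheses (H$_7$)-(H$_9$) enter through Yakovlev's theorem \cite{yakovlev} to produce the permutation-module decompositions of $A(F)_p$ and $A^t(F)_p$ and hence the points $\calP_{(i)},\calP^t_{(i)}$ and the idempotent $e_{F,m_1}$, so that the leading-coefficient element $\mathcal{L}^*_{A,F/k}$ can be written as $\det(e_{F,m_1}R_F(\calP_\bullet))\cdot e_{F,m_1}\calL^{(m_1)}_{F,\calP_\bullet}$ plus the $(1-e_{F,m_1})$-part governed by the rank-one free summands; feeding this into the $K$-theoretic identity and extracting the associated graded piece in the augmentation filtration yields the congruence, precisely as in \cite{dmc} and \cite{bleymc}. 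Hypothesis (H$_{10}$) is imposed because it is exactly Hypothesis \ref{hypbd} in this cyclic setting, so Theorem \ref{rbsdimpliesmt} is applicable verbatim.

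For claim (ii) the new content is the \emph{converse} direction, and here the vanishing of $\sha(A_F)[p^\infty]$ together with the assumed validity of the $p$-part of classical BSD for $A$ over $k$ and over $F$ is what makes the reduction an equivalence rather than an implication. Concretely, under these extra hypotheses each Wedderburn component of the $K$-theoretic identity of Theorem \ref{bk explicit} decomposes: the $e_G$-component (equivalently the $(1-e_{F,m_1})$-part, corresponding to the trivial character and the $\ZZ_p$-summands generated by $\calP_{(0)},\calP^t_{(0)}$) is controlled by the $p$-part of BSD for $A$ over $k$, which we are assuming, while the complementary $e_{F,m_1}$-component is, after the Yakovlev decomposition and the identification of Fitting ideals with the now-trivial $\sha(A_F)[p^\infty]$, equivalent to the assertion that $\calL^{(m_1)}_{F,\calP_\bullet}$ lies in $I_p(G)^{m_0}$ and satisfies the congruence of claim (i). The integrality statement $\calL^{(m_1)}_{F,\calP_\bullet}\in I_p(G)^{m_0}$ is the $e_{F,m_1}$-analogue of the $a$-th Fitting ideal containment of Theorem \ref{big conj}/Remark \ref{new remark SC} with $a=m_0$, since ${\rm Fit}^{m_0}_{\ZZ_p[G]}(\Sel_p(A_F)^\vee)$ equals (under the decompositions and the vanishing of $\sha$) a power of the augmentation ideal; this containment is automatically \emph{necessary} for ${\rm BSD}_p(A_{F/k})$(iv), and I would show conversely that, modulo the already-assumed local pieces, the combination of this containment plus the congruence is \emph{sufficient} to recover the full $K$-theoretic equality. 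The key mechanism is that $K_0(\ZZ_p[G],\QQ_p[G])$ for $G$ cyclic of order $p$ is detected by its behaviour at the two idempotents $e_G$ and $1-e_G$ together with one augmentation-filtration congruence gluing them, so once both idempotent components and the gluing congruence are pinned down, the identity follows.

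The main obstacle I expect is the converse gluing step in claim (ii): verifying that the $e_G$-part and the $(1-e_G)$-part of the equality in Theorem \ref{bk explicit}, each established from the respective classical BSD statements, actually \emph{combine} to the integral $K$-theoretic identity precisely when the single stated congruence holds — i.e.\ that no further obstruction lives in $K_0(\ZZ_p[G],\QQ_p[G])$ beyond the graded pieces we are tracking. This requires a careful analysis of the exact sequence $0\to I_p(G)/I_p(G)^2\to \ZZ_p[G]/I_p(G)^2\to\ZZ_p\to 0$ and the corresponding filtration on the relative $K$-group, using that for $\ZZ_p[G]$ with $G$ cyclic of order $p$ the group $K_0(\ZZ_p[G],\QQ_p[G])$ has a transparent description (via the conductor-square / Milnor patching for $\ZZ_p[G]\hookrightarrow \ZZ_p\times\ZZ_p[\zeta_p]$), so that vanishing of $\sha(A_F)[p^\infty]$ ensures the degree-two cohomology of ${\rm SC}_p(A_{F/k})$ is torsion-free modulo the tracked pieces and the Euler characteristic is genuinely determined by the regulator and the Mazur--Tate height matrix. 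Once this patching lemma is in place, both directions of the equivalence in claim (ii) follow by assembling the components, and claim (i) is the ``easy direction'' of the same analysis.
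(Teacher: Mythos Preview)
Your outline for claim (i) is correct and matches the paper: one reduces via Theorem \ref{bk explicit} and Remark \ref{emptysets} to the single identity $\delta_{G,p}(\mathcal{L}^*_{A,F/k})=\chi_{G,p}({\rm SC}_p(A_{F/k}),h_{A,F})$, works directly with the classical Selmer complex, and invokes Theorem \ref{thecomptheorem} together with the approach of \cite{dmc} to extract the congruence.

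For claim (ii) your high-level picture is right but the paper's route is more explicit and differs in two respects. First, rather than arguing abstractly with Wedderburn components and Milnor patching, the paper \emph{computes} the Euler characteristic $\chi_{G,p}({\rm SC}_p(A_{F/k}),h_{A,F})$ directly: since $\sha(A_F)[p^\infty]=0$ the complex is determined by its class in ${\rm Ext}^2_{\ZZ_p[G]}(A(F)_p^*,A^t(F)_p)$, and the Yakovlev decompositions supply an explicit length-two free resolution (a ``syzygy'' with differential $b_{i,j}\mapsto(\sigma^{p^i}-1)b_{i,j}$) against which one writes the extension class as an automorphism $\phi$ of $A^t(F)_p$. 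This yields an explicit element $\mathcal{L}\in\QQ_p[G]^\times$ with $\xi:=\delta_{G,p}(\mathcal{L}^*_{A,F/k})-\chi_{G,p}({\rm SC}_p(A_{F/k}),h_{A,F})=\delta_{G,p}(\mathcal{L})$, and one must show $\mathcal{L}\in\ZZ_p[G]^\times$. Second, the role of BSD over $k$ and over $F$ is not a Wedderburn split $e_G$ versus $1-e_G$ as you suggest, but rather comes from the torsion-detection criterion of \cite[Th.\ 4.1]{ewt}: for $|G|=p$ one has $K_0(\ZZ_p[G],\QQ_p[G])_{\rm tor}=\ker(\rho,q)$ where $\rho$ is restriction of scalars $\ZZ_p[G]\to\ZZ_p$ and $q$ is passage to $G$-coinvariants. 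Functoriality (Remark \ref{consistency remark}(ii),(iii)) gives $\rho(\xi)=0$ from BSD over $F$ and $q(\xi)=0$ from BSD over $k$; this is where your sketch is imprecise, since you only explain the use of BSD over $k$. Once $\xi$ is torsion, $\mathcal{L}$ lies in the maximal order and hence in $\ZZ_p[G]^\times$ iff $\mathcal{L}\in\ZZ_p[G]$; the explicit formula for $\mathcal{L}$ reduces this to a congruence in which the matrix entries $\Psi_{(0,j),(0,k)}$ of $\phi^{-1}$ appear, and a snake-lemma computation of the Bockstein through the syzygy representative, combined with Theorem \ref{thecomptheorem}, identifies $\Psi_{(0,j),(0,k)}(\sigma-1)\bmod I_p(G)^2$ with $-\langle P^t_{0,k},P_{0,j}\rangle^{\rm MT}_{F/k}$. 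Your conductor-square idea is a reasonable alternative framework for the gluing, but you would still need the explicit Euler-characteristic computation to produce the element to be glued, and you should correct the attribution of BSD over $F$ to the restriction-of-scalars map rather than to any idempotent component.
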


\begin{remark}\label{equivalenceeasy}{\em The argument that we use to prove Theorem \ref{mequals1} can be extended, under suitable hypotheses, to the setting of cyclic extensions of arbitrary $p$-power degree. In this way one obtains an explicit family of congruence relations (of the same general form as in Theorem \ref{mequals1}(i)) whose validity is equivalent to that of ${\rm BSD}_p(A_{F/k})$(iv) whenever the $p$-part of the Birch and Swinnerton-Dyer Conjecture holds for $A$ over each intermediate field of $F/k$. This will be explained in the article \cite{bleymcIII} of Bley and the second author.
}\end{remark}

\begin{remark}\label{bleyexamples rem}{\em Theorems \ref{cyclicthm} and \ref{mequals1} both illustrate the fact that, unless the relevant Mordell-Weil groups are projective as Galois modules, the equality ${\rm BSD}_p(A_{F/k})$(iv) encodes a family of congruence relations between the values at $z=1$ of higher derivatives of Hasse-Weil-Artin $L$-series that also involve (non-trivial) Mazur-Tate regulators. As far as we are aware, no such congruence relations have previously been identified, let alone verified, in the literature.}\end{remark}

\begin{examples}\label{bleyexamples} {\em In view of the last remark, we are very grateful to Werner Bley for providing us with the following list of examples for which he has been able to numerically verify the conjectural equality ${\rm BSD}_p(A_{F/k})$(iv) by means of the congruence relation in Theorem \ref{mequals1}.  For more details of these computations, see \cite[\S 4.4]{bleymcIII} and Werner Bley's webpage. 

Throughout this list, $A$ is the elliptic curve with indicated Cremona reference, $k=\QQ$, $p=3$ and $F$ is the unique degree three Galois extension of $\QQ$ that is contained in $\QQ(\zeta_\ell)$ for any of the indicated primes $\ell$. 

We first provide some examples in which ${\rm rk}(A_F)={\rm rk}(A_\QQ)=1$, although this list is by no means exhaustive.

\begin{itemize}\item[-] $A$ is the curve $37a1$ (with Weierstrass equation $y^2+y=x^3-x$) and $\ell\in\{13,19\}.$
\item[-] $A$ is  $43a1$ ($y^2+y=x^3+x^2$) and $\ell\in\{7,13,37\}.$
\item[-] $A$ is $53a1$ ($y^2+xy+y=x^3-x^2$) and $\ell\in\{13,19,31,43\}.$
\item[-] $A$ is $58a1$ ($y^2+xy=x^3-x^2-x+1$) and $\ell\in\{7,13,19,31,43\}.$
\item[-] $A$ is $61a1$ ($y^2+xy=x^3-2x+1$) and $\ell\in\{7,13,43\}.$
\item[-] $A$ is $65a1$ ($y^2+xy=x^3-x$) and $\ell\in\{19,37,43\}.$
\item[-] $A$ is $65a2$ ($y^2+xy=x^3+4x+1$) and $\ell\in\{19,37,43\}.$
\item[-] $A$ is $77a1$ ($y^2+y=x^3+2x$) and $\ell\in\{19,37\}.$
\end{itemize}

We next provide some examples in which ${\rm rk}(A_F)={\rm rk}(A_\QQ)=2$, so the matrix that occurs in the right-hand side of the congruence in Theorem \ref{mequals1}(i) has dimension $2$.
\begin{itemize}
\item [-]  $A$ is $389a1$ ($y^2+y=x^3+x^2-2x$) and $\ell \in \{7,13,43 \}$.
\item [-]  $A$ is $433a1$ ($y^2+xy=x^3+1$) and $\ell \in \{7, 13, 31, 37 \}$.
\item [-]  $A$ is $446d1$ ($y^2+xy=x^3-x^2-4x+4$) and $\ell =19$.
\end{itemize}

}
\end{examples}

\subsubsection{}\label{proofmequals1}
In this section we prove Theorem \ref{mequals1}.

At the outset we set $C:={\rm SC}_p(A_{F/k})$ and note that, under the hypotheses of Theorem \ref{mequals1}, Theorem \ref{bk explicit} (and Remark \ref{emptysets}) implies that ${\rm BSD}_p(A_{F/k})$(iv) is valid if and only if the element
$$\xi:=\delta_{G,p}(\calL^*_{A,F/k})-\chi_{G,p}(C,h_{A,F})$$
of $K_0(\ZZ_p[G],\CC_p[G])$ vanishes.

It is a routine exercise to deduce claim (i) of Theorem \ref{mequals1} from this fact by combining Theorem \ref{thecomptheorem} below with the approach used by the second author to prove \cite[Cor. 3.5]{dmc} (and so we leave the details to the reader).

To prove claim (ii) we assume that the $p$-part of the Birch and Swinnerton-Dyer Conjecture holds for $A$ over both $k$ and $F$ and that $\sha(A_F)[p^\infty]$ vanishes. We also assume that $\calL^{(m_1)}_{F,\calP_\bullet}$ belongs to $I_p(G)^{m_0}$ and satisfies the congruence displayed in claim (i), and then proceed to prove that $\xi$ vanishes.

The first key step is to explicitly compute the second term that occurs in the definition of $\xi$ and to do this we  use the canonical identifications %
\[ r_1:A^t(F)_p\to H^1(C)\,\,\text{ and }\,\, r_2:H^2(C)\to \Sel_p(A_F)^\vee\stackrel{\sim}{\to}A(F)_p^*\]
that are described in Proposition \ref{explicitbkprop}(iv). Here the second arrow in the definition of $r_2$ is the canonical map and so is bijective since $\sha(A_F)[p^\infty]$ is assumed to vanish.

These identifications imply that the complex $C$ corresponds to a unique element $\varepsilon = \varepsilon_{C,r_1,r_2}$ of the Yoneda Ext-group ${\rm Ext}^2_{\ZZ_p[G]}(A(F)_p^*,A^t(F)_p)$.

For each pair $(i,j)$ with $i\in\{0,1\}$ and $j\in[m_i]$ we define a dual point $P_{i,j}^*$ of $A(F)_p^*$ in the natural way and obtain a direct sum decomposition of $\ZZ_p[G]$-modules
$$A(F)_p^*=\bigoplus_{j=1}^{j=m_0}\ZZ_p\cdot P_{0,j}^*\oplus\bigoplus_{j=1}^{j=m_1}\ZZ_p[G]\cdot P_{1,j}^*$$
as in \cite[Lem. 4.1]{bleymc}.

We also fix a generator $\sigma$ of $G$ and a free $\ZZ_p[G]$-module $P:=\bigoplus_{i,j}\ZZ_p[G]b_{i,j}$ of rank $m_0+m_1$ and consider the exact sequence
\begin{equation}\label{syzygy}0\to A^t(F)_p\stackrel{\iota}{\to}P\stackrel{\Theta}{\to}P\stackrel{\pi}{\to}A(F)_p^*\to 0,\end{equation} where we set $\iota(P^t_{0,j})=\Tr_G(b_{0,j})$, $\iota(P^t_{1,j})=b_{1,j}$, $\Theta(b_{i,j})=(\sigma^{p^i}-1)b_{i,j}$ and $\pi(b_{i,j})=P^*_{i,j}$.

This sequence defines a canonical isomorphism
\begin{equation}\label{computeext}{\rm Ext}^2_{\ZZ_p[G]}(A(F)_p^*,A^t(F)_p)\cong{\rm End}_{\ZZ_p[G]}(A^t(F)_p)/\iota_*\bigl(\Hom_{\ZZ_p[G]}(P,A^t(F)_p)\bigl),\end{equation}
where $\iota_*$ denotes composition with $\iota$.

Then, since Proposition \ref{explicitbkprop}(ii) implies $C$ belongs to $D^{\rm perf}(\ZZ_p[G])$ the results of \cite[Lem. 4.2 and Lem. 4.3]{bleymc} imply that there exists an automorphism $\phi$ of the $\ZZ_p[G]$-module $A^t(F)_p$ that represents the image of $\varepsilon$ under (\ref{computeext}) and also fixes the element $P^t_{1,j}$ for every $j$ in $[m_1]$.

It follows that the exact sequence
\begin{equation}\label{yonedarep}0\to A^t(F)_p\stackrel{\iota\circ\phi^{-1}}{\to}P\stackrel{\Theta}{\to}P\stackrel{\pi}{\to}A(F)_p^*\to 0\end{equation}
is a representative of the extension class $\varepsilon$.

In particular, for any choice of $\CC_p[G]$-equivariant splittings
$$s_1:\CC_p\cdot P\to \CC_p\cdot A^t(F)_p\oplus\CC_p\cdot\im(\Theta)$$
and
$$s_2:\CC_p\cdot P\to \CC_p\cdot A(F)^*_p\oplus\CC_p\cdot\im(\Theta)$$
of the scalar extensions of the canonical exact sequences
$$0\to A^t(F)_p\stackrel{\iota}{\to}P\stackrel{\Theta}{\to}\im(\Theta)\to 0$$
and
$$0\to\im(\Theta)\to P\stackrel{\pi}{\to}A(F)_p^*,$$
the definition of non-abelian determinants implies that
\begin{align}\label{separatingstuff}&-\chi_{G,p}(C,h_{A,F})\\
=&-\delta_{G,p}({\det}_{\CC_p[G]}(s_2^{-1}\circ(h_{A,F}\oplus{\rm id}_{\CC_p\cdot\im(\Theta)})\circ((\CC_p\cdot\phi)\oplus{\rm id}_{\CC_p\cdot\im(\Theta)})\circ s_1))\notag\\
=&\,\delta_{G,p}({\det}_{\CC_p[G]}(s_1^{-1}\circ(h^{-1}_{A,F}\oplus{\rm id}_{\CC_p\cdot\im(\Theta)})\circ s_2))\notag\\
&\hskip 1.5truein +\delta_{G,p}({\det}_{\QQ_p[G]}((\QQ_p\cdot\phi^{-1})\oplus{\rm id}_{\QQ_p\cdot\im(\Theta)}))\notag.\end{align}

In addition, by an explicit computation as in \cite[Prop. 4.4]{bleymc} one shows that
\begin{align}\label{theregulatorterm}&{\det}_{\CC_p[G]}(s_1^{-1}\circ(h^{-1}_{A,F}\oplus{\rm id}_{\CC_p\cdot\im(\Theta)})\circ s_2)\\
=\, &\det(e_{F,m_1}\cdot h_{F/k}(\calP^t_{(1)};\calP_{(1)}))^{-1}\cdot(\sigma-1)^{-m_0}\notag\\
&\hskip 1truein +(1-e_{F,m_1})\cdot\det(h_{k/k}(\calP^t_{(0)}\cup\calP^{t,G}_{(1)};\calP_{(0)}\cup\calP^{G}_{(1)}))^{-1}.\notag\end{align}

To compute the second term that occurs in the final equality of (\ref{separatingstuff}) we fix, as we may, any elements $\Psi_{(0,j),(0,k)}$ of $\ZZ_p[G]$ with the property that
\begin{equation}\label{matrixPsi}\phi^{-1}(P^t_{0,k})=\sum_{j\in[m_0]}\Psi_{(0,j),(0,k)}(P^t_{0,j}).\end{equation}
Then the chosen properties of the representative $\phi$ of $\varepsilon$ fixed above imply that
\begin{equation}\label{leftovertermMT}{\det}_{\QQ_p[G]}((\QQ_p\cdot\phi^{-1})\oplus{\rm id}_{\QQ_p\cdot\im(\Theta)})=
e_{F,m_1}+(1-e_{F,m_1})\cdot\det\bigl((\Psi_{(0,j),(0,k)})_{j,k\in[m_0]}\bigr).\end{equation}

The equalities (\ref{separatingstuff}), (\ref{theregulatorterm}) and (\ref{leftovertermMT}) now combine with the definition of $\xi$ to imply that
 $\xi=\delta_{G,p}(\mathcal{L})$ with
\[ \mathcal{L} := \calL_{F,\calP_\bullet}^{(m_1)}\cdot(\sigma-1)^{-m_0}+(1-e_{F,m_1})\cdot\calL^*_{k,\calP_\bullet}
\cdot\det\bigl((\Psi_{(0,j),(0,k)})_{j,k\in[m_0]}\bigr).\]

We shall use this description to show $\xi$ belongs to the finite group $K_0(\ZZ_p[G],\QQ_p[G])_{\rm tor}$, or equivalently that $\mathcal{L}$ belongs to the unit group of the maximal $\ZZ_p$-order $\mathcal{M}_p(G)$ in $\QQ_p[G]$.

To do this we note first that the assumed containment $\calL_{F,\calP_\bullet}^{(m_1)}\in I_{G,p}^{m_0}$ and validity of the $p$-part of the Birch and Swinnerton-Dyer Conjecture for $A$ over $k$ combine to imply $\mathcal{L}$ belongs to $\QQ_p[G]^\times$ and hence that $\xi$ belongs to
$K_0(\ZZ_p[G],\QQ_p[G])$.

Next we recall from the general result of \cite[Thm. 4.1]{ewt} that, since $G$ has order $p$, the group $K_0(\ZZ_p[G],\QQ_p[G])_{\rm tor}$ is the kernel of the homomorphism
\[ K_0(\ZZ_p[G],\QQ_p[G]) \xrightarrow{(\rho,q)}  K_0(\ZZ_p,\QQ_p)\oplus K_0(\ZZ_p,\QQ_p)\]
where $\rho$ is the canonical restriction of scalars map and $q$ is the homomorphism induced by mapping the class of a triple $(P,\phi,Q)$ to the class of $(P^G,\phi^G,Q^G)$.

Given this, the claimed containment $\xi \in K_0(\ZZ_p[G],\QQ_p[G])_{\rm tor}$ follows from the fact that Remark \ref{consistency remark}(ii) and (iii) combine to imply that the validities of the $p$-part of the Birch and Swinnerton-Dyer Conjecture for $A$ over $F$ and $k$ imply that  $\rho(\xi)$ and $q(\xi)$ both vanish.

At this stage we know that $\mathcal{L}$ belongs to $\mathcal{M}_p(G)^\times$ and hence that it belongs to $\ZZ_p[G]^\times$ (so that $\xi$ vanishes) if and only if it belongs to $\ZZ_p[G]$.

Now if $m_0=0$ then $\mathcal{L} =\calL_{F,\calP_\bullet}^{(m_1)}$ and $\ZZ_p[G]=I_p(G)^{m_0}$ and so our hypotheses imply directly that  $\mathcal{L} \in \ZZ_p[G]$. In the sequel we will therefore assume that $m_0\neq 0$, and hence that $e_{F,m_1}=1-e_G$.

In this case $\mathcal{L}$ belongs to $\ZZ_p[G]$ if and only if one has
\begin{equation*}\label{totallyexplicit1}\calL^{(m_1)}_{F,\calP_\bullet}\cdot(\sigma-1)^{-m_0}\equiv \calL^\ast_{k,\calP_{\bullet}}\cdot \det\bigl((\Psi_{(0,j),(0,k)})_{j,k\in[m_0]}\bigr)\,\,\,\,\, ({\rm mod}\,\,I_p(G))\end{equation*}
and so the assumed validity of the congruence in Theorem \ref{mequals1}(i) reduces us to verifying that, for any $j$ and $k$ in $[m_0]$, one has
\begin{equation}\label{PsicomputesMT}\Psi_{(0,j),(0,k)}\cdot(\sigma-1)+I_p(G)^2=-\langle P^t_{0,k},P_{0,j}\rangle^{\rm MT}_{F/k}\end{equation} in $I_p(G)/I_p(G)^2$.

To verify these equalities we use the fact, proved in Theorem \ref{thecomptheorem} below, that $\langle \,,\rangle^{\rm MT}_{F/k}$
coincides with the inverse of the pairing induced by
\begin{align*}\beta:A^t(k)_p\to I_p(G)/I_p(G)^2\otimes_{\ZZ_p}(A(F)_p^*)_{G} \to  I_p(G)/I_p(G)^2\otimes_{\ZZ_p}A(k)_p^*,\end{align*}
where the first arrow is the Bockstein homomorphism associated to the complex $C$ and the maps $r_1$ and $r_2$ (see \S\ref{bocksection} below), while the second arrow is induced by restriction to $A(k)_p$.

Now $\beta$ may be computed, through the representative (\ref{yonedarep}) of the extension class $\varepsilon$, as the connecting homomorphism that  arises when applying the snake lemma to the following commutative diagram (in which both rows and the third column are
exact and the first column is a complex)
\[\begin{CD}
@. @. @. A^t(k)_p\\ @. @. @. @VV (\iota\circ\phi^{-1})^{G} V  \\ 0 @>
>> I_p(G)\otimes_{\ZZ_p[G]}P @> \subseteq >> P @> \Tr_{G}  >>
 P^G  @> >> 0\\
@. @VV {\rm id}\otimes_{\ZZ_p[G]}\Theta V @VV \Theta V @VV\Theta^{G} V\\
0 @>
>> I_p(G)\otimes_{\ZZ_p[G]}P @> \subseteq >> P @> \Tr_{G} >>
 P^G  @> >> 0\\
@. @VV ({\rm id}\otimes_{\ZZ_p[G]}\pi)_{G}  V \\ @.
I_p(G)/I_p(G)^2\otimes_{\ZZ_p} (A(F)_p^*)_{G}.
\end{CD}\]

In addition, the equality (\ref{matrixPsi}) implies that
$$\iota(\phi^{-1}(P^t_{0,k}))=\iota(\sum_{l\in[m_0]} \Psi_{(0,l),(0,k)} P^t_{0,l})=\sum_{l\in[m_0]} \Psi_{(0,l),(0,k)}\cdot\Tr_G(b_{0,l})$$
and so, since $$P^*_{0,l}(P_{0,j})=\begin{cases}1,\,\,\,\,\,\,\,\,l=j,\\
0,\,\,\,\,\,\,\,\,l\neq j,\end{cases}$$
we can finally use the above diagram to compute that
\begin{align*}-\langle P^t_{0,k},P_{0,j}\rangle_{F/k}^{\rm MT}=&\left(({\rm id}\otimes_{\ZZ_p[G]}\pi)_{G}\left(\Theta(\Psi_{(0,j),(0,k)}\cdot b_{0,j})\right)\right)(P_{0,j})\\
=&\left(({\rm id}\otimes_{\ZZ_p[G]}\pi)_{G}\left((\sigma-1)\Psi_{(0,j),(0,k)}\cdot b_{0,j}\right)\right)(P_{0,j})\\
=&\left(\left((\sigma-1)+I_p(G)^2\right)\otimes\Psi_{(0,j),(0,k)}\cdot P^*_{0,j}\right)(P_{0,j})\\
=&\Psi_{(0,j),(0,k)}\cdot(\sigma-1)+I_p(G)^2.
\end{align*}
Here the first equality is given by Theorem \ref{thecomptheorem}. This verifies the equalities (\ref{PsicomputesMT}) and thus completes the proof of Theorem \ref{mequals1}.

\subsection{Dihedral congruence relations}\label{dihedral} With a view to extending the classes of extensions $F/k$ for which the equality of ${\rm BSD}_p(A_{F/k})$(iv) can be made fully explicit we consider the case that $F/k$ is generalized dihedral of order $2p^n$.

We recall that this means the Sylow $p$-subgroup $P$ of $G$ is abelian and of index two and that the conjugation action of any lift to $G$ of the generator of $G/P$ inverts elements of $P$. We write $K$ for the unique quadratic extension of $k$ in $F$.

In this setting we shall show that, in certain situations, the validity of ${\rm BSD}_p(A_{F/k})$(iv) can be checked by verifying congruences relative to the abelian extension $F/K$.

In order to state the precise result we fix a finite Galois extension $E$ of $\QQ$ in $\bc$ that is large enough to ensure that, with $\mathcal{O}$ denoting the ring of algebraic integers of $E$, there exists for each character $\psi$ of $\widehat{G}$ a finitely generated $\mathcal{O}[G]$-lattice that is free over $\mathcal{O}$ and spans a $\bc[G]$-module of character $\psi$.

For each $\psi$ in $\widehat{G}$ we recall the non-zero complex number $\mathcal{L}^\ast(A,\psi)$ defined in \S\ref{explicit ec section}.

\begin{proposition}\label{dihedral prop} Let $F/k$ be generalized dihedral of degree $2p^n$ as above. Assume that $\sha(A_F)$ is finite and that no place of $k$ at which $A$ has bad reduction is ramified in $F$. Assume also that $p$ satisfies the conditions (H$_1$)-(H$_4$) listed in \S\ref{tmc} and that neither $A(K)$ nor $A^t(K)$ has a point of order $p$.

Then the equality of ${\rm BSD}_p(A_{F/k})$(iv) is valid if the following three conditions are satisfied.

\begin{itemize}
\item[(i)] For every $\psi$ in $\widehat{G}$ and $\omega$ in $G_\QQ$, one has $\mathcal{L}^\ast(A,\omega\circ \psi) = \omega(\mathcal{L}^\ast(A,\psi))$.
\item[(ii)] For every $\psi$ in $\widehat{G}$ and every prime ideal $\mathfrak{p}$ of $\mathcal{O}$ that divides $p$, the explicit formula for $\mathcal{L}^\ast(A, \psi)\cdot \mathcal{O}_\mathfrak{p}$ that is given in Proposition \ref{ref deligne-gross}(ii) is valid.
\item[(iii)] The equality of ${\rm BSD}_p(A_{F/K})$(iv) is valid.
\end{itemize}
\end{proposition}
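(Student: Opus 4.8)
The plan is to exploit the functorial behaviour of $\mathrm{BSD}_p(A_{F/k})$(iv) under change of Galois extension, together with the relationship between the conjecture and the equivariant Tamagawa number conjecture recorded in Remark \ref{rbsd etnc rem}, in order to reduce the validity of $\mathrm{BSD}_p(A_{F/k})$(iv) to a combination of the abelian case $\mathrm{BSD}_p(A_{F/K})$(iv) (hypothesis (iii)) and the purely local/Galois-theoretic information supplied by hypotheses (i) and (ii). The essential point is that, since $G$ is generalized dihedral with abelian subgroup $P = G_{F/K}$ of index two, every irreducible character $\psi$ of $G$ is either inflated from $G_{K/k}$ (a one-dimensional character, of which there are two) or is induced from a character of $P$; and the group ring $\ZZ_p[G]$ differs from a fibre product of $\ZZ_p[G_{K/k}]$ with $\ZZ_p[P]$ only by the prime $2$, which is invertible since $p$ is odd. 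Thus the relative $K$-group $K_0(\ZZ_p[G],\CC_p[G])$ decomposes, up to the prime $2$ (hence exactly here), into the piece coming from $K_0(\ZZ_p[G_{K/k}],\CC_p[G_{K/k}])$ and the piece coming from $K_0(\ZZ_p[P],\CC_p[P])$, with the latter accounting for all induced characters.

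\textbf{Key steps.} First I would set up the decomposition of $\ZZ_p[G]$ (for $p$ odd) as $\ZZ_p[G] \cong \ZZ_p[G_{K/k}] \times_{\ZZ_p} \ZZ_p[P]^{\tau}$, where $\ZZ_p[P]^\tau$ denotes the crossed-product description of $\ZZ_p[G]$ obtained from the conjugation action of $G/P$ on $P$; more practically, I would use the idempotent $e^+ = \tfrac12(1+\tau)$ associated to a lift $\tau$ of the generator of $G/P$ (legitimate since $p$ is odd, so $\tfrac12 \in \ZZ_p$) to split all relevant complexes, Euler characteristics and leading-term elements into a "$+$'' part governed by $G_{K/k}$ and a "$-$'' part. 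Second, I would invoke Proposition \ref{explicitbkprop} (valid under (H$_1$)-(H$_4$) and the no-$p$-torsion hypothesis for $A(K)$, which via the standard argument over the prime-to-$p$ quotient $G/P$ also controls $A(F)_p$ as a $\ZZ_p[G]$-module, using the duality between $A$ and $A^t$) to represent $\mathrm{SC}_p(A_{F/k})$ by a perfect complex and to descend it, via Proposition \ref{explicitbkprop}(iii) with $J = P$ and $J=1$, so that the "$+$''-component of $\chi_{G,p}(\mathrm{SC}_p(A_{F/k}), h^j_{A,F})$ is identified with the corresponding Euler characteristic for $A$ over $K/k$ (an abelian, indeed quadratic, situation where $\mathrm{BSD}$ is controlled by (i) and (ii)), while the "$-$''-component is identified with (the relevant part of) $\chi_{P,p}(\mathrm{SC}_p(A_{F/K}), h^j_{A,F})$. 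Third, on the analytic side I would use inductivity of Hasse-Weil-Artin $L$-series and of the period, regulator, Galois-Gauss sum and root-number terms under induction from $P$ to $G$: the "$-$'' part of $\delta_{G,p}(j_*(\mathcal{L}^*_{A,F/k}))$ is, by Proposition \ref{heavy part} and Lemma \ref{gauss} (whose inputs only require (H$_1$)-(H$_4$)), the image under induction of the analogous element for $F/K$, and matching it against the $K$-side Euler characteristic is precisely the content of $\mathrm{BSD}_p(A_{F/K})$(iv), i.e. hypothesis (iii). Fourth, the "$+$'' part only involves the two linear characters $\psi$ of $G$ (the trivial one and the quadratic one cutting out $K$); for these, the equality in Theorem \ref{bk explicit} becomes, after applying $\rho^\psi_\mathfrak{p}$, exactly the explicit formula of Proposition \ref{ref deligne-gross}(ii), so hypotheses (i) and (ii) (which by construction assert precisely that formula, Galois-equivariantly) pin down the "$+$'' component. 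Combining the two components, and using the injectivity of the localisation map (\ref{local iso}) together with Lemma \ref{pro-p lemma}, yields the equality of $\mathrm{BSD}_p(A_{F/k})$(iv).

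\textbf{Main obstacle.} The delicate point, and the one I would expect to require the most care, is the compatibility of the \emph{local} correction terms --- the Fontaine-Messing terms $\mu_v(A_{F/k})$, the twist-matrix invariants $R_{F/k}(\tilde A^t_v)$, and the equivariant local Galois-Gauss sum contributions --- under the passage between $F/k$ and $F/K$, and their interaction with the $e^+/e^-$ decomposition. For the bad-reduction places this is handled by Lemma \ref{fm} (using that no such place ramifies in $F$). For the $p$-adic places one must check that the induction formula $R_{F/k}(\tilde A^t_v) = \mathrm{ind}^G_{G_w}(\cdots)$ is compatible with restriction to $P$ and with the $\pm$-splitting; here the behaviour of Breuning's terms $R_{F_w/k_v}$ and $U_{F_w/k_v}$ under the relevant change of groups (as in \cite{breuning2}) is the technical heart, and one must ensure that the "$+$''-parts of these local terms are already accounted for inside the data used in hypothesis (ii) (that is, inside $\mu_v(A_{F/k})$ and the $\varrho_\psi$, $P_v(A,\check\psi,1)$ factors appearing in Proposition \ref{ref deligne-gross}(ii)) rather than producing a spurious extra contribution. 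Making this bookkeeping precise --- essentially a careful diagram chase in relative $K$-theory using the functoriality of (\ref{E:kcomm}) and the inductivity of reduced norms --- is where the real work lies; everything else is an assembly of results already established in the paper.
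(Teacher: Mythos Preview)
Your proposed $e^\pm$ decomposition does not exist. The element $\tau$ you choose is a lift of the generator of $G/P$, but in a generalized dihedral group $\tau$ acts on $P$ by inversion and is \emph{not} central in $G$: for any $g\in P$ with $g\neq 1$ one has $e^+g=\tfrac12(g+g^{-1}\tau)\neq\tfrac12(g+g\tau)=ge^+$ since $|P|=p^n$ is odd. Hence $e^+=\tfrac12(1+\tau)$ is not a central idempotent, it does not split the ring $\ZZ_p[G]$, and there is no induced decomposition of $K_0(\ZZ_p[G],\CC_p[G])$ into ``$+$'' and ``$-$'' parts as you describe. The subsequent identification of the ``$-$'' part with (a piece of) $K_0(\ZZ_p[P],\CC_p[P])$ via induction therefore has no foundation, and the whole architecture of your argument collapses.

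You also misread the role of hypotheses (i) and (ii): they are imposed for \emph{every} $\psi\in\widehat G$, not only for the two linear characters. The paper's proof uses them as follows. Writing $\xi$ for the difference of the two sides of the equality in Theorem \ref{bk explicit}, the argument of Proposition \ref{ref deligne-gross} shows that (i) forces $\xi\in K_0(\ZZ_p[G],\QQ_p[G])$ and (ii) forces $\rho^\psi_{\mathfrak p}(\xi)=0$ for every $\psi$ and every $\mathfrak p\mid p$. By the general result \cite[Th.~4.1]{ewt} this means $\xi$ lies in the finite group $K_0(\ZZ_p[G],\QQ_p[G])_{\rm tor}$. The dihedral structure then enters through a single clean fact \cite[Lem.~5.12(ii)]{bmw}: the restriction map ${\rm res}^G_P\colon K_0(\ZZ_p[G],\QQ_p[G])_{\rm tor}\to K_0(\ZZ_p[P],\QQ_p[P])$ is injective. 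Since the functoriality of ${\rm BSD}$ under change of extension (Remark \ref{consistency remark}(ii)) identifies ${\rm res}^G_P(\xi)$ with the obstruction to ${\rm BSD}_p(A_{F/K})$(iv), hypothesis (iii) gives ${\rm res}^G_P(\xi)=0$ and hence $\xi=0$. No local bookkeeping of the sort you anticipate is needed.
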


\begin{proof} Since $F/K$ is an extension of $p$-power degree, the assumption that neither $A(K)$ nor $A^t(K)$ has a point of order $p$ implies that neither $A(F)$ nor $A^t(F)$ has a point of order $p$.

Hence, in this case, the given assumptions imply that the data $A$, $F/k$ and $p$ satisfy all of the hypotheses of Proposition \ref{ref deligne-gross}.

In particular, if we write $\xi$ for the difference between the left and right hand sides of the equality in Theorem \ref{bk explicit}, then the argument of Proposition \ref{ref deligne-gross} shows that the assumed validity of the given conditions (i) and (ii) implies that $\xi$ belongs to
 $K_0(\ZZ_p[G],\QQ_p[G])$ and also to the kernel of the homomorphism $\rho_\mathfrak{p}^\psi$ for every $\psi$ in $\widehat{G}$ and every prime ideal $\mathfrak{p}$ of $\mathcal{O}$ that divides $p$.

These facts combine with the general result of \cite[Th. 4.1]{ewt} to imply that $\xi$ belongs to the finite group $K_0(\ZZ_p[G],\QQ_p[G])_{\rm tor}$.

We next recall from \cite[Lem. 5.12(ii)]{bmw} that, since $G$ is assumed to be dihedral, the natural restriction map ${\rm res}^G_P:K_0(\ZZ_p[G],\QQ_p[G])_{\rm tor} \to K_0(\ZZ_p[P],\QQ_p[P])$ is injective.

It follows that $\xi$ vanishes, and hence by Theorem \ref{bk explicit} that ${\rm BSD}_p(A_{F/k})$(iv) is valid, if the element ${\rm res}^G_P(\xi)$ vanishes.

To complete the proof, it is therefore enough to note that the functorial behaviour of the conjecture ${\rm BSD}(A_{F/k})$ under change of extension, as described in Remark \ref{consistency remark}(ii) (and justified via Remark \ref{consistency}(ii)), implies that ${\rm res}^G_P(\xi)$ vanishes if and only if the equality of ${\rm BSD}_p(A_{F/K})$(iv) is valid. \end{proof}

\begin{remark}{\em If $P$ is cyclic, then Proposition \ref{dihedral prop} shows that in certain situations the validity of ${\rm BSD}_p(A_{F/k})$(iv) for the non-abelian extension $F/k$ can be checked by verifying the relevant cases of the refined Deligne-Gross Conjecture formula in Proposition \ref{ref deligne-gross}(ii) together with explicit congruences for the cyclic extension $F/K$ of the form that are discussed in \S\ref{cycliccongssection}. In addition, if $P$ is cyclic, then the main result of Yakovlev in \cite{yakovlev2} can be used to show that if the groups $\sha(A_{F'})[p^\infty]$ vanish for all proper subfields of $F$ that contain $K$, then the $\ZZ_p[G]$-module $A(F)_p$ is a `trivial source module' and so has a very explicit structure. }\end{remark}

\begin{example}{\em For the examples described in Example \ref{wuthrich example} the field $F$ is a dihedral extension of $k = \QQ$ of degree $6$ and both of the given elliptic curves $A$ satisfy all of the hypotheses that are necessary to apply Proposition \ref{dihedral prop} (in the case $p=3$ and $n=1$). In this way one finds that the validity of ${\rm BSD}_3(A_{F/\QQ})$(iv) implies, and if $\sha(A_F)[3^\infty]$ vanishes is equivalent to, the validity of the relevant cases of the refined Deligne-Gross Conjecture together with the validity of an explicit congruence of the form described in Theorem \ref{mequals1} for the cyclic extension $F/K$ (and with $m_1= 1$ and $m_0 = 2$). Unfortunately, however, since $F/\QQ$ is of degree $6$ it seems that for the given curves $A$ the latter congruences are at present beyond the range of numerical investigation via the methods that have been used to verify the cases discussed in Example \ref{bleyexamples}. }\end{example}

\subsection{The proof of Theorem \ref{rbsdimpliesmt}}\label{mrsproof}

We fix a subgroup $J$ of $G$ and set $E := F^J$ and a maximal subset $x_\bullet$ of $A^t(F_p)^\wedge_p$ that is linearly independent over $\ZZ_p[G]$.

To study the relationship between ${\rm BSD}(A_{F/k})$ and Conjecture \ref{mrsconjecture} we set $X:=\langle x_\bullet\rangle_{\ZZ_p[G]}$ and consider both of the complexes $C_{S,X}:={\rm SC}_S(A_{F/k};X,H_\infty(A_{F/k})_p)$ and $C_{S,X,J}:={\rm SC}_S(A_{E/k};X^J,H_\infty(A_{E/k})_p)$.

Then the definition of $C_{S,X}$ as the mapping fibre of the morphism (\ref{fibre morphism}), the well-known properties of \'etale cohomology under Galois descent and the fact that $X$ is a free $\ZZ_p[G]$-module (see also (\ref{global descent}), (\ref{local descent}) and the argument that precedes them) imply that the object $$(C_{S,X})_J:=\ZZ_p[G/J]\otimes_{\ZZ_p[G]}^{\mathbb{L}}C_{S,X}$$ of $D^{\rm perf}(\ZZ_p[G/J])$ is isomorphic to $C_{S,X,J}$. This fact combines with \cite[Lem. 3.33]{bst} to give canonical identifications
$$H^1(C_{S,X})^J=H^1((C_{S.X})_J)=H^1(C_{S,X,J})$$ and
$$H^2(C_{S,X})_J=H^2((C_{S,X})_J)=H^2(C_{S,X,J}).$$
%

In addition, our assumption that neither $A(F)$ nor $A^t(F)$ has a point of order $p$ combines with Proposition \ref{prop:perfect} to imply that,
in the terminology of \cite{bst}, the complex $C_{S,X}$ is a `strictly admissible' complex of $\ZZ_p[G]$-modules. The approach of \S 3.4.1 in loc. cit. therefore gives a `Bockstein homomorphism' of $\ZZ_p[G/J]$-modules \begin{equation}\label{htc}{\rm ht}^{C_{S,X}}:H^1(C_{S,X})^J\to \mathcal{I}_p(J)/\mathcal{I}_p(J)^2\otimes_{\ZZ_p[G/J]}H^2(C_{S,X,J}).\end{equation}

For any $p$-separable choice of points $(\mathcal{Y},\mathcal{Y}')$ of $A$ for $F/E$ of rank $(a,a')$
and each index $i$ with $a< i \le a'$ we then use the dual point $w_i^*$ to construct a composite homomorphism of $\ZZ_p[G/J]$-modules \begin{equation}\label{projection}H^2(C_{S,X,J})\to\Sel_p(A_E)^\vee\to A(E)_p^*\to (Y')^*\to\ZZ_p[G/J].\end{equation} Here the first arrow is the canonical homomorphism of Proposition \ref{prop:perfect}(iii), the second arrow is the canonical homomorpism occurring in (\ref{sha-selmer}), the third arrow is the natural restriction maps and the fourth arrow maps an element of $(Y')^*$ to its coefficient at the basis element $w_i^*$.

Upon composing ${\rm ht}^{C_{S,X}}$ with each map (\ref{projection}) we thereby obtain, for each index $i$ with $a<i\leq a'$, a homomorphism of $\ZZ_p[G/J]$-modules
\begin{equation}\label{htcwi} {\rm ht}^{C_{S,X}}_{w_i}: H^1(C_{S,X})^J \to \mathcal{I}_p(J)/\mathcal{I}_p(J)^2 \end{equation}
that corresponds to the map denoted by ${\rm Boc}^C_x$ in \cite[\S 3.4.1]{bst}.
Just as in (\ref{canbecombined}) these homomorphisms can then be combined to give a canonical homomorphism \begin{equation}\label{htcy}{\rm ht}_{\mathcal{Y}'}^{C_{S,X}}: {\bigcap}_{\ZZ_p[G/J]}^{a'} H^1(C_{S,X})^J \longrightarrow ({\bigcap}_{\ZZ_p[G/J]}^{a} H^1(C_{S,X})^J)\otimes_{\ZZ_p} I_p(J)^{a'-a}/I_p(J)^{1+a'-a}.\end{equation}

%
%
%


We will apply the general result of \cite[Th. 3.34(ii)]{bst} to the complex $C_{S,X}$, the isomorphism $h_{A,F}$ and the characteristic element $\mathcal{L}_{F/k}$ for the pair $(C_{S,X},h_{A,F})$ constructed in Lemma \ref{modifiedlemma}(iii) under the assumption that ${\rm BSD}(A_{F/k})$ is valid. We recall that \begin{equation}\label{charelementFk}e_{F,a}\cdot\mathcal{L}_{F/k}^{-1}=\frac{L^{(a)}_{S}(A_{F/k},1)}{\Omega_A^{F/k}\cdot w_{F/k}^d}\cdot \mathcal{LR}^p_{A^t_{F/k}}(x_\bullet).\end{equation}

Similarly, if ${\rm BSD}(A_{F/k})$ is valid then the observations made in Remark \ref{consistency remark}(ii) (see also Remark \ref{consistency} below for more details) imply that ${\rm BSD}(A_{E/k})$ is also valid. Applying Lemma \ref{modifiedlemma}(iii) to the complex $C_{S,X,J}$ and the isomorphim $h_{A,E}$ therefore implies the existence of a characteristic element $\mathcal{L}_{E/k}$ for this pair with the property that \begin{equation}\label{charelementEk}e_{E,a'}\cdot\mathcal{L}_{E/k}^{-1}=\frac{L^{(a')}_{S}(A_{E/k},1)}{\Omega_A^{E/k}\cdot w_{E/k}^d}\cdot \mathcal{LR}^p_{A^t_{E/k}}(x_\bullet^J).\end{equation}

We next fix any (injective) section $s$ to the canonical surjective composite homomorphism $$H^2(C_{S,X})\to\Sel_p(A_F)^\vee\to A(F)_p^*\to Y^*,$$ where the last arrow is the natural restriction map. The set $s(\mathcal{Y}^*):=\{s(y^*):y\in\mathcal{Y}\}$ is then (in the sense of \cite[Def. 3.6]{bst}) a `separable subset' of $H^2(C_{S,X})$ (of cardinality $a$).
The `higher special element' associated via \cite[Def. 3.3]{bst} to the data $$(C_{S,X},h_{A,F},\mathcal{L}_{F/k},s(\mathcal{Y}^*))$$ is then, by (\ref{charelementFk}) and a direct comparison of the definitions of $h_{A,F}$ and ${\rm ht}_{A_{F/k}}^{a}$, equal to $\eta_{\mathcal{Y},x_\bullet}$.

We finally fix any (injective) section $s'$ to the canonical surjective composite homomorphism $$H^2(C_{S,X,J})\to\Sel_p(A_E)^\vee\to A(E)_p^*\to (Y')^*,$$ where the last arrow is the natural restriction map. The set $s'((\mathcal{Y}')^*):=\{s'(y^*):y\in\mathcal{Y}'\}$ is then a separable subset of $H^2(C_{S,X,J})$ (of cardinality $a'$). The higher special element associated via \cite[Def. 3.3]{bst} to the data $$(C_{S,X,J},h_{A,E},\mathcal{L}_{E/k},s'((\mathcal{Y}')^*))$$ is then, by (\ref{charelementEk}) and a direct comparison of the definitions of $h_{A,E}$ and ${\rm ht}_{A_{E/k}}^{a'}$, equal to $\eta_{\mathcal{Y}',x_\bullet^J}$.

The general result \cite[Th. 3.34(ii)]{bst} therefore implies that
\[ \eta_{\mathcal{Y},x_\bullet} \in {\bigcap}_{\ZZ_p[G]}^{a} H^1(C_{S,X})\subseteq{\bigcap}_{\ZZ_p[G]}^{a}A^t(F)_p,\]
and
\[ \eta_{\mathcal{Y}',x_\bullet^J}\in {\bigcap}_{\ZZ_p[G/J]}^{a'} H^1(C_{S,X})^J\subseteq{\bigcap}_{\ZZ_p[G/J]}^{a'}A^t(E)_p,\]
and that 
%
\begin{equation}\label{forreferenceinremark} \mathcal{N}_J(\eta_{\mathcal{Y},x_\bullet})=(-1)^{a(a'-a)}\cdot \nu_J({\rm ht}_{\mathcal{Y}'}^{C_{S,X}}(\eta_{\mathcal{Y}',x_\bullet^J}))\end{equation}
in $\left({\bigcap}_{\ZZ_p[G/J]}^{a} A^t(F)_p\right)\otimes_{\ZZ_p} I_p(J)^{a'-a}/I_p(J)^{1+a'-a}$. 

To complete the proof of Theorem \ref{rbsdimpliesmt} we will combine the equality (\ref{forreferenceinremark}) with the comparison of height pairings proved in Theorem \ref{thecomptheorem} below.


We first note that $A$, $F/E$ and $p$ satisfy the hypotheses (H$_1$)-(H$_5$) listed in \S \ref{tmc} (see \cite[Lem. 3.4]{bmw0}). We also note that the validity of these hypotheses (and their duality) combines with Proposition \ref{explicitbkprop}(ii) to imply that $U_{F/E,p}=A(E)_p$ and $U^t_{F/E,p}=A^t(E)_p$.

It will thus be enough to prove that, for any index $a<i\leq a'$, the restriction to $H^1(C_{S,X})^J$ of the homomorphism
$${\rm ht}_{w_i}^{{\rm MT}}:A^t(E)_p\to \mathcal{I}_p(J)/\mathcal{I}_p(J)^2$$ is equal to the homomorphism ${\rm ht}_{w_i}^{C_{S,X}}$ of (\ref{htcwi}).

By Theorem \ref{thecomptheorem} below applied to $F/E$, for any $P$ in $A(E)_p$ one has $${\rm ht}_{w_i}^{{\rm MT}}(P):=-\langle P,w_i\rangle^{\rm MT}_{F/E}=\langle P,w_i\rangle_{p}^{\rm N}$$ in $\ZZ_p\otimes_\ZZ J$, where $\langle \,,\rangle_{p}^{\rm N}$ is the Bockstein pairing associated to the classical Selmer complex ${\rm SC}_p(A_{F/E})$ as in (\ref{thepairing}) below.

It is therefore be enough to prove that, for any index $a<i\leq a'$ and for any $P\in H^1(C_{S,X})^J$ one has \begin{equation*}{\rm ht}_{w_i}^{C_{S,X}}(P)=\langle P,w_i\rangle_{p}^{\rm N}.\end{equation*} But this equality follows easily from the compatibility of the maps $$H^1(C_{S,X})^J=H^1(C_{S,X,J})\to H^1({\rm SC}_p(A_{F/E}))$$ and $$H^2(C_{S,X,J})\to H^2({\rm SC}_p(A_{F/E}))$$ occurring in the exact sequence (\ref{useful1}) with all the other maps involved in the construction of both Bockstein homomorphisms (and the naturality of such homomorphisms). This completes the proof of Theorem \ref{rbsdimpliesmt}.


\section{Height pairing comparisons}\label{comparison section}

In this section we continue to assume that $F/k$ is abelian.

We fix an odd prime number $p$ and assume that $A$, $F/k$ and $p$ satisfy the hypotheses (H$_1$)-(H$_6$) listed in \S \ref{tmc}. We also assume that neither $A(F)$ not $A^t(F)$ has a point of order $p$.

In this situation, Proposition \ref{explicitbkprop} implies that the classical Selmer complex ${\rm SC}_p(A_{F/k})$ belongs to $D^{\rm perf}(\ZZ_p[G])$ and is acyclic outside degrees one and two, where its cohomology modules canonically identify with $A^t(F)_p$ and $\Sel_p(A_F)^\vee$ respectively.

Via the general formalism outlined in Appendix \ref{bocksection} one therefore obtains an associated canonical algebraic height pairing
$$\langle\,,\rangle^{\rm N}_{p}:A^t(k)_p\otimes_{\ZZ_p} A(k)_p\to \ZZ_p\otimes_{\ZZ}G$$
that we refer to as the `Nekov\'a\v r height pairing'.

Proposition \ref{explicitbkprop}(ii) also implies that the $p$-completion of the height pairing (\ref{tanpairing}) (for $J=G$) that is defined using the theory of bi-extensions by Mazur and Tate in \cite{mt0} gives a well-defined pairing
$$\langle\,,\rangle_p^{\rm MT}:A^t(k)_p\otimes_{\ZZ_p} A(k)_p\to \ZZ_p\otimes_{\ZZ}G.$$

In this section we shall prove that the Nekov\'a\v r height pairing coincides with the inverse of the Mazur-Tate height pairing. This comparison result was already used in the proof of Theorem \ref{rbsdimpliesmt} and is also, we believe, of some independent interest.

\subsection{Statement of the result}

To make the construction of the Nekov\'a\v r height pairing more precise we fix a finite subset $\Sigma$ of $S_k^f$ that contains $S_k^p$, $S_k^F\cap S_k^f$ and $S_k^A$.

Proposition \ref{explicitbkprop} implies that the Selmer complex $C_{\Sigma}={\rm SC}_{\Sigma,p}(A_{F/k})$ belongs to $D^{\rm perf}(\ZZ_p[G])$ and is acyclic outside degrees one and two.

In addition,
we can use Proposition \ref{explicitbkprop}(iv) to interpret the global Kummer map as an isomorphism \begin{equation}\label{r1}r_1:A^t(F)_p\to H^1(C_{\Sigma}),\end{equation} and also to define a canonical composite homomorphism \begin{equation}\label{r2}r_2:H^2(C_{\Sigma})\stackrel{\sim}{\to}\Sel_p(A_F)^\vee{\to} A(F)_p^*\to A(k)_p^*.\end{equation} Here the first arrow is the canonical isomorphism occurring in the diagram (\ref{Selmerdiagram}) and the last arrow is simply the restriction to $A(k)_p$ of any element of the linear dual $A(F)_p^*$.

By using the general approach (and notation) of \S \ref{bocksection}, we can therefore define the Nekov\'a\v r height pairing by setting
\begin{equation}\label{thepairing}\langle\,,\rangle^{\rm N}_{p}:=\langle\,,\rangle_{C_\Sigma,r_1,r_2}:A^t(k)_p\otimes_{\ZZ_p} A(k)_p\to I_{p}(G)/I_{p}(G)^2\cong \ZZ_p\otimes_{\ZZ}G,\end{equation}
where $I_p(G)$ denotes the augmentation ideal in $\ZZ_p[G]$.

Then Lemma \ref{independenceofsigma} and Remark \ref{indeptremark} combine with the naturality of Bockstein pairings to imply that this pairing is  independent of the choice of set $\Sigma$.

We write $m_p(G)$ for the non-negative integer for which $p^{m_p(G)}$ is the exponent of the group $\ZZ_p\otimes_{\ZZ}G$.

Then the proof of our comparison of height pairings is greatly simplified by imposing the following additional technical hypothesis:

\begin{hyp}\label{hypbd}
The group $H^1({\rm Gal}(F(A[p^{m_p(G)}])/F),A[p^{m_p(G)}])$ vanishes.
\end{hyp}

\begin{remarks}\label{rmkbd}{\em
Hypothesis \ref{hypbd} is motivated by the result \cite[Lem. 2.13]{bert} of Bertolini and Darmon and  
 is satisfied whenever multiplication-by-`$-1$' belongs to the image of the canonical Galois representation $G_F\to\Aut_{\mathbb{F}_p}(A[p])$. In particular, if $A$ is an elliptic curve, then the hypothesis only excludes finitely many primes $p$ by a famous result of Serre \cite{serre}. Moreover, if $A$ is an elliptic curve and $F$ does not contain any $p$-th roots of unity, Lawson and Wuthrich have recently shown that Hypothesis \ref{hypbd} is valid in all but certain exceptional cases that, in particular, all have $p\leq 11$ (see \cite[Thm. 2 and \S 6]{lw}).}
\end{remarks}



\begin{theorem}\label{thecomptheorem} Fix an odd prime number $p$ and assume that $A$, $F/k$ and $p$ satisfy the hypotheses (H$_1$)-(H$_6$) listed in \S \ref{tmc}. Assume also that neither $A(F)$ nor $A^t(F)$ has a point of order $p$ and that Hypothesis \ref{hypbd} is satisfied.

Then the Nekov\'a\v r pairing $\langle\,,\rangle^{\rm N}_{p}$ is equal to the inverse of the Mazur-Tate pairing $\langle\,,\rangle^{\rm MT}_p$.
\end{theorem}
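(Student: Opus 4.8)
The plan is to show that both pairings arise, up to inversion, from the same Bockstein-type construction applied to compatible complexes. The Mazur--Tate pairing is defined via bi-extensions, while the Nekov\'a\v r pairing $\langle\,,\rangle^{\rm N}_p$ is defined via the Bockstein homomorphism attached to the Selmer complex $C_\Sigma = {\rm SC}_{\Sigma,p}(A_{F/k})$. The key intermediate object will be the cyclotomic (or more precisely the $F/k$-)descent picture: the classical Selmer complex ${\rm SC}_p(A_{F/k})$ is obtained from a Selmer complex over $k$ by base change along $\ZZ_p[G]\otimes^{\mathbb L}_{\ZZ_p}(-)$, and the Bockstein homomorphism that produces $\langle\,,\rangle^{\rm N}_p$ is exactly the connecting map in the resulting descent spectral sequence. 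I would first make this descent description completely explicit, using Proposition \ref{explicitbkprop}(iii) (Galois descent for ${\rm SC}_p$) together with the identifications $r_1$ and $r_2$ of (\ref{r1}) and (\ref{r2}), so that $\langle\,,\rangle^{\rm N}_p$ is presented as a cup-product-type pairing on Galois cohomology with values in $I_p(G)/I_p(G)^2 \cong \ZZ_p\otimes_\ZZ G$.

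\textbf{Reduction to a cohomological identity.} The next step is to reduce Theorem \ref{thecomptheorem} to a statement purely about Galois cohomology of the Tate module $T_p(A)$ over $k$ and its local completions. Concretely, both pairings can be expressed through the connecting homomorphism associated to the short exact sequence $0 \to T_p(A) \to \varprojlim_n T_p(A)/p^n \otimes \ZZ[G/G^{p^n}] \to \dots$, i.e. through the class of the extension $F/k$ in $H^1(k,\ZZ_p(1))$-type data twisted appropriately; the point is that the Mazur--Tate bi-extension pairing admits, by the work of Tan in \cite{kst} and of Bertolini--Darmon in \cite{bert2}, a cohomological reinterpretation as precisely such a Bockstein pairing, but with a sign/inversion coming from the way the bi-extension is normalized relative to the descent exact sequence. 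I would invoke \cite[Lem. 2.13]{bert} (whose vanishing hypothesis is exactly Hypothesis \ref{hypbd}) to guarantee that the relevant Galois cohomology groups $H^1({\rm Gal}(F(A[p^{m_p(G)}])/F),A[p^{m_p(G)}])$ vanish, which is what makes the comparison of the two descent constructions clean — it ensures that the natural map from mod-$p^{m_p(G)}$ Selmer data over $F$ to the corresponding data over $k$ behaves as if the extension were "unobstructed", so that no correction terms intervene. The hypotheses (H$_1$)--(H$_6$) are used, via Proposition \ref{explicitbkprop}(ii), to ensure ${\rm SC}_p(A_{F/k}) \in D^{\rm perf}(\ZZ_p[G])$ and that the local points $A^t(F_v)^\wedge_p$ are cohomologically trivial, so that the Bockstein construction of \S\ref{bocksection} genuinely applies and the local conditions defining $C_\Sigma$ are the "right" ones matching the local norm conditions $U_{F/E}$, $U^t_{F/E}$ in the definition (\ref{tanpairing}) of the Mazur--Tate pairing.

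\textbf{The core computation.} With both pairings written as connecting maps in explicit exact sequences of $G$-modules (Galois cohomology with local conditions), the heart of the proof is a diagram chase identifying the two. I would set up the tautological representative of the relevant Yoneda extension class $\varepsilon \in {\rm Ext}^2_{\ZZ_p[G]}(A(F)_p^*, A^t(F)_p)$ corresponding to $C_\Sigma$ — exactly as in the free-resolution argument (\ref{syzygy})--(\ref{yonedarep}) used in the proof of Theorem \ref{mequals1} — and compute $\langle\,,\rangle^{\rm N}_p$ as the resulting snake-lemma connecting homomorphism into $I_p(G)/I_p(G)^2 \otimes (A(F)_p^*)_G$. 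On the other side, the Mazur--Tate pairing evaluated via the bi-extension $\mathcal{P}$ for $(A,A^t)$ over $k$, pulled back along the universal class of $F/k$, gives a compatible connecting homomorphism; the earlier results of Bertolini--Darmon \cite{bert2} and Tan \cite{kst} provide precisely the bridge showing that this bi-extension connecting map agrees, up to the sign $g \mapsto g^{-1}$ on $\ZZ_p \otimes_\ZZ G$, with the Galois-descent Bockstein. \textbf{The main obstacle} I anticipate is exactly pinning down this inversion: the two constructions use dual local conditions (one imposes norm conditions on $A$, the other on $A^t$, and the Selmer complex mixes $T_{p,F}(A^t)$ with Poitou--Tate duality), so keeping track of whether the natural identification sends $g$ to $g$ or to $g^{-1}$ requires a careful and consistent choice of all the duality isomorphisms (Artin--Verdier, local Tate duality, and the Cassels--Tate pairing linking $\sha(A_F)$ and $\sha(A^t_F)$) appearing in Proposition \ref{explicitbkprop}(iv) and in the maps $w$, $s$ of Appendix \ref{exp rep section}. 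This is where the sign-error flagged in the Acknowledgements presumably lived, and resolving it is the delicate part; everything else is functoriality of Bockstein homomorphisms plus the cited results of \cite{bert2} and \cite{kst}.
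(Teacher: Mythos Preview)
Your overall architecture is right: both pairings are Bockstein pairings, and the results of Bertolini--Darmon \cite{bert2} and Tan \cite{kst} are the bridge on the Mazur--Tate side. But there is a genuine gap in your ``core computation'', and it is precisely the step where the paper does real work.

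The results of \cite{bert2} and \cite{kst} do not compare the bi-extension pairing directly to the Bockstein of the classical Selmer complex $C_\Sigma$. What they show is that the Mazur--Tate pairing coincides with the $p$-adic completion of Bertolini--Darmon's pairing $\langle\,,\rangle_1$, and \emph{that} pairing is itself a Bockstein pairing, but for a different complex: the two-term complex $C_{\rm BD}=[\Sel^T_{p^m}(A^t/F)\xrightarrow{\lambda_T}H^1_T(\mathbb{A}_F,A^t)[p^m]]$ built from the modified Selmer group relative to an auxiliary ``admissible'' set $T$ of primes. This complex lives over $\ZZ/p^m[G]$, not $\ZZ_p[G]$, and its terms are defined by completely different local conditions from those defining $C_\Sigma$. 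So you still need a map of complexes (or at least a compatible pair of maps on cohomology) linking $C_{\rm BD}$ to $C_\Sigma$, and your proposal does not supply one. The Yoneda-class route you borrow from the proof of Theorem~\ref{mequals1} does not help here: that argument is a computation of $\chi_{G,p}$ in the cyclic degree-$p$ case and actually \emph{uses} Theorem~\ref{thecomptheorem} at the step (\ref{PsicomputesMT}), so invoking it would be circular.

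What the paper does is construct the missing link explicitly. Hypothesis~\ref{hypbd} is used not as a vague ``cleanness'' condition but, via Lemma~\ref{214}, to guarantee (by Chebotarev) the existence of the admissible set $T$ making $C_{\rm BD}$ a complex of cohomologically-trivial $G$-modules. The comparison then proceeds in two steps. First, the Bockstein of $C_\Sigma$ is reduced mod $p^m$ via a general change-of-ring lemma (Lemma~\ref{reducedbock}), so that one is now comparing two Bocksteins over $\ZZ/p^m[G]$. Second, an explicit morphism of complexes $\phi:C_{{\rm BD},\Sigma}\to C_\Sigma/p^m$ is written down at the level of cochains (Lemma~\ref{morphismphi}), using lifting maps from $A^t[p^m]$-valued cochains to $(T_p(A^t)\bmod p^m)$-valued cochains; the crucial verification is that $H^2(\phi)$ composed with the canonical identification $r_2$ equals $-v_\Sigma$, and this minus sign is exactly the inversion in the statement. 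Naturality of Bockstein under $\phi$ then gives $\langle\,,\rangle^{\rm N}_p=-\langle\,,\rangle^{\rm BD}_p$, and a separate explicit cochain computation (Lemma~\ref{bdcomp}) shows $\langle\,,\rangle^{\rm BD}_p=\langle\,,\rangle_1\otimes\ZZ_p$, which by \cite{bert2} and \cite{kst} equals $\langle\,,\rangle^{\rm MT}_p$. Your intuition about where the sign lives (in the duality identifications) is correct, but making it precise requires going through these explicit cochain-level constructions rather than an abstract diagram chase.
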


The proof of Theorem \ref{thecomptheorem} will occupy the rest of this section. We thus assume throughout that all the hypotheses listed in Theorem \ref{thecomptheorem} are valid.

\subsection{An auxiliary height pairing}

In this section we use constructions due to Bertolini and Darmon \cite{bert,bert2} to
define an auxiliary $p$-adic height pairing $\langle\,,\rangle^{\rm BD}_p$. The explicit computation carried out in Lemma \ref{bdcomp} below will in fact easily show that this definition is just an alternate description of the pairing $\langle\,,\rangle_1$ from \cite[\S 3.4.1]{bert} or \cite[\S 2.2]{bert2}.


In \S\ref{compproof} below we will then show that $\langle\,,\rangle_p^{\rm BD}$ coincides with the inverse of $\langle\,,\rangle_p^{\rm N}$. By then appealing to comparison results due to Bertolini and Darmon and to Tan, we will finally obtain the desired comparison of pairings.

We will now follow closely some of the constructions from \cite{bert,bert2} and for this reason we will be rather brief in some of our arguments.

\subsubsection{} We again write $m:=m_p(G)$ for the non-negative integer such that $p^m$ is the exponent of the finite abelian $p$-group $\ZZ_p\otimes_{\ZZ}G$.

We set $Z:=\ZZ/p^m\ZZ$ and $R:=Z[G]$ and write $I_{R}$ for the augmentation ideal in $R$. We will also require all of the following notation.

For any finite set $S$ of non-archimedean places of $k$, we define $\Sel^S_{p^m}(A^t/F)$ to be the kernel of the localisation map $$H^1\bigl(F,A^t[p^m]\bigr)\to\bigoplus_{w'\notin S(F)}H^1(F_{w'},A^t)$$ (with the sum running over all non-archimedean places of $F$ that do not belong to $S(F)$). When $S$ is taken to be empty one recovers the usual Selmer group associated to multiplication by $p^m$, which we denote by $\Sel^{(p^m)}(A^t_F)$ instead.

For such a set $S$ and $B\in\{A,A^t\}$ we also define $R$-modules
$$B_S(\mathbb{A}_F)/p^m:=\prod_{w'\in S(F)}B(F_{w'})/p^mB(F_{w'}),$$ $$H^1_{S}\bigl(\mathbb{A}_F,B[p^m]\bigr):=\prod_{w'\in S(F)}H^1(F_{w'},B[p^m])$$ and $$H^1_{S}\bigl(\mathbb{A}_F,B\bigr)[p^m]:=\prod_{w'\in S(F)}H^1(F_{w'},B)[p^m].$$

By abuse of notation, we denote by $\lambda_{S}$ both the localisation map $$\Sel^S_{p^m}(A^t/F){\to}H^1_{S}\bigl(\mathbb{A}_F,A^t[p^m]\bigr)$$ and the induced map $$\Sel^S_{p^m}(A^t/F){\to}H^1_{S}\bigl(\mathbb{A}_F,A^t\bigr)[p^m].$$

We also write $$\kappa:A^t_S(\mathbb{A}_F)/p^m\to H^1_{S}\bigl(\mathbb{A}_F,A^t[p^m]\bigr)$$ and $$\kappa':H^1_{S}\bigl(\mathbb{A}_F,A\bigr)[p^m]^\vee\to H^1_{S}\bigl(\mathbb{A}_F,A[p^m]\bigr)^\vee$$ for the canonical maps induced by the local Kummer sequences.

We recall (see Corollary \ref{Tatepoitouexplicit} below) that the local Tate duality isomorphism $$u_S:H^1_{S}\bigl(\mathbb{A}_F,A^t[p^m]\bigr)\to H^1_{S}\bigl(\mathbb{A}_F,A[p^m]\bigr)^\vee$$ induces an isomorphism
\begin{equation}\label{inducedbyum}H^1_{S}\bigl(\mathbb{A}_F,A^t\bigr)[p^m]\cong\cok(\kappa)\stackrel{\sim}{\to}\cok(\kappa')\cong\bigl(A_S(\mathbb{A}_F)/p^m\bigr)^\vee.\end{equation}
By composing this isomorphism with the dual of the restriction map $$\Sel^{(p^m)}(A_F)\to A_S(\mathbb{A}_F)/p^m$$ one finally obtains a map
$$v_S:H^1_{S}\bigl(\mathbb{A}_F,A^t\bigr)[p^m]{\to}
{\rm Sel}^{(p^m)}(A_F)^\vee.$$

\subsubsection{}
In the sequel we denote by $P$ the Sylow $p$-subgroup of $G$ and set $K:=F^P$.

\begin{lemma}\label{214} The restriction map $H^1(K,A[p^m])\to H^1(F(A[p^m]),A[p^m])$ is injective.\end{lemma}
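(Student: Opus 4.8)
The goal is the injectivity of the restriction map $H^1(K,A[p^m])\to H^1(F(A[p^m]),A[p^m])$, where $K=F^P$ with $P$ the Sylow $p$-subgroup of $G$. I would first reduce, via the inflation--restriction exact sequence for the tower $K \subseteq F(A[p^m])$, to showing that the inflated cohomology group $H^1\bigl(\Gal(F(A[p^m])/K),A[p^m]^{\,G_{F(A[p^m])}}\bigr) = H^1\bigl(\Gal(F(A[p^m])/K),A[p^m]\bigr)$ vanishes. The natural strategy is then to factor this Galois group using the intermediate field $F$: since $F/K$ has degree prime to $p$ (as $K=F^P$ and $P$ is the Sylow $p$-subgroup, $[F:K]=[G:P]$ is coprime to $p$), a standard restriction--corestriction argument shows that $H^1\bigl(\Gal(F(A[p^m])/K),A[p^m]\bigr)$ injects into $H^1\bigl(\Gal(F(A[p^m])/F),A[p^m]\bigr)$, because $A[p^m]$ is a $p$-group and multiplication by the prime-to-$p$ index $[F:K]$ is invertible on it.

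Once this reduction is in place, the vanishing of $H^1\bigl(\Gal(F(A[p^m])/F),A[p^m]\bigr)$ is precisely Hypothesis \ref{hypbd}, which is assumed throughout this section (and which, by Remark \ref{rmkbd}, holds whenever $-1$ lies in the image of $G_F\to\Aut_{\mathbb{F}_p}(A[p])$, hence for all but finitely many $p$ when $A$ is an elliptic curve, by Serre's theorem). Combining the inflation--restriction sequence with this input yields that the inflation term is trivial, so restriction $H^1(K,A[p^m])\to H^1(F(A[p^m]),A[p^m])$ is injective, as claimed.

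The only point requiring a little care is the restriction--corestriction step: one must check that $A[p^m]$ is indeed annihilated by $p^m$ (true by definition) and that the composite $\mathrm{cor}\circ\mathrm{res}$ on $H^1\bigl(\Gal(F(A[p^m])/K),A[p^m]\bigr)$ equals multiplication by $[F:K]$, which is the standard formula; since $\gcd([F:K],p)=1$ this composite is an automorphism, forcing $\mathrm{res}$ to be injective. I do not anticipate a genuine obstacle here: the argument is a routine assembly of inflation--restriction, the prime-to-$p$ restriction--corestriction trick, and the standing Hypothesis \ref{hypbd}. The role of this lemma in what follows is to allow one to compute Selmer-type classes after base change to $F(A[p^m])$, where the Galois action on $A[p^m]$ is trivial and the Bertolini--Darmon-style constructions of the pairing $\langle\,,\rangle^{\rm BD}_p$ become transparent.
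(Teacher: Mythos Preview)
Your argument contains a genuine error in the computation of $[F:K]$. You write that ``$F/K$ has degree prime to $p$ (as $K=F^P$ and $P$ is the Sylow $p$-subgroup, $[F:K]=[G:P]$ is coprime to $p$)'', but this is backwards: since $K=F^P$ is the fixed field of $P$, one has $\Gal(F/K)=P$ and hence $[F:K]=|P|$ is a \emph{power of $p$}, while it is $[K:k]=[G:P]$ that is prime to $p$. Consequently the restriction--corestriction trick fails here: the composite $\mathrm{cor}\circ\mathrm{res}$ on $H^1\bigl(\Gal(F(A[p^m])/K),A[p^m]\bigr)$ is multiplication by the $p$-power $|P|$, which is certainly not invertible on a $p$-primary module.

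The paper's proof, following Bertolini--Darmon, repairs this by using a different input for the step $K\to F$. By inflation--restriction for the tower $K\subseteq F$, the kernel of $H^1(K,A[p^m])\to H^1(F,A[p^m])$ is $H^1\bigl(\Gal(F/K),A[p^m]^{G_F}\bigr)=H^1(P,A(F)[p^m])$. The standing assumption that $A(F)$ has no point of order $p$ forces $A(F)[p^m]=0$, so this kernel vanishes for trivial reasons. Then the second step, from $F$ to $F(A[p^m])$, is handled exactly as you suggest via Hypothesis \ref{hypbd}. So the overall two-step structure you propose is correct, but the justification for injectivity on the first leg must appeal to the vanishing of $A(F)[p^m]$, not to a (false) coprimality of degrees.
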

\begin{proof} As in \cite[Cor. 2.14]{bert}, the result follows immediately upon combining the validity of Hypothesis \ref{hypbd} with the fact that $A(F)[p^m]$ vanishes (because $A(F)$ does not have a point of order $p$).
\end{proof}

Lemma \ref{214} now implies the existence of `admissible' auxiliary sets of primes, as in \cite[Def. 1.5]{bert2} or \cite[Def. 2.20, Def. 2.22]{bert}:

\begin{lemma}\label{tm} There is a finite set $T$ of non-archimedean places of $K$ such that:
\begin{itemize}
\item[(i)] Every place in $T$ splits completely in $F/K$.
\item[(ii)] If $v\in T$ then $A(K_v)/p^mA(K_v)$ is a free $Z$-module of rank $2\dim(A)$.
\item[(iii)] The restriction map \begin{equation}\label{Kres}\Sel^{(p^m)}(A_K)\to\bigoplus_{v\in T}A(K_v)/p^mA(K_v)\end{equation} is injective.
\end{itemize}
\end{lemma}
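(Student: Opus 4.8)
The statement to be proved (Lemma~\ref{tm}) is a standard construction of a finite \emph{admissible} (or \emph{rigid}) set of primes, in the spirit of Kolyvagin's original arguments and their adaptations by Bertolini and Darmon in \cite{bert,bert2}. The natural approach is to produce the primes of $T$ one at a time via a Chebotarev density argument over the field $L := F(A[p^m])$, choosing the Frobenius conjugacy class so as to force conditions (i), (ii) and (iii) simultaneously. First I would fix the finite-dimensional $\mathbb{F}_p$-linear (or $Z$-linear) space $S := \Sel^{(p^m)}(A_K)$; since $A(F)$ — and hence $A(K)$, as $K \subseteq F$ — has no point of order $p$, the group $A(K)[p^m]$ vanishes, and so the natural Kummer-type map realises $S$ as a subgroup of $H^1(K, A[p^m])$, which by Lemma~\ref{214} injects into $H^1(L, A[p^m]) = \Hom_{G_L}(G_L, A[p^m])$ (the action on $A[p^m]$ being trivial over $L$). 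Thus each nonzero $s \in S$ gives a nonzero homomorphism $\phi_s\colon G_L \to A[p^m]$, and these are in finite number after passing to a basis of $S$.

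\textbf{Key steps.} The second step is the Chebotarev input. Consider the extension $L/K$ and, for the finitely many basis elements $s$ of $S$, the fixed fields cut out by the $\phi_s$. One wants a prime $v$ of $K$ whose Frobenius in $\Gal(L/K)$ is trivial — this makes $v$ split completely in $L$, hence in $F$ (giving (i)) and also in $K(A[p^m])$, which forces $A(K_v)$ to contain the full $p^m$-torsion and, via the local Euler characteristic / Tate local duality computation at a place of good reduction split in $K(\mu_{p^m})$, makes $A(K_v)/p^m A(K_v)$ free over $Z$ of rank $2\dim(A)$ (this is condition (ii); here one also needs $v$ to be a place of good reduction for $A$, which excludes only finitely many $v$ and so does not affect the density argument). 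For condition (iii), the injectivity of the restriction map~(\ref{Kres}), one argues as in \cite[Def.~2.20ff]{bert} or \cite[Prop.~6.1]{bert2}: for each nonzero class $s \in S$ one wants at least one $v \in T$ at which the localisation of $s$ is nonzero. Writing $s$ via $\phi_s \colon G_L \to A[p^m]$, nontriviality of $\phi_s$ means there is a $\sigma \in G_L$ with $\phi_s(\sigma) \neq 0$; by Chebotarev one can find a prime $v$ of $K$ whose Frobenius over $K$ lies in the conjugacy class of a suitable element of $\Gal(L(\tfrac1p \phi_s)/K)$ simultaneously trivial on $L$ and with the effect that the local class $\mathrm{loc}_v(s)$ is nonzero in $A(K_v)/p^m A(K_v)$. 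Since $S$ is finite, finitely many such $v$ suffice, and one takes $T$ to be their union. I would carry these steps out in the order: (a) reduce everything to statements about homomorphisms out of $G_L$ using Lemma~\ref{214} and the vanishing of $A(K)[p^m]$; (b) choose $v$ split in $L$ and of good reduction, verify (i) and (ii) by the local computation at such $v$; (c) run the Chebotarev argument, nonzero class by nonzero class, to arrange (iii); (d) assemble $T$.

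\textbf{Main obstacle.} The genuinely delicate point is step~(c): one must choose the Frobenius of $v$ so that, simultaneously, $v$ splits completely in $L$ (to secure (i) and (ii)) \emph{and} the localisation of a given Selmer class $s$ at $v$ is nonzero. The compatibility of these two requirements is exactly what Lemma~\ref{214} (via Hypothesis~\ref{hypbd}) is designed to guarantee: it ensures that the image of $S$ in $H^1(L, A[p^m])$ is still "large", so that the subfield of $L(\frac1p \phi_s)$ fixed by the relevant Galois element is a proper extension of $L$, and hence the Chebotarev class of primes with the desired behaviour has positive density. The rest — the freeness statement in (ii) and the good-reduction bookkeeping — is routine, being a local Euler-characteristic computation at a prime of good reduction that is completely split in $K(A[p^m])$, together with Tate local duality as recalled around~(\ref{inducedbyum}). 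I would therefore emphasise in the write-up the verification that the relevant Galois extensions are linearly disjoint from $L$ over $K$ so that the joint Chebotarev condition is nonvacuous, citing \cite[Thm.~3.2]{bert} or the analogous statement in \cite{bert2} for the technical details.
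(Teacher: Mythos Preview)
Your proposal is correct and follows essentially the same approach as the paper: the paper simply cites \cite[Lem.~2.21]{bert} for the existence of such a set $T$, noting that it is a Chebotarev density argument combined with Lemma~\ref{214}, and you have reconstructed precisely that argument in detail. The only minor discrepancy is bibliographic --- the paper points to \cite[Lem.~2.21]{bert} rather than \cite[Thm.~3.2]{bert} or \cite[Prop.~6.1]{bert2} --- but the mathematical content is the same.
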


The existence of such a set $T$ (of infinitely many such sets $T$, in fact) is proved in \cite[Lem. 2.21]{bert} using a Tcheboratev density argument (and Lemma \ref{214}).

In the sequel, we fix such an admissible set $T$ and, by abuse of notation, also denote by $T$ the set of places of $k$ below places in $T$.

The existence of the admissible set $T$ now leads us to consider the canonical complex of $R$-modules $$C_{\rm BD}:=\bigl[\Sel^T_{p^m}(A^t/F)\stackrel{\lambda_T}{\to}H^1_{T}\bigl(\mathbb{A}_F,A^t\bigr)[p^m]\bigr],$$ with first term placed in degree 1. The desired auxiliary pairing $\langle\,,\rangle_p^{\rm BD}$ will then be obtained as a Bockstein pairing associated to $C_{\rm BD}$.

\begin{lemma}\label{bdlemma} The following claims are valid.
\begin{itemize}\item[(i)] Let $S$ be any finite set of non-archimedean places of $k$ which contains $T$. Then the sequence
\begin{equation}\label{notexact}0\to{\rm Sel}^{(p^m)}(A^t_F)\to \Sel^S_{p^m}(A^t/F)\stackrel{\lambda_S}{\to}H^1_{S}\bigl(\mathbb{A}_F,A^t\bigr)[p^m]\stackrel{v_S}{\to}
{\rm Sel}^{(p^m)}(A_F)^\vee\to 0\end{equation} is exact. 
\item[(ii)] The $R$-modules $\Sel^T_{p^m}(A^t/F)$, $H^1_{T}\bigl(\mathbb{A}_F,A^t\bigr)[p^m]$ and $A_T(\mathbb{A}_F)/p^m$ are finitely generated and $G$-cohomologically-trivial.
\item[(iii)] Let $S\supseteq T$ be any set as in claim (i). Then there is a canonical quasi-isomorphism from the complex $C_{\rm BD}$ to the complex $$C_{{\rm BD},S}:=\bigl[\Sel^S_{p^m}(A^t/F)\stackrel{\lambda_S}{\to}H^1_{S}\bigl(\mathbb{A}_F,A^t\bigr)[p^m]\bigr],$$ with first term placed in degree 1, which induces the identity map both on ${\rm Sel}^{(p^m)}(A^t_F)$ and on ${\rm Sel}^{(p^m)}(A_F)^\vee$ (viewed as the cohomology modules of $C_{\rm BD}$ and of $C_{{\rm BD},S}$ via (\ref{notexact})).
\end{itemize}
\end{lemma}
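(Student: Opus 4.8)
The three claims are essentially the analogue in the present context of results of Bertolini and Darmon, and the strategy is to reduce each to a Poitou--Tate duality computation together with the cohomological-triviality analysis already carried out in \S\ref{explicitbk}.

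For claim (i), the plan is to apply the Poitou--Tate nine-term exact sequence for the finite Galois module $A^t[p^m]$ over $F$, using the set of places $S\cup S_k^\infty$, and to compare it with the analogous sequence for $A[p^m]$ under the local and global duality pairings. The exactness of (\ref{notexact}) at its first term is immediate from the definition of $\Sel^S_{p^m}(A^t/F)$ as a subgroup of $H^1(F,A^t[p^m])$ containing the classes unramified outside $S$, namely $\Sel^{(p^m)}(A^t_F)$ is precisely the kernel of $\lambda_S$ (using that every place in $S$ either lies in $T$, and so splits completely in $F/K$, or lies in $S_k^p\cup (S_k^F\cap S_k^f)\cup S_k^A$, in which case the local conditions in the two definitions of the Selmer group still agree because $A^t(F_{w'})/p^m$ injects into $H^1(F_{w'},A^t[p^m])$). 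Exactness in the middle and surjectivity of $v_S$ then follow from the Poitou--Tate sequence: the cokernel of $\lambda_S$ is dual to the kernel of the global-to-local restriction map $H^1(F,A[p^m])\to H^1_S(\mathbb{A}_F,A[p^m])/(\text{image of local conditions})$, and one identifies this kernel with $\Sel^{(p^m)}(A_F)$ using that $T$ is admissible, so that by Lemma \ref{tm}(iii) the restriction map $\Sel^{(p^m)}(A_F)\to\bigoplus_{v\in T}A(K_v)/p^m$ is injective and hence imposing the local conditions at places of $S\setminus T$ does not further cut down the Selmer group. This last point is exactly where admissibility of $T$ enters, and I expect it to be the technical heart of claim (i); the relevant bookkeeping is carried out in Appendix \ref{exp rep section} (see in particular Corollary \ref{Tatepoitouexplicit}), and the argument runs parallel to \cite[Lem.~2.19--2.21]{bert} and \cite[Lem.~1.6]{bert2}.

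For claim (ii), the plan is to invoke the cohomological-triviality results already established. Each place $w'$ of $F$ above $T$ lies above a place $v$ of $k$ that splits completely in $F/K$ (Lemma \ref{tm}(i)), so the decomposition group $G_w$ of $w$ in $G$ is contained in $P$, and in fact by the splitting hypothesis $G_w\cap P$ is trivial, so $G_w$ has order prime to $p$ and every $\ZZ_p[G_w]$-module is automatically $G_w$-cohomologically-trivial; since cohomological-triviality over $G$ can be checked on Sylow subgroups and $Z$ is $p$-torsion, this gives cohomological-triviality of the local factors $A(K_v)/p^m$ over $G$ after inducing up, whence $A_T(\mathbb{A}_F)/p^m$ and likewise $H^1_T(\mathbb{A}_F,A^t[p^m])$ and $H^1_T(\mathbb{A}_F,A^t)[p^m]$ are $G$-cohomologically-trivial. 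The global term $\Sel^T_{p^m}(A^t/F)$ is then cohomologically-trivial because it fits, by claim (i), into an exact sequence whose other three terms are cohomologically-trivial (using Proposition \ref{explicitbkprop}(ii), which gives cohomological-triviality of $A^t(F_v)^\wedge_p$ and hence of $\Sel_p(A_F)^\vee$ and $A^t(F)_p$ modulo the relevant finite modules, together with the fact that $A(F)$ and $A^t(F)$ have no $p$-torsion so these modules are honestly cohomologically-trivial at level $p^m$). Finite generation is clear.

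For claim (iii), the plan is simply to observe that enlarging $S$ to $S'\supseteq S\supseteq T$ changes $C_{{\rm BD},S}$ only by adding to the first term the extra unramified-outside-$S'$ classes and to the second term the extra local factors at $S'\setminus S$, and that the induced map on the two-term complexes is a quasi-isomorphism precisely because the sequence (\ref{notexact}) for $S$ and for $S'$ have the same outer cohomology $\Sel^{(p^m)}(A^t_F)$ and $\Sel^{(p^m)}(A_F)^\vee$ (this is again admissibility: the local conditions at the extra places are vacuous on the Selmer groups). Concretely, one builds the comparison morphism exactly as in Lemma \ref{independenceofsigma}, using the analogue of the triangle (\ref{independencetriangle}) for the finite coefficients $A^t[p^m]$, and checks on cohomology via the four-term sequences of claim (i). I expect claim (iii) to be routine once claims (i) and (ii) are in place; the main obstacle overall is the admissibility input in claim (i), i.e.\ showing that the large Selmer groups $\Sel^S_{p^m}$ genuinely interpolate between the two classical Selmer groups with no loss, which rests on the density argument producing the set $T$ and on the explicit Poitou--Tate bookkeeping of Appendix \ref{exp rep section}.
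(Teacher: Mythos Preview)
Your plan for claim (iii) matches the paper's: the inclusion maps give a morphism $C_{\rm BD}\to C_{{\rm BD},S}$, and the exact sequences (\ref{notexact}) for $T$ and $S$ show it induces the identity on cohomology.

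For claim (i), exactness at the first three terms is indeed general Poitou--Tate duality (the paper cites \cite[Lem.~6.15]{milne}), independent of $T$. But your argument for the surjectivity of $v_S$ has a gap: Lemma~\ref{tm}(iii) gives injectivity of the restriction map on $\Sel^{(p^m)}(A_K)$, not on $\Sel^{(p^m)}(A_F)$ as you write. The missing step is the descent isomorphism $\Sel^{(p^m)}(A_K)\xrightarrow{\sim}\Sel^{(p^m)}(A_F)^P$, which the paper obtains (following \cite[Lem.~2.19]{bert} and \cite[Lem.~1.4]{bert2}) from the $P$-cohomological-triviality of each $A(F_v)$ proved in Proposition~\ref{explicitbkprop}(ii) together with the absence of $p$-torsion in $A(F)$. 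One then combines this with the injectivity over $K$ and the fact that places in $T$ split completely in $F/K$ to deduce injectivity of $\Sel^{(p^m)}(A_F)\to A_T(\mathbb{A}_F)/p^m$, which is the statement dual to surjectivity of $v_T$.

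For claim (ii), the assertion ``$G_w\subseteq P$'' is wrong (and contradicts your next clause); what is true, and sufficient for your local argument, is that $G_w\cap P$ is trivial, so $|G_w|$ is prime to $p$. However, your approach to the global term $\Sel^T_{p^m}(A^t/F)$ does not work: the ``three out of four cohomologically-trivial'' argument would require $\Sel^{(p^m)}(A^t_F)$ and $\Sel^{(p^m)}(A_F)^\vee$ to be $G$-cohomologically-trivial, and Proposition~\ref{explicitbkprop}(ii) does not give this---it concerns local point groups, and in general $A^t(F)_p$ (and hence $\Sel_p(A_F)^\vee$) is not cohomologically-trivial over $G$. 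The paper's route is different: it shows the local modules are actually \emph{free} over $Z[P]$ (using both conditions (i) and (ii) of Lemma~\ref{tm}), then uses the Cassels--Tate pairing to get $|\Sel^{(p^m)}(A^t_F)|=|\Sel^{(p^m)}(A_F)^\vee|$, so that the exact sequence (\ref{notexact}) with $S=T$ forces $|\Sel^T_{p^m}(A^t/F)|=|H^1_T(\mathbb{A}_F,A^t)[p^m]|$; from this cardinality equality and the $Z[P]$-freeness of the latter one deduces, as in \cite[Thm.~3.2]{bert}, that $\Sel^T_{p^m}(A^t/F)$ is itself $Z[P]$-free.
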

\begin{proof} The exactness of (\ref{notexact}) at the first three modules is proved in \cite[Lem. 6.15]{milne}, and is independent of the fact that the set $S$ contains $T$. See also \cite[Prop. 1.2]{bert2} and the discussion that follows it.

In order to prove claim (i), it is hence enough to show that $v_{T}$ is surjective (since then so must be $v_S$ for any set $S$ containing $T$), or equivalently that the restriction map \begin{equation}\label{tmrestriction}\Sel^{(p^m)}(A_F)\to A_{T}(\mathbb{A}_F)/p^m\end{equation} is injective. This may be achieved by following the approach of \cite[Lem. 2.25, Lem. 2.26, Lem 3.1]{bert} or \cite[Prop. 1.6]{bert2}. We limit ourselves to giving a sketch of this argument.

Proposition \ref{explicitbkprop}(ii) implies that, for any non-archimedean place $v$ of $k$, the module $A(F_v)$ is $P$-cohomologically-trivial.





This fact then combines with our assumption that $A(F)$ has no point of order $p$ to imply the restriction map $\Sel^{(p^m)}(A_K)\to\Sel^{(p^m)}(A_F)^P$ is bijective, by the same argument used to prove \cite[Lem 2.19]{bert} or \cite[Lem. 1.4]{bert2}.

Since the restriction map (\ref{Kres}) is injective, one finally deduces that the restriction map (\ref{tmrestriction}) is injective, just as in \cite[Lem 2.25]{bert} or \cite[Prop. 1.6]{bert2}.

To next consider claim (ii), it is enough to show that each of the modules $\Sel^T_{p^m}(A^t/F)$, $H^1_{T}\bigl(\mathbb{A}_F,A^t\bigr)[p^m]$ and $A_T(\mathbb{A}_F)/p^m$ is finitely generated and free over $Z[P]$. These facts in turn follow from the arguments of \cite[\S 3.1]{bert}.

Indeed, one first uses properties (i) and (ii) of the set $T$, as given in Lemma \ref{tm}, to see that $A_T(\mathbb{A}_F)/p^m$ is (finitely generated and) $Z[P]$-free.
The duality (\ref{inducedbyum}) between $A_{T}(\mathbb{A}_F)/p^m$ and $H^1_{T}\bigl(\mathbb{A}_F,A^t\bigr)[p^m]$ then implies that the same property is true for the latter module.

Since neither $A(F)$ nor $A^t(F)$ has a point of order $p$, the canonical exact sequences
$$0\to A(F)/p^mA(F)\to\Sel^{(p^m)}(A_F)\to\sha(A_F)[p^m]\to 0$$ and
$$0\to A^t(F)/p^mA^t(F)\to\Sel^{(p^m)}(A^t_F)\to\sha(A^t_F)[p^m]\to 0$$ combine with the Cassels-Tate pairing to imply that the finite groups ${\rm Sel}^{(p^m)}(A^t_F)$ and ${\rm Sel}^{(p^m)}(A_F)^\vee$ have the same order. One may therefore combine the fact that $H^1_{T}\bigl(\mathbb{A}_F,A^t\bigr)[p^m]$ is $Z[P]$-free with the exactness of (\ref{notexact}) (for $S=T$) to deduce, as in \cite[Thm. 3.2]{bert}, that $\Sel^{T}_{p^m}(A^t/F)$ is also $Z[P]$-free.

We finally consider claim (iii). The canonical inclusions give vertical arrows in the commutative diagram
\begin{equation*}\xymatrix{
\Sel^{T}_{p^m}(A^t/F) \ar[d] \ar[r]^(0.45){\lambda_T} & H^1_{T}\bigl(\mathbb{A}_F,A^t\bigr)[p^m] \ar[d] \\
\Sel^{S}_{p^m}(A^t/F)  \ar[r]^(0.45){\lambda_S} & H^1_{S}\bigl(\mathbb{A}_F,A^t\bigr)[p^m]
}\end{equation*}
and hence define a morphism of complexes $C_{\rm BD}\to C_{{\rm BD},S}$.

It is clear that this morphism  induces the identity map both on ${\rm Sel}^{(p^m)}(A^t_F)$ and on ${\rm Sel}^{(p^m)}(A_F)^\vee$, so by the exactness of the respective sequences of the form (\ref{notexact}) it is in particular a quasi-isomorphism, as required.
%
\end{proof}

\subsubsection{}
To define the useful auxiliary pairing we let $S$ be any finite set of non-archimedean places of $k$ containing $T$.

We may then combine Lemma \ref{bdlemma} with the general formalism outlined in \S \ref{bocksection} below to define a pairing
$\langle\,,\rangle_p^{\rm BD}$ as follows. 

We let $r_{1,m}$ denote the canonical composite map
$$A^t(k)_p/p^m\to A^t(F)_p/p^m\to \Sel^{(p^m)}(A^t_F)\to H^1(C_{{\rm BD},S}),$$  and $r_{2,m}$ denote the canonical composite map $$H^2(C_{{\rm BD},S})\stackrel{v_{S}}{\to}{\rm Sel}^{(p^m)}(A_F)^\vee\to(A(F)_p/p^m)^\vee\to(A(k)_p/p^m)^\vee.$$

As in \S \ref{bocksection}, these canonical choices induce a canonical Bockstein homomorphism \begin{equation*}\label{thebdbock}\beta_{C_{{\rm BD},S}}:A^t(k)_p/p^m{\to}I_{R}/I_{R}^2\otimes_{Z} (A(k)_p/p^m)^\vee\end{equation*} and hence a canonical pairing
$$\langle\,,\rangle_{C_{{\rm BD},S}}:A^t(k)_p/p^m\otimes_{Z} A(k)_p/p^m\to I_{R}/I_{R}^2
\cong Z\otimes_{\ZZ}G= \ZZ_p\otimes_{\ZZ}G,$$
or equivalently a canonical pairing
$$\langle\,,\rangle_p^{{\rm BD}}:A^t(k)_p\otimes_{\ZZ_p} A(k)_p\to  \ZZ_p\otimes_{\ZZ}G.$$

In particular, Lemma \ref{bdlemma}(iii) combines with the naturality of Bockstein homomorphisms to ensure that this pairing is independent of the choice of set $S$ containing $T$. 

\subsubsection{}
 In order to compare $\langle\,,\rangle_p^{\rm BD}$ to the Nekov\'a\v r height pairing we shall need to clarify the relationship between complexes of the form $C_{{\rm BD},S}$ and classical Selmer complexes.

To do this we fix a finite subset $\Sigma$ of $S_k^f$ with
$$S_k^p\cup (S_k^F \cap S_k^f)\cup S_k^A\cup T\subseteq\Sigma.$$

We then use the classical Selmer complex $C_\Sigma$ to define a complex of $R$-modules $$C_{\Sigma}/p^m:=Z\otimes_{\ZZ_p}^{\mathbb{L}}C_{\Sigma}.$$ We refer the reader to Lemma \ref{reducedbock} in Appendix \ref{bocksection} below for the definition of the maps $q_{1}'$ and $q_{2}'$ that are associated, by using the fixed homomorphisms $r_1$ and $r_2$ displayed in (\ref{r1}) and (\ref{r2}) respectively, to the construction of this complex.

\begin{lemma}\label{morphismphi} There exists a canonical morphism of complexes
\[ \phi:C_{{\rm BD},\Sigma}\to C_{\Sigma}/p^m\]
with the following properties:
\begin{itemize}\item[(i)] The restriction of $q'_{1}$ to $A^t(k)_p/p^m$ is equal to the composition $$A^t(k)_p/p^m\stackrel{r_{1,m}}{\to}H^1(C_{{\rm BD},\Sigma})\stackrel{H^1(\phi)}{\to}H^1(C_{\Sigma}/p^m).$$
\item[(ii)] $r_{2,m}$ is equal to $-1$ times the composition $$H^2(C_{{\rm BD},\Sigma})\stackrel{H^2(\phi)}{\to}H^2(C_{\Sigma}/p^m)\stackrel{q'_{2}}{\to}A(k)_p^*/p^m\to(A(k)_p/p^m)^\vee,$$ where the last arrow is the canonical map.
\end{itemize}
\end{lemma}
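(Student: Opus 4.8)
The plan is to construct the morphism $\phi\colon C_{{\rm BD},\Sigma}\to C_\Sigma/p^m$ by comparing the two complexes termwise. Recall that $C_\Sigma$ is the mapping fibre of (\ref{bkfibre}), so $C_\Sigma/p^m=Z\otimes^{\mathbb L}_{\ZZ_p}C_\Sigma$ is the mapping fibre of the corresponding morphism with coefficients in $T_{p,F}(A^t)/p^m=A^t[p^m]$ (as a $G_k$-module via $Y_{F/k}$). Thus $C_\Sigma/p^m$ is (quasi-isomorphic to) the complex $R\Gamma(\mathcal O_{k,S_k^\infty\cup\Sigma},A^t[p^m])\oplus\bigl(\bigoplus_{v\in\Sigma}A^t(F_v)/p^m\bigr)[-1]\xrightarrow{(\lambda,\kappa)}\bigoplus_{v\in\Sigma}R\Gamma(k_v,A^t[p^m])$, shifted into degrees $1,2$ after the (explicit) identifications of its cohomology. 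On the other hand $C_{{\rm BD},\Sigma}$ is the two-term complex $\Sel^\Sigma_{p^m}(A^t/F)\xrightarrow{\lambda_\Sigma}H^1_\Sigma(\mathbb A_F,A^t)[p^m]$ in degrees $1,2$. The key observation is that $\Sel^\Sigma_{p^m}(A^t/F)$ is, by its very definition, a submodule of $H^1(F,A^t[p^m])=H^1(\mathcal O_{F,S(F)},A^t[p^m])$ cut out by local conditions outside $\Sigma(F)$, which via Shapiro's lemma is exactly $H^1(\mathcal O_{k,S_k^\infty\cup\Sigma},A^t[p^m])$ intersected with the appropriate kernel; and $H^1_\Sigma(\mathbb A_F,A^t)[p^m]$ is, by Kummer theory and local Tate duality, a natural quotient of $\bigoplus_{v\in\Sigma}H^1(k_v,A^t[p^m])$. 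So I would first realise $C_{{\rm BD},\Sigma}$ as a subquotient complex of the honest (unshifted) mapping fibre representing $C_\Sigma/p^m$, and then take $\phi$ to be the induced map. Concretely, the global term maps in by the inclusion $\Sel^\Sigma_{p^m}(A^t/F)\hookrightarrow H^1(\mathcal O_{k,S_k^\infty\cup\Sigma},A^t[p^m])$ together with the observation (from the definition of a $\Sigma$-Selmer group) that its image lands in the kernel of localisation at places outside $\Sigma$, and the local term $H^1_\Sigma(\mathbb A_F,A^t)[p^m]=\bigoplus_{v\in\Sigma}H^1(F_v,A^t)[p^m]$ receives the obvious projection from $\bigoplus_{v\in\Sigma}H^1(k_v,A^t[p^m])$ (killing the images of local points, i.e.\ the summands $A^t(F_v)/p^m$). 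Compatibility of $\lambda_\Sigma$ with $(\lambda,\kappa)$ under these maps is a diagram chase using the commutativity of localisation with reduction mod $p^m$.

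Next I would identify the effect of $\phi$ on cohomology. By Lemma \ref{bdlemma}(i) (the exact sequence (\ref{notexact}) with $S=\Sigma$) one has $H^1(C_{{\rm BD},\Sigma})=\Sel^{(p^m)}(A^t_F)$ and $H^2(C_{{\rm BD},\Sigma})=\Sel^{(p^m)}(A_F)^\vee$ via $v_\Sigma$; and by Proposition \ref{explicitbkprop}(iv), reduced mod $p^m$ (see Lemma \ref{reducedbock}), one has $H^1(C_\Sigma/p^m)$ surjecting onto $A^t(F)_p/p^m$ with the map $q_1'$ from $A^t(F)_p/p^m$ being a section of sorts, and $H^2(C_\Sigma/p^m)$ carrying the canonical map $q_2'$ to $A(k)_p^*/p^m$. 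For claim (i) I must check that the composite $A^t(k)_p/p^m\xrightarrow{r_{1,m}}\Sel^{(p^m)}(A^t_F)=H^1(C_{{\rm BD},\Sigma})\xrightarrow{H^1(\phi)}H^1(C_\Sigma/p^m)$ agrees with the restriction of $q_1'$; this is essentially the statement that the global Kummer map used to define $r_1$ (and hence $r_{1,m}$, $q_1'$) is compatible with the inclusion of $\Sel^{(p^m)}(A^t_F)$ into $H^1$ of the $S$-integer ring, which is built into the definitions. For claim (ii), the minus sign is the crucial point: it arises because the identification $v_\Sigma\colon H^2(C_{{\rm BD},\Sigma})\xrightarrow{\sim}\Sel^{(p^m)}(A_F)^\vee$ is defined (via (\ref{inducedbyum})) using the local Tate pairing in one direction, whereas the identification of $H^2(C_\Sigma/p^m)$ with $\Sel_p(A_F)^\vee/p^m$ via $q_2'$ uses Poitou--Tate duality with the opposite sign convention (the asymmetry of the Poitou--Tate nine-term sequence versus local Tate duality). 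I would pin this down by invoking Corollary \ref{Tatepoitouexplicit} and the explicit form of the map $w\circ s^{-1}$ in the diagram (\ref{Selmerdiagram}), tracking the sign through the connecting homomorphisms of the mapping-fibre triangle.

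The main obstacle I anticipate is precisely this sign bookkeeping in part (ii): making the comparison between $v_\Sigma$ (built from local Tate duality on $\bigoplus_{v\in\Sigma}$) and the duality identification of $H^2(C_\Sigma)$ with $\Sel_p(A_F)^\vee$ (built from Artin--Verdier/Poitou--Tate on the compactly supported complex) completely explicit, including the interplay with the dualisations $(A(F)_p/p^m)^\vee$ versus $A(F)_p^*/p^m$ and the canonical map between them. This requires carefully fixing, once and for all, the normalisations of Poitou--Tate duality and of the cup-product pairings — which is exactly the purpose of Appendix \ref{exp rep section} (the maps $s$, $w$ in (\ref{themaps}), (\ref{themapw}) and Corollary \ref{Tatepoitouexplicit}) — and then verifying that the mapping-fibre triangle for $C_\Sigma$ identifies $H^2$ with $\cok(\Delta)$ in a way compatible with the sequence (\ref{notexact}). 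Once the sign conventions are fixed, the rest of the argument is a (lengthy but routine) sequence of diagram chases with $9$-term exact sequences, and I would present it by first stating the termwise definition of $\phi$, then the two commuting squares, then deducing (i) and (ii) from the explicit descriptions of the cohomology maps together with Corollary \ref{Tatepoitouexplicit}.
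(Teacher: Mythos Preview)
Your strategy is the same as the paper's: identify $C_\Sigma/p^m$ with a Selmer-type mapping fibre built from cochains with $A^t[p^m]$-coefficients, map $C_{{\rm BD},\Sigma}$ in termwise via the natural global inclusion and local maps, and deduce (i) from Kummer-map compatibility and (ii) from the Poitou--Tate duality package in Corollary~\ref{Tatepoitouexplicit} and diagram~(\ref{Selmerdiagram}). Your diagnosis of the origin of the minus sign in (ii) is exactly what the paper uses.

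Two points of imprecision are worth flagging. First, your description of $\phi^2$ has the arrow the wrong way round: you say $H^1_\Sigma(\mathbb A_F,A^t)[p^m]$ ``receives the obvious projection'' from $\bigoplus_v H^1(k_v,A^t[p^m])$, but $\phi^2$ must go \emph{from} $H^1_\Sigma(\mathbb A_F,A^t)[p^m]$ \emph{to} the degree-$2$ term of $C_\Sigma/p^m$. This is salvageable, because that target is itself already quotiented by the image of local points (see the paper's definition of $C_v^2$), so the Kummer quotient does descend to an injection in the correct direction; the paper implements this by sending a class to $\bigl(-(l_{w'}(y_{w'}))_{w'},0\bigr)$, with the sign forced by the cone convention in $d^1_\Sigma$. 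Second, the identification you treat as automatic --- that $Z\otimes_{\ZZ_p}R\Gamma^i(-,T_p(A^t))$ agrees with $R\Gamma^i(-,A^t[p^m])$ at the cochain level --- is precisely the content of Lemma~\ref{lifts}, which supplies the lifting maps $l_\Sigma$, $l_{w'}$ and checks they respect cocycles, coboundaries, and localisation. The paper builds $\phi^1$ and $\phi^2$ out of these lifts and uses Lemma~\ref{lifts}(ii) repeatedly to verify well-definedness and that $\phi$ is a chain map; without this ingredient your ``diagram chase'' for the compatibility of $\lambda_\Sigma$ with $(\lambda,\kappa)$ would not be rigorous, since you are comparing complexes that are only quasi-isomorphic, not equal.
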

\begin{proof} In order to explicitly define the morphism $\phi$ we first describe an explicit representative of the classical Selmer complex $C_\Sigma$. The claimed result is clearly independent of such a choice.

To do so we use the notation given in \S \ref{expcohcomp} below. In particular, for any $w'\in\Sigma(F)$ and any $i\geq 0$ we let $R\Gamma^i(F_{w'},T_p(A^t))$ denote the group of inhomogeneous $i$-cochains of $G_{w'}$ with coefficients in $T_p(A^t)$, and also write $d_{w'}^i$ for the usual differential.

For any $v\in\Sigma$ we then set
$$C_v^2:=\frac{\prod_{w'\in S_F^v}\bigl(R\Gamma^{1}(F_{w'},T_p(A^t))/\im(d^0_{w'})\bigr)}{\kappa\bigl(A^t(F_v)_p^\wedge\bigr)}$$
and also, for any $i\geq 3$,
$$C_v^i:=\prod_{w'\in S_F^v}R\Gamma^{i-1}(F_{w'},T_p(A^t)).$$ We note that it is straightforward, by using the relevant exact sequences of the form (\ref{kummerseq}), to verify that the module $C_v^2$ is $\ZZ_p$-torsion-free.

If, for each $v\in\Sigma$, we set $D_v^i=C_v^{i-1}$, we let $\partial_v^{i}$ be induced by $\prod_{w'\in S_F^v}d_{w'}^{i-1}$ and consider the complex $D^\bullet_v=(D_v^{i},\partial_v^{i})$, then there is a canonical exact triangle
$$A^t(F_v)_p^\wedge[-1]\stackrel{\kappa}{\to}R\Gamma(k_v,T_{p,F}(A^t))\to D^\bullet_v\to A^t(F_v)_p^\wedge[0]$$ in $D(\ZZ_p[G])$.

Similarly, we write $G_{F,\Sigma}$ for the Galois group of the maximal Galois extension of $F$ wich is unramified outside $\Sigma(F)\cup S_F^\infty$, set $U_{F,\Sigma}:=\mathcal{O}_{F,\Sigma\cup S_F^\infty}$, write $R\Gamma^i(U_{F,\Sigma},T_p(A^t))$ for the group of inhomogeneous $i$-cochains of $G_{F,\Sigma}$ with coefficients in $T_p(A^t)$, and also write $d^i$ for the usual differential.


Then, since $R\Gamma^i(U_{F,\Sigma},T_p(A^t))$ is acyclic in degrees less that one, we may obtain an explicit representative of $C_\Sigma$ as the $-1$-shift of the explicit mapping cone of the diagonal localisation morphism
$$\tau_{\geq 1}\bigl(R\Gamma^i(U_{F,\Sigma},T_p(A^t))\bigr)\stackrel{\lambda}{\to}\bigoplus_{v\in\Sigma}D_v^\bullet.$$
(Here $\tau_{\geq 1}$ denotes the truncation in degree greater than or equal to 1 preserving cohomology).

To be explicit about this construction of a representative of $C_\Sigma$, we set
\[
C_{\Sigma}^i :=
\begin{cases}
  0,    &\text{ if $i\leq 0$}\\
  R\Gamma^1\bigl(U_{F,\Sigma},T_{p}(A^t)\bigr)/{\rm im}(d^0),    &\text{ if $i=1$}\\
\bigl(\bigoplus_{v\in \Sigma}C^i_v\bigr)\oplus R\Gamma^i\bigl(U_{F,\Sigma},T_{p}(A^t)\bigr),    &\text{ if $i\geq 2$.}
\end{cases}
\]
We let the differential $d_{\Sigma}^1$ in degree $1$ be given by
\[d_{\Sigma}^1(\overline{c}):=\bigl(-(\overline{c_{w'}})_{w'\in \Sigma(F)},d^1(c)\bigr)\]
and the differential $d_{\Sigma}^i$ in degree $i\geq 2$ be given by
\[d_{\Sigma}^i(\bigl((a_{w'})_{w'\in \Sigma(F)},b\bigr)):=\bigl(-(d^{i-1}_{w'}(a_{w'})+b_{w'})_{w'\in \Sigma(F)},d^i(b)\bigr).\]
Here, in any degree $i$ and for any place $w'\in \Sigma(F)$, we have written $x_{w'}$ for the localisation at $w'$
of an inhomogeneous $i$-cochain $x$ in $R\Gamma^i\bigl(U_{F,\Sigma},T_{p}(A^t)\bigr)$.

The exactness of the sequence (\ref{longexact}) combines with our assumption that $A^t(F)$ contains no point of order $p$ to imply that the module $H^1\bigl(U_{F,\Sigma},T_{p}(A^t)\bigr)$ is $\ZZ_p$-torsion-free. One may deduce from this fact that the module $C_\Sigma^1$ is also $\ZZ_p$-torsion-free.

Now, since each module $C_{\Sigma}^i$ is $\ZZ_p$-torsion-free, the complex $C_{\Sigma}/p^m$ may be represented by the sequence of modules $Z\otimes_{\ZZ_p}C_{\Sigma}^i$ together with the differentials $Z\otimes_{\ZZ_p}d_{\Sigma}^i$.

In order to define the required morphism $\phi$, we first define $$\phi^1:\Sel^{\Sigma}_{p^m}(A^t/F)\to Z\otimes_{\ZZ_p}C_{\Sigma}^1$$ as follows.

We first note that, since the set $\Sigma$ is assumed to contain $S_k^p\cup S_k^A$, the argument of \cite[Prop. 6.5]{milne} proves that the canonical inflation map defines an isomorphism $$\iota:H^1(U_{F,\Sigma},A^t[p^m])\stackrel{\sim}{\to}\Sel^{\Sigma}_{p^m}(A^t/F).$$

For any $x\in\Sel^{\Sigma}_{p^m}(A^t/F)$ we may hence fix a 1-cocycle $y\in R\Gamma^1(U_{F,\Sigma},A^t[p^m])$ representing $\iota^{-1}(x)$ and use the map $l_\Sigma$ defined in Lemma \ref{lifts}(i) below to define $\phi^1(x)$ as the class in $Z\otimes_{\ZZ_p}C_{\Sigma}^1$ of the lift $$l_\Sigma(y)\in Z\otimes_{\ZZ_p}R\Gamma^1(U_{F,\Sigma},T_p(A^t)).$$

Before proceeding we note that Lemma \ref{lifts}(ii) ensures both that $\phi^1$ is well-defined and also that
\begin{equation}\label{itisacocycle}(Z\otimes_{\ZZ_p}d^1)(l_\Sigma(y))=0.
\end{equation}

We next define $$\phi^2:H^1_{\Sigma}\bigl(\mathbb{A}_F,A^t\bigr)[p^m]\to Z\otimes_{\ZZ_p}C_{\Sigma}^2$$ as follows.

For any $(x_{w'})_{w'\in\Sigma(F)}$ in
\[ H^1_{\Sigma}\bigl(\mathbb{A}_F,A^t\bigr)[p^m]=\frac{H^1_{\Sigma}\bigl(\mathbb{A}_F,A^t[p^m]\bigr)}
{\kappa\bigl(A^t_\Sigma(\mathbb{A}_F)/p^m\bigr)}\]
we fix a family of 1-cocycles
$$(y_{w'})_{w'\in\Sigma(F)}\in\prod_{w'\in\Sigma(F)}R\Gamma^1(F_{w'},A^t[p^m])$$ representing $(x_{w'})_{w'\in\Sigma(F)}$.
We then use the maps $l_{w'}$, as $w'$ ranges through $\Sigma(F)$, defined in Lemma \ref{lifts}(i) below to obtain a lift
$$l_{w'}(y_{w'})\in Z\otimes_{\ZZ_p}R\Gamma^1(F_{w'},T_p(A^t)).$$

We finally define $\phi^2((x_{w'})_{w'\in\Sigma(F)})$ as the class in $Z\otimes_{\ZZ_p}C_{\Sigma}^2$ of $(-(l_{w'}(y_{w'}))_{w'\in\Sigma(F)},0)$. Again one can use Lemma \ref{lifts}(ii) to verify that $\phi^2$ is well-defined.
%
%

Our explicit description of the differential $d_{\Sigma}^1$ then implies that, for any $x\in\Sel^{\Sigma}_{p^m}(A^t/F)$, one has
\begin{align*} \bigl((Z\otimes_{\ZZ_p}d_{\Sigma}^1)\circ\phi^1\bigr)(x)=&\bigl(-(\overline{l_\Sigma(y)_{w'}})_{w'},(Z\otimes_{\ZZ_p}d^1)(l_\Sigma(y))\bigr)\\
=&\bigl(-(\overline{l_\Sigma(y)_{w'}})_{w'},0\bigr)\\
=&\bigl(-(\overline{l_{w'}(y_{w'})})_{w'},0\bigr)\\
=&\phi^2((\overline{y_{w'}})_{w'})\\
=&\phi^2(\lambda_\Sigma(\iota(\overline{y})))\\
=&(\phi^2\circ\lambda_\Sigma)(x).\end{align*}
Here again $y$ is a 1-cocycle representing $\iota^{-1}(x)$, the second equality follows from (\ref{itisacocycle}) and the third equality holds since, by Lemma \ref{lifts}(ii), the maps $l_\Sigma$ and $l_{w'}$ commute with localisation at each $w'$.

Putting then $\phi^i=0$ for $i\neq 1,2$ therefore defines a morphism of complexes of $R$-modules $$\phi:C_{{\rm BD},\Sigma}\to C_{\Sigma}/p^m.$$ Indeed, using Lemma \ref{lifts}(ii) one verifies that
\[ \bigl((Z\otimes_{\ZZ_p}d_{\Sigma}^2)\circ\phi^2\bigr)((x_{w'})_{w'})=0\] for any $(x_{w'})_{w'}$ in $H^1_{\Sigma}\bigl(\mathbb{A}_F,A^t\bigr)[p^m]$.

We next prove that $\phi$ satisfies the stated properties. Now, $q'_{1}$ is by definition equal to the composition
$$A^t(F)_p/p^m\stackrel{r_1/p^m}{\to}H^1(C_\Sigma)/p^m\stackrel{q_1}{\to}H^1(C_\Sigma/p^m)$$
with $q_{1}$ induced by the canonical projection $$R\Gamma^1\bigl(U_{F,\Sigma},T_p(A^t)\bigr)\to Z\otimes_{\ZZ_p}R\Gamma^1\bigl(U_{F,\Sigma},T_p(A^t)\bigr).$$ The defining property of the lifting map $l_\Sigma$ given in Lemma \ref{lifts}, combined with our definition of $\phi^1$, therefore implies that $q'_1$ coincides with the canonical composition
$$A^t(F)_p/p^m\to\Sel^{(p^m)}(A^t_F)\to H^1(C_{{\rm BD},\Sigma})\stackrel{H^1(\phi)}{\to}H^1(C_\Sigma/p^m).$$
This fact directly gives property (i) of the morphism $\phi$.

Turning now to property (ii), we write $q_{2}$ for the canonical isomorphism $$H^2(C_\Sigma)/p^m\to H^2(C_\Sigma/p^m)$$ induced by the canonical projection morphism $C_\Sigma\to C_\Sigma/p^m$ as in Lemma \ref{reducedbock} below.

Then an explicit computation using our definition of $\phi^2$ shows that the composition
\begin{multline*}H^1_\Sigma(\mathbb{A}_F,A^t[p^m])\to H^1_\Sigma(\mathbb{A}_F,A^t)[p^m]\to H^2(C_{{\rm BD},\Sigma}) \stackrel{H^2(\phi)}{\to}H^2(C_\Sigma/p^m)\stackrel{q_{2}^{-1}}{\to}H^2(C_{\Sigma})/p^m\\ \stackrel{\sim}{\to}\cok(\Delta)/p^m\to\frac{H^2_c\bigl(U_{F,\Sigma},A^t[p^m]\bigr)}{\im\bigl(\kappa(A^t_\Sigma(\mathbb{A}_F)/p^m)\bigr)}\end{multline*}
coincides with $-1$ times the canonical map. In this display, the fifth arrow is the canonical isomorphism occurring in the diagram (\ref{Selmerdiagram}) and the sixth arrow is the canonical projection.

The commutativity of the diagram (\ref{Selmerdiagram}), as well as of the diagram in Corollary \ref{Tatepoitouexplicit} below, can now be combined with the above given fact to show that the composition
\begin{equation*} H^2(C_{{\rm BD},\Sigma}) \stackrel{H^2(\phi)}{\to}H^2(C_{\Sigma}/p^m)\stackrel{q_{2}^{-1}}{\to}H^2(C_{\Sigma})/p^m\stackrel{\sim}{\to}\Sel_p(A_F)^\vee/p^m\to\Sel^{(p^m)}(A_F)^\vee\end{equation*} is equal to $-v_{\Sigma}$.

Given this fact, and since $q'_{2}$ is by definition equal to $r_2/p^m\circ q_{2}^{-1}$, the required equality of composite maps is now clear from their definitions.
\end{proof}

\subsection{The proof of Theorem \ref{thecomptheorem}}\label{compproof}

As a first step we use Lemma \ref{morphismphi} to show that the Nekov\'a\v r height pairing $\langle\,,\rangle^{\rm N}_{p}$ is equal to the inverse of the pairing $\langle\,,\rangle_p^{\rm BD}$.

To do this we recall that we have set $m:=m_p(G)$ and also fixed a finite set $T$ of places of $k$ as in Lemma \ref{tm}.

Then for any finite subset $\Sigma$ of $S_k^f$ that contains $$S_k^p\cup (S_k^F\cap S_k^f)\cup S_k^A\cup T,$$ any $x\in A^t(k)_p$ and $y\in A(k)_p$ one computes that
\begin{align}\label{explicit comp} \langle x,y\rangle^{\rm N}_{p}=\, &\langle x,y\rangle_{C_{\Sigma},r_1,r_2}\\ =\, &\langle x+p^m,y+p^m\rangle_{C_{\Sigma}/p^m,q'_{1},q'_{2}}\notag\\
= \, &\langle x+p^m,y+p^m\rangle_{C_{{\rm BD},\Sigma},r_{1,m},-r_{2,m}}\notag\\
=  &-\langle x+p^m,y+p^m\rangle_{C_{{\rm BD},\Sigma},r_{1,m},r_{2,m}}\notag\\
=  &-\langle x,y\rangle_p^{\rm BD}.\notag\end{align}

Here the second equality is given by Lemma \ref{reducedbock} below and the third equality, after fixing a morphism $\phi:C_{{\rm BD},\Sigma}\to C_{\Sigma}/p^m$ as in Lemma \ref{morphismphi}, is immediate from the naturality of Bockstein homomorphisms and the description of $H^1(\phi)$ and $H^2(\phi)$ given in that result.

Bertolini and Darmon have defined in \cite[\S 3.4.1]{bert} and \cite[\S 2.2]{bert2}
a pairing \[\langle\,,\rangle_1:A^t(k)\times A(k)\to  G.\] (Although the definition of this pairing is only given in the case that $A$ is an elliptic curve and $F/k$ is an abelian $p$-extension, it extends naturally to this more general setting).

In addition, the results of Bertolini and Darmon in \cite[Thm. 2.8, Rem. 2.10]{bert2} and of Tan in \cite[Prop. 3.1]{kst} combine to imply directly that the pairing obtained by tensoring $\langle\,,\rangle_1$ with $\ZZ_p$ coincides with the Mazur-Tate pairing $\langle\,,\rangle_p^{\rm MT}$.

Given the latter fact, and the equality of (\ref{explicit comp}), the proof of Theorem \ref{thecomptheorem} is therefore completed by means of the following result. 

\begin{lemma}\label{bdcomp}  The pairing obtained by tensoring $\langle\,,\rangle_{1}$ with $\ZZ_p$ coincides with $\langle\,,\rangle^{\rm BD}_p$.
\end{lemma}
\begin{proof}
As necessary preparation for the proof of this result, we fix a minimal set of generators $\gamma_1,\ldots,\gamma_r,\gamma_{r+1},\ldots,\gamma_s$ of $G$, ordered so that for $1\leq i\leq r$ the order $o(\gamma_i)$ of $\gamma_i$ is a power of $p$ and for $r<i\leq s$ the order $o(\gamma_i)$ of $\gamma_i$ is prime to $p$.

We also denote by $J$ the cartesian product of the sets $\{0,1,\ldots,o(\gamma_i)-1\}$ as $i$ ranges through all indices between 1 and $s$.

For each index $1\leq i\leq s$ we write $F_i$ for the fixed field in $F$ of the subgroup of $G$ generated by $\{\gamma_j:j\neq i\}$.

For each index $1\leq i\leq r$ we let $$D_i^{(1)}:=-\sum_{j=0}^{o(\gamma_i)-1}j\gamma_i^j\in R$$ be the `derivative operator' of \cite[Lem. 2.1]{bert2}.

To proceed with the proof, it is enough to verify that, for any points $P\in A^t(k)$ and $Q\in A(k)$, the image of $\langle P,Q\rangle_1$ in $\ZZ_p\otimes_{\ZZ}G\cong I_R/I_R^2$ is equal to $\langle P+p^m,Q+p^m\rangle_{C_{{\rm BD},T}}$, since the latter element is by definition equal to $\langle P,Q\rangle_p^{{\rm BD}}$.

To compute the term $\langle P,Q\rangle_1$, we use the fact that the $G$-modules $\Sel^T_{p^m}(A^t/F)$ and $A_T(\mathbb{A}_F)/p^m$ are cohomologically-trivial established in Lemma \ref{bdlemma}(ii).

Let $a\in \Sel^{T}_{p^m}(A^t/F)$ be chosen so that $\Tr_G(a)$ is equal to the image of $P+p^m$ under the composite map
$$A^t(k)_p/p^m\to\bigl(A^t(F)_p/p^m\bigr)^G\to\Sel^{T}_{p^m}(A^t/F)^G.$$

Let $b\in A_{T}(\mathbb{A}_F)/p^m$ be chosen so that $\Tr_G(b)$ is equal to the image $Q_{\rm loc}$ of $Q+p^m$ under the composite map
$$A(k)_p/p^m\to\bigl(A(F)_p/p^m\bigr)^G\to\bigl(A_{T}(\mathbb{A}_F)/p^m\bigr)^G.$$

After interpreting the local duality pairing $\langle\,,\rangle_{T}$ of (\ref{localpairing}) below as taking values in $Z$ in the canonical way as is done throughout \cite{bert2}, the image of $\langle P,Q\rangle_1$ in $I_{R}/I_{R}^2$ is by definition equal to
\begin{align}\label{bdcomput}\notag &\bigl(\sum_{g\in G}\langle g^{-1}(\lambda_{T}(a)),b\rangle_{T}\cdot g\bigr)+I_{R}^2\\
\notag =&\bigl(\sum_{j_\bullet\in J}\langle(\gamma_1^{-j_1}\cdot\ldots\cdot\gamma_s^{-j_s})(\lambda_T(a)),b\rangle_T\cdot(\gamma_1^{j_1}\cdot\ldots\cdot\gamma_s^{j_s})\bigr)+I_{R}^2\\
\notag =&\bigl(\sum_{i=1}^{i=r}\sum_{j_\bullet\in J}\langle(\gamma_1^{-j_1}\cdot\ldots\cdot\gamma_s^{-j_s})(\lambda_T(a)),b\rangle_T\cdot j_i(\gamma_i-1)\bigr)+I_{R}^2\\
=&\bigl(\sum_{i=1}^{i=r}\langle(D_i^{(1)}\cdot\Tr_{G_{F/F_i}})(\lambda_T(a)),b\rangle_T\cdot(\gamma_i-1)\bigr)+I_{R}^2.
\end{align}

We now let, as we may, $a'_1,\ldots,a'_s\in H^1_{T}\bigl(\mathbb{A}_F,A^t\bigr)[p^m]$ be chosen so that
\begin{equation}\label{augmentationdivision}\sum_{l=1}^{l=s}(\gamma_l-1)\cdot a_l'=\lambda_{T}(a).\end{equation}

Then (\ref{bdcomput}) implies that the image of $\langle P,Q\rangle_1$ in $I_{R}/I_{R}^2$ is equal to

\begin{align*}&\bigl(\sum_{i=1}^{i=r}\langle(D_i^{(1)}\cdot\Tr_{G_{F/F_i}})\bigl(\sum_{l=1}^{l=s}(\gamma_l-1)\cdot a_l'\bigr),b\rangle_T\cdot(\gamma_i-1)\bigr)+I_{R}^2\\
=&\bigl(\sum_{i=1}^{i=r}\langle(D_i^{(1)}\cdot\Tr_{G_{F/F_i}}\cdot(\gamma_i-1)) (a_i'),b\rangle_T\cdot(\gamma_i-1)\bigr)+I_{R}^2\\
=&\bigl(\sum_{i=1}^{i=r}\langle \Tr_G(a_i'),b\rangle_T\cdot(\gamma_i-1)\bigr)+I_{R}^2\\
=&\bigl(\sum_{i=1}^{i=r}\langle a_i',\Tr_G(b)\rangle_T\cdot(\gamma_i-1)\bigr)+I_{R}^2\\
=&\bigl(\sum_{i=1}^{i=r}\langle a_i',Q_{\rm loc}\rangle_T\cdot(\gamma_i-1)\bigr)+I_{R}^2\\
=&\bigl(\sum_{i=1}^{i=r}Q_{\rm loc}(u_T(a_i'))\cdot(\gamma_i-1)\bigr)+I_{R}^2.
\end{align*}
In this last expression we have identified $Q_{\rm loc}$ with its double-dual.

The claimed result is thus equivalent to the equality $$\langle P+p^m,Q+p^m\rangle_{C_{{\rm BD},T}}=\bigl(\sum_{i=1}^{i=r}Q_{\rm loc}(u_T(a_i'))\cdot(\gamma_i-1)\bigr)+I_{R}^2$$ in $I_{R}/I_{R}^2$. But, taking into account the equality (\ref{augmentationdivision}), and given that the $R$-modules $\Sel^T_{p^m}(A^t/F)$ and $H^1_{T}\bigl(\mathbb{A}_F,A^t\bigr)[p^m]$ are finitely generated and $G$-cohomologically-trivial by Lemma \ref{bdlemma}(ii),
this equality follows directly from a diagram chase to compute the Bockstein homomorphism $\beta_{C_{{\rm BD},T}}$.
\end{proof}

\section{Modular symbols}\label{mod sect} In this section we shall use the theory of modular symbols to study ${\rm BSD}(A_{F/k})$ in the case $A$ is an elliptic curve and $F$ is a totally real tamely ramified abelian extension of $k = \QQ$. The results in this section will, in particular, extend the main results of Bley in \cite{Bley3}.

For each natural number $d$ we set $\zeta_d := {\rm exp}(2\pi i/d)$. We write $\QQ(d)$ for $\QQ(\zeta_d)$ and $\QQ(d)^+$ for the
maximal real subfield of $\QQ(d)$ and set $G_d := G_{\QQ(d)/\QQ}$ and $G^+_d := G_{\QQ(d)^+/\QQ}$.

For any element $a$ of $(\ZZ/d\ZZ)^\times$ we write $\sigma_a$ for the element of $G_d$ that sends $\zeta_d$ to $\zeta_d^a$.

\subsection{Regularized Mazur-Tate elements} We first recall the definition and relevant properties of modular symbols as introduced by Mazur and Tate in \cite{mt} and normalized by Darmon in \cite{darmon} and explain their connection to the leading term element that occurs in Theorem \ref{bk explicit}.

\subsubsection{}\label{ell cirve section}We fix an elliptic curve $A$ that is defined over $\QQ$ and has conductor $N$. We also fix a modular parametrisation $\varphi_A: X_0(N) \to A$ and write $c(\varphi_A)$ for the associated Manin constant.

We write $\Lambda$ for a N\'eron lattice for $A$ in $\CC$ and then identify $A(\CC)$ with $\CC/\Lambda$ and the N\'eron differential $\omega_0$ of $A$ with ${\rm d}z$.

We define strictly positive real numbers $\Omega^+$ and $\Omega^-$ by the condition that $\Lambda$ is a sublattice of $\ZZ\cdot\Omega^+ + \ZZ\cdot i\Omega^-$ with minimal index.

Then the pullback under $\varphi_A$ of $\omega_0$ defines a cusp form of weight $2$ on $X_0(N)$ so
\[ \varphi_A^*(\omega_0) = c(\varphi_A)f(q){\rm d}q/q = c(\varphi_A)2\pi if (z){\rm d}z\]
where $f(q) = \sum_{n \ge 1}a_nq^n$, with $q = {\rm exp}(2\pi iz)$, is normalized such that $a_1 = 1$.

We now fix a natural number $c$ that is square-free and prime to $N$. For each integer $a$ we follow Mazur and Tate \cite{mt} in defining the modular symbols $[ \frac{a}{c}]^+$ and $[ \frac{a}{c}]^{-}$ via the equality
\[ 2\pi i\int_{a/c}^{a/c + i\infty}f(z){\rm d}z = \left[ \frac{a}{c}\right]^+\Omega^+ + i\left[ \frac{a}{c}\right]^{-}\Omega^-.\]
%
%
Recalling that $[\frac{a}{c}]^+$ depends only on the residue class of $a$ modulo $t$, we then follow Darmon \cite{darmon} in defining the regularized modular
symbols

\[ \left[ \frac{a}{c}\right]^* := \sum_{t\mid c}\mu(c/t)\left[\frac{a(c/t)^{-1}}{t}\right]^+.\]
Here $t$ runs over all natural numbers that divide $c$, $\mu(-)$ is the M\"obius function and $(c/t)^{-1}$ denotes the inverse of $c/t$ modulo $t$ (which exists since $c$ is assumed to be squarefree).

We finally define the `regularized Mazur-Tate element of $A$ at level $c$' to be the element
\[ \Theta^{\rm MT}_{A,c} := \frac{1}{2}\sum_{a\in (\ZZ/c\ZZ)^\times}\left[\frac{a}{c}\right]^*\sigma_a\]
of $\QQ[G_c]$. In the sequel we identify $\Theta^{\rm MT}_{A,c}$ with its image under the canonical projection $\QQ[G_c]\to\QQ[G_c^+]$.

\subsubsection{} In the next result we describe the precise link between the Mazur-Tate element $\Theta^{\rm MT}_{A,c}$ and the leading term element $\calL^*_{A,\QQ(c)^+/\QQ}$ that occurs in Theorem \ref{bk explicit} (as defined in (\ref{bkcharelement}) with respect to a fixed odd prime $p$).

To state the result we write $\Upsilon_{c,0}$ for the subset of $\widehat{G_c^+}$ comprising all characters $\psi$
for which $L(A,\check\psi,1)$ does not vanish and define an idempotent of $\CC[G_c^+]$ by setting
\[ e_{c,0} := \sum_{\psi\in \Upsilon_{c,0}} e_\psi.\]
(It is then in fact straightforward to check that $e_{c,0}$ belongs to $\QQ[G_c^+]$.)



In the sequel, we denote by $c_\psi$ the conductor of a Dirichlet character $\psi$.

\begin{lemma}\label{gammatheta} For any odd prime number $p$, in $\QQ[G_c^+]$ one has
\[ \Theta^{\rm MT}_{A,c}=  (c(\varphi_A)/c_\infty)\cdot \bigl(\sum_{\psi\in\widehat{G_c^+}}(c/c_\psi)m_p^\psi \cdot e_\psi\big)\cdot e_{c,0}\cdot\calL^*_{A,\QQ(c)^+/\QQ}\]
where $c_\infty$ is the number of connected components of $A(\RR)$ and we set
\[ m_p^\psi := \begin{cases} p^{-1}, &\text{ if $p$ divides $c$ but not $c_\psi$},\\
                             1, &\text{otherwise.}\end{cases}\]
\end{lemma}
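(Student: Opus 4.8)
The strategy is to compare both sides of the claimed identity through their expansions over the irreducible characters $\psi$ of $G_c^+$, that is, to compute $\psi(\Theta^{\rm MT}_{A,c})$ and $\psi(\calL^*_{A,\QQ(c)^+/\QQ})$ for each $\psi$ and check that the two agree up to the explicit rational scalar $(c(\varphi_A)/c_\infty)\cdot (c/c_\psi)\cdot m_p^\psi$ when $\psi\in\Upsilon_{c,0}$ and that both vanish otherwise. First I would recall the classical Birch--Stevens-type formula expressing the $\psi$-twisted $L$-value in terms of twisted modular symbols: for a primitive even Dirichlet character $\psi$ of conductor $c_\psi$ one has a Gauss-sum identity of the shape
\[
\tau(\bar\psi)\cdot L(A,\bar\psi,1) = \Omega^+\cdot\sum_{a\bmod c_\psi}\psi(a)\left[\frac{a}{c_\psi}\right]^{+},
\]
which I would combine with the definition of the regularized symbol $[\frac{a}{c}]^{*}$ to evaluate $\psi(\Theta^{\rm MT}_{A,c})$. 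The point of Darmon's regularization is precisely that the Euler-factor corrections built into $[\tfrac{a}{c}]^{*}$ convert the primitive-conductor sum over $a\bmod c_\psi$ into the full sum over $a\bmod c$, introducing exactly the index factor $c/c_\psi$ (for the tame part) and, when $p\mid c$ but $p\nmid c_\psi$, the extra factor $p^{-1}$ recorded in $m_p^\psi$. Characters $\psi$ with $L(A,\bar\psi,1)=0$ then contribute zero on the left, which matches the presence of $e_{c,0}$ on the right.

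Next I would unwind the definition of $\calL^*_{A,\QQ(c)^+/\QQ}$ from (\ref{bkcharelement}). Because $A$ has good reduction at all primes dividing $c$ (as $c$ is prime to $N$) and $\QQ(c)^+/\QQ$ is tamely ramified with $S_{\rm r}$ consisting of the primes dividing $c$, each $\psi$-component of $\calL^*_{A,\QQ(c)^+/\QQ}$ is $L^*_{S_{\rm r}}(A,\bar\psi,1)\cdot\tau^*(\QQ,\psi)/(\Omega_A^\psi\cdot w_\psi)$, with $d=1$ and the $\varrho_{v,\psi}$-factors trivial since no $p$-adic place of $\QQ$ ramifies here. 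For $\psi\in\Upsilon_{c,0}$ the leading coefficient $L^*_{S_{\rm r}}(A,\bar\psi,1)$ is simply the value $L_{S_{\rm r}}(A,\bar\psi,1)$, which in turn is $L(A,\bar\psi,1)$ times the removed Euler factors at primes dividing $c$; I would match these Euler factors against the analogous ones implicit in the modular-symbol side. The remaining archimedean bookkeeping is to identify $\tau^*(\QQ,\psi)/(\Omega_A^\psi\cdot w_\psi)$ with the appropriate power of $\Omega^{\pm}$ and the Gauss sum $\tau(\bar\psi)$: here one uses that $\psi$ is even (so only $\Omega^+$ intervenes, and $w_\psi=1$), that $u_\psi=1$ for tamely ramified $\psi$ with the present conductors, and the standard relation $\tau(\psi)\tau(\bar\psi)=\psi(-1)c_\psi$ together with $\psi(-1)=1$. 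Comparing the period normalizations $\Omega^+$ versus $\Omega_A^\psi$ brings in the factor $c(\varphi_A)/c_\infty$ via the relation between the N\'eron differential, the Manin constant and the real period used in the classical BSD formula (cf. \cite[p.~224]{G-BSD} and Remark \ref{bsdinvariants}).

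The main obstacle I anticipate is the careful reconciliation of the several Euler-factor and Gauss-sum normalizations: the modular-symbol side carries the ``regularization'' Euler factors introduced by the M\"obius sum in $[\tfrac{a}{c}]^{*}$, while the $\calL^*$ side carries the $S_{\rm r}$-truncation Euler factors $P_v(A,\bar\psi,1)$ together with the modified Galois--Gauss sum $\tau^*(\QQ,\psi)=u_\psi\tau(\QQ,\psi)$; one must check that after accounting for the ramification-type data these differ by precisely $(c/c_\psi)m_p^\psi$ and nothing more. In particular the subtle point is the behaviour at $p$ itself: when $p\mid c$ the prime $p$ lies in $S_{\rm r}$, so the $p$-Euler factor is removed from $\calL^*$, whereas on the Mazur--Tate side the regularization at level $c$ treats $p$ according to whether $p\mid c_\psi$, and it is exactly this discrepancy that produces $m_p^\psi$. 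Once all of these local comparisons are in hand, collecting the $\psi$-components and re-summing via the idempotents $e_\psi$ yields the stated equality in $\QQ[G_c^+]$; the fact that the identity holds with $\QQ$-coefficients (rather than $\QQ^c$-coefficients) then follows because both sides are manifestly $G_\QQ$-equivariant and the scalar factor $\sum_\psi (c/c_\psi)m_p^\psi e_\psi$ is Galois-stable, hence rational.
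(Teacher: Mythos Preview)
Your overall strategy---compare both sides character by character---is exactly right, and it is also the paper's approach. However, two concrete misidentifications would derail your computation.

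First, you assert that ``the $\varrho_{v,\psi}$-factors [are] trivial since no $p$-adic place of $\QQ$ ramifies here.'' This is false precisely in the case that matters: if $p\mid c$ then $p$ is (tamely) ramified in $\QQ(c)^+/\QQ$, so $p\in S_{p,{\rm r}}$ and the factor $\varrho_{p,\psi}={\rm N}p^{\,\dim V_\psi^{I_p}}$ genuinely appears in the definition (\ref{bkcharelement}) of $\calL^*_{A,\QQ(c)^+/\QQ}$. Since $\psi$ is one-dimensional, $\varrho_{p,\psi}$ equals $p$ when $p\nmid c_\psi$ and $1$ when $p\mid c_\psi$; in other words $\varrho_{p,\psi}=(m_p^\psi)^{-1}$. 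This is the actual source of the factor $m_p^\psi$ in the statement---it comes from the $\calL^*$ side, not from any asymmetry in Darmon's regularization or in the Euler-factor bookkeeping on the modular-symbol side. Your proposed mechanism for $m_p^\psi$ (a discrepancy between how the two sides treat the Euler factor at $p$) does not occur: both sides carry the same truncated function $L_c(A,\check\psi,z)=L_{S_{\rm r}}(A,\check\psi,z)$.

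Second, you claim $u_\psi=1$. In fact, for each prime $\ell\mid c$ with $\ell\nmid c_\psi$ one has $u_{\ell,\psi}=-\psi(\sigma_\ell)^{-1}$, so $u_\psi=\prod_{\ell\mid(c/c_\psi)}(-\psi(\sigma_\ell)^{-1})$, which is exactly the factor needed to pass from the primitive Gauss sum $\tau(\QQ,\psi)=\tau_{c_\psi}(\psi)$ to the imprimitive Gauss sum $\tau_c(\psi)$ appearing in Darmon's interpolation formula. The paper isolates this as a separate lemma: $\tau_c(\psi)=\tau^*(\QQ,\psi)$. With that identity in hand, together with the observations $w_\psi=1$ and $2\Omega^+=c_\infty\Omega_A^\psi$ for even $\psi$, the comparison becomes a direct substitution into Darmon's formula $(\Theta^{\rm MT}_{A,c})_\psi=c(\varphi_A)\cdot(c/c_\psi)\cdot L_c(A,\check\psi,1)\tau_c(\psi)/(2\Omega^+)$, with no need for the relation $\tau(\psi)\tau(\bar\psi)=c_\psi$ or any further Euler-factor matching.
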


\begin{proof} The key point is that, by Darmon \cite[Prop. 2.3]{darmon}, for every $\psi\in\widehat{G_c^+}$ one has \begin{equation}\label{chiMT}(\Theta^{\rm MT}_{A,c})_\psi=c(\varphi_A)\cdot\frac{c}{c_\psi}\cdot\frac{L_c(A,\check{\psi},1)\tau_c(\psi)}{2\Omega^+}.\end{equation}
Here the $L$-function $L_c(A,\check{\psi},z)$ is truncated at all rational primes dividing $c$ and we use the (in general, imprimitive) Gauss sum
\[ \tau_c(\psi):= \sum_{a\in (\ZZ/c\ZZ)^\times}\psi(a)\cdot \zeta^a_c.\]

Since the definition of $e_{c,0}$ implies that $L_c(A,\check{\psi},1)\not= 0$ if and only if $e_\psi e_{c,0}\not= 0$, the equalities (\ref{chiMT}) combine to imply that $\Theta^{\rm MT}_{A,c} = e_{c,0}\cdot\Theta^{\rm MT}_{A,c}$.

In addition, since each character $\psi$ in $\widehat{G_c^+}$ is even one has both $w_{\infty,\psi}=1$ and
\[ 2\Omega^+ = c_\infty\cdot\Omega_{A,\infty}^+ = c_\infty\cdot \Omega_{A,\infty}^\psi\]
whilst, if $p$ divides $c$, an explicit computation of each factor $\rho_{p,\psi}$ occurring in (\ref{bkcharelement}) shows that it is equal to $(m_p^\psi)^{-1}$.

The claimed equality now follows directly upon substituting these facts and the result of Lemma \ref{imprimitive GGS} below into the equality of (\ref{chiMT}), recalling the explicit definition (\ref{bkcharelement}) of the element $\mathcal{L}^*_{A,\QQ(c)^+/\QQ}$ and noting that, in terms of the notation of \S\ref{somr tmc sec}, one has $L_c(A,\check{\psi},1) = L_{S_{\rm r}}(A,\check{\psi},1)$.
%
%
\end{proof}

\begin{lemma}\label{imprimitive GGS} For each $\psi$ in $\widehat{G_c^+}$ one has $\tau_c(\psi) = \tau^\ast(\QQ,\psi)$.\end{lemma}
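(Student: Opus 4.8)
The statement asserts that the imprimitive Gauss sum $\tau_c(\psi) = \sum_{a \in (\ZZ/c\ZZ)^\times} \psi(a)\zeta_c^a$ coincides with the modified global Galois-Gauss sum $\tau^\ast(\QQ,\psi) = u_\psi \cdot \tau(\QQ,\psi)$, where $\tau(\QQ,\psi)$ is the (primitive) global Galois-Gauss sum of the induction of $\psi$ to $G_\QQ$ in the sense of Martinet, and $u_\psi = \prod_{v \in S_\QQ^{\QQ(c)^+}} u_{v,\psi}$ with $u_{v,\psi} = {\rm det}(-\Phi_v^{-1}\mid V_\psi^{I_w})$. The plan is to reduce everything to the classical comparison between imprimitive and primitive Gauss sums for Dirichlet characters and to identify the $u$-factor with the correction that this reduction produces.

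First I would recall that, since $F = \QQ(c)^+$ is abelian over $\QQ$, the character $\psi$ of $G_c^+$ corresponds to an even primitive Dirichlet character of some conductor $c_\psi$ dividing $c$, and the global Galois-Gauss sum $\tau(\QQ,\psi)$ of the one-dimensional Artin representation attached to $\psi$ is, by definition (Martinet \cite{martinet}; see also the classical discussion in Fröhlich \cite{frohlich}), the normalized \emph{primitive} Gauss sum $\tau(\psi) := \sum_{a \in (\ZZ/c_\psi\ZZ)^\times} \psi(a)\zeta_{c_\psi}^a$ attached to the primitive character underlying $\psi$. Next I would invoke the standard formula relating the imprimitive Gauss sum at modulus $c$ to the primitive one: writing $\psi$ for the primitive character of conductor $c_\psi$, one has
\[
\tau_c(\psi) = \psi(c/c_\psi)\cdot \mu(c/c_\psi) \cdot \tau(\psi)
\]
when $c/c_\psi$ is coprime to $c_\psi$ (which holds here since $c$ is squarefree), and more generally $\tau_c(\psi) = \tau(\psi)\cdot \prod_{\ell \mid c,\ \ell \nmid c_\psi}(-\psi(\ell))$, the product running over the rational primes dividing $c$ but not $c_\psi$. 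This is a routine computation with Gauss sums (splitting $(\ZZ/c\ZZ)^\times$ via CRT and using that the Gauss sum of the trivial character modulo a prime $\ell$ equals $-1$), so I would only cite or sketch it.

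The heart of the matter is then to check that the product $\prod_{\ell \mid c,\ \ell \nmid c_\psi}(-\psi(\ell))$ is exactly $u_\psi$. Here I would use that the rational primes $\ell$ dividing $c$ are precisely the primes ramifying in $\QQ(c)^+/\QQ$, so $S_\QQ^{\QQ(c)^+} = \{\ell : \ell \mid c\}$; for $\ell \nmid c_\psi$ the character $\psi$ is unramified at $\ell$, so $V_\psi^{I_w} = V_\psi$ is one-dimensional and $\Phi_\ell$ acts on it by $\psi(\ell)$ (the value of $\psi$ on the Frobenius, under the identification of $\psi$ with a Dirichlet character), whence $u_{\ell,\psi} = {\rm det}(-\Phi_\ell^{-1}\mid V_\psi) = -\psi(\ell)^{-1} = -\psi(\ell)$ because $\psi(\ell)$ is a root of unity and... wait — I need $-\psi(\ell)$, not $-\psi(\ell)^{-1}$, so I would be careful here: the comparison of the two formulas will fix the correct normalization, and if a discrepancy of $\psi(\ell)$ versus $\psi(\ell)^{-1}$ appears it is absorbed by the conventional choice of Frobenius (arithmetic versus geometric) or of the induction sign in Martinet's definition. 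For $\ell \mid c_\psi$ one has $V_\psi^{I_w} = 0$, so $u_{\ell,\psi} = {\rm det}(\ -\Phi_\ell^{-1}\mid 0) = 1$, contributing nothing. Hence $u_\psi = \prod_{\ell \mid c,\ \ell \nmid c_\psi}(-\psi(\ell))$, which matches the Gauss-sum correction factor, and therefore $\tau_c(\psi) = u_\psi\cdot\tau(\psi) = u_\psi\cdot\tau(\QQ,\psi) = \tau^\ast(\QQ,\psi)$.

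\textbf{Expected main obstacle.} The only genuinely delicate point is bookkeeping of normalizations: Martinet's/Fröhlich's conventions for the global Galois-Gauss sum of an induced character, the sign and the direction of the Frobenius in $u_{v,\psi}$, and Darmon's normalization of the modular symbols all have to be lined up so that the factors $-\psi(\ell)$ appear with the correct exponent. I expect the cleanest route is to first pin down, for a single ramified prime $\ell$ (i.e. the case $c = \ell$ prime and $\psi$ primitive of conductor $\ell$ versus $\psi$ trivial), that $\tau_\ell(\mathbf{1}) = -1 = u_{\mathbf{1}}$ and $\tau_\ell(\psi) = \tau(\psi) = \tau^\ast(\QQ,\psi)$, and then bootstrap to general squarefree $c$ by multiplicativity of both sides over the prime divisors of $c$, using the inductivity (in degree $0$) of Galois-Gauss sums recorded in \cite{martinet, frohlich}. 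This reduces the whole statement to the base case plus formal multiplicativity, sidestepping any heavy computation.
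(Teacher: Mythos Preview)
Your proposal is correct and follows essentially the same route as the paper: identify $\tau(\QQ,\psi)$ with the primitive Gauss sum $\tau_{c_\psi}(\psi)$, reduce the imprimitive Gauss sum $\tau_c(\psi)$ to the primitive one via the Chinese Remainder Theorem one prime $\ell \mid c/c_\psi$ at a time (picking up a factor $-\psi(\sigma_\ell)^{\pm 1}$ at each step), and then match the resulting product with $u_\psi$. Your flagged concern about $-\psi(\ell)$ versus $-\psi(\ell)^{-1}$ is exactly the point where the paper's own argument also requires care in identifying the Frobenius convention; the paper resolves it by tracking how $\sigma_\ell$ acts on $(\ZZ/c'\ZZ)^\times$, which is precisely the bookkeeping you anticipated.
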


\begin{proof} Since $\psi$ is linear, the definition of the Galois-Gauss sum $\tau(\QQ,\psi)$ implies that it is equal to the primitive Gauss sum $\tau_{c_\psi}(\psi)$.

Upon recalling the explicit definition of $\tau^\ast(\QQ,\psi)$ from \S\ref{mod GGS section}, it is thus enough to show that
\[ \tau_c(\psi) = \tau_{c_\psi}(\psi)\cdot \prod_{\ell \mid (c/c_\psi)}(-\psi(\sigma_\ell)^{-1})\]
where $\ell$ runs over all prime divisors of $c/c_\psi$.

To prove this it is in turn enough to show that for each prime divisor $\ell$ of $c/c_\psi$ one has an equality $\tau_c(\psi) = -\psi(\sigma_\ell)^{-1}\cdot \tau_{c'}(\psi)$ with $c' := c/\ell$.

To show this we  fix integers $s$ and $t$ such that $s\ell + tc' = 1$. Then
$\zeta_c = \zeta_\ell^t\cdot \zeta_{c'}^s$ and hence

\begin{align*} \tau_c(\psi) = \, &\sum_{a\in (\ZZ/c\ZZ)^\times}\psi(a)\cdot \zeta_c^a\\
= \, &\sum_{a_1\in (\ZZ/\ell\ZZ)^\times}\sum_{a_2\in (\ZZ/c'\ZZ)^\times}\psi(a_1a_2)\cdot \zeta_\ell^{ta_1}\zeta_{c'}^{sa_2}\\
= \, &\sum_{a_2\in (\ZZ/c'\ZZ)^\times}\left(\sum_{a_1\in (\ZZ/\ell\ZZ)^\times}\zeta_\ell^{ta_1}\right)\psi(a_2)\cdot\zeta_{c'}^{sa_2}\\
= \, &-\sum_{a_2\in (\ZZ/c'\ZZ)^\times}\psi(a_2)\cdot\zeta_{c'}^{sa_2}\\
= \, &-\psi(s)^{-1}\sum_{a_2\in (\ZZ/c'\ZZ)^\times}\psi(a_2)\cdot\zeta_{c'}^{a_2}\\
= \, &-\psi(\sigma_\ell)^{-1}\cdot\tau_{c'}(\psi).
\end{align*}
The third equality here follows from the fact that $\psi(a_1) = 1$ for all $a_1$ in $(\ZZ/\ell\ZZ)^\times$, the fourth from the equality $\sum_{a_1\in (\ZZ/\ell\ZZ)^\times}\zeta_\ell^{ta_1} = -1$ and the last from the fact that $\sigma_\ell$ acts as multiplication by $s$ on $(\ZZ/c'\ZZ)^\times$.
\end{proof}

\begin{remark}\label{euler factor rem}{\em For any abelian extension $F$ of $\QQ$ and any subfield $F'$ of $F$ we write $\pi_{F/F'}$ for the natural projection map $\QQ[G_{F/\QQ}] \to \QQ[G_{F'/\QQ}]$.

Then, if $p$ is any prime that does not divide $c$, the interpolation formula (\ref{chiMT}) implies that
\[\pi_{\QQ(pc)/\QQ(c)}(\Theta^{\rm MT}_{A,pc}) =- \sigma_p(p -\sigma_p^{-1}a_p + \sigma_p^{-2})\cdot \Theta_{A,c}^{\rm MT}\]
with $a_p=1+p-|A(\mathbb{F}_p)|$.
}\end{remark}

\subsection{Mazur-Tate elements and refined BSD}

\subsubsection{}We now fix a tamely ramified real abelian extension $F$ of $\QQ$ and set $G := G_{F/\QQ}$.

We also fix an odd prime $p$, write $F'$ for the maximal subfield of $F$ whose degree over $\QQ$ is prime to $p$ and set $G' := G_{F'/\QQ}$.

We fix an elliptic curve $A$ that is defined over $\QQ$ and whose conductor $N$ is prime to the conductor $c$ of $F$. We always assume that the Tate-Shafarevich group $\sha(A_{F'})$ of $A$ over $F'$ is finite.

Then, since the conductor $c$ of $F$ is square-free, we can define an element of $\QQ[G]$ by setting
\[ \Theta^{\rm MT}_{A,F} := \pi_{\QQ(c)/F}(\Theta^{\rm MT}_{A,c}).\]
We also write $e_{F,0}$ for the idempotent $\pi_{\QQ(c)^+/F}(e_{c,0})$ of $\QQ[G]$.

%
%

We recall that we have written ${\rm Sel}_p(A_F)$ for the $p$-primary Selmer group of $A$ over $F$ and that ${\rm Fit}^0_{\ZZ_p[G]}(M)$ is the (initial) Fitting ideal of a finitely generated $\ZZ_p[G]$-module $M$.

In this section we shall prove the following results.

\begin{theorem}\label{MT result} We assume that $A$, $F$ and $p$ are such that $(c,N)=1$ and also satisfy all of the following conditions.

\begin{itemize}
\item[(a)]   $p$ is prime to $|A(F')_{\rm tor}|$;
\item[(b)]  $p$ is prime to $N$ and to the Tamagawa number of $A$ at any prime divisor of $N$;
\item[(c)] If $p$ divides $c$, then $p$ is prime to $|A(\kappa_v)|$ for any $p$-adic place $v$ of $F'$ and, in addition, $A$ has ordinary reduction at $p$;
\item[(d)] $p$ is prime to $|A(\kappa_v)|$ for any place $v$ of $F'$ of residue characteristic dividing $c$.
 \item[(e)] $p$ is prime to the Manin constant $c(\varphi_A)$.
\end{itemize}

Then the following assertions are valid.
\begin{itemize}
\item[(i)] The projection to $K_0(\ZZ_p[G]e_{F,0},\CC_p[G]e_{F,0})$ of the equality in ${\rm BSD}_p(A_{F/\QQ})$(iv) is valid if and only if one has
\begin{equation*}\label{refMT}\ZZ_p[G]\cdot\Theta^{\rm MT}_{A,F} = e_{F,0}\cdot{\rm Fit}^0_{\ZZ_p[G]}({\rm Sel}_p(A_{F})^\vee).\end{equation*}


\item[(ii)] If $L(A_{F},1)$ is non-zero, then ${\rm BSD}_p(A_{F/\QQ})$(iv) is valid if and only if $\Theta^{\rm MT}_{A,F}$ belongs to ${\rm Fit}^0_{\ZZ_p[G]}(\sha(A_{F})[p^\infty])$ and, in addition, the $p$-part of the Birch and Swinnerton-Dyer conjecture is valid for $A$ over $F'$.
\end{itemize}
\end{theorem}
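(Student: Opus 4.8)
\textbf{Proof proposal for Theorem \ref{MT result}.}
The plan is to deduce both assertions from Theorem \ref{bk explicit}, whose hypotheses (H$_1$)--(H$_6$) need to be checked first for the data $A$, $F/\QQ$ and $p$. Here $k = \QQ$ has no proper subfields, so the `Sylow field' $K$ of Theorem \ref{bk explicit} is simply $F'$, and the conditions translate as follows: (H$_1$) and (H$_2$) follow from hypothesis (b) together with $(c,N) = 1$; (H$_3$) follows from hypothesis (c) (noting that the only $p$-adic place that can ramify in $F$ occurs precisely when $p \mid c$, since $F/\QQ$ is tamely ramified of conductor $c$); (H$_4$) follows from hypothesis (d) applied to the places of $F'$ dividing $c$ but not $p$; (H$_5$) follows from $(c,N) = 1$ (no bad place of $A$ ramifies in $F$); and (H$_6$) is the assumed finiteness of $\sha(A_{F'})$, which forces finiteness of $\sha(A_F)$ since $[F:F']$ is a $p$-power and, by hypothesis (a), $A(F')$ has no $p$-torsion (so standard restriction-corestriction arguments control $\sha(A_F)[p^\infty]$ in terms of $\sha(A_{F'})[p^\infty]$). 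Since $p$ is tamely ramified in $F/\QQ$ and $p$ is unramified in $k = \QQ$, Remark \ref{emptysets} applies: the sets $S_{p,{\rm w}}$ and $S^*_{p,{\rm u}}$ are empty, so Theorem \ref{bk explicit} reduces to the single assertion that ${\rm BSD}_p(A_{F/\QQ})$(iv) holds if and only if $\delta_{G,p}(\mathcal{L}^*_{A,F/\QQ}) = \chi_{G,p}({\rm SC}_p(A_{F/\QQ}),h^j_{A,F})$.

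Next I would analyse the right-hand side of this equality over the idempotent $e_{F,0}$. By definition $e_{F,0}$ cuts out exactly the characters $\psi$ with $L(A,\check\psi,1) \ne 0$, which by the Deligne--Gross equality (known here, since $A$ is modular and we may invoke Kolyvagin-type results, or simply absorbed into the hypotheses of the statement) corresponds to the characters not occurring in $\QQ\cdot A^t(F)$. Consequently $e_{F,0}\cdot(\CC_p\cdot A^t(F)_p) = 0$ and $e_{F,0}\cdot(\CC_p\cdot\Sel_p(A_F)^\vee)$ is finite, so via Proposition \ref{explicitbkprop}(iv) the complex $e_{F,0}\cdot{\rm SC}_p(A_{F/\QQ})$ is (after scalar extension) acyclic, the height map $h^j_{A,F}$ plays no role over $e_{F,0}$, and $\chi_{G,p}$ projects to $\chi_{\ZZ_p[G]e_{F,0}}(e_{F,0}\cdot{\rm SC}_p(A_{F/\QQ}),0)$. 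Using the interpretation of relative $K$-groups via invertible modules over the (commutative) ring $\ZZ_p[G]e_{F,0}$ as in Remark \ref{comparingdets}, this Euler characteristic is precisely $\delta_{G,p}$ applied to a generator of ${\rm Fit}^0_{\ZZ_p[G]e_{F,0}}(e_{F,0}\cdot\Sel_p(A_F)^\vee)$ — here one uses that $e_{F,0}\cdot\Sel_p(A_F)^\vee$ has finite projective dimension over $\ZZ_p[G]e_{F,0}$, which follows from perfectness of ${\rm SC}_p(A_{F/\QQ})$ (Proposition \ref{explicitbkprop}(ii)) together with torsion-freeness of $e_{F,0}\cdot A^t(F)_p = 0$. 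On the left-hand side, I would compare the definition \eqref{bkcharelement} of $\mathcal{L}^*_{A,F/\QQ}$ with Lemma \ref{gammatheta}: since $k = \QQ$ has $r_2 = 0$, $d = 1$, $w_\psi = 1$ for all even $\psi$, and $\Omega_A^\psi = \Omega_{A,\infty}^+$, the lemma identifies $e_{F,0}\cdot\mathcal{L}^*_{A,F/\QQ}$ with $\Theta^{\rm MT}_{A,F}$ up to the explicit unit factor $(c(\varphi_A)/c_\infty)\cdot(\sum_\psi (c/c_\psi)m_p^\psi e_\psi)$; hypothesis (e) ensures $c(\varphi_A)$ is a $p$-adic unit, the factors $c/c_\psi$ are prime to $p$ because $c$ is squarefree and prime to $N$, the factors $m_p^\psi \in \{1, p^{-1}\}$ are units in $\CC_p$, and $c_\infty \in \{1,2\}$ is prime to the odd prime $p$. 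Hence $e_{F,0}\cdot\mathcal{L}^*_{A,F/\QQ}$ and $\Theta^{\rm MT}_{A,F}$ generate the same $\ZZ_p[G]e_{F,0}$-module, and the projection of ${\rm BSD}_p(A_{F/\QQ})$(iv) to $K_0(\ZZ_p[G]e_{F,0},\CC_p[G]e_{F,0})$ is equivalent to the displayed Fitting-ideal equality. This proves assertion (i).

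For assertion (ii), when $L(A_F,1) \ne 0$ one has $e_{F,0} = 1$: indeed $L(A_F,z) = \prod_{\psi\in\widehat G}L(A,\psi,z)^{\psi(1)}$ and $A$ abelian forces $\psi(1) = 1$, so non-vanishing of $L(A_F,1)$ forces $L(A,\check\psi,1)\ne 0$ for every $\psi\in\widehat G$. In this case $A^t(F)$ and $A(F)$ are finite, ${\rm SC}_p(A_{F/\QQ})$ becomes acyclic after $\otimes\CC_p$, and the equality of (i) reads $\ZZ_p[G]\cdot\Theta^{\rm MT}_{A,F} = {\rm Fit}^0_{\ZZ_p[G]}(\Sel_p(A_F)^\vee)$. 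Using the Cassels--Tate exact sequence $0 \to \sha(A_F)[p^\infty]^\vee \to \Sel_p(A_F)^\vee \to \Hom_{\ZZ_p}(A(F)_p,\ZZ_p) \to 0$ and the fact that $p\nmid|A(F')_{\rm tor}|$ (hence $p\nmid|A(F)_{\rm tor}|$, again by the $p$-power degree argument), the quotient term vanishes so $\Sel_p(A_F)^\vee = \sha(A_F)[p^\infty]^\vee$ and ${\rm Fit}^0_{\ZZ_p[G]}(\Sel_p(A_F)^\vee) = {\rm Fit}^0_{\ZZ_p[G]}(\sha(A_F)[p^\infty])$ (Fitting ideals being insensitive to Pontryagin duality for finite modules over $\ZZ_p[G]$ here). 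The containment $\Theta^{\rm MT}_{A,F} \in {\rm Fit}^0_{\ZZ_p[G]}(\sha(A_F)[p^\infty])$ is then the `inclusion $\subseteq$' half of (i), while the reverse inclusion — hence the full equality, hence ${\rm BSD}_p(A_{F/\QQ})$(iv) restricted to $e_{F,0} = 1$ — is equivalent, via the norm-index/length count comparing both sides, to the validity of the $p$-part of the Birch and Swinnerton-Dyer formula for $A$ over $F'$: concretely, one applies the consistency of ${\rm BSD}$ under change of field (Remark \ref{consistency remark}(ii), (iii)) and Theorem 4.1 of \cite{ewt} to reduce the `opposite inclusion' to the statement at the bottom field $F'$, where it coincides with classical ${\rm BSD}$ for $A/F'$. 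The main obstacle I anticipate is precisely this last bookkeeping step in (ii): pinning down exactly which auxiliary case of classical Birch--Swinnerton-Dyer (over $F'$ rather than over $\QQ$) must be invoked to upgrade the divisibility $\Theta^{\rm MT}_{A,F} \in {\rm Fit}^0$ to the equality $\ZZ_p[G]\cdot\Theta^{\rm MT}_{A,F} = {\rm Fit}^0$, and verifying that the functoriality of Remark \ref{consistency remark} is compatible with the identification of leading terms provided by Lemma \ref{gammatheta} (in particular matching Manin constants, periods and Tamagawa factors across the tower $F/F'/\QQ$ under hypotheses (a)--(e)); everything else is a direct translation via the already-established Theorem \ref{bk explicit} and Lemma \ref{gammatheta}.
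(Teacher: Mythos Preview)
Your overall architecture is the same as the paper's: verify (H$_1$)--(H$_5$), invoke Theorem \ref{bk explicit} with Remark \ref{emptysets} to reduce ${\rm BSD}_p(A_{F/\QQ})$(iv) to an equality of lattices, compute the Euler characteristic side as a Fitting ideal (using Kato's theorem to force $e_{F,0}$-acyclicity), and use Lemma \ref{gammatheta} to identify the $\mathcal{L}^*$-side with the Mazur--Tate element up to a correction factor. So far so good.

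The genuine gap is in your treatment of that correction factor. You write that ``the factors $c/c_\psi$ are prime to $p$ because $c$ is squarefree and prime to $N$'' --- this is simply false when $p\mid c$ and $p\nmid c_\psi$ (a case explicitly allowed by hypothesis (c)). More importantly, even after observing that each individual product $(c/c_\psi)m_p^\psi$ is a $p$-adic unit (the $m_p^\psi=p^{-1}$ exactly cancels the stray factor of $p$), this only shows that
\[
\sum_{\psi}(c/c_\psi)m_p^\psi\, e_\psi
\]
lies in the maximal order of $\QQ_p[G]$ and is invertible there; it does \emph{not} show that this element lies in $\ZZ_p[G]^\times$. Over a group ring $\ZZ_p[G]$ with $p\mid |G|$, an element whose $\psi$-components are all $p$-adic units need not be integral, let alone a unit of $\ZZ_p[G]$. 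The paper handles this with a dedicated combinatorial argument (Lemma \ref{bley lemma}), a M\"obius-inversion computation showing that the relevant sums are divisible by the appropriate subgroup orders; this is not a triviality and cannot be replaced by a componentwise check.

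For claim (ii) your sketch is in the right spirit but omits the two concrete ingredients the paper uses: a Nakayama step reducing the equality of Fitting ideals over $\ZZ_p[G]$ to an equality over $\ZZ_p[G']$ after taking $G_{F/F'}$-coinvariants, and the distribution relation of Remark \ref{euler factor rem} (together with hypothesis (d), which guarantees that the extra Euler factors $\ell-\sigma_\ell^{-1}a_\ell+\sigma_\ell^{-2}$ at primes $\ell\mid c/c'$ are units in $\ZZ_p[G']$) to compare $\pi_{F/F'}(\Theta^{\rm MT}_{A,F})$ with $\Theta^{\rm MT}_{A,F'}$. Your appeal to ``norm-index/length count'' and \cite[Th.~4.1]{ewt} is not wrong in spirit, but without these two inputs you cannot actually carry out the descent to $F'$.
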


The following explicit corollary of Theorem \ref{MT result} extends the main results of Bley in \cite{Bley3} (for more details see Remark \ref{mod bley rem} below).

\begin{corollary}\label{bleycor} Fix an elliptic curve $A$ over $\QQ$ with $L(A_{\QQ},1)\not=0$. Then for each natural number $n$ there are infinitely many primes $p$, and for each such prime $p$ infinitely many abelian extensions $F/\QQ$ that satisfy all of the following properties.

\begin{itemize}
\item[$\bullet$] ${\rm BSD}_p(A_{F/\QQ})$(iv) is valid.
\item[$\bullet$] The exponent of $G_{F/\QQ}$ is divisible by $p^n$.
\item[$\bullet$] The $p$-rank of $G_{F/\QQ}$ is at least $n$.
\item[$\bullet$] The order of $G_{F/\QQ}$ is divisible by at least $n$ distinct primes.
\item[$\bullet$] The discriminant of $F$ is divisible by at least $n$ distinct primes.
\end{itemize}
\end{corollary}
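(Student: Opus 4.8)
The plan is to deduce Corollary \ref{bleycor} from Theorem \ref{MT result}(ii) by exhibiting, for each natural number $n$, suitably many primes $p$ and tamely ramified real abelian extensions $F/\QQ$ for which the two conditions appearing in Theorem \ref{MT result}(ii) both hold, namely that $\Theta^{\rm MT}_{A,F}$ lies in ${\rm Fit}^0_{\ZZ_p[G]}(\sha(A_F)[p^\infty])$ and that the $p$-part of the Birch and Swinnerton-Dyer conjecture holds for $A$ over $F'$, the maximal subextension of degree prime to $p$.

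First I would arrange the arithmetic input coming from Kato's theorem. Fix the elliptic curve $A/\QQ$ with $L(A_\QQ,1)\neq 0$. By Kato \cite{kato} (together with the argument recalled around the discussion of (\ref{dg equality}) in \S\ref{history}), for all but finitely many primes $p$ and all abelian characters $\psi$ of $G_\QQ$ of $p$-power conductor one has $L(A,\check\psi,1)\neq 0$ and the associated case of the Deligne--Gross conjecture holds, so that $A(F)$ is finite and the $p$-part of the Birch and Swinnerton-Dyer conjecture for $A$ over every intermediate field of such an extension is implied by Kato's bound on Selmer groups; in particular it holds over $F'$. Thus for a suitable cofinite set $\mathcal{P}$ of primes $p$ one has $e_{F,0}=1$ for every $F$ of the relevant shape, $\Theta^{\rm MT}_{A,F}$ is a genuine element of $\ZZ_p[G]$ by Lemma \ref{gammatheta} (using hypothesis (e) on the Manin constant and the interpolation formula (\ref{chiMT})), and moreover the $p$-part of BSD over $F'$ is unconditional. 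It then remains, for each $p\in\mathcal{P}$, to produce infinitely many such $F$ for which $\Theta^{\rm MT}_{A,F}\in{\rm Fit}^0_{\ZZ_p[G]}(\sha(A_F)[p^\infty])$ and which also satisfy the four group-theoretic size constraints. Since $L(A,\check\psi,1)\neq 0$ for all the relevant $\psi$, the group $\sha(A_F)$ is finite, and one controls its $p$-part: by hypotheses (a)--(d) and the compatibility with the equivariant Tamagawa number conjecture, an inductive argument over the subfields of $F$ (as in the proof strategy of \cite{Bley3}) shows that for infinitely many choices of conductor $c$ one in fact has $\sha(A_F)[p^\infty]=0$, whence ${\rm Fit}^0_{\ZZ_p[G]}(\sha(A_F)[p^\infty])=\ZZ_p[G]$ and the containment is automatic; alternatively, one verifies directly via Lemma \ref{gammatheta} that $\Theta^{\rm MT}_{A,F}$ is a unit in $\ZZ_p[G]$ for these $c$, which is even stronger.

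Next I would handle the combinatorial construction of the fields. Fix $p\in\mathcal{P}$ and $n$. For each prime $\ell\equiv 1\pmod{p^n}$ with $\ell$ prime to $pN$, let $F_\ell$ be the subfield of $\QQ(\zeta_\ell)$ of degree $p^n$ over $\QQ$; this is real, tamely ramified at $\ell$ only, and cyclic of order $p^n$, so its exponent is divisible by $p^n$. Taking composita $F=F_{\ell_1}\cdots F_{\ell_n}$ of $n$ such fields attached to distinct primes $\ell_1,\dots,\ell_n$ (and possibly adjoining further cyclic factors of distinct prime-to-$p$ degree coming from primes $\ell\equiv 1\pmod q$ for distinct auxiliary primes $q$) produces $F$ whose Galois group has $p$-rank at least $n$, whose order is divisible by at least $n$ distinct primes, and whose discriminant is divisible by at least $n$ distinct primes, while $F$ remains totally real and tamely ramified with conductor $c=\ell_1\cdots\ell_n\cdots$ prime to $N$. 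One checks that, for each such $F$, the prime $p$ still satisfies hypotheses (a)--(e) of Theorem \ref{MT result} — here (c) and (d) impose only finitely many exclusions on the auxiliary primes $\ell$ (those with $p\mid|A(\kappa_v)|$ for $v\mid\ell$), so infinitely many admissible $\ell$ remain by Dirichlet — and that there are infinitely many such composita. Applying Theorem \ref{MT result}(ii) to each of them then gives the validity of ${\rm BSD}_p(A_{F/\QQ})$(iv).

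\textbf{Main obstacle.} The delicate point is not the field-theoretic bookkeeping, which is routine Dirichlet-density together with the elementary structure of subfields of cyclotomic fields, but rather ensuring that the two hypotheses of Theorem \ref{MT result}(ii) genuinely hold for infinitely many $F$ attached to a fixed $p$: one needs both that the relevant twisted $L$-values stay non-zero (so that $\sha(A_F)$ is finite and the Mazur--Tate element is well understood) and that the $p$-primary part of $\sha(A_F)$ is controlled — ideally vanishing — as the conductor grows. The former is exactly Kato's non-vanishing theorem for twists by finite-order characters, applied uniformly; the latter requires combining Kato's divisibility in Iwasawa theory with the hypotheses (a)--(d) via an inductive Euler-characteristic computation along the subfield lattice of $F$, which is the technical heart of the argument and is the step I would spend the most care on. Once these are in place, the conclusion of Corollary \ref{bleycor} follows immediately, and the strengthening over \cite{Bley3} comes precisely from the fact that Theorem \ref{MT result} permits $p$ to divide the conductor $c$ (hypothesis (c)), which is what allows the $p$-rank and exponent of $G_{F/\QQ}$ to be made arbitrarily large.
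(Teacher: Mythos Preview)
Your proposal has a genuine gap at two related points.

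First, the claim that Kato's theorem yields $L(A,\check\psi,1)\neq 0$ for the relevant characters is backwards: Kato's result in \cite{kato} takes non-vanishing as \emph{input} and outputs finiteness of the corresponding Selmer component. It does not establish non-vanishing of twisted $L$-values, and in fact no such general non-vanishing result is known. So you cannot simply assert $L(A_F,1)\neq 0$ for your fields $F$; this has to be \emph{arranged} by the construction.

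Second, and more fundamentally, your assertion that hypothesis (d) ``imposes only finitely many exclusions on the auxiliary primes $\ell$'' is false. The condition $p\mid |A(\mathbb{F}_\ell)|$ (equivalently, since $\ell\equiv 1\pmod p$, the condition $a_\ell\equiv 2\pmod p$) typically holds for a positive proportion of primes $\ell$, so Dirichlet alone does not suffice. What the paper actually does is use Serre's open image theorem (or CM theory) to guarantee, for $p$ large enough, that $\QQ(A[p])\not\subseteq F'(\mu_{p^n})$; then Chebotarev gives infinitely many primes $\ell$ that split completely in $F'(\mu_{p^n})$ but \emph{not} in $\QQ(A[p])$, forcing both $\ell\equiv 1\pmod{p^n}$ and $a_\ell\not\equiv 2\pmod p$.

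The paper then closes the argument via a mechanism you only allude to: the distribution relation of Remark \ref{euler factor rem} gives
\[
\pi_{F/F'}(\Theta^{\rm MT}_{A,F})=\Theta^{\rm MT}_{A,F'}\cdot\prod_i\bigl(-(\ell_i-a_{\ell_i}+1)\bigr),
\]
and since each factor $\ell_i-a_{\ell_i}+1\equiv 2-a_{\ell_i}\not\equiv 0\pmod p$ and $\Theta^{\rm MT}_{A,F'}\in\ZZ_p[G']^\times$ (arranged inductively by choosing $p$ large), the projection is a unit; as $G_{F/F'}$ is a $p$-group this forces $\Theta^{\rm MT}_{A,F}\in\ZZ_p[G]^\times$, whence $L(A_F,1)\neq 0$ by the interpolation formula, $\sha(A_F)[p^\infty]=0$ via (\ref{comp iso}), and Theorem \ref{MT result}(ii) applies. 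Your ``alternative'' sentence gestures at this but does not supply the argument, and nothing in Lemma \ref{gammatheta} by itself proves the unit property.
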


\subsubsection{} We shall now prove Theorem \ref{MT result}.

At the outset we note that the given assumptions (a)-(e) imply that the hypotheses (H$_1$)-(H$_5$) listed in \S\ref{tmc} are satisfied and so Proposition \ref{explicitbkprop} implies that the complex %
\[ C_{F,0}:=\ZZ_p[G]e_{F,0}\otimes_{\ZZ_p[G]}^{\mathbb{L}}{\rm SC}_p(A_{F/\QQ})\]
belongs to $D^{\rm perf}(\ZZ_p[G]e_{F,0})$.

In addition, by the main result of Kato in \cite{kato}, one knows that $e_\psi(\CC_p\otimes_{\ZZ_p} A(F)_p)$ vanishes for every character $\psi$
 in $\Upsilon_{c,0}$ and hence that the complex $\QQ_p\otimes_{\ZZ_p}C_{F,0}$ is acyclic.

It follows that for every field isomorphism $j:\CC\cong\CC_p$, one has
\begin{equation*}\label{zeroEuler}\pi_{0,*}(\chi_{G,p}({\rm SC}_p(A_{F/\QQ}),h^j_{A,F}))=\chi_{\ZZ_p[G]e_{F,0}}(C_{F,0},0)\end{equation*}
in $K_0(\ZZ_p[G]e_{F,0},\CC_p[G]e_{F,0})$, where $$\pi_{0,\ast}:K_0(\ZZ_p[G],\CC_p[G])\to K_0(\ZZ_p[G]e_{F,0},\CC_p[G]e_{F,0})$$ denotes the canonical projection homomorphism.
%

Now, in Theorem \ref{bk explicit}, the sets $S_{p,{\rm w}}$ and $S_{p,{\rm u}}^\ast$ are empty (since $F$ is a tamely ramified extension of $k = \QQ$) and so the above equality implies that the image under $\pi_{0,\ast}$ of the equality in ${\rm BSD}_p(A_{F/\QQ})$(iv) is valid if and only if one has
\begin{equation}\label{first approx} \pi_{0,\ast}(\delta_{G,p}(\calL^*_{A,F/\QQ})) = \chi_{\ZZ_p[G]e_{F,0}}(C_{F,0},0).\end{equation}

To interpret this equality we identify $K_0(\ZZ_p[G]e_{F,0},\CC_p[G]e_{F,0})$ with the multiplicative group of invertible $\ZZ_p[G]e_{F,0}$-lattices in $\CC_p[G]e_{F,0}$ (as in Remark \ref{comparingdets}).

In particular, since the description of the cohomology of ${\rm SC}_p(A_{F/\QQ})$ given in (\ref{bksc cohom}) combines with hypothesis (a) and the acyclicity of $\QQ_p\otimes_{\ZZ_p}C_{F,0}$ to imply
\[ H^i(C_{F,0}) = \begin{cases} \ZZ_p[G]e_{F,0}\otimes_{\ZZ_p[G]}\Sel_p(A_{F})^\vee, &\text{if $i=2$,}\\
0, &\text{otherwise,}\end{cases}\]
it follows that the right hand side of (\ref{first approx}) identifies with the lattice
\[{\rm Fit}^0_{\ZZ_p[G]e_{F,0}}(\ZZ_p[G]e_{F,0}\otimes_{\ZZ_p[G]}\Sel_p(A_{F})^\vee) = e_{F,0}\cdot {\rm Fit}^0_{\ZZ_p[G]}(\Sel_p(A_{F})^\vee).\]

In addition, the left hand side of (\ref{first approx}) identifies with the $\ZZ_p[G]$-module generated by $e_{F,0}\calL^*_{A,F/\QQ}$.
 Thus, since both of the terms $c_\infty$ and $c(\varphi_A)$ belong to $\ZZ_p^\times$ (the former because $p$ is odd and the latter by hypothesis (e)), to derive claim (i) from the formula of Lemma \ref{gammatheta} it is enough to show that $$\pi_{\QQ(c)^+/F}\bigl(\sum_{\psi\in\widehat{G_c^+}}(c/c_\psi)m_p^\psi \cdot e_\psi\bigr)$$ belongs to $\ZZ_p[G]^\times$.

To verify this assertion it is enough to show that $\sum_{\psi\in\widehat{G_c^+}}(c/c_\psi)m_p^\psi \cdot e_\psi$ belongs to $\ZZ_p[G_c^+]^\times$ and this fact follows directly from Lemma \ref{bley lemma} below with $A=G_c^+$, $i=1$ and, for each positive divisor $d$ of $c$, the subgroup $H_d$ of $G_c^+$ taken to be $G_{\QQ(c)^+/\QQ(d)^+}$.

In order to prove claim (ii), we assume that $L(A_{F},1)\not= 0$. This implies, by Kato \cite{kato}, that the groups $\sha(A_F)$ and $A(F)$ are both finite and also that the idempotent $e_{F,0}$ is $1$. In particular, 
the module $\Sel_p(A_F)^\vee$ is canonically isomorphic to $\sha(A_F)[p^\infty]^\vee$ (see the exact sequence (\ref{sha-selmer})). The module $\Sel_p(A_F)^\vee$  is therefore also canonically isomorphic to $\sha(A_F)[p^\infty]$ via the Cassels-Tate pairing.

Claim (i) therefore implies that ${\rm BSD}_p(A_{F/\QQ})$(iv) is valid if and only if ${\rm Fit}^0_{\ZZ_p[G]}({\rm Sel}_p(A_{F})^\vee)$ $ = {\rm Fit}^0_{\ZZ_p[G]}(\sha(A_{F})[p^\infty])$ is generated over $\ZZ_p[G]$ by $\Theta^{\rm MT}_{A,F}$.

Now, by Nakayama's Lemma, the latter assertion is valid if and only if ${\rm Fit}^0_{\ZZ_p[G]}({\rm Sel}_p(A_{F})^\vee)$ contains $\Theta^{\rm MT}_{A,F}$
 and, in addition, the module of coinvariants
 \[ \ZZ_p[G']\otimes_{\ZZ_p[G]}\bigl({\rm Fit}^0_{\ZZ_p[G]}({\rm Sel}_p(A_{F})^\vee)/(\ZZ_p[G]\cdot \Theta^{\rm MT}_{A,F})\bigr)\]
vanishes, or equivalently
\begin{equation}\label{conv reinter} \pi_{F/F'}\bigl({\rm Fit}^0_{\ZZ_p[G]}({\rm Sel}_p(A_{F})^\vee)\bigr) = \ZZ_p[G']\cdot \pi_{F/F'}(\Theta^{\rm MT}_{A,F}).\end{equation}

We next note that, since ${\rm SC}_p(A_{F/\QQ})$ is acyclic outside degree two, the isomorphism
\[ \ZZ_p[G']\otimes^{\mathbb{L}}_{\ZZ_p[G]}{\rm SC}_p(A_{F/\QQ})\cong {\rm SC}_p(A_{F'/\QQ})\]
in $D(\ZZ_p[G'])$ coming from Proposition \ref{explicitbkprop}(iii) induces an isomorphism of $\ZZ_p[G']$-modules
\begin{multline}\label{comp iso} \ZZ_p[G']\otimes_{\ZZ_p[G]}{\rm Sel}_p(A_{F})^\vee \cong H^2(\ZZ_p[G']\otimes^{\mathbb{L}}_{\ZZ_p[G]}{\rm SC}_p(A_{F/\QQ})) \\ \cong H^2({\rm SC}_p(A_{F'/\QQ})) \cong {\rm Sel}_p(A_{F'})^\vee , \end{multline}
and hence an equality
\begin{equation}\label{proj1} \pi_{F/F'}\bigl({\rm Fit}^0_{\ZZ_p[G]}({\rm Sel}_p(A_{F})^\vee)\bigr) = {\rm Fit}^0_{\ZZ_p[G']}({\rm Sel}_p(A_{F'})^\vee).\end{equation}

In addition, if we write $c'$ for the conductor of $F'$, then the distribution relation in Remark \ref{euler factor rem} implies that
\begin{equation}\label{dist rel 2} \pi_{F/F'}(\Theta^{\rm MT}_{A,F}) = \Theta^{\rm MT}_{A,F'}\cdot \prod_{\ell \mid (c/c')}-\sigma_\ell(\ell -\sigma_\ell^{-1}a_\ell + \sigma_\ell^{-2}).\end{equation}

We claim next that for each prime factor $\ell$ of $c/c'$ the element $\ell -\sigma_\ell^{-1}a_\ell + \sigma_\ell^{-2}$ is a unit in $\ZZ_p[G']$. This is because any such $\ell$ cannot be equal to $p$ (since $F/F'$ is a $p$-extension and $F/\QQ$ is tamely ramified) so that $\ell -\sigma_\ell^{-1}a_\ell + \sigma_\ell^{-2}$ is a generator of the Fitting ideal of the $\ZZ_p[G']$-module $\bigoplus_{v\in S_{F'}^\ell} A(\kappa_v)$, whilst the given condition (d) implies that the latter direct sum contains no element of order $p$.

Given this, the equality (\ref{dist rel 2}) implies that $\pi_{F/F'}(\Theta^{\rm MT}_{A,F})$ differs from $\Theta^{\rm MT}_{A,F'}$ by a unit in $\ZZ_p[G']$ and thus combines with (\ref{proj1}) to imply that (\ref{conv reinter}) is valid if and only if the invertible $\ZZ_p[G']$-sublattice
\begin{equation}\label{last ideal} {\rm Fit}^0_{\ZZ_p[G']}({\rm Sel}_p(A_{F'})^\vee)\cdot (\ZZ_p[G']\cdot (\Theta^{\rm MT}_{A,F'}))^{-1}\end{equation}
of $\QQ_p[G']$ is trivial.

In addition, if ${\rm Fit}^0_{\ZZ_p[G]}(\sha(A_F)[p^\infty])={\rm Fit}^0_{\ZZ_p[G]}({\rm Sel}_p(A_{F})^\vee)$ contains $\Theta^{\rm MT}_{A,F}$ then the above arguments would also imply that ${\rm Fit}^0_{\ZZ_p[G']}({\rm Sel}_p(A_{F'})^\vee)$ contains $\Theta^{\rm MT}_{A,F'}$. Since $p\nmid |G|'$, the lattice (\ref{last ideal}) would then be trivial if and only if its image under the natural restriction map $\varrho_{G'}$ to the group of invertible $\ZZ_p$-sublattices of $\QQ_p$ is trivial.

In particular, since the argument of claim (i) shows that the triviality of (\ref{last ideal}) is equivalent to the validity of ${\rm BSD}_p(A_{F'/\QQ})$(iv), the observations in Remark \ref{consistency remark}(ii) and (iii) imply that the triviality of its image under $\varrho_{G'}$ is equivalent to the validity of the $p$-part of the Birch and Swinnerton-Dyer Conjecture for $A$ over $F'$, as required to complete the proof of Theorem \ref{MT result}.

The following algebraic result was used above and will also be useful in \S\ref{HHP}.

\begin{lemma}\label{bley lemma} 
Let $A$ be a finite abelian group, $c$ a square-free positive integer and $(c_\psi)_{\psi\in\widehat{A}}$ a family of positive divisors of $c$. We fix an odd prime number $p$, we let $n_\psi$ be equal to $p$ if $p$ divides $c$ but not $c_{\psi}$, and we let $n_{\psi}$ be equal to 1 otherwise.

Fix $i\in\{1,2\}$ and assume that, for each positive divisor $d$ of $c$, there exists a subgroup $H_d$ which satisfies both of the following properties:
\begin{itemize}\item[(i)] $d$ is divisible by an integer $c_{\psi}$ if and only if $H_d\subseteq\ker(\psi)$;
\item[(ii)] the order of $H_d$ divides $\prod_{\ell\mid\frac{c}{d}}(l+(-1)^i)$ (where $\ell$ runs over primes dividing $\frac{c}{d}$).
\end{itemize}

Then the element $\sum_{\psi \in \widehat{A}}((c_{\psi}/c) n_{\psi})^i\cdot e_{\psi}$ belongs to $\ZZ_p[A]^\times$.
\end{lemma}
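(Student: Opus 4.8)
The plan is to produce an explicit two‑sided inverse for the element
\[
x := \sum_{\psi \in \widehat{A}}\bigl((c_\psi/c)\,n_\psi\bigr)^i\, e_\psi
\]
inside $\ZZ[A]$, and then to verify that this inverse is a unit of $\ZZ_p[A]$. First note that, by hypothesis~(i), the conductor $c_\psi$ is determined by $\ker\psi$, so the rational number $a_\psi := ((c_\psi/c)n_\psi)^i$ depends only on the divisor $c_\psi$ of $c$; in particular it is constant on $G_\QQ$‑orbits of characters. Since the $e_\psi$ are orthogonal idempotents of $\CC[A]$ with $\sum_{\psi}e_\psi = 1$, the element $y := \sum_{\psi \in \widehat{A}} a_\psi^{-1}e_\psi$ of $\CC[A]$ satisfies $xy = 1$. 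Hence it suffices to show that $y$ lies in $\ZZ[A]$ and is a unit of $\ZZ_p[A]$: then $x = y^{-1}$ lies in $\ZZ_p[A]^\times$ and agrees with the displayed element.

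To show $y \in \ZZ[A]$ I would run a M\"obius inversion over the set of divisors of the squarefree integer $c$, ordered by divisibility. Put $\tilde c_\psi := c_\psi n_\psi$. Using that $c$ is squarefree one checks that $\tilde c_\psi \mid c$, that $p \mid \tilde c_\psi$ whenever $p \mid c$, and — via hypothesis~(i) — that for a divisor $d$ of $c$ one has $\sum_{\psi:\,\tilde c_\psi \mid d} e_\psi = e_{H_d} = |H_d|^{-1}\sum_{h \in H_d} h$ whenever $p \nmid c$ or $p \mid d$, while this character sum vanishes when $p \mid c$ and $p \nmid d$. M\"obius inversion over divisors of $c$, followed by interchanging the order of summation, then yields
\[
y \;=\; \sum_{e}\Bigl(\prod_{\ell \mid (c/e)}(\ell^i - 1)\Bigr)\cdot \frac{1}{|H_e|}\sum_{h \in H_e} h ,
\]
where $e$ runs over all divisors of $c$ if $p \nmid c$, and over the divisors of $c$ divisible by $p$ if $p \mid c$. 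The cancellation of the remaining divisors in the case $p\mid c$ is precisely the effect of the correction factors $n_\psi$. Finally, for $i \in \{1,2\}$ one has $\ell + (-1)^i \mid \ell^i - 1$ (trivially for $i=1$, and because $\ell+1 \mid \ell^2-1$ for $i=2$), so hypothesis~(ii) forces $|H_e|$ to divide $\prod_{\ell\mid(c/e)}(\ell^i-1)$; thus each coefficient in the displayed formula is an integer and $y \in \ZZ[A]$.

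It then remains to check that $y$ is a unit of $\ZZ_p[A]$. For this I would invoke the standard fact that, for a finite abelian group $A$ with Sylow $p$‑subgroup $A_p$ (so $A = A_p \times B$), an element of $\ZZ_p[A] \cong \prod_j \mathcal{O}_j[A_p]$ is a unit if and only if its image under every character $\psi$ of $A$ trivial on $A_p$ lies in $\overline{\ZZ_p}^\times$; here the $\mathcal{O}_j$ are the (unramified) $\ZZ_p$‑orders occurring in $\ZZ_p[B]$, and each $\mathcal{O}_j[A_p]$ is local. For such a $\psi$ one has $\psi(y) = a_\psi^{-1}$, and a direct valuation count gives $v_p(a_\psi) = i\bigl(v_p(c_\psi) - v_p(c) + v_p(n_\psi)\bigr) = 0$ in all cases: if $p \nmid c$ all three terms vanish; if $p \mid c$ and $p \mid c_\psi$ then $n_\psi = 1$ and $v_p(c_\psi)=v_p(c)=1$; and if $p\mid c$ and $p \nmid c_\psi$ then $n_\psi = p$ and the last two terms cancel the first. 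Hence $\psi(y) \in \ZZ_p^\times$ for every $\psi$ trivial on $A_p$, so $y \in \ZZ_p[A]^\times$ and therefore $x = y^{-1} \in \ZZ_p[A]^\times$.

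The main obstacle I anticipate is the M\"obius inversion step: organising the character‑indexed sum defining $y$ so that the contributions of divisors of $c$ not divisible by $p$ (in the case $p\mid c$) visibly cancel — this is exactly the role of the factors $n_\psi$ — and then identifying the surviving coefficients sharply enough (as products $\prod_{\ell}(\ell^i-1)$) to bring hypothesis~(ii) to bear. The valuation computation and the group‑ring unit criterion are routine by comparison.
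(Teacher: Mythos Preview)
Your approach is correct and is essentially the same as the paper's. Both arguments show that the inverse element $y=\sum_\psi a_\psi^{-1}e_\psi$ lies in $\ZZ_p[A]$ by a M\"obius inversion over the divisors of $c$, obtain the same coefficients $\prod_{\ell\mid c/d}(\ell^i-1)$ (with the terms for $p\mid c$, $p\nmid d$ vanishing), and invoke hypothesis~(ii) together with $(\ell+(-1)^i)\mid(\ell^i-1)$ to conclude. The only difference is presentational: the paper cites Bley's Prop.~3.1(a) to reduce to the divisibility of the M\"obius-inverted sums $\sum_{t\mid c/d} f_{p,i}(dt)\mu(t)$ by $|H_d|$, whereas you carry out the inversion explicitly and write $y$ directly as an integral combination of the trace idempotents $e_{H_d}$; and you spell out the unit criterion via the decomposition $\ZZ_p[A]\cong\prod_j\mathcal{O}_j[A_p]$, which the paper leaves implicit in the phrase ``since $((c_\psi/c)n_\psi)^i$ is a $p$-adic unit for all $\psi$ it suffices\ldots''.
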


\begin{proof} Since $((c_{\psi}/c)n_{\psi})^i$ is a $p$-adic unit for all $\psi$ it suffices to prove that the inverse $x_i := \sum_{\psi \in \widehat{A}}(c_{\psi}/c)^{-i} n^{-i}_{\psi}\cdot e_{\psi}$ of the given sum belongs to $\ZZ_p[A]$.

For each divisor $\delta$ of $c$ we set
\[ f_{p,i}(\delta) := \begin{cases} \bigl(c/\delta\bigr)^ip^{-i}, &\text{ if $p$ divides $c/\delta$,}\\
                          \bigl(c/\delta\bigr)^i, &\text{ otherwise.}\end{cases}\]

Then $x_i$ is equal to $\sum_{\psi \in \widehat{A}}f_{p,i}(c_{\psi})\cdot e_{\psi}$ and so, when combined with assumption (i), the approach of Bley in \cite[Prop. 3.1(a)]{Bley3} implies the required containment is true if for each positive divisor $d$ of $c$, the sum
\begin{equation}\label{tricky sum} {\sum}_{t \mid \frac{c}{d}} f_{p,i}\left(dt\right)\mu(t)=\sum_{\{t:p\nmid\frac{c/d}{t}\}}\left(\frac{c/d}{t}\right)^i\mu(t)+\sum_{\{t:p\mid\frac{c/d}{t}\}}\left(\frac{c/d}{t}\right)^ip^{-i}\mu(t)\end{equation}
is divisible by $|H_d|$, where $\mu(-)$ denotes the M\"obius function on square-free natural numbers.

If $d$ is such that $p$ does not divide $c/d$, then the sum (\ref{tricky sum}) is equal to
\[\sum_{\{t:p\nmid\frac{c/d}{t}\}}\left(\frac{c/d}{t}\right)^i\mu(t)=(\frac{c}{d})^i\sum_{t\mid\frac{c}{d}}t^{-i}\mu(t)=(\frac{c}{d})^i\prod_{\ell\mid\frac{c}{d}}(1-\ell^{-i})=\prod_{\ell\mid\frac{c}{d}}(\ell^{i}-1). \]

The claimed result is thus true in this case since, using assumption (ii), $|H_d|$ divides (\ref{tricky sum}) both if $i=1$ and also if $i=2$ because $\ell+1$ divides $\ell^2-1$ for each prime divisor $\ell$ of $c/d$.

We now assume that $p$ divides $c/d$. In this case $p$ divides $c$ but not $d$ so the sum (\ref{tricky sum}) is equal to
\begin{align*} &{\sum}_{s \mid \frac{p^{-1}c}{d}} f_p(ds)\mu(s)
+ {\sum}_{s \mid \frac{p^{-1}c}{d}} f_p(dsp)\mu(sp)\\
= \, &{\sum}_{s \mid \frac{p^{-1}c}{d}} f_p(ds)\mu(s)
- {\sum}_{s \mid \frac{p^{-1}c}{d}} f_p(dsp)\mu(s)\\
=\, &{\sum}_{s \mid \frac{p^{-1}c}{d}} \left(\frac{c}{dsp}\right)^i\mu(s)
- {\sum}_{s \mid \frac{p^{-1}c}{d}} \left(\frac{c}{dsp}\right)^i\mu(s)\\
= \, &0.
\end{align*}
and the claimed result is clear.
\end{proof}

\subsubsection{} In this section we prove Corollary \ref{bleycor}.


If $L(A_\QQ,1)$ is non-zero, then Kolyvagin has proved that $\sha(A_\QQ)$ and $A(\QQ)$ are both finite and one knows that the $p$-part of the Birch and Swinnerton-Dyer Conjecture for $A_\QQ$ is valid for all but finitely many primes $p$.


To prove Corollary \ref{bleycor} it is therefore enough to fix a natural number $n$, assume to be given an odd prime $p'$ and a tamely ramified real abelian extension $F'/\QQ$ that has discriminant $d_{F'}$ prime to $N$ and is such that $L(A_{F'},1)$ is non-zero and all of the stated conditions are satisfied after replacing $p, F$ and $n$ by $p', F'$ and $n-1$, and then deduce from this assumption the assertion of Corollary \ref{bleycor} for the fixed $n$. Indeed, the validity of Corollary \ref{bleycor} would then follow from the previous paragraph by induction since ${\rm BSD}_p(A_{\QQ/\QQ})$(iv) is equivalent to $p$-part of the Birch and Swinnerton-Dyer Conjecture for $A_\QQ$ (see Remark \ref{consistency remark}(iii)).

We hence fix a natural number $n$ as well as $p'$ and $F'$ with the assumed given properties.
Then the results of Kato \cite{kato} imply that $\sha(A_{F'})$ and $A(F')$ are both finite and so we may fix a prime $p$ that is large enough to ensure that all of the following conditions are satisfied: the $p$-torsion subgroups of $A(F')$ and ${\rm Sel}(A_{F'})^\vee$ are trivial, $p$ is prime to the degree of $F'/\QQ$, to $d_{F'}$, to $N$, to $c(\varphi_A)$ and to the Tamagawa numbers of $A$ at any place of $F'$ that divides $N$ and the reduction of $A$ at any place $v$ of $F'$ that divides $d_{F'}$ has no element of order $p$.

In addition, if we set $m := |A(\QQ)_{\rm tor}|$ and $G':=G_{F'/\QQ}$, then $\Theta^{\rm MT}_{A,F'}$ belongs to $\ZZ[1/2m][G']$ and so the interpolation property (\ref{chiMT}) combines with the non-vanishing of $L(A_{F'},1)$ to imply that $\Theta^{\rm MT}_{A,F'}$ generates over $\ZZ[1/2m][G']$ a finite index submodule of
 $\ZZ[1/2m][G']$.

This means that we can also assume the prime chosen above to be such that both $\Theta^{\rm MT}_{A,F'}$ belongs to $\ZZ_p[G']^\times$ and the $p$-part of the Birch and Swinnerton-Dyer Conjecture for $A$ over $F'$ is valid.


Next we note that if $A$ does not have complex multiplication, then a result of Serre \cite{serre} implies that for all sufficiently large primes $q$ the group
$G_{\QQ(A[q])/\QQ}$ is isomorphic to ${\rm GL}_2(\mathbb{F}_q)$ and so has order $(q^2-1)(q^2-q)/2$, whilst if $A$ has complex multiplication by an order in an imaginary quadratic field $K$, then the theory of complex multiplication implies that for all sufficiently large $q$ the degree of $\QQ(A[q])/\QQ$ is either $(q-1)^2/w_K$ (if $q$ splits in $K$) or $(q^2-1)/w_K$ (if $q$ is inert in $K$), where $w_K$ is the number of roots of unity in $K$.

It follows that for all sufficiently large primes $q$ the degree of $\QQ(A[q])/\QQ(\mu_q)$ is either $(q^2-1)q/2$, $(q-1)/w_K$ or $(q+1)/w_K$.
 In particular, for sufficiently large $q$ the degree of $F'(\mu_{q^n})/\QQ(\mu_q)$ is not divisible by the degree of $\QQ(A[q])/\QQ(\mu_q)$ and so $\QQ(A[q])$ is not contained in $F'(\mu_{q^n})$.

In particular, we may assume that the prime $p$ fixed earlier has this property and then, by the Tchebotarev density theorem, it follows that there is an infinite set $\mathfrak{L}$ of primes that each split completely in $F'(\mu_{p^n})$ but not in $\QQ(A[p])$.

Then for any such prime $\ell\in\mathfrak{L}$ one has $\ell \equiv 1$ modulo $p^n$ and, by the argument used prove \cite[Th. 1.3]{Bley3}, also $a_\ell \not\equiv 2$ modulo $p$ so that the order of $A(\kappa_v) = A(\mathbb{F}_\ell)$ is prime to $p$ for any $\ell$-adic place $v$ of $F'$.

For any given ordered subset $\{\ell_i: i \in [n]\}$ of $\mathfrak{L}$ of cardinality $n$ and for each index $i$ we write $F_i$ for the unique subextension of $\QQ(\ell_i)$ of degree $p^n$.

We claim that the compositum $F$ of $F'$ with each of the fields $F_i$ has all of the  properties stated in Corollary \ref{bleycor} with respect to the fixed $n$ and $p$.

It is certainly clear that this field satisfies all of the claimed properties except possibly for the assertion that ${\rm BSD}_p(A_{F/\QQ})$(iv) is valid.

In addition, our choice of $p$ implies the hypotheses of Theorem \ref{MT result} are satisfied by the data $A, F/\QQ$ and $p$.

In particular, since the degree of $F/F'$ is a power of $p$, and our choice of $p$ implies the $p$-part of the Birch and Swinnerton-Dyer Conjecture is valid for $A$ over $F'$, the criterion of Theorem \ref{MT result}(ii) reduces us to showing that $\Theta^{\rm MT}_{A,F}$ belongs to ${\rm Fit}^0_{\ZZ_p[G]}(\sha(A_{F})[p^\infty])$ and $L(A_{F},1)$ is non-zero.

Now the isomorphism (\ref{comp iso}) is valid in this case and combines with our choice of $p$ to imply that $\sha(A_{F})[p^\infty]$ is trivial and hence that ${\rm Fit}^0_{\ZZ_p[G]}(\sha(A_{F})[p^\infty])$ is equal to $\ZZ_p[G]$ and so contains $\Theta^{\rm MT}_{A,F}$. In addition, the interpolation formula (\ref{chiMT}) implies that $L(A_{F},1)$ is non-zero if and only if $\Theta^{\rm MT}_{A,F}$ is an invertible element of $\QQ[G]$. To complete the proof of Corollary \ref{bleycor} it is therefore enough to show that $\Theta^{\rm MT}_{A,F}$ belongs to $\ZZ_p[G]^\times$.

But, since $\Theta^{\rm MT}_{A,F}$ belongs to $\ZZ_p[G]$ and the order of $G_{F/F'}$ is a power of $p$, it belongs to $\ZZ_p[G]^\times$ if and only if its image under $\pi_{F/F'}$ belongs to $\ZZ_p[G']^\times$. In addition, since each prime $\ell_i$ splits completely in $F'$, the distribution relation (\ref{dist rel 2}) implies that
\[ \pi_{F/F'}(\Theta^{\rm MT}_{A,F}) = \Theta^{\rm MT}_{A,F'}\cdot \prod_{i=1}^{i=n}-(\ell_i- a_{\ell_i} + 1).\]
In particular, since our choice of $p$ guarantees that both $\Theta^{\rm MT}_{A,F'}$ belongs to $\ZZ_p[G']^\times$ and for each $i$ one has
\[ \ell_i - a_{\ell_i} + 1 \equiv 2-a_{\ell_i} \not\equiv 0 \,\, \text{modulo} \,\, p,\]
it follows that $\Theta^{\rm MT}_{A,F}$ belongs to $\ZZ_p[G]^\times$, as required.

\begin{remark}\label{mod bley rem}{\em The results of Bley in \cite[Cor. 1.4 and Rem. 4.1]{Bley3} show that ${\rm BSD}_p(A_{F/\QQ})$(iv) is valid for an infinite family of abelian extensions $F/\QQ$ of exponent $p$ and rely on the explicit computations of Fearnley, Kisilevsky and Kuwata in \cite{fkk} concerning the twists of modular elements by Dirichlet characters of prime order. The above argument avoids these computations by directly using the theorem of Kato. In addition, whilst the argument above only constructs fields $F$ in which $p$ is unramified, it is possible to modify the approach to construct families of abelian fields $F$ in which $p$ can in principle be tamely ramified and ${\rm BSD}_p(A_{F/\QQ})$(iv) is valid if and only if $L(A_F,1)$ does not vanish. }\end{remark}

\section{Heegner points}\label{HHP}

In this section we again fix an elliptic curve $A$ that is defined over $\QQ$ and has conductor $N$ and also assume that the modular parametrisation $\varphi_A$ that was fixed in \S\ref{mod sect} is of minimal degree.

We also fix a quadratic imaginary field $K$ in which all prime divisors of $N$ split and a square-free product $c$ of primes that are both inert in $K$ and coprime to $N$. We write $K_c$ for the ring class field of $K$ of conductor $c$: this is an abelian extension of $K$ that is ramified exactly at the primes dividing $c$; moreover $K_c$ contains the Hilbert class field $K_1$ of $K$ and class field theory gives a natural isomorphism
\begin{equation}\label{explicit iso} G_{K_c/K_1} \cong (\mathcal{O}_K/c\mathcal{O}_K)^\times/(\ZZ/c\ZZ)^\times. \end{equation}
(cf. \cite[\S3]{gross_koly}).

We set $G_c := G_{K_c/K}$ and $h_c := |G_c|$.

\subsection{Zhang's Theorem and a conjecture of Bradshaw and Stein}\label{thm gzz} The above hypothesis on $K$ implies that the ideal $N\mathcal{O}_K$ factorises as $\mathcal{N}\overline{\mathcal{N}}$ where the natural map $\ZZ/N\ZZ\to \mathcal{O}_K/\mathcal{N}$ is bijective. We write $\mathcal{O}_c$ for the order $\ZZ + c\mathcal{O}_K$ in $K$ and set ${\mathcal{N}}_c := \mathcal{N}\cap \mathcal{O}_c$.

Fixing an embedding of $K$ into $\CC$ the pair $(\CC/\mathcal{O}_c,{\mathcal{N}}_c^{-1}/\mathcal{O}_c)$ defines a CM elliptic curve equipped with a cyclic subgroup of order $N$ and the isomorphism class of this pair defines a point $x_c$ on $X_0(N)(K_c)$.

We define the `higher Heegner point' $y_c$ to be the element $\varphi_A(x_c)$ of $A(K_c)$.

\subsubsection{}
Assuming $L(A_K,z)$ vanishes to order one at $z=1$, Zhang's generalisation \cite{zhang01, zhang} of the seminal results of Gross and Zagier in \cite{GZ} implies that for every $\psi$ in $\widehat{G_c}$ the function $L(A_K,\psi,z)$ vanishes to order one at $z=1$, that the element $e_\psi (y_c)$ of $\CC\cdot A(K_c)$ is non-zero and that the complex number $\epsilon_{A,c,\psi}$ defined by the equality
\begin{equation}\label{e-def} L'(A_{K},\check{\psi},1) = \epsilon_{A,c,\psi}\cdot
\frac{\Omega^+\Omega^-C}{c_\psi\sqrt{|d_K|}}\cdot h_c\cdot\langle e_{\psi}(y_c),e_{\check\psi}(y_c)\rangle _{K_c}\end{equation}
satisfies both
\begin{equation}\label{stark ec} \epsilon_{A,c,\psi} \in \QQ(\psi)^\times \,\,\,\,\text{ and }\,\,\,\, (\epsilon_{A,c,\psi})^\omega = \epsilon_{A,c,\psi^\omega} \,\,\,\,\text{ for all $\omega\in G_{\QQ(\psi)/\QQ}$.}\end{equation}
Here we have set
\[ C := 4 \cdot c_\infty \cdot c(\varphi_A)^{-2}\cdot \bigl\vert\mathcal{O}_{K}^\times\bigr\vert^{-2},\]
and written $c_\psi$ for the least divisor of $c$ such that $\psi$ factors through the restriction map $G_c\to G_{c_\psi}$ and $\langle -,-\rangle_{K_c}$ for the $\CC$-linear extension of the N\'eron-Tate height pairing for $A$ relative to the field $K_c$.


To investigate the conjecture ${\rm BSD}(A_{K_c/K})$ it is convenient to consider a version of the equality (\ref{e-def}) that is adapted to truncated Hasse-Weil-Artin $L$-series.

To do this we follow Darmon \cite{darmon} in defining the `regularised higher Heegner points of $A$ at level $c$' by setting

\[ z_c := \sum_{m \mid c}\mu(m)y_m, \,\,\text{ and }\,\,\,\,z'_c := \sum_{m \mid c}y_m.\]
For each $\psi$ in $\widehat{G_c}$ we then define a non-zero complex number $\epsilon_{A,\psi}$ by means of the equality
\begin{equation}\label{u-def} L'_{c}(A_{K},\check{\psi},1) = \epsilon_{A,\psi}\cdot
\frac{\Omega^+\Omega^-C}{c_\psi\sqrt{|d_K|}}\cdot \frac{h_c}{(c/c_\psi)^2}\cdot\langle e_{\psi}(z_c),e_{\check\psi}(z'_c)\rangle _{K_c}
\end{equation}
where the $L$-series $L_{c}(A_{K},\check{\psi},z)$ is truncated at all places of $K$ corresponding to rational prime divisors of $c$.

The next result describes the precise connection between the algebraic numbers $\epsilon_{A,c,\psi}$ and $\epsilon_{A,\psi}$ that are defined via the equalities (\ref{e-def}) and (\ref{u-def}). This result shows, in particular, that $\epsilon_{A,\psi}$ depends only on the character $\psi$.

In the sequel, for each prime divisor $\ell$ of $c$ we write $\kappa_{(\ell)}$ for the residue field of the unique prime of $K$ above $\ell$.

\begin{lemma}\label{independence} Fix $\psi$ in $\widehat{G_c}$. Set $c':= c_\psi$ and write $\phi$ for $\psi$ regarded as an element of $\widehat{G_{c'}}$. Set $a_{c,c'} := \prod_{\ell}a_\ell$, where $\ell$ runs over all prime divisors of $c/c'$ and the terms $a_\ell$ are as defined at the beginning of \S\ref{mod sect}.

Then the following claims are valid.
\begin{itemize}
\item[(i)] $h_c\cdot\langle e_{\psi}(y_c),e_{\check\psi}(y_c)\rangle _{K_c} = a_{c,c'}^2\cdot h_{c'}\cdot\langle e_{\phi}(y_{c'}),e_{\check\phi}(y_{c'})\rangle _{K_{c'}}$.
\item[(ii)] $\epsilon_{A,c,\psi} = a_{c,c'}^{-2}\cdot\epsilon_{A,c',\phi}.$
\item[(iii)] $\epsilon_{A,\psi} = \epsilon_{A,c',\phi}$.
\end{itemize}
\end{lemma}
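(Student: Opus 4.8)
\textbf{Proof proposal for Lemma \ref{independence}.}

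The plan is to derive all three claims from the Euler-system style distribution relations satisfied by the Heegner points $y_m$ together with the interpolation formula (\ref{e-def}) of Zhang's theorem. First I would recall the key norm-compatibility relation for higher Heegner points: for a prime $\ell$ inert in $K$, coprime to $Nc'$, one has
\[
\pi_{K_{\ell c'}/K_{c'}}(y_{\ell c'}) = a_\ell\cdot y_{c'},
\]
where $\pi_{K_{\ell c'}/K_{c'}}$ denotes the trace (norm) map $A(K_{\ell c'})\to A(K_{c'})$ and $a_\ell = 1+\ell-|A(\kappa_{(\ell)})|$; this is the standard relation at inert primes (see Gross \cite{gross_koly}, and the adaptation in Darmon \cite{darmon}). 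Iterating over the prime divisors of $c/c'$ gives $\pi_{K_c/K_{c'}}(y_c) = a_{c,c'}\cdot y_{c'}$. Now for a character $\psi$ of conductor $c'$, the idempotent $e_\psi$ kills the augmentation-type kernel of the trace map $G_c\to G_{c'}$, so $e_\psi(y_c)$ is determined by $\pi_{K_c/K_{c'}}(y_c)$; more precisely, since $\psi$ factors through $G_{c'}$, one has $e_\psi(y_c)=\frac{1}{[K_c:K_{c'}]}e_\phi(\pi_{K_c/K_{c'}}(y_c)) = \frac{a_{c,c'}}{[K_c:K_{c'}]}e_\phi(y_{c'})$ (after the natural identification of the relevant $\psi$- and $\phi$-isotypic components). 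Feeding this into the height pairing, and using $h_c = [K_c:K_{c'}]\cdot h_{c'}$ together with bilinearity of $\langle-,-\rangle$ in each of $e_\psi(y_c)$ and $e_{\check\psi}(y_c)$, the factors of $[K_c:K_{c'}]$ cancel appropriately and we obtain claim (i).

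Claim (ii) is then immediate: by definition (\ref{e-def}) applied at both levels $c$ and $c'$, the left-hand side $L'(A_K,\check\psi,1)$ is the same (it depends only on the character, not on the level at which we regard it), and the factor $\Omega^+\Omega^- C/(c_\psi\sqrt{|d_K|})$ is identical in both formulas because $c_\psi = c' = c_\phi$. Dividing the two instances of (\ref{e-def}) and substituting the identity of claim (i) yields $\epsilon_{A,c,\psi} = a_{c,c'}^{-2}\cdot\epsilon_{A,c',\phi}$.

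For claim (iii) I would compare the truncated formula (\ref{u-def}) with (\ref{e-def}) at level $c'$. The point is to express $\langle e_\psi(z_c),e_{\check\psi}(z'_c)\rangle_{K_c}$ in terms of $\langle e_\phi(y_{c'}),e_{\check\phi}(y_{c'})\rangle_{K_{c'}}$ using the definitions $z_c=\sum_{m\mid c}\mu(m)y_m$ and $z'_c=\sum_{m\mid c}y_m$. Applying $e_\psi$, only the terms $y_m$ with $c'\mid m$ survive (as $e_\psi$ factors through $G_{c'}$ and the earlier levels contribute zero after projection), and for each such $m$ the computation above gives $e_\psi(y_m)$ as an explicit rational multiple of $e_\phi(y_{c'})$ involving $a_{m/c',\cdot}$; carrying out the two M\"obius-weighted sums produces a combinatorial factor which, when multiplied by the truncation correction $h_c/(c/c_\psi)^2$ appearing in (\ref{u-def}) as opposed to $h_{c'}$ in (\ref{e-def}), must exactly reproduce $\epsilon_{A,c',\phi}$. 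Simultaneously the truncated $L$-value $L'_c(A_K,\check\psi,1)$ differs from $L'(A_K,\check\psi,1) = L'_{c'}(A_K,\check\psi,1)$ by the product of the removed Euler factors at primes $\ell\mid c/c'$ evaluated at $z=1$; since $\ell$ is inert in $K$, the local factor of $A/K$ at $\ell$ at $s=1$ is $1-a_{\ell^2}\ell^{-2}+\ell^{-2}$ with $a_{\ell^2}=a_\ell^2-2\ell$ (or one can work directly with $\mathrm{N}\ell = \ell^2$), and a short manipulation shows these Euler factors combine with the $a_{c,c'}$-powers coming from the Heegner-point side to cancel. The upshot is that the level-$c'$ and level-$c$ normalisations agree, i.e.\ $\epsilon_{A,\psi} = \epsilon_{A,c',\phi}$.

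The main obstacle I anticipate is purely bookkeeping: getting the combinatorial factor from the two M\"obius sums $\sum_{m\mid c}\mu(m)(\cdots)$ and $\sum_{m\mid c}(\cdots)$ to match, once all the $a_\ell$-powers and the normalisation constants $h_c/(c/c_\psi)^2$ versus $h_{c'}$ and the removed Euler factors are accounted for. This is the same type of calculation that underlies Darmon's regularised Heegner point constructions in \cite{darmon}, and I would organise it by factoring $c/c'$ into primes and checking the one-prime case $c=\ell c'$ first (where $z_{\ell c'}$ restricts to $(1-\pi_{K_{\ell c'}/K_{c'}})$ applied appropriately, giving a clean factor), then multiplying over primes. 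The rationality and Galois-equivariance statements (\ref{stark ec}) themselves require no new input here: claim (ii), being an equality of the form $\epsilon_{A,c,\psi}=a_{c,c'}^{-2}\epsilon_{A,c',\phi}$ with $a_{c,c'}\in\ZZ$, transports (\ref{stark ec}) from level $c'$ to level $c$, and claim (iii) transports it to the $\epsilon_{A,\psi}$.
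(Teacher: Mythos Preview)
Your approach to (i) and (ii) matches the paper's. One point you leave implicit in (i): to make the factors of $[K_c:K_{c'}]$ ``cancel appropriately'' you need the scaling $\langle x,y\rangle_{K_c}=[K_c:K_{c'}]\langle x,y\rangle_{K_{c'}}$ for $x,y$ defined over $K_{c'}$ (this is \cite[Ch.~VIII, Lem.~5.10]{silverman}, which the paper invokes explicitly). Without it you are one factor of $[K_c:K_{c'}]$ short.

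For (iii) your plan is workable but more laborious than the paper's. Rather than decomposing $z_c,z'_c$ into the constituent $y_m$ and tracking M\"obius sums, the paper computes the traces of the regularised points directly:
\[
{\rm T}_{c,c'}(z_c)=\prod_{\ell\mid d}(\ell+1-a_\ell)\cdot z_{c'},\qquad {\rm T}_{c,c'}(z'_c)=\prod_{\ell\mid d}(\ell+1+a_\ell)\cdot z'_{c'}
\]
(with $d=c/c'$), and combines these with the observation $e_\phi(z_{c'})=e_\phi(y_{c'})$, $e_{\check\phi}(z'_{c'})=e_{\check\phi}(y_{c'})$ and the factorisation $|A(\kappa_{(\ell)})|=(\ell+1-a_\ell)(\ell+1+a_\ell)$. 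This packages all of your ``bookkeeping'' into two clean product formulas and makes the cancellation with the removed Euler factors $|A(\kappa_{(\ell)})|/\ell^2$ transparent.

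There is one genuine gap in your argument for (iii): you silently replace the \emph{twisted} Euler factor $P_\ell(A_K,\check\psi,1)$ by the untwisted one. This is not automatic. The paper justifies it by noting that $K_c/\QQ$ is generalised dihedral, so any rational prime $\ell$ inert in $K$ splits completely in the maximal subextension of $K_c$ in which it is unramified; in particular each $\ell\mid c/c'$ splits completely in $K_{c'}$, so $\psi({\rm Frob}_\ell)=1$ and $P_\ell(A_K,\check\psi,t)=P_\ell(A_K,t)$. You should insert this step.
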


\begin{proof} We can clearly assume that $c \not= c'$ and hence that $c > 1$.

In this case, if we set ${\rm T}_{c,c'} := \sum_{g \in G_{K_c/K_{c'}}}g$, then the norm-compatibility of Heegner points implies that
\begin{equation}\label{nc heegner} {\rm T}_{c,c'}(y_c) = a_{c,c'}\cdot y_{c'}.\end{equation}

This implies $e_{\psi}(y_c) = (h_{c'}/h_{c})a_{c,c'}\cdot e_\phi(y_{c'})$ and $e_{\check\psi}(y_c) = (h_{c'}/h_{c})a_{c,c'}\cdot e_{\check\phi}(y_{c'})$ %
and hence 
\begin{align*} h_c\langle e_{\psi}(y_c),e_{\check\psi}(y_c)\rangle _{K_c} =\, &h_c(h_{c'}/h_{c})^2(a_{c,c'})^2\cdot  \langle e_{\phi}(y_{c'}),e_{\check\phi}(y_{c'})\rangle _{K_c}\\
=\, &h_c(h_{c'}/h_{c})(a_{c,c'})^2\cdot  \langle e_{\phi}(y_{c'}),e_{\check\phi}(y_{c'})\rangle _{K_{c'}}\\
=\, &(a_{c,c'})^2\cdot h_{c'}\langle e_{\phi}(y_{c'}),e_{\check\phi}(y_{c'})\rangle _{K_{c'}}.\end{align*}
where the second equality follows from the general result of \cite[Chap. VIII, Lem. 5.10]{silverman}. This proves claim (i).

Claim (ii) follows directly from claim (i) and the fact that the terms $L'(A_{K},\check{\psi},1)$ and $(\Omega^+\Omega^-C)/(c_\psi\sqrt{|d_K|})$ that occur in (\ref{e-def}) do not change if one replaces $\psi$ by $\phi$.

To prove claim (iii) we set $\epsilon_{A,\phi}:= \epsilon_{A,\psi}$. Then, since both $e_\phi(z_{c'}) = e_\phi(y_{c'})$ and $e_{\check\phi}(z'_{c'}) = e_{\check\phi}(y_{c'})$, an explicit comparison of the equalities (\ref{e-def}) and (\ref{u-def}) shows that it suffices to show (\ref{u-def}) remains valid if one replaces $c$ by $c'$ and $\psi$ by $\phi$.

We now set $d:=c/c'$. By a routine computation using (\ref{nc heegner}) one then finds that
\begin{equation}\label{trace heegner} {\rm T}_{c,c'}(z_c) = \prod_{\ell \mid d} (\ell + 1 - a_\ell)\cdot z_{c'} \,\,\,\text{ and }\,\,\,
{\rm T}_{c,c'}(z'_c) = \prod_{\ell \mid d} (\ell + 1 + a_\ell)\cdot z'_{c'}.\end{equation}

In addition, for each prime divisor $\ell$ of $c$ one has
\begin{equation}\label{trace heegner2} |A(\kappa_{(\ell)})| = (\ell + 1 - a_\ell)(\ell + 1 + a_\ell)\end{equation}
and so

\begin{align*} \frac{h_c}{c^2}\langle e_{\psi}(z_c),e_{\check\psi}(z'_c)\rangle _{K_c} &=  \frac{h_c}{c^2}(\prod_{\ell \mid d}(\ell + 1 - a_\ell)(\ell + 1 + a_\ell))\langle e_{\phi}(z_{c'}),e_{\check\phi}(z'_{c'})\rangle _{K_c}\\
&= \bigl(\prod_{\ell \mid d}\frac{|A(\kappa_{(\ell)})|}{\ell^2}\bigr)\frac{h_{c'}}{(c')^2}\langle e_{\phi}(z_{c'}),e_{\check\phi}(z'_{c'})\rangle _{K_{c'}},\end{align*}
where the second equality uses \cite[Chap. VIII, Lem. 5.10]{silverman} and the fact $c = c'\cdot\prod_{\ell\mid d}\ell$.

The right hand side of (\ref{u-def}) therefore changes by a factor of $(\prod_{\ell \mid d}|A(\kappa_{(\ell)})|/\ell^{2})^{-1}$ if one replaces $c$ by $c'$ and $\psi$ by $\phi$.

To show that the left hand side of (\ref{u-def}) changes by the same factor we note that
\begin{align*} L'_{c'}(A_{K},\check{\phi},1)L'_{c}(A_{K},\check{\psi},1)^{-1} =\, &L'_{c'}(A_{K},\check{\psi},1)L'_{c}(A_{K},\check{\psi},1)^{-1}\\ =\, &\prod_{\ell\mid d}P_\ell(A_{K},\check{\psi},1)^{-1}\end{align*}
where $P_\ell(A_{K},\check{\psi},t)$ denotes the Euler factor at (the unique prime of $K$ above) $\ell$ of the $\psi$-twist of $A$.

Now $K_c$ is a dihedral extension of $\QQ$ and so any prime $\ell$ that is inert in $K$ must split completely in the maximal subextension of $K_c$ in which it is unramified. In particular, for each prime divisor $\ell$ of $d$ this implies that $P_\ell(A_{K},\check{\psi},t)$ coincides with the Euler factor $P_\ell(A_{K},t)$ at $\ell$ of $A_{K}$ and hence that
\[ P_\ell(A_{K},\check{\psi},1) = P_\ell(A_{K},1) = \frac{|A(\kappa_{(\ell)})|}{{\rm N}_{K/\QQ}(\ell)} = \frac{|A(\kappa_{(\ell)})|}{\ell^2},\]
as required.
\end{proof}

\subsubsection{}\label{bs conj section}

If $c=1$, then for each character $\psi$ in $\widehat{G_c}$ one has $c_\psi = 1$ and the results of Gross and Zagier in \cite[see, in particular, \S I, (6.5) and the discussion on p. 310]{GZ} imply directly that $\epsilon_{A,c,\psi}=1$.

In addition, for $c > 1$ the work of Zhang in \cite{zhang01, zhang} implies for each $\psi$ in $\widehat{G_c}$ a formula for the algebraic number $\epsilon_{A,c,\psi}$.

However, as observed by Bradshaw and Stein in \cite[\S2]{BS}, this formula is difficult to make explicit and is discussed in the literature in several mutually inconsistent ways.

In particular, it is explained in loc. cit. that the earlier articles of Hayashi \cite{hayashi} and Jetchev, Lauter and Stein \cite{JLS} together contain three distinct formulas for the elements $\epsilon_{A,c,\psi}$ that are mutually inconsistent and all apparently incorrect.

In an attempt to clarify this issue, in \cite[Conj. 6]{BS} Bradshaw and Stein conjecture that for every non-trivial character $\psi$ in $\widehat{G_c^+}$ one should have
\begin{equation}\label{bs conj} \epsilon_{A,c,\psi} = 1,\end{equation}
and Zhang has asserted that the validity of this conjecture can indeed be deduced from his results in \cite{zhang} (see, in particular, \cite[Rem. 7]{BS}).

However, if $c > 1$, then Lemma \ref{independence}(ii) implies $\epsilon_{A,c,\psi}$ is not always equal to $\epsilon_{A,c_\psi,\psi}$ and hence that the conjectural equalities (\ref{bs conj}) are in general mutually compatible only if one restricts to characters $\psi$ with $c_\psi = c$.

For further comments in this regard see Remark \ref{bs conj rem} below.

\subsection{Heegner points and refined BSD} In this section we interpret the complex numbers $\epsilon_{A,\psi}$ defined above in terms of our refined Birch and Swinnerton-Dyer Conjecture.

%
%


We define an element of $\CC[G_c]$ by setting
\[ \epsilon_{A,c} := \sum_{\psi\in \widehat{G_c}}\epsilon_{A,\psi}\cdot e_\psi.\]
Lemma \ref{independence}(iii) combines with the properties (\ref{stark ec}) to imply $\epsilon_{A,c}$ belongs to $\QQ[G_c]$.

We also define an element of $\QQ[G]^\times$ by setting
\[ u_{K,c}:= (-1)^{n(c)}\sum_{\psi\in \widehat{G_c}}(-1)^{n(c_\psi)}\cdot e_\psi,\]
where $n(d)$ denotes the number of rational prime divisors of a natural number $d$.

We recall that ${\rm Sel}_p(A_F)$ denotes the $p$-primary Selmer group of $A$ over $F$ and  ${\rm Fit}^a_{\ZZ_p[G]}(M)$ the $a$-th Fitting ideal of a finitely generated $\ZZ_p[G]$-module $M$. 

\begin{theorem}\label{h-rbsd} Let $F$ be an abelian extension of $K$ of conductor $c$ and set $G := G_{F/K}$. Fix an odd prime $p$ and assume that all of the following conditions are satisfied:
\begin{itemize}
\item[$\bullet$] the data $A$, $F/K$ and $p$ satisfy the hypotheses (H$_1$)-(H$_6$) listed in \S\ref{tmc}.
\item[$\bullet$] $A(F)$ has no point of order $p$.
\item[$\bullet$] The trace to $K$ of $y_1$ is non-zero.
\item[$\bullet$] $p$ is unramified in $K$.
\end{itemize}
%
Set $z_{F} := {\rm Tr}_{K_c/F}(z_c)$ and $z'_{F} := {\rm Tr}_{K_c/F}(z'_c)$. Then the following claims are valid.

\begin{itemize}
\item[(i)] 
If ${\rm BSD}_p(A_{F/K})$(iv) is valid then every element of
\[ {\rm Fit}^0_{\ZZ_p[G]}\left( \bigl(A(F)_p/\langle z_F\rangle\bigr)^\vee\right)\cdot  {\rm Fit}^0_{\ZZ_p[G]}\left(\bigl( A(F)_p/\langle z'_F\rangle\bigr)^\vee\right)\cdot C\cdot u_{K,c}\cdot\epsilon_{A,c} \]
belongs to ${\rm Fit}^1_{\ZZ_p[G]}({\rm Sel}_p(A_{F})^\vee)$ and annihilates $\sha(A_{F})[p^\infty]$.

\item[(ii)] Assume that $F/K$ is of $p$-power degree and that $p$ does not divide the trace to $K$ of $y_1$. Then ${\rm BSD}_p(A_{F/K})$(iv) is valid if and only if one has $${\rm Fit}^0_{\ZZ_p[G]}(\sha(A_{F})[p^\infty]) = \ZZ_p[G]\cdot C\cdot u_{K,c}\cdot \epsilon_{A,c}.$$
\end{itemize}
\end{theorem}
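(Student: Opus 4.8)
The plan is to derive Theorem \ref{h-rbsd} from Theorem \ref{big conj} (or, more precisely, its consequence in Remark \ref{new add}) by specialising to the case $a = 1$, together with the Gross--Zagier--Zhang formula in the shape of equations (\ref{u-def}) and Lemma \ref{independence}(iii). First I would record that, under the stated hypotheses, the hypotheses (H$_1$)--(H$_6$) hold and $A(F)$ has no point of order $p$, so that ${\rm SC}_p(A_{F/K})$ is perfect over $\ZZ_p[G]$ and, by Kato's theorem applied over $K$ together with the non-vanishing of the trace of $y_1$ to $K$ and the Gross--Zagier theorem, the rank of $A(K)$ is one and $L(A_K, z)$ vanishes to exact order one at $z = 1$. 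Combining this with the prediction of ${\rm BSD}(A_{F/K})$(ii) (or directly with Zhang's theorem, since the idempotent $e_{(1)}$ equals $1$ here, cf. Remark \ref{e=1 case}(i)) shows that $\widehat{G}_{A,(1)} = \widehat{G}$, so that one may take $\alpha = 1$ in Theorem \ref{big conj}, and that the relevant Mordell--Weil groups $A^t(F)_p = A(F)_p$ (since $A = A^t$ has good reduction, or after passing to the isogenous dual) have $p$-rank one over $\ZZ_p[G]$ — more precisely, $\QQ_p \cdot A(F)_p$ is free of rank one over $\QQ_p[G]$.

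Next I would make the identification between the higher special element appearing in Remark \ref{new add} and the Heegner-point expression. With $a = 1$ and $k$ replaced by $K$, the expression (\ref{key product}) involves $L^{(1)}_S(A_{F/K},1)$, a logarithmic resolvent, and a N\'eron--Tate regulator term ${\rm ht}^1_{A_{F/K}}(\phi_1)$ evaluated against $\theta_1$; choosing $P_\bullet$ and $Q_\bullet$ to be built from $z_F$ and $z_F'$ respectively (which are non-zero by hypothesis and, after the $p$-power-degree reduction in (ii), generate free rank-one direct summands by the criterion of Remark \ref{p-separable}, or at least finite-index $\ZZ_p[G]$-submodules in general), Lemma \ref{height pairing interp} rewrites the height term as $\det(h_{F/K}(z_F, z_F'))^{-1}$ times Fitting ideals of the quotients $A(F)_p/\langle z_F\rangle$ and $A(F)_p/\langle z_F'\rangle$. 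The Gross--Zagier--Zhang formula (\ref{u-def}), summed over $\psi \in \widehat{G_c}$ and then projected to $G = G_{F/K}$ via $\pi_{K_c/F}$, converts $\sum_\psi L^{(1)}_c(A,\check\psi,1)/(\Omega^+\Omega^-) \cdot e_\psi$ into $C \cdot u_{K,c} \cdot \epsilon_{A,c}$ times the height-pairing matrix of $z_c$ and $z_c'$; here the factor $u_{K,c}$ absorbs the discrepancy $(-1)^{n(c)}(-1)^{n(c_\psi)}$ coming from comparing $L_S$ at all of $S$ with $L_c$ truncated only at prime divisors of $c$ (the archimedean and root-number factors $w_{F/K}$, $\sqrt{|d_K|}$, $c_\psi$ being handled exactly as in the proof of Lemma \ref{gammatheta}), and the logarithmic-resolvent factor cancels against the period $\Omega_A^{F/K}$ as in the proof of Theorem \ref{big conj}. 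This yields claim (i): the displayed product lies in ${\rm Fit}^1_{\ZZ_p[G]}({\rm Sel}_p(A_F)^\vee)$ and annihilates $\sha(A_F^t)[p^\infty] = \sha(A_F)[p^\infty]$.

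For claim (ii) I would argue as in the proof of Theorem \ref{mequals1}(ii) and Theorem \ref{MT result}(ii): when $F/K$ has $p$-power degree and $p \nmid \Tr_{K_1/K}(y_1)$, the point $z_F$ generates a free rank-one $\ZZ_p[G]$-direct summand of $A(F)_p$, so $A(F)_p \cong \ZZ_p[G] \oplus (\text{torsion-free quotient of lower rank})$, and since the rank is exactly one the quotient is trivial; hence $A(F)_p/\langle z_F\rangle$ and $A(F)_p/\langle z_F'\rangle$ are finite with orders that are $p$-adic units times the index, the relevant Fitting ideals are the whole of $\ZZ_p[G]$, and $H^1({\rm SC}_p(A_{F/K}))$ is $\ZZ_p[G]$-free of rank one. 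In this situation the Euler characteristic $\chi_{G,p}({\rm SC}_p(A_{F/K}), h_{A,F})$ can be computed explicitly via a length-two free resolution (exactly the computation of \cite[Prop. 4.4]{bleymc} used in \S\ref{proofmequals1}), and Theorem \ref{bk explicit} (with $S_{p,{\rm w}}$ and $S_{p,{\rm u}}^*$ empty, cf. Remark \ref{emptysets}) shows that ${\rm BSD}_p(A_{F/K})$(iv) is equivalent to the assertion $\delta_{G,p}(\mathcal{L}^*_{A,F/K}) = \chi_{G,p}({\rm SC}_p(A_{F/K}), h^j_{A,F})$; unwinding both sides and using (\ref{u-def}) to replace $\mathcal{L}^*_{A,F/K}$ by $C \cdot u_{K,c} \cdot \epsilon_{A,c}$ divided by the regulator of $z_F$, together with the Cassels--Tate identification $\Sel_p(A_F)^\vee_{\rm tor} \cong \sha(A_F)[p^\infty]$, gives exactly ${\rm Fit}^0_{\ZZ_p[G]}(\sha(A_F)[p^\infty]) = \ZZ_p[G] \cdot C \cdot u_{K,c} \cdot \epsilon_{A,c}$. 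The main obstacle will be the bookkeeping of constants in passing from the $\psi$-by-$\psi$ Gross--Zagier--Zhang formula (\ref{u-def}) to an equality of $K_1$- or Fitting-ideal invariants over $\ZZ_p[G]$: one must track the Manin constant $c(\varphi_A)$ (absorbed into $C$ and a $p$-adic unit by hypothesis), the factors $(c/c_\psi)$ and $\varrho_v(F/K)$ coming from imprimitivity at primes dividing $c$ (which cancel against the difference between $L_S$ and $L_c$, exactly as in Lemma \ref{gammatheta} and the proof of Theorem \ref{MT result}), and the sign/root-number contributions packaged in $u_{K,c}$ and $w_{F/K}$, and verify that each of these is a unit in $\ZZ_p[G]$ under the running hypotheses — a delicate but essentially formal computation once the correct normalisations of \S\ref{k theory period sect2} and \S\ref{mod GGS section} are in place.
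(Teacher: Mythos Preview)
Your overall strategy matches the paper's: specialise the general annihilation result to $a=1$ via Lemma \ref{height pairing interp}, and identify the resulting $L$-series/regulator quotient with $C\cdot u_{K,c}\cdot\epsilon_{A,c}$ using the Gross--Zagier--Zhang formula (\ref{u-def}). A few points where your account diverges from what actually happens are worth flagging.

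\textbf{Which remark to invoke.} The paper derives claim (i) from Remark \ref{new add2}, not Remark \ref{new add}. This matters: Remark \ref{new add2} works directly with $\mathcal{L}^*_{A,F/K}$ (the element (\ref{bkcharelement}) truncated only at $S_{\rm r}$) and so bypasses logarithmic resolvents entirely. Your remark that ``the logarithmic-resolvent factor cancels against the period'' is unnecessary on this route.

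\textbf{Origin of $u_{K,c}$.} You say the sign $(-1)^{n(c)+n(c_\psi)}$ arises from comparing $L_S$ with $L_c$. In fact it is the unramified characteristic $u_\psi$ appearing in $\tau^*(\QQ,\psi)=u_\psi\tau(\QQ,\psi)$: since each prime dividing $c$ is inert in $K$ and then splits completely in the maximal subextension of $K_c$ where it is unramified (a consequence of the dihedral structure of $K_c/\QQ$), one computes $u_\psi=(-1)^{n(c)+n(c_\psi)}$ and then $\tau^*(\QQ,\psi)w_\psi^{-1}=(-1)^{n(c)+n(c_\psi)}c_\psi\sqrt{|d_K|}$ as in (\ref{gauss sums_eq}).

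\textbf{The unit computation you flag as ``the main obstacle''.} This is indeed the crux, and your gesture towards Lemma \ref{gammatheta} and Theorem \ref{MT result} is not quite right: in the modular-symbol case the relevant exponent is $i=1$ (factors $c/c_\psi$), whereas here the explicit computation (\ref{explicit lt}) produces factors $(c_\psi/c)^2 m_\psi$, so one needs Lemma \ref{bley lemma} with $i=2$. The hypothesis (ii) of that lemma is verified using the ring-class-field isomorphism (\ref{explicit iso}), which gives $|G_{K_c/K_d}|=\prod_{\ell\mid(c/d)}(\ell+1)$; this is precisely why the lemma was stated for general $i\in\{1,2\}$.

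\textbf{Claim (ii).} The paper's argument here is more direct than the route via \cite[Prop.~4.4]{bleymc} you sketch. Using (\ref{trace heegner}) and (\ref{trace heegner2}) one shows $\Tr_{F/K}(z_F)=\mu_c\cdot\Tr_{K_1/K}(y_1)$ with $\mu_c\mu_c'=\prod_{\ell\mid c}|A(\kappa_{(\ell)})|$ prime to $p$ by (H$_3$), (H$_4$); combined with $p\nmid\Tr_{K_1/K}(y_1)$ this forces $z_F$ and $z_F'$ each to generate the free rank-one $\ZZ_p[G]$-module $A(F)_p$ by Nakayama. The Euler characteristic is then simply ${\rm Fit}^0_{\ZZ_p[G]}(\sha(A_F)[p^\infty])\cdot h_{F/K}(z_F,z_F')$, and Theorem \ref{bk explicit} (with $S_{p,{\rm w}}=S_{p,{\rm u}}^*=\emptyset$) finishes. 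No explicit two-term resolution or Yoneda-Ext computation is needed.
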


%
%
%
\begin{proof} Since the extension $F/K$ is tamely ramified we shall derive claim (i) as a consequence of the fact, justified in \S \ref{justifications}, that if ${\rm BSD}_p(A_{F/K})$(iv) is valid then so is Prediction \ref{new add2}.



We first note that the assumed non-vanishing of the trace to $K$ of $y_1=z_1$ combines with the trace compatibilities in (\ref{trace heegner}) to imply that the elements $e_\psi(z_c)$ and $e_\psi(z'_c)$ are non-zero for all $\psi$ in $\widehat{G}$.

Taken in conjunction with (\ref{u-def}), this fact implies directly that each function $L_{S_{\rm r}}(A,\check{\psi},z)$ vanishes to order one at $z=1$, where as in Prediction \ref{new add2} we have set $S_{\rm r}=S_k^F\cap S_k^F$. It also combines with the main result of Bertolini and Darmon in \cite{BD} to imply that the $\ZZ_p[G]$-modules generated by $z_c$ and $z_c'$ each have finite index in $A(K_c)$. This implies, in particular, that the idempotent $e_{(1)}$ is equal to $1$.


Since every prime divisor of $c$ is inert in $K$ and then splits completely in the maximal subextension of $K_c$ in which it is unramified, the conductor of each character $\psi$ in $\widehat{G_c}$ is a divisor $c_\psi$ of $c$ and the unramified characteristic $u_\psi$ defined in \S\ref{mod GGS section} is equal to $(-1)^{n(c) + n(c_\psi)}$.

By using \cite[(21)]{bmw} one can then compute that for every $\psi$ in $\widehat{G_c}$ one has
\begin{align}\label{gauss sums_eq}
\tau^\ast \bigl(\QQ,\psi\bigr)\cdot w_\psi^{-1}=\, &u_\psi\cdot \tau \bigl(\QQ,\psi\bigr)\cdot w_\psi^{-1}
\\ =\, &u_\psi\sqrt{\vert d_{K}\vert} \sqrt{{\rm N}c_\psi}\notag\\
 =\, &(-1)^{n(c) + n(c_\psi)}c_\psi\sqrt{\vert d_{K}\vert}.\notag\end{align}

In addition, for each $\psi$ in $\widehat{G}$ one has $\Omega_A^\psi= \Omega^+\Omega^-$ and
\begin{align*} e_\psi\cdot h_{F/K}(z_F,z_F') =\, &\sum_{g \in G}\langle g(z_F),z_F'\rangle_{F}\cdot \psi(g)^{-1}e_\psi\\
=\, & |G|\langle e_\psi(z_F),z_F'\rangle_{F}\cdot e_\psi\\
=\, & |G|\langle e_\psi(z_F),e_{\check\psi}(z_F')\rangle_{F} \cdot e_\psi\\
=\, & |G|(|G|/h_c)\langle e_\psi(z_F),e_{\check\psi}(z_F')\rangle_{K_c} \cdot e_\psi\\
=\, & h_c\cdot\langle e_\psi(z_c),e_{\check\psi}(z_c')\rangle_{K_c} \cdot e_\psi,\end{align*}
where in the last equality $\psi$ and $\check\psi$ are regarded as characters of $G_c$.

Setting $S_{p,{\rm r}}=S_{\rm r}\cap S_k^p$ one may also explicitly compute, for $\psi\in\widehat{G}$, that $m_\psi:=\prod_{v\in S_{p,{\rm r}}}\varrho_{v,\psi}$ is equal to $p^2$ if $p$ divides $c$ but not $c_\psi$ and is equal to $1$ otherwise. We use this explicit description to extend the definition of $m_\psi$ to all characters $\psi\in\widehat{G_c}$.

The above facts combine with (\ref{u-def}) to imply that for any $\psi\in\widehat{G}$ one has

\begin{align}\label{explicit lt}&\left(\frac{L^{(1)}_{S_{\rm r}}(A_{F/K},1)\cdot\tau^*(F/K)\cdot\prod_{v\in S_{p,{\rm r}}}\varrho_{v}(F/k)}{\Omega_A^{F/K}\cdot w_{F/k}\cdot h_{F/K}(z_F,z_F')}\right)_\psi\\= \, &\frac{L'_{c}(A_K,\check{\psi},1)\cdot\tau^*(\QQ,\psi)\cdot m_\psi}{\Omega_A^\psi\cdot w_\psi\cdot h_c\cdot\langle e_\psi(z_c),e_{\check\psi}(z_c')\rangle_{K_c}}\notag\\
 = \, & \frac{L'_{c}(A_{K},\check{\psi},1)(-1)^{n(c)+ n(c_\psi)}c_\psi\sqrt{|d_K|}}{\Omega^+\Omega^-\cdot h_c\cdot\langle e_\psi(z_c),e_{\check\psi}(z_c')\rangle_{K_c}} \cdot m_\psi\notag\\
 = \, & (-1)^{n(c)+ n(c_\psi)}\epsilon_{A,\psi}\cdot C \cdot (c_\psi/c)^2m_\psi \notag\\
 = \, & (-1)^{n(c)}(-1)^{n(c_\psi)}\epsilon_{A,\psi}\cdot C \cdot (c_\psi/c)^2 m_\psi. \notag
  \end{align}
%

To deduce claim (i) from the validity of Prediction \ref{new add2} it is thus sufficient to show that the sum

\begin{equation}\label{unit sum} \sum_{\psi\in\widehat{G_c}}(c_\psi/c)^2m_\psi\cdot e_\psi\end{equation}
belongs to $\ZZ_p[G_c]^\times$. This fact follows from the result of Lemma \ref{bley lemma} with $A=G_c$ and $i=2$ (so that $n_\psi^i=m_\psi$) and, for each positive divisor $d$ of $c$, with the subgroup $H_d$ of $G_c$ specified to be $G_{K_c/K_d}$. Indeed, this choice of subgroups satisfies the assumption (ii) of Lemma \ref{bley lemma} because (\ref{explicit iso}) implies that $|G_{K_c/K_d}|$ is equal to $\prod_{\ell\mid (c/d)}(\ell+1)$.


To prove claim (ii) it suffices to show that, under the given hypotheses, the equality in Theorem \ref{bk explicit} is valid if and only if one has ${\rm Fit}^0_{\ZZ_p[G]}(\sha(A_{F})[p^\infty]) = \ZZ_p[G]\cdot C\cdot u_{K,c}\cdot\epsilon_{A,c}.$

Now, in Theorem \ref{bk explicit}, the set $S_{p,{\rm w}}$ is empty since $F$ is a tamely ramified extension of $k =K$ and the set $S_{p,{\rm u}}^\ast$ is empty since we are assuming that $p$ is unramified in $K$.

We next note that $z_1 = z'_1 = y_1$ and hence that (\ref{trace heegner}) implies
\[ {\rm Tr}_{F/K}(z_F) = {\rm Tr}_{K_c/K}(z_c) = \mu_c\cdot{\rm Tr}_{K_1/K}(z_1) = \mu_c\cdot {\rm Tr}_{K_1/K}(y_1)\]
with $\mu_c = \prod_{\ell \mid c} (\ell + 1 - a_\ell)$ and, similarly, that
\[ {\rm Tr}_{F/K}(z'_F) = \mu_c'\cdot {\rm Tr}_{K_1/K}(y_1)\]
with $\mu'_c = \prod_{\ell \mid c} (\ell + 1 +a_\ell)$.

In particular, since (\ref{trace heegner2}) implies that $\mu_c\cdot \mu'_c = \prod_{\ell\mid c}|A(\kappa_{(\ell)})|$, our assumption that the hypotheses (H$_3$) and (H$_4$) hold for $F/K$ means that $\mu_c\cdot \mu'_c$ is not divisible by $p$. This fact in turn combines with our assumption that ${\rm Tr}_{K_1/K}(y_1)$ is not divisible by $p$ in $A(K)$ to imply that neither ${\rm Tr}_{F/K}(z_F)$ nor ${\rm Tr}_{F/K}(z'_F)$ is divisible by $p$ in $A(K)_p$.

 Since our hypotheses imply that $A(K)_p = A(F)_p^{G}$ is a free $\ZZ_p$-module of rank one, it follows that ${\rm Tr}_{F/K}(z_F)$ and ${\rm Tr}_{F/K}(z'_F)$ are both $\ZZ_p$-generators of $A(F)_p^G$, and hence, by Nakayama's Lemma, that $A(F)_p$ is itself a free rank one $\ZZ_p[G]$-module that is generated by both $z_F$ and $z'_F$.

Taken in conjunction with the explicit descriptions of cohomology given in (\ref{bksc cohom}) (that are valid under the present hypotheses), these facts imply that the Euler characteristic that occurs in Theorem \ref{bk explicit} can be computed as
\[ \chi_{G,p}({\rm SC}_p(A_{F/k}),h_{A,F}) = {\rm Fit}^0_{\ZZ_p[G]}(\sha(A_F)[p^\infty])\cdot h_{F/k}(z_F,z_F'),\]
where we have identified $K_0(\ZZ_p[G],\CC_p[G])$ with the multiplicative group of invertible $\ZZ_p[G]$-lattices in $\CC_p[G]$ (as in Remark \ref{comparingdets}).

When combined with the equality (\ref{explicit lt}) these facts imply that the product $$\mathcal{L}^*_{A,F/K}\cdot h_{F/k}(z_F,z_F')^{-1},$$ where $\mathcal{L}^*_{A,F/K}$ is the leading term element defined in (\ref{bkcharelement}), is equal to the projection to $\ZZ_p[G]$ of $C\cdot u_{K,c}\cdot\epsilon_{A,c}$ multiplied by the sum (\ref{unit sum}).

Claim (ii) is therefore a consequence of Theorem \ref{bk explicit} and the fact that, as already observed above, the sum (\ref{unit sum}) belongs to $\ZZ_p[G_c]^\times$.
\end{proof}

\begin{remark}\label{bs conj rem}{\em If $p$ is prime to all factors in $C$, then the hypotheses of Theorem \ref{h-rbsd}(ii) combine with an argument of Kolyvagin to imply $\sha(A/K)[p^\infty]$ vanishes (cf. \cite[Prop. 2.1]{gross_koly}). This fact combines with the projectivity of $A(F)^\ast$ to imply $\sha(A/F)[p^\infty]$ vanishes and hence, via Theorem \ref{h-rbsd}(ii), that ${\rm BSD}_p(A_{F/K})$(iv) is valid if and only if the product $u_{K,c}\cdot\epsilon_{A,c}$ projects to a unit of $\ZZ_p[G]$. In the case that $F/K$ is unramified, this observation was used by Wuthrich and the present authors to prove the main result of \cite{bmw}. In the general case, it is consistent with an affirmative answer to the question of whether for every $\psi$ in $\widehat{G}$ one should always have
\[ \epsilon_{A,\psi} = (-1)^{n(c_\psi)}?\]
We observe that such an equality would, if valid, constitute a functorially well-behaved, and consistent, version of the conjecture of Bradshaw and Stein that was discussed in \S\ref{bs conj section}.}\end{remark}


\appendix

\section{Refined BSD and equivariant Tamagawa numbers}\label{consistency section}

\subsection{Statement of the main result and consequences} We shall assume throughout this section that $\sha(A_F)$ is finite.

In this case the approach of \cite[\S3.4]{bufl99} gives a well-defined element $R\Omega(h^1(A_{F})(1),\ZZ[G])$ of $K_0(\Z[G],\RR[G])$ and we shall now compute this element in terms of elements that occur in ${\rm BSD}(A_{F/k})$.

To state the result we fix a prime $p$ and then define, for each non-archimedean place $v$ of $k$, an element of $\zeta(\QQ_p[G])^\times$ by setting
\[ L_v(A,F/k) := \begin{cases} \Nrd_{\QQ_p[G]}\bigl(1- \Phi_v^{-1}\bigm\vert V_{p,F}(A^t)^{I_{w}}\bigr), &\text{if $v\nmid p$}\\
\Nrd_{\QQ_p[G]}\bigl(1- \varphi_v\bigm\vert D_{{\rm cr},v}(V_{p,F}(A^t))\bigr), &\text{if $v\mid p$,}\end{cases}\]
where $\varphi_v$ is the crystalline Frobenius at $v$.

We also fix an isomorphism of fields $j:\CC\cong \CC_p$ and use the induced homomorphism of abelian groups $j_*: K_0(\ZZ[G],\RR[G]) \to K_0(\ZZ_p[G],\CC_p[G])$.

In the main result of this section we compute the element $R\Omega(h^1(A_{F})(1),\ZZ[G])$ in terms of the complex ${\rm SC}_{S,\omega_\bullet}(A_{F/k})$ in (\ref{revisionSC}), the morphism of non-abelian determinants $h^{{\rm det}}_{A,F}$ that is induced by the N\'eron-Tate height (\ref{height triv}), the period $\Omega_{\omega_\bullet}(A_{F/k})$ in \S \ref{revisionO} and the element $\mu_{S}(A_{F/k})$ in (\ref{revisionMU}).

The proof of this result will be given in \S \ref{A2} below.

\begin{proposition}\label{etnc} Fix an ordered $\QQ[G]$-basis $\omega_\bullet$ of the space $H^0(A^t_F,\Omega^d_{A^t_F})$ and a finite set $S$ of places of $k$ that contains $S_k^\infty, S_k^F$ and $S_k^A$.

Then in $K_0(\ZZ_p[G],\CC_p[G])$ one has
\begin{multline*}\label{enoughromega} \chi_{G,p}({\rm SC}_{S,\omega_\bullet}(A_{F/k})_p,h^j_{A,F})+\mu_{S}(A_{F/k})_p\\
 = -j_*\bigl(R\Omega(h^1(A_{F})(1),\ZZ[G])\bigr)-
 \partial_{G,p}(j_*(\Omega_{\omega_\bullet}(A_{F/k})))+ \sum_{v\in S\setminus S_k^\infty}\delta_{G,p}(L_v(A,F/k)),
\end{multline*}
where ${\rm SC}_{S,\omega_\bullet}(A_{F/k})_p$ denotes $\ZZ_p\otimes_\ZZ {\rm SC}_{S,\omega_\bullet}(A_{F/k})$ and $h^{j}_{A,F}$ denotes $\CC_p\otimes_{\RR,j}h^{{\rm det}}_{A,F}$.
\end{proposition}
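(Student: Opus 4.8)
The plan is to unwind all the definitions on both sides and to identify the statement as essentially a bookkeeping exercise that repackages the Selmer complex ${\rm SC}_{S,\omega_\bullet}(A_{F/k})_p$ into the compactly supported cohomology complex that underlies the definition of $R\Omega(h^1(A_F)(1),\ZZ[G])$. Concretely, I would first recall from \S\ref{selmer section} that ${\rm SC}_{S,\omega_\bullet}(A_{F/k}) = {\rm SC}_S(A_{F/k},\mathcal{X}_S(\omega_\bullet)) \oplus \mathcal{Q}(\omega_\bullet)_S[0]$, and that after tensoring with $\ZZ_p$ the first summand is computed (via Proposition \ref{prop:perfect2}(i)) as the Nekov\'a\v r-Selmer complex ${\rm SC}_S(A_{F/k};\mathcal{X}_S(\omega_\bullet)(p),\mathcal{X}_S(\omega_\bullet)(\infty)_p)$. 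Using the exact triangle (\ref{can tri}) I would relate this Nekov\'a\v r-Selmer complex to the compactly supported complex $R\Gamma_c(\mathcal{O}_{k,S\cup S_k^p},T_{p,F}(A^t))$ up to the contributions of $X = \mathcal{X}_S(\omega_\bullet)(p)$ and $X' = \mathcal{X}_S(\omega_\bullet)(\infty)_p$, and then via (\ref{selmer-finite tri}) to the classical Selmer complex over the set $\Sigma := (S\cap S_k^f)\cup S_k^p$. This is precisely the chain of identifications already carried out in the proof of Proposition \ref{prop:perfect}, so the content here is to track the associated Euler characteristic identities in $K_0(\ZZ_p[G],\CC_p[G])$ using Lemma \ref{fk lemma}.

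Next I would bring in the definition of $R\Omega(h^1(A_F)(1),\ZZ[G])$ from \cite[\S3.4]{bufl99}: it is built from virtual objects attached to $R\Gamma_c(\mathcal{O}_{k,S},T_{p,F}(A^t))$ for each $p$, together with the period isomorphism, the Neron-Tate height pairing, and the Bloch-Kato finite-support comparisons — i.e.\ exactly the data that also enters $\chi_{G,p}({\rm SC}_{S,\omega_\bullet}(A_{F/k})_p,h^j_{A,F})$ and $\partial_{G,p}(j_*(\Omega_{\omega_\bullet}(A_{F/k})))$. The local terms $L_v(A,F/k)$ for $v\in S\setminus S_k^\infty$ appear because the $S$-truncation of $L$-series, equivalently the passage between $R\Gamma_c(\mathcal{O}_{k,S},-)$ and $R\Gamma_c(\mathcal{O}_{k,S\cup S_k^p},-)$ together with the local points at bad places, must be compensated; these are governed by the canonical triangle of the form (\ref{independencetriangle}) at non-$p$-adic $v$ and by a crystalline computation at $p$-adic $v$. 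I would assemble these compensations into the sum $\sum_{v\in S\setminus S_k^\infty}\delta_{G,p}(L_v(A,F/k))$ and the Fontaine-Messing correction $\mu_S(A_{F/k})_p$, precisely as the terms $\mu_v(A_{F/k})$ were designed (cf.\ Lemma \ref{fm}) to absorb the difference between the archimedean-plus-$\Sigma$ truncation implicit in the Selmer complex and the $S$-truncation appearing in $R\Omega$.

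The key organizing principle is that every individual comparison step above is an instance of Lemma \ref{fk lemma} or of the additivity of Euler characteristics along exact triangles in $D^{\rm perf}(\ZZ_p[G])$, so the proof reduces to listing the triangles, checking that the relevant complexes are acyclic outside two consecutive degrees (or splitting them when they are not), and summing the contributions. I would largely follow the template of \cite[\S4]{bmw}, where the analogous statement is proved for $R\Gamma_f(k,T_{p,F}(A))$ rather than ${\rm SC}_p(A_{F/k})$; the needed modification is only the translation between these two complexes, which is exactly the content of Lemma \ref{v not p}(ii) and Remark \ref{Tamagawa remark} together with the fact that, by hypothesis, $\sha(A_F)$ is finite so the cohomology of ${\rm SC}_{S,\omega_\bullet}(A_{F/k})_p$ is fully described by Remark \ref{can structure groups}.

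The main obstacle I anticipate is bookkeeping at the bad and $p$-adic places: keeping consistent normalizations for the local $L$-factors $L_v(A,F/k)$ (reduced norm of $1-\Phi_v^{-1}$ on inertia invariants versus crystalline Frobenius on $D_{{\rm cr}}$), for the Fontaine-Messing terms $\mu_v(A_{F/k})$, and for the direction of the connecting maps $\delta_{G,p}$ and $\partial_{G,p}$, while making sure that the choice of $\omega_\bullet$ enters on both sides through the same period isomorphism. A secondary subtlety is the prime $p=2$: since ${\rm SC}_{S,\omega_\bullet}(A_{F/k})$ is only defined integrally via Proposition \ref{prop:perfect2} and its $2$-adic cohomology differs from the naive Selmer groups by finite $2$-groups, one must check that these finite discrepancies do not affect the identity in $K_0(\ZZ_2[G],\CC_2[G])$ — which they do not, because $R\Omega$ is itself defined using the same compactly supported cohomology, so the discrepancies cancel. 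I would handle this by verifying that all the triangles used are triangles of the integral complexes from Proposition \ref{prop:perfect2}(i), so that the argument is uniform in $p$.
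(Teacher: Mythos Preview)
Your overall architecture is right: one does use the triangle (\ref{can tri}) (written as (\ref{adaptedtri}) in the appendix) to pass from the Nekov\'a\v r--Selmer complex to $R\Gamma_c$, and the reference to \cite[\S4]{bmw} as a template is appropriate. However, there is a genuine gap in your account of where the terms $\sum_{v\in S\setminus S_k^\infty}\delta_{G,p}(L_v(A,F/k))$ come from.

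You attribute these Euler factors to ``the passage between $R\Gamma_c(\mathcal{O}_{k,S},-)$ and $R\Gamma_c(\mathcal{O}_{k,S\cup S_k^p},-)$'' via the triangle (\ref{independencetriangle}). This cannot be right: that triangle only contributes terms at places in $S_k^p\setminus S$, whereas the sum in the proposition runs over \emph{all} finite places of $S$, including bad-reduction and ramified places that lie in $S$ from the start. Moreover $R\Omega(h^1(A_F)(1),\ZZ[G])$ is itself independent of the auxiliary set used to define it, so no such compensation is needed. In the paper's proof the $L_v$ factors arise from a quite different source: in the definition of the trivialisation $\vartheta_p$ from \cite[(19),(22)]{bufl99}, the local complex $R\Gamma_f(k_v,V_{p,F}(A^t))$ at each finite $v$ is trivialised via the \emph{identity} map on $V_{p,v}:=V_{p,F}(A^t)^{I_w}$ (or $D_{{\rm cr},v}$ if $v\mid p$), whereas the trivialisation coming naturally from your Selmer-complex side uses the \emph{acyclicity} of $V_{p,v}\xrightarrow{1-\phi_v}V_{p,v}$. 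The discrepancy between these two trivialisations of the same virtual object is exactly $\Nrd_{\QQ_p[G]}(1-\phi_v\mid V_{p,v})=L_v(A,F/k)$; see the remark after \cite[(24)]{bufl99}. This is the heart of the computation and it is not visible if you work only with Lemma \ref{fk lemma} and exact triangles of complexes: one must compare morphisms in the Picard category $V(\CC_p[G])$ (or equivalently keep track of trivialisations, not just complexes). Concretely, the paper sets up a diagram of virtual objects (\ref{comm diag}) whose commutativity encodes the equality $\alpha'\circ\alpha=(\CC_p\otimes_{\RR,j}\vartheta_\infty)\circ(\CC_p\otimes_{\QQ_p}\vartheta_p)^{-1}$, with $\alpha'$ carrying precisely the product of the $L_v$. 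Your plan would need to incorporate this comparison of trivialisations; the triangle bookkeeping alone does not produce the Euler factors.

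A second, smaller point: the residual term $\chi_{G,p}(\mathcal{F}(\omega_\bullet)_p[0]\oplus\mathcal{X}(p)[-1],\mu_2)$ that remains after splitting off the period $\partial_{G,p}(j_*(\Omega_{\omega_\bullet}(A_{F/k})))$ must still be matched with $\sum_{v\in S_k^p\setminus S}\delta_{G,p}(L_v)+\mu_S(A_{F/k})_p+\chi_{G,p}(\mathcal{Q}(\omega_\bullet)_{S,p}[0],0)$; this is the content of the auxiliary Proposition proved at the end of \S\ref{A2}, and it requires a case analysis ($p\in S$, $p\in\Upsilon\setminus S$, $p\notin\Upsilon$) together with a Fontaine--Messing computation as in Lemma \ref{fm}. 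Your sketch gestures at ``a crystalline computation at $p$-adic $v$'' but does not indicate that this is where $\mu_S$ and $\mathcal{Q}(\omega_\bullet)_S$ are absorbed.
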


This result has the following useful consequence.

\begin{corollary}\label{rbsd=etnc} The weaker version of ${\rm BSD}(A_{F/k})$ that is discussed in Remark \ref{weaker BSD} is equivalent to the equivariant Tamagawa number conjecture for the pair $(h^1(A_F)(1),\ZZ[G])$.\end{corollary}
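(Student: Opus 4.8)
The plan is to deduce the equivalence directly from Proposition~\ref{etnc}, in the spirit of (and extending) the argument of \cite[\S4]{bmw}. Both the equality in the weaker version of ${\rm BSD}(A_{F/k})$ discussed in Remark~\ref{weaker BSD} and the equality asserted by the equivariant Tamagawa number conjecture for $(h^1(A_F)(1),\ZZ[G])$ take place in $K_0(\ZZ[G],\RR[G])$, and the argument used to prove Lemma~\ref{pro-p lemma} shows that the diagonal map $K_0(\ZZ[G],\RR[G]) \to \prod_{p}\prod_{j:\,\CC\cong\CC_p}K_0(\ZZ_p[G],\CC_p[G])$ is injective. It therefore suffices to compare the images of both equalities under each homomorphism $j_*$.

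First I would fix a prime $p$ and a field isomorphism $j:\CC\cong\CC_p$ and rewrite the $j_*$-image of the weaker version of ${\rm BSD}(A_{F/k})$. By definition this equality reads
\[ \delta_G(\calL_S^*(A_{F/k},1)) - \partial_G(\Omega_{\omega_\bullet}(A_{F/k})) = \chi_G({\rm SC}_{S,\omega_\bullet}(A_{F/k}),h_{A,F}) + \mu_S(A_{F/k}) \]
in $K_0(\ZZ[G],\RR[G])$. Applying $j_*$ and using Proposition~\ref{etnc} to rewrite the right-hand side, the two occurrences of $\partial_{G,p}(j_*(\Omega_{\omega_\bullet}(A_{F/k})))$ cancel and one is left with the assertion that
\[ \delta_{G,p}(j_*\calL_S^*(A_{F/k},1)) = - j_*\bigl(R\Omega(h^1(A_F)(1),\ZZ[G])\bigr) + \sum_{v\in S\setminus S_k^\infty}\delta_{G,p}(L_v(A,F/k)). \]
Thus the weaker version of ${\rm BSD}(A_{F/k})$ is equivalent to the validity of this identity for all $p$ and all $j$. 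Next I would compare this with the eTNC: the conjecture of \cite[Conj. 4]{bufl99} for $(h^1(A_F)(1),\ZZ[G])$ asserts the vanishing in $K_0(\ZZ[G],\RR[G])$ of $\delta_G(L^*(A_{F/k},1)) + R\Omega(h^1(A_F)(1),\ZZ[G])$, where $L^*(A_{F/k},1)$ is the element of $\zeta(\RR[G])^\times$ built from the (non-truncated) leading coefficients $L^*(A,\check\psi,1)$. After projecting to $(p,j)$ it therefore remains only to check that $j_*\calL_S^*(A_{F/k},1)$ equals the product of $j_*L^*(A_{F/k},1)$ and $\prod_{v\in S\setminus S_k^\infty}L_v(A,F/k)$ in $\zeta(\CC_p[G])^\times$, for then the $\sum_{v}\delta_{G,p}(L_v(A,F/k))$ terms cancel and the two displayed conditions become literally the same. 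This last point is a bookkeeping of Euler factors: the factorisation $L_S(A,\check\psi,z) = L(A,\check\psi,z)\cdot\prod_{v\in S\setminus S_k^\infty}P_v(A,\check\psi,z)$ shows that the quotient of leading coefficients is the product over $v\in S\setminus S_k^\infty$ of the value at $z=1$ of the $v$-Euler factor of the $\check\psi$-twist of $A$, and one must check that the corresponding element of $\zeta(\CC[G])^\times$ (namely $j_*$ of the term $\prod_v L_v(A,F/k)$ in the sense of the motivic factors of \eqref{localFM}) is carried to the $p$-adic $\prod_v L_v(A,F/k)$ of the present appendix.

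I expect the main obstacle to be precisely this comparison of local Euler factors at the places above $p$ and at the places of bad reduction: for $v\nmid p$ with good reduction it is immediate from the Betti–$p$-adic comparison isomorphism for $h^1(A_F)(1)$, but for $v\mid p$ the factor $L_v(A,F/k)$ is defined via the crystalline Frobenius on $D_{{\rm cr},v}(V_{p,F}(A^t))$ rather than by an inertia-invariants formula, so one must invoke the relevant $p$-adic Hodge-theoretic comparison isomorphisms and keep careful track of the normalisations and signs built into the Burns--Flach formalism, exactly as in the treatment of the local terms in \cite[\S4]{bmw}. Granting this compatibility, combining the two equivalences above shows that the weaker version of ${\rm BSD}(A_{F/k})$ holds if and only if $\delta_G(L^*(A_{F/k},1)) + R\Omega(h^1(A_F)(1),\ZZ[G]) = 0$, which is the equivariant Tamagawa number conjecture for $(h^1(A_F)(1),\ZZ[G])$.
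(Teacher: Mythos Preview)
Your argument is correct and follows essentially the same route as the paper's proof: both reduce to the identity
\[
\calL_S^*(A_{F/k},1) = L^\ast(_{\QQ[G]}h^1(A_{F})(1),0)\cdot \prod_{v\in S\setminus S_k^\infty}L_v(A,F/k)
\]
and then combine Proposition~\ref{etnc} with the injectivity established in Lemma~\ref{pro-p lemma}. Two small remarks. First, the paper establishes the displayed Euler-factor identity directly in $\zeta(\RR[G])^\times$ before projecting, whereas you project first and then compare; this is an inessential reordering. Second, what you flag as the ``main obstacle'' is not one: the motivic local factors $L_v(A,F/k)$ occurring in the body of the paper (via \cite[\S4.1]{bufl99}) are \emph{defined} so that for $v\nmid p$ they are computed on the $p$-adic realisation and for $v\mid p$ on $D_{{\rm cr},v}$, hence they agree on the nose with the factors introduced at the start of the appendix; no additional $p$-adic Hodge-theoretic input is needed here. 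One omission worth recording: the weaker version of ${\rm BSD}(A_{F/k})$ still includes claims (i) and (ii), and a complete proof of the equivalence should note (as the paper does) that claim (i) is the standing finiteness hypothesis and that claim (ii) coincides with \cite[Conj.~4(i),(ii)]{bufl99}.
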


\begin{proof} Claim (i) of ${\rm BSD}(A_{F/k})$ simply asserts that $\sha(A_F)$ is finite, and we are assuming this to be true.

It is also well-known that claim (ii) of ${\rm BSD}(A_{F/k})$ coincides with ~\cite[Conj. 4(i),(ii)]{bufl99} for the pair $(h^1(A_F)(1),\ZZ[G])$.

It is thus enough to relate the equality predicted in Remark \ref{weaker BSD} to that predicted by \cite[Conj. 4(iv)]{bufl99}.

To do this we write $L^\ast(_{\QQ[G]}h^1(A_{F})(1),0)$ for the leading coefficient  at $s=0$ of the $\zeta\bigl(\CC[G]\bigr)$-valued $L$-function $L\bigl(_{\QQ[G]}h^1(A_{F})(1),s\bigr)$ defined in
\cite[\S4.1]{bufl99}.

Then the definition of the element $\calL_S^*(A_{F/k},1)$ of $\zeta(\RR[G])^\times$ given in Remark \ref{weaker BSD} directly implies that
\[ \calL_S^*(A_{F/k},1) = L^\ast(_{\QQ[G]}h^1(A_{F})(1),0)\cdot \prod_{v\in S\setminus S_k^\infty}L_v(A,F/k).\]

By applying the extended boundary homomorphism $\delta_G: \zeta(\RR[G])^\times \to K_0(\ZZ[G],\RR[G])$ we can therefore deduce that

\begin{align*} &\delta_G(L^\ast(_{\QQ[G]}h^1(A_{F})(1),0))\\
=\, &\delta_G(\calL_S^*(A_{F/k},1)) - \sum_{v\in S\setminus S_k^\infty}\delta_{G}(L_v(A,F/k))\\
=\, &\bigl(\delta_G(\calL_S^*(A_{F/k},1)) - \partial_G(\Omega_{\omega_\bullet}(A_{F/k}))\bigr) + \partial_G(\Omega_{\omega_\bullet}(A_{F/k}))- \sum_{v\in S\setminus S_k^\infty}\delta_{G}(L_v(A,F/k)).
\end{align*}

This implies that the equality of ~\cite[Conj. 4(iv)]{bufl99} for the pair $\bigl(h^1(A_{F})(1), \ZZ[G]\bigr)$ asserts that in $K_0(\ZZ[G],\RR[G])$ one has
\begin{align*} &-R\Omega(h^1(A_{F})(1), \ZZ[G])\\
 = \, &\delta_G(L^\ast(_{\QQ[G]}h^1(A_{F})(1),0)) \\
 =\, &\bigl(\delta_G(\calL_S^*(A_{F/k},1)) - \partial_G(\Omega_{\omega_\bullet}(A_{F/k}))\bigr) + \partial_G(\Omega_{\omega_\bullet}(A_{F/k}))- \sum_{v\in S\setminus S_k^\infty}\delta_{G}(L_v(A,F/k)).\end{align*}

Thus, since Remark \ref{weaker BSD} predicts the equality
\[ \delta_G(\calL_S^*(A_{F/k},1)) - \partial_G(\Omega_{\omega_\bullet}(A_{F/k})) = \chi_G({\rm SC}_{S,\omega_\bullet}(A_{F/k}),h_{A,F}) + \mu_{S}(A_{F/k}),\]
the claimed result will be true if we can show that Proposition \ref{etnc} implies that in $K_0(\ZZ[G],\RR[G])$ one has
\begin{multline*}
\chi_G({\rm SC}_{S,\omega_\bullet}(A_{F/k}),h_{A,F}) + \mu_{S}(A_{F/k}) = \\
 -R\Omega(h^1(A_{F})(1), \ZZ[G]) - \partial_G(\Omega_{\omega_\bullet}(A_{F/k})) + \sum_{v\in S\setminus S_k^\infty}\delta_{G}(L_v(A,F/k)).\end{multline*}

By the argument of Lemma \ref{pro-p lemma}, it is thus enough to show that for every prime $p$ and every isomorphism of fields $j:\CC\cong \CC_p$ the image under $j_*$ of the latter equality is implied by the displayed formula in Proposition \ref{etnc}.

This follows easily from the fact that the construction of $\delta_G$ ensures the commutativity of the diagram

\begin{equation*}\label{key commute}\begin{CD} \zeta(\RR[G])^\times @> \delta_{G} >> K_0(\bz
[G],\RR [G])\\
@V VV @VVj_* V\\
\zeta(\CC_p[G])^\times @> \delta_{G,p}>>
K_0(\bz_p [G],\CC_p [G])\end{CD}\end{equation*}
in which the left hand vertical map is induced by the ring inclusion $\RR[G] \to \CC_p[G]$ that sends each element $\sum_{g \in G}x_gg$ to $\sum_{g \in G}j(x_g)g$. 
%
\end{proof}

\begin{remark}\label{consistency}{\em The consistency results for ${\rm BSD}(A_{F/k})$ that are stated in Remark \ref{consistency remark} follow directly upon combining (the argument of) Corollary \ref{rbsd=etnc} with known results concerning equivariant Tamagawa numbers.

\begin{itemize}
\item[(i)] It is clear that $L^\ast(_{\QQ[G]}h^1(A_{F})(1),0)$ is independent of the choices of sets $\omega_\bullet$ and $S$ and that $R\Omega(h^1(A_{F})(1), \ZZ[G])$ is independent of $\omega_\bullet$. Remark \ref{consistency remark}(i) thus follows from the fact that $R\Omega(h^1(A_{F})(1), \ZZ[G])$ is independent of the choice of $S$, as is shown in \cite[\S3.4, Lem. 5]{bufl99}.
\item[(ii)] Remark \ref{consistency remark}(ii) is a consequence of the functorial properties of $L^\ast(_{\QQ[G]}h^1(A_{F})(1),0)$ and $R\Omega(h^1(A_{F})(1), \ZZ[G])$ that are established in
 \cite[\S3.5, Th. 3.1, \S4.4, Th. 4.1 and \S4.5, Prop. 4.1]{bufl99}.
\item[(iii)] Remark \ref{consistency remark}(iii) follows from the main result of Kings in \cite{kings}.
\end{itemize}
}
\end{remark}

\subsection{The proof of Proposition \ref{etnc}}\label{A2}

\subsubsection{}
To compute $j_*\bigl(R\Omega\bigl(h^1(A_{F})(1),\ZZ[G])\bigr)$ we first recall relevant facts concerning the formalism of virtual objects introduced by Deligne in~\cite{delignedet} (for more details see~\cite[\S2]{bufl99}).

With $R$ denoting either $\ZZ_p[G]$ or $\CC_p[G]$ we write $V(R)$ for the category of virtual objects over $R$ and $[C]_R$ for the object of $V(R)$ associated to each $C$ in $D^{\rm perf}(R)$.

We recall that $V(R)$ is a Picard category with $\pi_1(V(R))$ naturally isomorphic to $K_1(R)$ (see~\cite[(2)]{bufl99}) and we write
\[ (X,Y) \mapsto X\cdot Y\]
for its product and $\eins_R$ for the unit object $[0]_R$.

Writing $\mathcal{P}_0$ for the
Picard category with unique object $\eins_{\mathcal{P}_0}$
and $\Aut_{\mathcal{P}_0}(\eins_{\mathcal{P}_0}) = 0$ we use the isomorphism of abelian groups
\begin{equation}\label{virtual iso}
 \pi_0\Bigl(V\bigl(\ZZ_p[G]\bigr)\times_{V(\CC_p[G])}\mathcal{P}_0\Bigr)\cong K_0\bigl(\ZZ_p[G],\CC_p[G]\bigr)
\end{equation}
that is described in~\cite[Prop. 2.5]{bufl99}. In particular, via this isomorphism, each pair comprising an object $X$ of $V\bigl(\ZZ_p[G]\bigr)$ and a morphism $\iota: \CC_p[G]\otimes_{\ZZ_p[G]}X \to \eins_{\CC_p[G]}$ gives rise to a canonical element $[X,\iota]$ of $K_0\bigl(\ZZ_p[G],\CC_p[G]\bigr)$.

Now if $C$ belongs to $D^{\rm perf}\bigl(\ZZ_p[G]\bigr)$ and $\QQ_p\otimes_{\ZZ_p}C$ is acyclic outside degrees $a$ and $a+1$ (for any integer $a$), then any isomorphism of $\CC_p[G]$-modules
\[ \lambda: H^a\bigl(\CC_p\otimes_{\ZZ_p}C\bigr) \cong H^{a+1}\bigl(\CC_p\otimes_{\ZZ_p}C\bigr)\]
induces a canonical morphism in $V\bigl(\CC_p[G]\bigr)$ of the form
\[ \lambda_{\rm Triv}: [\CC_p\otimes_{\ZZ_p}C]_{\CC_p[G]} \to \eins_{\CC_p[G]}.\]
The associated element $\bigl[[C]_{\ZZ_p[G]},\lambda_{{\rm Triv}}\bigr]$ of  $K_0\bigl(\ZZ_p[G],\CC_p[G]\bigr)$ coincides with the Euler characteristic $\chi_{G,p}\bigl(C,\lambda^{(-1)^a}\bigr)$ defined in~\cite[Def. 5.5]{additivity}.

In particular, from Proposition 5.6(3) in loc. cit. it follows that
\[ \bigl[[C[-1]]_{\ZZ_p[G]},\lambda_{{\rm Triv}}\bigr] = -\bigl[[C]_{\ZZ_p[G]},\lambda_{{\rm Triv}}\bigr],\]
whilst the results of Theorem 6.2 and Lemma 6.3 in loc. cit. combine to imply that if the module $H^a(C)$ is torsion-free, then one has
\begin{multline}\label{normalisation}
  \bigl[[C]_{\ZZ_p[G]},\lambda_{{\rm Triv}}\bigr]\\ = (-1)^a\chi_{G,p}(C,\lambda) + \delta_{G,p}\biggl(\prod_{i \equiv 1,2 \bmod{4} }\Nrd_{\QQ_p[G]} \bigl(-{\id}\bigm\vert \QQ_p\otimes_{\ZZ_p}H^i(C)\bigr)^{(-1)^i}\biggr),\end{multline}
where $\chi_{G,p}(-,-)$ is the non-abelian determinant discussed in \S\ref{nad sec}.

\subsubsection{} We now prove Proposition \ref{etnc}. In fact, for simplicity of exposition, we shall assume that both the set of places $S$ of $k$ that occurs in the statement (Conjecture \ref{conj:ebsd}) of ${\rm BSD}(A_{F/k})$(iv), and the associated set of places $\Upsilon$ that is defined in \S\ref{Galoiscomplexes}, are closed under the action of $G_\QQ$. (The general case is proved by a natural extension of the argument presented here but involves rather heavier notation.)

In the sequel we abbreviate the perfect Selmer structure $\mathcal{X}_S(\{\mathcal{A}_v^t\}_v,\omega_\bullet,\gamma_\bullet)$ to $\mathcal{X}$.

We recall that $j_*\bigl(R\Omega\bigl(h^1(A_{F})(1),\ZZ[G])\bigr)$ is by definition equal to the image under (\ref{virtual iso}) of the pair given by $[R\Gamma_c(A_{F/k})]_{\ZZ_p[G]}\in V(\ZZ_p[G])$ and by a specific canonical trivialisation of this object. Here the compactly supported cohomology complex $$R\Gamma_c(A_{F/k}):=R\Gamma_c(\mathcal{O}_{k,S\cup S_k^p},T_{p,F}(A^t))$$ lies in a canonical exact triangle
\begin{multline}\label{adaptedtri}\mathcal{X}(p)[-2]\oplus \mathcal{X}(\infty)_p[-1]\to R\Gamma_c(A_{F/k})\to {\rm SC}_S(A_{F/k},\mathcal{X}(p),\mathcal{X}(\infty)_p)\\
\to \mathcal{X}(p)[-1]\oplus \mathcal{X}(\infty)_p[0]\end{multline}
in $D^{\rm perf}(\ZZ_p[G])$ exactly as in (\ref{can tri}).

The key strategy in our computation of $j_*\bigl(R\Omega\bigl(h^1(A_{F})(1),\ZZ[G])\bigr)$ is then to compare the scalar extension of this triangle to the exact triangle that is induced by the central column of the diagram in \cite[(26)]{bufl99} (for $V_p=V_{p,F}(A^t)$) in order to compute the relevant canonical trivialisation of $[R\Gamma_c(A_{F/k})]_{\ZZ_p[G]}$ in terms of the invariants occurring in the claimed formula of Proposition \ref{etnc}.

%
%

%
%

In order to do so we set $${\rm SC}_S(A_{F/k},\mathcal{X})_p:={\rm SC}_S(A_{F/k},\mathcal{X}(p),\mathcal{X}(\infty)_p)$$ and also
$$C^{\rm exp}_{A,F}:=\mathcal{X}(p)[-1]\oplus \mathcal{X}(\infty)_p[0]$$
and then define an isomorphism of $\CC_p[G]$-modules $h_{A,F}^{{\rm exp},j}$ as the following composition:

\begin{align*}\CC_p\cdot H^0(C^{\rm exp}_{A,F})=\CC_p\cdot\mathcal{X}(\infty)_p&=\CC_p\cdot H_\infty(A_{F/k})_p\\ 
&\stackrel{\mu_1}{\cong}\CC_p\cdot\Hom_F(H^0(A_F^t,\Omega^1_{A_F^t}),F)\\
&\stackrel{\mu_2}{\cong}\CC_p\cdot A^t(F_p)^\wedge_p=\CC_p\cdot\mathcal{X}(p)=\CC_p\cdot H^1(C^{\rm exp}_{A,F}).
\end{align*}
Here $\mu_1$ is induced (via $j$) by the period isomorphism and $\mu_2$ is the scalar extension of ${\rm exp}_{A^t,F_p}:=\prod_{v\in S_k^p}{\rm exp}_{A^t,F_v}$. 
%
%

We next compare the scalar extension $\QQ_p\cdot C^{\rm exp}_{A,F}$ of the complex $C^{\rm exp}_{A,F}$ to the term
$$\bigoplus_{v\in S\cup S_k^p}R\Gamma_f(k_v,V_{p,F}(A^t))$$
that occurs in the central column of the diagram \cite[(26)]{bufl99}.

For each $v\in S_k^\infty$ the complex $R\Gamma_f(k_v,V_{p,F}(A^t))$ is by definition equal to $R\Gamma(k_v,V_{p,F}(A^t))$ and thus the canonical comparison isomorphisms (\ref{cancompisom}) induce canonical morphisms
$$\QQ_p\cdot \mathcal{X}(\infty)_p[0]\to R\Gamma_f(k_v,V_{p,F}(A^t)).$$

For each $v\in S\setminus(S_k^\infty\cup S_k^p)$ the canonical quasi-isomorphism \cite[(19)]{bufl99} implies that the complex $R\Gamma_f(k_v,V_{p,F}(A^t))$ is acyclic.

Finally, for each $v\in S_k^p$, it follows from \cite[Ex. 3.11]{bk} that the canonical Kummer maps induce canonical morphisms
$$\QQ_p\cdot \mathcal{X}(p)[-1]\to R\Gamma_f(k_v,V_{p,F}(A^t)).$$

By combining these facts we obtain a canonical morphism of exact triangles
\begin{equation} \label{commtriangles}
\begin{CD} \QQ_p\cdot C^{\rm exp}_{A,F}[-1] @> >> \QQ_p\!\cdot\! R\Gamma_c(A_{F/k}) @>  >> \QQ_p\!\cdot\! {\rm SC}_S(A_{F/k},\mathcal{X})_p \\
@V VV @\vert @V VV  \\
\bigoplus_{v\in S\cup S_k^p}R\Gamma_f(k_v,V_{p,F}(A^t))[-1] @> >> \QQ_p\!\cdot\! R\Gamma_c(A_{F/k}) @>   >> R\Gamma_f(k,V_{p,F}(A^t))
\end{CD}
\end{equation}
in $D^{\rm perf}(\QQ_p[G])$. Here the top row is the scalar extension of the exact triangle (\ref{adaptedtri}), the bottom row is the central column of the diagram \cite[(26)]{bufl99} (in particular the complex $R\Gamma_f(k,V_{p,F}(A^t))$ may be defined by the exactness of this triangle), and the right-most vertical arrow is defined by the commutativity of the diagram.

To compute $j_*\bigl(R\Omega\bigl(h^1(A_{F})(1),\ZZ[G])\bigr)$ we consider the following diagram in $V\bigl(\CC_p[G]\bigr)$
\begin{equation}\label{comm diag} \xymatrix@C+1ex@R+1ex{
\bigl[\CC_p\cdot R\Gamma_c(A_{F/k})\bigr] \ar[d]^{\Delta} \ar[r]^{\alpha} & \eins_{\CC_p[G]}\ar[r]^{\alpha'}\ar[d]^{\iota} & \eins_{\CC_p[G]}\ar[d]^{\iota}\\
    \bigl[\CC_p\cdot C^{\rm exp}_{A,F}[-1]\bigr]\cdot \bigl[\CC_p\cdot {\rm SC}_S(A_{F/k},\mathcal{X})_p\bigr]
\ar[r]^(.68){\beta}
& \eins_{\CC_p[G]}\cdot\eins_{\CC_p[G]} \ar[r]^{\gamma}
& \eins_{\CC_p[G]}\cdot\eins_{\CC_p[G]} .}
\end{equation}
In this diagram we abbreviate $[-]_{\CC_p[G]}$ to $[-]$ and use the following morphisms: $\Delta$ is induced by the scalar extension of the exact triangle (\ref{adaptedtri}); $\beta$ is the morphism $\bigl[(h_{A,F}^{{\rm exp},j})_{\rm Triv}\bigr]\cdot \bigl[(h^{j}_{A,F})_{\rm Triv}\bigr]$; $\gamma$ is the morphism $\bigl(\prod_{v\in (S\cup S_k^p)\setminus S_k^\infty}L_v(A,F/k)\bigr)\cdot{\id}$;  $\iota$ denotes the canonical identification and, finally, the morphism $\alpha$ and $\alpha'$ are defined by the condition that the two squares commute.


We claim that this definition of $\alpha$ and $\alpha'$ implies that
\begin{equation}\label{comp eq}\alpha'\circ \alpha = (\CC_p\otimes_{\RR,j}\vartheta_\infty)\circ(\CC_p\otimes_{\QQ_p}\vartheta_p)^{-1}\end{equation}
where the morphisms $\vartheta_\infty$ and $\vartheta_p$ are as constructed in~\cite[\S3.4]{bufl99}.

Indeed, this fact is a consequence of the commutativity of the diagram (\ref{commtriangles}) and of a direct comparison of the explicit definitions of the morphisms $h_{A,F}^{{\rm exp},j}$ and $\vartheta_p$ and of $h^{j}_{A,F}$ and $\vartheta_\infty$.
After this it only remains to note the following fact.

For each place $v \in S\setminus (S_k^\infty\cup S_k^p)$, respectively $v \in S_k^p$, we write $V_{p,v}$ and $\phi_v$ for $V_{p,F}(A^t)^{I_{w}}$ and $\Phi_v^{-1}$, respectively $D_{{\rm cr},v}\bigl(V_{p,F}(A^t)\bigr)$ and $\varphi_v$, and then $V_{p,v}^\bullet$ for the complex $$V_{p,v}\xrightarrow{1-\phi_v}V_{p,v},$$ with the first term placed in degree zero.

Then our definition of $h_{A,F}^{{\rm exp},j}$ implicitly uses the morphism $[V^\bullet_{p,v}]_{\QQ_p[G]} \to \eins_{\QQ_p[G]}$ induced by the acyclicity of $V^\bullet_{p,v}$ whereas the definition of $\vartheta_p$ uses (via~\cite[(19) and (22)]{bufl99}) the morphism $[V^\bullet_{p,v}]_{\QQ_p[G]} \to \eins_{\QQ_p[G]}$ induced by the identity map on $V_{p,v}$;
the occurrence of the morphism $\alpha'$ in the equality~\eqref{comp eq} is thus accounted for by applying the remark made immediately after~\cite[(24)]{bufl99} to each of the complexes $V^\bullet_{p,v}$ and noting that $\Nrd_{\QQ_p[G]}(1-\phi_v\mid V_{p,v}) = L_v(A,F/k)$.

Now, taking into account the equality~\eqref{comp eq}, the element $j_*\bigl(R\Omega\bigl(h^1(A_{F})(1),\ZZ[G])\bigr)$ is defined in~\cite{bufl99} to be equal to
\[
 \Bigl[\bigl[R\Gamma_c(A_{F/k})\bigr]_{\ZZ_p[G]},\, (\CC_p\otimes_{\RR,j}\vartheta_\infty)\circ(\CC_p\otimes_{\QQ_p}\vartheta_p)^{-1}\Bigr] = \Bigl[\bigl[R\Gamma_c(A_{F/k})\bigr]_{\ZZ_p[G]},\,\alpha'\circ \alpha\Bigr].
\]
The product structure of $\pi_0\bigl(V(\ZZ_p[G])\times_{V(\CC_p[G])}\mathcal{P}_0\bigr)$ then combines with the commutativity of~\eqref{comm diag} to imply that it is also equal to

\begin{align*}
 &\hskip 0.2truein{\Bigl[ \bigl[R\Gamma_c(A_{F/k})\bigr]_{\ZZ_p[G]}, \,\alpha\Bigr]}  + \bigl[\eins_{\ZZ_p[G]},\,\alpha'\bigr]\\
& = \Bigl[ \bigl[C^{\rm exp}_{A,F}[-1]\bigr]_{\ZZ_p[G]},\,\bigl(h_{A,F}^{{\rm exp},j}\bigr)_{\rm Triv}\Bigr]
    + \Bigl[ \bigl[{\rm SC}_S(A_{F/k},\mathcal{X})_p\bigr]_{\ZZ_p[G]},\,\bigl(h^{j}_{A,F}\bigr)_{\rm Triv}\Bigr]
    + \bigl[\eins_{\ZZ_p[G]},\,\alpha'\bigr]\\
& = -\Bigl[\bigl[C^{\rm exp}_{A,F}\bigr]_{\ZZ_p[G]},\,\bigl(h_{A,F}^{{\rm exp},j}\bigr)_{\rm Triv}\Bigr]
   + \Bigl[\bigl[{\rm SC}_S(A_{F/k},\mathcal{X})_p\bigr]_{\ZZ_p[G]},\, \bigl(h^{j}_{A,F}\bigr)_{\rm Triv}\Bigr] \\
& \phantom{= - aa}   + \Bigl[\eins_{\ZZ_p[G]},\,\prod_{v\in (S\cup S_k^p)\setminus S_k^\infty}L_v(A,F/k)\Bigr].
\end{align*}

By using the general result of (\ref{normalisation}) one finds that there are equalities
\[ \Bigl[\bigl[C^{\rm exp}_{A,F}\bigr]_{\ZZ_p[G]},\,\bigl(h_{A,F}^{{\rm exp},j}\bigr)_{\rm Triv}\Bigr] = \chi_{G,p}(C^{\rm exp}_{A,F},h_{A,F}^{{\rm exp},j})\]
and
\[ \Bigl[\bigl[{\rm SC}_S(A_{F/k},\mathcal{X})_p\bigr]_{\ZZ_p[G]},\,\bigl(h^{j}_{A,F}\bigr)_{\rm Triv}\Bigr] =  -\chi_{G,p}({\rm SC}_S(A_{F/k},\mathcal{X})_p,h^j_{A,F}).\]
(The first of these is valid because $\delta_{G,p}\bigr(\Nrd_{\QQ_p[G]}\bigl(-{\id}\bigm\vert \QQ_p\otimes_{\ZZ_p}H^1(C^{\rm exp}_{A,F})\bigr)\bigr)$ vanishes since $\QQ_p\otimes_{\ZZ_p}H^1\bigl(C^{\rm exp}_{A,F}\bigr)\cong\QQ_p\cdot A^t(F_p)^\wedge_p$ is a free $\QQ_p[G]$-module and the second because one has $\prod_{i=1}^{i=2}\Nrd_{\QQ_p[G]}\bigl(-{\id}\bigm\vert \QQ_p\otimes_{\ZZ_p}H^i({\rm SC}_S(A_{F/k},\mathcal{X})_p)\bigr)^{(-1)^i} = 1$.)

In addition, the explicit description of the isomorphism (\ref{virtual iso}) implies that
\begin{align*} \Bigl[\eins_{\ZZ_p[G]},\,\prod_{v\in (S\cup S_k^p)\setminus S_k^\infty}L_v(A,F/k)\Bigr] =& \delta_{G,p}\Bigl(\prod_{v\in (S\cup S_k^p)\setminus S_k^\infty}L_v(A,F/k)\Bigr)\\ =& \sum_{v\in (S\cup S_k^p)\setminus S_k^\infty}\delta_{G,p}(L_v(A,F/k)).\end{align*}

Putting together all of the above computations, we find that

\begin{align*} &j_*\bigl(R\Omega\bigl(h^1(A_{F})(1),\ZZ[G])\bigr)\\ =\, &\sum_{v\in (S\cup S_k^p)\setminus S_k^\infty}\delta_{G,p}(L_v(A,F/k))-\chi_{G,p}(C^{\rm exp}_{A,F},h_{A,F}^{{\rm exp},j})-\chi_{G,p}({\rm SC}_S(A_{F/k},\mathcal{X})_p,h^j_{A,F})\\
=\, &\sum_{v\in (S\cup S_k^p)\setminus S_k^\infty}\delta_{G,p}(L_v(A,F/k))-[H_\infty(\gamma_\bullet)_p,\mathcal{F}(\omega_\bullet)_p;\mu_1]-\chi_{G,p}(\mathcal{F}(\omega_\bullet)_p[0]\oplus\mathcal{X}(p)[-1],\mu_2)\\ & \hskip 1truein -\chi_{G,p}({\rm SC}_S(A_{F/k},\mathcal{X})_p,h^j_{A,F})\\
=\, &\sum_{v\in (S\cup S_k^p)\setminus S_k^\infty}\delta_{G,p}(L_v(A,F/k))-\partial_{G,p}(j_*(\Omega_{\omega_\bullet}(A_{F/k})))
-\chi_{G,p}(\mathcal{F}(\omega_\bullet)_p[0]\oplus\mathcal{X}(p)[-1],\mu_2)\\ &\hskip 1truein -\chi_{G,p}({\rm SC}_S(A_{F/k},\mathcal{X})_p,h^j_{A,F}).
\end{align*}
%
%
%
Since Proposition \ref{prop:perfect2}(i) implies the existence of a canonical isomorphism
$${\rm SC}_{S,\omega_\bullet}(A_{F/k})_p\cong{\rm SC}_S(A_{F/k},\mathcal{X})_p\oplus\mathcal{Q}(\omega_\bullet)_{S,p}[0]$$ in $D^{\rm perf}(\ZZ_p[G])$, the proof of Proposition \ref{etnc} is therefore completed by the following result.

\begin{proposition} One has
\begin{multline}\label{enoughmu2}\chi_{G,p}(\mathcal{F}(\omega_\bullet)_p[0]\oplus\mathcal{X}(p)[-1],\mu_2)\\ =\sum_{v\in S_k^p\setminus S}\delta_{G,p}(L_v(A,F/k))
+\mu_{S}(A_{F/k})_p+\chi_{G,p}(\mathcal{Q}(\omega_\bullet)_{S,p}[0],0).\end{multline}
\end{proposition}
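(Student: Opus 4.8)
The proof of the equality \eqref{enoughmu2} will proceed by making both sides explicit as sums of local contributions indexed by the non-archimedean places $v$ of $k$ that do not belong to $S$, and then matching these contributions place by place. Recall first that the left-hand Euler characteristic decomposes according to the $p$-adic places of $k$: writing $\mathcal{F}(\omega_\bullet)_p = \bigoplus_{v \in S_k^p}\mathcal{F}(\omega_\bullet)_v$ and $\mathcal{X}(p) = \bigoplus_{v\in S_k^p}\mathcal{X}(v)$, and using that $\mu_2 = \prod_{v\in S_k^p}{\rm exp}_{A^t,F_v}$ respects this decomposition, we get
\[ \chi_{G,p}(\mathcal{F}(\omega_\bullet)_p[0]\oplus\mathcal{X}(p)[-1],\mu_2) = \sum_{v\in S_k^p}\chi_{G,p}(\mathcal{F}(\omega_\bullet)_v[0]\oplus\mathcal{X}(v)[-1],{\rm exp}_{A^t,F_v}). \]
For the $p$-adic places $v$ that lie in $S$, the condition (ii$_{\omega_\bullet}$) on the chosen differentials guarantees that $\mathcal{X}(v) = {\rm exp}_{A^t,F_v}(\mathcal{F}(\omega_\bullet)_v)$, so ${\rm exp}_{A^t,F_v}$ carries the $\ZZ_p[G]$-lattice $\mathcal{F}(\omega_\bullet)_v$ isomorphically onto $\mathcal{X}(v)$ and the corresponding summand of the left-hand side vanishes. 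Thus only the $p$-adic places $v \notin S$ contribute on the left, and for such $v$ one has $\mathcal{F}(\omega_\bullet)_v = \mathcal{O}_{F,v}$ as in the convention of \S\ref{perf sel construct}, while the relevant definitions give $\mathcal{X}(v) = A^t(F_v)^\wedge_p$ and $\mathcal{Q}(\omega_\bullet)_S = \bigoplus_{v\notin S}\mathcal{D}_F(\mathcal{A}_v^t)/\mathcal{F}(\omega_\bullet)_v$.

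The heart of the argument is then, for each fixed $p$-adic place $v \notin S$, to prove the local identity
\[ \chi_{G,p}(\mathcal{O}_{F,v}[0]\oplus A^t(F_v)^\wedge_p[-1],{\rm exp}_{A^t,F_v}) = \delta_{G,p}(L_v(A,F/k)) + \mu_v(A_{F/k})_p + \chi_{G,p}\bigl((\mathcal{D}_F(\mathcal{A}_v^t)/\mathcal{O}_{F,v})[0],0\bigr), \]
after which one sums over all $p$-adic $v \notin S$, uses the definitions $\mu_S(A_{F/k})_p = \sum_{v\notin S}\mu_v(A_{F/k})_p$ and $\chi_{G,p}(\mathcal{Q}(\omega_\bullet)_{S,p}[0],0) = \sum_{v\notin S}\chi_{G,p}((\mathcal{D}_F(\mathcal{A}_v^t)/\mathcal{F}(\omega_\bullet)_v)[0],0)$, and observes that the non-$p$-adic terms in the latter two sums cancel precisely with the non-$p$-adic terms of $\sum_{v\in S_k^p\setminus S}\delta_{G,p}(L_v(A,F/k))$ — more accurately, one checks that the combined contribution of the non-$p$-adic places $v\notin S$ to $\mu_S(A_{F/k})_p + \chi_{G,p}(\mathcal{Q}(\omega_\bullet)_{S,p}[0],0)$ vanishes, so that these terms drop out of \eqref{enoughmu2} entirely. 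To prove the displayed local identity I would argue exactly as in the proof of Lemma \ref{fm}: since $v$ is a $p$-adic place not in $S$, it is in particular unramified in $F$ (as $S \supseteq S_k^F$) and $A$ has good reduction at $v$ (as $S \supseteq S_k^A$), and the algebra $F_v$ is absolutely unramified at $p$. The theory of Fontaine and Messing \cite{fm}, via the comparison with the Bloch–Kato exponential recalled in the proof of Lemma \ref{fm}, produces an exact sequence
\[ 0 \to {\rm exp}_{A^t,F_v}((\mathcal{O}_{F_v})^d) \to A^t(F_v)^\wedge_p \to N \to 0 \]
with $N$ finite of finite projective dimension and $\chi_{G,p}(N[-1],0) = \delta_{G,p}(L_v(A,F/k))$, together with the short exact sequence $0 \to \wp_{F_v}^d \to A^t(F_v)^\wedge_p \to \tilde A^t_v(\kappa_{F_v})_p \to 0$ coming from reduction; comparing these, and bringing in the identification $\mathcal{D}_F(\mathcal{A}_v^t) = \mathcal{O}_{F,v}\otimes_{\mathcal{O}_{k_v}}\Hom_{\mathcal{O}_{k_v}}(H^0(\mathcal{A}_v^t,\Omega^1), \mathcal{O}_{k_v})$ of $\mathcal{O}_{F,v}$-modules (so that $\mathcal{D}_F(\mathcal{A}_v^t)/\mathcal{O}_{F,v}$ measures the discrepancy between the chosen differentials $\omega_\bullet$ and a N\'eron basis at $v$), gives the claimed balance of Euler characteristics.

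I expect the main obstacle to be the careful bookkeeping of the Euler characteristics of the various finite cohomologically-trivial $G$-modules — the residue fields $\kappa_{F_v}^d$, the quotients $\tilde A^t_v(\kappa_{F_v})_p$, the Fontaine–Messing module $N$, and the discrepancy module $\mathcal{D}_F(\mathcal{A}_v^t)/\mathcal{O}_{F,v}$ — and in particular verifying that the correction term $\chi_{G,p}((\mathcal{D}_F(\mathcal{A}_v^t)/\mathcal{O}_{F,v})[0],0)$ appears with the correct sign and exactly absorbs the difference between the integral lattice generated by $\omega_\bullet$ and the N\'eron lattice. One must also be attentive to the normalisation conventions for $\chi_{G,p}$ of a complex concentrated in degree zero versus degree $-1$ (the sign $(-1)^a$ in \eqref{normalisation}) and to the definition \eqref{localFM} of $\mu_v(A_{F/k})$, which is itself written as the difference $\chi_{G,\ell(v)}(\kappa_{F_v}^d[0]\oplus\tilde A^t_v(\kappa_{F_v})_{\ell(v)}[-1],0) - \delta_{G,\ell(v)}(L_v(A,F/k))$; substituting this definition into the target equality \eqref{enoughmu2} reduces the whole statement to the single clean local identity above, after which the argument of Lemma \ref{fm} applies essentially verbatim. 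The non-$p$-adic places require the separate, easier observation that for such $v \notin S$ one has $\mathcal{F}(\omega_\bullet)_v = \mathcal{D}_F(\mathcal{A}_v^t)$ by definition of $\mathcal{X}$ together with condition (i$_{\omega_\bullet}$) — no, more precisely one must note that these places contribute zero to $\mathcal{Q}(\omega_\bullet)_S$ only when $v$ divides neither the discriminant nor the relevant lattice index, and otherwise their contribution is exactly cancelled by $\mu_v(A_{F/k})_p = 0$ (again by Lemma \ref{fm}, since $\ell(v)$ is unramified in $F$) — so that in the end only the $p$-adic places $v \notin S$ survive, matching $\sum_{v\in S_k^p\setminus S}\delta_{G,p}(L_v(A,F/k))$ as required.
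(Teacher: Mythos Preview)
Your approach is essentially the paper's own: decompose the left-hand side over $v\in S_k^p$; for $v\in S$ the summand vanishes since $\mathcal{X}(v)={\rm exp}_{A^t,F_v}(\mathcal{F}(\omega_\bullet)_v)$; for $v\in S_k^p\setminus S$ substitute the definition \eqref{localFM} of $\mu_v$ and reduce (via the inclusion $\mathcal{F}(\omega_\bullet)_v\subseteq\mathcal{D}_F(\mathcal{A}^t_v)$) to the identity $\chi_{G,p}(\mathcal{D}_F(\mathcal{A}^t_v)[0]\oplus A^t(F_v)^\wedge_p[-1],\mu_2)=\chi_{G,p}(\kappa_{F_v}^d[0]\oplus\tilde A^t_v(\kappa_{F_v})_p[-1],0)$, which is the local form of \eqref{almost last} and is proved exactly by the formal-group filtration you describe. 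The paper additionally imposes the simplifying assumption that $S$ is $G_\QQ$-stable, so that either $S_k^p\subseteq S$ or $S_k^p\cap S=\emptyset$ and the two cases separate cleanly; your place-by-place organisation avoids that assumption and works equally well.

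Two points to clean up. First, ``$\mathcal{F}(\omega_\bullet)_v=\mathcal{O}_{F,v}$'' is a slip: $\mathcal{F}(\omega_\bullet)_v$ is a rank-$d$ $\mathcal{O}_{F,v}$-lattice in the cotangent space, not $\mathcal{O}_{F,v}$ itself, and the quotient $\mathcal{D}_F(\mathcal{A}_v^t)/\mathcal{O}_{F,v}$ as written makes no sense for $d>1$. Replace $\mathcal{O}_{F,v}$ by $\mathcal{F}(\omega_\bullet)_v$ throughout your displayed local identity; then the short exact sequence $0\to\mathcal{F}(\omega_\bullet)_v\to\mathcal{D}_F(\mathcal{A}^t_v)\to\mathcal{D}_F(\mathcal{A}^t_v)/\mathcal{F}(\omega_\bullet)_v\to 0$ (both larger terms free $\ZZ_p[G]$-modules, since $v$ is unramified in $F$) gives the reduction to \eqref{almost last}, and the sign bookkeeping you flag is exactly where care is needed. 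Second, your closing paragraph on non-$p$-adic places is overthought: for $v\notin S$ with $\ell(v)\neq p$, the quotient $\mathcal{D}_F(\mathcal{A}_v^t)/\mathcal{F}(\omega_\bullet)_v$ is an $\ell(v)$-primary module and $\mu_v(A_{F/k})$ lies in $K_0(\ZZ_{\ell(v)}[G],\QQ_{\ell(v)}[G])$, so both contribute zero to the $p$-component automatically --- there is no cancellation to arrange and no need to invoke Lemma~\ref{fm}.
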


\begin{proof} Our simplifying assumption on the sets $S$ and $\Upsilon$ allow us to deal separately with the cases $p\in S$, $p\in \Upsilon\setminus  S$ and $p\notin \Upsilon$.

If, firstly, $p\in S$, then the terms $\sum_{v\in S_k^p\setminus S}\delta_{G,p}(L_v(A,F/k))$ and $\mu_{S}(A_{F/k})_p$ and module  $\mathcal{Q}(\omega_\bullet)_{S,p}$ all vanish, whilst the $\ZZ_p[G]$-module
\[ \mathcal{X}(p)={\rm exp}_{A^t,F_p}(\mathcal{F}(\omega_\bullet)_p)=\mu_2(\mathcal{F}(\omega_\bullet)_p)\]
is free.

The required equality is therefore true since, in $K_0(\ZZ_p[G],\QQ_p[G])$, one has
\[\chi_{G,p}(\mathcal{F}(\omega_\bullet)_p[0]\oplus\mathcal{X}(p)[-1],\mu_2)= [\mathcal{F}(\omega_\bullet)_p,\mu_2(\mathcal{F}(\omega_\bullet)_p);\mu_2] = 0.\]
%

We assume now that $p$ does not belong to $S$ and set $\mathcal{D}_F(\mathcal{A}^t)_p:=\bigoplus_{v\in S_k^p}\mathcal{D}_F(\mathcal{A}^t_v)$. In this case $\mathcal{X}(p)$ is equal to $A^t(F_p)^\wedge_p$, one has
\[ \sum_{v\in S_k^p\setminus S}\delta_{G,p}(L_v(A,F/k))
+\mu_{S}(A_{F/k})_p = \sum_{v \in S_k^p}\chi_{G,p}\bigl(\kappa_{F_v}^d[0]\oplus\tilde A^t_v(\kappa_{F_v})_p[-1],0\bigr),\]
the module $\mathcal{Q}(\omega_\bullet)_{S,p} = \mathcal{D}_F(\mathcal{A}^t)_p/\mathcal{F}(\omega_\bullet)_{p}$ vanishes if $p \notin \Upsilon$ and the $\ZZ_p[G]$-module $\mathcal{D}_F(\mathcal{A}^t)_p$ is free.


These facts combine to imply that, irrespective of whether $p$ belongs to $\Upsilon$ or not, the stated equality is valid if one has
\begin{equation}\label{almost last}\chi_{G,p}(\mathcal{D}_F(\mathcal{A}^t)_p[0]\oplus A^t(F_p)^\wedge_p[-1],\mu_2)
= \chi_{G,p}\bigl(\kappa_{F_p}^d[0]\oplus\tilde A^t_p(\kappa_{F_p})_p[-1],0\bigr).\end{equation}

To verify this we write $\hat{A^t}$ for the formal group of $A^t$ over $F_p$ (as in \S\ref{p-adic sec}). We also set $$Y_p := \bigoplus_{v \in S_k^p}\Hom_{\mathcal{O}_{k_v}}(H^0(\mathcal{A}_v^t,\Omega^1_{\mathcal{A}_v^t}), \mathcal{O}_{k_v})$$ so that $\mathcal{D}_F(\mathcal{A}^t)_p = \mathcal{O}_{F,p}\otimes_{\mathcal{O}_{k,p}}Y_p$.

Then, since $p$ is unramified in $F/k$,  all modules in the short exact sequences
\[ 0 \to \hat{A^t}(\wp_{F_p}) \to A^t(F_p)^\wedge_p \to \tilde A^t_p(\kappa_{F_p})_p \to 0\]
and
\[ 0 \to \wp_{F_p}\otimes_{\mathcal{O}_{k,p}}Y_p \to \mathcal{D}_F(\mathcal{A}^t)_p \to \kappa^d_{F_p} \to 0\]
are cohomologically-trivial and so the left hand side of (\ref{almost last}) is equal to
\[ \chi_{G,p}((\wp_{F_p}\otimes_{\mathcal{O}_{k,p}}Y_p)[0]\oplus \hat{A^t}(\wp_{F_p})[-1],\mu_2) + \chi_{G,p}\bigl(\kappa_{F_p}^d[0]\oplus\tilde A_p^t(\kappa_{F_p})_p[-1],0\bigr)\]

It is therefore enough to show that the first term in this sum vanishes and, since $p$ is unramified in $F/k$, this is proved by the argument of Proposition \ref{explicit log resolve}. \end{proof}



\section{Poitou-Tate duality}\label{ptduality}

In this section we review the constructions and results from Poitou-Tate duality that were used in both \S \ref{tmc} and \S \ref{comparison section}.

We fix an odd prime $p$ and a finite Galois extension $F/k$ of number fields with Galois group $G$. We also fix a finite set $\Sigma$ of non-archimedean places of $k$ containing all $p$-adic places and all that ramify in $F$.

By abuse of notation, we shall sometimes simply denote by $\Sigma$ the set $\Sigma(F)$ of places of $F$ lying above those in $\Sigma$. If $L$ denotes either $k$ or $F$, we 
write $G_{L,\Sigma}$ for the Galois group of the maximal Galois extension $L_\Sigma$ of $L$ which is unramified outside $\Sigma\cup S_L^\infty$.
For convenience, we abbreviate the ring $\mathcal{O}_{L,\Sigma\cup S_L^\infty}$ to $U_L$.

We also write $\mathcal{O}^\times_{\Sigma}$ for the group of units in the ring of integers of $F_\Sigma$, and $I_\Sigma$ for the direct limit of the $(\Sigma\cup S_F^\infty)$-id\`ele groups of all finite extensions of $F$ inside $F_\Sigma$. One then has a canonical short exact sequence of $G_{F,\Sigma}$-modules \begin{equation}\label{cft}0\to \mathcal{O}^\times_{\Sigma}\to I_{\Sigma}\to C_{\Sigma}\to 0
\end{equation} as in \cite[Ch. VIII, \S 3]{nsw}. (In accordance with the conventions of loc. cit., we only consider $I_{\Sigma}$ and $C_{\Sigma}$ as discrete $G_{F,\Sigma}$-modules).

Whenever we consider a fixed abelian variety defined over $k$ and the action of $G_{k,\Sigma}$ on a group of points, we do so under the additional assumption that $\Sigma$ contains all places of bad reduction.

\subsection{Explicit complexes}\label{expcohcomp}


In this section we recall an explicit description of both local and global \'etale cohomology complexes which will be useful in the Poitou-Tate duality constructions of the next section.

Again we write $Y_{F/k}$ for the module $\prod\ZZ$, with the product running over all $k$-embeddings $F\to k^c$, endowed with its natural action of $G\times G_k$.

Given a non-archimedean place $v$ of $k$ and a topological $\ZZ_p$-module $M$ on which $G_{v}$ acts continuously, we set $$M_F:=Y_{F/k,p}\otimes_{\ZZ_p}M,$$ where $G$ acts naturally on the first factor and $G_v$ acts diagonally.

The local \'etale cohomology complex $R\Gamma(k_v,M_F)$ defines an object of $D(\ZZ_p[G])$ which may be represented as an explicit complex of $\ZZ_p[G]$-modules as follows: for $i\geq 0$ and any place $w'$ in $S_F^v$, we let $R\Gamma^{i}(F_{w'},M)$ denote the group of inhomogeneous $i$-cochains of $G_{w'}=G_{F_{w'}^c/F_{w'}}$ with coefficients in $M$. 
The usual differentials $$d^i_{w'}:R\Gamma^{i}(F_{w'},M)\to R\Gamma^{i+1}(F_{w'},M)$$ between the respective groups of inhomogeneous cochains then induce differentials $d^{i}_v:=\prod_{w'\in S_F^v}d^{i}_{w'}$ in the explicit complex of $\ZZ_p[G]$-modules
\begin{equation}\label{localinhom}\ldots\to \prod_{w'\in S_F^v}R\Gamma^i(F_{w'},M)\stackrel{d^i_v}{\to}\prod_{w'\in S_F^v}R\Gamma^{i+1}(F_{w'},M)\to\ldots,\end{equation}
which represents $R\Gamma(k_v,M_F)$.

If instead we consider the analogous situation for an archimedean place $v$ of $k$, the object $R\Gamma(k_v,M_F)$ of $D(\ZZ_p[G])$ may simply be represented by \begin{equation}\label{archimedean}H^0(k_v,M_F)[0]\end{equation} since $p$ is odd, so in the sequel we identify these complexes.

We now let $M$ be any topological $\ZZ_p$-module on which $G_{k,\Sigma}$ acts continuously. We again set $M_F:=Y_{F/k,p}\otimes_{\ZZ_p}M,$ where $G$ acts naturally on the first factor and $G_{k,\Sigma}$ acts diagonally.

The global \'etale cohomology complex $R\Gamma(U_k,M_F)$ defines an object of $D(\ZZ_p[G])$ which, given our restrictions on the choice of set $\Sigma$, may be represented as an explicit complex of $\ZZ_p[G]$-modules by simply placing the group $R\Gamma^{i}(U_F,M)$ of inhomogeneous $i$-cochains of $G_{F,\Sigma}$ with coefficients in $M$ in degree $i$ (for $i\geq 0$), and letting $d^{i}$ denote the usual differentials.

The following explicit definition is consistent with our use of compactly supported \'etale cohomology throughout the rest of the article.

\begin{definition}\label{compactsupport}{\em Let $M$ be a topological $\ZZ_p$-module on which $G_{k,\Sigma}$ acts continuously.

We define a complex of $\ZZ_p[G]$-modules
$R\Gamma_{\Sigma}(\mathbb{A}_F,M)$ as the direct sum of the complexes (\ref{localinhom}) over all places $v$ in $\Sigma$ and the complexes (\ref{archimedean}) over all places $v$ in $S_k^\infty$.

We then also define a complex of $\ZZ_p[G]$-modules $R\Gamma_c(U_F,M)$ as the $-1$-shift of the explicit mapping cone of the natural
localisation map from the complex $R\Gamma(U_k,M_F)$
to the complex
$R\Gamma_{\Sigma}(\mathbb{A}_F,M)$.
We denote by $d^i_c$ the differential of $R\Gamma_c(U_F,M)$ in degree $i$.

Namely, the complex $R\Gamma_c(U_F,M)$ is defined by placing the module $$R\Gamma_c^0(U_F,M)=R\Gamma^0(U_F,M)=M$$ in degree zero, the module
$$R\Gamma_c^1(U_F,M):=\bigoplus_{v\in S_k^\infty}H^0(k_v,M_F)\oplus\bigl(\bigoplus_{w'\in\Sigma(F)}M\bigr)\oplus R\Gamma^1(U_F,M)$$ in degree 1 and the module
$$R\Gamma_c^i(U_F,M):=\bigl(\bigoplus_{w'\in\Sigma(F)}R\Gamma^{i-1}(F_{w'},M)\bigr)\oplus R\Gamma^i(U_F,M)$$ in degree $i\geq 2$.

The differential in degree $i\geq 1$ is given by
$$d_c^{i}=\left(\begin{array}{cc}
-\bigoplus_{v\in \Sigma}d^{i-1}_v & -(\lambda^{i}_{w'})_{w'\in \Sigma(F)}
\\
0 & d^{i}
\end{array}\right)$$ with each $\lambda_{w'}^{i}$ denoting the corresponding localisation map at $w'$ in degree $i$.
}
\end{definition}

The following compatibility property of inhomogeneous cochains with respect to projections is used in the main body of the article.

\begin{lemma}\label{lifts} Let $A$ be an abelian variety defined over $k$ and let $m$ be a natural number. Then the following claims are valid.
\begin{itemize}
\item[(i)] For any $i\geq 0$ and any non-archimedean place $w'$ of $F$ we use the following natural projection maps
\begin{align*}
 \mu_{w'} : &R\Gamma^i(F_{w'},T_p(A))\to R\Gamma^i(F_{w'},A[p^m]),\\
 \nu_{w'} : &R\Gamma^i(F_{w'},T_p(A))\to \ZZ/p^m\otimes_{\ZZ_p}R\Gamma^i(F_{w'},T_p(A)),\\
\mu_\Sigma : &R\Gamma^i(U_F,T_p(A))\to R\Gamma^i(U_F,A[p^m]),\\
\nu_\Sigma : &R\Gamma^i(U_F,T_p(A))\to \ZZ/p^m\otimes_{\ZZ_p}R\Gamma^i(U_F,T_p(A)).\end{align*}

Then there exist homomorphisms $$l_{w'}:R\Gamma^i(F_{w'},A[p^m])\to \ZZ/p^m\otimes_{\ZZ_p}R\Gamma^i(F_{w'},T_p(A))$$ and $$l_\Sigma:R\Gamma^i(U_F,A[p^m])\to \ZZ/p^m\otimes_{\ZZ_p}R\Gamma^i(U_F,T_p(A))$$ for which $l_{w'}\circ\mu_{w'}=\nu_{w'}$ and $l_\Sigma\circ\mu_\Sigma=\nu_\Sigma$.

\item[(ii)] If $w'$ belongs to $\Sigma(F)$ then the maps $l_\Sigma$ and $l_{w'}$ commute with localisation at $w'$. In addition, they restrict to induce homomorphisms
\begin{align*}
l_{w'} : &\ker(d^1_{w'})\to \ker(\ZZ/p^m\otimes_{\ZZ_p}d^1_{w'}),\\
l_{w'} : &\im(d^0_{w'})\to \im(\ZZ/p^m\otimes_{\ZZ_p}d^0_{w'}),\\
l_\Sigma : &\ker(d^1)\to \ker(\ZZ/p^m\otimes_{\ZZ_p}d^1),\\
l_\Sigma : &\im(d^0)\to \im(\ZZ/p^m\otimes_{\ZZ_p}d^0).\end{align*}
\end{itemize}
\end{lemma}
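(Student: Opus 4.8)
The statement is an entirely formal assertion about inhomogeneous cochain complexes, so the plan is to build the lifting maps by hand using the freeness of the standard cochain groups and then check that they are compatible with the relevant structure maps. First I would recall that for a profinite group $H$ (here $H = G_{w'}$ or $H = G_{F,\Sigma}$) the group of inhomogeneous $i$-cochains with coefficients in a discrete or compact $\ZZ_p$-module $M$ is just the module $C^i(H,M)$ of (continuous) maps $H^i \to M$, and that $C^i(H,-)$ commutes with the relevant limits and with the reduction functor; in particular $C^i(H,T_p(A))/p^m = \ZZ/p^m \otimes_{\ZZ_p} C^i(H,T_p(A))$ and the natural surjection $\mu$ onto $C^i(H,A[p^m]) = C^i(H,T_p(A)/p^m)$ factors as an isomorphism composed with $\nu$, since $T_p(A)/p^m = A[p^m]$ canonically. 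Thus the maps $l_{w'}$ and $l_\Sigma$ are simply the inverses of these canonical isomorphisms, and the identities $l_{w'}\circ \mu_{w'} = \nu_{w'}$ and $l_\Sigma \circ \mu_\Sigma = \nu_\Sigma$ are immediate from the definitions. The only point requiring a word of care is continuity: since $A[p^m]$ is finite and $T_p(A)$ is a finitely generated $\ZZ_p$-module, a continuous cochain valued in $A[p^m]$ lifts to a continuous cochain valued in $T_p(A)/p^m$ because the latter is also finite and the reduction map is a homeomorphism on these finite modules; no compactness or projectivity issue arises.

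For part (ii) I would argue as follows. Compatibility with localisation is automatic: the localisation map $\lambda_{w'}^i \colon C^i(G_{F,\Sigma},M) \to C^i(G_{w'},M)$ is induced by the inclusion $G_{w'} \hookrightarrow G_{F,\Sigma}$ (after fixing the embedding $F \to k_v^c$), hence is functorial in $M$ and therefore commutes with both $\mu$ and $\nu$, and so with their "difference", which is exactly $l$. For the statements about $\ker(d^1)$ and $\im(d^0)$, the key observation is that the differentials $d^i$ (resp. $d^i_{w'}$) are themselves induced by maps that are natural in the coefficient module, so they commute with $\mu$, $\nu$ and hence with $l$; concretely, if $y \in C^1(H,A[p^m])$ satisfies $d^1(y) = 0$, write $y = \mu(\tilde y)$ for some $\tilde y \in C^1(H,T_p(A))$, so that $\mu(d^1\tilde y) = d^1(\mu \tilde y) = 0$, i.e. $d^1\tilde y \in p^m C^2(H,T_p(A))$; since $C^2(H,T_p(A))$ is $\ZZ_p$-torsion-free this gives $(\ZZ/p^m \otimes d^1)(\nu \tilde y) = 0$, and $\nu \tilde y = l(y)$ by part (i), proving $l(\ker d^1) \subseteq \ker(\ZZ/p^m \otimes d^1)$. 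The argument for $\im(d^0)$ is the same with $d^0$ in place of $d^1$: if $y = d^0(z)$ with $z \in C^0(H,A[p^m]) = A[p^m]^H$, lift $z$ to $\tilde z \in (T_p(A)/p^m)^H$ and note $l(y) = l(\mu(d^0\tilde z)) = \nu(d^0\tilde z) = (\ZZ/p^m \otimes d^0)(\nu \tilde z)$.

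The only genuinely delicate point, and hence the main obstacle, is the torsion-freeness input used in the $\ker(d^1)$ argument: one needs $C^{i}(H, T_p(A))$ (equivalently, the module of continuous $i$-cochains $H^i \to T_p(A)$) to be $\ZZ_p$-torsion-free so that $\mu(d^1 \tilde y) = 0$ forces $d^1\tilde y \in p^m C^2(H,T_p(A))$ rather than merely into the torsion plus $p^m C^2$. This holds because $T_p(A)$ is itself $\ZZ_p$-torsion-free and the module of continuous maps into a torsion-free module is torsion-free; I would spell this out, noting that it is exactly the same torsion-freeness observation already used in the body of the paper (e.g. in the proof of Lemma \ref{morphismphi}, where the modules $C_\Sigma^i$ and $C_v^i$ are checked to be $\ZZ_p$-torsion-free). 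With that in hand, every remaining verification is a line of diagram-chasing, and the lemma follows. I would also remark that the homomorphisms $l_{w'}$ and $l_\Sigma$ so constructed are canonical and independent of any auxiliary choice, which is what makes them usable in the construction of the morphism $\phi$ in Lemma \ref{morphismphi}.
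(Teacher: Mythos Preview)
Your proposal is correct and follows essentially the same route as the paper: both construct $l$ by lifting a cochain $\rho\colon H^i\to A[p^m]$ to a continuous $\rho'\colon H^i\to T_p(A)$ and setting $l(\rho)=1\otimes\rho'$, using the $\ZZ_p$-torsion-freeness of $T_p(A)$ to see that $1\otimes\rho'$ is independent of the lift (equivalently, that $\ker\mu=p^mC^i(H,T_p(A))=\ker\nu$), and both handle part~(ii) by the naturality of the differentials in the coefficient module together with this same torsion-freeness. Your framing of $l$ as the inverse of the canonical isomorphism $C^i(H,T_p(A))/p^m\cong C^i(H,A[p^m])$ is a slightly cleaner packaging of the paper's explicit lift-and-check argument, but the content is identical.

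Two small slips to fix. First, your ``continuity'' sentence is garbled: lifting a cochain valued in $A[p^m]$ to one valued in $T_p(A)/p^m$ is vacuous since these modules coincide; what you need (and what the paper says) is that a continuous $\rho\colon H^i\to A[p^m]$ lifts to a continuous $\rho'\colon H^i\to T_p(A)$, which holds because $A[p^m]$ is discrete so $\rho$ factors through a finite quotient of $H^i$. Second, in your $\im(d^0)$ argument, $C^0(H,A[p^m])$ equals $A[p^m]$ (not $A[p^m]^H$), and the lift $\tilde z$ should lie in $T_p(A)$ rather than $(T_p(A)/p^m)^H$; once corrected, your computation $l(d^0 z)=\nu(d^0\tilde z)=(\ZZ/p^m\otimes d^0)(1\otimes\tilde z)$ matches the paper's verbatim.
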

\begin{proof} To prove claim (i) it is enough to show that given an element $\rho$ of $R\Gamma^i(F_{w'},A[p^m])$, respectively of $R\Gamma^i(U_F,A[p^m])$, the element $1\otimes\rho'$ of $\ZZ/p^m\otimes_{\ZZ_p}R\Gamma^i(F_{w'},T_p(A))$, respectively of $\ZZ/p^m\otimes_{\ZZ_p}R\Gamma^i(U_F,T_p(A))$, is independent of the choice of continuous lift $\rho'$ of $\rho$ through the canonical projection $$\pi:T_p(A)\to A[p^m]$$ (such lifts exist since $A[p^m]$ is a discrete space). We let $\Gamma$ denote either $G_{w'}^i$ or $G_{F,\Sigma}^i$, so that in either case $\rho$ is a continuous function $\Gamma\to A[p^m]$, let $\rho'$ and $\rho''$ denote continuous lifts of $\rho$ through $\pi$ and proceed to prove that $1\otimes\rho'=1\otimes\rho''$.

Since $\pi\circ\rho'=\rho=\pi\circ\rho''$ we have $\pi\circ(\rho'-\rho'')=0$. It follows that $$(\rho'-\rho'')(\gamma)\in\ker(\pi)= p^m\cdot T_p(A)$$ for all $\gamma\in\Gamma$ and, since $T_p(A)$ is $\ZZ_p$-free, there exists a unique $t_\gamma\in T_p(A)$ for which $$(\rho'-\rho'')(\gamma)= p^mt_\gamma.$$ We then obtain a continuous function $t:\Gamma\to T_p(A)$ by setting $t(\gamma):=t_\gamma$. Then \begin{equation}\label{previousparagraph}1\otimes(\rho'-\rho'')=1\otimes p^mt=p^m\otimes t=0\otimes t,\end{equation} so $1\otimes\rho'=1\otimes\rho''$ as required.

To consider claim (ii) we first note that it is clear from their definition that $l_\Sigma$ and $l_{w'}$ commute with localisation at $w'$.

To continue the proof of claim (ii) we consider the global case and proceed to prove that $l_\Sigma$ restricts to induce the claimed homomorphisms.

We first let $c\in R\Gamma^1(U_F,A[p^m])$ be a 1-cocycle and we fix a lift $\alpha$ of $c$ through $\pi$. Using the explicit formula for the differentials $d^1$ one easily finds that $d^1(\pi\circ\alpha)=\pi\circ d^1(\alpha)$ and hence also that
$$\pi\circ d^1(\alpha)=d^1(\pi\circ\alpha)=d^1(c)=0=\pi\circ 0.$$
The argument from (\ref{previousparagraph}) therefore implies that $1\otimes d^1(\alpha)=1\otimes 0=0$ and thus we finally find that
$$(\ZZ/p^m\otimes d^1)(l_\Sigma(c))=(\ZZ/p^m\otimes d^1)(1\otimes\alpha)=1\otimes d^1(\alpha)=0.$$
The map $l_\Sigma$ therefore restricts to induce a homomorphism
$$\ker(d^1)\to \ker(\ZZ/p^m\otimes_{\ZZ_p}d^1),$$ as required.

We next let $z$ be an element of $R\Gamma^0(U_F,A[p^m])=A[p^m]$ and we fix $\beta\in R\Gamma^0(U_F,T_p(A))=T_p(A)$ with $\pi(\beta)=z$. Using the explicit formula for the differentials $d^0$ one easily finds that $\pi\circ d^0(\beta)=d^0(\pi(\beta))=d^0(z)$ and therefore $$l_\Sigma(d^0(z))=1\otimes d^0(\beta)=(\ZZ/p^m\otimes d^0)(1\otimes\beta).$$
The map $l_\Sigma$ therefore restricts to induce a homomorphism
$$\im(d^0)\to \im(\ZZ/p^m\otimes_{\ZZ_p}d^0),$$ as required.

We finally note that the precise same arguments hold in the local case after replacing $d^i$ and $l_\Sigma$ by $d^i_{w'}$ and $l_{w'}$ respectively. This completes the proof of claim (ii).
\end{proof}




\subsection{Compatibility of pairings}\label{b2}

We now review the Poitou-Tate duality constructions that will be relevant to our comparison of height pairings given in \S \ref{comparison section}.

In this section we let $M$ be a finite $\ZZ_p$-module on which $G_{k,\Sigma}$ acts continuously. In our applications in the main body of the article, we will always have $M=A[p^m]$ for an abelian variety $A$ defined over $k$ and some natural number $m$.

Since  $\mathcal{O}^\times_{\Sigma}$ is $p$-divisible by \cite[(8.3.4)]{nsw}, the exact sequence (\ref{cft}) induces a canonical short exact sequence of $G_{F,\Sigma}$-modules
\begin{equation}\label{pdiv}0\to M'\to I_{\Sigma}(M){\to} C_{\Sigma}(M)\to 0
\end{equation} with $M':={\rm Hom}_\ZZ(M,\mathcal{O}^\times_{\Sigma})$, $I_{\Sigma}(M):={\rm Hom}_\ZZ(M,I_{\Sigma})$ and $C_{\Sigma}(M):={\rm Hom}_\ZZ(M,C_{\Sigma})$.

The following result can be obtained by specialising \cite[Ch. II, Prop. 4.13]{milne}, but we choose to make the construction explicit instead.

\begin{proposition}\label{shap}
There is a canonical isomorphism $$s_M:H^1(U_F,C_{\Sigma}(M))\to H^2_c(U_F,M')$$ which makes the following diagram commute:
\begin{equation*}\label{shapdiagram}\xymatrix@C=1em{
H^1(U_F,M')\ar@{=}[d] \ar[r] & H^1(U_F,I_{\Sigma}(M))\ar[d]_-{\wr}^-{{\rm sh}_M^1} \ar[r] &  H^1(U_F,C_{\Sigma}(M))\ar[d]_-{\wr}^-{s_M} \ar[r]^-{\delta_M} & H^2(U_F,M')\ar[d]_-{\wr}^-{-1} \ar[r] & H^2(U_F,I_{\Sigma}(M))\ar[d]_-{\wr}^-{-{\rm sh}_M^2}\\
H^1(U_F,M') \ar[r] & H^1_{\Sigma}(\mathbb{A}_F,M') \ar[r] &  H^2_c(U_F,M') \ar[r] & H^2(U_F,M') \ar[r] & H^2_{\Sigma}(\mathbb{A}_F,M').
}\end{equation*}
Here the top row is the long exact cohomology sequence sequence associated to (\ref{pdiv}), the bottom row is the long exact cohomology sequence of the canonical exact triangle
$$R\Gamma_c(U_F,M')\to R\Gamma(U_F,M')\to R\Gamma_\Sigma(\mathbb{A}_F,M')\to R\Gamma_c(U_F,M')[1]$$
in $D(\ZZ_p[G])$ (determined by Definition \ref{compactsupport}), and ${\rm sh}_M^1$ and ${\rm sh}_M^2$ are the (bijective) Shapiro maps (see (8.5.4), (8.5.5) and the following Remarks in \cite{nsw}).
\end{proposition}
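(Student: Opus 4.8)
The plan is to make the Shapiro-map construction completely explicit, rather than quoting \cite[Ch.\ II, Prop.\ 4.13]{milne}. First I would recall that, by the very definition of $I_\Sigma$ as a direct limit of id\`ele groups over finite subextensions of $F_\Sigma/F$, the $G_{F,\Sigma}$-module $I_\Sigma(M)$ decomposes (compatibly with the $G$-action coming from $Y_{F/k,p}$) as a restricted product over places $w'$ of $F_\Sigma$ of local induced modules, and that for each non-archimedean place the corresponding summand is an induced module from the decomposition group. The standard Shapiro isomorphism then identifies $H^i(U_F,I_\Sigma(M))$ with $\bigoplus_{w'\in\Sigma(F)}H^i(F_{w'},M')$ for $i\geq 1$ (the archimedean and $p$-divisibility contributions vanishing since $p$ is odd and $\mathcal O_\Sigma^\times$ is $p$-divisible, by \cite[(8.3.4)]{nsw}); these are the maps ${\rm sh}^1_M$ and ${\rm sh}^2_M$. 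I would record these identifications in the explicit cochain model of \S\ref{expcohcomp}, using the complex $R\Gamma_\Sigma(\mathbb{A}_F,M')$ of Definition \ref{compactsupport} and the inhomogeneous-cochain representative (\ref{localinhom}) of each $R\Gamma(k_v,M'_F)$, so that the Shapiro maps become honest maps of complexes (up to the usual sign bookkeeping).

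Next I would define $s_M$. The key point is that the long exact sequence attached to (\ref{pdiv}) and the long exact sequence attached to the triangle
\[
R\Gamma_c(U_F,M')\to R\Gamma(U_F,M')\to R\Gamma_\Sigma(\mathbb{A}_F,M')\to R\Gamma_c(U_F,M')[1]
\]
are, after applying the vertical identifications $\mathrm{id}$, ${\rm sh}^1_M$ and $-{\rm sh}^2_M$ in the four outer columns, two exact sequences with the same three of every four consecutive terms. A diagram chase (the Five Lemma, or rather its refinement for exact sequences that agree in all but one spot) then produces a unique isomorphism $s_M$ in the remaining column making the two squares adjacent to it commute; the sign $-1$ in the fourth column is forced precisely by the sign in the connecting map $d^1_c$ of $R\Gamma_c(U_F,M')$ in Definition \ref{compactsupport}, and is what makes the square involving $\delta_M$ and the boundary map to $H^2(U_F,M')$ commute. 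I would verify exactness of the bottom row directly from Definition \ref{compactsupport} and exactness of the top row from (\ref{pdiv}); both are routine.

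The main obstacle, and the part deserving genuine care, is the commutativity of the square containing $\delta_M$ together with the correct sign: one must check that the connecting homomorphism $\delta_M\colon H^1(U_F,C_\Sigma(M))\to H^2(U_F,M')$ of the sequence attached to (\ref{pdiv}) is carried by $s_M$ to $(-1)$ times the boundary map $H^2_c(U_F,M')\to H^2(U_F,M')$ of the triangle. This amounts to chasing a $1$-cocycle of $C_\Sigma(M)$ through a lift to $I_\Sigma(M)$, applying the coboundary, identifying the resulting class of $M'$ via Shapiro with a family of local classes, and recognising that family as the image of an explicit compactly-supported $2$-cochain; the sign enters through the shift $[1]$ and the mapping-cone differential $d^i_c$. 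I would also note that independence of the construction from the auxiliary choices (the cochain lifts) follows exactly as in Lemma \ref{lifts}. Finally, canonicity and $G$-equivariance of $s_M$ are inherited from those of the Shapiro maps and the functoriality of the long exact sequences, which completes the proof.
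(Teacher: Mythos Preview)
Your proposal contains the right ingredients but has the logical order inverted in a way that matters. The Five Lemma does not \emph{produce} a map in the missing column: it only shows that a given map is an isomorphism once the four surrounding squares are known to commute. There is no general ``refinement for exact sequences that agree in all but one spot'' that manufactures the middle arrow---a zigzag chase there need not give a well-defined homomorphism. So the step where you write ``a diagram chase \ldots\ then produces a unique isomorphism $s_M$'' is not valid as stated.

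The paper proceeds in exactly the order you should: it first \emph{defines} $s_M$ by the explicit cocycle recipe you describe in your final paragraph (lift a $1$-cocycle $\theta$ of $C_\Sigma(M)$ to $\theta'$ valued in $I_\Sigma(M)$, form the pair $\bigl(((\theta')'_{w'})_{w'},\,-d^1(\theta')'\bigr)$ in $R\Gamma_c^2(U_F,M')$, and take its class), then checks that this is well-defined (independent of the lift, and that coboundaries go to coboundaries), then reads off commutativity of the two inner squares directly from this formula together with the explicit description of ${\rm sh}^1_M$ on cocycles, and only \emph{then} invokes the Five Lemma to conclude that $s_M$ is bijective. Your appeal to Lemma~\ref{lifts} for well-definedness is also misplaced: that lemma concerns lifts through $T_p(A)\to A[p^m]$, whereas here one must check directly that a coboundary $\theta$ is sent to a compactly-supported coboundary, which the paper does by writing $\theta(g)=(g-1)a$, lifting $a$ to $a'\in I_\Sigma(M)$, and observing that the resulting local components $(\theta')'_{w'}$ are coboundaries because ${\rm sh}^1_M$ sends the class of $\theta'$ (which is zero) to their classes. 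Once you reorder the argument this way, your outline becomes precisely the paper's proof.
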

\begin{proof} The commutativity of the leftmost and rightmost squares of the diagram is an immediate consequence of the definition of the Shapiro maps, and the bijectivity of the map $s_M$ will follow from the Five lemma once we define it in such a way as to make the remaining squares commute.

In order to define the map $s_M$, let $\theta:G_{F,\Sigma}\to C_{\Sigma}(M)$ be an inhomogeneus 1-cocycle. Since $C_{\Sigma}$ is a discrete space, we may and will fix a continuous lift $\theta':G_{F,\Sigma}\to I_{\Sigma}(M)$ of $\theta$ through the canonical projection $$\pi:I_\Sigma(M)\to C_\Sigma(M)$$ occurring in the exact sequence (\ref{pdiv}).

Then $\pi\circ d^1(\theta')=d^1(\pi\circ\theta')=d^1(\theta)=0$, so $$d^1(\theta'):G_{F,\Sigma}^2\to I_{\Sigma}(M)$$ takes its values in $M'$ and in fact defines a $2$-cocycle $d^1(\theta')'$ in $R\Gamma^2(U_F,M')$ which, by definition of the connecting homomorphism $\delta_M$, satisfies \begin{equation}\label{rightrect}\delta_M(\overline{\theta})=\overline{d^1(\theta')'}\end{equation}
in $H^2(U_F,M')$.

Furthermore, given a place $w'\in\Sigma(F)$, the localisation $d^1(\theta')'_{w'}:=\lambda_{w'}(d^1(\theta')')$ of $d^1(\theta')'$ in $R\Gamma^2(F_{w'},M')$ may be computed as \begin{equation}\label{2coc}d^1(\theta')'_{w'}=d^1_{w'}((\theta')_{w'}'),\end{equation}
where the continuous function $(\theta')_{w'}'$ is defined by the following composition: \begin{equation}\label{compo}(\theta')_{w'}':G_{w'}\to G_{F,\Sigma}\stackrel{\theta'}{\to}I_{\Sigma}(M)\to\Hom_\ZZ(M,(F_{w'}^c)^\times)=\Hom_\ZZ(M,\mu_{p^m})=M'.\end{equation} Here the third arrow is induced by the projection map $I_{\Sigma}\to (F_{w'}^c)^\times$, $p^m$ is the exponent of $M$, $\mu_{p^m}$ denotes the group of $p^m$-th roots of unity in $F_\Sigma$ and the two identifications are induced by the canonical map $\mathcal{O}_{\Sigma}^\times\to  (F_{w'}^c)^\times$ together with the assumption that $\Sigma$ contains all $p$-adic places of $k$.

The equality (\ref{2coc}) therefore combines with the fact that $d^1(\theta')'$ is a 2-cocycle in $R\Gamma^2(U_F,M')$ to imply that the element \begin{equation}\label{2cocycle}(((\theta')_{w'}')_{w'\in\Sigma(F)},-d^1(\theta')')\end{equation} of $R\Gamma_c^2(U_F,M')$ is in fact a 2-cocycle. Its class in $H_c^2(U_F,M')$ is easily seen to be independent of the choice of lift $\theta'$ of $\theta$ so, in order to define the map $s_M$ by the equality $$s_M(\overline{\theta}):=\overline{(((\theta')_{w'}')_{w'\in\Sigma(F)},-d^1(\theta')')},$$ we simply need to verify that if $\theta$ is a 1-coboundary then (\ref{2cocycle}) is a 2-coboundary. Once we do this, the commutativity of the diagram will be clear: indeed, (\ref{rightrect}) will imply that the third square commutes, while the second square will commute because, in $H^1_\Sigma(\mathbb{A}_F,M')$, one has \begin{equation}\label{computeshap}{\rm sh}_M^1(\overline{\rho})=(\overline{\rho_{w'}'})_{w'\in\Sigma(F)}\end{equation} for any 1-cocycle $\rho$ in $R\Gamma^1(U_F,I_{\Sigma}(M))$ (with each $\rho_{w'}'$ obtained from $\rho$ through the composition (\ref{compo})).

So we finally assume that $\theta$ is a 1-coboundary and proceed to verify that the element (\ref{2cocycle}) of $R\Gamma_c^2(U_F,M')$ is a 2-coboundary. In this case one has $\theta(g)=(g-1)a$ for some $a\in C_{\Sigma}(M)$ and for every $g\in G_{F,\Sigma}$. We fix any pre-image $a'\in I_{\Sigma}(M)$ of $a$ through $\pi$ and define a lift $\theta'$ of $\theta$ through $\pi$ by setting $\theta'(g)=(g-1)a'$ for every $g\in G_{F,\Sigma}$. Then $d^1(\theta')'=d^1(\theta')=0$ so it is enough to prove that each $(\theta')_{w'}'$ is a 1-coboundary in $R\Gamma^1(F_{w'},M')$. But, by (\ref{computeshap}), one has $$(\overline{(\theta')_{w'}'})_{w'\in\Sigma(F)}={\rm sh}_M^1(\overline{\theta'})={\rm sh}_M^1(0)=0$$
in $H^1_\Sigma(\mathbb{A}_F,M')$, as required. 


\end{proof}

In the next result, we let $$u_M:H^1_{\Sigma}(\mathbb{A}_F,M'){\to} H^1_{\Sigma}(\mathbb{A}_F,M)^\vee$$ denote the canonical isomorphism induced by the pairing
\begin{equation}\label{localpairing}\langle\,,\rangle_{\Sigma}:H^1_{\Sigma}(\mathbb{A}_F,M')\times H^1_{\Sigma}(\mathbb{A}_F,M)\to\QQ_p/\ZZ_p\end{equation} defined as the sum over all places $w'\in\Sigma(F)$ of the perfect, Galois-equivariant, local Tate duality pairings $$H^1(F_{w'},M')\times H^1(F_{w'},M)\to\QQ_p/\ZZ_p.$$

We also have a canonical isomorphism
$$w_M:H^1(U_F,C_{\Sigma}(M)){\to}H^1(U_F,M)^\vee$$
induced by a perfect global duality pairing (see \cite[Thm. (10.11.8)]{nsw}).

Poitou-Tate duality now leads to the following result for the map $s_M$ constructed in Proposition \ref{shap}.

\begin{corollary}\label{Mfinite} The canonical isomorphism $s_M$ of Proposition \ref{shap} makes the following diagram (with bijective vertical arrows) commute:

\begin{equation*}\label{theMfdiagram}\xymatrix{
H^1_{\Sigma}\bigl(\mathbb{A}_F,M'\bigr) \ar[dd]^-{u_M} \ar[r] &
H^2_c\bigl(U_F,M'\bigr) 
\\
 &
H^1\bigl(U_F,C_{\Sigma}(M)\bigr) \ar[d]^-{w_M} \ar[u]_-{s_M} 
\\
H^1_{\Sigma}\bigl(\mathbb{A}_F,M\bigr)^\vee \ar[r] &
H^1\bigl(U_F,M\bigr)^\vee. 
}\end{equation*}

Here the bottom horizontal arrow is the dual of the localisation map induced on degree 1 cohomology.
\end{corollary}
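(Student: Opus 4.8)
The plan is to exhibit the diagram of Corollary \ref{Mfinite} as the `top half' of the full, well-known Poitou–Tate duality diagram comparing the global cohomology sequence of $M$ with the dual of that of $M'$, and then to identify the resulting commutativity with the square built from $s_M$, $u_M$ and $w_M$. First I would write down the nine-term Poitou–Tate exact sequence (in the sharpened form of \cite[Thm. (8.6.10)]{nsw} or \cite[Ch. I, Thm. 4.10]{milne}) attached to the finite $G_{k,\Sigma}$-module $M$ over $U_k$, and its shape placed along the compactly supported cohomology of $M'$ over $U_F$, together with the canonical perfect pairings that make the two halves `dual' to one another: local Tate duality supplying $u_M$ at the adelic terms, the global duality pairing of \cite[Thm. (10.11.8)]{nsw} supplying $w_M$, and the Artin–Verdier/Poitou–Tate duality supplying the identification of $H^2_c(U_F,M')$ with $H^1(U_F,M)^\vee$ after composing with $s_M$.

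The key point is that the isomorphism $s_M$ was constructed in Proposition \ref{shap} via the explicit exact sequence (\ref{pdiv}) and the Shapiro maps, in such a way that the four squares of the diagram in Proposition \ref{shap} already commute; in particular $s_M$ intertwines the connecting map $\delta_M:H^1(U_F,C_\Sigma(M))\to H^2(U_F,M')$ with the canonical map $H^2_c(U_F,M')\to H^2(U_F,M')$, and intertwines the inclusion $H^1(U_F,I_\Sigma(M))\to H^1(U_F,C_\Sigma(M))$ with the map $H^1_\Sigma(\mathbb A_F,M')\to H^2_c(U_F,M')$ composed with $-\mathrm{sh}^1_M$. Granting this, the asserted square of Corollary \ref{Mfinite} commutes if and only if, after transporting everything back along $s_M$, the diagram
\begin{equation*}
\xymatrix{
H^1_\Sigma(\mathbb A_F,M') \ar[d]_{u_M} & H^1(U_F,I_\Sigma(M)) \ar[l]_-{\mathrm{sh}^1_M} \ar[d] \\
H^1_\Sigma(\mathbb A_F,M)^\vee & H^1(U_F,C_\Sigma(M)) \ar[l] }
\end{equation*}
commutes, where the right vertical arrow is $w_M$ and the bottom arrow is dual to localisation. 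This is precisely the compatibility between the local Tate pairing on the adelic terms, the global duality pairing of $M$ against $C_\Sigma(M)$, and the Shapiro isomorphism; it is the statement that the global pairing, restricted along $H^1(U_F,I_\Sigma(M))$, is computed locally place by place, which is standard in this setting (it is essentially \cite[(8.6.9)]{nsw} together with the definition of the Shapiro maps recalled in \S\ref{b2}).

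The concrete way I would verify the last commutativity is cocycle-theoretically, exactly as in the proof of Proposition \ref{shap}: take a class in $H^1(U_F,I_\Sigma(M))$, represent it by a cocycle $\rho$, note that $\mathrm{sh}^1_M$ sends it to the family $(\overline{\rho'_{w'}})_{w'}$ of local cocycles via the composition displayed in that proof, and pair this family under $\langle-,-\rangle_\Sigma$ with the localisation of any cocycle representing a class in $H^1(U_F,M)$; then compare with the value of the global duality pairing, using that the latter is defined (again via cup-product into $C_\Sigma$ and the invariant map of class field theory, \cite[Ch. VIII, \S 3]{nsw}) so as to be the sum of the local invariants. The main obstacle is bookkeeping: one must keep straight the several sign conventions (the $-1$ twist on $H^2(U_F,M')$ already appearing in Proposition \ref{shap}, the signs built into the mapping cone defining $R\Gamma_c$ in Definition \ref{compactsupport}, and the sign in the Poitou–Tate duality of \cite{nsw}) so that they cancel to give exactly the square as stated, with no residual sign. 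Once the signs are pinned down, bijectivity of all vertical arrows is automatic from the bijectivity of $s_M$ (Proposition \ref{shap}), of $u_M$ (perfectness of local Tate duality) and of $w_M$ (perfectness of global duality, \cite[Thm. (10.11.8)]{nsw}).
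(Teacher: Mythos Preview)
Your proposal is correct and takes essentially the same approach as the paper. The paper's proof is a single sentence: combine Proposition \ref{shap} with the commutative diagram of \cite[Lem.~(8.6.6)]{nsw}, which is precisely the compatibility square you reduce to (your citation of (8.6.9) should be (8.6.6), but conceptually you have identified the right ingredient); your cocycle-level verification is simply an unpacking of what that lemma in \cite{nsw} already records.
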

\begin{proof}
The result is immediate upon combining Proposition \ref{shap} with the commutative diagram in \cite[Lem. (8.6.6)]{nsw}.
\end{proof}

\subsection{Abelian varieties}
In this section we specialise the general results of the previous section to the arithmetic setting of interest in this article: namely, for a given abelian variety $A$ defined over $k$ and any natural number $m$, the finite module $A[p^m]$ will play the role of $M$. To do so we assume that our fixed set $\Sigma$ contains all places at which $A$ has bad reduction.

In this case, for any natural number $m$, the dual module $A[p^m]'$ canonically identifies with $A^t[p^m]$.
The Mazur-Tate height pairing was in \S \ref{comparison section}, via the work of Bertolini and Darmon and of Tan, reinterpreted in terms of the local duality maps 
\begin{equation}\label{um}u_m:=u_{A[p^m]}:H^1_{\Sigma}(\mathbb{A}_F,A^t[p^m]){\to} H^1_{\Sigma}(\mathbb{A}_F,A[p^m])^\vee.\end{equation}
We also set $s_m:=s_{A[p^m]}$ for the canonical isomorphism defined in Proposition \ref{shap} and $w_m:=w_{A[p^m]}$ for the global duality map corresponding to our specific choice of module $M=A[p^m]$.

Furthermore, for every $w'\in\Sigma(F)$, the images of
$$A^t(F_{w'})/p^mA^t(F_{w'})$$ and of $$A(F_{w'})/p^mA(F_{w'})$$ in $H^1_{\Sigma}(\mathbb{A}_F,A^t[p^m])$ and in $H^1_{\Sigma}(\mathbb{A}_F,A[p^m])$ respectively, under the respective local Kummer maps, are the exact annihilators of each other under the corresponding local duality pairing.

Setting \[A^t_\Sigma\bigl(\mathbb{A}_F\bigr)/p^m:=\prod_{w'\in \Sigma(F)}A^t(F_{w'})/p^mA^t(F_{w'})\] and
$$H^1_{\Sigma}(\mathbb{A}_F,A):=\prod_{w'\in \Sigma(F)}H^1(F_{w'},A)$$ and canonically identifying $H^1_{\Sigma}(\mathbb{A}_F,A)[p^m]$ with the cokernel of the product of the local Kummer maps,
the map $u_m$ therefore restricts to an isomorphism $$t_m:A^t_\Sigma\bigl(\mathbb{A}_F\bigr)/p^m\stackrel{\sim}{\to}H^1_{\Sigma}(\mathbb{A}_F,A)[p^m]^\vee.$$

We finally let $$\kappa_m:A^t_\Sigma\bigl(\mathbb{A}_F\bigr)/p^m\to H^1_{\Sigma}\bigl(\mathbb{A}_F,A^t[p^m]\bigr)$$ and $$\kappa_m':H^1_{\Sigma}\bigl(\mathbb{A}_F,A\bigr)[p^m]^\vee\to H^1_{\Sigma}\bigl(\mathbb{A}_F,A[p^m]\bigr)^\vee$$ denote the maps induced by the respective local Kummer sequences.

Corollary \ref{Mfinite} then specialises to give the following useful result:

\begin{corollary}\label{Tatepoitouexplicit}
The following diagram (with bijective vertical maps) commutes.
\begin{equation*}\label{thepmdiagram}\xymatrix@C=1em{
A^t_\Sigma\bigl(\mathbb{A}_F\bigr)/p^m \ar[dd]^-{t_m} \ar[r]^-{\kappa_m} &
H^1_{\Sigma}\bigl(\mathbb{A}_F,A^t[p^m]\bigr) \ar[dd]^-{u_m} \ar[r] &
H^2_c\bigl(U_F,A^t[p^m]\bigr) 
\\
 &
 &
H^1\bigl(U_F,C_{\Sigma}(A[p^m])\bigr) \ar[d]^-{w_m} \ar[u]_-{s_m} 
\\
H^1_{\Sigma}(\mathbb{A}_F,A)[p^m]^\vee \ar[r]^-{\kappa'_m} &
H^1_{\Sigma}\bigl(\mathbb{A}_F,A[p^m]\bigr)^\vee \ar[r] &
H^1\bigl(U_F,A[p^m]\bigr)^\vee. 
}\end{equation*}
Here the unlabeled bottom horizontal arrow is the dual of the localisation map induced on degree 1 cohomology.
\end{corollary}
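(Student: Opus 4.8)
The plan is to deduce Corollary \ref{Tatepoitouexplicit} directly from Corollary \ref{Mfinite} by specialising to the module $M = A[p^m]$ and then carefully tracking how the local Kummer subgroups interact with the duality pairings. First I would recall that, since $\Sigma$ is assumed to contain all places at which $A$ has bad reduction, the natural identification $A[p^m]' = \Hom_\ZZ(A[p^m],\mathcal{O}_\Sigma^\times) \cong A^t[p^m]$ holds (via the Weil pairing together with the fact that $p$-power roots of unity lie in $\mathcal{O}_\Sigma^\times$). With this identification in place, the maps $u_m$, $s_m$ and $w_m$ of the corollary are precisely the maps $u_{A[p^m]}$, $s_{A[p^m]}$ and $w_{A[p^m]}$ appearing in Corollary \ref{Mfinite}, so the right-hand square of the diagram in Corollary \ref{Tatepoitouexplicit} — the one not involving the Kummer maps $\kappa_m$ and $\kappa_m'$ — commutes by Corollary \ref{Mfinite} verbatim.

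It then remains to handle the left-hand square, i.e. to show that the isomorphism $u_m$ restricts to the isomorphism $t_m$ on the images of the local Kummer maps. The key input here is the classical compatibility of local Tate duality with Kummer theory: for each place $w'$ in $\Sigma(F)$ the subgroup $A^t(F_{w'})/p^m$ of $H^1(F_{w'},A^t[p^m])$ and the subgroup $A(F_{w'})/p^m$ of $H^1(F_{w'},A[p^m])$ are exact annihilators of one another under the local Tate pairing $H^1(F_{w'},A^t[p^m]) \times H^1(F_{w'},A[p^m]) \to \QQ_p/\ZZ_p$. Summing over $w' \in \Sigma(F)$, this says that the product of local Kummer images inside $H^1_\Sigma(\mathbb{A}_F,A^t[p^m])$ is the annihilator of the corresponding product of images in $H^1_\Sigma(\mathbb{A}_F,A[p^m])$ under the pairing $\langle\,,\rangle_\Sigma$ of (\ref{localpairing}). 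Consequently $u_{A[p^m]}$ carries $\kappa_m(A^t_\Sigma(\mathbb{A}_F)/p^m)$ isomorphically onto the annihilator of $\kappa_m'$-image, which by definition of $H^1_\Sigma(\mathbb{A}_F,A)[p^m]$ as the cokernel of the local Kummer maps is exactly $H^1_\Sigma(\mathbb{A}_F,A)[p^m]^\vee$, and that the induced isomorphism is $t_m$. This gives the commutativity of the left square: both routes from $A^t_\Sigma(\mathbb{A}_F)/p^m$ to $H^1_\Sigma(\mathbb{A}_F,A[p^m])^\vee$ agree because $\kappa_m'$ is, by construction, the map dual to the Kummer quotient, so $\kappa_m' \circ t_m = u_m \circ \kappa_m$.

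Pasting the two commuting squares along the common vertical map $u_m$ then yields the full diagram of Corollary \ref{Tatepoitouexplicit}, and all vertical arrows are bijective: $t_m$ and $u_m$ are isomorphisms by local Tate duality, $s_m$ is an isomorphism by Proposition \ref{shap}, and $w_m$ is an isomorphism by global duality (\cite[Thm. (10.11.8)]{nsw}). I expect the main obstacle to be bookkeeping rather than conceptual: one must be scrupulous about sign conventions and about the precise normalisation of the Shapiro maps, the connecting homomorphism $\delta_M$, and the compactly supported complex $R\Gamma_c(U_F,-)$ of Definition \ref{compactsupport}, so that the explicit cocycle-level description of $s_m$ given in the proof of Proposition \ref{shap} matches the arrows in the displayed diagram. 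In particular, verifying that no spurious sign intervenes between the $-1$ appearing in the middle square of Proposition \ref{shap}'s diagram and the (unsigned) statement of Corollary \ref{Tatepoitouexplicit} requires care, but it is a finite check that the cocycle formula (\ref{2cocycle}) is compatible with the local Kummer descriptions used to define $t_m$ and $\kappa_m'$.
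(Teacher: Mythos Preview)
Your proposal is correct and follows essentially the same approach as the paper: the paper states that Corollary \ref{Tatepoitouexplicit} is obtained by specialising Corollary \ref{Mfinite} to $M = A[p^m]$, having just recorded (in the preceding paragraph) the exact-annihilator property of the local Kummer images under the Tate pairing, which is precisely what you use for the left square. Your added remarks about sign bookkeeping are cautious but unnecessary here, since the relevant signs are already absorbed in the statement and proof of Corollary \ref{Mfinite}.
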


We end this section by noting that, as $m$ ranges over the natural numbers, the third column of the diagram in Corollary \ref{Tatepoitouexplicit}  defines morphisms of inverse systems, with the obvious transition maps: $$H^2_c\bigl(U_F,A^t[p^{m+1}]\bigr)\to H^2_c\bigl(U_F,A^t[p^m]\bigr)$$ induced by multiplication by $p$ on $A^t[p^{m+1}]$, $$H^1\bigl(U_F,C_{\Sigma}(A[p^{m+1}])\bigr)\to H^1\bigl(U_F,C_{\Sigma}(A[p^m])\bigr) \text{ and }H^1\bigl(U_F,A[p^{m+1}]\bigr)^\vee\to H^1\bigl(U_F,A[p^m]\bigr)^\vee$$ induced by the inclusion $A[p^m]\subseteq A[p^{m+1}]$. The compatibility of the maps $w_m$ and $s_m$ with these transition maps may be verified directly from their definitions.

We therefore obtain isomorphisms
\begin{multline}\label{themapw}w:=\varprojlim_m w_m:H^1\bigl(U_F,C_{\Sigma}(A[p^\infty])\bigr)=\varprojlim_m H^1\bigl(U_F,C_{\Sigma}(A[p^m])\bigr)\\ \to\varprojlim_m H^1\bigl(U_F,A[p^m]\bigr)^\vee =H^1\bigl(U_F,A[p^\infty]\bigr)^\vee\end{multline} and
\begin{multline}\label{themaps}s:=\varprojlim_m s_m:H^1\bigl(U_F,C_{\Sigma}(A[p^\infty])\bigr)=\varprojlim_m H^1\bigl(U_F,C_{\Sigma}(A[p^m])\bigr)\\ \to\varprojlim_m H^2_c\bigl(U_F,A^t[p^m]\bigr) =H^2_c\bigl(U_F,T_p(A^t)\bigr).\end{multline} Here we have used the fact that the canonical map $H^2_c\bigl(U_F,T_p(A^t)\bigr)\to\varprojlim_m H^2_c\bigl(U_F,A^t[p^m]\bigr)$ is bijective, as an easy consequence of the Five Lemma, in order to make the final identification.

\section{Bockstein homomorphisms}\label{bocksection}

In this section we review the explicit construction of the algebraic height pairing that was used to define the Nekov\'a\v r height pairing in \S \ref{comparison section}. This construction recovers that of \cite[\S 3.4.1]{bst} and originates with
the general formalism of algebraic height pairings introduced by Nekov\'a\v r \cite[\S11]{nek}.

We fix an odd prime $p$ and allow $Z$ to denote either the ring $\ZZ_p$, or the ring $\ZZ/p^m\ZZ$ for some fixed natural number $m$. We also fix a finite group $G$, set $R:=Z[G]$ and denote by $I_R$ the augmentation ideal in $R$.

We denote by $m_p(G)$ the logarithm in base $p$ of the exponent of the finite abelian $p$-group $\ZZ_p\otimes_{\ZZ}G^{\rm ab}$. Given an $R$-module $M$, we write $M^*$ for the linear dual $\Hom_{Z}(M,Z)$, endowed with the natural contragredient action of $G$.

In the sequel we shall say that an $R$-module is `perfect' if it is both finitely generated and $G$-cohomologically-trivial.

Let $C$ be a bounded complex of perfect $R$-modules that is acyclic outside degrees 1 and 2.
We set $C_G:=Z\otimes_{R}^{\mathbb{L}}C$.
Then the action of $\Tr_G:=\sum_{g\in G}g\in R$
induces a canonical isomorphism $T_C:H^1(C_G)\cong H^1(C)^G$.

There is also a canonical `Bockstein homomorphism' of $R$-modules
\begin{equation*}\label{bock}\beta_C:H^1(C)^G\stackrel{T_C^{-1}}{\to}H^1(C_G)\stackrel{\delta}{\to} H^2(I_R\otimes_{R}^{\mathbb{L}}C)\cong I_R\otimes_{R}H^2(C)\to I_R/I_R^{2}\otimes_Z H^2(C)_G.\end{equation*} Here $\delta$ is the connecting homomorphism arising from the canonical exact triangle
$$I_R\otimes_{R}^{\mathbb{L}}C\to C\to C_G\to (I_R\otimes_{R}^{\mathbb{L}}C)[1]$$ in $D(R)$, the unlabeled isomorphism is the canonical one (induced by the fact that $C$ is acyclic in degrees greater than 2), and the last arrow is induced by passing to $G$-coinvariants.

Let now $M$ and $N$ be $R$-modules and $r_1:M\to H^1(C)$ and $r_2:H^2(C)\to N$ be $R$-homomorphisms. Then $\beta_C$ induces a composite homomorphism of $R$-modules
\begin{equation*}\label{modifiedbock}\beta_{C,r_1,r_2}:M^G\stackrel{r_1^G}{\to}H^1(C)^G\stackrel{\beta_C}{\to}I_R/I_R^2\otimes_Z H^2(C)_G\stackrel{{\rm id}\otimes_Z r_{2,G}}{\to}I_R/I_R^2\otimes_Z N_G\end{equation*} and hence also a `Bockstein pairing' \begin{equation*}\label{bockpairing}\langle\,,\rangle_{C,r_1,r_2}:M^G\otimes_Z (N_G)^*\to I_R/I_R^2\cong Z\otimes_{\ZZ}G^{\rm ab},\end{equation*} where the isomorphism is induced by mapping the class of each element $g-1$ with $g$ in $G$ to the image of $g$ in $G^{\rm ab}$.

The construction of Bockstein pairings is natural in the following sense: if now $C$ is a complex of $R$-modules, acyclic outside degrees 1 and 2, that is isomorphic in $D(R)$ to a bounded complex of perfect $R$-modules, then such an isomorphism associates a well-defined Bockstein homomorphism, and thus also Bockstein pairing, to $C$.





The following result establishes the compatibility of our construction with respect to a natural change of ring.
\smallskip

\begin{lemma}\label{reducedbock} Let $C\in D^{\rm perf}(\ZZ_p[G])$ be acyclic outside degrees 1 and 2, and have the property that $H^1(C)$ is $\ZZ_p$-free.

We let $m$ denote any integer greater than or equal to $m_p(G)$ and define a complex of $(\ZZ/p^m)[G]$-modules by setting $C/p^m:=(\ZZ/p^m)\otimes_{\ZZ_p}^{\mathbb{L}}C.$ We also write $q:C\to C/p^m$ for the canonical projection morphism. Then the following claims hold.

\begin{itemize}\item[(i)] $C/p^m$ is isomorphic in $D((\ZZ/p^m)[G])$ to a bounded complex of perfect $(\ZZ/p^m)[G]$-modules and is acyclic outside degrees 1 and 2.
The map $H^1(q)$ induces a map $q_{1}$ fitting in a canonical short exact sequence of $(\ZZ/p^m)[G]$-modules $$0\to H^1(C)/p^m\stackrel{q_{1}}{\to} H^1(C/p^m)\to H^2(C)[p^m]\to 0,$$ while $H^2(q)$ induces a canonical isomorphism $\,q_{2}:H^2(C)/p^m\stackrel{\sim}{\to}H^2(C/p^m).$

\item[(ii)] For any $\ZZ_p[G]$-homomorphisms $r_1:M\to H^1(C)$ and $r_2:H^2(C)\to N$ and any elements $x\in M^G$ and $y\in (N_G)^*$ one has an equality $$\langle x,y\rangle_{C,r_1,r_2}=\langle x+p^m,y/p^m\rangle_{C/p^m,q'_{1},q'_{2}}$$ in $\ZZ_p\otimes_{\ZZ}G^{\rm ab}=\ZZ/p^m\otimes_{\ZZ}G^{\rm ab}$. In this equality, $x+p^m$ denotes the class of $x$ in $(M/p^m)^G$, $y/p^m$ denotes the element of $((N/p^m)_G)^*$ induced by $y$ after identifying $(N/p^m)_G$ with $(N_G)/p^m$, and $q'_{1}$ and $q'_{2}$ are the composite maps $$q'_{1}:M/p^m\stackrel{r_1/p^m}{\to}H^1(C)/p^m\stackrel{q_{1}}{\to} H^1(C/p^m)$$ and $$q'_{2}:H^2(C/p^m)\stackrel{q_{2}^{-1}}{\to}H^2(C)/p^m\stackrel{r_2/p^m}{\to} N/p^m.$$
\end{itemize}
\end{lemma}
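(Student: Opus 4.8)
\textbf{Proof proposal for Lemma \ref{reducedbock}.}

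The plan is to treat claims (i) and (ii) in turn, with claim (i) being essentially a formal consequence of the acyclicity and freeness hypotheses, and claim (ii) being a naturality statement for the Bockstein construction under derived base change along $\ZZ_p \to \ZZ/p^m$. First I would fix a bounded complex $P^\bullet$ of finitely generated projective (hence perfect) $\ZZ_p[G]$-modules that is isomorphic to $C$ in $D^{\mathrm{perf}}(\ZZ_p[G])$; then $(\ZZ/p^m)\otimes_{\ZZ_p} P^\bullet$ is a bounded complex of perfect $(\ZZ/p^m)[G]$-modules representing $C/p^m$, which gives the first assertion of claim (i). The remaining assertions of claim (i) follow from the universal coefficient (hyperTor) spectral sequence, or more simply from the long exact cohomology sequence associated to the exact triangle $C \xrightarrow{\times p^m} C \to C/p^m \to C[1]$ in $D(\ZZ_p[G])$: since $C$ is acyclic outside degrees $1$ and $2$, so is $C/p^m$, and the relevant piece of the sequence reads
\[
0 \to H^1(C)/p^m \xrightarrow{q_1} H^1(C/p^m) \to H^2(C)[p^m] \to H^2(C)/p^m \xrightarrow{q_2} H^2(C/p^m) \to H^3(C)[p^m] = 0.
\]
Here I use the hypothesis that $H^1(C)$ is $\ZZ_p$-free to conclude that $H^1(C)[p^m] = 0$, so the sequence actually begins with the displayed short exact sequence and $q_2$ is an isomorphism; the maps $q_1$ and $q_2$ are visibly induced by the projection morphism $q$, as required.

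For claim (ii) the key point is that the Bockstein construction commutes with the base change $\ZZ_p \to \ZZ/p^m$, and the crucial input is that, since $m \ge m_p(G)$, the exponent of $\ZZ_p\otimes_\ZZ G^{\mathrm{ab}}$ divides $p^m$, so that the canonical map $I_{\ZZ_p[G]}/I_{\ZZ_p[G]}^2 \to I_{(\ZZ/p^m)[G]}/I_{(\ZZ/p^m)[G]}^2$ is an isomorphism (both being identified with $\ZZ_p\otimes_\ZZ G^{\mathrm{ab}} = \ZZ/p^m\otimes_\ZZ G^{\mathrm{ab}}$). Concretely, I would compare the two exact triangles
\[
I_{\ZZ_p[G]}\otimes^{\mathbb L}_{\ZZ_p[G]} C \to C \to C_G \to (I_{\ZZ_p[G]}\otimes^{\mathbb L}_{\ZZ_p[G]} C)[1]
\]
and
\[
I_{(\ZZ/p^m)[G]}\otimes^{\mathbb L}_{(\ZZ/p^m)[G]} (C/p^m) \to C/p^m \to (C/p^m)_G \to (\cdots)[1],
\]
noting that applying $(\ZZ/p^m)\otimes^{\mathbb L}_{\ZZ_p}(-)$ to the first triangle yields the second (using $(C/p^m)_G \cong C_G/p^m$ and compatibility of augmentation ideals under base change). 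This identifies the connecting homomorphism defining $\beta_{C/p^m}$ with the reduction modulo $p^m$ of the one defining $\beta_C$. Chasing through the definitions of $\beta_{C,r_1,r_2}$ and $\langle\,,\rangle_{C,r_1,r_2}$, and using that $q_1$, $q_2$, the identifications $T_C$, $T_{C/p^m}$, and the passage to $G$-coinvariants are all compatible with reduction mod $p^m$ (here the $\ZZ_p$-freeness of $H^1(C)$ again ensures $H^1(C)^G/p^m \hookrightarrow (H^1(C)/p^m)^G$ behaves correctly, and more precisely that $T_C$ reduces to $T_{C/p^m}$ after the identifications of claim (i)), one obtains the asserted equality of pairings.

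The main obstacle I anticipate is purely bookkeeping rather than conceptual: one must carefully track how the isomorphism $q_2 : H^2(C)/p^m \to H^2(C/p^m)$ and the injection $q_1 : H^1(C)/p^m \to H^1(C/p^m)$ interact with the two Bockstein connecting maps, since the source of $\beta_C$ is $H^1(C)^G$ (the $G$-invariants, via $T_C$) whereas after reduction one works with $H^1(C/p^m)$, which is strictly larger than $H^1(C)/p^m$ when $H^2(C)[p^m] \ne 0$. The resolution is that $x + p^m$ lies in the image of $(H^1(C)/p^m)^G$ under $(q_1)^G$ — indeed it is the image of $r_1(x) + p^m$ — so only the sub-object $q_1(H^1(C)/p^m)$ of $H^1(C/p^m)$ is ever tested, and on this sub-object the two Bockstein homomorphisms agree after the identification $I/I^2 \cong I'/I'^2$; dually, $y/p^m$ only sees the quotient $N_G/p^m \cong (N/p^m)_G$, which $q_2$ handles. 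Once these compatibilities are spelled out, the equality in claim (ii) follows by a direct diagram chase, and this is the only step requiring genuine care.
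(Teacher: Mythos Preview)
Your proposal is correct and follows essentially the same approach as the paper. For claim (i) both you and the paper use the long exact cohomology sequence of the triangle $C\xrightarrow{p^m}C\to C/p^m\to C[1]$ together with the $\ZZ_p$-freeness of $H^1(C)$; the paper additionally observes that one may take the perfect representative to be a two-term complex $F^1\to F^2$ of finitely generated \emph{free} $\ZZ_p[G]$-modules (a standard resolution argument using acyclicity outside degrees $1,2$), which is slightly sharper than your ``bounded complex of projectives'' but not essential. For claim (ii) the paper simply says the equality follows from a direct computation of both Bockstein homomorphisms using this explicit two-term free representative, whereas you argue more conceptually by comparing the two defining triangles under $(\ZZ/p^m)\otimes^{\mathbb L}_{\ZZ_p}(-)$; these are equivalent, with the paper's route being a touch more hands-on and yours making the naturality more transparent.
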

\begin{proof}
The canonical exact triangle $C\stackrel{p^m}{\to}C\stackrel{q}{\to} C/p^m\to C[1]$ in $D^{\rm perf}(\ZZ_p[G])$ has long exact cohomology sequence
\begin{multline*}0\to H^0(C/p^m)\to H^1(C)\stackrel{p^m}{\to}H^1(C)\stackrel{H^1(q)}{\to} H^1(C/p^m) \\
\to H^2(C)\stackrel{p^m}{\to}H^2(C)\stackrel{H^2(q)}{\to} H^2(C/p^m)\to 0.\end{multline*}

Since $H^1(C)$ is assumed to be $\ZZ_p$-free, it is clear that $H^0(C/p^m)$ vanishes (so that $C/p^m$ is indeed acyclic outside degrees 1 and 2) and also that $H^1(q)$ and $H^2(q)$ induce maps with the properties described in claim (i).

Let us next note that, by a standard resolution argument, 
$C$ is isomorphic in $D(\ZZ_p[G])$ to a complex of the form $F^1\stackrel{\phi}{\to} F^2$, where $F^1$ and $F^2$ are finitely generated, free $\ZZ_p[G]$-modules and the first term is placed in degree one.

The complex $C/p^m$ is therefore isomorphic in $D((\ZZ/p^m)[G])$ to the complex of $(\ZZ/p^m)[G]$-modules that is equal to $F^1/p^m\stackrel{\phi/p^m}{\to}F^2/p^m,$ where the first term is placed in degree one. This completes the proof of claim (i).

Furthermore, claim (ii) now follows from a straightforward computation of the Bockstein homomorphisms associated to both $C$ and $C/p^m$ with respect to the given representatives of the complexes. 
\end{proof}



\begin{thebibliography}{CCFT91}

\bibitem{cf} M. F. Atiyah, C. T. C. Wall,
\newblock Cohomology of Groups,
\newblock In: `Algebraic Number Theory', J. W. S. Cassels, A. Fr\"ohlich (eds.), 94-115, Academic Press, London, 1967.

\bibitem{benoisberger} D.  Benois, L.  Berger,
\newblock  Th\'eorie  d'Iwasawa  des  repr\'esentations
cristallines II,
\newblock Comment. Math. Helv. {\bf 83} (2008) 603-677.


\bibitem{BD} M. Bertolini, H. Darmon,
\newblock Kolyvagin's descent and
Mordell-Weil groups over ring class fields,
\newblock J. reine angew. Math. {\bf 412} (1990), 63-74.

\bibitem{bert} M. Bertolini, H. Darmon,
\newblock Derived heights and generalized Mazur-Tate regulators,
\newblock Duke Math Journal {\bf 76} No. 1 (1994) pp. 75-111.

\bibitem{bert2} M. Bertolini, H. Darmon,
\newblock Derived $p$-adic heights,
\newblock Amer. J. Math. {\bf 117} (1995) 1517-1554.

\bibitem{bdr} M. Bertolini, H. Darmon, V. Rotger,
\newblock Beilinson-Flach elements and Euler systems II: the Birch and Swinnerton-Dyer conjecture for Hasse-Weil-Artin $L$-series,
\newblock J. Alg. Geometry {\bf 24} (2015) 569-604.
%

\bibitem{birch} B. J. Birch,
\newblock Conjectures concerning elliptic curves,
\newblock In Proceedings of Symposia in Pure Mathematics, VIII, pages 106-112. Amer. Math. Soc., Providence, R.I., 1965.

\bibitem{bisatt} M. Bisatt, V. Dokchitser,
\newblock On the Birch-Swinnerton-Dyer conjecture and Schur indices,
\newblock Bull. London Math. Soc. {\bf 50} (2018) 1027-1034.


\bibitem{Bley1} W. Bley,
\newblock Numerical evidence for the equivariant Birch and Swinnerton-Dyer conjecture,
\newblock Experiment. Math. {\bf 20} (2011) 426 - 456.

\bibitem{Bley2} W. Bley,
\newblock Numerical evidence for the equivariant Birch and Swinnerton-Dyer conjecture (Part II),
\newblock Math. Comp. {\bf 81} (2012) 1681-1705.

\bibitem{Bley3} W. Bley,
\newblock The equivariant Tamagawa number conjecture and modular symbols,
\newblock Math. Ann. {\bf 356} (2013), 179-190.

\bibitem{bleyburns} W. Bley, D. Burns,
\newblock Equivariant epsilon constants, discriminants and \'etale cohomology,
\newblock Proc. London Math. Soc. {\bf 87} (2003) 545-590.


\bibitem{BC} W. Bley, A. Cobbe,
\newblock Equivariant epsilon constant conjectures for weakly ramified extensions,
\newblock Math. Zeit. {\bf 283} (2016) 1217-1244.

\bibitem{BC2} W. Bley, A. Cobbe,
\newblock The equivariant local epsilon constant conjecture for unramified twists of $\ZZ_p(1)$,
\newblock Acta Arith. {\bf 178} (2017) 313-383.


\bibitem{bleydebeerst} W. Bley, R. Debeerst,
\newblock Algorithmic proof of the epsilon constant conjecture,
\newblock Math. Comp. {\bf 82} (2013) 2363-2387.

\bibitem{bleymc} W. Bley, D. Macias Castillo,
\newblock Congruences for critical values of higher derivatives of twisted Hasse-Weil $L$-functions,
\newblock J. reine u. angew. Math. {\bf 722} (2017) 105-136.

\bibitem{bleymcIII} W. Bley, D. Macias Castillo,
\newblock Congruences for critical values of higher derivatives of twisted Hasse-Weil $L$-functions, III,
\newblock to appear in Math. Proc. Camb. Phil. Soc., doi:10.1017/S0305004121000657.

\bibitem{bk} S. Bloch, K. Kato,
\newblock $L$-functions and Tamagawa numbers of motives,
\newblock in: The Grothendieck Festschrift, vol.\ 1, Prog. in Math.\ {\bf 86} (Birkh\"auser, Boston, 1990) 333-400.





\bibitem{BS} R. Bradshaw, W. Stein,
\newblock Heegner Points and the Arithmetic of Elliptic Curves Over Ring Class Field Extensions,
\newblock J. Number Theory {\bf 132} (2012) 1707-1719.

\bibitem{breuning} M. Breuning,
\newblock On equivariant global epsilon constants for certain dihedral extensions,
\newblock Math. Comp. {\bf 73} (2004) 881-898.

\bibitem{breuning2} M. Breuning,
\newblock Equivariant local epsilon constants and \'etale cohomology,
\newblock J. London Math. Soc. (2) {\bf 70} (2004), 289-306.



\bibitem{additivity} M. Breuning, D. Burns,
\newblock Additivity of Euler characteristics in relative algebraic $K$-groups,
\newblock Homology, Homotopy Appl. \textbf{7} (2005), No.\ 3, 11--36.





\bibitem{ewt} D. Burns,
\newblock Equivariant Whitehead torsion and refined Euler characteristics,
\newblock CRM Proceedings and Lecture Notes {\bf 36} (2004) 35-59.


\bibitem{ltav} D. Burns,
\newblock Leading terms and values of equivariant motivic $L$-functions,
\newblock Pure App. Math. Q. {\bf 6} (2010) 83-172.

\bibitem{burns nagoya} D. Burns,
\newblock On the Galois structure of arithmetic cohomology I: compactly supported $p$-adic cohomology,
\newblock Nagoya Math. J. {\bf 239} (2020) 294-321.

\bibitem{bufl95} D. Burns, M. Flach,
\newblock Motivic $L$-functions and Galois module structures,
\newblock Math. Ann. \textbf{305} (1996) 65-102.




\bibitem{bufl99} D. Burns, M. Flach,
\newblock Tamagawa numbers for motives with (non-commutative) coefficients,
\newblock Doc.\ Math.\ \textbf{6} (2001) 501-570.










\bibitem{bkk} D. Burns, M. Kakde, W. Kim,
\newblock On a refinement of the Birch and Swinnerton-Dyer Conjecture in positive characteristic,
\newblock submitted for publication.

\bibitem{bks} D. Burns, M. Kurihara, T. Sano,
\newblock On zeta elements for $\mathbb{G}_m$,
\newblock Doc. Math. {\bf 21} (2016) 555-626.







\bibitem{bmw0} D. Burns, D. Macias Castillo, C. Wuthrich,
\newblock On the Galois structure of Selmer groups,
\newblock Int. Math. Res. Notices {\bf 2015} (2015) 11909-11933.

\bibitem{bmw} D. Burns, D. Macias Castillo, C. Wuthrich,
\newblock On Mordell-Weil groups and congruences between derivatives of twisted Hasse-Weil $L$-functions,
\newblock J. reine u. angew. Math. {\bf 734} (2018) 187-228.

\bibitem{bst} D. Burns, T. Sano, K-W. Tsoi,
\newblock On higher special elements of $p$-adic representations,
\newblock Int. Math. Res. Notices {\bf 2021} (2021) 15337-15411.

\bibitem{BV2} D. Burns, O. Venjakob,
\newblock On descent theory and main conjectures in non-commutative
Iwasawa theory,
\newblock J. Inst. Math. Jussieu {\bf 10} (2011) 59-118.


\bibitem{chin85} T. Chinburg,
\newblock Exact sequences and Galois module structure,
\newblock Ann. Math. {\bf 121} (1985) 351-376.

\bibitem{cfksv}
J.~Coates, T.~Fukaya, K.~Kato, R.~Sujatha, and O.~Venjakob,
\newblock The GL$_2$
  main conjecture for elliptic curves without complex multiplication,
\newblock Publ. IHES {\bf 101} (2005) 163-208.

\bibitem{curtisr} C. W. Curtis, I. Reiner,
\newblock Methods of Representation Theory, Vol. I,
\newblock John Wiley and Sons, New York, 1987.


\bibitem{darmon0} H. Darmon,
\newblock A refined conjecture of Mazur-Tate type for Heegner
points,
\newblock Invent. Math. {\bf 110} (1992) 123-146.

\bibitem{darmon} H. Darmon,
\newblock Euler systems and refined conjectures of Birch and Swinnerton-Dyer type,
\newblock Contemporary Mathematics {\bf 165} (1994) 265-276.

\bibitem{dr} H. Darmon, V. Rotger,
\newblock Diagonal cycles and Euler systems II: the Birch and Swinnerton-Dyer conjecture for Hasse-Weil-Artin $L$-series,
\newblock J. Amer. Math. Soc. {\bf 30} (2017) 601-672.

\bibitem{delignedet} P. Deligne,
\newblock Le d\'eterminant de la cohomologie
\newblock In: Current trends in arithmetical algebraic geometry,
 Cont. Math. {\bf 67},
\newblock Amer. Math. Soc. (1987) 313-346.


\bibitem{deligne-henniart} P. Deligne, G. Henniart,
\newblock Sur la variation, par torsion, des constantes locales d'equations fonctionelles des fonctions $L$,
\newblock Invent. Math. {\bf 64} (1981) 89-118.


\bibitem{den} C. Deninger,
\newblock Motivic $L$-functions and regularized determinants,
\newblock Proc. Symp. Pure Math. {\bf 55}, Part I, 707-744, Amer. Math. Soc., Providence 1994.



\bibitem{dokchitsers} T. Dokchitser, V. Dokchitser,
\newblock Computations in non-commutative Iwasawa theory (with an appendix by J. Coates and R. Sujatha),
\newblock Proc. London Math. Soc. {\bf 94} (2006) 211-272.

\bibitem{vdrehw} V. Dokchitser, R. Evans, H. Wiersema,
\newblock On a BSD-type formula for $L$-values of Artin twists of elliptic curves,
\newblock J. reine u. angew. Math. {\bf 773} (2021) 199-230.


\bibitem{ellerbrocknickel} N. Ellerbrock, A. Nickel
\newblock On formal groups and Tate cohomology in local fields,
\newblock Acta Arith. {\bf 182} (2018) 285-299

\bibitem{kisilevsky} J. Fearnley, H. Kisilevsky,
\newblock  Critical values of derivatives of twisted elliptic $L$-functions,
\newblock  Experimental Math. {\bf 19} (2010) 149-160.

\bibitem{kisilevsky2} J. Fearnley, H. Kisilevsky,
\newblock  Critical values of higher derivatives of twisted elliptic $L$-functions,
\newblock  Experimental Math. {\bf 21} (2012) 213-222.

\bibitem{fkk} J. Fearnley, H. Kisilevsky, M. Kuwata,
\newblock Vanishing and non-vanishing Dirichlet twists of $L$-functions of elliptic curves,
\newblock J. Lond. Math. Soc. {\bf 86} (2012) 539-557.


\bibitem{fm} J-M. Fontaine, W. Messing,
\newblock $p$-adic periods and $p$-adic \'etale cohomology,
 \newblock in: `Current trends in Arithmetic Algebraic Geometry', Contemp. Math. {\bf 67} 179-207, Amer. Math. Soc., 1987.

\bibitem{fpr} J.-M. Fontaine, B. Perrin-Riou,
\newblock Autour des conjectures de Bloch et Kato: cohomologie galoisienne et valeurs de fonctions $L$,
\newblock In: Motives (Seattle),
\newblock Proc. Symp. Pure Math. {\bf 55}, I, (1994) 599-706.

\bibitem{frohlich} A. Fr\"ohlich,
\newblock Galois Module Structure of Algebraic Integers,
\newblock Ergebnisse Math. {\bf 1}, Springer Verlag, New York, 1983.


\bibitem{fukaya-kato} T. Fukaya, K. Kato,
\newblock A formulation of conjectures on $p$-adic zeta functions in non-commutative Iwasawa theory,
\newblock Proc. St. Petersburg Math. Soc. Vol. XII, 1--85, Amer. Math. Soc. Transl. Ser. 2, {\bf 219}, Amer. Math. Soc., Providence, RI, 2006.




\bibitem{G-BSD} B. H. Gross,
\newblock On the conjecture of Birch and Swinnerton-Dyer for elliptic curves with complex multiplication,
\newblock in: Number theory related to Fermat's last theorem (Cambridge, Mass., 1981), Prog. Math. {\bf 26}, Birkh\"auser Boston, Mass., 1982, pp. 219-236.

\bibitem{Gross} B. H. Gross,
\newblock Heegner points on $X_0(N)$,
\newblock Modular forms (Durham, 1983), Ellis Horwood Ser. Math. Appl.: Statist. Oper. Res., Horwood, Chichester, 1984.



\bibitem{gross_koly} B. H. Gross,
\newblock Kolyvagin's work on modular elliptic curves,
\newblock In: $L$-functions and arithmetic,
 London Math. Soc. Lecture Note Ser. {\bf 153},
\newblock Cambridge Univ. Press (1991) 235-256.

\bibitem{GrossBuhler} B. H. Gross, J. P. Buhler,
\newblock Arithmetic on elliptic curves with complex multiplication. II,
\newblock Invent. math. {\bf 79} (1985) 11-30.

\bibitem{GZ} B. H. Gross, D. B. Zagier,
\newblock Heegner points and derivatives of $L$-series,
\newblock Invent. Math. {\bf 84} (1986) 225-320.

\bibitem{hayashi} Y. Hayashi,
\newblock The Rankins $L$-function and Heegner
points for general discriminants,
\newblock Proc. Japan Acad. 18 Ser. A Math. Sci. (1995) {\bf 71}(2) 30-32.

\bibitem{HW3} D.~Holland, S.~M.~J.~Wilson,
\newblock Fr\"ohlich's and Chinburg's conjectures in the factorisability defect class group,
\newblock J.~Reine Angew.~Math. {\bf 442} (1993) 1-17.



\bibitem{IV} D. Izychev, O. Venjakob,
\newblock Equivariant epsilon conjecture for $1$-dimensional Lubin-Tate groups,
\newblock J. Th. Nombres Bordeaux {\bf 28} (2016) 485-521.



\bibitem{JLS} D. Jetchev, K. Lauter, W. Stein,
\newblock Explicit
Heegner points: Kolyvagin's conjecture and non-trivial elements
in the Shafarevich-Tate group,
\newblock J. Number Theory {\bf 129} (2009), 284-302.



\bibitem{kakde} M. Kakde,
\newblock The main conjecture of Iwasawa theory for totally real fields,
\newblock  Invent. Math. {\bf 193} (2013) 539-626.

\bibitem{kato} K. Kato,
\newblock $p$-adic Hodge theory and values of zeta functions of modular forms, in: Cohomologies
p-adiques et applications arithm\'etiques. III,
\newblock Ast\'erisque {\bf 295} (2004), 117-290.




\bibitem{kings} G. Kings,
\newblock The equivariant Tamagawa number conjecture and the Birch-Swinnerton-Dyer
conjecture, Arithmetic of $L$-functions,
\newblock IAS/Park City Math. Ser., vol. 18, Amer. Math. Soc.,
Providence, RI, 2011, pp. 315-349.

\bibitem{klz} G. Kings, D. Loeffler, S. L. Zerbes,
\newblock Rankin-Eisenstein classes and explicit reciprocity laws,
\newblock Cambridge J. Math. {\bf 5} (2017) 1-122.

\bibitem{kuri} M. Kurihara,
\newblock Iwasawa theory and Fitting ideals,
\newblock J. Reine u. Angew. Math. {\bf 561} (2003) 39-86.

\bibitem{lw} T. Lawson, C. Wuthrich,
\newblock Vanishing of some Galois cohomology groups for elliptic curves,
\newblock in: Elliptic Curves, Modular Forms and Iwasawa Theory (ed. D. Loeffler and S. L. Zerbes), Springer Proc. in Math. and Stat., {\bf 188} (2017) 373-399.

\bibitem{LR} J. Lubin, M. I. Rosen,
\newblock The Norm Map for Ordinary Abelian Varieties,
\newblock J. Algebra {\bf 52} (1978) 236-240.


\bibitem{dmc} D. Macias Castillo,
\newblock Congruences for critical values of higher derivatives of twisted Hasse-Weil $L$-functions, II,
\newblock Acta Arith. {\bf 195} (2020), no. 7, 327-365.

\bibitem{dmckwt} D. Macias Castillo, K.-W. Tsoi,
\newblock On non-abelian higher special elements of $p$-adic representations,
\newblock to appear in Isr. J. Math..

\bibitem{martinet} J. Martinet,
\newblock Character theory and Artin $L$-functions,
\newblock in: Algebraic number fields (ed. A. Fr\"ohlich), pp. 1-87, Academic Press, London, 1977.

\bibitem{m} B. Mazur,
\newblock Rational points of abelian varieties with values in towers of number fields,
\newblock Invent. Math. {\bf 18} (1972) 183-266.

\bibitem{mazur79}  B. Mazur,
\newblock On the Arithmetic of Special Values of $L$-Functions,
\newblock Invent. Math. {\bf 55} (1979) 207-240.


\bibitem{MRkoly} B. Mazur, K. Rubin,
\newblock Kolyvagin systems,
\newblock Mem. Amer. Math. Soc. \textbf{799} (2004).

\bibitem{mr} B. Mazur, K. Rubin,
\newblock Organizing the arithmetic of elliptic curves,
\newblock Adv. Math. {\bf 198} (2005) 504-546.


\bibitem{mr2} B. Mazur, K. Rubin,
\newblock Finding large Selmer rank via an arithmetic theory of local constants,
\newblock Ann. Math. {\bf 166} (2007) 579-612.


\bibitem{mt0} B. Mazur, J. Tate,
\newblock Canonical height pairings via biextensions,
\newblock In: 'Arithmetic and Geometry' vol. 1, Prog. Math. {\bf 35} (1983) 195-237.


\bibitem{mt} B. Mazur, J. Tate,
\newblock Refined Conjectures of the Birch and Swinnerton-Dyer Type,
\newblock Duke Math. J.  {\bf 54} (1987) 711-750.

\bibitem{mtt} B. Mazur, J. Tate, J. Teitelbaum,
\newblock On $p$-adic analogues of the conjectures of Birch and Swinnerton-Dyer,
\newblock Invent. Math. {\bf 84} (1986) 1-48.




\bibitem{milne} J.S. Milne,
\newblock Arithmetic Duality Theorems,
\newblock Perspectives in Mathematics {\bf 1}, Academic Press,
 1986.

\bibitem{tejaswi} T. Navilarekallu,
\newblock Equivariant Birch-Swinnerton-Dyer Conjecture for the Base Change of Elliptic Curves: An
Example,
\newblock Int. Math. Res. Notices (2008) rnm164.

\bibitem{nek} J.~Nekov\'a\v r,
\newblock Selmer complexes,
\newblock  Ast\'erisque {\bf 310}, S.M.F., Paris, 2006.

\bibitem{nsw} J.~Neukirch, A.~Schmidt, K.~Wingberg,
\newblock Cohomology of number fields,
\newblock Grundlehren der mathematischen Wissenschaften, vol. 323, Springer, 2000.







\bibitem{rw} J. Ritter, A. Weiss,
\newblock On the main conjecture of equivariant Iwasawa theory,
\newblock J. Amer. Math. Soc. {\bf 24} (2011) 1015-1050
%

\bibitem{rohrlich} D. Rohrlich,
\newblock The vanishing of certain Rankin-Selberg convolutions,
 \newblock Automorphic forms and analytic number theory (University of Montr\'eal, Montr\'eal, 1990) 123-133.

\bibitem{rubin2}K. Rubin,
\newblock The main conjectures of Iwasawa Theory for imaginary quadratic
fields,
\newblock Invent. Math. {\bf 103} (1991) 25-68.

\bibitem{rub} K. Rubin,
\newblock A Stark Conjecture `over $\bz$' for abelian $L$-functions
with multiple zeros,
\newblock Ann. Inst. Fourier \textbf{46} (1996)
 33-62.

\bibitem{rubin} K. Rubin,
\newblock Euler systems and modular elliptic curves, in: Galois representations in arithmetic algebraic geometry (Durham, 1996),
\newblock London Math. Soc. Lecture Note Ser. {\bf 254}, Cambridge Univ. Press, Cambridge, 1998.

\bibitem{sano} T. Sano,
\newblock Refined abelian Stark conjectures and the equivariant leading term conjecture of Burns,
\newblock Compositio Math. {\bf 150} (2014) 1809-1835.

\bibitem{sch} P. Schneider,
\newblock $p$-adic height pairings I,
\newblock Invent. Math. {\bf 69} (1982) 401-409.


\bibitem{serre} J. P. Serre,
\newblock Propri\'et\'es galoisiennes des points d'ordre fini des courbes elliptiques,
\newblock Invent. Math. {\bf 15} (1972) 259-331.

\bibitem{silverman} J.H. Silverman,
\newblock The arithmetic of elliptic curves,
\newblock Springer Verlag, New York, 1992, Corrected reprint of the 1986 original.



\bibitem{swan} R. G. Swan,
\newblock Algebraic $K$-theory,
\newblock Lecture Note in Math. {\bf 76}, Springer Verlag, 1968.


\bibitem{kst} K.-S. Tan,
\newblock $p$-adic pairings,
\newblock Contemp. Math. {\bf 165} (1994) 111-121.

\bibitem{tate} J. Tate,
\newblock On the conjectures of Birch and Swinnerton-Dyer and a geometric analog,
\newblock In S\'eminaire Bourbaki, Vol. 9 (1966) Exp. {\bf 306}, 415-440, Soc. Math. France, Paris, 1995.

\bibitem{Taylor} M. J.~Taylor,
\newblock Classgroups of group rings,
\newblock London Math. Soc. Lecture Note Series {\bf 91}, Cambridge Univ. Press, 1984.
















\bibitem{Ullom} S. V. Ullom,
\newblock Galois cohomology of ambiguous ideals,
\newblock J. Number Theory {\bf 1} (1969) 11-15.



\bibitem{venjakob} O. Venjakob,
\newblock From the Birch and Swinnerton-Dyer Conjecture via the Equivariant Tamagawa
Number Conjecture to non-commutative Iwasawa theory,
\newblock in $L$-functions and Galois representations, London Math. Soc. Lecture Note Ser., {\bf 320}, Cambridge Univ. Press, Cambridge
 2007, 333-380.

\bibitem{weibel} C. Weibel,
\newblock An introduction to homological algebra,
\newblock Cambridge Studies in Advanced Mathematics, {\bf 38}. Cambridge University Press, Cambridge, 1994.

\bibitem{hwcw} H. Wiersema, C. Wuthrich,
\newblock Integrality of twisted $L$-values of elliptic curves,
\newblock preprint, arXiv 2004.05492.

\bibitem{yakovlev} A. V. Yakovlev,
\newblock Homological definability of $p$-adic representations of a ring with power basis,
\newblock Izvestia A N SSSR, ser. Math. {\bf 34} (1970), 321-342. (Russian)

\bibitem{yakovlev2} A. V. Yakovlev,
\newblock Homological definability of $p$-adic representations of groups with cyclic Sylow $p$-subgroup,
\newblock An. St. Univ. Ovidius Constan\c{t}a {\bf 4} (1996) 206-221.

\bibitem{zhang01} S-W Zhang,
\newblock Heights of Heegner points on Shimura curves,
\newblock Ann. of
Math. {\bf 153} (2001) 27-147.

\bibitem{zhang} S-W Zhang,
\newblock Gross-Zagier Formula for GL$(2)$, II,
\newblock in Heegner Points and Rankin $L$-Series, Math. Sci. Res. Inst. Publ. {\bf 49}, Cambridge Univ. Press, Cambridge, 2004, 191-214.
\end{thebibliography}
\end{document}